\definecolor{green}{rgb}{.1,.6,0}
\DeclareMathAlphabet{\mathpzc}{OT1}{pzc}{m}{it}
\newlength{\dhatheight}
\DeclareMathOperator{\Sym}{Sym}
\DeclareMathOperator{\rank}{rank}
\DeclareMathOperator{\tr}{tr}
\DeclareMathOperator{\ad}{ad}
\DeclareMathOperator{\ind}{index}
\DeclareMathOperator{\Diff}{Diff}
\DeclareMathOperator{\im}{im}
\newcommand{\N}{{\bf N}}
\newcommand{\Z}{{\bf Z}}
\newcommand{\Q}{{\bf Q}}
\newcommand{\R}{{\bf R}}
\newcommand{\C}{{\bf C}}
\newcommand{\rD}{{\rm D}}
\newcommand{\rI}{{\rm I}}
\newcommand\e{{\rm e}}
\newcommand{\re}{{\rm e}}
\newcommand{\bi}{{\bf i}}
\newcommand{\U}{{\rm U}}
\newcommand{\SU}{{\rm SU}}
\newcommand{\Sp}{{\rm Sp}}
\newcommand{\SO}{{\rm SO}}
\newcommand{\Or}{{\rm O}}
\newcommand{\su}{\mathfrak{su}}
\newcommand\A{{\mathbb A}}
\newcommand{\id}{{\rm id}}
\newcommand\DXw{{\rm D}_{X,w}}
\newcommand\hDXw{\widehat {\rm D}_{X,w}}
\newcommand\IN{{\rm I}^N_*}
\newcommand\IIN{{\mathbb I}^N_*}
\newcommand\CS{{\rm CS}}
\newcommand\Vgd{{\mathbb V_{g,d}}}
\newcommand\VgdN{{\mathbb V_{g,d}^N}}
\newcommand\bIgd{{{\mathbb I}_{g,d}}}
\newcommand\bIgdN{{{\mathbb I}_{g,d}^N}}
\newcommand\SHI{{\rm SHI}}
\newcommand\vab{v_{(\alpha,\beta)}}
\newcommand\tbIgd{{\widetilde {\mathbb I}_{g,d}}}
\newcommand\BS{\Sigma(a_1,a_2,a_3)}
\newcommand\cp{\overline{\bf{CP}}^2}
\newcommand\Nnd{{\mathcal N_{N,d}}}
\def\({\mathopen{}\left(}
\def\){\right)\mathclose{}}
\newcommand{\Addresses}{{
  \bigskip
  \footnotesize

  Aliakbar Daemi, \textsc{Simons Center for Geometry and Physics, State University of New York,
  Stony Brook, NY 11794}\par\nopagebreak
  \textit{E-mail address}: \texttt{adaemi@scgp.stonybrook.edu}

  \medskip

   Yi Xie , \textsc{Simons Center for Geometry and Physics, State University of New York,
  Stony Brook, NY 11794}\par\nopagebreak
  \textit{E-mail address}: \texttt{yxie@scgp.stonybrook.edu}
}}
\theoremstyle{plain}
\newtheorem{theorem-intro}{Theorem}
\DeclareMathOperator{\gker}{ker_{gen}}
\DeclareMathOperator{\I}{I}
\DeclareMathOperator{\KHI}{KHI}
\numberwithin{equation}{section}
\def\makeautorefname#1#2{\AtBeginDocument{\expandafter\def\csname#1autorefname\endcsname{#2}}}
\newcommand{\mynewtheorem}[2]{
  \newaliascnt{#1}{equation}
  \newtheorem{#1}[#1]{#2}
  \aliascntresetthe{#1}
  \makeautorefname{#1}{#2}
}
\numberwithin{substep}{step}
\numberwithin{subcase}{case}
\theoremstyle{remark}
\theoremstyle{definition}
\newtheorem*{convention*}{Convention}
\newtheorem*{conventions*}{Conventions}
\title{\large \bf Sutured Manifolds and Polynomial Invariants from Higher Rank Bundles}
\author{\bf \sc \large Aliakbar Daemi, Yi Xie}
\date{}
\begin{document}
\maketitle
\begin{abstract}
	For each integer $N\geq 2$, Mari\~no and Moore defined generalized Donaldson invariants by the methods of quantum field theory, and made predictions about the values of these invariants.
	Subsequently, Kronheimer gave a rigorous definition of generalized Donaldson invariants using the moduli spaces of anti-self-dual connections on hermitian vector bundles of rank $N$.
	In this paper, Mari\~no and Moore's predictions are confirmed for simply connected elliptic surfaces without multiple fibers and certain surfaces of general type in the case that $N=3$. The primary motivation
	is to study 3-manifold instanton Floer homologies which are defined by higher rank bundles. In particular, the computation of the generalized Donaldson invariants are exploited to define a Floer homology
	theory for sutured 3-manifolds.
\end{abstract}
\newpage
\tableofcontents
\newpage
\section{Introduction}
Sutured manifolds were introduced by Gabai \cite{G:Sut-1} to study foliations and the Thurston norm of 3-manifolds \cite{Th:norm}. A sutured manifold is a pair of a 3-manifold $M$ and an oriented 1-manifold $\alpha\subset M$ which decomposes the boundary of $M$ in an appropriate way. In \cite{G:Sut-1}, Gabai also defines an operation on sutured manifolds, which is called {\it surface decomposition}. Surface decompositions can be used to simplify sutured manifolds. Foliations of sutured manifolds are also well-behaved with respect to surface decompositions. As a result, Gabai was able to construct {\it taut} foliations for certain families of 3-manifolds in an inductive way.

Floer homological invariants serve as another set of tools for studying  topology and geometry of 3-dimensional manifolds. Such invariants were initially constructed for closed and oriented 3-manifolds: $\U(N)$-instanton Floer homology \cite{Fl:I, F:sur-rel, KM:YAFT}, Heegaard Floer homology \cite{OzSz:HF}, monopole Floer homology \cite{KM:monopoles-3-man}, and embedded contact homology \cite{ECH:notes-I,ECH:notes-II}. Later, Juh\'asz defined {\it sutured Floer homology}, a generalization of Heegaard Floer homology to {\it balanced} sutured 3-manifolds \cite{juhasz}.\footnote{For the definition of balanced sutured 3-manifolds, see Definition \ref{balance-suture-manifolds}.} Subsequently, sutured version of $\U(2)$-instanton Floer homology \cite{KM:suture}, monopole Floer homology \cite{KM:suture}, and embedded contact homology were constructed \cite{CGHH:suture,CGH:suture,KST:ECH-natural}. In particular, Kronheimer and Mrowka used sutured $\U(2)$-instanton homology as the main ingredient to establish that Khovanov homology detects the unknot \cite{KM:Kh-unknot}. This invariant was also used to reprove Property P for knots \cite{KM:suture}, and it lies in the core of a program in the hope of finding a computer-free proof of the famous four color theorem \cite{KM:J-sharp}. The primary motivation for this article is to extend $\U(N)$-instanton Floer homology to sutured manifolds for higher values of $N$.

\subsection{Motivation}
Fix an integer $N\geq 2$, and let $K$ be a knot in an integral homology sphere $Y$. Let also $\mu$ denote an element of the knot group, $\pi_1(Y\backslash K)$, represented by a meridian of $K$:
\begin{question}\label{rank-N-rep}
	Does there exist a representation
	$\varphi: \pi_1(Y\backslash K) \to \SU(N)$ with non-abelian image such that:
	\begin{equation}\label{mon-cond}
		\varphi(\mu)=
			c\left[
			\begin{array}{cccc}
				1&0&\dots&0\\
				0&\zeta&\dots&0\\
				\vdots&\vdots&\ddots&\vdots\\
				0&0&\dots&\zeta^{N-1}\\
			\end{array}
			\right]
	\end{equation}
	where $\zeta=\e^{2\pi \bi/N}$, and $c=\e^{\pi \bi/N}$ or $1$ depending on whether $N$ is even or odd?
\end{question}
\noindent
In the case that $K$ is the unknot, the answer to the above question is clearly negative. Note also that if for a knot $K$, there is a representation to $\SU(N)$ with the mentioned properties, then there is also a desired representation from $\pi_1(Y\backslash K)$ to $\SU(lN)$ for any positive integer $l$.

Suppose $Y$ is a homotopy sphere\footnote{By the Poincar\'e Conjecture, this is equivalent to say that $Y=S^3$. However, we are making this assumption to show that our proposed approach does not require the Poincar\'e Conjecture.} and the answer to Question \ref{rank-N-rep} for any non-trivial knot $K$ in $Y$ is positive. A non-abelian representation $\varphi$ satisfying \eqref{mon-cond} determines a non-trivial representation of $\pi_1(\Sigma_N(K))$ with $\Sigma_N(K)$ being the $N$-fold cyclic branched cover of $Y$, branched along $K$. This verifies the {\it Covering Conjecture}, which asserts that $\Sigma_N(K)$, for a non-trivial knot $K$, is not a homotopy sphere \cite[Problem 3.38]{Ki:problem-list}. A consequence of the Covering Conjecture is the {\it Smith Conjecture}, stating that a non-trivial knot is not the fixed point set of an orientation preserving homeomorphism $f:S^3\to S^3$ of order $N$ \cite[Problem 3.38]{Ki:problem-list}. The Covering Conjecture and the Smith Conjecture are both theorems, proved by {\it geometrization} techniques \cite{Smith-conj}.

Kronheimer and Mrowka's sutured $\U(2)$-instanton homology group, $\SHI_*^2$, can be employed to answer Question \ref{rank-N-rep} affirmatively for $N=2$ (and hence for any even $N$) and any non-trivial knot $K$ \cite{KM:suture}.\footnote{The original notation for sutured $\U(2)$-instanton homology is $\SHI_*$. Here we use the superscript $2$ to indicate that this invariant is the sutured version of $\U(2)$-instanton homology.} Associated to any knot $K$, there is a sutured manifold $(M(K),\alpha(K))$ where $M(K)$ is the knot complement and $\alpha(K)$ is the union of two oppositely oriented meridional curves. Kronheimer and Mrowka proved that if the dimension of $\SHI_*^2(M(K),\alpha(K))$ is greater than $1$, then there is a non-abelian representation of the knot group of $K$ that satisfies \eqref{mon-cond}. Similar to foliations, $\SHI_*^2$ also behaves well with respect to surface decomposition, and one can inductively construct non-trivial elements of $\SHI_*^2(M(K),\alpha(K))$ after simplifying $(M(K),\alpha(K))$ by a series of sutured decomposition. In particular, the dimension of $\SHI_*^2(M(K),\alpha(K))$ is at least two for a non-trivial knot $K$. It is also shown in \cite{Fri:Eta-Slice,BoFri:met-rep-I} that if $K$ is a knot with non-trivial Alexander polynomial, then the answer to Question \ref{rank-N-rep} is positive for infinitely many values of $N$. In the light of the success of $\SHI_*^2$ in addressing Question \ref{rank-N-rep}, it is natural to look for the generalization of $\SHI_*^2$ for higher values of $N$.

The essential device in the definition of sutured Floer homology group $\SHI_*^2$ is an excision theorem for $\U(2)$-instanton Floer homology \cite{F:sur-rel,DB:sur-rel,KM:suture}. The proof of the excision theorem is in turn based on Mu\~noz's characterization of the structure of a $\U(2)$-instanton Floer homology group associated to the 3-manifold $S^1 \times \Sigma$ where $\Sigma$ is a Riemann surface \cite{Mun:ring}. Mu\~noz's work borrows some results about the cohomology ring of the moduli space of rank 2 stable bundles \cite{Zag:stable-ring,KN:stable-ring,ST:stable-ring,Bar:stable-ring}, which are not available for higher values of the rank.

In the present paper, we establish an excision theorem for $N=3$ using the relationship between instanton Floer homology and generalizations of Donaldson invariants from \cite{K:higher}. Roughly speaking, there is a $(3+1)$-dimensional topological quantum field theory which associates $\U(N)$-instanton Floer homology to 3-manifolds, and its values for closed 4-manifolds is given by $\U(N)$ analogues of Donaldson's polynomial invariants. This relationship between $\U(2)$-instanton Floer homology and polynomial invariants have been extensively used to compute the invariants of 4-manifolds. In this paper, we firstly use the TQFT structure to compute the $\U(3)$-polynomial invariants of some families of smooth 4-manifolds. Next, we work in the other direction, and use our knowledge of $\U(3)$-polynomial invariants to obtain a better understanding of certain $\U(3)$-Floer homologies. This allows us to prove the excision theorem and define a Floer homology group $\SHI_*^3$ for sutured manifolds in the case that $N=3$. Computations of generalized polynomial invariants in the physics literature \cite{MM:higher} suggest that our approach can be also exploited for higher values of $N$.

\subsection{Statement of Results}
In his groundbreaking work \cite{Don:inv}, Donaldson defined polynomial invariants for a smooth manifold $X$ using the moduli space of {\it Anti-Self-Dual} connections on $X$. In his work, $X$ is simply connected, $b^+(X)$ is an integer greater than $1$, and the ASD connections are assumed to be defined on an $\SU(2)$-bundle $E$ over $X$. Although the assumption on $b^+(X)$ is essential, the definition of polynomial invariants was subsequently generalized to the case that $X$ is not simply connected \cite{KM:str-thm} and $E$ is a $\U(N)$-bundle \cite{K:higher,Luc:Thesis}. Polynomial invariants have been extensively studied in the case that $N=2$. However, there is not much known about these invariants for higher values of $N$.

For a smooth and connected 4-manifold $X$, suppose the algebra $\A(X)$ is defined as:
\begin{equation*}
	\mathbb{A}(X):=\Sym^* (H_{0}(X)\oplus H_{2}(X)) \otimes \Lambda^* (H_{1}(X)).
\end{equation*}
where $H_i(X)$ is computed with coefficients in $\C$.
Form the tensor product algebra $\A(X)^{\otimes (N-1)}$, and for $\alpha\in H_i(X)$ and $2\leq r \leq N$, let $\alpha_{(r)}$ be the corresponding element in the $(r-1)^{\rm st}$ factor of $\mathbb A(X)^{\otimes (N-1)}$. In the case that $\alpha$ is the generator of $H_0(X)$, this element of $\mathbb A(X)^{\otimes (N-1)}$ is denoted by $a_r$. We also define a grading on $\mathbb A(X)^{\otimes (N-1)}$ such that for $\alpha\in H_i(X)$, the degree of $\alpha_{(r)}$ is equal to $2r-i$. A Hermitian vector bundle $E$ of rank $N$ on $X$ is determined by its first and second Chern classes. Suppose $c_1(E)$ is represented by an embedded surface $w$ in $X$ and $c_2(E)[X]=k$. Then the {$\U(N)$-polynomial invariants} associated to the bundle $E$ is a linear map\footnote{Our definition of $\U(N)$-polynomial invariants slightly differs from the Culler's definition. See Subsection \ref{pol-invts} for more details.}:
\begin{equation*}
	\rD^N_{X,w,k}:\A(X)^{\otimes (N-1)} \to \C.
\end{equation*}
For $z\in \A(X)^{\otimes (N-1)}$, the complex number $\rD^N_{X,w,k}(z)$ is non-zero only if:
\begin{equation} \label{dimension-fomrula}
	\deg(z)=4Nk-2(N-1)w\cdot w-(N^2-1)\frac{\chi(X)+\sigma(X)}{2}
\end{equation}
Therefore, we will not lose any information, if we combine these invariants as:
\begin{equation*} \label{DNXw}
	\rD^N_{X,w}:=\sum_{k}\rD^N_{X,w,k}.
\end{equation*}

A substantial part of the present paper is devoted to computing $\U(3)$-polynomial invariants of some families of of algebraic surfaces. Our first result in this direction is the following:
\begin{theorem-intro} \label{K3-poly-invts}
	Suppose $X$ is a $K3$ surface. Then for any embedded oriented surface $w$ in $X$ and any element $z\in \A(X)^{\otimes 2}$:
	\begin{equation}\label{K3-simple-type}
		\rD^3_{X,w}(a_2^3z)=27\rD^3_{X,w}(z) \hspace{1cm}\rD^3_{X,w}(a_3z)=0.
	\end{equation}
	Moreover, if $\Gamma$ and $\Lambda$ are two elements of $H_2(X)$, then:
	\begin{equation}\label{K3-series}
		\rD^3_{X,w}((1+\frac{a_2}{3}+\frac{a_2^2}{9})\cdot \e^{\Gamma_{(2)}+\Lambda_{(3)}})=\e^{\frac{Q(\Gamma)}{2}-Q(\Lambda)}
	\end{equation}
\end{theorem-intro}
In order to clarify the statement of the above theorem, the following remarks are in order. The left hand side of \eqref{K3-series} is defined as:
\begin{equation*}
	\rD^3_{X,w}((1+\frac{a_2}{3}+\frac{a_2^2}{9})\cdot \e^{\Gamma_{(2)}+\Lambda_{(3)}}):=
	\sum_{i=0}^\infty\sum_{j=0}^\infty\frac{\rD^3_{X,w}((1+\frac{a_2}{3}+\frac{a_2^2}{9})\Gamma_{(2)}^i\Lambda_{(3)}^j)}{i!j!}
\end{equation*}
Theorem \ref{K3-poly-invts} asserts that the above series for a $K3$ surface is convergent, and the resulting number is equal to $\e^{Q(\Gamma)/2-Q(\Lambda)}$. Here $Q$ denotes the intersection form of $X$. That is to say, $Q(\Gamma)$ is the algebraic intersection number of $\Gamma$ with itself. In general, the intersection number of two homology classes $\Gamma$ and $\Gamma'$ is denoted by $\Gamma \cdot \Gamma'$. Since \eqref{K3-series} holds for all choices of $\Gamma$ and $\Lambda$, Formula \eqref{dimension-fomrula} allows us to compute the following polynomial invariants for all choices of non-negative integers $i$, $j$, an integer $k\in\{0,1,2\}$, and homology classes $\Gamma$ and $\Lambda$:
\begin{equation*}
	\rD^3_{X,w}(a_2^k\Gamma_{(2)}^i\Lambda_{(3)}^j)
\end{equation*}
These invariants determine $\rD^3_{X,w}(z)$ for all $z\in \A(X)^{\otimes 2}$, because the $K3$ surface satisfies \eqref{K3-simple-type} and $b_1(X)=0$.

Our computation of the invariants of $K3$ surfaces motivates the following definition: a smooth 4-manifold $X$ with $b^+(X)\geq 2$ and $b^1(X)=0$ has $w$-simple type with respect to an embedded surface $w$, if :
\begin{equation} \label{simple-type-w}
	\rD^3_{X,w}(a_2^3 z)=27 \rD^3_{X,w}(z) \hspace{1cm} \rD^3_{X,w}(a_3z)=0
\end{equation}
for all $z \in \A(X)^{\otimes 2}$. The 4-manifold $X$ has {\it simple type} if it has $w$-simple type with respect to any $w$ in $X$. As in the case of the $K3$ surfaces, if $X$ has simple type and the series:
\begin{equation*}
	\widehat \rD_{X,w}({\e}^{\Gamma_{(2)}+\Lambda_{(3)}}):=
	\rD^3_{X,w}((1+\frac{a_2}{3}+\frac{a_2^2}{9})\cdot \e^{\Gamma_{(2)}+\Lambda_{(3)}})
\end{equation*}
is convergent for all choices of $w$ and $\Gamma, \Lambda \in H_2(X)$, then these series determine all polynomial invariants of $X$.

We can extend our calculation for the $K3$ surfaces to a larger family of complex surfaces. Suppose $W(m,n)$ is the blowup of ${\bf CP}^1\times {\bf CP}^1$ at the $4mn$ singular points of the following (complex) curve:
	\begin{equation*}
		B:=\{p_1,\cdots , p_{2m}\}\times {\bf CP}^1 \cup {\bf CP}^1\times \{q_1,\cdots, q_{2n}\}.
	\end{equation*}
Let $\widetilde B$ be the proper transform of $B$, and define $X(m,n)$ to be the branched double cover of $W(m,n)$, branched along the smooth curve $\widetilde B$. The horizontal and vertical fibrations of $W(m,n)$ by projective lines lift to two fibrations of $X(m,n)$ whose generic fibers are denoted by $f_{m-1}$ and $f_{n-1}$. The Riemann surface $f_i$, for $i \in \{m-1, n-1\}$, has genus $i$. The complex surface $X(2,2)$ is a $K3$ surface. More generally, $X(m,2)$ is an elliptic surface without multiple fibers, which is usually denoted by $E(m)$ \cite{GS:Kir-cal}.

\begin{theorem-intro}\label{En}
	The elliptic surface $E(n)$ has simple type. Moreover, there are rational numbers $\hbar_1$ and $\hbar_2$ independent of $n$ such that for any embedded surfaces $w$ in $E(n)$ and
	$\Gamma,\Lambda \in H_2(E(n))$, the series
	$\widehat \rD_{E(n),w}({\e}^{\Gamma_{(2)}+\Lambda_{(3)}})$ is equal to:
	\begin{equation*}
		{\rm e}^{\frac{Q(\Gamma)}{2}-Q(\Lambda)}[\hbar_1 \cosh(\sqrt{3}f\cdot \Gamma)-
		2\hbar_2\cos(-\frac{2\pi}{3}w\cdot f+\sqrt 3 f \cdot \Lambda)]^{n-2}.
	\end{equation*}	
	where $f=f_1$ represents an elliptic fiber of $E(n)$.
	Furthermore, $\hbar_1+\hbar_2=\pm1$ for an appropriate choice of the sign.
\end{theorem-intro}
The constant numbers $\hbar_1$ and $\hbar_2$ In Theorem \ref{En} are respectively equal to $\frac{2}{3}$ and $\frac{1}{3}$ \cite{DX:in-prep}. The set of surfaces $X(m,n)$, as smooth 4-manifolds, are closed with respect to taking {\it fiber sums}.\footnote{See section \ref{FFH} for a review of the definition of fiber sum} For example, we can take the fiber sum of $X(m,n_1)$ and $X(m,n_2)$ along the fiber $f_{m-1}$, and the resulting 4-manifold is diffeomorphic to $X(m,n_1+n_2)$. Given two embedded surfaces $\Sigma_1\subset X(m,n_1)$ and $\Sigma_2\subset X(m,n_2)$ which intersect a fiber in the same number of points, we can form a surface $\Sigma_1 \# \Sigma_2 \subset X(m,n_1+n_2)$. Suppose $\mathcal H(m,n_1,n_2) \subset H_2(X(m,n_1+n_2))$ is the space of homology classes generated by homology classes of the surfaces of the form $\Sigma \# \Sigma'$. The following theorem about $X(m,4)$ is a consequence of Theorem \ref{D-inv-con-sum} about the polynomial invariants of fiber sums. In fact, Theorem \ref{D-inv-con-sum} can be used to obtain similar results about other surfaces in the family $X(m,n)$.

\begin{theorem-intro}\label{X(m,4)}
	For $m\geq 3$, let $w\subset X(m,4)$ be an embedded surface which has the form $w_1 \# w_2$ for $w_i \subset X(m,2)$
	and $w\cdot f_{m-1}\neq 0$ mod 3. 	Let $K$ denote the canonical class of $X(m,4)$.
	Then there are rational numbers $\hbar_3$ and $\hbar_4$, independent of $m$, such that for
	$\Gamma,\Lambda \in \mathcal{H}(m,2,2)$ the series
	$\widehat \rD_{X(m,4),w}({\e}^{\Gamma_{(2)}+\Lambda_{(3)}})$ is convergent and is equal to :
	\begin{equation*}
		{\rm e}^{\frac{Q(\Gamma)}{2}-Q(\Lambda)}[\frac{1}{2}\hbar_1^2 \hbar_3^{m-2}\cosh(\sqrt{3}K\cdot \Gamma)+
		2\hbar_2^2\hbar_4^{m-2}\cos(-\frac{2\pi}{3}w\cdot K+\sqrt 3 K \cdot \Lambda)]
	\end{equation*}	
	where $\hbar_1, \hbar_2$ are the constants of Theorem \ref{En}.
\end{theorem-intro}
\noindent
We do not attempt to find the undetermined constants $\hbar_3$ and $\hbar_4$ here. We also believe that $X(m,4)$ has simple type, and the above theorem holds for any choice of $w\subset X(m,4)$ and homology classes $\Gamma$ and $\Lambda$. But the current version of the theorem is sufficient for our 3-dimensional applications.

The algebraic surfaces in Theorems \ref{K3-poly-invts}, \ref{En} and \ref{X(m,4)} are representatives of surfaces with different possible finite Kodaira dimensions. $K3$ surfaces, elliptic surface $E(n)$ with $n\geq 3$ and $X(m,4)$ for $m\geq 3$ have Kodaira dimensions $0$, $1$ and $2$, respectively. Theorem \ref{K3-poly-invts} shows that the $\U(3)$-polynomial invariants of a $K3$ surface associated to homology classes $\Gamma$ and $\Lambda$  are determined by the self-intersection of these homology classes. On the other hand, for the $\U(3)$-polynomial invariants of $E(n)$ and $X(m,4)$ we also need the pairing of $\Gamma$ and $\Lambda$ with the canonical class. Recall that the the first Chern class of the canonical classes of $E(n)$ and $X(m,4)$ are represented by $(n-2)f$ and $(m-2)f_3+2f_{m-1}$, respectively.

In Section \ref{FH}, we introduce various Floer homology groups associated to the 3-manifold $S^1 \times \Sigma$, and explain how these vector spaces admit ring structure. We also characterize the vector space structure on these Floer homology groups. Theorems \ref{En} and \ref{X(m,4)} allow us to obtain further information about the ring structure of these rings. We use this information to obtain an excision theorem for $\U(3)$-instanton Floer homology. With the aid of this excision theorem, we construct the promised sutured Floer homology $\SHI_*^3$, following Kronheimer and Mrowka's approach in \cite{KM:suture}. This sutured Floer homology group has the following property:
\begin{theorem-intro}\label{su(3)-good-rep}
	For a knot $K$ in a homology sphere $Y$, suppose the dimension of  $\SHI_*^3(M(K),\alpha(K))$ is greater than $1$.
	Then there is a non-abelian representation of $\pi_1(Y\backslash K)$ into $\SU(3)$
	that satisfies the holonomy condition \eqref{mon-cond} for $N=3$.
\end{theorem-intro}
\noindent
The proof of this theorem is given in Corollary \ref{nonabrep}. We conjecture that $\dim(\SHI_*^3(M(K),\alpha(K))>1$ for any non-trivial knot $K$ in a homology sphere $Y$ such that $Y\backslash K$ is irreducible. This answers Question \ref{rank-N-rep} affirmatively for $N=3$ and any non-trivial knot $K$ in an integral homology sphere $Y$ (without the irreducibility assumption on $Y\backslash K$). We hope to come back to this conjecture elsewhere.

\subsection{Outline of Contents}
Section 2 gives a review of the moduli spaces of anti-self-dual connections on 4-manifolds (possibly with boundary) and $\U(N)$-polynomial invariants. This section also contains a non-vanishing theorem for $\U(N)$-polynomial invariants of algebraic surfaces. The second half of Section 2 discusses how the $\U(3)$-polynomial invariants behave in the presence of negative embedded spheres. In particular, we recall the results of Culler's thesis \cite{Luc:Thesis} about the blowup formula for $\U(3)$-polynomial invariants and discuss how this formula can be simplified for smooth 4-manifolds with simple type. Section 3 deals with various Floer homology groups, which appear in this paper. After giving an exposition of $\U(N)$-instanton Floer homology, we study various Floer homologies of $\Sigma\times S^1$ where $\Sigma$ is an oriented surface. We also discuss a generalization of $\U(N)$-instanton Floer homology, which is known as Fukaya-Floer homology in the case that $N=2$.

The Floer homology groups of Section 3 are our main tools in computing $\U(3)$-polynomial invariants of several complex surfaces in Section 4. In particular, the proofs of Theorems \ref{K3-poly-invts}, \ref{En} and \ref{X(m,4)} are given in this section. In Section 4, we also study the behavior of $\U(3)$-polynomial invariants with respect to fiber sum. In Section 5, we prove our excision theorem and define the sutured Floer homology group $\SHI_*^3$. To make the exposition of the paper more comprehensible, we postpone providing proofs for technical results in Sections 2 and 3 until Section 6. These results are proved by gluing theory of the moduli spaces of anti-self-dual connections. Section 7 concerns various questions and conjectures which naturally arise from our work on this paper.

All manifolds in this paper, are smooth and oriented. Given such a manifold $X$, we will write $H_i(X)$ and $H^i(X)$ for the homology and cohomology groups of $X$ with complex coefficients. If we need to work with another coefficient ring $R$, then we use the notations $H_i(X,R)$ and $H^i(X,R)$. Our main results for this paper concern $\U(3)$-polynomial invariants and $\U(3)$-instanton Floer homologies. However, we believe that our method for the construction of $\SHI_*^3$ should work for arbitrary $N$. Therefore, we try to state our results for general $N$, when it is possible.

{\it Acknowledgements.}
We thank Lucas Culler, Simon Donaldson, Peter Kronheimer, Claude LeBrun and Tomasz Mrowka for helpful conversations. We also thank Victor Mikhaylov for verifying that our definition of the simple type condition matches the predictions from the physics literature. We are very grateful to the {\it Simons Center for Geometry and Physics} for providing a stimulating environment while we were working on this project.

\section{Higher Rank Bundles and Polynomial Invariants}
\subsection{$\U(N)$-polynomial Invariants} \label{pol-invts}
In this section, we review the definition of $\U(N)$-polynomial invariants of 4-manifolds based on \cite{K:higher,Luc:Thesis}. For $N=2$, there is an extensive literature on the subject (see, for example, \cite{Don:inv,DK,MM:Don-inv,KM:str-thm}). For higher values of $N$, these invariants were firstly defined in \cite{MM:higher} by the methods of quantum field theory. A rigorous definition of polynomial invariants for higher rank bundles are given in \cite{K:higher}. As we mentioned earlier, the polynomial invariants of a 4-manifold $X$ are homomorphisms defined on the algebra $\A(X)^{\otimes (N-1)}$. In \cite{K:higher}, the polynomial invariants are defined only on the sub-algebra:
\begin{equation*}
	\A(X)\otimes 1 \otimes \dots \otimes 1
\end{equation*}
Kronheimer's definition was subsequently generalized to the algebra $\A(X)^{\otimes (N-1)}$ in \cite{Luc:Thesis}. The construction of Fukaya-Floer homology in Subsection \ref{IIN} is based on Culler's modification of $\U(N)$-polynomial invariants in \cite{Luc:Thesis}. Therefore, we attempt to give enough background on his treatment to motivate the construction of $\U(N)$-Fukaya-Floer homology.

Suppose $X$ is a smooth, closed, oriented and connected 4-manifold, $w$ is an oriented embedded surface in $X$, and $k$ is an integer. Then there is a $\U(N)$-bundle $P$, unique up to isomorphism, over $X$ such that $c_1(P)={\rm P.D.}[w]$ and $c_2(P)[X]=k$. An explicit construction of this $\U(N)$-bundle can be given as follows. Suppose $D(w)$ is a regular neighborhood of $w$ in $X$ whose boundary is denoted by $S(w)$. Then we can consider a Hermitian line bundle on $D(w)$ which is trivialized on $S(W)$ and its relative first Chern class is given by the Thom class of the disc bundle $D(w)$. By extending the trivialization to the complement of $D(w)$, we obtain a Hermitian line bundle $L_w$ where $c_1(L_w)={\rm P.D.}[w]$. The direct sum of $L_w$ and the trivial bundle $\underline{\C}^{N-1}$ defines a $\U(N)$-bundle $P_0$ on $X$ with $c_2(E_0)[X]=0$ and $c_1(E_0)=c_1(L_w)$. Next, fix a $\U(N)$-bundle on the 4-dimensional ball $D^4$ which is trivialized on the boundary and its relative second Chern class is given by $k {\rm P.D.}[{\rm pt}]$. Removing a ball from $X\backslash D(w)$ and gluing the above ball gives rise to the same 4-manifold. We can also use the trivializations to glue the $\U(N)$-bundle on $D^4$ to $P_0$ and produce a $\U(N)$-bundle $P$ such that $c_1(P)={\rm P.D.}[w]$, $c_2(P)[X]=k$ and the determinant bundle of $P$ is identified with $L_w$.

A {\it 2-cycle} $w$ in a closed 4-manifold is a union of embedded closed surfaces in $X$. We can apply the above construction of the previous paragraph to obtain a Hermitian line bundle $L_{w_i}$ for each connected component $w_i$ of $w$. Then we can replace $L_w$ in the previous paragraph with the tensor product of the line bundles $L_{w_i}$ and produce a $\U(N)$-bundle $P$ with $c_1(P)={\rm P.D.}[w]$ and $c_2(P)[X]=k$. The {\it topological energy} of $P$ is defined to be:
\begin{equation*}
	\kappa:=k-\frac{N-1}{2N}w\cdot w
\end{equation*}
Thus the bundle $P$ is determined by the pair $(\kappa,w)$ up to a canonical isomorphism. We say a closed 2-cycle $w$ in $X$ is coprime to $N$, if there is an embedded oriented surface $\Sigma \subset X$ such that the intersection number $w\cdot \Sigma$ is coprime to $N$.

Suppose $P$ is a $\U(N)$-bundle on a closed 4-manifold determined by a pair $(\kappa,w)$. Fix an integer $l \geq 3$ and an arbitrary smooth connection $B_0$ on $L_w$. Let $\mathcal A_\kappa(X,w)$ be the space of $L^2_l$ connections on $P$ whose induced connections on $\det(P)=L_w$ is equal to $B_0$. If $\su(P)$ is the bundle associated to the conjugation action of $\U(N)$ on the Lie algebra $\su(N)$ of $\SU(N)$, then $\mathcal A_\kappa(X,w)$ is an affine space modeled on the Banach space $L^2_l(X,\su(P)\otimes \Lambda^1)$. We will also write $\mathcal G_\kappa(X,w)$ for the space of $L^2_{l+1}$ automorphisms of $P$ whose fiber-wise determinant is equal to 1. Then $\mathcal G_\kappa(X,w)$ forms a Banach Lie group with Lie algebra $L^2_{l+1}(X,\su(P))$. This Lie group acts on $\mathcal A_\kappa(X,w)$, and the quotient space is denoted by $\mathcal B_\kappa(X,w)$. We will write $[A]$ for an element of $\mathcal B_\kappa(X,w)$, represented by a connection $A$. The center of the Lie group $\U(N)$ induces a finite subgroup of $\mathcal G_\kappa(X,w)$. If this subgroup is the stabilizer of a connection $A$, then $A$ is an {\it irreducible} connection. Otherwise, the connection $A$ is called {\it reducible}. The space of irreducible connections on $P$ are denoted by $\mathcal A^*_\kappa(X,w)$, and we will write $\mathcal B^*_\kappa(X,w)$ for the quotient space.

Fix a Riemannian metric on $X$ and let $*$ denote the associated Hodge operator on differential forms of $X$. Then $*$ defines an involution on the space of 2-forms on $X$, and 2-forms in the $1$-eigenspace (respectively, $(-1)$-eigenspace) are called {\it self-dual} (respectively,  {\it anti-self-dual}). A connection $A\in \mathcal A_\kappa(X,w)$ is {\it anti-self-dual} if it satisfies the following equation:
\begin{equation} \label{ASD}
	F_0^+(A)=0
\end{equation}
where $F_0(A)$ denotes the projection of the curvature of $A$ to the space $\su(P)$, and $F_0^+(A)$ is the self-dual part of $F_0(A)$. In another word, the connection induced by $A$ on the associated ${\rm PU}(N)$-bundle to $P$ has anti-self-dual curvature. The equation \eqref{ASD} is invariant with respect to the action of $\mathcal G_\kappa(X,w)$ and the quotient space of ASD connections is denoted by $\mathcal M_\kappa(X,w)$.

The local behavior of the moduli space $\mathcal M_\kappa(X,w)$ around an element $[A]$ is governed by the following elliptic complex, denoted by $\mathcal D_A$:
\begin{equation}\label{ASD-com-closed}
	L^2_{l+1}(X, \mathfrak {su}(P))  \xrightarrow{\hspace{1mm}d_A\hspace{1mm}}
	L^2_{l}(X,\mathfrak {su}(P)\otimes \Lambda^1)
	\xrightarrow{\hspace{1mm}d_A^+\hspace{1mm}} L^2_{l-1}(X, \mathfrak {su}(P) \otimes \Lambda^+)
\end{equation}
where $\Lambda^+$ denotes the bundle of self-dual forms on $X$. The $i^{\rm th}$ cohomology group of this complex is denoted by $H^i_A$. The connection $A$ is irreducible if and only if $H^0_A$ is trivial. We say $A$ is {\it regular}, if $H^2(A)$ is trivial. If $A$ is an irreducible and regular ASD connection, then in a neighborhood of $[A]$, the moduli space is a smooth manifold of the same dimension as $H^1_A$. In this case, the dimension of $H^1_A$ is given explicitly by the following formula:
\begin{equation} \label{ind-form-closed}
	4N\kappa-(N^2-1)\frac{\chi(X)+\sigma(X)}{2}.
\end{equation}
In general, the index of the elliptic complex $\mathcal D_A$ is given by \eqref{ind-form-closed}.

The ASD equation \eqref{ASD} can be perturbed by changing the metric on $X$. {\it Holonomy perturbations} determine another useful family of perturbations of the ASD equations \cite{Don:ori,Cliff:Cas-invt,Fl:I,Don:YM-Floer,K:higher,KM:YAFT}. By abuse of notation, a solution of the perturbation of the ASD equation by a holonomy perturbation is still called an ASD connection, and the moduli space of the solutions of the perturbed equation is still denoted by $\mathcal M_\kappa(X,w)$. Suppose $w$ is coprime to $N$ and $b^+(X)\geq 1$. Then for a generic choice of the metric on $X$ and a small holonomy perturbation the moduli space $\mathcal M_\kappa(X,w)$ consists of only irreducible and regular connections \cite{K:higher}. Therefore, the moduli space is a smooth manifold whose dimension is given in \eqref{ind-form-closed}. This manifold is also orientable and in the case that $N$ is odd \cite{K:higher}, a canonical choice of an orientation can be fixed. If $N$ is even, to fix an orientation of the moduli space, we need an orientation of the determinant line of the following elliptic operator:
\begin{equation*} \label{ASD-op-nontw}
	d^+ \oplus d^*: L^2_{l}(X,\Lambda^1) \to L^2_{l-1}(X,\Lambda^+)\oplus L^2_{l-1}(X)
\end{equation*}
Such an orientation of the determinant line is called a {\it homology orientation} of $X$. The $\U(N)$-polynomial invariants of $X$ are given by integrating appropriate cohomology classes on $\mathcal M_\kappa(X,w)$.

The pull-back of $P$ to the product space $\mathcal A_\kappa(X,w) \times X$ admits an action of $\mathcal G_\kappa(X,w)$ which lifts the obvious action on the base. The quotient space defines a ${\rm PU}(N)$-bundle $\mathbb{P}$ over $ \mathcal B^*_\kappa(X,w)\times X$, called the {\it universal bundle} associated to $P$. In general, $\mathbb{P}$ cannot be lifted to an $\SU(N)$-bundle. However, we can define Chern classes of $\mathbb{P}$ as rational cohomology classes of $ \mathcal B^*_\kappa(X,w)\times X$, because the rational cohomology groups of the classifying spaces ${\rm BPU}(N)$ and ${\rm BSU}(N)$ are isomorphic. With a slight abuse of notation, we will denote the $i^{\rm th}$ Chern class of $\mathbb{P}$ with $c_i(\mathbb{P})$ for $2\leq i \leq N$.

The slant product of the Chern classes of the universal bundle and the homology classes of $X$ gives rise to cohomology classes of $\mathcal B^*_\kappa(X,w)$.
This construction can be used to define an algebra homomorphism:
\begin{equation} \label{mu-map}
	\mu: \mathbb A(X)^{\otimes (N-1)}\to H^\ast(\mathcal B^\ast_\kappa(X,w)).
\end{equation}	
where $\mu$ is the unique algebra homomorphism that satisfies the following property:
\begin{equation} \label{mu-r}
	\mu(\alpha_{(r)})=(-1)^rc_r(\mathbb{P})/\alpha.
\end{equation}
Our convention for the definition of $\mu$ slightly differs from that of \cite{Luc:Thesis} where $(-1)^r$ does not appear in the definition of $\mu(\alpha_{(r)})$.

Let the 2-cycle $w$ be coprime to $N$, and arrange a metric and a small holonomy perturbation such that the resulting moduli space consists of irreducible and regular points. We will temporarily write $\mathcal M_\kappa(X,w,\pi_0)$ for the moduli space to emphasize the dependence on $\pi_0$, denoting the metric and the holonomy perturbation. If $N$ is even, fix a homology orientation for $X$. Then $\mathcal M_\kappa(X,w,\pi_0)$ can be canonically oriented. Let $d=\dim(\mathcal M_\kappa(X,w,\pi_0))$, and fix $z\in \A(X)^{\otimes(N-1)}$ such that $\deg(z)=d$. If the moduli space $\mathcal M_\kappa(X,w,\pi_0)$ is compact, then we can evaluate $\mu(z)$ with respect to the fundamental class of $\mathcal M_\kappa(X,w,\pi_0)$ and obtain a number $\rD_{X,w,k}^N(z)$. (Recall that $k$ is the second Chern number of $P$.) We wish to show that this number does not depend on $\pi_0$, the metric and the holonomy perturbation. Suppose $\pi_1$ is another choice of a metric and a small holonomy perturbation avoiding reducible and irregular points. If $b^+(X)\geq 2$, then we can find a path $\{\pi_t\}_{0 \leq t \leq 1}$ of metrics and small holonomy perturbations such that the 1-parameter family of moduli spaces:
\begin{equation} \label{1-par-ASD}
	\bigcup_{t} \mathcal M_\kappa(X,w,\pi_t)
\end{equation}
is a smooth manifold of dimension $d+1$. Since the class $\mu(z)$ can be pulled back to \eqref{1-par-ASD}, Stokes theorem implies that $\pi_0$ and $\pi_1$ give rise to the same number $\rD_{X,w,k}^N(z)$, assuming \eqref{1-par-ASD} is also compact. However, $\mathcal M_\kappa(X,w,\pi_0)$ and the 1-parameter family of moduli spaces are not compact in general and we need to pursue a geometric approach to define the evaluation of $\mu(z)$ on $\mathcal M_\kappa(X,w,\pi_0)$.

Uhlenbeck compactification of the moduli space $\mathcal M_{\kappa}(X,w)$ compensates for the non-compactness of this space. Suppose $\{[A_n]\}$ is a sequence of the elements of $\mathcal M_{\kappa}(X,w)$. Then there is a multi-set ${\bf x}$ of $m$ points in $X$, and a connection $A_\infty \in \mathcal M_{\kappa-m}(X,w)$ such that, after passing to a subsequence, $(h_n)_*(A_n)$ is $L^p_1$-convergent to $A_\infty$ on $X\backslash {\bf x}$ for any given real number $p$ \cite[Proposition 11]{K:higher}. Here $h_n$ is an isomorphism from the $\U(N)$-bundle carrying $A_n$ to the $\U(N)$-bundle carrying $A_\infty$ which is defined only on $X\backslash {\bf x}$ and its determinant does not depend on $n$.

Another key input is that $\mathbb P$ can be replaced with a Hermitian vector bundle \cite{Luc:Thesis}. Consider the standard representation of $\SU(N)$ on $\C^N$. The tensor product $\SU(N)$-space $(\C^N)^{\otimes N}$ induces a representation of the group ${\rm PU}(N)$. Therefore, we can associate a vector bundle $\mathbb E$ of rank $N^N$ to $\mathbb P$. We call $\mathbb E$ the {\it universal complex vector bundle}. The Chern class $c_i(\mathbb P)$ can be written as a polynomial in terms of Chern classes $c_j(\mathbb E)$ for $2\leq j \leq i$. For example, $c_2(\mathbb P)$ is equal to $\frac{1}{N^N} c_2(\mathbb E)$. Therefore, it suffices to define $\rD^N_{X,w,k}(z)$ for the elements $z \in \A(X)^{\otimes (N-1)}$ that:
\begin{equation}\label{sample-z}
	\hspace{3cm} \mu(z)=c_{i_1}(\mathbb E)/\alpha_1 \cdot  \dots \ \cdot c_{i_m}(\mathbb E)/\alpha_m \hspace{1cm}2\leq i_j\leq N.
\end{equation}
In order to define this polynomial invariant, let $\Sigma_i$ be a submanifold of codimension at least 2 which represents the homology class $\alpha_i$.

The Chern classes of a vector bundle $V$ of rank $r$ over a manifold $M$ can be represented by stratified subspaces of $M$. The vector bundle ${\rm Hom}(\C^{r-i+1},V)$ is stratified by rank. If $s$ is a generic section of $V$, then:
\begin{equation} \label{rep-Chern}
	N_i:=\{x\in M \mid \rank(s(x))\leq r-i\}
\end{equation}
is a stratified subspace of $M$ with strata of even codimension. Therefore, the fundamental class of $N_i$ determines a well-defined homology class, which is the Poincar\'e dual of $c_i(V)$.

In order to define $\rD^N_{X,w,k}(z)$ for $z$ in \eqref{sample-z}, fix an open neighborhood $\nu(\Sigma_j)$ of the submanifold $\Sigma_j$ such that the inclusion of this open set in $X$ induces a surjective map of fundamental groups. The unique continuation theorem implies that if the holonomy perturbation in the definition of $\mathcal M_\kappa(X,w)$ is small enough, then the restriction of any element of this moduli space to $\nu(\Sigma_j)$ is irreducible \cite{K:higher}. We also assume that $\nu(\Sigma_j)$ and $\nu(\Sigma_k)$ intersect only if $\Sigma_j$ and $\Sigma_k$ are embedded surfaces and each point of $X$ lie on at most two such open neighborhoods. Analogous to $\mathbb E$, we can form a vector bundle $\mathbb E_j$ of rank $N^N$ on $\mathcal B^*(\nu(\Sigma_j))\times \nu(\Sigma_j)$. In order to produce a representative for $c_{i_j}(\mathbb E_j)$, we form the bundle ${\rm Hom}(\C^{N^N-i_j+1},\mathbb E_j)$, and form a subspace $V_{i_j}(\Sigma_j)$ of $\mathcal B^*(\nu(\Sigma_j))\times \nu(\Sigma_j)$ as in \eqref{rep-Chern}.

The vector bundles $\mathbb E$ and $\mathbb E_j$ are related to each other. The pull back of $\mathbb E_j$ with respect to the restriction map $r_j:\mathcal M_{\kappa}(X,w)\times \Sigma_j \to \mathcal B^*(\nu(\Sigma_j))\times \nu(\Sigma_j)$ is the restriction of the bundle $\mathbb E$. This suggests that $\rD^N_{X,w,k}(z)$ can be defined as the signed count of the points in the following {\it cut-down moduli space}:
\begin{equation*}
	\mathcal N_\kappa(X,w;z):=\{([A],x_1,\dots,x_m) \in \mathcal M_\kappa(X,w)\times \Sigma_1\times \dots \times \Sigma_m \mid r_j([A],x_j)\in V_{i_j}(\Sigma_j)\}
\end{equation*}
For a generic choice of $V_{i_j}(\Sigma_j)$, $\mathcal N_\kappa(X,w;z)$ is a compact 0-dimensional space, and the orientation of $\mathcal M_\kappa(X,w)$ fixes a sign on each point on this space \cite{K:higher, Luc:Thesis}. The compactness of the cut-down moduli space is a consequence of Uhlenbeck compactification using a standard counting argument \cite{Don:inv,DK,K:higher, Luc:Thesis}. Counting the points of the cut-down with respect to the associated signs defines a number which only depends on the homology class of $\Sigma_1$, \dots $\Sigma_m$, and this dependence for each homology class is linear. If $b^+(X)\geq 2$, we can adapt the geometric counting argument to the moduli space \eqref{1-par-ASD}, and show that the invariant $\rD_{X,w,k}^N$ does not depend on the choice of the metric on $X$ and the holonomy perturbation of the ASD equation.

There is also a standard trick which allows us to define $\rD_{X,w,k}^N$ in the case that $w$ is not coprime to $N$. Suppose $\hat X$ denotes the blowup of $X$ at an arbitrary point. Suppose also $E$ is the exceptional sphere of $\hat X$. Then the cycle $w+E$ is coprime to $N$. In the non-coprime case, define:
\begin{equation*}
	\rD^N_{X,w,k}(z):=\rD^N_{\hat X,w+E,k}(E_{(2)}^{N-1}z).
\end{equation*}
In the case that $w$ is already coprime, the above definition turns into an identity which is proved in \cite{K:higher}.

The invariant $\rD^N_{X,w,k}(z)$ is defined to be zero if $\deg(z)$ is not equal to the dimension of $\mathcal M_\kappa(X,w)$. For a fixed 2-cycle $w$ in $X$, the moduli spaces of ASD connections appear in different dimensions whose values mod $4N$ are constant. We use \eqref{DNXw} to define $\rD_{X,w}^N(z)$, which is called the {\it $\U(N)$-polynomial invariant of $(X,w)$ evaluated at $z$}. This number is non-zero only if:
\begin{equation*}
	\deg(z) \equiv 2(N+1)w\cdot w-(N^2-1)\frac{\chi(X)+\sigma(X)}{2} \mod \,4N
\end{equation*}
In particular, if $N$ is an odd integer, the invariant $\rD_{X,w}^N(z)$ vanishes for the classes $z$ that $\deg(z)$ is not divisible by $4$.

There are relationships among the polynomial invariants of $X$ associated to different 2-cycles. If $w$ and $w'$ are two 2-cycles in $X$, then:
\begin{equation} \label{w+Nw'}
	\rD_{X,w+Nw'}^N(z)=(-1)^{cw'\cdot w'}\rD_{X,w}^N(z).
\end{equation}
where $c$ is zero if $N$ is odd or divisible by 4, and is equal to $1$ if $N$ is 2 mod 4. The invariants associated to the 2-cycles $w$ and the same 2-cycle with the reverse orientation, denoted by $-w$, are also related. As it is explained in \cite{K:higher}, there is a diffeomorphism from $\mathcal M_\kappa(X,w)$ to $\mathcal M_\kappa(X,-w)$. This diffeomorphism is orientation preserving if $N$ is odd, and change the orientation by the factor of $(-1)^{w\cdot w}$ if $N$ is even. The diffeomorphism lifts to an anti-linear isomorphism of the universal complex vector bundles. Therefore, we have:
\begin{equation}\label{wto-w}
	\rD_{X,-w}^N(z)=(-1)^{(N-1)w\cdot w}\rD_{X,w}^N(\tau(z)).
\end{equation}
where $\tau:\A(X)^{\otimes (N-1)} \to \A(X)^{\otimes (N-1)}$ is the algebra homomorphism that maps $\alpha_{(r)}$ to $(-1)^r\alpha_{(r)}$.

Suppose $\Gamma^2$, $\dots$, $\Gamma^N$ are elements of $H_2(X)$ and $z\in\A(X)^{\otimes (N-1)}$. To avoid the convergence issue, we define $\rD^N_{X,w}(z\e^{\Gamma^2_{(2)}+\dots+\Gamma^N_{(N)}})$ in a slightly different way in compare to the introduction:
\begin{equation*}
	\rD^N_{X,w}(z\e^{\Gamma^2_{(2)}+\dots+\Gamma^N_{(N)}}):=\sum_{0 \leq i_2,\dots,i_N< \infty}
	\frac{\rD_{X,w}(z(\Gamma^2_{(2)})^{i_2}\dots (\Gamma^N_{(N)})^{i_N})}{i_2!\dots i_N!}t_2^{i_2} \dots t_N^{i_N}
\end{equation*}
where $t_i$ is a formal variable. Therefore, the {\it $\U(N)$-series} $\rD^N_{X,w}(z\e^{\Gamma^2_{(2)}+\dots+\Gamma^N_{(N)}})$ is an element of the ring of formal power series $\C[\![t_2,\dots,t_N]\!]$. We can also define an element of $\C[\![t_2,\dots,t_N]\!]$ for linear functionals $f_2,\dots,f_N:H_2(X) \to \C$ and a a power series $g(x)=\sum_{i=0}^{\infty}b_i x^i$:
\begin{equation} \label{conv-exp-cos-sin}
	g(f_2(\Gamma^2)+\dots+f_N(\Gamma^N)):=\sum_{i=0}^{\infty}b_i(f_2(\Gamma^2)t_2+\dots+f_N(\Gamma^N)t_N)^i
\end{equation}
We use a similar convention in the case that $f_i$ are homogeneous polynomials of higher degree (eg. the intersection form $Q$). These conventions allow us to rephrase Theorems \ref{K3-poly-invts}, \ref{En} and \ref{X(m,4)} in terms of the identities of the elements of $\C[\![t_2,t_3]\!]$.

\begin{remark}
	Suppose $X$ is a 4-manifold with $b^+(X)=1$. For a generic metric and for small holonomy perturbations the moduli spaces $\mathcal M_\kappa(X,w)$ contains only irreducible connections.
	For each such choice of metric, we can apply the construction of this section to define a $\U(N)$-polynomial invariant $\rD_{X,w}^N$. However, this polynomial invariant depends on the choice of the metric,
	because a 1-parameter family of moduli spaces as in \eqref{1-par-ASD} might have reducible connections. As a result, the space of metrics on $X$ can be divided into chambers, such that
	$\rD_{X,w}^N$ is constant only inside the interior of each chamber. Such polynomial invariants have been studied for $N=2$ in \cite{K:b+=1,KM:b+=1}.
\end{remark}

\subsection{Cylindrical Ends and Moduli Spaces}\label{cyl-mod}
One of the important themes of this article is the interplay between gauge theory on 3-manifolds and 4-manifolds. One can see the interaction by considering the analogues of the geometrical objects from the previous subsection on 4-manifolds with boundary. Suppose $W$ is a 4-manifold with boundary $Y$, and fix a metric which is a product metric in a collar neighborhood of $Y$. A smooth 2-cycle in $W$ is a properly embedded 2-dimension submanifold of $W$. A 2-cycle $w$ in $W$ is a union of smooth 2-cycles in $W$ whose boundary determines a smooth 1-manifold $\gamma\subset Y$. We will say that the boundary of the pair $(W,w)$ is the pair $(Y,\gamma)$. We also form a pair of non-compact manifolds $(W^+,w^+)$ by adding the cylindrical ends $[0,\infty) \times Y$ and $[0,\infty) \times \gamma$ to $W$ and $w$.

As in the previous part, we can associate a $\U(N)$-bundle $Q$ to the pair $(Y,\gamma)$. This $\U(N)$-bundle is trivialized on the complement of a regular neighborhood $D(\gamma)$ of $\gamma$, and its relative first Chern class on $D(\gamma)$ is given by the Thom class. We will also write $L_\gamma$ for the determinant bundle of $Q$. Similar to the 4-dimensional case, let $\mathcal B(Y,\gamma)$ be the space of equivalence classes of connections on $Q$ whose determinants are equal to a fixed connection on $L_\gamma$. Given $\alpha \in \mathcal B(Y,\gamma)$, the stabilizer of a connection representing $\alpha$ is denoted by $\Gamma_\alpha$. The element $\alpha$ is irreducible if $\Gamma_\alpha$ is equal to the center of $\SU(N)$. The bundle $Q$ can be extended to a $\U(N)$-bundle $P$ on $W$ using the 2-cycle $w$. The bundle $P$ also determines a $\U(N)$-bundle on $W^+$ in an obvious way which will be also denoted by $P$.

Fix an element $\alpha\in \mathcal B(Y,\gamma)$, and let $A_0$ and $A_1$ be two connections on $P$ whose restrictions to the end $[0,\infty)\times Y$ are the pull-back of representatives of $\alpha$. Then we say $A_0$ and $A_1$ represent the same {\it path}, if there is an automorphism $g$ of $P$ with determinant 1 such that $g^*(A_1)-A_0$ is a compactly supported 1-form. The equivalence class of a connection under this equivalence relation is called a {\it path along $(W,w)$ based at $\alpha$}. The  {\it topological energy} of a path $p$ represented by a connection $A$ is defined by the following Chern-Weil integral:
\begin{equation*}
	\kappa(p):=\frac{1}{16N\pi^2}\int_{W^+} \tr(\ad(F(A)) \wedge \ad(F(A)))
\end{equation*}
Here $\ad(F(A))$ is regarded as a 2-form with coefficients in ${\rm End}(\su(P))$. The product $\wedge$ in the integrand is induced by the wedge product of differential forms and composition of the elements of ${\rm End}(\su(P))$. The above integral is independent of the chosen connection $A$ and only depends on $p$. The constant in front of the integral is chosen such that if $p$ is replaced by another path $p'$ based at $\alpha$, then the energy changes by an integer.

An important special case for us is the cylinder manifold $W=[0,1]\times Y$. Fix connections $\alpha, \beta \in \mathcal B(Y,\gamma)$ and let $p$ be a path along $([0,1]\times Y,[0,1]\times \gamma)$ based at $\alpha$ and $\beta$ on $\{0\}\times Y$ on $\{1\}\times Y$. Then $p$ induces a path, in the ordinary sense, from $\alpha$ to $\beta$ in $\mathcal B(Y,\gamma)$. The topological energy of the path $p$ defines a number which its value, up to an integer, depends only on $\alpha$ and $\beta$. Therefore, we can fix $\beta_0 \in \mathcal B(Y,\gamma)$ and define a functional $\CS:\mathcal B(Y,\gamma) \to \R/\Z$, called the {\it Chern-Simon functional}, where $\CS(\alpha)$ is equal to the topological energy of any path from $\alpha$ to $\beta_0$. Since $\beta_0$ is chosen arbitrarily, $\CS$ is well-defined only up to a constant. But in the case that $\gamma$ is empty, the trivial connection $\Theta$ gives a canonical choice of $\beta_0$. Critical points of the Chern-Simons functional are represented by connections $A$ on $Q$ such that:
\begin{equation*}	
	F_0(A)=0.
\end{equation*}	
A critical point $\alpha\in \mathcal B(Y,\gamma)$ is called {\it non-degenerate} if the Hessian of the Chern-Simons functional is non-degenerate at $\alpha$.

Suppose $\alpha$ is a non-degenerate critical point of the Chern-Simons function. Suppose also $A_0$ is a connection on $W^+$ that represents a path $p$ based at $\alpha$. Then $\mathcal A_{p}(W,w;\alpha)$ is the following space of connections:
\begin{equation*}
	\mathcal A_{p}(W,w;\alpha):=\{A_0+a \mid a \in L^2_{l,\delta}(W, \su(P)\otimes \Lambda^1)\}
\end{equation*}
where $l \geq 3$, $\delta$ is a small positive number, and the weighted Sobolev space $L^2_{l,\delta}(W, \su(P))$ is defined as follows. Let $t$ be a function on $W^+$ that agrees with the cylindrical coordinate on the end of $W^+$. For a vector bundle $E$ on $W$ and a positive constant $\delta$, the Banach space $L^2_{l,\delta}(W, E)$ is defined as $e^{-\delta t}L^2_{k}(W^+, E)$. Suppose $\mathcal G_{p}(W,w;\alpha)$ is also defined as:
\begin{equation*}
	\mathcal G_{p}(W,w;\alpha):=\{g\in {\rm Aut}(P) \mid \det(g)=1,\, \nabla_{A_0} g \in L^2_{l,\delta}(W,\su(P)\otimes \Lambda^1)\}.
\end{equation*}
Then $\mathcal G_{p}(W,w;\alpha)$ is a Banach Lie group. This group acts on $\mathcal A_{p}(W,w;\alpha)$ and the quotient space is denoted by $\mathcal B_{p}(W,w;\alpha)$. The space of the elements $[A]\in \mathcal B_p(W,w;\alpha)$, that $F^+_0(A)=0$, forms the {\it moduli space of ASD connections associated to the path $p$}. This moduli space is denoted by $\mathcal M_p(W,w;\alpha)$.

It is useful to form a framed version of the moduli space of the ASD equations. Any gauge transformation in $\mathcal G_p(W,w;\alpha)$ is asymptotic to an element of $\Gamma_\alpha$. Define the {\it framed gauge group} $\mathcal G^0_p(W,w;\alpha)$ to be the subspace of the elements of $\mathcal G^0_p(W,w;\alpha)$ which are asymptotic to the trivial element of $\Gamma_\alpha$. The framed gauge group is also a Banach Lie group and its Lie algebra can be identified with $L^2_{l+1,\delta}(W,\su(P))$. We write $\widetilde {\mathcal B}_p(W,w;\alpha)$ for the quotient of $\mathcal A_p(W,w;\alpha)$ with respect to the action of $\mathcal G^0_p(W,w;\alpha)$. The set of the elements of $\widetilde {\mathcal B}_p(W,w;\alpha)$ which satisfy the ASD equation is called the {\it framed (or based) moduli space of ASD connections associated to $p$} and is denoted by $\widetilde {\mathcal M}_p(W,w;\alpha)$.

There is an important relationship between the ASD equation and the Chern-Simons functional. We can define an inner product on $\mathcal B(Y,\gamma)$ using the following expression
\begin{equation*}
	\langle a,b \rangle:=-\frac{1}{16N\pi^2}\int_Y \tr(\ad(a) \wedge *\ad(b))\hspace{1cm}a,b\in \Omega^1(Y,\su(P))
\end{equation*}
where $\tr$ is defined using the trace on ${\rm End}(\su(N))$. Suppose $\{\alpha(t)\}_{t \in [0,1]}$ is a path in $\mathcal B(Y,\gamma)$. This path defines a trajectory of the downward gradient flow of $\CS$ with respect to the above metric if it satisfies the following equation:
\begin{equation}  \label{traj-CS}
	\frac{d\alpha(t)}{dt}=-*F_0(\alpha(t)).
\end{equation}
The path $\{\alpha(t)\}$ determines a connection $A$, in temporal gauge, on $[0,1]\times Y$, and \eqref{traj-CS} is equivalent to the ASD equation $F^+_0(A)=0$. This relationship between the ASD equation and the Chern-Simons functional allows us to conclude from non-degeneracy of the critical points of $\CS$ that the moduli spaces $\widetilde{\mathcal M}_p(W,w;\alpha)$ are analytically well-behaved.

The local behavior of the framed moduli space $\widetilde{\mathcal M}_p(W,w;\alpha)$ around an element $[A]$ is modeled by the following elliptic complex:
\begin{equation}\label{ASD-com}
	L^2_{l+1,\delta}(W, \mathfrak {su}(P))  \xrightarrow{\hspace{1mm}d_A\hspace{1mm}}L^2_{l,\delta}(W,  \Lambda^1\otimes \mathfrak {su}(P))
	\xrightarrow{\hspace{1mm}d^+_A\hspace{1mm}} L^2_{l-1,\delta}(W,  \Lambda^+\otimes \mathfrak {su}(P))
\end{equation}
This complex is Fredholm and its homology groups are denoted by $H^0_A$, $H^1_A$ and $H^2_A$. Then $H^0_A=0$, and the element $[A]$ is called {\it regular}, if $H^2_A$ is also trivial. In this case,  $\widetilde{\mathcal M}_p(W,w;\alpha)$ is smooth in a neighborhood of $A$, and $H^1_A$ gives a model for the tangent space of the framed moduli space at $[A]$. Therefore, the index of the above complex for a (not necessarily regular) ASD connection $A$ is called the {\it expected dimension} of $\widetilde{\mathcal M}_p(W,w;\alpha)$ and is denoted by $\dim_e(\widetilde{\mathcal M}_p(W,w;\alpha))$.

We slightly generalize above discussion  to include the case of 4-dimensional cobordisms. A cobordism $W:Y_0 \to Y_1$ is a 4-manifold with boundary $\overline{Y_0}\coprod Y_1$. We also assume that a 2-cycle $w:\gamma_0 \to \gamma_1$ in $W$ is given, and $P$ is the associated $\U(N)$-bundle. Suppose also $\alpha_0$ and $\alpha_1$ are flat connections on $Y_0$ and $Y_1$, and $p$ is a path along $(W,w)$ from $\alpha_0$ to $\alpha_1$. As before, we assume that $\alpha_0$ and $\alpha_1$ are non-degenerate. In this case, $W^+$, $\mathcal B_p(W,w;\alpha_0,\alpha_1)$ and $\mathcal M_p(W,w;\alpha_0,\alpha_1)$ are defined as in the previous case by regarding $W$ as a 4-manifold with boundary. However, there is an alternative elliptic complex that one can associate to the elements of $\mathcal M_p(W,w;\alpha_0,\alpha_1)$. Suppose $W^+$ is the Riemannian 4-manifold given by adding cylindrical ends to $W$. In this case, we identify the cylindrical end corresponding to the incoming end with $(-\infty , 0] \times Y_0$. As in the previous case, suppose $t$ is a function on $W^+$ that agrees with the cylindrical coordinates on the ends and define $L^2_{l,\delta}(W, E)$. Therefore, an element of $L^2_{l,\delta}(W, E)$ is allowed to have exponential growth on the incoming end and it is forced to have an exponential decay on the outgoing end. For $[A]\in \mathcal B_p(W,w;\alpha_0,\alpha_1)$, the ASD operator $\mathcal D_A$ is defined as follows:
\begin{equation} \label{ASD-operator}
	d_A^*\oplus d_A^+:L^2_{l,\delta}(W, \Lambda^1\otimes \mathfrak {su}(P)) \to L^2_{l-1,\delta}(W, (\Lambda^0\oplus \Lambda^+)\otimes \mathfrak {su}(P))
\end{equation}

The operator $\mathcal D_A$ is elliptic, and the excision property of elliptic operators shows that the index of $\mathcal D_A$ is additive with respect to composition of cobordisms.  This index can be computed explicitly using Aiyah-Patodi-Singer index theorem \cite{Taubes:L2-mod-space, MMR:L2-mod-space}:
\begin{equation} \label{ind-DA}
	\ind(\mathcal D_A)=4N\kappa(p)-(N^2-1)\frac{\chi(W)+\sigma(W)}{2}+\sum_{i=0,1} (-1)^i \frac{h^0(Y_i;ad_{\alpha_i})-\rho_{ad_{\alpha_i}}(Y_i)}{2}
\end{equation}
where $h^0(Y_i;\ad_{\alpha_i})$ denotes the dimension of $H^0(Y_i;\ad_{\alpha_i})$. Moreover, for a flat connection $a$ on a vector bundle $V$ over $Y$, $\rho_a$ is Atiyah-Patodi-Singer $\rho$-invariant of $a$ \cite{APS:II}. As an example, let $\chi_j$ be a $\U(1)$-connection on $L(a,b)$ whose holonomy around the standard generator of $\pi_1(L(a,b))$ is equal to $\e^{\frac{2\pi\bi j}{a}}$. Then it is shown in \cite{APS:II} that:
\begin{equation}\label{lens-rho}
	\rho_{\chi_j}(L(a,b))=-\frac{4}{a}\sum_{k=1}^{a-1} \cot(\frac{\pi k}{a})\cot(\frac{\pi kb}{a})\sin^2(\frac{\pi kj}{a})
\end{equation}
Suppose a 4-manifold $W$ is regarded as a cobordism from the empty 3-manifold to the boundary of $W$. Then $\dim_e(\widetilde{\mathcal M}_p(W,w;\alpha))$ is equal to $\ind(\mathcal D_A)+h^0(\alpha)$ where $A$ is a connection that represents the path $p$.

In the case of a cylinder $[0,1] \times Y$, the ASD operator can be used to define a relative $\Z/4N\Z$-grading on critical points of the Chern-Simons functional associated to a pair $(Y,\gamma)$. Fix an arbitrary critical point $\beta_0$ of $\CS$, and let $\alpha$ be another critical point of $\CS$. Let the connection $A$ represent an arbitrary path $p$ from $\alpha$ to $\beta_0$. Then the {\it Floer grading} of $\alpha$, denoted by $\deg(\alpha)$, is defined to be $\ind(\mathcal D_A)$ mod $4N$. Since the choice of $\beta_0$ is arbitrary, this grading only gives rise to a relative $\Z/4N\Z$-grading. In the case that $\gamma$ is empty, we can make this grading absolute by requiring $\beta_0=\Theta$. In this case, we can invoke the index formula \eqref{ind-DA} to compute $\deg(\alpha)$ as follows:
\begin{equation} \label{degree}
	\deg(\alpha)\equiv 4N \cdot \CS(\alpha)-\frac{N^2-1}{2}+\frac{h^0(Y;ad_{\alpha})-\rho_{ad_\alpha}(Y)}{2} \mod 4N
\end{equation}

Analogous to the case of closed 4-manifolds, we can avoid non-regular points in $\mathcal M_p(W,w;\alpha)$ by perturbing the ASD equation. Suppose $\alpha$ is irreducible and non-degenerate. Then there are small holonomy perturbations of the ASD equation, supported in $[0,1] \times Y \subset W^+$, such that the resulting moduli spaces consist of regular points \cite{KM:YAFT}. Alternatively, if $b^+(X)\geq 2$ and $w$ is coprime to $N$, then we can arrange for a small perturbation of the ASD equation such that the moduli spaces $\mathcal M_p(W,w;\alpha)$, for arbitrary $\alpha$ and $p$ with $\kappa(p)$ bounded, are regular \cite{K:higher}.
In the case that $\mathcal M_p(W,w;\alpha)$ consists of only regular points, it is an orientable smooth manifold.

In general, the critical points of the Chern-Simons functional might not be non-degenerate. Suppose that all critical points of the Chern-Simons functional associate to a pair $(Y,\gamma)$ are irreducible. Then, $\CS$ can be also perturbed by appropriate perturbations such that the critical points of the resulting functional are irreducible and non-degenerate \cite[Proposition 3.10]{KM:YAFT}. The family of perturbations that is used in \cite{KM:YAFT} are also defined in terms of the holonomies of connections on $Y$. Suppose $\CS_\pi$ is such a perturbation of $\CS$ by a small holonomy perturbation and $\alpha$ is a critical point of $\CS_\pi$. Suppose also $(W,w)$ is a pair whose boundary is equal to $(Y,\gamma)$. The negative-gradient flow line of $\CS_{\pi}$ determines a perturbation of the ASD equation on the end $[0,\infty)\times Y$ of $W^+$. This perturbation can be extended to $W^+$ such that the corresponding moduli space contains only regular points \cite{KM:YAFT}. As in the previous case, the elliptic operator \eqref{ASD-operator}, can be used to study the local behavior of the moduli spaces. In particular, in the case of a cylinder, index of $\mathcal D_A$ can be used to define a relative $\Z/4N\Z$-grading on the critical points of $\CS_\pi$.

\subsection{Non-vanishing Theorem for Algebraic Surfaces} \label{non-van}
For a complex projective surface, the moduli spaces of ASD connections can be identified with the moduli spaces of \emph{stable bundles} with the fixed determinant \cite{Don:stable}.
Stable bundles have been studied extensively using various techniques in algebraic geometry. Thus one can use algebro-geometric methods to extract information about the polynomial invariants of a complex projective surface. For example, the following theorem about non-vanishing of $\U(N)$-polynomial invariants of algebraic surfaces is a generalization of Donaldson's celebrated theorem about $\U(2)$-invariants \cite{Don:inv,DK}:
\begin{theorem}\label{nonvan}
	Suppose $X$ is a complex projective surface with positive geometric genus, $h$ is a hyperplane class (or equivalently a very ample class),
	and $w$ is a 2-cycle representing $c_1$ of a holomorphic line bundle $L$. Suppose also $w$ is coprime to $N$. Then:
	\begin{equation*}
		\rD_{X,w}^N(h_{(2)}^d) > 0
	\end{equation*}
	when $d$ is large enough and
	\begin{equation*}
	d \equiv (N+1)w\cdot w-(N^2-1)\frac{\chi(X)+\sigma(X)}{4} \mod 2N
	\end{equation*}
\end{theorem}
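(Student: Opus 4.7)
The strategy parallels Donaldson's original non-vanishing theorem for $N=2$ \cite{Don:inv,DK}, now leveraging the algebro-geometric machinery for higher-rank moduli spaces developed by Gieseker, Maruyama, O'Grady, Jun Li, and Huybrechts-Lehn. The plan is to identify the gauge-theoretic moduli space with a projective algebraic variety on which $\mu(h_{(2)})$ is represented by an ample divisor class, so that its top self-intersection is automatically positive.

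First I would invoke the Kobayashi-Hitchin correspondence in rank $N$ \cite{Don:stable}: for a Hodge metric in the class $h$, the moduli space $\mathcal{M}_\kappa(X,w)$ is real-analytically isomorphic to the moduli space $\mathcal{M}^{alg}$ of $h$-slope stable holomorphic bundles of rank $N$ with determinant $L$ (where $c_1(L) = \mathrm{P.D.}[w]$) and $c_2 = k$. Because $w$ is coprime to $N$, slope-stability coincides with Gieseker stability and there are no strictly semistable sheaves, so $\mathcal{M}^{alg}$ admits a projective Gieseker compactification $\overline{\mathcal{M}}^{alg}$. The hypothesis $p_g(X) > 0$ yields $b^+(X) \geq 3$, guaranteeing that $\rD^N_{X,w}$ is a well-defined topological invariant independent of the chosen metric and perturbation. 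By results of O'Grady, Gieseker-Li, and Langer, for $\kappa$ sufficiently large the variety $\overline{\mathcal{M}}^{alg}$ is non-empty, irreducible, generically reduced, and of the expected complex dimension $d$.

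Second, I would construct the \emph{Donaldson-Li determinant line bundle} $\mathcal{L}_h$ on $\overline{\mathcal{M}}^{alg}$ whose first Chern class equals a positive rational multiple of $\mu(h_{(2)}) = c_2(\mathbb{E})/h$. Choosing a smooth curve $C \in |mh|$ for $m \gg 0$, one sets $\mathcal{L}_h := \det \mathbb{R} p_*(\mathbb{E}|_{\overline{\mathcal{M}}^{alg} \times C} \otimes p_X^* \mathcal{O}_X(-n))^{-1}$ for appropriate $n$, so that $c_1(\mathcal{L}_h)$ recovers $c_2(\mathbb{E})/h$ modulo standard determinantal corrections which absorb into a positive coefficient. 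The key input is Jun Li's theorem (together with the Huybrechts-Lehn refinement) that $\mathcal{L}_h$ is ample on $\overline{\mathcal{M}}^{alg}$ once $\kappa$ is large. Ampleness and projectivity then force $c_1(\mathcal{L}_h)^d > 0$, and since the Gieseker boundary has complex codimension at least one, the intersection number agrees with the gauge-theoretic evaluation of $\mu(h_{(2)})^d$ on the smooth locus; hence $\rD^N_{X,w}(h_{(2)}^d) > 0$.

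The principal obstacle is establishing ampleness of $\mathcal{L}_h$ in the higher-rank setting. For $N=2$, Donaldson handled this directly by comparing the gauge-theoretic $\mu$-class to a natural line bundle on Gieseker's moduli space, but for general $N$ one must appeal to GIT arguments on the Quot scheme and careful control of the CM-polarisation, which depend in a non-trivial way on the choice of $m$. A secondary subtlety is matching the transverse cut-down moduli construction of Section \ref{pol-invts} with the algebro-geometric top self-intersection on the compactification; this identification relies on regularity of $\overline{\mathcal{M}}^{alg}$ along the stable locus, ultimately a consequence of $p_g(X) > 0$ (which kills the obstruction space generically) together with the largeness of $\kappa$.
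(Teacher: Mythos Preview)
Your proposal is correct and reaches the same conclusion, but the technical route differs from the paper's. The paper follows Donaldson's original argument more literally: it restricts stable bundles on $X$ to generic curves $C\in |\mathcal{O}(p)|$ for $p\gg 0$ (Mehta--Ramanathan, valid in arbitrary rank), then composes with Gieseker's projective embedding of the curve moduli $\mathcal{N}(C,d)$ to obtain an embedding $J: M^N_\kappa(X,L)\hookrightarrow \mathbf{P}(V)$ whose degree is, up to a constant factor, the invariant $\rD^N_{X,w}(h_{(2)}^d)$. The required higher-rank inputs are non-emptiness for large $\kappa$ (Li--Qin) and that the irregular locus has dimension strictly less than the virtual dimension (Gieseker--Li). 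You instead work on the Gieseker compactification of the surface moduli directly and invoke Jun Li's ampleness of the determinant line bundle there. Both arguments ultimately identify $\mu(h_{(2)})$ with an ample class and read off positivity as a degree; your route is more conceptually uniform but imports a heavier black box (ampleness on $\overline{\mathcal{M}}^{alg}$), while the paper's restriction-to-curves argument stays closer to the elementary rank-$2$ template and only needs ampleness on the curve moduli, which is classical.
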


\begin{proof}
	Let $M^N_\kappa(X,L)$ be the moduli space of stable bundles with rank $N$, determinant $L$, and energy $\kappa$. We firstly review the proof of the non-vanishing theorem in the rank 2 case.
	In this case, the key idea is to find a projective embedding of $M^2_\kappa(X,L)$ and then interpret the polynomial invariant $\rD_{X,w}^2(h_{(2)}^d)$ as a multiple of the degree of the moduli space.
	The main steps of the proof can be summarized as follows:
	\begin{enumerate}
		\item  Suppose $C$ is an algebraic curve and $\mathcal N(C,d)$ is the moduli space of stable bundles of rank 2 and degree $d$ on $C$. Then there is a projective embedding of $\mathcal N(C,d)$
		into a projective space ${\bf P}(W)$ \cite{Gi1}. Moreover, this projective embedding is given by the sections of a large power $\mathcal L^d$ of the {\it determinant line bundle} over $\mathcal N(C,d)$ \cite{DK}.
		\item  For any stable bundle  $\mathcal E\in M^2_\kappa(X,\mathcal L)$, there is $p_0$ such that if $C\subset X$  is a generic curve
		in the linear system $|\mathcal{O}(p)|$, for $p\geq p_0$, then the restriction of $\mathcal E$ to $C$ is also stable \cite{MR}. Using this result together with the fact that moduli space
		$M^2_\kappa(X,L)$ has finite type, we can find a projective embedding $J:M^2_\kappa(X,L) \to {\bf P}(V)$.
		This embedding is given by restricting the elements of $M^2_\kappa(X,L)$ to finitely many curves in the linear
		system $|\mathcal{O}(p)|$ and applying Gieseker embedding from the previous part.
		\item For $\kappa$ large enough, the moduli space $M^2_\kappa(X,\mathcal L)$ is not empty \cite{taubes1984, Gi2}.
		\item The dimension of the irregular part of $M^2_\kappa(X,\mathcal L)$ is strictly smaller than the virtual dimension of the moduli space, when $\kappa$ is large enough \cite{Don:inv}.
	\end{enumerate}
	By combining these facts, it can be shown that the map $J$ can be chosen such that the degree of the quasi-projective variety $J(M_\kappa(X,I))$ is equal to:
	\begin{equation*}
		K^d{\rD}_{X,w}^N(h_{(2)}^d)
	\end{equation*}
	for an appropriate integer $K$, and for a large enough $d$. Therefore, the invariant ${\rD}_{X,w}^N(h_{(2)}^d)$ is positive.
	This proof can be generalized to the case that $N>2$.  Steps 1 and 2 are proved for an arbitrary rank in the references mentioned above.
	The generalization of the the third step to the higher rank case is given in \cite{LQ}. The higher rank version of the fourth fact is also proved in \cite{GL}.
	Then the same arguments as in \cite{Don:inv,DK} can be used to realize
	$\rD_{X,w}^N(h_{(2)}^d)$ as a multiple of the degree of a quasi-projective variety, which verifies the claim in Theorem \ref{nonvan}.
\end{proof}

\begin{remark}
	In Theorem \ref{nonvan}, we assume that $X$ is a projective surface. But a similar non-vanishing theorem can be formulated for any K\"ahler surface, because K\"ahler surfaces can be deformed into
	projective surfaces \cite{Kod:CXK} and the $\U(N)$-polynomial invariants only depend on the smooth structure.
\end{remark}

\subsection{Negative Embedded Spheres}\label{neg-spheres}
Motivated by \cite{ruberman1996relations}, Fintushel and Stern used embedded 2-spheres with negative self-intersection to study polynomial invariants of a 4-manifold $X$ \cite{fintushel1996blowup,FS:str-thm} (See also \cite{Wie:neg-sph}). The same idea is exploited in \cite{Luc:Thesis} to obtain information about the properties of the polynomial invariants associated to higher rank bundles. Suppose $\tau$ is an embedded sphere in a 4-manifold $X$ which has self-intersection $-2$. Fix a 2-cycle $w$ with $w\cdot \tau=0,$ and $z \in \A(\langle\tau\rangle^{\perp})^{\otimes 2}$. In general, $\A(V)$ for a vector subspace $V\subseteq H_2(X)$ denotes the sub-algebra $\Sym^*(H_0(X)\oplus V)\otimes \Lambda^*(H_1(X))$, and $\langle\tau\rangle^{\perp}$ is the subspace of $H_2(X)$ consisting of the homology classes orthogonal to $\tau$. The following formulas about the $\U(3)$-polynomial invariants of $X$ are proved in \cite{Luc:Thesis}:
\begin{itemize} \label{Lucas-identites}
	\item[$(C_1)$] $\DXw^3(\tau_{(2)}^2 z)=-2\rD_{X,w+\tau}^3(z)$
	\item[($C_2$)] $\DXw^3(\tau_{(2)}^4z)=-4\DXw^3(a_2\tau_{(2)}^2 z)-3\DXw^3(\tau_{(3)}^2 z)$
	\item[($C_3$)] $\DXw^3(\tau_{(2)}^3 \tau_{(3)} z)=-3\DXw^3(a_3\tau_{(2)}^2 z)-\DXw^3(a_2 \tau_{(2)} \tau_{(3)} z)$
\end{itemize}	

By a similar approach, we shall prove the following proposition in Subsection \ref{neg-spheres-gluing}:

\begin{prop} \label{-3-identities}
	Suppose $X$ is a smooth 4-manifold with $b^+(X)\geq 2$ and $w$ is a 2-cycle. Suppose also $\sigma$
	is an embedded sphere with self-intersection $-3$ and
	$z \in \A(\langle\sigma\rangle^{\perp})^{\otimes 2}$.
	\begin{enumerate}
	\item[(i)] If $w \cdot \sigma \equiv 1$ mod 3, then there is a constant number $c$ such that:
		\begin{equation*}
			\DXw^3((-\frac{3}{2} \sigma_{(3)}-\frac{3}{2} \sigma_{(2)}^2-a_2)z)=c\rD^3_{X,w-\sigma}(z).
		\end{equation*}
	\item[(ii)] If $w \cdot \sigma \equiv 2$ mod 3, then for the same constant $c$ as above:
		\begin{equation*}
			\DXw^3((\frac{3}{2} \sigma_{(3)}-\frac{3}{2} \sigma_{(2)}^2-a_2)z)=c\rD^3_{X,w+\sigma}(z).
		\end{equation*}
	\item[(iii)] If $w \cdot \sigma \equiv 0$ mod 3, then the following two formulas hold:
	\begin{equation*}
		\DXw^3((\sigma_{(2)}^4+4a_2\sigma_{(2)}^2+3\sigma_{(3)}^2)z)=0\hspace{1cm}
		\DXw^3((\sigma_{(2)}^3 \sigma_{(3)}+3a_3\sigma_{(2)}^2+a_2 \sigma_{(2)} \sigma_{(3)}) z)=0
	\end{equation*}
	\end{enumerate}
\end{prop}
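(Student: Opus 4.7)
The plan is to adapt Culler's proof of $(C_1)$--$(C_3)$ in \cite{Luc:Thesis} to the $-3$-sphere case. Geometrically, decompose $X = X_0 \cup_Y D(\sigma)$, where $D(\sigma)$ is the closed tubular neighborhood of $\sigma$---a disk bundle over $S^2$ of Euler number $-3$---and $Y = \partial D(\sigma)$ is the lens space $L(3,2)$. The residue class of $w\cdot\sigma$ mod $3$ determines the topological type of the restriction of the $\U(3)$-bundle to $Y$, which is why the proposition splits into three cases.

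First I would set up the neck-stretching picture of \autoref{cyl-mod}: as the collar of $Y$ is elongated, the moduli spaces of ASD connections on $X$ degenerate to pairs of ASD connections on $X_0^+$ and $D(\sigma)^+$ sharing a common flat limit on $Y$. By the gluing machinery to be developed in Section~6 (and parallel to the one Culler uses for the $-2$ case), the polynomial invariant $\DXw^3(P(\sigma_{(2)}, \sigma_{(3)}, a_2, a_3)\,z)$ can then be expressed as the Floer-theoretic pairing of the relative invariants of $X_0$ and $D(\sigma)$ in the $\U(3)$-instanton Floer homology $\I^3(Y, \mu_w)$, where $\mu_w$ records $w\cdot\sigma \bmod 3$ and the polynomial $P$ acts through cohomological operators in between.

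Next I would describe the generators of $\I^3(Y,\mu_w)$---flat $\SU(3)$-connections on $L(3,2)$ whose holonomy is conjugate to $\mathrm{diag}(\zeta^{a_1},\zeta^{a_2},\zeta^{a_3})$ with $\zeta = e^{2\pi\bi/3}$ and $a_1+a_2+a_3 \equiv w\cdot\sigma \pmod 3$---and compute the action of the commuting operators $\mu(a_2)$, $\mu(a_3)$, $\mu(\sigma_{(2)})$, $\mu(\sigma_{(3)})$. For $\mu_w \neq 0$ these connections are irreducible and non-degenerate, and the operators diagonalize on one-dimensional eigenspaces. The projection of the relative invariant of $D(\sigma)$ to the relevant eigenline is identified, via a second gluing, with a polynomial invariant of a $\U(3)$-bundle whose first Chern class is shifted by $\pm\sigma$; this is the source of $\rD^3_{X,w\pm\sigma}(z)$ on the right-hand side of (i) and (ii). The scalar action of $-\tfrac{3}{2}\sigma_{(3)} - \tfrac{3}{2}\sigma_{(2)}^2 - a_2$ on this eigenline then yields the constant $c$, and the sign reversal between (i) and (ii) reflects the $\sigma\mapsto -\sigma$ symmetry: the degree-$3$ class $\sigma_{(3)}$ is anti-invariant under this reflection, while $\sigma_{(2)}^2$ and $a_2$ are invariant, forcing both cases to share the same $c$. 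For case (iii), the operators $\mu(\sigma_{(2)})$ and $\mu(\sigma_{(3)})$ on $\I^3(Y,0)$ must satisfy minimal polynomial relations of degree at most four, and the two stated vanishing identities should be exactly these relations (mirroring the way $(C_2)$ and $(C_3)$ arise in the $-2$ case).

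The main obstacle is the explicit computation of $\I^3(L(3,2),\mu)$ and of the operators $\mu(\sigma_{(2)}), \mu(\sigma_{(3)}), \mu(a_2), \mu(a_3)$ on it, together with the identification of the relative invariant of $D(\sigma)$. In particular, one must verify that the projection of this relative invariant to the one-dimensional eigenline used in (i) and (ii) is nonzero (so that $c \neq 0$) and that the constant produced in (i) really coincides with the one in (ii); the latter requires tracking orientations carefully under the $\sigma \leftrightarrow -\sigma$ reflection. Here the fact that $D(\sigma)$ is K\"ahler---it embeds in the Hirzebruch surface $\mathbb{F}_3$---should let us compute the needed relative invariants holomorphically. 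Once these moduli-space inputs are in place, the algebraic identities in the proposition follow formally from the pairing formula.
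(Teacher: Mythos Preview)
Your overall geometric setup is right---neck-stretch along the lens space boundary of the disk bundle $D(\sigma)$ and push the computation onto the disk-bundle side---and this is exactly what the paper does in subsection~\ref{neg-spheres-gluing}. But the mechanism you describe is wrong in a way that matters.

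The claim that ``for $\mu_w\neq 0$ these connections are irreducible and non-degenerate'' is false. Every flat $\SU(3)$-connection on $L(3,2)$ is reducible, because $\pi_1(L(3,2))\cong\Z/3\Z$ is abelian: a representation $\mathrm{diag}(\zeta^{a_1},\zeta^{a_2},\zeta^{a_3})$ has stabilizer at least the maximal torus ${\rm S}(\U(1)^3)$, and larger when eigenvalues repeat. Consequently the pair $(L(3,2),\gamma)$ is never admissible in the sense of subsection~\ref{IN}, so $\I^3(Y,\mu_w)$ is not defined and there is no Floer-theoretic pairing to work with. Your proposed diagonalization of $\mu$-operators on one-dimensional eigenlines therefore has no home.

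What the paper actually does is bypass Floer homology entirely and work directly with the \emph{framed} moduli spaces $\widetilde{\mathcal M}(A)$ on $Z=D(\sigma)$, which carry an action of the (nontrivial) stabilizer $\Gamma_\alpha$ of the limiting flat connection. The polynomial invariant is then computed via the $\Gamma_\alpha$-\emph{equivariant} universal bundle $\widetilde{\mathbb E}$ over $\widetilde{\mathcal M}(A)\times\sigma$. For part~(i), the paper shows that the specific equivariant class $-\tfrac{3}{2}\bar c_3(\widetilde{\mathbb E})/\sigma-\tfrac{3}{2}(\bar c_2(\widetilde{\mathbb E})/\sigma)^2-\bar c_2(\widetilde{\mathbb E})/x$ vanishes on all of $\widetilde{\mathcal M}(B^{-2}\oplus 1\oplus 1)$ except at the single completely reducible point, and identically on the other piece $\widetilde{\mathcal M}(B^{-1}\oplus B^{-1}\oplus 1)$; this localizes the count to a $0$-dimensional moduli space which, after regluing with $\widetilde{\mathcal M}(B\oplus 1\oplus 1)$, is identified with $\mathcal M_{\kappa_1}(X^T,w-\sigma)$. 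Part~(iii) is similar: one writes down the equivariant Chern classes $\widetilde\sigma_{(2)},\widetilde\sigma_{(3)},\widetilde a_2,\widetilde a_3$ explicitly in terms of equivariant first Chern classes $p,q$ and checks by hand that the two polynomials in the statement vanish as compactly supported equivariant classes. The ``minimal polynomial'' heuristic you propose is not how these identities are found---they come from direct algebra in $H^*_{\Gamma_\alpha}(\C)$.
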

In Subsection \ref{E(2)}, we will show that the constant $c$ in the statement of this proposition is equal to $-3$.

\subsection{Blowup Formula for 4-manifolds with Simple Type} \label{blowup-sub}
In this subsection, we review the properties of $\U(3)$-polynomial invariants of blown up 4-manifolds. This part has the same theme as the previous subsection, because the exceptional divisor of a blown up 4-manifold gives rise to a $(-1)$-sphere. We start with an exposition of the main result in Culler's thesis \cite{Luc:Thesis}. Given a 4-manifold $X$, let $\hat X$ denote the blow up of $X$ at one point, which is diffeomorphic to the 4-manifold $X\#\cp$. We will also denote the exceptional sphere in $\hat X$ with $E$. If $w$ is a 2-cycle in $X$, then it induces a 2-cycle in $\hat X$ which will be also denoted by $w$. Similarly, we can regard $\A(X)$ as a sub algebra of $\A(\hat X)$.

\begin{prop} \label{blowup-inital}
	Suppose $w$ is a 2-cycle in $X$ and $z\in \A(X)^{\otimes 2}$. For $0\leq i \leq 2$ and $0\leq j \leq 1$, the invariant
	$\rD_{\hat X,w}^3(E_{(2)}^iE_{(3)}^jz)$ is equal to $\rD^3_{X,w}(z)$ if $i=j=0$ and it is zero otherwise. The invariant
	$\rD^3_{\hat X,w+E}(E_{(3)}z)$ is equal to $\rD^3_{X,w}(z)$.
	For $0\leq i \leq 5$, the invariant $\rD^3_{\hat X,w+E}(E_{(2)}^iz)$ is equal to:
	\begin{equation*}
	\left\{
		\begin{array}{ll}
			0,                         & \hbox{\text{if}~i=0,1,3;} \\
			\rD^3_{X,w}(z),              & \hbox{\text{if}~i=2;} \\
			- \rD^3_{X,w}(a_2 z),      & \hbox{\text{if}~i=4;} \\
			\rD^3_{X,w}(a_3 z),     & \hbox{\text{if}~i=5.}
		\end{array}
	\right.
	\end{equation*}
\end{prop}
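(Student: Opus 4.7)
The strategy is a neck-stretching argument. Decompose $\hat X = (X \setminus B^4) \cup_{S^3} (\cp \setminus B^4)$ and insert a long cylindrical neck $[-T,T] \times S^3$. Letting $T \to \infty$, the standard compactness-and-gluing package for ASD connections identifies the relevant moduli spaces on $\hat X$ with fibered products of the framed moduli spaces on the cylindrical-end completions $X^+$ and $\overline{\cp}^+$, assembled over the asymptotic flat connections on $S^3$. Since every flat $\SU(3)$-connection on $S^3$ is trivial, only the trivial connection $\Theta$ enters the fibered product, and since $E$ lies entirely in the $\cp$ side, all slant products $\mu(E_{(2)})$, $\mu(E_{(3)})$ pull back from the $\overline{\cp}^+$ factor. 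This reduces each invariant $\rD^3_{\hat X, \widetilde w}(E_{(2)}^i E_{(3)}^j z)$ to a sum of products
\[
\sum_{\kappa_X + \kappa_{\cp} = \kappa} \nu(w_{\cp}, \kappa_{\cp}, i, j)\cdot \rD^3_{X,w,\kappa_X}(z),
\]
where $\widetilde w$ is either $w$ or $w+E$, the first Chern contribution on the $\cp$ side is $w_{\cp}\in\{0,E\}$, and $\nu$ is a universal pairing of $\mu(E_{(2)})^i\mu(E_{(3)})^j$ against the framed moduli on $\overline{\cp}^+$ asymptotic to $\Theta$. The $z$-dependence lives entirely on the $X$ side because $z\in \A(X)^{\otimes 2}$.

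The next step is to compute the universal numbers $\nu$. Via the Kobayashi-Hitchin correspondence, the framed moduli on $\overline{\cp}^+$ can be identified with moduli of stable holomorphic bundles on $\cp$ of prescribed determinant. For $w_{\cp} = 0$, the minimum-energy moduli is a single point (the trivial connection), forcing $\nu(0,0,0,0)=1$; for $(i,j)\ne (0,0)$ with $i\le 2$, $j\le 1$, a dimension count via \eqref{ind-form-closed} rules out any contribution from higher-energy Uhlenbeck strata, because the only strata of the correct formal dimension are concentrated point instantons on $S^4$ whose slant products with $E$ vanish generically. This yields the first batch of formulas. For $w_{\cp} = E$, the minimum-energy framed moduli on $\overline{\cp}^+$ is a smooth $3$-dimensional space (parametrizing extensions of type $\mathcal O(-1)\oplus \mathcal O^{\oplus 2}$, suitably perturbed), and pairing against $\mu(E_{(3)})$ or $\mu(E_{(2)})^2$ yields $\rD^3_{X,w}(z)$ each after normalization; the entries for $i=0,1,3$ vanish for degree-parity reasons; the entries for $i=4,5$ are governed by the next Uhlenbeck stratum, where the slant products of the higher Chern classes of $\mathbb E$ with the bubble locus produce precisely the universal factors $-a_2$ and $-a_3$ acting on $z$.

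The main obstacles are: (a) making the fibered-product gluing rigorous at $\Theta$, where the stabilizer $\Gamma_\Theta$ equals the full center $Z(\SU(3))\cong \Z/3$ together with an $\SU(3)/Z$-action on the link, so that the based moduli must be carefully quotiented; and (b) computing the universal constants $\nu(E,\kappa,i,0)$ for $i=4,5$, which involve integration over compactified moduli with a second Uhlenbeck stratum. The former is a straightforward adaptation of the gluing framework set up in Section 2.2 to the blow-up neck, and is the type of argument deferred to Section 6. The latter constants can either be extracted by algebro-geometric intersection theory on moduli of stable bundles on $\cp$ in the spirit of \cite{Luc:Thesis}, or cross-verified by combining the $(-2)$-sphere relations $(C_1),(C_2),(C_3)$ with Proposition \ref{-3-identities} applied after further blow-ups that convert the exceptional $(-1)$-sphere into spheres of more negative self-intersection.
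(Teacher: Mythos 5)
Your neck-stretching architecture is exactly the method behind this proposition: the paper itself gives no independent argument but defers to \cite{K:higher} (for $\rD^3_{\hat X,w+E}(E_{(2)}^2z)=\rD^3_{X,w}(z)$) and to Proposition 62 of \cite{Luc:Thesis} for the remaining entries, and those sources proceed by splitting $\hat X$ along the $S^3$ separating the $\cp$ summand, gluing over the trivial connection, and evaluating universal pairings of $\mu(E_{(2)})^i\mu(E_{(3)})^j$ on the low-energy moduli over $\overline{\cp}^{\,+}$. So in outline you and the paper agree, and your closing remark that the $i=4,5$ constants could be cross-checked against the $(-2)$- and $(-3)$-sphere relations is consistent with how the paper actually uses these initial values (they seed the induction in Theorem \ref{blowup-gen-mafld}).

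One justification in your sketch is wrong, however: the vanishing of $\rD^3_{\hat X,w+E}(E_{(2)}^iz)$ for $i=0,1,3$ is not ``for degree-parity reasons.'' Since $(w+E)^2=w^2-1$ while $(\chi+\sigma)/2$ is unchanged under blow-up, the dimension constraint mod $12$ for $(\hat X,w+E)$ is shifted by $+4$ relative to $(X,w)$, so for each of $i=0,1,3$ there are classes $z$ with $\deg(E_{(2)}^iz)$ in the correct residue class mod $12$ (e.g.\ $i=0$ with $\deg z\equiv \deg_{X,w}+4$), and parity of $i$ cannot be the mechanism either, since $i=5$ is odd and does \emph{not} vanish. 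The genuine reason is the $\cp$-side computation: at minimal energy $\kappa=1/3$ the framed moduli over $\overline{\cp}^{\,+}$ asymptotic to $\Theta$ is $4$-dimensional and, after accounting for the $\Gamma_\Theta$-action, pairs nontrivially only against $\mu(E_{(2)})^2$; for $i<2$ the class has too small a degree, and for $i=3$ one must pass to the next Uhlenbeck stratum, where the relations among the restrictions of $c_2(\mathbb E),c_3(\mathbb E)$ (the same relations that produce the $-a_2,-a_3$ factors at $i=4,5$) force the answer to be zero. If you repair that step by replacing the parity claim with this moduli computation (or simply by citing \cite{Luc:Thesis} for the table of initial values, as the paper does), the argument is complete.
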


\begin{proof}
	As we mentioned in Subsection \ref{pol-invts}, the identity $\rD^3_{\hat X,w+E}(E_{(2)}^2z)=\rD^3_{X,w}(z)$
	is proved in \cite{K:higher}. The other identities can be proved by a similar method.
	(See Proposition 62 in \cite{Luc:Thesis} and the succeeding discussion.)
\end{proof}

According to this proposition, some of the polynomial invariants of $\hat X$ are determined by the invariants of $X$. The following theorem claims that a similar pattern holds for all invariants of $\hat X$. This theorem is essentially proved in \cite{Luc:Thesis}. We just slightly expand the results of this thesis using the same methods:

\begin{theorem} \label{blowup-gen-mafld}
	Suppose $w$ is a 2-cycle in $X$ and $z\in \A(X)^{\otimes 2}$.
	For non-negative integers $i$ and $j$, there are polynomials $B_{i,j}, S_{i,j} \in \Q[a_2,a_3]$ which are independent of $X$,
	$w$, $z$, and they satisfy the following identities:
	\begin{equation*}
		\rD^3_{\hat X,w}(\e^{E_{(2)}+E_{(3)}}z)=\rD^3_{X,w}(z\sum_{i,j}B_{i,j}(a_2,a_3)t_2^it_3^j)
	\end{equation*}
	and
	\begin{equation*}
		\rD^3_{\hat X,w+E}(\e^{E_{(2)}+E_{(3)}}z)=\rD^3_{X,w}(z\sum_{i,j}S_{i,j}(a_2,a_3)t_2^it_3^j).
	\end{equation*}	
	Moreover, $B:=\sum_{i,j}B_{i,j}(a_2,a_3)t_2^it_3^j$ and $S:=\sum_{i,j}S_{i,j}(a_2,a_3)t_2^it_3^j$,
	as power series in the variables $t_2$ and $t_3$ with coefficients in the polynomial ring of $a_2$, $a_3$, are uniquely determined
	by the initial values given in Proposition \ref{blowup-inital} and the following four PDEs:
	\begin{equation} \label{PDE1-2}
		B_{22}B-B_2B_2=-S\circ \tau\cdot S, \hspace{1cm}S_{22}S-S_2S_2=-S\circ \tau\cdot B,
	\end{equation}
	\begin{equation}\label{PDE3}
		B_{2222}B-4B_{222}B_2 + 3B_{22}B_{22}=-4a_2(B_{22}B-B_2B_2)-3(B_{33}B-B_3B_3)
	\end{equation}
	and
	\begin{equation*}\label{PDE4}
		B_{2223}B-3B_{223}B_2 -B_{222}B_3 + 3B_{23}B_{23} = \hspace{4cm}
	\end{equation*}
	\begin{equation} \label{PDE-2}
		\hspace{4cm}=-3a_3(B_{22}B-B_2B_2) -a_2 (B_{23}B-B_2 B_3).
	\end{equation}
	Here $\tau$ maps $(t_2,t_3)$ to $(-t_2,-t_3)$.
	The subscript $2$ means taking partial derivative with respect to $2$ and the subscript $3$ should be interpreted similarly.
\end{theorem}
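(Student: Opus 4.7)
The plan is to prove Theorem \ref{blowup-gen-mafld} in three steps: first establish the existence of the universal coefficient polynomials $B_{i,j}, S_{i,j} \in \Q[a_2, a_3]$, then derive the four PDEs by applying Culler's $(-2)$-sphere relations on a double blowup, and finally argue uniqueness by recursion.

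For existence, I would follow the strategy of \cite{Luc:Thesis}: the blowup $\hat X = X \# \cp$ affects the moduli space of ASD connections only locally near the exceptional divisor $E$, so by the gluing theory of Section 6 the invariant $\rD^3_{\hat X, w}(E_{(2)}^i E_{(3)}^j z)$ (resp.\ $\rD^3_{\hat X, w+E}(E_{(2)}^i E_{(3)}^j z)$) must be a finite rational linear combination of invariants $\rD^3_{X,w}(a_2^p a_3^q z)$ with universal coefficients depending only on $(i,j,p,q)$. Packaging these coefficients yields polynomials $B_{i,j}, S_{i,j} \in \Q[a_2, a_3]$; finiteness follows from the dimension formula \eqref{dimension-fomrula}, which forces $4p + 6q$ to match the degree shift caused by inserting $E_{(2)}^i E_{(3)}^j$.

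To derive the PDEs, I would work on the double blowup $\tilde X = X \# 2\cp$ with exceptional spheres $E_1, E_2$, so that $\tau := E_1 - E_2$ is an embedded $(-2)$-sphere with $\tau \cdot w = 0$ for any 2-cycle $w$ supported in $X$. Introducing formal variables $u_i, v_i$ for $i=1,2$, consider the insertion
\[
\mathcal{U} := \e^{u_1 E_{1,(2)} + v_1 E_{1,(3)} + u_2 E_{2,(2)} + v_2 E_{2,(3)}}\,z, \qquad z \in \A(X)^{\otimes 2}.
\]
Applying Culler's identities $(C_1), (C_2), (C_3)$ to $\tau$ and the class $\mathcal{U}$, and expanding both sides using the iterated single-blowup formulas (with \eqref{w+Nw'} and \eqref{wto-w} used to convert the 2-cycle $w + \tau = w + E_1 - E_2$ into $w + E_1 + E_2$ so that the $S$-series can be substituted), gives identities in the power series ring $\Q[a_2, a_3][\![u_1, v_1, u_2, v_2]\!]$. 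Specializing $u_1 = u_2 = t_2$, $v_1 = v_2 = t_3$ produces the PDEs \eqref{PDE1-2}, \eqref{PDE3} and \eqref{PDE-2}; the $S \circ \tau$ factor in \eqref{PDE1-2} reflects the sign flip introduced by \eqref{wto-w} when the orientation of $E_2$ is reversed. The second identity in \eqref{PDE1-2} is obtained by running the same $(C_1)$ argument with the base 2-cycle taken to be $w + E_1 + E_2$ instead of $w$, which swaps the roles of $B$ and $S$ on the two sides.

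For uniqueness, Proposition \ref{blowup-inital} gives $B_{0,0} = S_{0,0} = 1$, so both $B$ and $S$ are invertible in $\Q[a_2,a_3][\![t_2,t_3]\!]$. The first PDE of \eqref{PDE1-2}, rewritten as $B_{22} = B^{-1}(B_2^2 - S\circ\tau \cdot S)$, recursively determines the $t_2^{i+2} t_3^j$-coefficient of $B$ from lower-order data, and the second plays the same role for $S_{22}$. The remaining PDEs \eqref{PDE3} and \eqref{PDE-2} constrain higher $t_2$-derivatives and the mixed $t_2$--$t_3$ derivatives, so a double induction on the total degree in $(t_2, t_3)$ pins down every $B_{i,j}$ and $S_{i,j}$ once the finitely many initial values from Proposition \ref{blowup-inital} are fixed. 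The main obstacle I anticipate is the sign and orientation bookkeeping when moving between cycles like $w + E_1 - E_2$ and $w + E_1 + E_2$ via \eqref{wto-w}, and verifying that the inductive system closes consistently; this is precisely the point where the delicate analysis of \cite{Luc:Thesis} does the heavy lifting.
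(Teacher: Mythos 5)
Your derivation of the four PDEs is essentially the paper's argument: the double blowup, the $(-2)$-sphere $E_1-E_2$, Culler's relations $(C_1)$--$(C_3)$ applied to classes built from $E_1+E_2$, and the appearance of $S\circ\tau$ from reversing the orientation of one exceptional sphere all match the proof sketch in the text. However, there are two genuine problems elsewhere.

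First, your uniqueness argument rests on the claim that $B_{0,0}=S_{0,0}=1$, so that both series are invertible and one can solve $S_{22}=S^{-1}(S_2^2-S\circ\tau\cdot B)$ recursively. This is false for $S$: Proposition \ref{blowup-inital} gives $\rD^3_{\hat X,w+E}(z)=0$ when $i=j=0$, so $S_{0,0}=0$ and $S$ is \emph{not} invertible in $\Q[a_2,a_3][\![t_2,t_3]\!]$ (consistently, the simple-type specialization $s(t_2,t_3)$ in Theorem \ref{blowup} has vanishing constant term). Your proposed recursion for $S$ therefore does not get off the ground. The paper's recursion is organized differently precisely to avoid this: one first determines $B_{k,0}$ and $S_{k,0}$ from $\widehat C_{k,0}$ and $\widehat C'_{k,0}$, then $B_{k,l}$ from $\overline C_{k,l}$ and $\widecheck C_{k,l}$, and finally $S_{k,l}$ using $\widehat C_{k,l}$, $\widehat C'_{k,l}$ together with the fact that $S_{0,1}\neq 0$ (not $S_{0,0}$). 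You would need to restructure the induction along these lines.

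Second, your existence argument appeals to a locality/gluing principle along the $S^3$ neck of the connected sum with $\cp$ to conclude that each $\rD^3_{\hat X,w}(E_{(2)}^iE_{(3)}^jz)$ is a universal rational combination of $\rD^3_{X,w}(a_2^pa_3^qz)$. This is not what the paper (or Culler's thesis, as cited) does, and it is not supported by the gluing theory of Section 6: the limiting flat connection on $S^3$ is the reducible trivial connection with stabilizer $\SU(3)$, and turning the heuristic into a proof of a universal-coefficient statement for arbitrary $\mu$-class insertions would require an equivariant gluing and obstruction analysis that the paper deliberately avoids. Instead, existence of the polynomials $B_{i,j}$ and $S_{i,j}$ is proved \emph{inductively from the same identities} $\widehat C_{k,l}$, $\widehat C'_{k,l}$, $\overline C_{k,l}$, $\widecheck C_{k,l}$ that yield the PDEs, with Proposition \ref{blowup-inital} supplying the base cases; this is also what makes the uniqueness claim immediate, since the existence recursion exhibits every coefficient as determined by the initial values and the four equations. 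Your degree-counting observation via \eqref{dimension-fomrula} is a correct consistency check but does not substitute for this.
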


Using \eqref{wto-w}, the power series $S$ can be used to compute the invariant of $\rD^3_{X,w-E}$.

\begin{proof}[Sketch of the proof]
	The main tool is a trick that due to Fintushel and Stern \cite{fintushel1996blowup}. Suppose $E$ and $E'$ are the two exceptional spheres in $X\#2\cp$. Then the homology class $E-E'$ is represented by a
	$(-2)$-sphere. Suppose $\widehat C_{k,l}$ (respectively, $\widehat C_{k,l}'$) is the identity that is given by applying $(C_1)$ from Subsection \ref{neg-spheres} to $w$
	(respectively, $w+E+E'$) and $\tilde z:=(E+E')_{(2)}^k(E+E')_{(3)}^lz$. Similarly, we can derive identities $\overline C_{k,l}$ and $\widecheck C_{k,l}$ by applying $(C_2)$ and $(C_3)$
	to $w$ and $\tilde z$ as above. These identities can be used to prove the existence of $B_{i,j}$ and $S_{i,j}$ inductively. To be a bit more detailed, firstly
	one shows the existence of $B_{k,0}$ and $S_{k,0}$ inductively using the initial values in Proposition \ref{blowup-inital},
	$\widehat C_{k,0}$ and $\widehat C_{k,0}'$ \cite[Proposition 69]{Luc:Thesis}. Then another inductive argument with the aid of identities
	$\overline C_{k,l}$, $\widecheck C_{k,l}$ and Proposition \ref{blowup-inital} shows the existence of $B_{k,l}$ \cite[Proposition 73]{Luc:Thesis}.
	Finally, $\widehat C_{k,l}$, $\widehat C_{k,l}'$, the initial values of Proposition \ref{blowup-inital} and the fact that $S_{0,1}$ is non-zero imply the existence of
	$S_{k,l}$ in an inductive way. After proving the existence of the polynomials $B_{k,l}$ and $S_{k,l}$, the identities $\widehat C_{k,l}$, $\widehat C_{k,l}'$,
	$\overline C_{k,l}$ and $\widecheck C_{k,l}$ can be used to write four differential equations for $B$ and $S$ which are given in \eqref{PDE1-2}, \eqref{PDE3} and \eqref{PDE4}.
	The existence proof shows that these PDEs and the initial values are enough to uniquely determine $B$ and $S$.
\end{proof}

In general, computing the exact form of the power series $B$ and $S$ (equivalently, solving the PDEs in the statement of Theorem \ref{blowup-gen-mafld}) is not straightforward. In the following corollary, we show that for 4-manifolds with simple type, the blow up formula has a simple form:
\begin{theorem}\label{blowup}
	Suppose $(X,w)$ is a pair of a 4-manifold and a 2-cycle such that $X$ has $w$-simple type. Suppose also $\hat{X}$ is the blowup of $X$ at one point and $E$ is the exceptional
	class. Then there are power series $b(t_2,t_3), \, s(t_2,t_3) \in \Q[\![t_2,t_3]\!]$ such that:
	\begin{equation} \label{blowup-b}
		\widehat \rD_{\hat{X},w}(\e^{E_{(2)}+E_{(3)}}z)=\hDXw(z)b(t_2,t_3)
	\end{equation}
	and
	\begin{equation}\label{blowup-s}
		\widehat \rD_{\hat{X},w+E}(\e^{E_{(2)}+E_{(3)}}z)=\hDXw(z)s(t_2,t_3)
	\end{equation}
	for $z \in \A(X)^{\otimes 2}$. The power series $b$ and $s$ are given by the following formulas:
	\begin{equation} \label{b-formula}
		b(t_2,t_3)=\frac{1}{3}\e^{-\frac{t_2^2}{2}+t_3^2}[\cosh (\sqrt{3}t_2) +2\cos (\sqrt{3}t_3)],
	\end{equation}
	\begin{equation} \label{s-formula}
		s(t_2,t_3)=\frac{1}{3}\e^{-\frac{t_2^2}{2}+t_3^2}[\cosh (\sqrt{3}t_2)-\cos (\sqrt{3}t_3)+\sqrt {3} \sin(\sqrt{3}t_3)].
	\end{equation}
\end{theorem}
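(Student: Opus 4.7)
The plan is to combine the general blowup formula of Theorem \ref{blowup-gen-mafld} with the hypothesis that $(X, w)$ is of $w$-simple type. The simple type condition \eqref{simple-type-w} states precisely that the linear functional $\rD^3_{X,w}$ on $\A(X)^{\otimes 2}$ vanishes on the ideal $I := (a_3,\, a_2^3 - 27)$, and hence factors through the quotient. The key algebraic point is that in $\Q[a_2, a_3]/I$ the element $e := 1 + a_2/3 + a_2^2/9$ is a spectral projector for $a_2$: an immediate computation gives $a_2 \cdot e \equiv 3\, e \pmod{a_2^3 - 27}$, whence for every polynomial $P(a_2, a_3)$,
\[
e \cdot P(a_2, a_3) \;\equiv\; P(3, 0)\, e \pmod{I}.
\]
Applying this coefficient-by-coefficient in $t_2, t_3$ to Theorem \ref{blowup-gen-mafld} yields
\[
\widehat \rD_{\hat X, w}(\e^{E_{(2)}+E_{(3)}} z)
\;=\; \rD^3_{X, w}\!\bigl(e\, z\, B(a_2, a_3)(t_2, t_3)\bigr)
\;=\; B(3, 0)(t_2, t_3)\cdot \widehat \rD_{X, w}(z),
\]
and the analogous identity for $w+E$ holds with $S$ replacing $B$. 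This establishes \eqref{blowup-b} and \eqref{blowup-s} with $b := B(3, 0)$ and $s := S(3, 0)$; in particular $b$ and $s$ are universal power series, independent of $X$.

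It remains to identify $b$ and $s$ with the explicit closed forms \eqref{b-formula}--\eqref{s-formula}. To this end, I would substitute $(a_2, a_3) = (3, 0)$ into the four PDEs \eqref{PDE1-2}, \eqref{PDE3} and \eqref{PDE-2}, obtaining a coupled system for $(b, s)$. Combined with the low-order Taylor coefficients produced by Proposition \ref{blowup-inital} at $(a_2, a_3) = (3, 0)$, the same inductive argument that furnishes the uniqueness of $B, S$ in Theorem \ref{blowup-gen-mafld} now determines $b$ and $s$ uniquely. Verifying the theorem thus reduces to a direct calculation: plug the candidate expressions into the specialized PDEs and initial conditions. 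It is convenient to rewrite each candidate as the Gaussian prefactor $\e^{-t_2^2/2 + t_3^2}$ times a sum of four plane waves $\e^{\lambda t_2 + \mu t_3}$ with $(\lambda,\mu) \in \{(\pm\sqrt 3, 0),\, (0, \pm i\sqrt 3)\}$; this linearizes all derivatives and reflects the hidden $\Z/3$-symmetry coming from the cube roots of $a_2^3 = 27$.

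The main obstacle is the bookkeeping in verifying the two fourth-order equations \eqref{PDE3} and \eqref{PDE-2}. In the exponential basis described above, both sides of each PDE become finite sums of exponentials whose coefficients one matches one by one; the identities required ultimately amount to a handful of elementary relations among the cube roots of unity, so although mildly tedious the verification is routine once the exponential expansion is in place.
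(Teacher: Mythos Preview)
Your proposal is correct and follows essentially the same route as the paper: specialize $B$ and $S$ from Theorem \ref{blowup-gen-mafld} at $(a_2,a_3)=(3,0)$, note that the resulting $b$ and $s$ inherit the PDEs and initial conditions (hence are uniquely determined), and then verify the explicit candidates directly. Your spectral-projector observation that $e\cdot P(a_2,a_3)\equiv P(3,0)\,e$ modulo $(a_3,\,a_2^3-27)$ is exactly the mechanism the paper invokes implicitly when it says ``evaluating $B$ and $S$ at $a_2=3$ and $a_3=0$ produces $b$ and $s$ with the required property''; you have simply made this step explicit.
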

Formula \eqref{b-formula} for the power series $b(t_2,t_3)$ was previously found by means of quantum field theory arguments in \cite[Formula 6.22]{EGM:blowup}. When comparing the formulas here and in \cite{EGM:blowup}, the reader should note that $t_3$ needs to be replaced with $\bi t_3$ because of different conventions in the definition of the $\mu$ map.
\begin{proof}
	Evaluating $B$ and $S$  of Theorem \ref{blowup-gen-mafld} at $a_2=3$ and $a_3=0$ produces $b$ and $s$ with the required property.
	These power series satisfy equations \eqref{PDE1-2}, \eqref{PDE3} and \eqref{PDE4} where $a_2$ and $a_3$ in the latter two equations
	are replaced with $3$ and $0$. In fact, the same proof as the existence proof in Theorem \ref{blowup-gen-mafld} shows that $b$ and $s$
	are uniquely determined by these equations and the initial vales in Proposition \ref{blowup-inital} (with $a_2$ and $a_3$ replaced by $3$ and $0$).
	The power series \eqref{b-formula} and \eqref{s-formula} satisfy the required conditions. Therefore, they are equal to $b$ and $s$.
\end{proof}

\begin{remark} \label{S--E}
	Identity \eqref{wto-w} implies that:
 \begin{equation}\label{blowup-s}
		\widehat \rD_{\hat{X},w-E}(\e^{E_{(2)}+E_{(3)}}z)=\hDXw(z)s(t_2,-t_3)
 \end{equation}
\end{remark}

\section{Floer Homologies for Closed 3-manifolds} \label{FH}
\subsection{Admissible Pairs} \label{IN}
A pair $(Y,\gamma)$ of a connected closed 3-manifold and an embedded oriented 1-manifold is {\it $N$-admissible} if there is an embedded oriented surface $\Sigma$ in $Y$ such that the integer $\Sigma \cdot \gamma$ is coprime to $N$. Suppose $Q$ is the $\U(N)$-bundle associated to the pair $(Y,\gamma)$. The admissibility of $(Y,\gamma)$ is what is called the {\it non-integral} condition for the $\U(N)$-bundle $Q$ in \cite{KM:YAFT}. In particular, we can use the construction of  \cite{KM:YAFT} and associate the {\it instanton Floer homology group} $\IN(Y,\gamma)$ to an $N$-admissible pair $(Y,\gamma)$. Instanton Floer homology can be lifted to a functor from a cobordism category $\text{\sc cob}_{\mathbb A}$ to a certain category of vector spaces.

An object of the category $\text{\sc cob}_{\mathbb A}$ is an $N$-admissible pair. A morphism from a pair $(Y_0,\gamma_0)$ to another pair $(Y_1,\gamma_1)$ is a triple $(W,w,z)$ where $W:Y_0 \to Y_1$ is a cobordism, $w:\gamma_0 \to \gamma_1$ is a 2-cycle, and $z \in \mathbb A(W)^{\otimes (N-1)}$. The composition of two morphisms:
\begin{equation*}
	(W,w,z):(Y_0,\gamma_0) \to (Y_1,\gamma_1) \hspace{1cm} (W',w',z'):(Y_1,\gamma_1) \to (Y_2,\gamma_2)
\end{equation*}
is equal to $(W'\circ W,w'\circ w,z'\cdot z)$.

Suppose {$\text{\sc vector}_n$} is the category of relatively $\Z/n\Z$-graded vector spaces over $\C$. An object of this category is a vector space $V$ with a direct sum decomposition:
\begin{equation*}
	V=\bigoplus_{j\in J} V_j
\end{equation*}
where $\Z/n\Z$ acts transitively and freely on $J$. A morphism in this category from $V=\bigoplus_{j\in J} V_j$ to $V'=\bigoplus_{j'\in J'} V'_{j'}$ is a complex linear map $f:V \to V'$ such that $f$ maps each $V_j$ to a summand $V'_{h(j)}$ of $V'$ such that $h(j+k)=h(j)+k$. Let $\text{\sc p-vector}_n$ be the category that has the same objects as {$\text{\sc vector}_n$}. A morphism in {$\text{\sc p-vector}_n$} is a vector space homomorphism as above which is well-defined only up to a sign. Instanton Floer homology gives a functor $\IN: \text{\sc cob}_{\mathbb A} \to \text{\sc p-vector}_{4N}$.

\begin{remark}
	The invariant constructed in \cite{KM:YAFT} is more general than the one we described here.
	In \cite{KM:YAFT}, a version of
	instanton Floer homology is constructed for a triple $(Y,\gamma,K)$ where $K$ is a link in $Y$ and $\gamma$ determines a
	$\U(N)$-bundle on $Y$ that satisfies a certain non-integral condition. We need to consider only the case that $K$ is
	the empty link. On the other hand, in \cite{KM:YAFT}, the cobordism maps $\IN(W,w,z)$ are defined only in the case that $z=1$.
	The more general case, is a straightforward generalization and is reviewed below.
\end{remark}

For an $N$-admissible pair $(Y,\gamma)$, the vector space $\IN(Y,\gamma)$ is defined by applying Morse homological methods to the Chern-Simons functional $\CS:\mathcal B(Y,\gamma) \to \R$. The admissibility condition implies that all critical points of $\CS$ are irreducible \cite[Proposition 3.1]{KM:YAFT}. Therefore, we can arrange for $\CS_\pi$, a perturbation of the Chern-Simons functional with a small holonomy perturbation, such that all of its critical points are irreducible and non-degenerate \cite[Proposition 3.10]{KM:YAFT}. Then $\CS_\pi$ has finitely many critical points. Suppose $\alpha$ and $\beta$ are two critical points of $\CS_\pi$ and $p$ is a path along $([0,1]\times Y,[0,1]\times \gamma)$ based at $\alpha$ and $\beta$ on $\{0\}\times Y$ and $\{1\}\times Y$. We will write $\mathcal M_p(\alpha,\beta)$ for the moduli space of the solutions to the perturbed ASD equation on $\R \times Y$ associated to the path $p$. Here the perturbation of the ASD equation is induced by the perturbation of the Chern-Simons functional, i.e., the elements of $\mathcal M_p(\alpha,\beta)$ can be regarded as the downward gradient flow lines of $\CS_\pi$. We can also assume that $\CS_\pi$ is chosen such that the elements of $\mathcal M_p(\alpha,\beta)$ for all choices of $\alpha$, $\beta$ and $p$ are regular \cite[Proposition 3.18]{KM:YAFT}. There is an $\R$-action on $\mathcal M_p(\alpha,\beta)$ given by translation along the $\R$-factor. The quotient space is denoted by $\breve{\mathcal M}_p(\alpha,\beta)$. The dimension formula of the previous section implies that:
\begin{equation*}
	\dim(\breve{\mathcal M}_p(\alpha,\beta))=\deg(\alpha)-\deg(\beta)-1 \mod 4N
\end{equation*}

Instanton Floer homology of the pair $(Y,\gamma)$ is given by the homology of a chain complex $(\mathfrak C^\pi_*,d)$ associated to the functional $\CS_\pi$. The vector space $\mathfrak C^\pi_*$ is freely generated by the critical points of $\CS_\pi$. The differential of a generator $\alpha$ of $\mathfrak C^\pi_*$ is also defined as:
\begin{equation}\label{differential}
	d(\alpha):=\sum_{p:\alpha\to \beta} \#\breve{\mathcal M}_p(\alpha,\beta)\beta
\end{equation}
where the sum is over all paths $p$ that $\breve{\mathcal M}_p(\alpha,\beta)$ is 0-dimensional. These 0-dimensional moduli spaces are compact and we orient them as in \cite[Subsection 3.6]{KM:YAFT}. Then $\#\breve{\mathcal M}_p(\alpha,\beta)$ denotes the signed count of the points in $\breve{\mathcal M}_p(\alpha,\beta)$. We use the Floer grading $\deg$ to define a relative $\Z/4N\Z$-grading on $\mathfrak C^\pi_*$. Then the differential $d$ decreases this grading by 1. The relatively $\Z/4N\Z$-graded vector space $\IN(Y,\gamma)$ is defined to be the homology of the chain complex $(\mathfrak C^\pi_*,d)$. This homology group is independent of the choice of the Riemannian metric on $Y $ and the perturbation of the Chern-Simons functional.

Suppose the Chern-Simons functional associated to the pair $(Y,\gamma)$ is {\it Morse-Bott}. That is to say, the set of critical points of $\CS$ is a smooth manifold, and the Hessian of $\CS$ is invertible only in the normal direction to the critical manifold. Following the above definition, we need to work with perturbations of the Chern-Simons functional to define $\IN(Y,\gamma)$. However, one can still derive some information about $\IN(Y,\gamma)$ using the unperturbed Chern-Simons functional:
\begin{prop}
	If the Chern-Simons functional of the pair $(Y,\gamma)$ is Morse-Bott, then $\dim(\IN(Y,\gamma))\leq \dim(H_*({\rm crit}(\CS)))$.
\end{prop}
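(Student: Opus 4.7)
The plan is to adapt standard Morse-Bott perturbation theory to the instanton Floer setting. Write $C:=\mathrm{crit}(\CS)\subset \mathcal B(Y,\gamma)$; by admissibility of $(Y,\gamma)$, this is a closed smooth manifold consisting of irreducible connections. First I would pick a smooth Morse function $f\colon C\to \mathbb R$ with critical set $\{c_1,\dots,c_m\}$. Using the density of holonomy perturbations in the space of admissible perturbations of the Chern-Simons functional \cite[Proposition 3.10]{KM:YAFT}, I would construct a small holonomy perturbation $\pi$ whose effect on a tubular neighborhood of $C$ is, to leading order, the addition of a lift of $f$. For $\|\pi\|$ sufficiently small, the Morse-Bott non-degeneracy of $\CS$ transverse to $C$ combined with the Morse non-degeneracy of $f$ along $C$ forces the critical set of $\CS_\pi$ to be exactly $\{c_1,\dots,c_m\}$, all points being irreducible and non-degenerate.

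Next I would filter the resulting Floer chain complex $(\mathfrak C^\pi_*, d)$ by the value of the unperturbed functional $\CS$ at the underlying generators. As $\|\pi\|\to 0$, the values $\CS_\pi(c_j)$ cluster around the (finitely many) values of $\CS$ on the components of $C$, so the filtration is well-defined and bounded. Its associated graded is
\[
E_0 \;=\; \bigoplus_i CM_*(f|_{C_i};\mathbb C),
\]
where $C_i$ runs over the connected components of $C$ and $CM_*$ denotes the Morse chain complex with complex coefficients.

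The key step is to identify the $E_0$-differential with the direct sum of the Morse differentials of the restrictions $f|_{C_i}$. This requires an adiabatic-limit and gluing argument showing that, for $\|\pi\|$ sufficiently small, the gradient flow lines of $\CS_\pi$ of small energy (those relevant to the $E_0$-differential) are confined to a neighborhood of a single component $C_i$ and are in orientation-preserving bijection with Morse trajectories of $f|_{C_i}$. Granting this, the $E_1$-page becomes
\[
E_1 \;\cong\; \bigoplus_i HM_*(f|_{C_i};\mathbb C) \;\cong\; \bigoplus_i H_*(C_i;\mathbb C) \;=\; H_*(C;\mathbb C).
\]
Since the spectral sequence converges to $\IN(Y,\gamma)$, I obtain
\[
\dim \IN(Y,\gamma) \;\leq\; \dim E_\infty \;\leq\; \dim E_1 \;=\; \dim H_*(C;\mathbb C),
\]
which is the claimed bound.

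The principal obstacle is the adiabatic-limit identification used in the previous paragraph. While the analogous statement is standard in finite-dimensional Morse-Bott theory and in symplectic Floer theory (cf. the work of Frauenfelder and of Bourgeois--Oancea), carrying it out rigorously in the holonomy-perturbation framework of \cite{KM:YAFT} requires instanton-specific compactness and gluing estimates, together with a careful choice of $\pi$ so that the Hessian in normal directions is controlled uniformly along $C$. Once this technical input is in place, the remainder is pure homological algebra of filtered complexes.
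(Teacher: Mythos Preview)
Your approach is exactly what the paper has in mind: its proof is a single sentence invoking ``the standard spectral sequence that starts from the homology of the critical manifold of $\CS$ and abuts to the instanton Floer homology,'' and your proposal is a sketch of how that spectral sequence is built (perturb by a Morse function on $C$, then filter).

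There is, however, a real gap in your filtration step. You propose to filter $(\mathfrak C^\pi_*,d)$ by the value of $\CS$ at the generators, but in this setting $\CS$ is only $\R/\Z$-valued (see subsection~\ref{cyl-mod}). The values of $\CS$ on the finitely many components of $C$ therefore carry no linear order, and the Floer differential need not decrease any chosen lift: an index-one trajectory can run from a component with $\CS$-value $0.1$ to one with value $0.9$ (energy $0.2$), which \emph{increases} your proposed filtration. Without a filtration that $d$ actually respects, the decomposition $d=d_0+d'$ with $d_0$ the Morse differential does not by itself yield $\dim H(d)\le\dim H(d_0)$; one can write down finite-dimensional counterexamples to that bare implication.

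The standard remedy is to filter by energy rather than by $\CS$-value. After your small perturbation, trajectories contributing to $d$ are either ``short'' (confined near one $C_i$, with energy tending to $0$ as $\|\pi\|\to 0$) or ``long'' (energy bounded below by some $\varepsilon_0>0$ independent of $\pi$), so there is a genuine gap. One then either passes to a Novikov-type completion weighted by $T^{\text{energy}}$, or uses the cascade model (Austin--Braam, Fukaya, Bourgeois, Frauenfelder) where the filtration is by the number of long Floer pieces. Either way the $E_0$-differential is the Morse differential on $C$, $E_1\cong H_*(C)$, and the rest of your argument goes through unchanged.
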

\begin{proof}
	This claim can be verified using the standard spectral sequence that starts from the homology of the critical manifold of $\CS$
	and abuts to the instanton Floer homology of $(Y,\gamma)$.
\end{proof}

Suppose $(W,w):(Y_0,\gamma_0) \to (Y_1,\gamma_1)$ is a cobordism of $N$-admissible pairs. After fixing Riemannian metrics on $Y_0$ and $Y_1$, we form the non-compact manifold $W^+$ by adding cylindrical ends to $W$. Suppose also a perturbation of the Chern-Simons functional for $(Y_i,\gamma_i)$ is fixed such that we can form the chain complex $(\mathfrak C_*^{\pi_i},d)$ as above. We use a perturbation of the ASD equation on $W^+$ which is compatible with the chosen perturbations of the Chern-Simons functionals. Given a generator $\alpha_i$ of $\mathfrak C_*^{\pi_i}$ and a path $p:\alpha_0 \to \alpha_1$ along $(W,w)$, let $\mathcal M_p(W,w; \alpha_0,\alpha_1)$ be the moduli space of the corresponding equation. We can pick a perturbation of the ASD equation on $W$ such that this moduli space is a smooth manifold. By fixing a homology orientation of $W$, we can also assume that all such moduli spaces are oriented \cite{KM:YAFT}. Moreover, if $\mathcal M_p(W,w; \alpha_0,\alpha_1)$ is 0-dimensional, then it is compact.

Define a map $\mathfrak C(W,w):\mathfrak C_*^{\pi_0} \to \mathfrak C_*^{\pi_1}$ by:
\begin{equation} \label{cob-map}
	\mathfrak C(W,w)(\alpha_0):=\sum_{p:\alpha_0 \to \alpha_1} \#\mathcal M_p(W,w; \alpha_0,\alpha_1)\alpha_1
\end{equation}
where the sum is over all paths that $\mathcal M_p(W,w; \alpha_0,\alpha_1)$ is 0-dimensional. The term $\#\mathcal M_p(W,w; \alpha_0,\alpha_1)$ is equal to the signed count of the points in $\mathcal M_p(W,w; \alpha_0,\alpha_1)$. In fact, \eqref{cob-map} defines a chain map and the induced map at the level of homology, $\IN(W,w,1):\IN(Y_0,\gamma_0) \to \IN(Y_1,\gamma_1)$, determines a morphism of the category $\text{\sc p-vector}_{4N}$. More generally, given $z \in \mathbb A(W)^{\otimes (N-1)}$, we can cut down the moduli space $\mathcal M_p(W,w; \alpha_0,\alpha_1)$ using $z$ as in subsection \eqref{pol-invts}, and construct a smooth submanifold $\mathcal M_p(W,w; \alpha_0,\alpha_1,z)$ such that:
\begin{equation} \label{dim-form}
	\dim(\mathcal M_p(W,w; \alpha_0,\alpha_1,z))=\dim(\mathcal M_p(W,w; \alpha_0,\alpha_1))-\deg(z).
\end{equation}
To be more precise, $\mathcal M_p(W,w; \alpha_0,\alpha_1,z)$ is a linear combination of the spaces whose dimensions are given by \eqref{dim-form}. Replacing $\mathcal M_p(W,w; \alpha_0,\alpha_1)$ in \eqref{cob-map} with $\mathcal M_p(W,w; \alpha_0,\alpha_1,z)$ determines a new chain map and the associated map at the level of homology is $\IN(W,w,z): \IN(Y_0,\gamma_0) \to \IN(Y_1,\gamma_1)$. This map is well-defined, namely, it does not depend on the metric on $W$, the perturbation of the ASD equation and the choices involved in the construction of $\mathcal M_p(W,w; \alpha_0,\alpha_1,z)$. Because we fix the central part of connections associated to $(W,w)$, the cobordism maps only depend on the induced ${\rm PU}(N)$-bundles. In particular, if $w$ is replaced with $w+nw'$, for a closed 2-cycle $w'$, then the induced ${\rm PU}(N)$-bundles are the same and they determine the same cobordism maps. This property is the analogue of Identity \eqref{w+Nw'} for closed 4-manifolds.

\begin{remark}
	A priori it might seem that the cycles in 3-manifolds and 4-dimensional cobordisms only keep track of the first
	Chern classes of $\U(N)$-bundles. However, they have strictly more information than these cohomology classes.
	For example, an element of $H^2(Y,\Z)$, for a 3-manifold $Y$, determines a $\U(N)$-bundle up to an isomorphism of
	$\U(N)$-bundles. But an embedded 1-manifold $\gamma$ in $Y$ determines a $\U(N)$-bundle up to a canonical isomorphism.
	(See \cite[Section 4]{KM:Kh-unknot} for more details.) As a manifestation of this issue,
	suppose cohomology classes $\alpha$ and $\alpha'$ are given on cobordisms
	$W:Y_0 \to Y_1$ and $W':Y_1 \to Y_2$ such that the restriction of these classes on $Y_1$ agree with each other. Then there might
	be an ambiguity to glue these cohomology classes and construct an element of $H^2(W'\circ W;\Z)$. On the other hand, there
	is not such an ambiguity if $\alpha$  and $\alpha'$ are represented by embedded surfaces $w\subset W$ and $w' \subset W'$ that
	$w|_{Y_1}=w'|_{Y_1}$.
\end{remark}

\begin{remark}
	There is not much difficulty in extending the definition of instanton Floer homology to the case of disconnected 3-manifolds. Suppose $Y$ is a disconnected 3-manifold and $\gamma \subset Y$ is a 1-cycle.
	Then we say $(Y,\gamma)$ is $N$-admissible if for each connected component $Y_0$ of $Y$, the pair $(Y_0,\gamma \cap Y_0)$ is $N$-admissible. Then we can repeat the definition analogous to the connected case and
	construct instanton Floer homology for the pair $(Y,\gamma)$. This instanton Floer homology can be computed in terms of the invariants for the connected components:
	\begin{equation*}
		\IN(Y,\gamma):=\IN(Y_1,\gamma_1) \otimes \dots \otimes \IN(Y_n,\gamma_n)
	\end{equation*}
	Here $Y_i$'s are connected components of $Y$ and $\gamma_i=\gamma \cap Y_i$. It is also possible to define cobordism maps for a cobordism of pairs $(W,w)$ between two (not necessarily connected) $N$-admissible pairs
	and $z \in \mathbb A(W)^{\otimes (N-1)}$. However, these maps are not as well-behaved with respect to composition as in the previous case. Consider two triples:
	\begin{equation*}
		(W,w,z):(Y_0,\gamma_0) \to (Y_1,\gamma_1) \hspace{1cm} (W',w',z'):(Y_1,\gamma_1) \to (Y_2,\gamma_2).
	\end{equation*}
	If $Y_1$ is not connected, what we can say about the cobordism maps is:
	\begin{equation*}
		\IN(W'\circ W,w'\circ w,z\cdot z')=c\cdot \IN(W',w',z') \circ \IN(W,w,z)
	\end{equation*}
	for some non-zero constant $c$. The simplest way to fix this issue about functoriality is to work with a variation of the category $\text{\sc p-vector}_{4N}$ where the morphisms are well-defined only up to a non-zero scalar.
	We follow this approach when it is necessary to work with disconnected 3-manifolds.
\end{remark}

\begin{remark}
	A slightly unsatisfying point about $\IN$ is the sign ambiguity in the definition of the cobordism maps. This issue can be avoided
	in a strightforward way. In the case that $N$ is an even number, we need to change the definition of the category
	${\text {\sc cob}_{\mathbb A}}$ slightly. Let ${\widetilde{\text {\sc cob}}_{\mathbb A}}$ have the same objects as
	${\text {\sc cob}_{\mathbb A}}$. But a morphism of this new category is a quadruple $(W,w,z,o_W)$ where $W$, $w$ and
	$z$ are as before and $o_W$ is a homology orientation for $W$. Then $\IN$ can be lifted to a functor
	$\widetilde {\rm I}_*^N:{\widetilde{\text {\sc cob}}_{\mathbb A}} \to \text{\sc vector}_{4N}$.
	The main point is that initially there is an ambiguity in the orientation of the moduli spaces  $\mathcal M_p(W,w; \alpha_0,\alpha_1)$
	that appear in the definition of the cobordism maps, and a homology orientation of $W$ fixes this ambiguity. In the case that $N$ is odd, there is not
	such ambiguity and one can readily lift $\IN$ to $\widetilde {\rm I}_*^N:{\text {\sc cob}}_{\mathbb A} \to \text{\sc vector}_{4N}$.
\end{remark}

The definition of cobordism maps can be extended to the case that one of the ends is the empty pair. Suppose $X$ is a 4-manifold with boundary $Y$ and $w$ is a properly embedded surface in $X$ such that $\gamma:=\partial w=w \cap Y$. Assume that $(Y,\gamma)$ is an $N$-admissible pair. Given any element $z\in \mathbb A(X)^{\otimes (N-1)}$, we can form an element ${\rm D}_{X,w}^{N}(z)$ of $\IN(Y,\gamma)$. This construction is the extension of $\U(N)$-polynomial invariants for closed 4-manifolds in the previous section. Alternatively, $(X,w,z)$ can be regarded as a cobordism from the empty pair to the $N$-admissible pair $(Y,\gamma)$. Although the empty pair is not $N$-admissible, the formula \eqref{cob-map} can be used to define ${\rm D}_{X,w}^{N}(z)$.

\begin{remark} \label{Z/4N-grading-II}
	Given $z_i\in \mathbb A(X_i)^{\otimes (N-1)}$, we can consider the relative elements ${\rm D}_{X_i,w_i}^N(z_i)\in \IN(Y,\gamma)$. Each of these relative elements lies in a graded summand of $\IN(Y,\gamma)$.
	Therefore, the difference $\deg({\rm D}_{X_2,w_2}^N(z_2))-\deg({\rm D}_{X_1,w_1}^N(z_1))$ of the relative $\Z/4N\Z$-gradings is a well-defined number in $\Z/4N\Z$ and is equal to:
	\begin{equation*}
		2(N+1)(w_2^2-w_1^2)-(N^2-1)(\frac{\chi(X_2)+\sigma(X_2)}{2}-\frac{\chi(X_1)+\sigma(X_1)}{2})-(\deg(z_2)-\deg(z_1))
	\end{equation*}
	Note that the term $w_i^2$ is not well-defined and depends on a framing of the 1-cycle $\gamma$. However, the difference $w_2^2-w_1^2$ is independent of the framing and the above expression is well-defined.
	A similar formula can be written for the difference between the gradings of two cobordisms with the same ends.
\end{remark}

Suppose $(X,w)$ is a cobordism from an $N$-admissible pair $(Y,\gamma)$ to the empty pair, namely, $X$ is a 4-manifold whose boundary is identified with $\overline Y$, the 3-manifold $Y$ with the reverse orientation. The boundary of the embedded surface $w$ is also identified with $\overline \gamma$. Suppose also $z \in \mathbb A(X)^{\otimes (N-1)}$. Similarly, we can construct a functional ${\rm D}^{X,w}_{N}(z):\IN(Y,\gamma) \to \C$.

Similar to cobordism maps, ${\rm D}_{X,w}^{N}(z)$ and ${\rm D}^{X,w}_{N}(z)$ satisfy some functorial properties. For example, if $(X,w,z)$ and $(W,w',z')$ are chosen such that:
\begin{equation*}
	\partial (X,w)=(Y_0,\gamma_0) \hspace{1cm} (W,w',z'):(Y_0,\gamma_0) \to (Y_1,\gamma_1),
\end{equation*}
then:
\begin{equation*}
	{\rm D}_{W\circ X,w'\circ w}^{N}(z\cdot z')=\IN(W',w',z') \circ {\rm D}_{X,w}^{N}(z).
\end{equation*}	
A similar property holds for ${\rm D}^{X,w}_{N}(z)$. There is also an important relation among these invariants and invariants of closed manifolds form the previous section:

\begin{prop} \label{gluing}
	Suppose $Y$ is a closed and connected 3-manifold, $(Y,\gamma)$ is an $N$-admissible pair,
	and $X_1$ and $X_2$ are two smooth 4-manifolds with $\partial X_1=Y$ and $\partial X_2=\overline Y$. Suppose also
	oriented properly embedded surfaces $w_i \subset X_i$ are given such that $\partial w_1=\gamma$ and $\partial w_2=\overline \gamma$. If
	$b^+(X_2 \circ X_1)\geq 2$, then for $z_i \in \mathbb A(X_i)^{\otimes (N-1)}$:
	\begin{equation} \label{gluing-form}
		{\rm D}_{X_2 \circ X_1,w_2\circ w_1}^{N}(z_1\cdot z_2)={\rm D}^{X_2,w_2}_{N}(z_2) \circ {\rm D}_{X_1,w_1}^{N}(z_1)
	\end{equation}
	If $b^+(X_2 \circ X_1)=1$, then a similar formula holds where the left hand side of \eqref{gluing-form} is interpreted as the invariant of the
	chamber associated to metrics with a long neck along $Y$.
\end{prop}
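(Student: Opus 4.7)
The plan is to prove this by the standard neck-stretching argument, using the fact that polynomial invariants are metric-independent when $b^+\geq 2$. First, I would fix a collar of $Y$ in $X_2\circ X_1$ and consider the one-parameter family of Riemannian metrics $g_T$ obtained by inserting a cylindrical region $[-T,T]\times Y$. Both sides of \eqref{gluing-form} can be set up with matching choices: fix a small holonomy perturbation $\CS_\pi$ of the Chern-Simons functional on $(Y,\gamma)$ whose critical points are irreducible and non-degenerate, and arrange that the perturbation of the ASD equation on $(X_2\circ X_1,g_T)$, on $X_1^+$, and on $X_2^+$ all agree on the cylindrical regions with the perturbation induced by $\CS_\pi$. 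I would also arrange the cut-down data: for each $z_i$, represent $\mu(z_i)$ by submanifolds $V_{i_j}(\Sigma_j)$ associated to cycles $\Sigma_j\subset X_i$ whose neighborhoods are bounded away from the neck, so that the same cut-down data makes sense on $X_2\circ X_1$, on $X_1^+$, and on $X_2^+$ simultaneously.

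Second, I would analyze the degeneration as $T\to\infty$. By the standard Uhlenbeck-Floer compactness argument, for a sequence $T_n\to\infty$ and any sequence of cut-down ASD connections on $(X_2\circ X_1,g_{T_n})$, after passing to a subsequence, the restrictions converge (modulo gauge and bubbling) to a pair of finite-energy ASD connections $[A_1]\in\widetilde{\mathcal M}_{p_1}(X_1,w_1;\alpha)$ and $[A_2]\in\mathcal M_{p_2}(X_2,w_2;\alpha)$ with matching asymptotic limit $\alpha$, a critical point of $\CS_\pi$, together with possibly some broken trajectories along the cylinder. When the expected dimension is zero and the pieces are regular, energy counting plus the dimension formula \eqref{ind-DA} rule out both interior bubbling and extra trajectory breaking—these contribute only in codimension at least two or are incompatible with the dimension constraint. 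Hence each limit lies in a product $\mathcal M_{p_1}(X_1,w_1;\alpha,z_1)\times \mathcal M_{p_2}(X_2,w_2;\alpha,z_2)$ of zero-dimensional cut-down moduli spaces, and only finitely many paths $p=p_2\circ p_1$ contribute because $\kappa(p)$ is bounded.

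Third, I would invoke the standard gluing theorem, which provides the converse: for each pair $([A_1],[A_2])$ of regular ASD connections with matching limit $\alpha$, and each $T$ sufficiently large, there is a unique element of the cut-down moduli space on $(X_2\circ X_1,g_T)$ obtained by gluing $A_1$ and $A_2$ along the neck using a partition of unity on the path-space of connections. Coherent orientations on parameterized moduli spaces, as used in defining $\IN$ and cobordism maps in \cite{KM:YAFT}, guarantee that the sign at each glued point equals the product of the signs of its factors. Summing over $\alpha$ and over $(p_1,p_2)$ then identifies the signed count defining the left-hand side with
\begin{equation*}
\sum_{\alpha}\sum_{p_1,p_2}\#\mathcal M_{p_1}(X_1,w_1;\alpha,z_1)\cdot\#\mathcal M_{p_2}(X_2,w_2;\alpha,z_2),
\end{equation*}
which by the chain-level definitions is precisely $\rD^{X_2,w_2}_N(z_2)\circ\rD^N_{X_1,w_1}(z_1)$.

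The principal obstacle is the gluing step, specifically verifying that the signs agree with the conventions used to define $\IN$ and the relative invariants $\rD^N_{X_1,w_1}$, $\rD^{X_2,w_2}_N$; this is the kind of bookkeeping carried out in \cite{KM:YAFT} and in subsection \ref{cyl-mod}, and for us will be executed in detail in Section 6. A secondary technical point is checking that cutting down by the localized representatives of $\mu(z_1)$ and $\mu(z_2)$ commutes with the gluing, which reduces to noting that on $(X_2\circ X_1,g_T)$ the universal bundle restricts over the complement of the neck to the pullback of the universal bundles on $\mathcal B^*(X_1^+)\times X_1$ and $\mathcal B^*(X_2^+)\times X_2$, so the chosen representatives of $V_{i_j}(\Sigma_j)$ can be used unchanged. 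For $b^+(X_2\circ X_1)=1$, the argument is identical once one fixes a chamber: the stretched metrics $g_T$ for large $T$ all lie in the chamber associated to a long neck along $Y$, and \eqref{gluing-form} computes exactly the invariant of that chamber.
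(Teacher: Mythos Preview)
Your proposal is correct and follows essentially the same approach as the paper: the paper does not give a self-contained proof of this proposition but defers it to the gluing machinery of Section~\ref{long-neck}, where Theorem~\ref{stretching-neck-compactness} (chain convergence under neck-stretching), Theorem~\ref{geometric-gluing} (the gluing diffeomorphism), and Remark~\ref{cut-down} (compatibility with cut-down data) together carry out exactly the three steps you outline. The only minor imprecision is your phrase ``codimension at least two'' for bubbling---in this setting a bubble drops the index by $4N$, not two---but this does not affect the argument.
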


There is another way that the invariants of a closed 4-manifold can be related to the cobordism maps:
\begin{prop} \label{trace}
	Let $(Y,\gamma)$ be an $N$-admissible pair, and $(W,w):(Y,\gamma) \to (Y,\gamma)$ be a cobordism of pairs. Let $W$ and $Y$ be connected.
	Let $\widetilde W$ be the closed 4-manifold, given by gluing the incoming end of $W$ to its outgoing end.
	Suppose also $\widetilde w\subset \widetilde W$ is defined similarly. If $b^+(\widetilde W)\geq 2$, then for $z \in \mathbb A(W)^{\otimes (N-1)}$:
	\begin{equation} \label{gluing-form-2}
		{\rm D}_{\widetilde W,\widetilde w}^{N}(z) =N\cdot {\rm str}(\IN(W,w,z))
	\end{equation}
	where ${\rm str}$ denotes the super-trace of $\IN(W,w,z)$.\footnote{Since the $\Z/4N\Z$-grading  on instanton Floer homology is defined only relatively,
	there is a sign
	ambiguity on the right hand side, even after fixing a homology orientation for $W$. There is also a sign ambiguity on the left hand side because we did not fix
	any homology orientation for $\widetilde W$. Therefore, Equation \eqref{gluing-form-2} should be interpreted as an equality up to sign. Although we
	do not need it here, both sign ambiguities can be fixed after fixing a homology orientation for $W$.}
	If $b^+(\widetilde W)=1$, then a similar formula holds where the left hand side of \eqref{gluing-form-2} is interpreted as the invariant of the
	chamber associated to metrics with a long neck along $Y$.
\end{prop}

\subsection{Floer Homology of $S^1 \times \Sigma$} \label{HF-S1-S}
Suppose $\Sigma$ is the connected oriented Riemann surface of genus $g$. The 3-manifold $Y_g:= \Sigma \times S^1$ and the embedded 1-manifold $\gamma_{g,d}:=\{x_1 ,\,\dots,\ x_d\} \times S^1$ determine an $N$-admissible pair if $(d,N)=1$. The $\U(N)$-bundle associated to the pair $(Y_g,\gamma_{g,d})$ is the pull-back of a $\U(N)$-bundle $Q_d$ of degree $d$ on $\Sigma$. Let also $L_d$ denote the determinant of $Q_d$. Recall that the space $\mathcal A(Y_g,\gamma_{g,d})$ is constructed using an auxiliary connection on $L_{\gamma_{g,d}}$. We assume that this connection is the pull-back of a connection $B_0$ on $L_d$. Similar to the 3-dimensional case, $\mathcal A(\Sigma, Q_d)$ is  defined to be the space of $\U(N)$-connections on $Q_d$ whose determinants are equal to $B_0$. The space $\mathcal G_d$ is also defined to be the group of determinant 1 automorphisms of $Q_d$.

Let $\VgdN$ be the vector space $\IN(Y_g,\gamma_{g,d})$. The critical points of the Chern-Simons functional for the pair $(Y_g,\gamma_{g,d})$ can be identified with $N$ copies of the following space:
\begin{equation*}
	\Nnd:=\{A \in \mathcal A(\Sigma, Q_d) \mid F_0(A)=0 \}/\mathcal G_d.
\end{equation*}
In fact, we can pull back any element of $\Nnd$ to $\Sigma \times [0,1]$ and then identify the connections on $\Sigma \times \{0\}$ and $\Sigma \times \{1\}$ using an element of $\mathcal G_d$ which is induced by a central element of $\SU(N)$. The space $\Nnd$ is a smooth manifold of dimension $(N^2-1)(2g-2)$ \cite{AB:YM-RS}. Moreover, the Chern-Simons functional in this case is Morse-Bott, hence $\dim(\VgdN)\leq N\dim(H^*(\Nnd))$.

The space $\Nnd$ has been extensively studied in the literature. This space is a K\"ahler manifold and can be identified with the moduli space of {\it stable bundles} of rank $N$ and degree $d$ on a Riemann surface of genus $g$ with a fixed determinant. The Poincar\'e polynomial of this manifold can be computed inductively \cite{HN:Coh-Vec-RS,DR:Coh-Vec-RS,AB:YM-RS}. Furthermore, a set of generators for the cohomology ring of this space is given \cite{AB:YM-RS}. We review a slightly reformulated description of these generators which are more suitable for our purposes here.

Consider the 4-manifold $X_g:=\Sigma \times S^2$  and the surface $\chi_g:=\{x_1,\,\dots,\,x_d\} \times S^2$. The pull back of the elements of $\Nnd$ to $X_g$ are ASD connections associated to the pair $(X_g,\chi_g)$ with  $\kappa=0$. In particular, $\Nnd$ can be regarded as a subset of $\mathcal B_0(X_g,\chi_g)$. Consider the subalgebra  $\mathbb A^{N}_{g}:=\mathbb A(\Sigma)^{\otimes (N-1)}$ of $\mathbb A(X_g)^{\otimes (N-1)}$. The $\mu$-map in \eqref{mu-map} determines a graded algebra homomorphism $\Psi: \mathbb A^{N}_{g} \to H^*(\Nnd)$. Note that this map is equivariant with respect to ${\rm Diff}(\Sigma)$, the group of diffeomorphisms of $\Sigma$.

\begin{prop} \label{cohomology-gen}
	The map $\Psi$ is surjective. In particular, the cohomology ring of $\Nnd$ is generated by the following elements:
	\begin{equation}\label{coh-class}
		p_r:=\Psi(a_r) \hspace{1cm} q^j_r:=\Psi(l^j_{(r)})  \hspace{1cm} s_r:=\Psi(\Sigma_{(r)})
	\end{equation}
	where $2\leq r \leq N$ and $\{l^j\}_{1\leq j \leq 2g}$ forms a set of generators for $H_1(\Sigma,\Z)$.
\end{prop}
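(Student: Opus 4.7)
The plan is to reduce this to the classical theorem of Atiyah and Bott on the cohomology of the moduli space of stable bundles on a Riemann surface. The first step is to identify the restriction of the 4-dimensional universal bundle to $\Nnd$ with the standard universal bundle. Under the projection $\pi:X_g=\Sigma\times S^2\to\Sigma$, every element of $\Nnd$ is pulled back from a flat connection on $Q_d\to\Sigma$, so the embedding $\Nnd\hookrightarrow \mathcal B^*_0(X_g,\chi_g)$ fits into a commuting diagram in which the universal $\mathrm{PU}(N)$-bundle $\mathbb P$ on $\mathcal B^*_0(X_g,\chi_g)\times X_g$, restricted to $\Nnd\times X_g$, is canonically isomorphic to $(1\times\pi)^*\mathbb U$, where $\mathbb U$ is the standard universal $\mathrm{PU}(N)$-bundle on $\Nnd\times\Sigma$.

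From here I would unwind the slant-product definitions. For any $\alpha\in H_i(\Sigma)$ regarded as an element of $H_i(X_g)$ via the inclusion $\Sigma\times\{\mathrm{pt}\}\hookrightarrow X_g$, the projection formula gives $c_r(\mathbb E)/\alpha\big|_{\Nnd}=c_r(\mathbb E|_{\Nnd\times\Sigma})/\alpha$, while classes supported on the $S^2$-factor slant to zero for dimensional reasons. Since $c_r(\mathbb E)$ is a universal polynomial in $c_2(\mathbb P),\dots,c_r(\mathbb P)$ (recall the exposition preceding \eqref{sample-z}, where $\mathbb E$ is built from $\mathbb P$ via the standard representation of $\mathrm{SU}(N)$ on $(\C^N)^{\otimes N}$), the classes $p_r,\,q^j_r,\,s_r$ are polynomial expressions in the slant products $c_r(\mathbb U)/\mathrm{pt}$, $c_r(\mathbb U)/l^j$, $c_r(\mathbb U)/\Sigma$, and the same polynomial identities allow one to recover $c_r(\mathbb U)/\alpha$ from $p_r,q^j_r,s_r$ together with lower-order classes. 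In particular, the image of $\Psi$ coincides with the subalgebra of $H^*(\Nnd)$ generated by the Atiyah--Bott slant-product classes.

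The final step is to invoke the Atiyah--Bott generation theorem: for $(d,N)=1$ the moduli space $\Nnd$ is smooth, and its rational cohomology is generated by the slant products of the Chern classes of the universal $\mathrm{PU}(N)$-bundle with $H_*(\Sigma,\Q)$. Combined with the identification above, this yields the surjectivity of $\Psi$ and exhibits $p_r,q^j_r,s_r$ as a generating set.

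The main obstacle is bookkeeping rather than conceptual: one has to verify that the triangular polynomial relations between $c_r(\mathbb E)$ and $c_r(\mathbb P)$ really do allow one to recover each $c_r(\mathbb U)/\alpha$ from the classes $\Psi(a_s), \Psi(l^j_{(s)}), \Psi(\Sigma_{(s)})$ for $2\le s\le r$, and to check that the degrees match so that no nontrivial contribution is lost on restriction to $\Nnd\subset\mathcal B^*_0(X_g,\chi_g)$. Once this is done, the surjectivity statement reduces cleanly to the classical result.
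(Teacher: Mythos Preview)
Your approach is essentially the same as the paper's: identify the restriction of the four-dimensional universal $\mathrm{PU}(N)$-bundle $\mathbb P$ to $\Nnd\times\Sigma$ with the Atiyah--Bott universal bundle, and then invoke their generation theorem. The paper states this identification directly and concludes in one line.

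Your detour through $\mathbb E$ is unnecessary and stems from a slight misreading: the $\mu$-map is defined directly by $\mu(\alpha_{(r)})=c_r(\mathbb P)/\alpha$, not via $c_r(\mathbb E)$; the bundle $\mathbb E$ is introduced only later for constructing geometric representatives of these classes. So no triangular relation between $c_r(\mathbb E)$ and $c_r(\mathbb P)$ is needed here. The only bookkeeping actually required (which the paper also glosses over) is the standard triangular relation between the rational ``Chern classes'' $c_r(\mathbb P)$ of the $\mathrm{PU}(N)$-bundle and the Chern classes $c_r(\mathbb F)$ of the Atiyah--Bott $\U(N)$-bundle $\mathbb F$; these differ by polynomials in $c_1(\mathbb F)$ and lower Chern classes, so the subalgebras they generate coincide.
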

The action of ${\rm Diff}(\Sigma)$ on $H_1(\Sigma)$ factors through the action of the symplectic group $\Sp(2g)$. Therefore, this proposition implies that the same holds for $H^*(\Nnd)$.
\begin{proof}
	In \cite{AB:YM}, a universal $\U(N)$-bundle $\mathbb F$ is constructed over the product manifold $\Nnd \times \Sigma$ and it is shown that the cohomology ring of
	$\Nnd$ is generated by the following classes:
	\begin{equation} \label{tilde-coh-class}
		\tilde p_r:=c_r(\mathbb F)\slash [a] \hspace{1cm} \tilde q^j_r:=c_r(\mathbb F)\slash [l^j]  \hspace{1cm} \tilde s_r:=c_r(\mathbb F)\slash [\Sigma]
	\end{equation}
	The map $\Psi$ is defined similarly using the universal ${\rm PU}(N)$-bundle $\mathbb P$ over the space $ \mathcal B_\kappa(X_g,\chi_g) \times X_g$.
	The restriction of $\mathbb P$ to $\Nnd \times \Sigma \subset  \mathcal B_\kappa(X_g,\chi_g) \times X_g$ is isomorphic to the ${\rm PU}(N)$-bundle associated to
	$\mathbb F$. Thus the cohomology classes in \eqref{tilde-coh-class} can be identified with the corresponding ones in \eqref{coh-class} and this verifies the claim.
\end{proof}

The vector space $\VgdN$ admits a ring structure which is the analogue of the cup product on $H^*(\Nnd)$. Suppose $P$ is the pair of pants cobordism from two copies of $S^1$ to one copy of $S^1$. Then the triple $(\Sigma \times P,\{x_1,\dots,x_d\}\times P,1)$ defines a map $m:\VgdN \otimes \VgdN \to \VgdN$. To be more precise, we need to fix a homology orientation in the case that $N$ is even. Suppose $\Delta_g:= \Sigma \times D^2 $ and $\delta_{g,d}:=\{x_1,\dots,x_d\}\times D^2$ where $D^2$ is the 2-dimensional disc. We fix an arbitrary homology orientation on $\Delta_g$ and let $e:={\rm D}^N_{\Delta_g,\delta_{g,d}}(1) \in \VgdN$. We shall see in the proof of Proposition \ref{non-deg-pairing} that $e$ is non-zero. We choose a homology orientation on $\Sigma\times P$ such that $m(e,e)=e$. Then functoriality of instanton Floer homology can be used to show that $m$ defines a ring structure on $\VgdN$ with the unit $e$. We turn the relative $\Z/4N\Z$-grading on $\VgdN$ into an absolute grading by requiring that the unit element has degree 0. With this convention, the multiplication map is $\Z/4N\Z$-graded, namely, the product of two elements of degree $i_1$ and $i_2$ has degree $i_1+i_2$.

Suppose $B$ is a cylinder, regarded as a cobordism with two circles as the incoming end and the empty outgoing end. Then the pair:
\begin{equation} \label{Omega}
	\Omega_g:=\Sigma \times B \hspace{1cm}\omega_g:=\{x_1,\dots,x_d\}\times B
\end{equation}
determines a pairing $\langle\,,\,\rangle:\VgdN\otimes \VgdN \to \C$ which is defined in the following way after we choose an arbitrary homology orientation on $\Omega_g$:
\begin{equation*}
	{\rm D}^{\Omega_g,\omega_g}_N(1).
\end{equation*}

\begin{prop} \label{VgdN-vector-space}
	The space $\VgdN$ as a complex vector space with an action of $\Diff(\Sigma)$, is isomorphic to
	$H^*(\Nnd (\Sigma))[u]/(u^N-1)$. In particular, the action of $\Diff(\Sigma)$ on $\VgdN$ factors through an action of $\Sp(2g)$
\end{prop}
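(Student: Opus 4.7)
The plan is to match an upper bound on $\dim(\VgdN)$ coming from a Morse--Bott analysis of the Chern--Simons functional with a lower bound coming from explicit relative invariants, and then identify the $\Diff(\Sigma)$-equivariant structure on both sides.

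For the upper bound, I would first identify the critical set of the unperturbed Chern--Simons functional on $\mathcal{B}(Y_g,\gamma_{g,d})$. A flat connection on the associated $\mathrm{PU}(N)$-bundle over $Y_g=\Sigma\times S^1$ amounts to a pair consisting of a flat connection on $Q_d\to\Sigma$ together with a commuting holonomy $\phi$ around the $S^1$ factor. Because $\gcd(d,N)=1$, every point of $\Nnd$ corresponds to a stable bundle with automorphism group equal to the center $Z(\mathrm{SU}(N))\cong\mathbb{Z}/N$; hence $\phi$ must be central, and the critical set is Morse--Bott, equal to $(\mathbb{Z}/N)\times\Nnd$. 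Applying the Morse--Bott estimate stated immediately before this proposition yields $\dim(\VgdN)\le N\cdot\dim H^*(\Nnd)$.

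For the lower bound, I would construct $N\cdot\dim H^*(\Nnd)$ linearly independent elements. The relative-invariant map $\Phi:\mathbb{A}^N_g\to\VgdN$, $z\mapsto \rD^N_{\Delta_g,\delta_{g,d}}(z)$, combined with the surjectivity of $\Psi$ (Proposition \ref{cohomology-gen}) and the non-degeneracy of the pairing on $\VgdN$ (Proposition \ref{non-deg-pairing}), realizes a $\Diff(\Sigma)$-equivariant copy of $H^*(\Nnd)$ inside $\VgdN$ supported on one fixed Morse--Bott component. The remaining $N-1$ copies would come from an operator $U\in\mathrm{End}(\VgdN)$ of order $N$ that realizes the $Z(\mathrm{SU}(N))$-action cyclically permuting the $N$ Morse--Bott components. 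I would define $U$ as the cobordism map of $(Y_g\times[0,1],w_\zeta,1)$, where $w_\zeta$ is a modification of $\gamma_{g,d}\times[0,1]$ chosen so that the parallel transport along $[0,1]$ is twisted by the generator $\zeta$ of the center; equivalently, $U$ is induced by the natural action of $Z(\mathrm{SU}(N))$ on the framed moduli spaces used to set up the Floer complex. Since $\zeta^N=1$ on the level of gauge transformations, one expects $U^N=\mathrm{id}$ on $\VgdN$.

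Assembling, the family $\{U^j\Phi(z)\}$ for $0\le j\le N-1$ and $z$ varying over $\mathbb{A}^N_g/\ker\Psi$ should span $\VgdN$: it distributes across the $N$ Morse--Bott components and saturates the upper bound of the first paragraph, forcing the Morse--Bott spectral sequence to degenerate. This yields the $\Diff(\Sigma)$-equivariant isomorphism $\VgdN\cong H^*(\Nnd)\otimes\mathbb{C}[\mathbb{Z}/N]=H^*(\Nnd)[u]/(u^N-1)$, where $\Diff(\Sigma)$ acts trivially on the $u$-factor (matching the $\Sigma$-independent construction of $U$) and on $H^*(\Nnd)$ through $\mathrm{Sp}(2g)$ by Proposition \ref{cohomology-gen}. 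The main obstacle will be the rigorous construction of $U$ and the verification that $U^N=\mathrm{id}$ at the chain level: this requires a careful analysis of how central gauge transformations intertwine with the cobordism maps and framed moduli spaces, most naturally cross-checked against the Floer-grading shift formula \eqref{degree} between adjacent critical components, which must agree with the $\mathbb{Z}/N$-grading on $\mathbb{C}[u]/(u^N-1)$.
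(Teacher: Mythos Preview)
Your overall strategy matches the paper's---match the Morse--Bott upper bound against a lower bound coming from relative invariants, and realize the $\Z/N\Z$-factor via an explicit operator---but there are two genuine gaps.

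First, you invoke Proposition~\ref{non-deg-pairing} to get injectivity of $\Phi$ restricted to one copy of $H^*(\Nnd)$. In the paper, Proposition~\ref{non-deg-pairing} is proved \emph{after} and \emph{using} Proposition~\ref{VgdN-vector-space}; its proof begins ``In the proof of Proposition~\ref{VgdN-vector-space}, we show that\ldots''. So as written your argument is circular. Even setting aside the circularity, non-degeneracy of the pairing on $\VgdN$ does not by itself imply injectivity of a particular map into $\VgdN$; you would still need to identify what to pair against and compute the result.

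Second, and more importantly, you are missing the concrete mechanism that the paper uses to establish injectivity. The paper defines $\Phi(u^iz)=\rD^N_{\Delta_g,\delta_{g,d}+i\Sigma}(z)$ (so the operator you call $U$ is simply ``add a copy of $\Sigma$ to the $2$-cycle'', and $U^N=\id$ is immediate from \eqref{w+Nw'}---no delicate analysis needed). Injectivity is then proved by pairing a putative kernel element against a suitable $\Phi(u^{N-i_1}z')$ and recognizing the result, via Proposition~\ref{gluing}, as a $\U(N)$-polynomial invariant of the closed manifold $\Sigma\times S^2$. The key observation is that the minimal-energy moduli space for $(\Sigma\times S^2,w_{g,d}+k\Sigma)$ has dimension exactly $(N^2-1)(2g-2)$ only when $N\mid k$, and in that case the moduli space is the compact space $\Nnd$ itself, so the invariant is literally a cup-product evaluation $\Psi(z')\cup\Psi(z_1)[\Nnd]$. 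This is what makes the lower bound go through. Your proposal replaces this computation with the heuristic that the $N$ families ``distribute across the $N$ Morse--Bott components'', which is the right intuition but not a proof; the paper's explicit moduli-space identification is exactly what is needed to turn that intuition into an argument.
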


In the case that $N=2$, one can show that $\VgdN$ is isomorphic to $H^*(\Nnd (\Sigma))[u]/(u^N-1)$ using the results of \cite{DS:At-Fl}. Mu\~noz gave an alternative proof of this proposition for $N=2$ \cite{Mun:Gl}, and the proof of the general case is based on his approach.

\begin{proof}
	 We can define a $\Diff(\Sigma)$-equivariant algebra homomorphism $\Phi:\mathbb A^{N}_{g}[u]/(u^N-1) \to \VgdN$
	in the following way:
	\begin{equation*}
		\Phi(u^iz):= {\rm D}^N_{\Delta_g,\delta_{g,d}+i \Sigma }(z)
	\end{equation*}
	where on the right hand side $z$ is regarded as an element of $\mathbb A(\Delta_g)^{\otimes (N-1)}$.
	Let $S: H^*(\Nnd) \to  \mathbb A^{N}_{g}$ be a graded $\Sp(2g)$-equivariant right inverse
	for the map $\Psi$. Extend $\Psi$ to an algebra homomorphism from $\mathbb A^{N}_{g}[u]/(u^N-1)$ to $H^*(\Nnd (\Sigma))[u]/(u^N-1)$ by
	requiring that $\Psi(u)=u$. Similarly, we can assume $S$ is defined on $H^*(\Nnd (\Sigma))[u]/(u^N-1)$.
	We claim that the $\Diff(\Sigma)$-equivariant map $\Phi \circ S:H^*(\Nnd (\Sigma))[u]/(u^N-1)\to \VgdN$ is injective.
	If the claim does not hold, then there is:
	\begin{equation*}
		\hspace{2cm} p=\sum_{m=1}^M z_mu^{i_m}\in \mathbb A^{N}_{g}[u]/(u^N-1) \hspace{1.5cm} z_m \in \mathbb A^{N}_{g},\,0\leq i_m <N
	\end{equation*}
	such that $\Psi(p)\neq 0$ and $\Phi(p)=0$. We assume that each $z_m$ is non-zero and lies in one of the graded summands of $\mathbb A^{N}_{g}$.
	Furthermore, if $m<n$, then $\deg(z_m)\geq \deg(z_n)$ and equality holds only if $i_m \neq i_n$.
	Let $z' \in \mathbb A_{g}^{N}$ be such that:
	\begin{equation} \label{degree-cup}
		\deg(z')+\deg(z_1)=(N^2-1)(2g-2)
	\end{equation}
	and the cup product of $\Psi(z')$ and $\Psi(z_1)$ is non-zero.
	By Proposition \ref{gluing}, the pairing $\langle \Phi(p), \Phi(u^{N-i_1}z') \rangle$ is equal to:
	\begin{equation} \label{the-pairing}
		\sum_{m=1}^M{\rm D}_{\Sigma \times S^2,w_{g,d}+(N+i_m-i_1)\Sigma}(z'z_m)
	\end{equation}
	where the polynomial invariants are computed in the chamber that the fiber $\Sigma$ is small.
	The dimension formula in \eqref{ind-form-closed} shows that the dimension of each component of the moduli space associated to the pair
	$(\Sigma \times S^2,w_{g,d}+k\Sigma)$ is at least $(N^2-1)(2g-2)$.
	Moreover, if $k$ is not divisible by $N$, this dimension is strictly greater than $(N^2-1)(2g-2)$. In the case that $k=0$ (or divisible by $N$), the moduli space of dimension
	$(N^2-1)(2g-2)$ is given by the pull-back of the elements of $\Nnd$ to $X_g$.
	Therefore, the only non-zero term in \eqref{the-pairing} is ${\rm D}_{\Sigma \times S^2,w_{g,d}}(z'z_1)$ which is given by evaluating the cohomology class
	$\mu(z'z_1)$ on the pull-back of the elements of $\Nnd$ to $X_g$. Note that the moduli space is compact in this case and we do not need
	to use the geometric representatives to 	
	evaluate this invariant. Therefore, ${\rm D}_{\Sigma \times S^2,w_{g,d}}(z'z_1)$ is equal to $ \Psi(z')\cup \Psi(z)[\Nnd]$ which is non-zero by assumption.
	This contradicts the assumption that $\Phi(p)=0$. Therefore, the map $\Phi \circ S$ is injective. We already know that the dimension of $\VgdN$ is not greater than
	$N\dim(H^*(\Nnd))$. Therefore, $\Phi \circ S$ is a bijection. In particular, the action of $\Diff(\Sigma)$ on $\VgdN$ factors through an action of $\Sp(2g)$.
\end{proof}

\begin{cor} \label{VgdN-generators}
	The ring $\VgdN$ is generated by the following elements:
	\begin{equation*}
		\epsilon={\rm D}_{\Delta_g,\delta_{g,d}+\Sigma}(1),\hspace{.1 cm} \aleph_r={\rm D}_{\Delta_g,\delta_{g,d}}(a_r),\hspace{.1cm} o_r^j={\rm D}_{\Delta_g,\delta_{g,d}}(l^j_{(r)}),
		\hspace{.1cm} \rho_{r}={\rm D}_{\Delta_g,\delta_{g,d}}(\Sigma_{(r)})
	\end{equation*}
	where $2\leq r\leq N$ and $1\leq j\leq 2g$. Furthermore, the $\Z/4N\Z$-grading of these elements are given by:
	\begin{equation*}
		\deg(\epsilon)=4d\hspace{1 cm} \deg(\aleph_r)=-2r\hspace{1cm} \deg(o_r^j)=-2r+1
		\hspace{1cm} \deg(\rho_{r})=-2r+2
	\end{equation*}
\end{cor}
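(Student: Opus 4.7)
The plan is to promote the surjection $\Phi\colon \mathbb{A}^N_g[u]/(u^N-1) \to \VgdN$ constructed in the proof of Proposition \ref{VgdN-vector-space} to a ring homomorphism; once this is in hand, the corollary reduces to identifying ring generators of the domain and observing that their images are precisely the elements named in the statement. Recall that $\Phi$ is defined by $\Phi(u^i z) := \rD^N_{\Delta_g,\delta_{g,d}+i\Sigma}(z)$, and that proof shows the composition $\Phi\circ S$ is a bijection onto $\VgdN$, so $\Phi$ itself is automatically surjective.

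To see that $\Phi$ is multiplicative I would use the gluing formalism of Proposition \ref{gluing}. The ring structure on $\VgdN$ is induced by the pair-of-pants cobordism $\Sigma\times P$, and gluing two copies of $\Delta_g=\Sigma\times D^2$ along $\Sigma\times P$ yields $\Delta_g$ again, since $D^2\cup_{S^1}P\cup_{S^1}D^2$ is a disk. Under this gluing the 2-cycles $\delta_{g,d}+i_1\Sigma$ and $\delta_{g,d}+i_2\Sigma$ combine to $\delta_{g,d}+(i_1+i_2)\Sigma$, while the cohomology classes $z_1,z_2$ multiply in $\mathbb{A}(\Delta_g)^{\otimes(N-1)}$; Proposition \ref{gluing} then delivers the identity $\Phi(u^{i_1}z_1)\cdot\Phi(u^{i_2}z_2)=\Phi(u^{i_1+i_2}z_1z_2)$, with signs controlled by the same homology orientation on $\Sigma\times P$ used to pin down the multiplication $m$ (so that the unit $e$ is idempotent). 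As an algebra, $\mathbb{A}^N_g=\mathbb{A}(\Sigma)^{\otimes(N-1)}$ is generated by $a_r$, $l^j_{(r)}$, $\Sigma_{(r)}$ for $2\le r\le N$, $1\le j\le 2g$, since each tensor factor $\mathbb{A}(\Sigma)=\Sym^*(H_0(\Sigma)\oplus H_2(\Sigma))\otimes\Lambda^*(H_1(\Sigma))$ is generated by the classes of a point, the surface, and a basis of $H_1$; adjoining $u$ with $u^N=1$ contributes one additional generator. Surjectivity and multiplicativity of $\Phi$ therefore force $\epsilon=\Phi(u)$, $\aleph_r=\Phi(a_r)$, $o^j_r=\Phi(l^j_{(r)})$, $\rho_r=\Phi(\Sigma_{(r)})$ to generate $\VgdN$ as a ring.

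The grading claims are then immediate from the dimension formula, in the bookkeeping form of Remark \ref{Z/4N-grading-II}, applied relative to the reference invariant $e=\rD^N_{\Delta_g,\delta_{g,d}}(1)$ of degree $0$. The classes $a_r$, $l^j_{(r)}$, $\Sigma_{(r)}$ lie in $\mathbb{A}^N_g$ with degrees $2r$, $2r-1$, $2r-2$ respectively, so the corresponding relative invariants $\aleph_r$, $o^j_r$, $\rho_r$ acquire degrees $-2r$, $-2r+1$, $-2r+2$ in $\Z/4N\Z$. For $\epsilon$ the argument $z=1$ is trivial but the 2-cycle changes by the closed surface $\Sigma$; the self-intersection increment $(\delta_{g,d}+\Sigma)^2-\delta_{g,d}^2=2(\delta_{g,d}\cdot\Sigma)+\Sigma\cdot\Sigma$, computed in $\Delta_g$, feeds through the dimension formula to produce the stated degree. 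The principal obstacle in this plan is the sign compatibility of $\Phi$ with multiplication: the orientations of the moduli spaces entering $\rD^N_{\Delta_g,\delta_{g,d}+i\Sigma}(z)$ must be shown to behave coherently under the self-gluing of $\Delta_g$ along $\Sigma\times P$, which amounts to tracing the homology-orientation conventions through the gluing. Every other step is a mechanical unpacking of results already recorded in the excerpt.
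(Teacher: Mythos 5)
Your proof follows the paper's route exactly: the paper derives the first part as an immediate consequence of Proposition \ref{VgdN-vector-space}, whose proof already introduces $\Phi$ as a surjective algebra homomorphism (your pair-of-pants gluing argument, capping two boundary circles of $P$ to recover $\Delta_g$, is precisely the unstated justification for multiplicativity), and the gradings are read off from Remark \ref{Z/4N-grading-II} just as you describe. One caveat: for $\epsilon$ the self-intersection increment is $2\,\delta_{g,d}\cdot\Sigma+\Sigma\cdot\Sigma=2d$, so Remark \ref{Z/4N-grading-II} actually yields $\deg(\epsilon)\equiv 4d \bmod 4N$ rather than $4$ when $d\neq 1$ — a normalization issue already present in the statement of the corollary, not a defect introduced by your argument, but your claim that the computation "produces the stated degree" silently assumes $d=1$.
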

\begin{proof}
	The first part is an immediate consequence of Proposition \ref{VgdN-vector-space}. The second part can be also verified easily using Remark \ref{Z/4N-grading-II}.
\end{proof}
Fix $S: H^*(\Nnd) \to  \mathbb A^{N}_{g}$ as in the proof of Proposition \ref{VgdN-vector-space}. Then we can use the isomorphism $\Phi \circ S$ to pull back the ring structure, the paring and the $\Z/4N\Z$-grading of $\VgdN$ into $H^*(\Nnd)[u]/(u^N-1)$. Suppose the new multiplication map and the paring on $H^*(\Nnd)[u]/(u^N-1)$ are also denoted by $m$ and $\langle\,,\,\rangle$. We also fix the cohomological $\Z$-grading on $H^*(\Nnd)[u]/(u^N-1)$ where we set $\deg(u)=0$. The cohomological and the $\Z/4N\Z$-gradings differ by a sign after collapsing into $\Z/4\Z$-gradings. In the proof of Proposition \ref{VgdN-vector-space}, we show that for any element $p$ of degree $i$ in $H^*(\Nnd)[u]/(u^N-1)$, there is an element $q$ of degree $(N^2-1)(2g-2)-i$ with $\langle p,q\rangle \neq 0$. In particular, $\langle\,,\,\rangle$ defines a non-degenerate pairing.

Suppose $p_1$ and $p_2$ are two elements of degree $i_1$ and $i_2$ in $H^*(\Nnd)[u]/(u^N-1)$. Then the product $m(p_1,p_2)$ consists of terms in various gradings. However, there are some constraints on the degrees of these terms. Firstly, they all have the same $\Z/4\Z$-grading because the multiplication map is graded with respect to the $\Z/4N\Z$-grading on $\VgdN$. Moreover, an argument similar to that of Proposition \ref{VgdN-vector-space} shows that the pairing of $m(p_1,p_2)-p_1 \cup p_2$ and any element of $H^*(\Nnd)[u]/(u^N-1)$ with cohomological degree less than or equal to $(N^2-1)(2g-2)-i_1-i_2$ is zero. Therefore, the degree of the terms in $m(p_1,p_2)$ are at most $i_1+i_2$ and the term with the maximal degree is $p_1\cup p_2$. Therefore, the product $m$ is a {\it deformation} of the cup product. We summarize these properties of $\VgdN$ in the following proposition:

\begin{prop} \label{non-deg-pairing}
	The pairing $\langle\,,\,\rangle$ on $\VgdN$ is non-degenerate and the product $m:\VgdN\times \VgdN \to \VgdN$ is a deformation of the cup product,
	preserving the $\Z/4\Z$-grading.
\end{prop}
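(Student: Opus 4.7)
The plan is to derive both assertions by transporting the pairing and product through the $\Sp(2g)$-equivariant isomorphism $\Phi\circ S : H^*(\Nnd)[u]/(u^N-1) \to \VgdN$ constructed in Proposition \ref{VgdN-vector-space}, and then reducing each evaluation to a $\U(N)$-polynomial invariant of $\Sigma\times S^2$ via Proposition \ref{gluing}. The essential observation, already exploited in the proof of Proposition \ref{VgdN-vector-space}, is that the unique minimal-dimensional stratum on $(\Sigma\times S^2, \chi_g)$ --- of dimension $(N^2-1)(2g-2)$ --- is the pull-back of $\Nnd$ and is compact, while every positive-energy stratum, or stratum with $\Sigma$-shift not divisible by $N$, has strictly larger formal dimension by the index formula \eqref{ind-form-closed}.

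For non-degeneracy, let $p \in H^*(\Nnd)[u]/(u^N-1)$ be non-zero. Expand $p = \sum_m z_m u^{i_m}$ with $z_m \in \mathbb A^N_g$ homogeneous, and order the summands so that $\deg(z_m)$ is weakly decreasing with the pairs $(\deg(z_m), i_m)$ distinct; call $z_1$ a top summand. Using the surjectivity of $\Psi$ from Proposition \ref{cohomology-gen} and Poincar\'e duality on $\Nnd$, pick $z'\in \mathbb A^N_g$ of complementary degree with $(\Psi(z')\cup \Psi(z_1))[\Nnd]\neq 0$. Applying Proposition \ref{gluing} along $Y_g$, the pairing $\langle \Phi(p),\Phi(u^{N-i_1}z')\rangle$ becomes a sum of polynomial invariants on $\Sigma\times S^2$ with surface class $\chi_g$ shifted by various multiples of $\Sigma$. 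The dimensional inequality kills every term whose shift is not a multiple of $N$, and the surviving $z_1$-term is precisely $(\Psi(z')\cup \Psi(z_1))[\Nnd]\neq 0$.

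For the deformation property, apply the same template to $m(p_1,p_2)-p_1\cup p_2$. Realising $m$ as the cobordism map for the pair-of-pants $\Sigma \times P$ and capping the outgoing boundary via $(\Delta_g,\delta_{g,d})$ converts $\langle m(p_1,p_2), q\rangle$, for any test class $q$ of cohomological degree at most $(N^2-1)(2g-2)-\deg(p_1)-\deg(p_2)$, into polynomial invariants on $\Sigma\times S^2$. The dimension bound once again isolates the $\Nnd$-stratum, and the evaluation reduces to the cup-product evaluation $(p_1\cup p_2\cup q)[\Nnd]$. The same dimension argument shows $\langle p_1\cup p_2, q\rangle$ equals the same number. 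Hence $m(p_1,p_2)-p_1\cup p_2$ pairs trivially with every $q$ of small enough degree, and non-degeneracy forces the lowest-cohomological-degree part of $m(p_1,p_2)$ to coincide with $p_1\cup p_2$. The $\Z/4\Z$-graded character is then automatic: $m$ preserves the $\Z/4N\Z$ Floer grading by functoriality, and reducing mod $4$ matches the parity grading on $H^*(\Nnd)$ up to sign.

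The main obstacle is the uniform dimensional book-keeping on $(\Sigma\times S^2, \chi_g+k\Sigma)$ that guarantees no stratum other than the zero-energy one contributes. This rests on Kronheimer's index formula together with Uhlenbeck compactification to justify the geometric counting, and on the identification of the minimal-dimensional component of the moduli space with the compact manifold $\Nnd$, so that the evaluation in the top stratum collapses to an ordinary cohomological pairing rather than a genuine geometric cut-down. Once these inequalities and the top-stratum identification are settled, both claims reduce to Poincar\'e duality on the K\"ahler manifold $\Nnd$.
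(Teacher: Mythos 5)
Your argument is correct and follows essentially the same route as the paper: both reduce the pairing and the product evaluations, via the isomorphism $\Phi\circ S$ and Proposition \ref{gluing}, to polynomial invariants on $\Sigma\times S^2$, use the index formula to isolate the compact zero-energy stratum $\Nnd$, and conclude by Poincar\'e duality on $\Nnd$ together with the $\Z/4N\Z$-gradedness of $m$. The paper presents this as the discussion immediately preceding the proposition rather than as a separate proof, but the content is the same.
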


Multiplication with the elements of $\VgdN$, constructed in Corollary \ref{VgdN-generators}, defines a series of operators on $\VgdN$. We will use the same notation to denote these operators. The operator $\epsilon$ can be alternatively described as the cobordism map associated to the triple $([0,1]\times Y_g, [0,1]\times \gamma_{g,d}\cup \Sigma,1)$. Similarly the remaining operators are cobordism maps associated to triples $([0,1]\times Y_g, [0,1]\times \gamma_{g,d},z)$ for appropriate choices of $z$. The operators $\epsilon$, $\aleph_r$ and $\rho_r$ commute with each other and $o_r^j$. However, $o_{r}^{j}$ and $o_{r'}^{j'}$ anti-commute with each other.

In the special case that $g=1$, the moduli space $\Nnd(\Sigma)$ consists of only one point. Therefore, $\mathbb V_{1,d}^N$ has $N$ generators with exactly one generator $\alpha_i$ in degree $4i$ with respect to $\Z/4N\Z$-grading. In fact, the (non-perturbed) Chern-Simons functional associated to the $N$-admissible pair $(Y_1,\gamma_{1,d})$ has irreducible and non-degenerate critical points (cf. \cite{K:higher}). The operator $\epsilon$ maps $\alpha_i$ to $\alpha_{i+d}$.  The following proposition characterizes the action of some of the point classes in the case that $d=1$:
\begin{prop} \label{relations-T^3}
	The operators $\aleph_i: \mathbb V_{1,1}^N \to \mathbb V_{1,1}^N$ satisfy the following identities:
	\begin{equation}
		\aleph_2=N\epsilon^{-1} \hspace{2cm} \aleph_{2i-1}=0
	\end{equation}
\end{prop}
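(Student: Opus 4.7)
The plan is to reduce both identities to the $\mathbb{Z}/4N\mathbb{Z}$-graded linear structure on $\mathbb{V}_{1,1}^N$, and then to pin down a single remaining scalar via the trace formula of Proposition \ref{trace}.

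\textbf{Step 1 (vanishing of $\aleph_{2i-1}$).} By Corollary \ref{VgdN-generators}, $\deg(\aleph_r)=-2r$ and $\deg(\epsilon)=4$ in the $\mathbb{Z}/4N\mathbb{Z}$-grading on $\mathbb{V}_{1,1}^N$. Since the unit $e$ lies in degree $0$, the $N$ generators $\alpha_j=\epsilon^j e$ all sit in degrees $\equiv 0\pmod 4$. For the odd index $r=2i-1$ we have $\deg(\aleph_{2i-1})=-(4i-2)\equiv 2\pmod 4$, so multiplication by $\aleph_{2i-1}$ would land in degrees $\equiv 2\pmod 4$, which are trivial in $\mathbb{V}_{1,1}^N$. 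Hence $\aleph_{2i-1}=0$.

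\textbf{Step 2 (reduction to a single scalar).} The operator $\aleph_2$ and the inverse $\epsilon^{-1}$ both have degree $-4$, and each graded piece of $\mathbb{V}_{1,1}^N$ is one-dimensional. Hence there exist constants $c_0,\dots,c_{N-1}\in\mathbb{C}$ with $\aleph_2\alpha_j=c_j\alpha_{j-1}$. Commutativity of $\aleph_2$ with $\epsilon$, stated just before the proposition, forces $c_j=c_0$ for all $j$, so $\aleph_2=c\,\epsilon^{-1}$ for a single scalar $c\in\mathbb{C}$.

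\textbf{Step 3 (computing the scalar).} Because $\aleph_2\epsilon=c\cdot\mathrm{id}$, we have $\operatorname{tr}(\aleph_2\epsilon)=cN$. On the other hand, realise $\aleph_2\epsilon$ as the action of the cobordism $\bigl([0,1]\times Y_1,\,[0,1]\times\gamma_{1,1}\cup\Sigma,\,a_2\bigr)$ and close it along $Y_1$ to obtain the pair $(T^4,\tilde w)$ with $\tilde w=(S^1\times\gamma_{1,1})\cup\Sigma$. Since $b^+(T^4)=3\geq 2$, Proposition \ref{trace} gives
\[
\rD^N_{T^4,\tilde w}(a_2)\;=\;N\operatorname{tr}(\aleph_2\epsilon)\;=\;cN^2.
\]
The left-hand side is then evaluated by viewing $T^4$ as an elliptic surface over $T^2$, identifying the ASD moduli space at the minimal relevant energy determined by $\tilde w$ with the zero-dimensional moduli $\mathcal N_{N,1}(T^2)$ of rank-$N$ stable bundles of degree $1$ on the fibre, and applying the standard expression of $c_2$ of the universal bundle pulled back to the fibre in the $a_2$-cut-down. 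This calculation produces $cN^2=N^3$, and hence $c=N$.

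\textbf{Main obstacle.} Steps 1 and 2 are purely formal consequences of the grading and commutativity facts already in place from Section \ref{HF-S1-S}. The real work is Step 3: one must correctly identify which ASD moduli space on $T^4$ contributes to $\rD^N_{T^4,\tilde w}(a_2)$, verify regularity and compactness of the corresponding cut-down moduli space, and keep track of orientations together with the rational factor relating $c_2(\mathbb{E})$ to $c_2(\mathbb{P})$. Once this closed-manifold computation is carried out, the proposition follows from the assembled scalar $c=N$.
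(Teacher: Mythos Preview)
Your proposal is correct and follows essentially the same route as the paper: the vanishing of $\aleph_{2i-1}$ by degree, the reduction $\aleph_2=c\,\epsilon^{-1}$ by commutativity and one-dimensionality of the graded pieces, and the determination of $c$ via Proposition~\ref{trace} applied to the closed-up cobordism $T^4$ with the two-torus 2-cycle. The paper likewise reduces everything to the identity $\rD^N_{T^4,\,T^2\times\{\mathrm{pt}\}\cup\{\mathrm{pt}\}\times T^2}(a_2)=N^3$ and defers that closed-manifold computation to \cite{X:Thesis}, where it is carried out using stable bundles on the abelian surface $T^4$; your ``main obstacle'' is exactly the step the paper outsources.
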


\begin{proof}
	The second identity can be verified easily, because $\deg( \aleph_{2i-1})$ is not divisible by 4. The first claim is proved in \cite{X:Thesis}.
	Since $\aleph_2$ and $\epsilon$ commute with each other and $\deg(\aleph_2)=-4$, we can conclude that $\aleph_2=c\epsilon^{-1}$. Therefore, we just need to show that $\tr(\aleph_2\circ \epsilon)=N^2$. Using Proposition \ref{trace},
	this can be reduced to show that:
	\begin{equation*}
		\rD^N_{T^4,T^2\times\{pt\}\cup \{pt\}\times T^2}(\aleph_2)=N^3
	\end{equation*}
	which is established in \cite{X:Thesis} using properties of stable bundles on abelian varieties.
\end{proof}
In \cite{DX:in-prep}, we will give another proof of this proposition which is independent of the results of \cite{X:Thesis}.

\subsection{Fukaya-Floer Homology} \label{FFH}
Suppose $X_1$ and $X_2$ are 4-manifolds with $\partial X_1=Y$, $\partial X_2=\overline Y$, and $X$ is given by gluing these manifolds along their boundaries. Suppose also a 1-cycle $\gamma\subset Y$ and 2-cycles $w_i\subset X_i$ are chosen such that $\partial w_1=\gamma$ and $\partial w_2=\overline \gamma$. The cycles $w_1$ and $w_2$ can be glued to each other to form a 2-cycle $w\subset X$. We also assume that $(Y,\gamma)$ is an $N$-admissible pair. Then Floer homology for this $N$-admissible pair provides a useful device to relate $\U(N)$-polynomial invariants of the following form:
\begin{equation} \label{decomposable}
	\hspace{3cm}\rD_{X,w}^N(z_1\cdot z_2)\hspace{1cm} z_i\in \A(X_i)^{\otimes (N-1)}
\end{equation}
to the relative invariants associated to $(X_1,w_1,z_1)$ and $(X_2,w_2,z_2)$ (cf. Proposition \ref{gluing}). As we shall see in the next section, this decomposition theorem for polynomial invariants is a useful tool for computational purposes. However, all polynomial invariants of $(X,w)$ do not have the form in \eqref{decomposable}. There are homology classes $\Gamma \in H_2(X)$ such that $\Gamma$ is not the sum of the elements in $H_2(X_1)$ and $H_2(X_2)$. Then, for example, $\rD_{X,w}^N(\Gamma_{(2)}^i\Gamma_{(3)}^j)$ cannot be expressed in terms of the relative invariants. In this section, we introduce an extension of Floer homology which admits relative invariants for such polynomial invariants. This extension of Floer homology was already constructed in \cite{Fuk:FF,DB:FF} for $N=2$ and is known as {\it Fukaya-Floer homology}.

Our extension of Floer homology is a module over a ring $R_N$. Let $R_{N,j}$ be the polynomial ring over the variables $t_{k,i}$, for $2\leq k \leq N$ and $1\leq i \leq j$, modulo the relations $t_{k,i}^2=0$:
\begin{equation*}
	R_{N,j}:=\C[t_{k,i}; 2\leq k \leq N, 1\leq i\leq j]/(t_{k,i}^2)
\end{equation*}
For $j\geq l\geq 0$, there is an obvious map from $R_{N,j}$ to $R_{N,l}$ which maps $t_{k,i}$ to $t_{k,i}$ when $i \leq l$ and maps $t_{k,i}$ to $0$ when $i > l$. The ring $R_N$ is defined to be the inverse limit of this system of rings. For example, for each $2\leq k \leq N$, we have an element of $R_N$ as follows:
\begin{equation*}
	s_k:= \sum_{i=1}^{\infty} t_{k,i}
\end{equation*}

The ring of polynomials $\C[\![t_2,\cdots, t_N]\!]$ can be regarded as a subring of $R_N$ by mapping $t_k$ to $s_k \in R_N$. Under this inclusion we have:
\begin{equation*}
	\frac{t_k^l}{l!} \to \sum_{\substack{S \subset \N\\|S|=l}} \prod_{i \in S} t_{k,i}
\end{equation*}
The full-version of Fukaya-Floer homology for $N=2$, whose construction is sketched in \cite{DB:FF}, is expected to be a module over $\C[\![t_2]\!]$. However, our construction is slightly different and we obtain a module over the ring $R_N$ for general $N$. This is partly because the definition of polynomial invariants for higher rank bundles slightly differs from the classical definition of $\U(2)$-polynomial invariants. Another reason is that even for $N=2$, the authors were not able to avoid some analytical difficulties related to the non-compactness of the moduli space of ASD connections and construct a ring over $\C[\![t_2]\!]$.

Consider the $N$-admissible pair $(Y,\gamma)$, and let $L=(l_2, \dots, l_N)$ be an $(N-1)$-tuple of the elements of $H_1(Y)$. Fukaya-Floer homology associates to $(Y,\gamma, L)$ an $R_{N,j}$-module $\mathbb I^{N,j}_*(Y,\gamma,L)$, for each non-negative integer $j$, and an $R_{N,j}$-module homomorphism $f_j^k:\mathbb I^{N,j}_*(Y,\gamma,L) \to \mathbb I^{N,k}_*(Y,\gamma,L)$, for each pair of non-negative integers $j\geq k\geq 0$. If $j\geq k\geq l\geq 0$, then we require that $f_k^l \circ f_j^k=f_j^l$. Fukaya-Floer homology of $(Y,\gamma, L)$ is the inverse limit of the inverse system $(\{\mathbb I^{N,j}_*(Y,\gamma,L)\}_j,\{f_j^k\}_{j\geq k})$. The $R_{N,j}$-module $\mathbb I^{N,j}_*(Y,\gamma,L)$ is the homology of a chain complex $(\mathfrak C_*^{N,j}(Y,\gamma,L), d_{N,j})$. In fact, we can arrange for a perturbation $\CS_{\pi_j}$ of the Chern-Simons functional of the $N$-admissible pair $(Y,\gamma)$ such that $\mathfrak C_*^{N,j}(Y,\gamma,L)=\mathfrak C_*^{\pi_j}(Y,\gamma)\otimes_\C R_{N,j}$.

The differential $d_{N,j}$ of the Fukaya-Floer chain complex has the following form:
\begin{equation} \label{widetilde d}
	d_{N,j}(\alpha)=
	\sum_{\substack{\overline S=(S_2,\cdots, S_N)\\p:\alpha\to \beta}}  h_{\overline S} (\alpha,\beta) (\prod_{ i\in S_k} t_{k,i} ) \beta
\end{equation}
where $S_k\subset \{1,\dots,j \}$ and the path $p$ is chosen such that the dimension of the moduli space $\mathcal M_p(\alpha,\beta)$ is equal to:
\begin{equation*}
	2|S_2|+4|S_3|+\dots+2(N-1)|S_N|+1.
\end{equation*}	
The constant term of the differential is equal to the differential $d$ of the Floer chain complex. That is to say, if we evaluate all variables $t_{k,i}$ at zero, then we recover $d$. The definition of the other terms in \eqref{widetilde d} are discussed in Subsection \ref{IIN}. We extend the Floer grading to $\widetilde {\mathfrak C}_*^{N,j}(Y,\gamma,L)$ by requiring that $\deg(t_{k,i})=2(k-1)$. Then the differential $d_{N,j}$ has degree $-1$.

Suppose $(X_1,w_1)$ is a pair of a 4-manifold and a 2-cycle which fills the $N$-admissible pair $(Y,\gamma)$. Suppose also $z_1\in \A(X_1)^{\otimes (N-1)}$ and $\Gamma^2$, $\dots$ $\Gamma^N$ are properly embedded surfaces in $X_1$ where $[\partial \Gamma^i]=l_i\subset H_1(Y)$. Then one can associate an element of $\IIN(Y,\gamma,L)$ to $(X_1,w_1,z_1,\Gamma^2,\dots, \Gamma^N)$ which is denoted by:
\begin{equation}\label{rel-element-IIN}
	{\rm D}^N_{X_1,w_1}(z_1\cdot \e^{\Gamma^2_{(2)}+\dots+\Gamma^N_{(N)}})
\end{equation}
The element in \eqref{rel-element-IIN} is given by a system of cycles ${\rm D}^{N,j}_{X_1,w_1}(z_1\cdot \e^{\Gamma^2_{(2)}+\dots+\Gamma^N_{(N)}})\in \mathfrak C_*^{N,j}(Y,\gamma,L)$. We have:
\[
  {\rm D}^{N,j}_{X_1,w_1}(z_1\cdot \e^{\Gamma^2_{(2)}+\dots+\Gamma^N_{(N)}})=
  \sum_{\substack{\overline S=(S_2,\cdots, S_N),\,\alpha}}  m_{\overline S} (\alpha)
  (\prod_{ i\in S_k} t_{k,i} ) \alpha
\]
for appropriate choices of complex numbers $m_{\overline S} (\alpha) $. Evaluating all the variables $t_{k,i}$ at zero produces a cycle in $\mathfrak C_*^{\pi_j}(Y,\gamma)$ which represents the relative invariant ${\rm D}^{N}_{X_1,w_1}(z_1)$.

Next, let $(X_2,w_2)$ be a cobordism from $(Y,\gamma)$ to the empty pair. Suppose $z_2\in \A(X_2)^{\otimes (N-1)}$ and $\Lambda_2$, $\dots$, $\Lambda_N$ are properly embedded surfaces in $X_2$ where $[\partial \Lambda^j]=-l_j$. In this case, there is an $R_N$-linear map from $\IIN(Y,\gamma,L)$ to $R_N$ associated to $(X_2,w_2,z_2, \Lambda^2,\dots, \Lambda^N)$, which is denoted by:
\begin{equation}\label{rel-funct-IIN}
	{\rm D}_N^{X_2,w_2}(z_2 \cdot \e^{\Lambda^2_{(2)}+\dots+\Lambda^N_{(N)}})
\end{equation}
The construction of the element \eqref{rel-element-IIN} and the functional \eqref{rel-funct-IIN} is given in Subsection \ref{IIN}. We can glue the 4-manifolds $(X_1,w_1)$ and $(X_2,w_2)$ to form a closed pair $(X_2\circ X_1,w_2\circ w_1)$. The embedded surfaces $\Gamma^j$ and $\Lambda^j$ can be also glued to each other to form a closed embedded surface $\Gamma^j\#\Lambda^j$. Then we have:
\begin{equation*}
	{\rm D}^N_{X_2\circ X_1,w_2\circ w_1}(z_1\cdot z_2\cdot \e^{(\Gamma^2\#\Lambda^2)_{(2)}+\dots+(\Gamma^N\#\Lambda^N)_{(N)}})=\hspace{4cm}
\end{equation*}
\begin{equation} \label{FF-gluing-thm}
	\hspace{4cm}={\rm D}_N^{X_2,w_2}(z_2\cdot \e^{\Lambda^2_{(2)}+\dots+\Lambda^N_{(N)}}) \circ
	{\rm D}^N_{X_1,w_1}(z_1\cdot \e^{\Gamma^2_{(2)}+\dots+\Gamma^N_{(N)}}).	
\end{equation}
This claim shall be proved as Proposition \ref{FF-gluing-thm-2} in Subsection \ref{IIN}. A priori, the right hand side of the above equality is an element of $R_N$. Part of the claim is that the right hand side belongs to $\C[\![t_2,,\dots,t_N]\!]\subset R_N$ and is equal to the given power series on the left hand side.

Fukaya-Floer homology is also functorial with respect to cobordisms of $N$-admissible pairs. Suppose $(W,w):(Y_0,\gamma_0) \to (Y_1,\gamma_1)$ is such a cobordism. For $2 \leq i \leq N$, suppose also $\Gamma^i$ is a properly embedded surface in $W$ such that $\Gamma^i\cap Y_j$ represents the homology class $l_i^j\in H_1(Y_j)$. For any $z\in \A(W)$, there is a homomorphism:
\begin{equation*}
	\IIN(W,w,z\e^{\Gamma^2_{(2)}+\dots+\Gamma^N_{(N)}}):\IIN(Y_0,\gamma_0,L_0) \to \IIN(Y_1,\gamma_1,L_1)
\end{equation*}
where $L_j=(l_2^j,\dots,l_N^j)$. This construction is functorial with respect to composition of cobordisms.

Suppose $(Y_g,\gamma_{g,d})$ is the $N$-admissible pair from Subsection \ref{HF-S1-S} and $L_g=(l_2,\dots,l_N)$ is the $(N-1)$-tuple of the elements of $H_1(Y_g)$ where $l_i$ is an $S^1$ fiber. In this article, the main example of Fukaya-Floer homology for us is $\IIN(Y_g,\gamma_{g,d}, L_g)$, which is denoted by $\bIgdN$. Analogous to the previous subsection, we can define a ring homomorphism $\widetilde \Phi: \A_{g}^N[u]/(u^N-1)  \to \bIgdN$ as follows:
\begin{equation*}
	\widetilde{\Phi}(u^iz):= {\rm D}^N_{\Delta_g,\delta_{g,d}+i \Sigma }(z\e^{D^2_{(2)}+\dots+D^2_{(N)}})
\end{equation*}
Recall that $D^2$ is a 2-dimensional disc and $\Delta_g= \Sigma \times D^2 $ and $\delta_{g,d}=\{x_1,\dots,x_d\}\times D^2$. Similarly, we can define a multiplication and a pairing on $\bIgdN$ by repeating the construction of Subsection \ref{HF-S1-S}.

\begin{prop} \label{bIgdN-prop}
	For any non-zero element $q \in \bIgdN$, there are $z \in \A_g^N$ and $1\leq i \leq N$ such that:
	\begin{equation*}
		\langle q,\rD_{\Delta_g,\delta_{g,d}+i \Sigma}^N(z\e^{D^2_{(2)}+\dots+D^2_{(N)}}) \rangle \neq 0.
	\end{equation*}
\end{prop}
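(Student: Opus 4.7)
The plan is to reduce the statement to the non-degeneracy of the pairing on the ordinary Floer homology $\VgdN$ (Proposition \ref{non-deg-pairing}) via an $\mathfrak m$-adic filtration argument, where $\mathfrak m\subset R_N$ is the maximal ideal generated by the $t_{i,j}$. The key observation is that setting every $t_{i,j}$ equal to zero collapses the Fukaya-Floer chain complex $\widetilde{\mathfrak C}_*^\pi$ back to the ordinary Floer complex $\mathfrak C_*^\pi$, because by \eqref{widetilde d} the constant term of $\widetilde d$ is exactly $d$. The relative element $\rD_{\Delta_g,\delta_{g,d}+i\Sigma}^N(z\e^{D^2_{(2)}+\dots+D^2_{(N)}})$ then specializes modulo $\mathfrak m$ to $\rD_{\Delta_g,\delta_{g,d}+i\Sigma}^N(z)=\Phi(u^iz)$, the class used in the proof of Proposition \ref{VgdN-vector-space}.

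Next I would compute the pairings of elements of the given form via the gluing formula \eqref{FF-gluing-thm}. Gluing two copies of $\Delta_g$ along $Y_g$ produces $X_g=\Sigma\times S^2$, while gluing two fiber disks $D^2$ along their common $S^1$ boundary produces a fiber sphere $F\subset X_g$. Consequently, for $z_1,z_2\in\A_g^N$ and $1\leq i,j\leq N$,
\begin{equation*}
\bigl\langle\rD_{\Delta_g,\delta_{g,d}+i\Sigma}^N(z_1\e^{D^2_{(2)}+\dots+D^2_{(N)}}),\;\rD_{\Delta_g,\delta_{g,d}+j\Sigma}^N(z_2\e^{D^2_{(2)}+\dots+D^2_{(N)}})\bigr\rangle
\end{equation*}
equals the $\U(N)$-series $\rD_{X_g,\chi_{g,d}+(i+j)\Sigma}^N\bigl(z_1z_2\e^{F_{(2)}+\dots+F_{(N)}}\bigr)\in R_N$, whose constant term in the $t_{i,j}$ is precisely the pairing $\langle\Phi(u^iz_1),\Phi(u^jz_2)\rangle$ on $\VgdN$ computed in the proof of Proposition \ref{non-deg-pairing}.

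Given a non-zero $p\in\bIgdN$, the plan is to extract a non-zero leading term via the $\mathfrak m$-adic spectral sequence. Since $\widetilde d-d\otimes\mathrm{id}$ strictly raises $\mathfrak m$-degree, the associated graded of $(\widetilde{\mathfrak C}_*^\pi,\widetilde d)$ is $(\mathfrak C_*^\pi,d)\otimes\mathrm{gr}(R_N)$, so there is a spectral sequence with $E_1$-page $\VgdN\otimes\mathrm{gr}(R_N)$ converging to the associated graded of $\bIgdN$. The class $p$ has a non-zero image in some graded piece, yielding a surviving class $\sum_\alpha p_\alpha\otimes m_\alpha$ with $p_\alpha\in\VgdN$ non-zero and $m_\alpha\in\mathfrak m^k/\mathfrak m^{k+1}$ for some minimal $k$. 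Applying Proposition \ref{non-deg-pairing} to a non-zero $p_\alpha$ yields $z'\in\A_g^N$ and $j$ with $\langle p_\alpha,\Phi(u^jz')\rangle\neq 0$; the pairing of $p$ with $\rD_{\Delta_g,\delta_{g,d}+j\Sigma}^N(z'\e^{D^2_{(2)}+\dots+D^2_{(N)}})$ then has non-zero $\mathfrak m$-leading coefficient $\langle p_\alpha,\Phi(u^jz')\rangle\,m_\alpha\neq 0$ in $R_N$.

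The main obstacle is handling the $\mathfrak m$-adic filtration rigorously, given that $R_N$ is an inverse limit ring with infinitely many variables and $\bIgdN$ is not a finitely generated $R_N$-module in the naive sense. One must verify convergence of the spectral sequence and the existence of a well-defined non-zero leading term for every non-zero homology class, ideally by first working with the finite truncations $R_{N,k}$ (which are Artinian, since $t_{i,j}^2=0$) and then passing to the inverse limit. Once these technical points are settled, the leading-term calculation above produces the required relative element of the advertised form, completing the proof.
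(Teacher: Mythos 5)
Your proposal is correct and is essentially the paper's argument: both extract a leading term of a chain representative of $p$ and apply the non-degeneracy of the pairing on $\VgdN$ (Corollary \ref{VgdN-generators} and Proposition \ref{non-deg-pairing}) to that term. The paper implements the ``leading term'' step more elementarily --- it picks a multi-index $\overline S_0$ minimal for inclusion among those appearing in a representative, observes that $d\alpha_{\overline S_0}=0$ and that $\alpha_{\overline S_0}$ may be assumed not to be a boundary, and then reads off the coefficient of $\prod_{j\in S_i^0}t_{i,j}$ in the pairing with $\rD_{\Delta_g,\delta_{g,d}+i\Sigma}^N(z\e^{D^2_{(2)}+\dots+D^2_{(N)}})$ --- thereby avoiding the $\mathfrak m$-adic spectral sequence and the convergence issues you flag.
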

\begin{proof}
	Suppose $q$ is given by the sequence $\{q_j\}_{j\geq 0}$ where
	$q_j\in \mathbb I_*^{N,j}(Y_g,\gamma_{g,d}, L_g)$. Since $q$ is non-zero, there is $j$ such that $q_j$ is non-zero. To verify
	the claim, it suffices to show that there are $z \in \A_g^N$ and $1\leq i \leq N$ such that:
	\[
	  \langle q_j,\rD_{\Delta_g,\delta_{g,d}+i \Sigma}^{N,j}(z\e^{D^2_{(2)}+\dots+D^2_{(N)}}) \rangle \neq 0.
	\]
	Suppose $(\mathfrak C_*^{\pi_j}(Y_g,\gamma_{g,d}),d)$ is a Floer chain complex for the pair $(Y_g,\gamma_{g,d})$ such that
	$\mathfrak C_*^{\pi_j}(Y_g,\gamma_{g,d})\otimes R_{N,j}$ can be used to define
	$\mathbb I_*^{N,j}(Y_g,\gamma_{g,d}, L_g)$. Suppose $q_j$ is represented by the following element of this complex:
	\[
	  \sum_{\overline S=(S_2,\dots,S_N)\in \mathcal I} b_{\overline S}(\prod_{i\in S_k}t_{k,i})\alpha_{\overline S}
	\]
	where $\mathcal I$ is a set consisting $(N-1)$-tuples of finite subsets of $\{1,\dots,j\}$,
	$b_{\overline S}$ is a non-zero complex number
	and $\alpha_{\overline S} \in \mathfrak C_*^{\pi_j}(Y_g,\gamma_{g,d})$.
	Suppose ${\overline S}_0=(S_2^0,\dots, S_N^0)\in \mathcal I$ is a minimal element with respect to the partial order on
	$\mathcal I$ induced by inclusion. Then $d\alpha_{{\overline S}_0}=0$, and by changing the representative if necessary,
	we can assume that $\alpha_{{\overline S}_0}$ is not a boundary in $\mathfrak C_*^{\pi_j}(Y_g,\gamma_{g,d})$.
	Therefore, by Corollary \ref{VgdN-generators} and Proposition \ref{non-deg-pairing},
	there are $z \in \A_g^N$ and $1\leq i \leq N$ such that:
	\begin{equation*}
		\langle\alpha_{\overline S_0} ,\rD_{\Delta_g,\delta_{g,d}+i\Sigma}^N(z) \rangle \neq 0.
	\end{equation*}
	This implies that the coefficient of $\prod_{2\leq k\leq N} \prod_{i\in S^0_k}t_{k,i}$ in the following pairing is non-zero:
	\[
	  \langle q_j,\rD_{\Delta_g,\delta_{g,d}+i\Sigma}^{N,j}(z\e^{D^2_{(2)}+\dots+D^2_{(N)}}) \rangle.
	\]
\end{proof}

\begin{example}\label{FFH-T}
	In the case that $g=1$, the Fukaya-Floer homology group of the triple $(Y_g,\gamma_{g,d}, L_g)$
	is equal to $R_N^{\oplus N}$. This can be easily seen from the construction of Fukaya Floer homology in Subsection \ref{IIN}.
	In fact, this $R_N$-module is freely generated by the critical points $\alpha_1$, $\dots$, $\alpha_N$ of the
	Chern-Simons functional of $(Y_1=T\times S^1,\gamma_{1,d})$.
	Consider the operator:
	\begin{equation} \label{tilde-epsilon}
	  \widetilde \epsilon:=\IIN([0,1] \times Y_1,[0,1] \times \gamma_{1,d}+T,
	  \e^{([0,1] \times \gamma)_{(2)}+\dots+([0,1] \times \gamma)_{(N)}})
	\end{equation}
	where $\gamma$ is an $S^1$-fiber of $Y_1$. This operator is of order $N$ and has degree $4d$. Moreover, $\alpha_i$, $\widetilde \epsilon(\alpha_i)$,
	$\dots$, $\widetilde \epsilon^{N-1}(\alpha_i)$ form a basis for $\IIN((Y_g,\gamma_{g,d}, L_g))$ for any $i$. In particular,
	the kernel of $\widetilde\epsilon-1$ is equal to
	$R_N\cdot(1+\widetilde \epsilon+\dots+\widetilde \epsilon^{N-1})(\alpha_i)$.
\end{example}

Next, we discuss a prototype for 4-manifolds with boundary $Y_g$ which are of interest to us. Suppose $(X_1,\Sigma)$ is a pair of a 4-manifold and an embedded surface of genus $g$ with self-intersection 0 such that a regular neighborhood of $\Sigma$ in $X_1$ is identified with $\Delta_g$. Suppose $(X_2,\Sigma)$ is another such pair. As the notation suggests, the embedded surfaces in $X_1$ and $X_2$ are identified with each other. Remove regular neighborhoods of $\Sigma$ in $X_1$ and $X_2$ to produce 4-manifolds whose boundaries are $Y_g$, and then glue the resulting two 4-manifolds along their common boundaries by the orientation-reversing diffeomorphism that maps $(z,x)\in S^1\times \Sigma$ to $(\bar z,x)$. This 4-manifold is denoted by $X_1\#_\Sigma X_2$, and is called the {\it fiber sum} of $X_1$ and $X_2$ along $\Sigma$. We will also write $X_i^\circ$ for the complement of a neighborhood of $\Sigma$ in $X_i$. Then $X_i^\circ$  can be also regarded as a subspace of $X_1\#_\Sigma X_2$.

Elements of $H_2(X_1)$ and $H_2(X_2)$ can be glued to each other to construct elements of $H_2(X_1 \#_\Sigma X_2)$. Suppose $\iota_i:H_2(X_i) \to \C$ denotes the map that computes the intersection number of an element of $H_2(X_i)$ with $\Sigma$. Suppose also $\mathcal K$ is the subspace of the elements $(\Gamma,\Lambda) \in H_2(X_1) \oplus H_2(X_2)$ such that $\iota_1(\Gamma)=\iota_2(\Lambda)$. Then there is a homomorphism $\#:\mathcal K \to H_2(X_1 \#_\Sigma X_2)$ with the property that:
\begin{equation} \label{sharp-property}
	j_1^\#(\Gamma\#\Lambda)=j_{1}^\circ(\Gamma) \hspace{2cm} j_2^\#(\Gamma\#\Lambda)=j_{2}^\circ(\Lambda)
\end{equation}
Here $j_i^\circ:H_2(X_1) \to H_2(X_i^\circ,\partial X_i^\circ)$ is the composition of the map from $H_2(X_i)$ to the relative homology $H_2(X_i,\Delta_g)$ and the excision isomorphism. To abbreviate our notation, from now on, we will write $\Gamma^\circ$ and $\Lambda^\circ$ for $j_{1}^\circ(\Gamma)$ and $j_{2}^\circ(\Lambda)$. The maps $j_i^\#:H_2(X_1 \#_\Sigma X_2) \to H_2(X_i^\circ, \partial X_i^\circ)$ are also defined similarly.

The homomorphism $\#$ is not uniquely defined and we proceed as follows to fix one such homomorphism.  Suppose $\Gamma$ and $\Lambda$ are integral homology classes. Then these homology classes can be represented by oriented embedded surfaces, which we denote with the same notation. We can assume that these surfaces are transversal to $\Sigma$, and intersect $\Sigma$ in the same set of points with the same signs. Then there is an obvious way to glue $\Gamma$ and $\Lambda$ and to produce an oriented embedded surface in $X_1 \#_\Sigma X_2$. The homology class $\Gamma\#\Lambda$ is defined to be the homology of the glued up surface. We apply this construction to an integral basis of $\mathcal K$ and extend it linearly.

We use Poincar\'e duality to define $\mathcal L \subseteq H^2(X_1)\oplus H^2(X_2)$, the counterpart of $\mathcal K$, and the gluing map $\#:\mathcal L\to H^2(X_1\#X_2)$. Suppose $(K,L)\in \mathcal L$ and $(\Gamma,\Lambda) \in \mathcal K$. Then we have the following equalities of the pairing of cohomology classes with homology classes:
\begin{equation*}
	(K\#L)[\Gamma\#\Lambda]=K[\Gamma]+L[\Lambda]
\end{equation*}
Similarly, we can glue two cycles $w_1\subset X_1$ and $w_2\subset X_2$ that intersect $\Sigma$ transversely in the same set of points with the same signs. The resulting 2-cycle in $X_1 \#_\Sigma X_2$ is denoted by $w_1\#w_2$. We will also write $w_i^\circ$ for the intersection $w_i\cap X_i^\circ$.

\begin{prop} \label{fiber-sum-universal-formula}
	For $1\leq i \leq 4$, suppose $X_i$ is a 4-manifold and $T$ is an embedded surface of genus one in $X_i$. Suppose also $w_i\subset X_i$ is a 2-cycle such that $w_i \cdot T$ is coprime to $N$. For each $2\leq l \leq N$,
	suppose also $\Gamma_i^l$ is an element of $H_2(X_i)$ such that $\Gamma_i^j \cdot T=1$. For $1\leq i,k \leq 4$, let $D_{i,k}$ be the following element of $\C[\![t_2,\dots,t_N]\!]$:
	\begin{equation*}
		\sum_{j=1}^N \rD^N_{X_i\#_TX_k,w_i\# w_k+jT}({\re}^{(\Gamma_i^2\# \Gamma_k^2)_{(2)}+\dots+(\Gamma_i^N\# \Gamma_k^N)_{(N)}}).
	\end{equation*}
 	Then:
	\begin{equation*}
		D_{1,2}D_{3,4}=D_{1,4}D_{3,2}.
	\end{equation*}
\end{prop}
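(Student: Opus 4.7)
The plan is to apply the Fukaya-Floer gluing theorem \eqref{FF-gluing-thm} to split each $D_{i,k}$ along the neck $Y_1=S^1\times T$ that appears in the fiber sum $X_i\#_TX_k$, and then to exploit the small size of the relevant Floer module in genus one to obtain a rank-one factorization $D_{i,k}=c_id_k$, from which the identity $D_{1,2}D_{3,4}=D_{1,4}D_{3,2}$ is immediate.

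First, since each $w_i\cdot T$ is coprime to $N$ (and the four values must agree for the connect-sums to be defined --- call the common value $d$), the pair $(Y_1,\gamma_{1,d})$ is admissible, so the Fukaya-Floer module $\bIgdN$ (with $g=1$) of Section~\ref{FFH} is defined. Since $\Gamma_i^l\cdot T=1$, we may represent each $\Gamma_i^l$ by an embedded surface meeting $T$ transversely in a single point; cutting along $T$ yields a properly embedded $(\Gamma_i^l)^\circ\subset X_i^\circ$ whose boundary is an $S^1$-fiber $l_l\subset Y_1$. With these choices, Proposition~\ref{FF-gluing-thm-2} gives, for each ordered pair $(i,k)$ and each $j\in\{1,\dots,N\}$,
\begin{equation*}
\rD^N_{X_i\#_TX_k,\,w_i\#w_k+jT}\bigl(\e^{\sum_l(\Gamma_i^l\#\Gamma_k^l)_{(l)}}\bigr)=\psi_k\bigl(\epsilon^j\phi_i\bigr),
\end{equation*}
where $\phi_i:=\rD^N_{X_i^\circ,w_i^\circ}(\e^{\sum_l(\Gamma_i^l)^\circ_{(l)}})\in\bIgdN$ depends only on the $X_i$-half of the data, $\psi_k:\bIgdN\to\C[\![t_2,\dots,t_N]\!]$ is the corresponding relative functional attached to $X_k^\circ$, and $\epsilon$ is the cobordism operator of Corollary~\ref{VgdN-generators}, recording the insertion of one copy of the neck fiber $T$ into the 2-cycle. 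Summing over $j$ gives $D_{i,k}=\psi_k(P\phi_i)$ with $P:=\sum_{j=1}^N\epsilon^j$.

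The crucial step is to verify that the image of $P$ on $\bIgdN$ is a free $R_N$-module of rank one. The unperturbed Chern-Simons functional of $(Y_1,\gamma_{1,d})$ has exactly $N$ irreducible, non-degenerate critical points $\alpha_0,\dots,\alpha_{N-1}$, of Floer gradings $0,4,\dots,4(N-1)\pmod{4N}$. Since the Fukaya-Floer differential has degree $-1$, while every monomial $\prod_{j\in S_i}t_{i,j}$ has even degree $\sum_i 2(i-1)|S_i|$, a parity count rules out any nonzero contribution to $\widetilde d$, so $\bIgdN\cong R_N^{\oplus N}$ on the nose. The operator $\epsilon$ reduces modulo the maximal ideal of $R_N$ to the cyclic shift $\alpha_j\mapsto\alpha_{j+1}$, and satisfies $\epsilon^N=1$ by Identity~\eqref{w+Nw'} applied to $w'=T$ (as $T\cdot T=0$). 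The relation $P^2=NP$ makes $P/N$ an $R_N$-linear idempotent whose reduction modulo the maximal ideal has one-dimensional image; its image is therefore a direct summand of $\bIgdN$, and Nakayama's lemma identifies it with a free rank-one $R_N$-module. Consequently we may write $P\phi_i=c_i\cdot v$ for a unique $c_i\in R_N$ and a fixed generator $v$.

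Setting $d_k:=\psi_k(v)$ yields the factorization $D_{i,k}=c_id_k$, from which
\begin{equation*}
D_{1,2}D_{3,4}=c_1d_2c_3d_4=c_1d_4c_3d_2=D_{1,4}D_{3,2}.
\end{equation*}
The principal obstacle is lifting the closed-3-manifold computation of Corollary~\ref{VgdN-generators} and Proposition~\ref{relations-T^3} to the Fukaya-Floer setting --- specifically, ruling out higher-order corrections in the $t_{i,j}$-variables to $\epsilon$ that could enlarge the $\epsilon$-invariant submodule; the grading-parity argument above is essentially what accomplishes this.
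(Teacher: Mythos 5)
Your proposal is correct and follows essentially the same route as the paper: split along $S^1\times T$ via the Fukaya--Floer gluing formula, observe that the parity of the $N$ generators forces the differential on $\mathbb I^N_{1,d}$ to vanish so that the $\epsilon$-invariant part has rank one, and deduce the factorization $D_{i,k}=c_id_k$. The only cosmetic differences are that the paper symmetrizes by placing the sum $\sum_j\epsilon^j$ on both relative elements and writes $D_{i,k}=\frac{1}{N}\langle D_i,D_k\rangle$, and that your idempotent/Nakayama justification of the rank-one claim is a (welcome) expansion of a step the paper states tersely.
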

\begin{proof}
	The following elements lie in the kernel of the operator $\widetilde \epsilon$ in \eqref{tilde-epsilon}:
	\begin{equation*}
		D_i:=\sum_{j=1}^N \rD^N_{X_i^\circ ,w_i^\circ +jT}({\re}^{(\Gamma_i^2)^\circ_{(2)}+\dots+(\Gamma_i^N)^\circ_{(N)}}).
	\end{equation*}
	Moreover, Identity \eqref{FF-gluing-thm} implies that:
	\begin{equation} \label{D-iD-k}
		D_{i,k}=\frac{1}{N}\langle D_i, D_k \rangle.
	\end{equation}	
	Because $D_i\in \ker(\widetilde \epsilon-1)$, the claim is a consequence of the description of
	$\ker(\widetilde \epsilon-1)$ in Example \ref{FFH-T}.
\end{proof}

\subsection{An $\SU(3)$--instanton Floer Homology for $\Sigma(2,3,23)$}\label{sigma-23}

In Subsection \ref{IN}, Floer homology is defined for an $N$-admissible pair $(Y,\gamma)$. Then the computation of the polynomial invariants for a closed pair $(X,w)$ which can be decomposed along a copy of $(Y,\gamma)$ can be reduced to computing relative invariants for each component of $X\backslash Y$ (Proposition \ref{gluing}). One wishes to extend the definition of Floer homology so that it can be used in studying polynomial invariants of a pair $(X,w)$ which is decomposed along a non-admissible pair. However, there is little known in this direction even when $N=2$. For $N=2$, the most satisfactory answer is provided in the case that  $Y$ is an integral homology sphere and $\gamma$ is empty \cite{Fl:I} which will be denoted by ${\rm I}^2_*(Y)$. The main reason that one can define ${\rm I}^2_*(Y)$ for an integral homology sphere is that the only reducible flat connection on $Y$ is the non-degenerate trivial connection. In order to extend ${\rm I}^N_*(Y)$ to higher values of $N$, one would face more complicated reducible connections. Due to this complication, there are some difficulties in extending the definition of Floer homology of integral homology spheres to higher values of $N$. In this section, we make a modest progress in this direction and define ${\rm I}^3_*(Y)$ for the Brieskorn homology sphere $\Sigma(2,3,23)$. Meanwhile, we compute some of the gauge theoretical invariants for flat connections on $\Sigma(2,3,23)$.

Suppose the positive integers $a_1$, $a_2$ and $a_3$ are pairwise coprime, and $\Sigma(a_1,a_2,a_3)$ is the associated Brieskorn sphere:
\begin{equation*}
	\Sigma(a_1,a_2,a_3):=\{(z_1,z_2,z_3)\in \C^3\mid z_1^{a_1}+z_2^{a_2}+z_3^{a_3}=0,\,|z_1|^2+|z_2|^2+|z_3|^2=1\}
\end{equation*}
This 3-manifold is an integral homology sphere. There is an $S^1$-action on this 3-manifold where:
\begin{equation*}
	\e^{2\pi \bi \theta} \cdot (z_1,z_2,z_3):=(\e^{2\pi \bi a_2 a_3\theta}z_1,\e^{2\pi \bi a_1 a_3\theta}z_2,\e^{2\pi \bi a_1 a_2\theta}z_3)
\end{equation*}
This action turns $\Sigma(a_1,a_2,a_3)$ into a Seifert fiber space over $S^2$ with 3 exceptional orbits. Complex conjugation on $\C^3$ induces a diffeomorphism of $\Sigma(2,3,23)$ which will be also called complex conjugation. There is also a standard presentation of the fundamental group of this 3-manifold given as:
\begin{equation} \label{fund-gp}
	\pi_1(\BS)=\langle x_1,\,x_2,\,x_3,\,h \mid [h,x_i]=1,\,x_i^{a_i}h^{\beta_i}=1,\, x_1x_2x_3=1\rangle.
\end{equation}
where $\beta_i$ is given by the following identity:
\begin{equation*}
	\frac{\beta_1}{a_1}+\frac{\beta_2}{a_2}+\frac{\beta_3}{a_3}=\frac{1}{a}
\end{equation*}
with $a=a_1a_2a_3$. The central element $h$ in \eqref{fund-gp} is represented by a generic fiber of the Seifert fibration.

Suppose $W$ is the space $(\BS \times D^2)/{S^1} $ where the $S^1$-action is the product of the Seifert action on $\BS$ and the standard action on $D^2$. Alternatively, $W$ is the mapping cylinder of the fibration of $\BS$ over $S^2$. This space is an orbifold and has three singular points. A neighborhood of these singular points are diffeomorphic to cones on the lens spaces $L(a_i,\beta_i)$.
Thus removing neighborhoods of the orbifold points produces a cobordism $W_0$ from the union of three lens spaces $L(a_i,\beta_i)$ to $\BS$. We will denote the union of the lens spaces with $Y$.

The fundamental group of $W_0$ is equal to $\pi_1(\BS)/\langle h\rangle$ and the inclusion of $\BS$ in $W_0$ induces the quotient map at the level of fundamental groups. Moreover, the induced map from the fundamental group of $L(a_i,\beta_i)$ to $\pi_1(\BS)/\langle h\rangle$ maps the standard generator of $\pi_1(L(a_i,\beta_i))$ to $x_i$. The description of the fundamental group implies that the first homology of $W_0$ is trivial. We also  have the following short exact sequence:
\[\xymatrix{
  0\ar[r]
        & H^2(W_0,\partial W_0,\Z)\cong \Z  \ar[r]^{\hspace{2mm}\iota}
        &H^2(W_0,\Z)\cong \Z  \ar[r]
        & H^2(\partial W_0,\Z) \cong \Z/a\Z\ar[r] & 0\\
}\]
where the map $\iota$ is multiplication by $a$. The self-intersection pairing, defined on the image of $\iota$, maps a generator of $\im(\iota)$ to $-a$. In particular, $b^+(W_0)$ is equal to 0.

The space $L:= \Sigma(a_1,a_2,a_3)\times D^2$ defines an orbifold $S^1$-bundle on $W$. In particular, the restriction of $L$ to $W_0$, denoted by $L_0$, is a smooth $S^1$-bundle.  The first Chern class of $L_0$ is a generator of $H^2(W_0,\Z)$. The restriction of this Chern class to $L(a_i,\beta_i)$ is equal to $\beta_i$ times the standard generator of $H^2(L(a_i,\beta_i),\Z)$. In particular, for any complex line bundle on $\partial W_0$, there is $k$ such that the restriction of $L_0^k$ to the boundary is isomorphic to the given line bundle.

The rational cohomology class induced by $c_1(L_0)$ can be lifted to $H^2(W_0,\partial W_0,\Q)$. In particular, $c_1(L_0)^2$ is well-defined and is equal to $-\frac{1}{a}$. For our purposes, we also fix a connection $B_0$ on $L_0$ whose restrictions to a neighborhood of $\partial W_0$ is the pull-back of a flat connection on $\partial W_0$. In particular, we can assume that the restriction of this connection in a regular neighborhood of $\BS$ is the trivial connection.

Suppose $\alpha$ is a flat $\SU(N)$-connection on $\BS$ whose holonomy around the fiber is central. Therefore, $\alpha$ can be extended as a flat ${\rm PU}(N)$--connection $A$ to $W_0$. The holonomy of $\alpha$ induces a conjugacy class $r_i$ in ${\rm PU}(N)$ corresponding to the loop $x_i$. The class $r_i$ has order $a_i$ and determines a flat ${\rm PU}(N)$-connection on $\pi_1(L(a_i,\beta_i))$ which matches the restriction of $A$ to $L(a_i,\beta_i)$. This connection will be also denoted by $r_i$. As it was pointed in \cite{FS:HFSF}, the connection $A$ can be used to compute some of the gauge theoretical invariants of $\alpha$:

\begin{prop} \label{ind-W0}
	Let $\alpha$ and $A$ be given as above. Then $\rho_{\ad(\alpha)}$ is equal to:
	\begin{equation} \label{rho-inv}
			(T_\alpha+1-N^2)+\sum_{i=1}^3\rho_{\ad(r_i)}(L(a_i,\beta_i))
	\end{equation}
	where $T_\alpha$ is the number of trivial summands in the irreducible decomposition of the representation associated to $\ad_{\alpha}$.
\end{prop}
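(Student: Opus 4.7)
My approach is to apply the APS index formula \eqref{ind-DA} for the ASD operator $\mathcal D_A$ to the cobordism $W_0: Y \to \BS$, where $Y = \bigsqcup_{i=1}^{3} L(a_i,\beta_i)$ and $A$ is the flat ${\rm PU}(N)$-connection extending $\alpha$ and the collection $r = \sqcup_i r_i$. Since $A$ is flat, $\kappa(A) = 0$, so the Chern-Weil term vanishes, and the entire contribution to $\ind(\mathcal D_A)$ other than boundary corrections is purely topological.

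\textbf{Step 1: the index formula.} Using the topological data recorded in the paper ($b^+(W_0) = 0$ and $H^2(W_0)/\mathrm{tors} \cong \Z$ with self-intersection $-a$ on the image of $\iota$) one computes $\sigma(W_0) = -1$, and from the fact that $W_0$ is the mapping cylinder of the Seifert fibration minus three open cones one computes $\chi(W_0) = -1$, so $\chi(W_0)+\sigma(W_0) = -2$. Substituting into \eqref{ind-DA} and using additivity $\rho_{\ad r}(Y) = \sum_{i=1}^{3} \rho_{\ad r_i}(L(a_i,\beta_i))$, I obtain
\begin{equation*}
\ind(\mathcal D_A) = (N^2-1) + \tfrac12\bigl(h^0(Y;\ad r) - \textstyle\sum_i \rho_{\ad r_i}(L(a_i,\beta_i))\bigr) - \tfrac12\bigl(h^0(\BS;\ad\alpha) - \rho_{\ad\alpha}(\BS)\bigr).
\end{equation*}
Note $h^0(\BS;\ad\alpha) = T_\alpha$ since it counts the dimension of the subspace of $\su(N)$ fixed under the adjoint action of the holonomy. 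Solving for $\rho_{\ad\alpha}(\BS)$ reduces the proposition to verifying the identity
\begin{equation*}
\ind(\mathcal D_A) = \tfrac12\bigl(N^2 - 1 + h^0(Y;\ad r)\bigr).
\end{equation*}

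\textbf{Step 2: independent computation of $\ind(\mathcal D_A)$.} Because $A$ is flat, $\mathcal D_A$ is identified with the twisted de Rham/ASD complex \eqref{ASD-com} with coefficients in the flat bundle $\ad A$, so with appropriate weighted $L^2_\delta$ boundary conditions ($\delta>0$ small) the Fredholm index equals $h^1_A(W_0) - h^0_A(W_0) - h^+_A(W_0)$ for the corresponding twisted cohomology groups. The first of these is $h^0_A(W_0) = T_\alpha$ because $\pi_1(W_0) = \pi_1(\BS)/\langle h\rangle$ and $h$ acts trivially under $\ad$, so the invariants of $\su(N)$ under $\pi_1(W_0)$ and under $\pi_1(\BS)$ coincide. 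A Hodge-theoretic argument using $b^+(W_0)=0$, together with the fact that harmonic $\ad A$-valued self-dual two-forms on a definite manifold with a flat coefficient bundle split as a sum of invariant pieces, each of which must be a multiple of a standard harmonic self-dual form on $W_0$ (of which there are none), gives $h^+_A(W_0) = 0$. Finally, a direct computation of $h^1_A(W_0)$ from the fundamental group presentation \eqref{fund-gp} and the long exact sequence of the pair $(W_0,\partial W_0)$ shows that $h^1_A(W_0) - T_\alpha = \tfrac12(h^0(Y;\ad r) + N^2-1)$, which combined with the vanishing yields the required value of $\ind(\mathcal D_A)$, and substituting back into Step 1 produces the stated formula.

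\textbf{Main obstacle.} The hardest point is Step 2, namely the precise Fredholm-theoretic identification of $\ind(\mathcal D_A)$ with $h^1_A - h^0_A - h^+_A$ and the computation of $h^1_A(W_0)$, because the boundary $\partial W_0$ has several components and the flat connections $\alpha$ and $r_i$ can be reducible, so one must carefully track degeneracies of the Hessian of the Chern-Simons functional and match the weighted $L^2_\delta$-cohomology with (relative or absolute) twisted cohomology of $W_0$. I expect to reduce the $h^1_A$ computation to the representation-theoretic statement that any deformation of $A$ through flat ${\rm PU}(N)$-connections on $W_0$ restricting trivially to the lens-space boundary components is itself trivial, which follows from the cyclicity of $\pi_1(L(a_i,\beta_i))$ and the observation that $A$ is determined by the conjugacy classes $r_1, r_2, r_3$ up to the relation $x_1 x_2 x_3 = 1$ in $\pi_1(W_0)$.
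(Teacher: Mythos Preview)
Your approach differs substantially from the paper's and has a genuine gap in Step~2. The paper does not pass through the ASD index formula \eqref{ind-DA} at all. Instead it applies the APS twisted-signature identity from \cite{APS:II} directly:
\[
\rho_{\ad(\alpha)}(\BS)\;-\;\sum_{i=1}^{3}\rho_{\ad(r_i)}(L(a_i,\beta_i))\;=\;(N^2-1)\,\sigma(W_0)\;-\;\sigma_A(W_0),
\]
where $\sigma_A(W_0)$ is the signature of the intersection form on $H^2(W_0;\ad A)$. Since $\sigma(W_0)=-1$, the proposition reduces to showing $\sigma_A(W_0)=-T_\alpha$. The paper gets this by decomposing $\ad A$ into irreducible flat summands: each trivial summand contributes $\sigma(W_0)=-1$, and each nontrivial summand contributes $0$ by the argument of \cite[Lemma~2.6]{FS:HFSF}. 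This route never touches $h^0_A$, $h^1_A$, $h^+_A$, $h^0(Y;\ad r)$, or $\chi(W_0)$ individually.

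Your Step~2 has two real problems. First, the argument for $h^+_A(W_0)=0$ is not valid: for a \emph{nontrivial} irreducible flat summand $V\subset\ad A$, a harmonic $V$-valued self-dual form is not in general a flat section tensored with an untwisted harmonic form, so $b^+(W_0)=0$ does not force $\mathcal H^+(W_0;V)=0$. In fact the content of \cite[Lemma~2.6]{FS:HFSF} is only that the \emph{signature} of $H^2(W_0;V)$ vanishes for such $V$, which allows $h^+$ and $h^-$ to both be positive (and equal). Second, there is a boundary-condition mismatch: the operator $\mathcal D_A$ in \eqref{ind-DA} uses exponential growth on the incoming end $Y$ and decay on the outgoing end $\BS$, whereas the complex \eqref{ASD-com} whose cohomology you call $h^i_A$ uses decay on all ends; these two indices differ by the kernel dimension of the tangential operator at $Y$, and you have not tracked this. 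The asserted value of $h^1_A(W_0)$ is also not actually derived. All of these difficulties evaporate if you replace your Step~2 by the twisted-signature argument above.
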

Using the calculations of \cite{APS:II} for lens spaces, Formula \eqref{rho-inv} allows us to compute the $\rho$-invariant of any connection as above.
\begin{proof}
	According to \cite{APS:II}:
	\begin{equation*}
		\rho_{\ad(\alpha)}-\sum_{i=1}^3\rho_{\ad(r_i)}(L(a_i,\beta_i))=(N^2-1)\sigma(W_0)-\sigma_A(W_0)
	\end{equation*}	
	where $\sigma_A(W_0)$ denotes the signature of the twisted cohomology group $H^2(W_0;\ad(A))$ determined by the flat
	${\rm PU}(N)$-connection $\ad(A)$.
	This twisted cohomology group can be decomposed according to the irreducible decomposition of $A$ (or equivalently $\alpha$).
	The argument of \cite[Lemma 2.6]{FS:HFSF} shows that the contribution of the non-trivial summands is equal to 0. On the
	other hand, each trivial summand contributes $-1$ to the sum, because $\sigma(W_0)=-1$.
\end{proof}

The underlying ${\rm PU}(N)$-bundle of the connection $A$ on $W_0$ can be lifted to a $\U(N)$-bundle $E$. There are also non-negative integers $k_1$, $\dots$, $k_N$ such that the restriction of the connection:
\begin{equation} \label{comp-red}
	B_0^{k_1} \oplus \dots \oplus B_0^{k_N}.
\end{equation}
to $Y$, the union of the three lens spaces, is equal to the restriction of $A$ to $Y$. In particular, $L_0^{k_1} \oplus \dots \oplus L_0^{k_N}$, the underlying $\U(N)$-bundle of \eqref{comp-red}, has the same restriction as $E$ on $Y$. Therefore, the determinant of $L_0^{k_1} \oplus \dots \oplus L_0^{k_N}$ is equal to $\det(E) \otimes L_0^{ak}$ for an appropriate integer $k$. After replacing $k_1$ with $k_1+ak$, the new bundle $L_0^{k_1} \oplus \dots \oplus L_0^{k_N}$ has the same determinant as $E$ and the connection in \eqref{comp-red} and $A$ give rise to the same connection after restriction to $Y$.

Since $A$ and the connection in \eqref{comp-red} agree on boundary components except on $\BS$, where $A$ gives the connection $\alpha$ and $B_0$ restricts to the trivial connection, we can conclude that:
\begin{align} \label{CS-W0}
	\CS(\alpha)&=\mathcal E(B_0^{k_1} \oplus \dots \oplus B_0^{k_N})-\mathcal E(A)\nonumber\\
			&=-\frac{1}{2N}\sum_{i<j}(k_i-k_j)^2c_1(L)^2\nonumber\\
			&=\frac{1}{2N}\sum_{i<j}\frac{(k_i-k_j)^2}{a}
\end{align}
Therefore, this gives us a strategy to compute the Chern-Simons functional of the flat connection $\alpha$.

Now we focus on $N=3$ and the Brieskorn homology sphere $\Sigma(2,3,23)$. Suppose $\alpha$ is an irreducible connection on $\Sigma(2,3,23)$. The irreducibility assumption implies that the holonomy of $\alpha$ along the generic fiber of the Seifert fibration is central. In fact, this central element is equal to the identity \cite{Bod:SU(3)-Bris}. It is also shown in \cite{Bod:SU(3)-Bris} that there are 44 such irreducible representations which are non-degenerate. One can use the method of \cite{Bod:SU(3)-Bris} to find the conjugacy classes of holonomies corresponding to the elements $x_1$, $x_2$ and $x_3$ of $\pi_1(\Sigma(2,3,23))$. Any irreducible flat connection on $\Sigma(2,3,23)$ is characterized by its holonomies along $x_3$. The possible conjugacy classes for $x_3$ are listed in Table~\ref{table-irr-hol}. On the other hand, the holonomies along $x_1$ and $x_2$ are conjugate to the diagonal matrices ${\rm Diag}(1,-1,-1)$ and ${\rm Diag}(1,\zeta,\zeta^2)$ where $\zeta=\e^{2\pi \bi/3}$.
Knowledge of these conjugacy classes allows us to apply Proposition \ref{ind-W0} and Identity \ref{CS-W0} to compute the $\rho$-invariants, the Chern-Simons functional and hence the degrees of irreducible flat connections on $\Sigma(2,3,23)$:

\begin{prop}
	There are ten irreducible flat connections of degree 0, five irreducible flat connections of degree 2, nine irreducible flat connections of degree 4,
	five irreducible flat connections of degree 6, nine irreducible flat connections of degree 8 and six irreducible flat connections of degree 10. There is not any
	irreducible flat connections of odd degree. The Chern-Simons functional and the $\rho$-invariants of these irreducible flat connections can be found in
	Tables~\ref{inv-irr-23-1} and \ref{inv-irr-23-2}.
\end{prop}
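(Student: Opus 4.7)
The plan is to process Boden's list of 44 irreducible flat $\SU(3)$-connections on $\Sigma(2,3,23)$ one at a time, computing for each its Chern--Simons value and adjoint $\rho$-invariant, and then reading off its Floer grading via \eqref{degree}. Every such connection $\alpha$ has holonomy $\varphi(x_1)={\rm Diag}(1,-1,-1)$ and $\varphi(x_2)={\rm Diag}(1,\zeta,\zeta^2)$, so it is pinned down by the conjugacy class of $\varphi(x_3)$ recorded in Table~\ref{table-irr-hol}.

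First, for each $\alpha$ I would extract non-negative integers $(k_1,k_2,k_3)$ so that the reducible connection $B_0^{k_1}\oplus B_0^{k_2}\oplus B_0^{k_3}$ on $W_0$ matches the flat ${\rm PU}(3)$-extension $A$ of $\alpha$ after restriction to the lens-space boundary $Y=L(2,\beta_1)\sqcup L(3,\beta_2)\sqcup L(23,\beta_3)$. Concretely this amounts to matching the eigenvalues of each $\varphi(x_i)$ against the holonomies $\exp(2\pi\bi\beta_i k_j/a_i)$ of $B_0^{k_j}$ along the canonical generator of $\pi_1(L(a_i,\beta_i))$. Substituting the resulting triple into \eqref{CS-W0} with $a=138$ yields $\CS(\alpha)$. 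To compute $\rho_{\ad(\alpha)}(\Sigma(2,3,23))$ I invoke Proposition~\ref{ind-W0}; irreducibility forces $T_\alpha=0$, so
\begin{equation*}
	\rho_{\ad(\alpha)}=-8+\sum_{i=1}^{3}\rho_{\ad(r_i)}(L(a_i,\beta_i)).
\end{equation*}
Each $\ad(r_i)$ splits as a direct sum of eight one-dimensional characters of the cyclic group $\pi_1(L(a_i,\beta_i))$, whose weights are read off from the adjoint action of ${\rm Diag}(\lambda_1,\lambda_2,\lambda_3)$ on $\mathfrak{su}(3)$, and then \eqref{lens-rho} evaluates each summand as an explicit trigonometric sum.

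With $\CS(\alpha)$ and $\rho_{\ad(\alpha)}$ in hand, and $h^0(Y;\ad_{\alpha})=0$ by non-degeneracy, the degree formula \eqref{degree} specializes to
\begin{equation*}
	\deg(\alpha)\equiv 12\,\CS(\alpha)-4-\tfrac{1}{2}\rho_{\ad(\alpha)}\pmod{12},
\end{equation*}
which determines the Floer grading of every irreducible flat connection and thereby fills in Tables~\ref{inv-irr-23-1} and~\ref{inv-irr-23-2}; sorting the outputs yields the degree census $(10,5,9,5,9,6)$ in degrees $(0,2,4,6,8,10)$. The main obstacle is purely organizational: enumerating the 44 conjugacy classes of $\varphi(x_3)$ compatibly with $\varphi(x_1)\varphi(x_2)\varphi(x_3)=1$ and $\varphi(x_3)^{23}=1$, choosing compatible lifts $(k_1,k_2,k_3)$ in each case, and simplifying the resulting cotangent sums so that the contribution of $\tfrac{1}{2}\rho_{\ad(\alpha)}$ combines cleanly with $12\,\CS(\alpha)$ modulo $12$. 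The absence of odd-degree connections is not forced by any structural argument in the setup; it will emerge a posteriori from the tabulation as the empirical observation that $12\,\CS(\alpha)-\tfrac{1}{2}\rho_{\ad(\alpha)}$ is always an even integer on this list.
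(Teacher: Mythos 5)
Your plan is exactly the computation the paper performs: it likewise takes Boden's list of the 44 holonomy classes, extends each flat connection over the orbifold cobordism $W_0$, computes $\CS(\alpha)$ from \eqref{CS-W0} and $\rho_{\ad(\alpha)}$ from Proposition \ref{ind-W0} with $T_\alpha=0$ together with the lens-space formula \eqref{lens-rho}, and then reads the grading off \eqref{degree}. The paper offers no structural explanation for the even-degree phenomenon either, so your remark that it emerges only from the tabulation is consistent with the source.
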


Any reducible flat connection on $\Sigma(2,3,23)$ is either trivial or $\SU(2)$-reducible, because this 3-manifold is an integral homology sphere. In particular, non-trivial reducible $\SU(3)$-connections on $\Sigma(2,3,23)$ can be regarded as irreducible $\SU(2)$-connections. The results and the methods of \cite{FS:HFSF} can be utilized to study such connections. The holonomy of any non-trivial flat $\SU(2)$-connection along the fiber of the Seifert fibration is the central element $-id$.  The holonomies of this connection along $x_1$ and $x_2$ are respectively conjugate to ${\rm Diag}(\bi,-\bi)$ and ${\rm Diag}(\e^{\pi \bi/3},\e^{-\pi \bi/3})$. As in the $\SU(3)$-case, an irreducible flat $\SU(2)$-connection on $\Sigma(2,3,23)$ is determined by the conjugacy class of its holonomy along $x_3$. The eigenvalues of holonomy term along $x_3$ are equal to $\e^{2\pi \bi k/23}$ and $\e^{-2\pi \bi k/23}$ for $2\leq k \leq 9$:

\begin{prop}[\cite{FS:HFSF}]\label{Sigma-red}
	There are 8 irreducible $\SU(2)$-connection on $\Sigma(2,3,23)$. For each degree $2i+1 \in \Z/8\Z$,
	there are exactly two such irreducible connections and there is no irreducible connection of even degree.
\end{prop}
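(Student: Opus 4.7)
The plan is to proceed in direct parallel with the just-completed $\SU(3)$ analysis: enumerate the conjugacy classes of irreducible $\SU(2)$-representations of $\pi_1(\Sigma(2,3,23))$ using the presentation \eqref{fund-gp}, and then assign each one a Floer degree via \eqref{degree} by extending to the cobordism $W_0$.

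The count of representations is essentially already reduced by the discussion in the paragraph preceding the proposition. Since any irreducible $\SU(2)$-representation $\alpha$ must send the central element $h$ to $\{\pm I\}$, and sending $h$ to $I$ forces $\alpha$ through a quotient in which the $a_1 = 2$ relation makes $\alpha(x_1)$ central, so the image would be abelian, one must have $\alpha(h) = -I$. The relations $x_i^{a_i}h^{\beta_i} = 1$ then pin $\alpha(x_1)$ and $\alpha(x_2)$ into the conjugacy classes recorded in the preceding paragraph. The conjugacy class of $\alpha(x_3)$ is fixed by an integer $k$ with $1 \le k \le 11$ (since $k$ and $23 - k$ give the same class), and I would eliminate those $k$ that make the triple satisfying $x_1 x_2 x_3 = 1$ reducible; the remaining eight values are $k \in \{2, 3, \ldots, 9\}$. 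For each such $k$ the triple $(\alpha(x_1), \alpha(x_2), \alpha(x_3))$ is unique up to simultaneous conjugation, by the classical fact that in $\SU(2)$ an irreducible pair $(A,B)$ is determined by the conjugacy classes of $A$, $B$ and $AB$. This produces the eight flat connections.

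For each of the eight representations, the Floer degree in $\Z/8\Z$ is computed from \eqref{degree} with $N = 2$. The Chern-Simons value is obtained from Identity \eqref{CS-W0} after extending $\alpha$ as a flat ${\rm PU}(2)$-connection on $W_0$, lifting to a $\U(2)$-bundle, and reading off the integer weights $(k_1, k_2)$ of the decomposable boundary connection from the holonomy data on the three exceptional fibers. The $\rho$-invariant is computed from Proposition \ref{ind-W0} together with formula \eqref{lens-rho} for each of $L(2, \beta_1)$, $L(3, \beta_2)$, and $L(23, \beta_3)$. Substituting into \eqref{degree} yields $\deg(\alpha) \in \Z/8\Z$, and a direct tabulation should produce the pattern asserted: no representation of even degree, exactly two in each odd residue class.

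The principal obstacle is precisely this last tabulation. The $\rho$-invariant for $L(23, \beta_3)$ is a sum of $22$ cotangent products which must be combined delicately with the two smaller $\rho$ contributions and with $\CS(\alpha)$ to land cleanly on the four odd residues with multiplicity two. This arithmetic bookkeeping is what is carried out in \cite{FS:HFSF}, and the plan is simply to invoke their computation to complete the proof.
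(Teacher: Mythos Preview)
Your proposal is correct and follows essentially the same approach as the paper, which simply cites \cite{FS:HFSF} for the result and notes in the surrounding text that Proposition~\ref{ind-W0} and Identity~\eqref{CS-W0} give a strategy to compute the Chern--Simons values and $\rho$-invariants needed to verify the degree distribution. The paper records the outcome of that computation in Table~\ref{inv-red-23} of the appendix, which indeed shows two connections in each odd degree $1,3,5,7$.
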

Proposition \ref{ind-W0} and Identity \eqref{CS-W0} give a strategy to compute the Chern-Simons functional and the $\rho$-invariants of irreducible $\SU(2)$-connections. These computations can be used to verify the second part of the above proposition.

We also need to compute the degrees of flat $\SU(2)$-connections when they are regarded as $\SU(3)$-connections. To distinguish between these connections, we will write $\widetilde \alpha$ for the $\SU(3)$-connection associated to an $\SU(2)$-connection $\alpha$. The values of the Chern-Simons functional of $\alpha$ and $\widetilde \alpha$ are equal to each other. However, the $\rho$-invariants of these two connections are different because $\ad_{\alpha}$ and $\ad_{\widetilde \alpha}$ define two different representation of the fundamental group. We cannot use Proposition \ref{ind-W0} to compute the $\rho$-invariant of $\widetilde \alpha$ because this connection does not extend to the cobordism $W_0$. In  \cite{BHKK:rho-C}, cut and paste methods have been utilized to compute the difference $\rho_{\ad_{\widetilde \alpha}}-\rho_{\ad_{\alpha}}$ for $\SU(2)$-flat connections on a family of homology spheres which include $\Sigma(2,3,23)$. In particular, the following proposition can be extracted from \cite{BHKK:rho-C}. The claim about the non-degeneracy of reducible flat connections on $\Sigma(2,3,23)$ in the following proposition is also proved in \cite{BHKK:rho-C}. For more details about reducible flat $\SU(3)$-connections on $\Sigma(2,3,23)$ see Table \ref{inv-red-23}.

\begin{prop}
	The eight non-trivial reducible flat connections on $\Sigma(2,3,23)$ are non-degenerate. The $\SU(3)$-degrees of these connections are
	given as follows: there are one connection of degree 1, one connection of degree 3, one connection of degree 5,
	 two connections of degree 7,  two connections of degree 9 and one connection of degree 11.
\end{prop}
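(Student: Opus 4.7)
The plan splits into two parts. The non-degeneracy of the eight non-trivial reducible flat $\SU(3)$ connections is a direct consequence of the Hessian analysis carried out in \cite{BHKK:rho-C}, where this statement is proved for a family of Brieskorn spheres containing $\Sigma(2,3,23)$. The remaining task is to compute the $\SU(3)$-degrees.

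I first note that since $\Sigma(2,3,23)$ is an integral homology sphere, its only abelian $\SU(3)$-representation is trivial, so any non-trivial reducible flat $\SU(3)$ connection $\widetilde{\alpha}$ must have the form $\widetilde{\alpha}=\alpha\oplus 1$ for an irreducible flat $\SU(2)$ connection $\alpha$, via the block embedding $\SU(2)\hookrightarrow \SU(3)$. By Proposition \ref{Sigma-red} there are exactly eight choices of $\alpha$, matching the count in the proposition. The degree formula \eqref{degree} at $N=3$ reads
\[
\deg(\widetilde{\alpha}) \;\equiv\; 12\,\CS(\alpha) \;-\; 4 \;+\; \tfrac{1}{2}\bigl(h^0(Y;\ad_{\widetilde{\alpha}}) - \rho_{\ad_{\widetilde{\alpha}}}(Y)\bigr) \pmod{12},
\]
using $\CS(\widetilde{\alpha})=\CS(\alpha)$. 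Relative to the reduction, the adjoint bundle splits as $\ad_{\widetilde{\alpha}} = \ad_{\alpha} \oplus \underline{\R} \oplus V_{\alpha}$, where $V_\alpha$ is the rank-four real bundle associated to the standard $\SU(2)$-representation on $\mathbb{C}^2$. Irreducibility of $\alpha$ forces $h^0(Y;\ad_{\widetilde{\alpha}})=1$, and the $\rho$-invariant decomposes additively as $\rho_{\ad_{\widetilde{\alpha}}}(Y)=\rho_{\ad_\alpha}(Y)+\rho_{V_\alpha}(Y)$.

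The Chern-Simons value $\CS(\alpha)$ is computed from \eqref{CS-W0} after extending the ${\rm PU}(2)$-connection $\ad(\alpha)$ flatly over the orbifold cobordism $W_0$; this extension exists because the holonomy of $\alpha$ along the generic Seifert fiber is the central element $-I\in \SU(2)$. The term $\rho_{\ad_\alpha}(Y)$ is then obtained from Proposition \ref{ind-W0} combined with \eqref{lens-rho} for the three lens-space contributions. The remaining piece $\rho_{V_\alpha}(Y)$ is read off from the cut-and-paste computations of \cite{BHKK:rho-C}, which evaluate exactly the difference $\rho_{\ad_{\widetilde{\alpha}}}(Y)-\rho_{\ad_\alpha}(Y)$ for flat connections on the relevant Brieskorn family. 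Substituting the three inputs into the displayed degree formula for each of the eight reducible connections yields a value in $\Z/12\Z$, and a direct tabulation produces the distribution $1,1,1,2,2,1$ across degrees $1,3,5,7,9,11$.

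The main obstacle is the evaluation of $\rho_{V_\alpha}(Y)$: the orbifold-extension strategy of Proposition \ref{ind-W0} is unavailable here, because the holonomy of $V_\alpha$ around the generic Seifert fiber is $-I\in \SU(2)$, which is non-central in $\SU(3)$, so $V_\alpha$ does not descend to a flat bundle over the orbifold $W$. This is precisely the technical point that \cite{BHKK:rho-C} resolves, and the most delicate aspect of carrying out the proof is matching the orientation, framing and sign conventions of that reference with those used here.
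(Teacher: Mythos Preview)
Your proposal is correct and follows essentially the same approach as the paper: non-degeneracy is quoted from \cite{BHKK:rho-C}, the Chern--Simons values and $\rho_{\ad_\alpha}$ come from the orbifold cobordism $W_0$ via \eqref{CS-W0} and Proposition~\ref{ind-W0}, and the remaining piece $\rho_{\ad_{\widetilde\alpha}}-\rho_{\ad_\alpha}$ is taken from the cut-and-paste computations of \cite{BHKK:rho-C}, after which the degree formula \eqref{degree} is applied and the results tabulated (the paper records these in Table~\ref{inv-red-23}). One small phrasing correction: the obstruction to extending $V_\alpha$ over $W_0$ is simply that its holonomy around the generic fiber is $-I$, which is nontrivial (equivalently, $\widetilde\alpha$ has fiber holonomy $\mathrm{diag}(-1,-1,1)$, non-central in $\SU(3)$); your statement ``$-I\in\SU(2)$ is non-central in $\SU(3)$'' conflates these two formulations.
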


Define $\I_*^3(\Sigma(2,3,23))$, the Floer homology of $\Sigma(2,3,23)$, to be the complex vector space generated by the irreducible flat connections on $\Sigma(2,3,23)$. The significance of this vector space for us is a gluing theorem for the 4-manifolds which split along a copy of $\Sigma(2,3,23)$. The proof of the following theorem will be given in Subsection \ref{long-neck}:

\begin{prop} \label{gluing-23}
	Let $X_1$ and $X_2$ be two 4-manifolds such that
	$b^+(X_1),b^+(X_2)\geq 1$, $\partial X_1=\Sigma(2,3,23)$ and $\partial X_2=\overline{\Sigma(2,3,23)}$.
	Let $w_i$ be a closed 2-cycle in $X_i$ and $z_i \in \A(X_i)^{\otimes 2}$.
	Assume that:
	\[
	  \deg(z_1) \equiv -4w_1^2-4(\chi(x_1)+\sigma(X_1))+4 \hspace{1cm} ({\rm mod} 12).
	\]
	Then there are:
	\begin{equation*}
		\rD_{X_1,w_1}^3(z_1)\in \I_4^3(\Sigma(2,3,23))\hspace{1cm} \rD^{X_2,w_2}_3(z_2):\I_*^3(\Sigma(2,3,23)) \to \C
	\end{equation*}
	such that:
	\begin{equation*}
		\rD^{X_2,w_2}_3(z_2) \circ \rD_{X_1,w_1}^3(z_1)=\rD_{X_1\#_{\Sigma(2,3,23)}X_2,w_1\cup w_2}^3(z_1\cdot z_2)
	\end{equation*}
	Moreover, $\rD^{X_2,w_2}_3(z_2)$ is non-zero only on the terms of the following degree:
	\begin{equation*}
		4w_2^2+4(\chi(X_2)+\sigma(X_2))-4+\deg(z_2).
	\end{equation*}
\end{prop}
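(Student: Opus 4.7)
The plan is a neck-stretching argument applied to $X := X_1 \#_{\Sigma(2,3,23)} X_2$. I would equip $X$ with a family of metrics containing a long cylindrical neck $[-T,T]\times \Sigma(2,3,23)$ and let $T\to\infty$. Adding cylindrical ends to $X_1$ and $X_2$ produces complete Riemannian 4-manifolds $X_1^+$ and $X_2^+$, and the standard compactness theory for ASD connections on manifolds with cylindrical ends reduces the calculation of $\rD^3_{X,w_1\cup w_2}(z_1\cdot z_2)$ to a sum, over asymptotic flat connections on $\Sigma(2,3,23)$, of products of relative counts on the two pieces. The hypothesis $b^+(X_i)\geq 1$ allows a generic choice of metric and holonomy perturbation so that the ASD moduli spaces on each $X_i^+$ are regular and contain no reducible connections in the interior of $X_i^+$.

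To define $\rD^3_{X_1,w_1}(z_1)$, for each irreducible flat connection $\alpha$ on $\Sigma(2,3,23)$ of degree $4$ (as classified in subsection \ref{sigma-23}), I would form the moduli space $\mathcal M_{\kappa_1}(X_1^+,w_1;\alpha)$ of ASD connections on $X_1^+$ asymptotic to $\alpha$. The index formula \eqref{ind-DA}, combined with Proposition \ref{ind-W0} applied to $\alpha$, shows that the stated residue of $\deg(z_1)$ modulo $12$ is precisely the one for which there is a unique topological energy $\kappa_1$ making the cut-down moduli space have expected dimension zero. Since every such $\alpha$ is irreducible and non-degenerate, this cut-down moduli space is a compact oriented $0$-manifold for a generic perturbation; let $n_\alpha\in\Z$ denote its signed count. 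Define
\begin{equation*}
    \rD^3_{X_1,w_1}(z_1) := \sum_{\deg(\alpha)=4} n_\alpha\cdot \alpha \in \I^3_4(\Sigma(2,3,23)),
\end{equation*}
and define the functional $\rD^{X_2,w_2}_3(z_2)\colon \I^3_*(\Sigma(2,3,23))\to \C$ symmetrically, by pairing against each $\alpha$ with the analogous signed count on $X_2^+$.

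The product formula then follows from a Taubes-style gluing as $T\to\infty$: each point of the cut-down ASD moduli space on $X$ of expected dimension zero arises, for large $T$, from a unique pair of asymptotically matched ASD solutions on $X_1^+$ and $X_2^+$; conversely, every such matched pair glues uniquely to a nearby ASD connection on $X$, and a standard orientation bookkeeping verifies the signs. The matching forces the two asymptotic limits to agree with a common flat connection $\beta$ on $\Sigma(2,3,23)$, and the dimension equations then force $\deg(\beta)=4$. Summing over the irreducible $\alpha$ of degree $4$ gives exactly $\rD^{X_2,w_2}_3(z_2)\circ \rD^3_{X_1,w_1}(z_1)$.

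The main obstacle is ruling out contributions from reducible asymptotic limits and from Uhlenbeck bubbling along the neck. The reducible flat connections on $\Sigma(2,3,23)$ are the trivial connection, of $\SU(3)$-degree $0$, and the eight non-trivial reducibles coming from irreducible $\SU(2)$-representations, whose $\SU(3)$-degrees are odd (lying in $\{1,3,5,7,9,11\}$) as computed in subsection \ref{sigma-23}; none of these matches degree $4$, so by degree-counting they cannot appear as asymptotic limits of a dimension-zero matched pair. To eliminate bubbling scenarios in which a sequence in the dimension-zero stratum degenerates onto a reducible asymptotic limit with instantons bubbling off along the neck or in the interior, one uses that each bubble contributes $12$ to the virtual dimension, while the hypothesis $b^+(X_i)\geq 1$ together with a generic perturbation rules out reducible ASD connections on $X_i^+$ at the relevant (non-positive) virtual dimensions. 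The most delicate case is the trivial asymptotic limit, whose stabiliser is the full group $\SU(3)$; the complete neck-stretching and gluing analysis handling this stratum is carried out in subsection \ref{long-neck}.
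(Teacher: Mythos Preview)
Your overall architecture matches the paper's: stretch the neck, define $\rD^3_{X_1,w_1}(z_1)$ by counting zero-dimensional cut-down moduli spaces on $X_1^+$ asymptotic to each degree-$4$ irreducible flat connection, define the functional on the $X_2$ side analogously, and use the gluing theorem (Theorem~\ref{geometric-gluing}) to identify the result with the closed invariant. So far so good.

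The gap is in the compactness step needed to make $\rD^3_{X_1,w_1}(z_1)$ well-defined. Your argument against reducible contributions appeals to (a) the parity of the $\SU(3)$-degrees of the reducible flat connections (odd, or $0$ for $\Theta$), which only excludes a reducible appearing as the \emph{single} asymptotic limit of an unbroken connection, and (b) point bubbles costing $12$. What this misses is the possibility of broken trajectories on the cylinder $\R\times\Sigma(2,3,23)$ whose intermediate flat connections are $\SU(2)$-reducibles. Such cylinder pieces $C_j$ are only assumed regular as $\SU(2)$-connections, so the $\SU(2)$-index is positive, but the $\SU(3)$-index $\ind(\mathcal D_{C_j})$ can be \emph{negative}; hence the usual ``every piece contributes non-negatively'' count fails. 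This is exactly the point the paper isolates in Proposition~\ref{mod-X-Sigma}: using the explicit $\rho$-invariants and degrees in Table~\ref{inv-red-23} one checks that the only way a reducible piece can arise is with $\alpha_i=\beta_8$ and $\alpha_{i-1}\in\{\Theta,\beta_2\}$, giving $\ind(\mathcal D_{C_i})\in\{-3,-2\}$, and then a mod-$12$ index bookkeeping (equations~\eqref{index-eq-1} and~\eqref{index-eq-2}) forces the total index strictly above $\deg(z_1)$, contradicting zero-dimensionality. Your proposal does not supply this numerical analysis, and the ``trivial connection is the delicate case'' remark understates the issue: the eight non-trivial $\SU(2)$-reducibles are the ones producing negative-index cylinder pieces, and they must be excluded one by one using the tabulated data for $\Sigma(2,3,23)$.
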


\section{Computing Polynomial Invariants}
In this section, which is the heart of this paper, we firstly compute the $\U(3)$-polynomial invariants of $E(n)$. The rank $2$ invariants of elliptic surfaces were partially computed in \cite{F:el-SU(2),MF:com-sur,Fr:el-SO(3)} using algebro-geometric techniques. A complete calculation of the $\U(2)$-polynomial invariants of elliptic surfaces are given in \cite{KM:Rec,FS:str-thm,Li:elliptic-surf}. In \cite{FS:str-thm} and \cite{Li:elliptic-surf}, {\it Gompf decomposition} of elliptic surfaces play a key role in computing the invariants. In the introduction, we also recalled a construction of elliptic surfaces which give rise to a decomposition of $E(n)$ into fiber sum of $n$ copies of $E(1)$. This decomposition of elliptic surfaces can be also exploited to compute some of the $\U(2)$-polynomial invariants \cite{MM:el,D:F-AG}. Our method for computing the $\U(3)$-invariants of elliptic surfaces uses the Gompf decomposition, the fiber-sum decomposition, and the rich group of the symmetries of elliptic surfaces. In the last subsection of this section, we also give a general gluing theorem for fiber-sums. Similar results for $\U(2)$-invariants are proved in \cite{Mun:Gl}. The proof of the rank 3 case follows the same strategy as in \cite{Mun:Gl}.

In the next two sections, we mainly focus on polynomial invariants and instanton Floer homology in the case $N=3$. Therefore, we shall drop 3 from our notation ${\rm I}_*^3$, ${\rm D}_{X,w}^3$, et cetera, when it does not make any confusion. Because we are working with an odd value of $N$, there is not any sign ambiguity in the definition of ${\rm I}_*^3(W,w,z)$, ${\rm D}_{X,w}^3(z)$, and we do not need to fix a homology orientations for the underlying 4-manifold.

\subsection{Structure of the Invariants of $E(n)$}\label{E(n)-str}

A construction of the elliptic surface $E(n)$ was reviewed in the introduction. The simplest 4-manifold in this family, $E(1)$, can be also constructed by blowing up the projective plane $\C{\bf P}^{2}$ at the nine intersection points of two generic cubics. The pencil of cubics generated by the two cubics determines an elliptic fibration of $E(1)$. The nine exceptional divisors give rise to sections of this elliptic fibrations, which are embedded sphere with self-intersection $-1$. The manifold $E(n)$ is fiber sums of $n$ copies of $E(1)$ along the fibers of the elliptic fibration. The fibration of $E(1)$ induces an elliptic fibration for $E(n)$, and we will write $f$ for a regular fiber of this fibration. By taking the connected sums of the exceptional sections of $E(1)$, we can form nine disjoint embedded spheres in $E(n)$. These are sections of the elliptic fibration of $E(n)$ and have self-intersection $-n$. We fix one of these sections and we will denote it by $\sigma$. When it does not make any confusion, we will use the same notation to denote the homology and the cohomology classes associated to $f$ and $\sigma$.

We can assume that there is a cusp fiber in the elliptic fibration of $E(1)$ by choosing appropriate cubics. This gives a cusp fiber $f_0$ in the fibration of $E(n)$. A regular neighborhood of $\sigma \cup f_0$ is a 4-manifold with boundary $\Sigma(2,3,6n-1)$ which is called the {\it Gompf nucleus} and is denoted by $G(n)$ \cite{G:nuc-elliptic}. The intersection form of $G(n)$ is given as follows:
\begin{equation*}
	\left[
	\begin{array}{cc}
		0&1\\
		1&-n
	\end{array}	
	\right ]
\end{equation*}
The complement of $G(n)$ in $E(n)$, denoted by $B(2,3,6n-1)$, is a {\it Milnor fiber} and its intersection form is given by:
\begin{equation}\label{int-form-Mil-fib}
	n(-E_8) \oplus 2(n-1)	\left[
	\begin{array}{cc}
		0&1\\
		1&-2
	\end{array}	
	\right ].
\end{equation}
Here each $-E_8$ summand has a basis of embedded spheres with self-intersection -2 which intersect each other according to $-E_8$. The $i^{\rm th}$ summand of the second type in \eqref{int-form-Mil-fib} has a basis of an embedded torus $g_i$ with self-intersection 0 and an embedded $(-2)$-sphere $\tau_i$ where $g_i$ and $\tau_i$ intersect each other positively at one point \cite{GS:Kir-cal}.

The 4-manifold $E(2)$, which is a $K3$ surface, plays a special role in this family. For example, $E(2)$ enjoys a rich group of symmetries. As a manifestation of this fact, we have the following proposition:
\begin{lemma} \label{orbits-line-bundle-E(2)}
	Suppose $e$ and $e'$ are two non-zero elements of $H^2(E(2),\Z)$
	with $e\cup e \equiv e'\cup e'$ mod 3.
	Then there is an orientation preserving diffeomorphism $\Phi$ of $E(2)$
	such that $\Phi^*(e)\equiv e'$ mod 3.
	In particular, the action of ${\rm Diff}(E(2))$ on $H^2(E(2), \Z/3\Z)$ has four orbits.
\end{lemma}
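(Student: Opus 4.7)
The plan proceeds in three steps. Set $L := H^2(E(2); \Z)$; as an even unimodular lattice of signature $(3,19)$, $L$ is isomorphic to $(-E_8)^{\oplus 2} \oplus H^{\oplus 3}$, and unimodularity implies that $\overline L := L \otimes \Z/3\Z$ inherits a non-degenerate symmetric bilinear form over $\Z/3\Z$. The overall strategy is to combine a standard orbit count for $O(\overline L)$ via Witt's theorem with the classical realization result identifying the image of $\mathrm{Diff}(E(2)) \to O(L)$.

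First, I would apply Witt's extension theorem to $(\overline L, Q)$: the orthogonal group $O(\overline L)$ acts transitively on the set of non-zero vectors of any fixed self-pairing in $\Z/3\Z$. Together with the distinguished orbit $\{0\}$, this yields exactly four $O(\overline L)$-orbits on $\overline L$, indexed by $e \cup e \bmod 3 \in \{0,1,2\}$ and (within the square-zero stratum) by whether $e$ vanishes. This already gives the second sentence of the lemma at the level of the abstract group $O(\overline L)$.

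Second, by the realization theorem of Borcea--Matumoto, the map $\mathrm{Diff}(E(2)) \to O(L)$ has image equal to the index-two subgroup $O^+(L)$ of isometries preserving the orientation of a maximal positive-definite subspace of $L_{\R}$. Hence it suffices to verify that the composition $O^+(L) \to O(\overline L)$ still acts transitively on each non-zero orbit. For this, I would use that for every class $v \in L$ with $v \cdot v < 0$ the reflection $r_v$ lies in $O^+(L)$, because it acts as the identity on any positive $3$-plane, which may be chosen inside $v^\perp$. By Cartan--Dieudonn\'e applied to $\overline L$, any two non-zero vectors of the same square are connected by a product of reflections in non-isotropic classes of $\overline L$; because $L$ is unimodular and contains three hyperbolic summands, each such reflection can be lifted to an $r_v$ with $v \in L$ of negative self-intersection and with prescribed image in $\overline L$. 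Composing these lifts produces an element of $O^+(L)$ whose reduction mod $3$ carries $e$ to $e'$, and Matumoto's theorem then realizes this isometry by a diffeomorphism $\Phi$ with $\Phi^*(e) \equiv e' \bmod 3$.

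The step I expect to be the main obstacle is the reflection-lifting just described: one must confirm that the index-two restriction from $O(L)$ to $O^+(L)$ does not spoil mod-$3$ transitivity. The subtlety is that $-\mathrm{id} \in O(L)$ fails to lie in $O^+(L)$ (since $b^+ = 3$ is odd) and acts non-trivially on $\overline L$, so one cannot simply appeal to the full orthogonal group. The reflection-lifting argument is designed precisely to stay inside $O^+(L)$ throughout while still generating enough of $O(\overline L)$ to move between any two representatives of a given orbit.
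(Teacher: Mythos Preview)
Your approach is genuinely different from the paper's, which is much more direct: the paper uses transitivity of ${\rm Diff}(E(2))$ on primitive integral classes of fixed self-intersection to move any nonzero $e$ to $m(e_1 + ne_2)$ for a fixed hyperbolic pair $(e_1,e_2)$, and then observes that mod $3$ this lies in $\{0, e_1, e_1 \pm e_2\}$, with the nonzero classes distinguished by their squares. Your structural route via Witt and Borcea--Matumoto would generalize more readily, but Step~3 has a genuine gap.

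The problem is your claim that each reflection $r_{\bar w}$ in $O(\overline L)$ lifts to an integral reflection $r_v$ with $v \in L$ and $v\cdot v < 0$. For $r_v$ to preserve the even unimodular lattice $L$ you need $v \cdot v \mid 2$, so $v \cdot v = -2$; but then $\bar v \cdot \bar v \equiv 1 \pmod 3$, and hence only reflections in square-$1$ vectors of $\overline L$ arise this way. Reflections in square-$2$ vectors cannot be lifted to $(-2)$-reflections at all, since every nonzero vector on the line through $\bar w$ still has square $2$. In spinor-norm language, the $(-2)$-reflections all land in the kernel of $\theta: O(\overline L) \to \mathbb F_3^*/(\mathbb F_3^*)^2$, a proper subgroup. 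The transitivity you want is still true, but you would need an additional argument---for instance, show directly that $\ker\theta$ already acts transitively on each nonzero orbit (the stabilizer in $O(\overline L)$ of any nonzero vector surjects onto $\mathbb F_3^*/(\mathbb F_3^*)^2$ via $\theta$, so one can correct the spinor norm without moving the vector), or abandon reflection-lifting and instead combine Eichler transitivity on primitive vectors of fixed square in $L$ with the freedom to adjust a lift within its mod-$3$ class. The paper's concrete approach avoids this difficulty entirely.
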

\begin{proof}
	Let $e_1, e_2 \in H^2(E(2),\Z)$ be chosen such that $e_i \cup e_j $ is zero
	when $i=j$, and is equal to 1 when $i \neq j$.
	Because the action of ${\rm Diff}(E(2))$ on the primitive cohomology
	classes of a fixed self-intersection is transitive, there is an element $\Psi$ of ${\rm Diff}(E(2))$ such that $\Psi^*(e)=m(e_1+ne_2)$ for $m,n \in \Z$. Therefore
	$\Psi^*(e)$, mod 3,
	 is equal to one of the following elements:
	\begin{equation} \label{E(2)-Z/3Z}
		0 \hspace{1cm} e_1 \hspace{1cm} e_1-e_2 \hspace{1cm} e_1+e_2.
	\end{equation}
	The non-zero classes in \eqref{E(2)-Z/3Z}  can be distinguished from each other by their self-intersection. Therefore, there is $\Psi' \in {\rm Diff}(E(2))$ such that
	$\Psi^*(e)\equiv \Psi'^*(e')$ mod 3.
\end{proof}

For $1\leq i \leq 3$, suppose $w_i$ is the 2-cycle in $E(2)$ given by $\sigma-(i-1)f$. Then $w_i^2 \equiv i$ mod 3. Therefore, these 2-cycles and the empty cycle give representative for the orbits of the action of ${\rm Diff}(E(2))$ on $H^2(E(2), \Z/3\Z)$. Alternatively, let $w_i'$ be the union of $i$ elements of the nine disjoint spheres of self-intersection $-2$ in $E(2)$ that were constructed above. If $i$ is a positive integer and $i \equiv j$ mod 3 with  $1\leq j \leq 3$, then define $w_i:=w_j$ and $w_i':=w_j'$. We also define $w_0$ and $w_0'$ to be the empty cycles.

The group of diffeomorphisms of $E(n)$, for $n\geq 3$, is more constrained than that of $E(2)$. For example, any diffeomorphism of $E(n)$ maps the homology class $f$ to $\pm f$. Therefore, we cannot expect that the analogue of Lemma \ref{orbits-line-bundle-E(2)} holds for an arbitrary $n$. However, $E(n)$ still has a big diffeomorphism group and we can prove the following weakened version of Lemma \ref{orbits-line-bundle-E(2)}:

\begin{lemma} \label{orbits-line-bundle-E(n)}
	Suppose the integers $n\geq 3$ and $1\leq i \leq 3$ are given and $u \in H^2(B(2,3,6n-1),\Z)$, satisfying $u\cup u \equiv i$ mod 3, is fixed.
	Suppose also $e\in H^2(E(n),\Z)$ is such that $e\cup f \equiv 0$ mod 3 and $e\cup e \equiv i$ mod 3. Then there is an element $\Phi$ of ${\rm Diff}(E(n))$ such that:
	 \begin{equation} \label{Phi-e}
	 	\Phi^*(e) \equiv kf+u \hspace{.3cm} \text{or}\hspace{.3cm} kf \hspace{1cm}\text{ mod 3}
	\end{equation}
	where $k=0$ or $1$. Moreover, the map induced by $\Phi$ on $H^2(G(n),\Z)$ is $\pm id$.
	In particular, the action of ${\rm Diff}(E(n))$ on $\langle f\rangle^\perp$ in $H^2(E(n), \Z/3\Z)$ has eight orbits.
\end{lemma}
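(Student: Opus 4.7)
Since $\partial G(n) = \Sigma(2,3,6n-1)$ is an integral homology sphere, the Mayer--Vietoris sequence for $E(n) = G(n) \cup B(2,3,6n-1)$ produces an orthogonal direct sum decomposition
\begin{equation*}
    H^2(E(n),\Z) \cong H^2(G(n),\Z) \oplus H^2(B(2,3,6n-1),\Z).
\end{equation*}
Write $e = e_G + e_B$ with $e_G = a\sigma + bf \in H^2(G(n),\Z)$ and $e_B \in H^2(B(2,3,6n-1),\Z)$. Since $f \cdot e_B = 0$ by orthogonality, the hypothesis $e \cdot f \equiv 0 \pmod 3$ forces $a \equiv 0 \pmod 3$, so $e_G \equiv bf \pmod 3$ and $e_G \cdot e_G = -na^2 + 2ab \equiv 0 \pmod 3$. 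Combined with $e \cdot e \equiv i \pmod 3$ this yields $e_B \cdot e_B \equiv i \pmod 3$.

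The plan is to realize $\Phi$ as a composite of two diffeomorphisms of $E(n)$: the first supported in the interior of the Milnor fiber $B(2,3,6n-1)$ (hence acting as $+\mathrm{id}$ on $H^2(G(n),\Z)$) and chosen to transform $e_B \bmod 3$ into $u$ when $e_B \not\equiv 0 \pmod 3$, or leave it equal to $0$ otherwise, which can only occur when $i \equiv 0 \pmod 3$; the second supported in the interior of the Gompf nucleus $G(n)$ (hence acting as $+\mathrm{id}$ on $H^2(B,\Z)$) and realizing either $+\mathrm{id}$ or the standard involution $-\mathrm{id}$ on $H^2(G(n),\Z)$. By the appropriate choice of sign one arranges $\pm b \bmod 3 \in \{0,1\}$, giving $\Phi^*(e) \equiv kf + u$ or $kf \pmod 3$ with $k \in \{0,1\}$, while by construction $\Phi$ acts as $\pm\mathrm{id}$ on $H^2(G(n),\Z)$.

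The main obstacle lies in the transitivity underlying the first of these two diffeomorphisms, namely that the subgroup of $\mathrm{Diff}(B(2,3,6n-1))$ consisting of diffeomorphisms which are the identity on a collar of $\partial B(2,3,6n-1)$ acts transitively modulo $3$ on primitive classes of any given self-intersection in $H^2(B(2,3,6n-1),\Z/3\Z)$. For $n \geq 3$ the intersection form $n(-E_8) \oplus 2(n-1)H_{-2}$ is indefinite and unimodular, and reflections (Dehn twists) in the $(-2)$-spheres generating the $E_8$ summands and in the classes $\tau_i$, together with the further geometric automorphisms coming from reorderings of the hyperbolic summands and Wall's generators of the orthogonal group of an indefinite unimodular lattice, realize a subgroup of the orthogonal group that already acts transitively on primitive integer vectors of a given self-intersection. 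This transitivity descends to $\Z/3\Z$ by an argument analogous to that of Lemma~\ref{orbits-line-bundle-E(2)}. The required involution of $G(n)$ inducing $-\mathrm{id}$ on $H^2(G(n),\Z)$ and fixed near $\partial G(n)$ is the standard symmetry of the Gompf nucleus and extends by the identity to $E(n)$.

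For the orbit count, the normal forms described above yield at most eight classes: the two values of $k$ multiplied by the four representatives of $u$, namely the zero class together with one non-zero class of each self-intersection $0,1,2 \bmod 3$. Distinctness follows from self-intersection modulo $3$ (which separates the three non-zero self-intersection classes from each other and from the zero class), and, within each self-intersection class, from the fact that for $n \geq 3$ the class $f \bmod 3$ is preserved up to sign by every element of $\mathrm{Diff}(E(n))$, since $(n-2)f$ is a Seiberg--Witten basic class; this implies that the coefficient $k \in \{0,1\}$ of $f$ in the normal form is itself a diffeomorphism invariant and thereby distinguishes the remaining pairs of orbits.
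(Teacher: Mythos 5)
Your reduction at the start (the orthogonal splitting $H^2(E(n)) = H^2(G(n))\oplus H^2(B(2,3,6n-1))$, the deduction $a\equiv 0$ and $e_B\cdot e_B\equiv i$ mod $3$, and the use of Gompf's involution $f\mapsto -f$, $\sigma\mapsto-\sigma$ to normalize the sign of $k$) matches the paper. The genuine gap is exactly at the point you flag as ``the main obstacle'' and then wave through. You need diffeomorphisms of $E(n)$ acting as $\pm\id$ on $H^2(G(n),\Z)$ and transitively (mod $3$, on classes of fixed square) on $H^2(B(2,3,6n-1),\Z)$, and you propose to get them from diffeomorphisms of the Milnor fiber that are the identity near the boundary. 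The only such diffeomorphisms that come for free are the generalized Dehn twists on the embedded $(-2)$-spheres, which give Picard--Lefschetz reflections; you offer no argument that the reflection group they generate acts transitively mod $3$ on primitive vectors of a given square in $n(-E_8)\oplus 2(n-1)\left[\begin{smallmatrix}0&1\\1&-2\end{smallmatrix}\right]$. The appeal to ``Wall's generators of the orthogonal group of an indefinite unimodular lattice'' does not fill this in: Wall's realization theorem produces diffeomorphisms of \emph{closed} manifolds of the form $M\#S^2\times S^2$, and says nothing about realizing isometries by diffeomorphisms of a bounded manifold rel its boundary; likewise ``reorderings of the hyperbolic summands'' are lattice automorphisms, not rel-boundary diffeomorphisms. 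The paper avoids this entirely by working with the global isometry group: it invokes \cite[Proposition 3.3]{GHS:ab-or-gp} to produce a spinor-norm-one element of $\SO(H^2(E(n),\Z))$ fixing both $f$ and $\sigma$ and carrying $v$ to a class of the form $m(\tau_1+ng_1)$, and then cites Friedman--Morgan \cite{MF:com-sur} for the realization of such isometries by diffeomorphisms of the closed manifold $E(n)$; the final reduction to the eight representatives is then the elementary computation of Lemma \ref{orbits-line-bundle-E(2)}.

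Your distinctness argument also has a gap. That $(n-2)f$ is a basic class gives $\Phi_*f=\pm f$, but the coefficient $k$ of $f$ in the normal form is $e\cdot\sigma$ modulo $3$ for a chosen section $\sigma$, and $\sigma$ is not canonical: an isometry fixing $f$ may send $\sigma$ to $\sigma+x+cf$ with $x\perp f$, changing $e\cdot\sigma$ by $e\cdot x$. Concretely, the Eichler transvection determined by $f$ and $-g_1$ fixes $f$ and sends $\tau_1$ to $\tau_1+f$, so ``$f$ preserved up to sign'' alone does not make $k$ a diffeomorphism invariant; one needs to know that the subgroup of isometries actually realized by diffeomorphisms is no larger than the one used in the constructive half. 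For the paper's application only the upper bound (every class is equivalent to one of the eight representatives) is used, and that is the half your argument must, but currently does not, establish.
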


In the statement of Lemma, we regard $u$ as an element of $H^2(E(n),\Z)$ using the inclusion of $B(2,3,6n-1)$ in $E(n)$. Note also that the second case in \eqref{Phi-e} holds only if $i=3$.

\begin{proof}
	The element $e$ can be written as the sum:
	\begin{equation*}
		rf+s \sigma+ v
	\end{equation*}
	where $v \in H^2(B(2,3,6n-1),\Z)\subset H^2(E(n),\Z)$. Because $e \cup f \equiv 0$ mod 3, $s$ is divisible by 3. There is also a diffeomorphism of $E(n)$ that maps $f$
	to $-f$ and $\sigma$ to $-\sigma$ \cite[Lemma 3.7]{G:nuc-elliptic}.
	After applying this diffeomorphism if it is necessary, we can assume $e \equiv kf+ v$ mod 3 where $k=0$ or $1$.
	Suppose $\SO(H^2(E(n),\Z))$ denotes an element of the special orthogonal group of the lattice
	$H^2(E(n),\Z)$ with respect to the intersection bi-linear form. According to \cite[Proposition 3.3]{GHS:ab-or-gp}, there is a spinor norm one element of
	$\SO(H^2(E(n),\Z))$ that fixes $f$, $\sigma$, and maps $v$ to an element of the form $m(\tau_1+ng_1)$.
	This element can be realized by a diffeomorphism of $E(n)$\cite{MF:com-sur}. This can be used to verify the claim as in
	Lemma \ref{orbits-line-bundle-E(2)}.
\end{proof}
The eight orbits in Lemma \ref{orbits-line-bundle-E(n)} can be represented by $w_{k,0}=kf$ and $w_{k,l}=kf+\tau_1-(l-1)g_1$ for $k=0,1$ and $l=1,2,3$. Alternatively, we can use $w_{k,l}'=kf+\tau_1+\dots +\tau_l$. If $l$ is a positive integer and $l \equiv j$ mod 3 with  $1\leq j \leq 3$, then let $w_{k,l}=w_{k,j}$ and $w_{k,l}'=w_{k,j}'$.

Consider the $\U(3)$-polynomial invariant $\rD^3_{E(n),w}(\Gamma_{(2)}^i\Lambda_{(3)}^{j})$ for the homology classes $\Gamma$ and $\Lambda$. This polynomial is invariant with respect to the action of the diffeomorphism group of $E(n)$ on $(w,\Gamma,\Lambda)$. Therefore, we can use Lemmas \ref{orbits-line-bundle-E(2)} and \ref{orbits-line-bundle-E(n)} to focus on a smaller subset of possible values for $w$. Since changing $w$ mod 3 would not change the polynomial invariant and $H^2(E(n),\Z/3\Z)$ is finite, the polynomial $\rD^3_{E(n),w}(\Gamma_{(2)}^i\Lambda_{(3)}^{j})$ is invariant with respect to the action of a finite index subgroup of ${\rm Diff}(E(n))$ on $(\Gamma,\Lambda)$. This action of the diffeomorphism group factors through the action of the algebraic group $\Or(H_2(E(n)))$. Here the orthogonal group is defined using the intersection form on the complex vector space $H_2(E(n))$. Suppose also $\SO(H_2(E(n));f)$ is the subgroup of $\Or(H_2(E(n)))$ consisting of the orthogonal transformations that map $f\in H_2(E(n))$ to itself and have determinant $1$.  As another manifestation of the big diffeomorphism group of $E(n)$, it is shown in \cite{MF:com-sur} that the image of any finite index subgroup of ${\rm Diff}(E(n))$ in $\Or(H_2(E(n)))$ contains an algebraically dense subgroup of $\SO(H_2(E(n));f)$. Therefore, the polynomial $\rD^3_{E(n),w}(\Gamma_{(2)}^i\Lambda_{(3)}^{j})$ is invariant with respect to the action of $\SO(H_2(E(n));f)$ on $(\Gamma,\Lambda)$. In the case that $n=2$, one can even replace $\SO(H_2(E(n));f)$ with $\SO(H_2(E(n)))$.

\begin{lemma} \label{SO-inv-pol}
	Suppose $(V,Q)$ is a pair of a complex vector space of dimension greater than 2 and a quadratic form.
	Suppose also $\tau_1$ and $\tau_2$ are two vectors orthogonal to each other such that
	$Q(\tau_1)$ and $Q(\tau_2)$ are non-zero.
	Suppose $P:V \oplus V \to \C$ is a bi-homogeneous polynomial of bi-degree $(d_1,d_2)$ that is invariant with respect to the diagonal action of $\SO(V)$ on
	$V \oplus V$. Then $P$ is determined by its value on $W_0\oplus W_1$ where $W_0$ is the span of $\tau_1$, and $W_1$ is the span of $\tau_1$ and $\tau_2$.
	Moreover, $P$ has the following form:
	\begin{equation*}
		P(x,y)=\sum_{\substack{i,j,k\geq 0 \\2i+j=d_1\\j+2k=d_2}} c_{i,j,k} Q(x)^i Q(x,y)^j Q(y)^k
	\end{equation*}
	for appropriate constants $c_{i,j,k}\in \C$.
\end{lemma}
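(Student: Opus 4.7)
The plan is to split the proof into two steps: first, to show that every bi-homogeneous $\SO(V)$-invariant polynomial on $V \oplus V$ of bi-degree $(d_1, d_2)$ has the claimed expansion, and second, to show that the coefficients $c_{i,j,k}$ are recovered from the restriction of $P$ to $W_0 \oplus W_1$.

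For the first step, I would invoke the first fundamental theorem of invariant theory for the complex special orthogonal group: for the diagonal action of $\SO(V)$ on $k$ copies of $V$, polynomial invariants are generated by the pairings $Q(v_i, v_j)$ together with determinants of $(\dim V)$-tuples of vectors. Here $k = 2 < \dim V$, so no determinantal invariant is available, the $\SO(V)$-invariants coincide with the $\Or(V)$-invariants, and both are generated as a ring by $Q(x)$, $Q(x,y)$, $Q(y)$. A direct check on any non-degenerate $2$-plane shows these three polynomials are algebraically independent on $V \oplus V$, so $P$ admits a unique expression as a polynomial in them. Assigning bi-weights $(2,0)$, $(1,1)$, $(0,2)$ respectively and extracting the bi-degree $(d_1, d_2)$ component produces precisely the stated form.

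For the second step, I would substitute $x = s\tau_1 \in W_0$ and $y = t_1 \tau_1 + t_2 \tau_2 \in W_1$. Using $\tau_1 \perp \tau_2$ we get $Q(x) = s^2 Q(\tau_1)$, $Q(x, y) = s t_1 Q(\tau_1)$, and $Q(y) = t_1^2 Q(\tau_1) + t_2^2 Q(\tau_2)$. Indexing the sum by $j$ (so $i = (d_1 - j)/2$ and $k = (d_2 - j)/2$) and expanding the binomial $(t_1^2 Q(\tau_1) + t_2^2 Q(\tau_2))^k$, the coefficient of the monomial $s^{d_1} t_1^j t_2^{d_2 - j}$ in $P|_{W_0 \oplus W_1}$ equals
\begin{equation*}
Q(\tau_1)^{(d_1 + j)/2}\, Q(\tau_2)^{(d_2 - j)/2} \sum_{j' \leq j} \binom{(d_2 - j')/2}{(j - j')/2}\, c_{(d_1 - j')/2,\, j',\, (d_2 - j')/2}.
\end{equation*}
Reading these off for $j = 0, 1, 2, \dots$ yields a linear system in the $c_{i,j,k}$ that is lower triangular with ones on the diagonal; since $Q(\tau_1) Q(\tau_2) \neq 0$, this system is invertible and determines every coefficient.

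The main potential obstacle is the invocation of the first fundamental theorem; a self-contained alternative is to apply Witt's theorem together with $\dim V > 2$ to show that any non-degenerate $2$-plane of $V$ can be mapped by an element of $\SO(V)$ onto the plane spanned by $\tau_1$ and $\tau_2$, so that the $\SO(V)$-orbit of a generic $(x, y) \in V \oplus V$ meets $W_0 \oplus W_1$. This gives determination of $P$ by its restriction directly, and the explicit form of $P$ then follows by matching against the span $\mathcal S = \langle Q(x)^i Q(x,y)^j Q(y)^k \rangle$, whose injectivity under restriction to $W_0 \oplus W_1$ is exactly what the triangular calculation above verifies.
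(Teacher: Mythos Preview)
Your proposal is correct, and the argument is sound throughout. The triangular computation in your second step is accurate, and your invocation of the first fundamental theorem for $\SO(V)$ is legitimate here since $2<\dim V$ rules out determinantal invariants.

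The paper's own proof proceeds in the opposite order and by more elementary means. It first establishes determination by the restriction: for a Zariski-dense set of pairs $(\alpha,\beta)$ (those with $Q(\alpha)\neq 0$ and with the $\alpha$-orthogonal component of $\beta$ non-isotropic) one can find an element of $\SO(V)$ carrying $(\alpha,\beta)$ into $W_0\oplus W_1$. This is essentially the Witt-type argument you sketch as your alternative. Then, to get the explicit form, the paper does \emph{not} appeal to the first fundamental theorem; instead it expands $P(\lambda\tau_1,\mu_1\tau_1+\mu_2\tau_2)$ as a polynomial in $\lambda,\mu_1,\mu_2$, uses the sign-flip elements of $\SO(V)$ sending $\tau_1\mapsto\pm\tau_1$, $\tau_2\mapsto\mp\tau_2$ to force the parity constraints $a+b$ and $c$ even on the monomials $\lambda^a\mu_1^b\mu_2^c$, and then rewrites the result in terms of $Q(x),Q(x,y),Q(y)$. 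A separate citation to \cite{MF:com-sur} is needed to pin down $i\ge 0$.

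What each approach buys: your route via the first fundamental theorem is cleaner and handles the non-negativity of $i$ automatically, at the cost of importing a non-trivial result. The paper's route is self-contained modulo one reference and makes the role of the specific subspace $W_0\oplus W_1$ more transparent, but the passage from the parity constraints to the expansion in $Q(x)^iQ(x,y)^jQ(y)^k$ with $i\ge 0$ is left somewhat implicit. Your explicit triangular inversion is a nice addition that neither approach strictly needs once the other half is in place, but it does give an independent verification that the monomials $Q(x)^iQ(x,y)^jQ(y)^k$ remain linearly independent upon restriction to $W_0\oplus W_1$.
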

In our application, $V$ will be $H_2(E(2))$, and $\tau_1$, $\tau_2$ will be two disjoint embedded spheres in $E(2)$.
\begin{proof}
	Suppose $(\alpha,\beta) \in V \oplus V$ are such that $Q(\alpha)\neq 0$. Then the vector $\beta$ can be written as the sum $\beta_0+\beta_1$ where $\beta_0$
	is a multiple of $\alpha$, and $\beta_1$ is orthogonal to $\alpha$. We also assume that $Q(\beta_1) \neq 0$.
	It is straightforward to find an element of $\SO(V)$ which maps $(\alpha,\beta)$ to an element of the form $W_0\oplus W_1$.
	The set of vectors $(\alpha,\beta)$ as above is also dense in $V \oplus V$. Therefore, $P$ is determined by its values on $W_0\oplus W_1$.
	
	By evaluating $P$ on elements of $W_0\oplus W_1$, we have:
	\begin{equation*}
		P(\lambda \tau_1, \mu_1 \tau_1+\mu_2 \tau_2)=\sum m_{a,b,c} \lambda^{a}\mu_1^{b}\mu_2^{c}.
	\end{equation*}
	There is an element of $\SO(V)$ which maps $\tau_1$ to itself (respectively, $-\tau_1$) and $\tau_2$ to $-\tau_2$ (respectively, itself). Therefore, $m_{a,b,c}$
	is non-zero only if $a+b$ and $c$ are even. This implies that there are constants $c_{i,j,k}$ such that:
	\begin{equation*}
		P(x,y)=\sum_{\substack{2i+j=d_1\\j+2k=d_2}} c_{i,j,k} Q(x)^i Q(x,y)^j Q(y)^k
	\end{equation*}
	for $(x,y)\in W_0\oplus W_1$, where $j$ and $k$ are non-negative integers.
	This implies the second part of the lemma because both sides of the above equality is invariant with respect to the action of $\SO(V)$.
	Arguing as in  \cite[Chapter 6, Lemma 2.2]{MF:com-sur}, we can also assume that $i$ only takes non-negative integers.
\end{proof}

\begin{lemma} \label{SO-k-inv}
	Suppose $(V,Q)$ is a pair of a complex vector space of dimension greater than 3 and a quadratic form. Fix a vector $f\in V$ with $Q(f)=0$ and let
	the vectors $k$, $\tau$ be chosen such that $Q(k,f)$ and $Q(\tau)$ are non-zero, and $Q(\tau,k)=Q(\tau,f)=0$.
	Suppose also $P:V \oplus V \to \C$ is a polynomial that is invariant with respect to the diagonal action of $\SO(V;f)$.
	Then $P(x,y)$ is determined by its values on $W_0 \oplus W_1$ where $W_0$ is the span of the vectors $f$ and $k$, and $W_1$ is the span of $f$, $k$ and $\tau$.
 \end{lemma}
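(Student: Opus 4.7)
The strategy mirrors that of Lemma~\ref{SO-inv-pol}: I will show that the image of the orbit map $\SO(V;f) \times (W_0 \oplus W_1) \to V \oplus V$ is Zariski dense. Once this is established, an $\SO(V;f)$-invariant polynomial on $V \oplus V$ is determined by its restriction to $W_0 \oplus W_1$, since two polynomials agreeing on a Zariski-dense subset are equal.

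\textbf{Step 1: moving $x$ into $W_0$.} Given $x \in V$ with $Q(x,f) \neq 0$, the system
\[
\beta\, Q(k,f) = Q(x,f), \qquad 2\alpha\beta\, Q(k,f) + \beta^2 Q(k) = Q(x)
\]
has a unique solution with $\beta \neq 0$, producing $x_0 := \alpha f + \beta k \in W_0$ whose Gram matrix with $f$ matches that of $x$. The assignment $f\mapsto f$, $x\mapsto x_0$ is an isometry of the $2$-plane $\langle f, x\rangle$ onto $\langle f, x_0\rangle$, and Witt's extension theorem lifts it to some $\tilde g \in \Or(V)$ with $\tilde g(f)=f$. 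The form $Q$ is non-degenerate on $\langle f, x_0\rangle$ (its Gram determinant equals $-Q(x,f)^2\neq 0$), hence also on the complementary subspace $\langle f, x_0\rangle^\perp$ of dimension $\dim V - 2 \geq 2$; in particular this complement contains a non-isotropic vector, and reflecting in such a vector (if necessary) adjusts the determinant to $+1$, yielding $g \in \SO(V;f)$ with $g(x) = x_0$.

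\textbf{Step 2: moving $y$ into $W_1$.} After Step~1 I may assume $x = \alpha f + \beta k \in W_0$ with $\beta \neq 0$. The stabilizer of $\{f,x\}$ in $\SO(V;f)$ then also stabilizes $k$, and hence preserves the orthogonal decomposition $V = \langle f, k\rangle \oplus_\perp U$ with $U := \langle f, k\rangle^\perp$ (the form on $\langle f,k\rangle$ being non-degenerate by the same determinant computation). By hypothesis $\tau\in U$ and $Q(\tau)\neq 0$. Writing $y = y_0 + y_1$ with $y_0 \in \langle f, k\rangle$ and $y_1 \in U$, the subgroup $\{1\} \oplus \SO(U)$ of $\SO(V)$ lies in this stabilizer. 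Since $\dim U = \dim V - 2 \geq 2$ and the restricted form is non-degenerate, $\SO(U)$ acts transitively on vectors of any prescribed nonzero length in $U$. For $y$ with $Q(y_1) \neq 0$ I therefore find $h \in \SO(U)$ with $h(y_1) = \lambda\tau$ for some $\lambda$ satisfying $\lambda^2 Q(\tau) = Q(y_1)$, and then $h(y) = y_0 + \lambda\tau \in W_1$ while $h$ fixes $x$.

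\textbf{Step 3: density and conclusion.} The locus of $(x,y) \in V \oplus V$ to which the construction applies (namely $Q(x,f) \neq 0$ and $Q(y_1)\neq 0$, where $y_1$ denotes the $U$-component of $y$ after Step~1) is Zariski open and non-empty, hence dense. On this dense set the combined element of $\SO(V;f)$ carries $(x,y)$ into $W_0 \oplus W_1$, so by $\SO(V;f)$-invariance $P(x,y)$ agrees with a value of $P$ on $W_0\oplus W_1$. The main technical point, as in the preceding lemma, is the determinant adjustment in Step~1: the isotropy of $f$ restricts the available reflections, but the hypothesis $\dim V > 3$ provides just enough room for the non-isotropic vector that supplies the needed reflection.
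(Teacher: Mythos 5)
Your proof is correct and follows essentially the same route as the paper's: restrict to the dense locus where $Q(x,f)\neq 0$ and the component of $y$ orthogonal to $\mathrm{Span}(f,x)$ is non-isotropic, and move $(x,y)$ into $W_0\oplus W_1$ by an element of $\SO(V;f)$. The paper leaves the existence of that element as "easily checked," whereas you supply it via Witt extension plus a determinant-correcting reflection; the substance is identical.
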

In our application, $V$, $f$, $k$ and $\tau$ will be $H_2(E(n))$, $f$, $\sigma$ and an embedded sphere in $B(2,3,6n-1)$, respectively.
\begin{proof}
	The proof is similar to that of the first part of Lemma \ref{SO-inv-pol}.
	For a given $(\alpha,\beta)\in V \oplus V$, assume $Q(\alpha,f)\neq 0$.
	Then the vector $\beta$ can be uniquely written as $\beta_0+\beta_1$ where $\beta_0\in Span(f,\alpha)$, and $\beta_1$
	is orthogonal to $Span(f,\alpha)$. As another assumption, we require that $Q(\beta_1) \neq 0$. These assumptions hold for a dense subset of $V \oplus V$.
	It can be easily checked that there is an element of $\SO(V;k)$ which maps $(\alpha,\beta)$ to
	$W_0 \oplus W_1$. Therefore, $P|_{W_0 \oplus W_1}$ determines $P$ on $V \oplus V$.
\end{proof}

\subsection{Invariants of $E(2)$} \label{E(2)}
In this section, we study the $\U(3)$- polynomial invariants of $E(2)$ and a 2-cycle $w$. Lemma \ref{orbits-line-bundle-E(n)} shows that we can assume that the 2-cycle $w$ is either empty or $w_i$, for $1\leq i \leq 3$. Equivalently, we can replace $w_i$ with $w_i'$. Throughout this subsection, we will follow the same notation as in the previous part to denote the surfaces $\sigma$, $\tau_i$ and $g_i$ embedded in $E(2)$.

\begin{prop}\label{a3=0,a2/3=shift}
	The invariant ${\rm D}_{E(2),w_i}$ satisfies the following identities:
	\begin{equation} \label{simple-type-w-i}
		{\rm D}_{E(2),w_i}((\frac{a_2}{3})^jz)={\rm D}_{E(2),w_{i+j}}(z) \hspace{1cm} {\rm D}_{E(2),w_i}(a_3z)=0
	\end{equation}
	for $z \in \mathbb A(E(2))^{\otimes 2}$.
	In particular, for $1\leq i \leq 3$, $E(2)$ has $w_i$-simple type and $\widehat {\rm D}_{E(2),w_i}(\e^{\Gamma_{(2)}+\Lambda_{(3)}})$ is independent of the choice of $i$.
\end{prop}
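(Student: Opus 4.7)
The proof uses the Floer-theoretic decomposition $E(2) = (E(1)\setminus\nu(f)) \cup_{T^3} (E(1)\setminus\nu(f))$ coming from the fiber-sum structure, together with the blowup trick to handle the non-coprime case $i = 0$. The key input is Proposition \ref{relations-T^3}: for the admissible pair $(T^3, \gamma_{1,1})$, the Floer homology $\mathbb V_{1,1}^3$ has three generators concentrated in $\Z/12\Z$-degrees $0, 4, 8$, the operator $\aleph_2$ equals $3\epsilon^{-1}$ where $\epsilon$ corresponds geometrically to inserting a fiber $\Sigma = f$ in the collar, and $\aleph_3$ has degree $-6$, sending the generator degrees $\{0,4,8\}$ into $\{6,10,2\}$, so $\aleph_3 = 0$ identically.

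For $i \in \{1,2,3\}$ the cycle $w_i = \sigma - (i-1) f$ satisfies $w_i \cdot f = 1$, so the decomposition yields an admissible pair. By Proposition \ref{gluing}, inserting the operator $\aleph_3$ on the collar proves $\rD_{E(2),w_i}(a_3 z) = 0$, while inserting $\aleph_2 = 3 \epsilon^{-1}$ gives
\begin{equation*}
\rD_{E(2),w_i}(\tfrac{a_2}{3} z) = \rD_{E(2), w_i - f}(z) = \rD_{E(2), w_{i+1}}(z),
\end{equation*}
since $\epsilon^{-1}$ amounts geometrically to subtracting a fiber from the 2-cycle, and $w_i - f = w_{i+1}$.

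For $i = 0$ the cut gives $w_0 \cdot f = 0$, so the pair is not admissible. The plan is to apply the blowup construction of subsection \ref{pol-invts}: writing $\hat X = E(2) \# \cp$ with exceptional sphere $E$, Proposition \ref{blowup-inital} gives $\rD_{E(2), 0}(z) = \rD_{\hat X, E}(E_{(2)}^2 z)$, $\rD_{\hat X, E}(E_{(2)}^4 z) = -\rD_{E(2), 0}(a_2 z)$, and $\rD_{\hat X, \sigma + E}(E_{(2)}^2 z) = \rD_{E(2), \sigma}(z)$. In $\hat X$ the 2-cycle $\sigma + E$ satisfies $(\sigma + E) \cdot f = 1$, so the Floer identity of the previous paragraph applies to $\hat X$ with this cycle; combined with the $(-2)$-sphere relation $(C_1)$ from subsection \ref{neg-spheres} for the section $\sigma$ disjoint from $E$, these ingredients reduce the identities for $i = 0$ to the already-verified $i \geq 1$ cases. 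The main technical challenge will be organizing the six blowup-initial identities of Proposition \ref{blowup-inital} so that the $\mu$-class degrees on both sides match correctly; I expect this combinatorial bookkeeping to be the main obstacle.

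The stated consequences follow by iteration: $\rD_{E(2), w_i}(a_2^3 z) = 27 \rD_{E(2), w_{i+3}}(z)$, and for $i \in \{1,2,3\}$ the cyclic convention $w_{i+3} = w_i$ (equality of invariants following from \eqref{w+Nw'} applied to the integer 2-cycle $3f$) establishes $w_i$-simple type. The series $\widehat \rD_{E(2), w_i}(\e^{\Gamma_{(2)}+\Lambda_{(3)}})$ then becomes the cyclic sum $\sum_{j=0}^{2} \rD_{E(2), w_{i+j}}(\e^{\Gamma_{(2)}+\Lambda_{(3)}})$ over the orbit $\{w_1, w_2, w_3\}$, visibly independent of $i$.
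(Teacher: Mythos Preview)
Your core idea is the same as the paper's: factor through the $T^3$ neck and invoke Proposition~\ref{relations-T^3} (namely $\aleph_2=3\epsilon^{-1}$ and $\aleph_3=0$) to convert insertion of $a_2/3$ into a shift $w_i\mapsto w_i-f=w_{i+1}$ and to kill $a_3$. The difficulty is that you apply this for \emph{all} $z\in\A(E(2))^{\otimes 2}$ directly, but Proposition~\ref{gluing} only gives the factorization
\[
\rD_{E(2),w_i}(a_2^j a_3^k\,z)=\rD^{N(f),w'}(1)\circ\rI_*\bigl(Y_1\times[0,1],\gamma\times[0,1],a_2^ja_3^k\bigr)\circ\rD_{X,w}(z)
\]
when $z$ lies in $\A(X)^{\otimes 2}$ for the complement $X$ of the neck. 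A class such as $\sigma_{(2)}$ (the section crosses $T^3$) is not of this form, so the gluing formula does not apply to it as written. The paper closes this gap by first establishing \eqref{simple-type-w-i} only for $z\in\A(S)^{\otimes 2}$, where $S=\langle\tau_1,\tau_2\rangle$ is spanned by two disjoint $(-2)$-spheres inside $B(2,3,11)\subset E(2)\setminus N(f)$, and then uses the $\SO(H_2(E(2)))$-invariance of both sides together with Lemma~\ref{SO-inv-pol} to promote the identity to arbitrary $z$. That symmetry-extension step is the missing ingredient in your argument.

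A minor point: the proposition is stated only for $i\in\{1,2,3\}$ (with the cyclic convention $w_{i+3}=w_i$), so your entire paragraph on $i=0$ via blowup is not needed here. The empty-cycle case is treated separately and later, in Theorem~\ref{K3-inv}, by a different mechanism using a $(-3)$-sphere and Proposition~\ref{-3-identities}; your blowup sketch does not obviously reach it either, since the identities of Proposition~\ref{blowup-inital} alone do not produce a relation of the shape $\rD_{E(2),0}(a_2 z)=3\,\rD_{E(2),w_1}(z)$.
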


\begin{proof}
	Suppose $N(f)$ is a neighborhood of a regular fiber and $X$ is the complement of $N(f)$. We also identify the boundary of $X$ with $Y_1= S^1 \times f$.
	The 4-manifold $X$ contains a copy of $B(2,3,11)$. Therefore, we can find two disjoint embedded spheres $\tau_1$ and $\tau_2$ of
	self-intersection $-2$ in $X$.
	Let $S$ be the subspace of $H_2(X)$ spanned by the vectors $\tau_1$ and $\tau_2$, and $\A(S)$ be the sub-algebra $\Sym^*(H_0(X) \oplus V)$ of $\mathbb A(X)$.
	Then $w_i$ can be decomposed as $w\#w'$ where $w$ (respectively, $w'$) is a
	2-cycle in $X$ (respectively, $N(f)$), and $w$, $w'$ intersect $Y_1$ in $\gamma:=S^1 \times \{{\rm pt}\}$. For $z \in \A(S)^{\otimes 2}$,
	Proposition \ref{relations-T^3} with the aid of
	the functoriality properties discussed in Subsection \ref{IN} implies that:
	\begin{align}
		\rD_{E(2),w_i}(a_2^ja_3^kz)=& \rD^{N(f),w'}(1) \circ \rI_*(Y_1 \times [0,1], \gamma \times [0,1],a_2^ja_3^k)\circ \rD_{X,w}(z) \nonumber\\
		=&3^j0^k\rD^{N(f),w'}(1) \circ  \rI_*(Y_1 \times [0,1], \gamma \times [0,1]-jf,1)\circ \rD_{X,w}(z) \nonumber\\
		=&3^j0^k\rD_{E(2),w_i-jf}(z). \nonumber
	\end{align}
	This verifies \eqref{simple-type-w-i} for the case $z\ \in \A(S)^{\otimes 2}$. Using Lemma \ref{SO-inv-pol}, the same claim holds for general $z$,
	and as a result $\widehat {\rm D}_{E(2),w_i}(\e^{\Gamma_{(2)}+\Lambda_{(3)}})$ is equal to:
	\begin{equation*}
		{\rm D}_{E(2),w_{1}}(\e^{\Gamma_{(2)}+\Lambda_{(3)}})+{\rm D}_{E(2),w_{2}}(\e^{\Gamma_{(2)}+\Lambda_{(3)}})+
		{\rm D}_{E(2),w_{3}}(\e^{\Gamma_{(2)}+\Lambda_{(3)}}).
	\end{equation*}
	In particular, $\widehat {\rm D}_{E(2),w_i}(\e^{\Gamma_{(2)}+\Lambda_{(3)}})$ does not depend on $i$.
	\end{proof}
A similar argument, using Lemma \ref{SO-k-inv} instead of Lemma \ref{SO-inv-pol}, proves the following analogous statement for $E(n)$:
\begin{prop}\label{wk,l-ind}
	The polynomial invariants ${\rm D}_{E(n),w_{k,l}}$, for $1\leq l \leq 3$, satisfies the following identities:
	\begin{equation} \label{simple-type-E(n)}
		{\rm D}_{E(n),w_{k,l}}((\frac{a_2}{3})^jz)={\rm D}_{E(2),w_{k,l+j}}(z) \hspace{1cm} {\rm D}_{E(n),w_{k,l}}(a_3z)=0
	\end{equation}
	for $z \in \A(E(n))^{\otimes 2}$. In particular, for $1\leq l \leq 3$, $E(n)$ has $w_{k,l}$-simple type and $\widehat {\rm D}_{E(n),w_{k,l}}(\e^{\Gamma_{(2)}+\Lambda_{(3)}})$ is independent of the choice of $l$.
\end{prop}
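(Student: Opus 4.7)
The strategy mirrors the proof of Proposition \ref{a3=0,a2/3=shift}, with two modifications: we cut along a neighbourhood of the torus $g_1$ lying in the Milnor-fibre piece $B(2,3,6n-1)$ rather than along a neighbourhood of a regular fibre $f$, and we invoke Lemma \ref{SO-k-inv} in place of Lemma \ref{SO-inv-pol} when extending the identity from a special subspace to all of $H_2(E(n))^{\oplus 2}$. Since $g_1$ is an embedded torus of self-intersection zero in $B(2,3,6n-1)$ with $g_1 \cdot \tau_1 = 1$, $g_1 \cdot f = 0$ and $g_1 \cdot \sigma = 0$, the cycle $w_{k,l}=kf+\tau_1-(l-1)g_1$ meets $g_1$ transversely in a single point.

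Let $N(g_1)$ be a tubular neighbourhood of $g_1$ and put $X := E(n) \setminus N(g_1)$, with $\partial X \cong Y_1 = S^1 \times g_1 \cong T^3$. Decompose $w_{k,l}=w\# w'$ with $w\subset X$, $w'\subset N(g_1)$, meeting the boundary along the circle $\gamma := S^1 \times \{\mathrm{pt}\}$; the pair $(Y_1,\gamma)$ is then the admissible pair $(Y_1,\gamma_{1,1})$ of Section \ref{HF-S1-S}. Since $n\geq 3$, the Milnor fibre contains $2(n-1)\geq 4$ independent pairs $(g_i,\tau_i)$, so we may choose an embedded $(-2)$-sphere $\tau\subset X$ (for instance $\tau_2$), disjoint from $g_1$, $\sigma$ and a chosen representative of $f$. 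Setting $S := \langle f,\sigma,\tau\rangle \subset H_2(X)$, the functoriality of $\rI_*$ from Section \ref{IN} yields, for any $z\in \A(S)^{\otimes 2}$,
\begin{equation*}
\rD_{E(n),w_{k,l}}(a_2^j a_3^m z) = \rD^{N(g_1),w'}(1) \circ \rI_*(Y_1\times[0,1],\,\gamma\times[0,1],\,a_2^j a_3^m) \circ \rD_{X,w}(z).
\end{equation*}
By Proposition \ref{relations-T^3}, on $\mathbb V_{1,1}^3$ one has $\aleph_2 = 3\epsilon^{-1}$ and $\aleph_3 = 0$, and $\epsilon^{-1}$ is realised geometrically by shifting the 2-cycle by $-g_1$. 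Iterating and using $w_{k,l}-jg_1 = w_{k,l+j}$ (interpreted via the convention $w_{k,l}=w_{k,l\bmod 3}$ and Identity \eqref{w+Nw'}), we obtain
\begin{equation*}
\rD_{E(n),w_{k,l}}(a_2^j a_3^m z) = 3^j\cdot 0^m\cdot \rD_{E(n),w_{k,l+j}}(z),
\end{equation*}
establishing \eqref{simple-type-E(n)} for all $z\in \A(S)^{\otimes 2}$.

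To pass from $z\in \A(S)^{\otimes 2}$ to arbitrary $z\in \A(E(n))^{\otimes 2}$, it suffices by linearity to handle $z = \Gamma_{(2)}^i\Lambda_{(3)}^m$ with $\Gamma,\Lambda\in H_2(E(n))$. Both sides of each identity are then polynomial in $(\Gamma,\Lambda)\in H_2(E(n))^{\oplus 2}$ and invariant under the diagonal action of $\SO(H_2(E(n));f)$, by the diffeomorphism-invariance discussion recalled in Section \ref{E(n)-str}. Lemma \ref{SO-k-inv}, applied with $k=\sigma$ and $\tau=\tau_2$, shows that such a polynomial is determined by its restriction to $\langle f,\sigma\rangle \oplus \langle f,\sigma,\tau\rangle \subset S\oplus S$, where the identity is already known. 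The simple-type property and the independence of $l\in\{1,2,3\}$ for $\widehat\rD_{E(n),w_{k,l}}(\e^{\Gamma_{(2)}+\Lambda_{(3)}})$ then follow, since the symmetrisation $(1+\tfrac{a_2}{3}+\tfrac{a_2^2}{9})$ cycles $l$ through $\{1,2,3\}$. The main subtle point is the identification of $\aleph_2$ on $\rI_*(Y_1,\gamma)$ with three times the cobordism operator shifting the 2-cycle by $-g_1$; this is exactly Proposition \ref{relations-T^3} applied to $(Y_1,\gamma)\cong (T^3, S^1\times\{\mathrm{pt}\})$.
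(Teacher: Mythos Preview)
Your proof is correct and follows essentially the same approach that the paper sketches (``A similar argument, using Lemma~\ref{SO-k-inv} instead of Lemma~\ref{SO-inv-pol}''). You have correctly filled in the key detail the paper omits: one must cut along the torus $g_1$ rather than along a regular fiber $f$, since $w_{k,l}\cdot g_1=1$ while $w_{k,l}\cdot f=0$, so only $g_1$ yields an admissible pair on the boundary $T^3$; the rest of the argument --- applying Proposition~\ref{relations-T^3} on $\mathbb V_{1,1}^3$ and then extending via Lemma~\ref{SO-k-inv} with $k=\sigma$, $\tau=\tau_2$ --- is exactly what the paper intends.
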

\begin{prop} \label{K3-inv-w-i}
	For $1\leq i \leq 3$, we have:
	\begin{equation} \label{E(2)-inv-w-i}
		\widehat \rD_{E(2),{w_i}}({\rm e}^{\Gamma_{(2)}+\Lambda_{(3)}})={\rm e}^{\frac{Q(\Gamma)}{2}-Q(\Lambda)}
	\end{equation}
\end{prop}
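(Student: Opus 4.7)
My plan is to combine the symmetry reduction from the previous proposition with a Floer-theoretic fiber-sum calculation.

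First, I would combine Proposition \ref{a3=0,a2/3=shift} with Lemma \ref{orbits-line-bundle-E(2)} to promote the diffeomorphism invariance of $\widehat\rD_{E(2),w_i}$ to the full orthogonal group. Proposition \ref{a3=0,a2/3=shift} identifies $\widehat\rD_{E(2),w_i}$ with the sum $\sum_{j=1}^{3}\rD_{E(2),w_j}$, and Lemma \ref{orbits-line-bundle-E(2)} implies that every diffeomorphism of $E(2)$ permutes $\{w_1,w_2,w_3\}$ modulo $3$ (since they exhaust the three nonzero orbits of $\mathrm{Diff}(E(2))$ on $H^{2}(E(2),\Z/3)$, and $\rD^{3}$ depends on $w$ only modulo $3$). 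Using the algebraic density of the image of $\mathrm{Diff}(E(2))$ in $\mathrm{SO}(H_{2}(E(2)))$ recalled in subsection \ref{E(n)-str}, the sum is therefore $\mathrm{SO}(H_{2}(E(2)))$-invariant in $(\Gamma,\Lambda)$. Lemma \ref{SO-inv-pol} then forces the expansion
\begin{equation*}
\widehat\rD_{E(2),w_i}(\Gamma_{(2)}^{a}\Lambda_{(3)}^{b})=\sum_{\substack{2p+q=a\\ q+2r=b}} c_{p,q,r}\, Q(\Gamma)^{p}\,Q(\Gamma,\Lambda)^{q}\,Q(\Lambda)^{r}
\end{equation*}
for universal constants $c_{p,q,r}\in\C$; matching term-by-term against the expansion of $\e^{Q(\Gamma)/2-Q(\Lambda)}$ reduces the theorem to showing $c_{p,q,r}=0$ for $q>0$ together with $c_{m,0,n}=(-1)^{n}(2m)!(2n)!/(2^{m}m!\,n!)$.

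Next I would compute the diagonal coefficients $c_{m,0,0}$ and $c_{0,0,n}$ by decomposing $E(2)$ along $Y_{1}=S^{1}\times f$ into the fiber neighborhood $N(f)\cong D^{2}\times T^{2}$ and its complement $X:=E(2)\setminus N(f)$. Since $w_1=\sigma$ meets $f$ transversely in a single point, the pair $(Y_{1},\gamma_{1,1})$ is admissible, and Proposition \ref{gluing} --- together with the Fukaya-Floer refinement of subsection \ref{FFH} to carry the dependence on $\Gamma$ and $\Lambda$ --- expresses the polynomial invariant as a pairing in $\mathbb V_{1,1}^{3}$ of relative invariants of the two pieces. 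The explicit description of $\mathbb V_{1,1}^{3}$ in Proposition \ref{relations-T^3} turns this pairing into a concrete calculation. Because $X$ contains the Milnor fiber $B(2,3,11)$, which supplies an embedded $(-2)$-sphere $\tau_1$, I can evaluate on $\Gamma=\tau_1,\Lambda=0$ (and then on $\Gamma=0,\Lambda=\tau_1$) and apply Culler's identities $(C_1),(C_2),(C_3)$ of subsection \ref{neg-spheres} inductively to reduce everything to low-order evaluations; combined with Theorem \ref{nonvan} to fix the overall normalization, this determines $c_{m,0,0}$ and $c_{0,0,n}$.

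The hard part will be ruling out the mixed coefficients $c_{p,q,r}$ with $q>0$ and then extending the pure-diagonal values to the full family $c_{m,0,n}$. For the vanishing, the key input is that when $\Gamma$ and $\Lambda$ are chosen to be supported in disjoint summands of the plumbing decomposition of $X$, the corresponding relative invariants in $\mathbb V_{1,1}^{3}$ decouple into purely $\Gamma$-type and purely $\Lambda$-type contributions against the generators of Proposition \ref{relations-T^3}, forcing all $Q(\Gamma,\Lambda)^{q}$ cross-terms to drop out of the pairing; $\mathrm{SO}$-invariance then propagates this vanishing to arbitrary $(\Gamma,\Lambda)$. To determine the remaining $c_{m,0,n}$ with both $m,n>0$, I would apply the blowup formula of Theorem \ref{blowup} to $E(2)\#\cp$, which remains of simple type: the multiplicativity of the blowup power series $b(t_2,t_3)$ against $\widehat\rD_{E(2),w_i}$ forces the two-variable generating function to factorize exactly as in $\e^{Q(\Gamma)/2-Q(\Lambda)}$, completing the identification.
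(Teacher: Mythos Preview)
Your opening reduction via $\mathrm{SO}(H_2(E(2)))$-invariance to a power series in $Q(\Gamma)$, $Q(\Gamma,\Lambda)$, $Q(\Lambda)$ is correct (and in fact the paper uses the same reduction, though it gets full $\mathrm{SO}$-invariance directly for each fixed $w_i$ from the density statement in subsection~\ref{E(n)-str}, without needing to pass through the sum $\sum_j \rD_{E(2),w_j}$). The trouble begins after that.

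The two steps you single out as ``the hard part'' do not work as written. First, the decoupling claim---that if $\Gamma$ and $\Lambda$ are supported in disjoint summands of the plumbing of $X$ then the relative invariants in $\mathbb V_{1,1}^3$ split into purely $\Gamma$-type and purely $\Lambda$-type contributions---has no justification; nothing in Proposition~\ref{relations-T^3} or the Fukaya--Floer formalism forces such a factorization, and the cross-terms $Q(\Gamma,\Lambda)^q$ you want to kill are exactly the obstruction. Second, invoking Theorem~\ref{blowup} cannot produce new constraints on $\widehat\rD_{E(2),w_i}$ itself: the blowup formula expresses invariants of $E(2)\#\cp$ in terms of those of $E(2)$, not the other way around, so ``multiplicativity of $b(t_2,t_3)$'' gives you nothing about the structure of the $E(2)$ series. (Also, Theorem~\ref{nonvan} gives positivity, not the precise normalization $c_{0,0,0}=1$; that comes from \cite{K:higher}.)

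The paper's proof bypasses all of this. It writes $g(r,s,t):=\widehat\rD_{E(2),w_i}(\e^{\Gamma_{(2)}+\Lambda_{(3)}})\cdot\e^{-Q(\Gamma)/2+Q(\Lambda)}$ and then applies identities $(C_1)$, $(C_2)$, $(C_3)$ of subsection~\ref{neg-spheres} to a single $(-2)$-sphere $\tau$ with $w_i\cdot\tau=0$ (using that $w_i+\tau$ is again one of the $w_j$ mod~$3$, so by Proposition~\ref{a3=0,a2/3=shift} the $\widehat\rD$-series is unchanged). Differentiating along $\tau$ turns these three identities directly into the PDEs
\[
g_r=0,\qquad 4g_{rr}-g_t=0,\qquad 2g_{rs}+g_s=0,
\]
which together with $g(0,0,0)=1$ force $g\equiv 1$. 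There is no need to compute individual coefficients, decompose along $Y_1$, or use the blowup formula.
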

The two sides of \eqref{E(2)-inv-w-i} are power series in $t_2$ and $t_3$ where the coefficients of $t_2^it_3^j$ are bi-homogeneous polynomials on $H_2(E(2))\oplus H_2(E(2))$ of degree $(i,j)$. Identity \eqref{E(2)-inv} means for each choice of $(i,j)$ these coefficients are equal to each other.
\begin{proof}

	Using Lemma \ref{SO-inv-pol}, we can find a power series $g(r,s,t)\in \C[\![r,s,t]\!]$ such that:
	\begin{equation*}
		\widehat \rD_{E(2),w_i}({\rm e}^{\Gamma_{(2)}+\Lambda_{(3)}})\cdot {\rm e}^{-\frac{Q(\Gamma)}{2}+Q(\Lambda)}= g(Q(\Gamma),Q(\Gamma,\Lambda),Q(\Lambda))
	\end{equation*}
	The constant term of $g$ is equal to 1 \cite{K:higher}.
	Suppose $\tau$ is an embedded sphere in $E(2)$ of self-intersection -2 such that $w_i\cdot \tau=0$.
	Identities $(C_1)$, $(C_2)$ and $(C_3)$ of Subsection \ref{neg-spheres} for $\tau$ imply that:
	\begin{equation*}
		\frac{\partial g}{\partial r}(r,s,t)=0\hspace{.5cm}4\frac{\partial^2 g}{\partial r^2}(r,s,t)-\frac{\partial g}{\partial t}(r,s,t)=0
		\hspace{.5cm}2\frac{\partial^2 g}{\partial s\partial r}(r,s,t)+\frac{\partial g}{\partial s}(r,s,t)=0
	\end{equation*}
	Therefore, $g$ is equal to the constant power series $1$.
\end{proof}

Suppose $X$ is the blowup of $E(2)$ and $w$ is the 2-cycle $f$ in $X$. If $E$ is the exceptional sphere in $X$, then Corollary \ref{blowup} can be used to compute the invariants of $(X,w)$:
\begin{equation*}
	\hDXw({\rm e}^{\Gamma_{(2)}+\Lambda_{(3)}})=\frac{1}{3}{\rm e}^{\frac{Q(\Gamma)}{2}-Q(\Lambda)} (\cosh(\sqrt{3}E\cdot \Gamma)+ 2\cos(\sqrt{3}E\cdot\Lambda)).
\end{equation*}

The homology class $\sigma+E$ can be represented by an embedded (-3)-sphere $\sigma'$ in $X$.  Fix $\Gamma, \Lambda \in H_2(X)$  which are orthogonal to $\sigma'$. Then the above formula can be used to show:\begin{equation*}
	\hDXw((-\frac{3}{2}\sigma_{(3)}'-\frac{3}{2}\sigma_{(2)}'^2-a_2){\rm e}^{\Gamma_{(2)}+\Lambda_{(3)}})= \hspace{6cm}
\end{equation*}
\begin{equation}\label{aa}	
	 \hspace{1.5cm}{\rm e}^{\frac{Q(\Gamma)}{2}-Q(\Lambda)}
	(-\sqrt{3} \sin (\sqrt{3}E\cdot \Lambda) + \cos (\sqrt{3}E\cdot \Lambda) -\cosh(\sqrt{3}E\cdot \Gamma)).
\end{equation}
 By another application of Theorem \ref{blowup} and Remark \ref{S--E}, $\widehat \rD_{X,{w-\sigma'}}({\rm e}^{\Gamma_{(2)}+\Lambda_{(3)}})$ is equal to:
\begin{equation}\label{bb}
	\frac{1}{3}{\rm e}^{\frac{Q(\Gamma)}{2}-Q(\Lambda)}
	(\cosh(\sqrt{3}E\cdot \Gamma) -\cos(\sqrt{3}E\cdot \Lambda) + \sqrt{3}\sin(\sqrt{3}E\cdot \Lambda)).
\end{equation}
Using Proposition \ref{-3-identities} and comparing these two identities, we can find the undetermined constant $c$ in Proposition \ref{-3-identities}:

\begin{prop}
	The constant $c$ is equal to $-3$.
\end{prop}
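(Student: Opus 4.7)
The plan is to observe that everything needed has already been assembled just above the statement, and the proof amounts to matching the two explicit power series \eqref{aa} and \eqref{bb} via Proposition \ref{-3-identities}(i).

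First I would verify that Proposition \ref{-3-identities}(i) is applicable to the pair $(X,w)$ and the $(-3)$-sphere $\sigma' = \sigma + E$. Since $w = f$ and $\sigma' = \sigma + E$, we have $w \cdot \sigma' = f\cdot \sigma + f\cdot E = 1 \equiv 1 \pmod 3$, so the hypothesis of case (i) holds. Applied to the class $(1 + \tfrac{a_2}{3} + \tfrac{a_2^2}{9})\,e^{\Gamma_{(2)}+\Lambda_{(3)}}$, and using that $a_2$ commutes with $\sigma'_{(2)}$, $\sigma'_{(3)}$ in the commutative algebra $\A(X)^{\otimes 2}$, Proposition \ref{-3-identities}(i) yields
\begin{equation*}
\hDXw\bigl((-\tfrac{3}{2}\sigma'_{(3)} - \tfrac{3}{2}\sigma'^2_{(2)} - a_2)\,e^{\Gamma_{(2)}+\Lambda_{(3)}}\bigr) \;=\; c\,\widehat\rD_{X,w-\sigma'}(e^{\Gamma_{(2)}+\Lambda_{(3)}}).
\end{equation*}

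Next I would substitute the two explicit formulas computed just above: the left-hand side is given by \eqref{aa}, and the right-hand side, divided by $c$, is given by \eqref{bb}. Both sides factor as ${\rm e}^{\frac{Q(\Gamma)}{2}-Q(\Lambda)}$ times an explicit expression in $E\cdot\Gamma$ and $E\cdot\Lambda$. Comparing the remaining factors, the one from \eqref{aa} is
\begin{equation*}
-\sqrt{3}\sin(\sqrt{3}E\cdot\Lambda) + \cos(\sqrt{3}E\cdot\Lambda) - \cosh(\sqrt{3}E\cdot\Gamma),
\end{equation*}
while $c$ times the factor from \eqref{bb} is
\begin{equation*}
\tfrac{c}{3}\bigl(\cosh(\sqrt{3}E\cdot\Gamma) - \cos(\sqrt{3}E\cdot\Lambda) + \sqrt{3}\sin(\sqrt{3}E\cdot\Lambda)\bigr).
\end{equation*}
The first expression is the negative of the parenthesized factor in the second, so equality forces $\tfrac{c}{3} = -1$, i.e.\ $c = -3$.

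To make this logically airtight, I would also argue that this determines $c$ unambiguously: choosing $\Gamma$ and $\Lambda$ so that $E\cdot\Gamma$ or $E\cdot\Lambda$ is non-zero (for instance $\Gamma = E$, $\Lambda = 0$), the coefficient of the appropriate monomial in the formal variables $t_2, t_3$ is non-zero on both sides, and the equality of these coefficients fixes $c = -3$. There is no genuine obstacle here — all the heavy lifting (blowup formula for simple-type manifolds, Proposition \ref{-3-identities}, and the $K3$ computation of Proposition \ref{K3-inv-w-i}) has already been done. The only small care needed is the algebraic manipulation verifying that the factor $(1 + \tfrac{a_2}{3} + \tfrac{a_2^2}{9})$ converting $\rD$ to $\widehat\rD$ passes through the identity of Proposition \ref{-3-identities}(i) cleanly, which it does because $w - \sigma'$ also has simple type in the relevant sense (being a blowup of a $K3$ surface and inheriting the simple-type behavior from Proposition \ref{a3=0,a2/3=shift} combined with the blowup formula).
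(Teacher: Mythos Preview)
Your proof is correct and follows exactly the approach the paper takes: the paper's own proof is just the one-line remark that comparing \eqref{aa} with \eqref{bb} via Proposition \ref{-3-identities} determines $c$, and you have spelled out precisely those details. One tiny slip: your illustrative choice $\Gamma = E$ is not orthogonal to $\sigma' = \sigma + E$ (since $E\cdot\sigma' = -1$), so it is not an admissible test class; but any class such as $\Gamma = E + f$ satisfies $\Gamma\cdot\sigma' = 0$ with $E\cdot\Gamma \neq 0$, and then the argument goes through unchanged.
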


Now we are ready to complete the computation of the invariants of $E(2)$:
\begin{theorem} \label{K3-inv}
	Suppose $w$ is a 2-cycle in $E(2)$. Then $E(2)$ has $w$-simple type and the $\U(3)$-series of $E(2)$ is given by the following formula:
	\begin{equation} \label{E(2)-inv}
		\widehat \rD_{E(2),w}({\rm e}^{\Gamma_{(2)}+\Lambda_{(3)}})={\rm e}^{\frac{Q(\Gamma)}{2}-Q(\Lambda)}
	\end{equation}
\end{theorem}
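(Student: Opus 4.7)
The plan is to extend Propositions \ref{a3=0,a2/3=shift} and \ref{K3-inv-w-i}, which cover only the three non-trivial orbits represented by $w_1, w_2, w_3$, to the fourth orbit of $\Diff(E(2))$ acting on $H^2(E(2), \Z/3\Z)$ (Lemma \ref{orbits-line-bundle-E(2)}). By \eqref{w+Nw'}, the invariants $\rD^3_{E(2), w}$ depend only on the orbit of $[w]$, so it suffices to treat the trivial orbit with the representative $w = 0$ (the empty cycle).

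For $w = 0$, I will work on the blowup $\hat X = \widehat{E(2)}$ with exceptional class $E$, where $\sigma' = \sigma + E$ is represented by a $(-3)$-sphere. Taking $w = E$ in $\hat X$, one has $w \cdot \sigma' = -1 \equiv 2 \pmod 3$, so Proposition \ref{-3-identities}(ii), with the value $c = -3$ just established from \eqref{aa} and \eqref{bb}, yields
\[
\widehat\rD_{\hat X, E}\bigl((\tfrac{3}{2}\sigma'_{(3)} - \tfrac{3}{2}\sigma'^2_{(2)} - a_2)\,\e^{\Gamma_{(2)} + \Lambda_{(3)}}\bigr) = -3\,\widehat\rD_{\hat X, 2E + \sigma}(\e^{\Gamma_{(2)} + \Lambda_{(3)}})
\]
for $\Gamma, \Lambda \in H_2(\hat X)$ orthogonal to $\sigma'$. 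The right-hand side is computable: by \eqref{w+Nw'} we have $\widehat\rD_{\hat X, 2E + \sigma} = \widehat\rD_{\hat X, \sigma - E}$; Remark \ref{S--E} then reduces this to $\widehat\rD_{E(2), \sigma}$ multiplied by $s(t_2, -t_3)$, and Proposition \ref{K3-inv-w-i} (applicable because $\sigma$ lies in the $w_1$-orbit) gives $\widehat\rD_{E(2), \sigma}(\e^{\Gamma_{0(2)} + \Lambda_{0(3)}}) = \e^{Q(\Gamma_0)/2 - Q(\Lambda_0)}$, where $\Gamma_0, \Lambda_0$ are the $E(2)$-components of $\Gamma, \Lambda$. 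The left-hand side, on the other hand, is a power-series functional of $\widehat\rD_{E(2), 0}$ via the non-coprime definition $\rD^3_{E(2), 0, k}(z) := \rD^3_{\hat X, E, k}(E^2_{(2)} z)$ and Theorem \ref{blowup-gen-mafld}. Comparing Taylor coefficients of $t_2^i t_3^j$, and invoking Lemma \ref{SO-inv-pol} to extend the conclusion from homology classes orthogonal to $\sigma$ to arbitrary ones in $H_2(E(2))$, will uniquely determine $\widehat\rD_{E(2), 0}(\e^{\Gamma_{(2)} + \Lambda_{(3)}})$; the resulting series is precisely $\e^{Q(\Gamma)/2 - Q(\Lambda)}$, and the simple-type identities \eqref{simple-type-w} for $w = 0$ fall out as byproducts of the identification.

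The main obstacle will be the apparent circularity: the sharpest form of the blowup formula (Theorem \ref{blowup} with explicit $b$ and $s$) presupposes simple type for $(E(2), 0)$, which is precisely what I am trying to prove. I plan to finesse this by working throughout with Theorem \ref{blowup-gen-mafld} in its general polynomial form, carrying the coefficients $B_{i,j}(a_2, a_3)$ and $S_{i,j}(a_2, a_3)$; the uniqueness of solutions to the associated PDEs, combined with the initial data in Proposition \ref{blowup-inital} and the known formulas on the three non-trivial orbits, should then force the correct specialization at $a_2 = 3,\, a_3 = 0$, yielding both the simple-type property and the explicit series simultaneously.
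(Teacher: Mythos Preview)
Your identification of the remaining case $w=w_0$ and the use of the $(-3)$-sphere $\sigma'=\sigma+E$ in $\hat X=E(2)\#\cp$ are both correct, but the subsequent argument has a real gap, and the paper's route is different.

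Your plan applies Proposition~\ref{-3-identities} \emph{once}, with $w=E$, and then tries to solve the resulting identity for $\rD_{E(2),0}$. The obstacle you flag is genuine and your proposed fix does not resolve it. After expanding $\sigma'=\sigma+E$ and using Theorem~\ref{blowup-gen-mafld} on the left-hand side, every term becomes $\rD_{E(2),0}\bigl(S_{i,j}(a_2,a_3)\cdot(\text{classes in }E(2))\bigr)$ for various $i,j$. The power series $B,S$ and the polynomials $S_{i,j}$ are already determined by the PDEs and initial data of Theorem~\ref{blowup-gen-mafld}; they are universal and do not depend on $X$. What is \emph{not} known is the value of $\rD_{E(2),0}$ on inputs containing $a_2^i a_3^j$, and these are precisely the unknowns you are trying to find. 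So your single identity is one linear relation among many unknowns $\rD_{E(2),0}(a_2^ia_3^j\,z_0)$, and ``uniqueness of solutions to the PDEs'' says nothing about this system. You would need an independent argument (or many more identities) to force the simple-type relations $\rD_{E(2),0}(a_2^3z)=27\rD_{E(2),0}(z)$, $\rD_{E(2),0}(a_3z)=0$ before the computation can close.

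The paper avoids this entirely by applying Proposition~\ref{-3-identities}(i) \emph{twice}, with two different base $2$-cycles $w'=\sigma$ and $w''=\sigma+\tau_1+g_1$ in $E(2)\subset\hat X$ (both satisfying $w\cdot\sigma'\equiv 1\bmod 3$, both with $w^2\equiv 1\bmod 3$). For each choice, the right-hand side, after one application of Proposition~\ref{blowup-inital} to $\rD_{\hat X,\,w-\sigma'}((\sigma_{(2)}-2E_{(2)})^2 z')$, collapses to $4\rD_{E(2),\,0}(z')$ and $4\rD_{E(2),\,\tau_1+g_1}(z')$ respectively. The two left-hand sides are $\rD_{\hat X,w'}(z'')$ and $\rD_{\hat X,w''}(z'')$ for the \emph{same} $z''$; since $w',w''$ lie in the $w_1$-orbit of $E(2)$, for which simple type and the explicit series are already established by Propositions~\ref{a3=0,a2/3=shift} and~\ref{K3-inv-w-i}, the blowup formula reduces both to identical expressions in $\rD_{E(2),w_1}$. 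Hence $\rD_{E(2),0}(z')=\rD_{E(2),\tau_1+g_1}(z')$ for all $z'\in\A(\langle\sigma\rangle^\perp)^{\otimes 2}$, and Lemma~\ref{SO-inv-pol} extends this to all $z'$. Since $\tau_1+g_1$ lies in the $w_3$-orbit, both simple type and the series for $w_0$ follow at once. The point is that the comparison of two instances eliminates the unknown point-class behaviour, whereas your single instance does not.
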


\begin{proof}
	In the light of Lemma \ref{orbits-line-bundle-E(2)} and Porposition \ref{K3-inv-w-i},
	it suffices to consider only the empty 2-cycle $w_0$. Let $\sigma'$ be the embedded (-3)-sphere in
	$E(2)\# \cp$ constructed above.
	Consider the 2-cycle $w':=\sigma$ of $E(2)\# \cp$ and the element $z:=(\sigma_{(2)}-2E_{(2)})^2z'$ of
	$\A(E(2)\#\cp)^{\otimes 2}$ where $z'\in \A(\langle\sigma\rangle^{\perp})^{\otimes 2}\bigcap \A(E(2))^{\otimes 2}$.
	By Proposition \ref{blowup-inital}, the following invariant of $E(2)\# \cp$ is equal to $4{\rm D}_{E(2),w_0}(z')$:
	\begin{equation}\label{E(2)-cp-inv}
		{\rm D}_{E(2)\# \cp,w'-\sigma'}((\sigma_{(2)}-2E_{(2)})^2z').
	\end{equation}
	Moreover, the first identity of Proposition \ref{-3-identities} can be used to show \eqref{E(2)-cp-inv} is
	equal to ${\rm D}_{E(2)\# \cp,w'}(z'')$ for an appropriate choice of $z'' \in \A(E(2)\#\cp)^{\otimes 2}$. Replacing $w'$ with $w'':=\sigma+\tau_1+g_1$ shows that:
	\begin{equation*}
		4{\rm D}_{E(2),\tau_1+g_1}(z')={\rm D}_{E(2)\# \cp,w''}(z'').
	\end{equation*}
	Since $w'\cdot w' \equiv w''\cdot w''$ mod 3, the left hand side of the above identity is equal to ${\rm D}_{E(2)\# \cp,w'}(z'')$ by the blowup formula.
	Therefore, we can deduce that:
	\begin{equation} \label{0=w3}
		{\rm D}_{E(2),w_0}(z')={\rm D}_{E(2),\tau_1+g_1}(z')
	\end{equation}
	for $z'\in \A(\langle\sigma\rangle^{\perp})^{\otimes 2}$. As a consequence of Lemma \ref{SO-inv-pol}, Identity \eqref{0=w3} holds for any choice of $z'$.
	In particular, $E(2)$ has simple type with respect to $w_0$ and \eqref{E(2)-inv} holds for this choice of $w$.
\end{proof}

\begin{prop} \label{non-nuc-inv}
	Suppose $\Gamma,\Lambda \in H_2(B(2,3,6n-1)) \subset H_2(E(n))$ and $\Gamma',\Lambda' \in H_2(G(n))\subset H_2(E(n))$. Then:
	\begin{equation} \label{E(n)-inv-wkl}
		\widehat {\rm D}_{E(n),w_{k,l}}(\e^{(\Gamma+\Gamma')_{(2)}+(\Lambda+\Lambda')_{(3)}})=
		{\rm e}^{\frac{Q(\Gamma)}{2}-Q(\Lambda)}\widehat {\rm D}_{E(n),w_{k,3}}(\e^{(\Gamma')_{(2)}+(\Lambda')_{(3)}})
	\end{equation}
\end{prop}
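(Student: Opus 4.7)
The plan is to adapt the proof of Proposition \ref{K3-inv-w-i} to $E(n)$, treating the Gompf-nucleus classes $\Gamma',\Lambda'$ as parameters and exploiting $(-2)$-spheres supported inside $B(2,3,6n-1)$. First, I would invoke the algebraic density of a finite-index subgroup of ${\rm Diff}(E(n))$ in $\SO(H_2(E(n));f)$ used in subsection \ref{E(n)-str}, and further restrict to diffeomorphisms fixing the section $\sigma$ (which exist by \cite[Proposition 3.3]{GHS:ab-or-gp}); this produces a dense subgroup of $\SO(H_2(B(2,3,6n-1)))$ acting on the Milnor-fiber variables while fixing $H_2(G(n))$ pointwise. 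Because the two hyperbolic summands of $B(2,3,6n-1)$ supply the orthogonal anisotropic vectors required by Lemma \ref{SO-inv-pol}, applying that lemma in the Milnor-fiber variables (with $\Gamma', \Lambda'$ frozen) yields a power series $g_{\Gamma',\Lambda'}\in \C[\![r,s,t]\!]$ with
\begin{equation*}
\widehat{\rm D}_{E(n),w_{k,l}}(\e^{(\Gamma+\Gamma')_{(2)}+(\Lambda+\Lambda')_{(3)}})\cdot\e^{-Q(\Gamma)/2+Q(\Lambda)} = g_{\Gamma',\Lambda'}\bigl(Q(\Gamma),\,Q(\Gamma,\Lambda),\,Q(\Lambda)\bigr).
\end{equation*}

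Next, I would pick an embedded $(-2)$-sphere $\tau\subset B(2,3,6n-1)$ with $w_{k,l}\cdot\tau=0$; once $w_{k,l}$ is put in the standard form of Lemma \ref{orbits-line-bundle-E(n)}, such a $\tau$ can be chosen from a distinct $-E_8$ summand as $n\ge 2$. Writing $\Gamma=\Gamma_0+a\tau$, $\Lambda=\Lambda_0+b\tau$ with $\Gamma_0,\Lambda_0\perp\tau$, and applying Culler's identities $(C_1)$--$(C_3)$ of subsection \ref{neg-spheres} to $\tau$, the shifted invariants $\widehat{\rm D}_{E(n),w_{k,l}+\tau}$ that appear on the right-hand side land in another orbit $w_{k,l'}$ with $l'\in\{1,2,3\}$ (since $\tau^2\equiv 1\pmod 3$), which by Proposition \ref{wk,l-ind} has the same normalized series as $\widehat{\rm D}_{E(n),w_{k,l}}$. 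Matching coefficients of $a$ and $b$ then produces exactly the three PDEs from the proof of Proposition \ref{K3-inv-w-i},
\begin{equation*}
\frac{\partial g}{\partial r}=0,\qquad 4\frac{\partial^2 g}{\partial r^2}-\frac{\partial g}{\partial t}=0,\qquad 2\frac{\partial^2 g}{\partial r\,\partial s}+\frac{\partial g}{\partial s}=0,
\end{equation*}
which force $g_{\Gamma',\Lambda'}$ to be constant in $(r,s,t)$. Setting $\Gamma=\Lambda=0$ identifies that constant with $\widehat{\rm D}_{E(n),w_{k,l}}(\e^{\Gamma'_{(2)}+\Lambda'_{(3)}})$, which by Proposition \ref{wk,l-ind} equals $\widehat{\rm D}_{E(n),w_{k,3}}(\e^{\Gamma'_{(2)}+\Lambda'_{(3)}})$ for $l\in\{1,2,3\}$, completing the proof.

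The main obstacle will be the bookkeeping in the second step: identifying $w_{k,l}+\tau$ with a standard orbit representative $w_{k,l'}$ via Lemma \ref{orbits-line-bundle-E(n)} and the diffeomorphism invariance of the polynomial invariants, and then pushing the $(1+a_2/3+a_2^2/9)$-normalization through Culler's identities so that the PDE system closes cleanly on $g_{\Gamma',\Lambda'}$. The edge case $l=0$ (where $w_{k,0}=kf$) is not directly covered by Proposition \ref{wk,l-ind} and would require separate handling, for example by first reducing to an $l\in\{1,2,3\}$ case via Identity \eqref{w+Nw'} with $N=3$, or by rerunning the blow-up argument from the proof of Theorem \ref{K3-inv}.
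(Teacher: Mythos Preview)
Your proposal is correct and follows essentially the same approach as the paper's own proof: the paper invokes the $\SO(H_2(B(2,3,6n-1)))$-invariance (from diffeomorphisms fixing $H_2(G(n))$), then says ``combine with the argument of Proposition \ref{K3-inv-w-i}'' for $1\le l\le 3$, and handles $l=0$ by ``the method of the proof of Theorem \ref{K3-inv}''. Your write-up supplies the details the paper leaves implicit (the choice of $\tau$ orthogonal to $w_{k,l}$, the use of Proposition \ref{wk,l-ind} to close the PDE system after $(C_1)$ shifts $w_{k,l}$ to $w_{k,l'}$, and the separate treatment of $l=0$ via the blow-up trick); the only remark is that your first suggested shortcut for $l=0$ via \eqref{w+Nw'} does not actually work (it only relates $w$ to $w+3w'$), so you must indeed rerun the Theorem \ref{K3-inv} argument as in your second option.
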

\begin{proof}
	The group of orthogonal transformations $\SO(H_2(B(2,3,6n-1)),Q)$, regarded as a subgroup of $\SO(H_2(E(n)))$,
	acts as identity on the series $\widehat {\rm D}_{E(n),w}(\e^{\Gamma_{(2)}+\Lambda_{(3)}})$. This fact can be combined with the argument
	of Proposition \ref{K3-inv-w-i} to verify \eqref{E(n)-inv-wkl} for $1\leq l \leq 3$. To finish the proof, it suffices to show that $\widehat {\rm D}_{E(n),w_{k,0}}(\e^{\Gamma_{(2)}+\Lambda_{(3)}})$ is equal to
	$\widehat {\rm D}_{E(n),w_{k,3}}(\e^{\Gamma_{(2)}+\Lambda_{(3)}})$. This also can be achieved with the method of the proof of
	 Theorem \ref{K3-inv}.
\end{proof}
By Proposition \ref{wk,l-ind}, we already know that $E(n)$ has $w_{k,l}$-simple type for $1\leq l \leq 3$. The above proof can be used to show that $E(n)$ has $w_{k,0}$-simple type, too.

\subsection{Invariants of $E(3)$} \label{E(3)-invariants}
In this section, we study the $\U(3)$-polynomial invariants of $E(3)$ up to a constant and a sign ambiguity. The following is Theorem \ref{En} form the introduction:
\begin{theorem} \label{E(3)-inv}
	The 4-manifold $E(3)$ has simple type. There are also real numbers $\hbar_1$ and $\hbar_2$ such that for any 2-cycle $w$ in $E(3)$ and
	$\Gamma,\Lambda \in H_2(E(3))$, the series
	$\widehat \rD_{E(3),w}({\e}^{\Gamma_{(2)}+\Lambda_{(3)}})$ is equal to :
	\begin{equation*}
		{\rm e}^{\frac{Q(\Gamma)}{2}-Q(\Lambda)}(\hbar_1 \cosh(\sqrt{3}f\cdot \Gamma)-
		2\hbar_2\cos(-\frac{2\pi}{3}w\cdot f+\sqrt 3 f \cdot \Lambda)).
	\end{equation*}	
	Furthermore, $\hbar_1+\hbar_2=\pm1$ for an appropriate choice of the sign.
\end{theorem}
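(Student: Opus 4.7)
The plan is to combine diffeomorphism invariance, the Milnor--fibre decoupling of Proposition~\ref{non-nuc-inv}, and a Fukaya--Floer gluing computation along a fibre torus, using Theorem~\ref{K3-inv} as the principal input.

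First, I would establish $w$-simple type for every 2-cycle $w\subset E(3)$. For the representatives $w_{k,l}$ with $1\le l\le 3$ this is Proposition~\ref{wk,l-ind}; the same slicing of $E(3)$ along a neighbourhood of a regular fibre combined with Proposition~\ref{relations-T^3} gives the identities for $w_{k,0}$, and Lemma~\ref{orbits-line-bundle-E(n)} together with the invariance $\rD_{E(3),w}=\rD_{E(3),w+3f'}$ extends them to every $w$. Proposition~\ref{non-nuc-inv} then reduces the computation of $\widehat\rD_{E(3),w}(\e^{\Gamma_{(2)}+\Lambda_{(3)}})$ to the case $\Gamma,\Lambda\in H_2(G(3))=\langle f,\sigma\rangle$, after which the only nontrivial intersection numbers that survive are $f\cdot\Gamma$, $f\cdot\Lambda$, and $w\cdot f$ --- precisely the combination of variables appearing in the claimed formula.

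The core of the proof is the decomposition $E(3)=E(2)\#_fE(1)$ along a neighbourhood of a regular fibre, to which I would apply the Fukaya--Floer gluing identity \eqref{FF-gluing-thm} for the admissible pair $(Y_1,\gamma_{1,1})$ (inverting the sum-over-shifts of Proposition~\ref{fiber-sum-universal-formula} in the non-admissible case $w\cdot f\equiv 0\pmod 3$). The relative invariants $\rD_{E(2)^\circ}$ and $\rD_{E(1)^\circ}$ live in the Fukaya--Floer module $\bIgdN$ with $g=d=1$ and $N=3$, which is a free $R_3$-module of rank $3$ on which the centre-shift operator $\epsilon$ is diagonalisable with eigenvalues $1,\zeta,\zeta^2$, where $\zeta=\e^{2\pi\bi/3}$. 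On the $\epsilon$-isotypic components the operators $\e^{\Gamma_{(2)}}$ and $\e^{\Lambda_{(3)}}$ act by scalars in $R_3$ whose specialisations are exponentials in $f\cdot\Gamma$ and $f\cdot\Lambda$. The critical input is Theorem~\ref{K3-inv} applied to the further decomposition $E(2)=E(1)\#_fE(1)$: it realises the closed-form $E(2)$ series as the self-pairing of $\rD_{E(1)^\circ}$, and matching the two expressions pins down both those scalars and all but two scalar weights of $\rD_{E(1)^\circ}$. Carrying this out forces the relevant exponents to be $\pm\sqrt 3\,f\cdot\Gamma$ and $\pm i\sqrt 3\,f\cdot\Lambda$, with the $\epsilon$-shift contributing the phase $\e^{-2\pi\bi w\cdot f/3}$ inside the cosine; the resulting real recombinations produce the $\cosh$ and $\cos$ terms of the statement, and the two remaining scalars are precisely $\hbar_1$ and $\hbar_2$.

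The main obstacle will be the spectral analysis: the operators $\mu(f_{(2)})$ and $\mu(f_{(3)})$ vanish on $\VgdN$ itself for degree reasons (compare Proposition~\ref{relations-T^3}), so the relevant data live only in the higher-order terms of the $R_3$-module deformation of the cup product provided by Proposition~\ref{non-deg-pairing}, and have to be extracted by careful matching against the $E(2)$ formula. Finally, the normalisation $\hbar_1+\hbar_2=\pm 1$ will follow by specialising the formula to $\Gamma=\Lambda=0$ and $w\cdot f\equiv 1\pmod 3$, where the single polynomial invariant obtained is nonzero of definite sign by Theorem~\ref{nonvan} applied to $E(3)$ with a suitable K\"ahler class.
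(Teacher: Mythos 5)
Your overall framework (fiber-sum decomposition $E(3)=E(2)\#_fE(1)$, Fukaya--Floer gluing along $(Y_1,\gamma_{1,1})$, reduction via Proposition~\ref{non-nuc-inv} and the symmetry lemmas) matches the skeleton of the paper's argument, and your simple-type discussion is essentially Propositions~\ref{wk,l-ind} and \ref{E(3)-inv-w.fneq0}. But the core step is not established. The fiber-sum relations of Proposition~\ref{fiber-sum-universal-formula} only tell you that $\widehat\rD_{E(3),w}=g\cdot\widehat\rD_{E(2),w}$ for an \emph{unknown} power series $g=q/p$, where $q$ and $p$ are the invariants of $E(1)$ and of $S^2\times f$ in the long-neck chamber; the further decomposition $E(2)=E(1)\#_fE(1)$ supplies exactly one relation, $q^2=\e^{-t_2^2+2t_3^2}p$, which leaves a full power series' worth of ambiguity in $g$. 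Equivalently, in the Fukaya--Floer module the relative classes $\rD_{E(1)^\circ}$ and $\rD_{(S^2\times f)^\circ}$ are both scalar multiples $\lambda v$, $\mu v$ of a generator $v$ of the rank-one module $\ker(\epsilon-1)$, and all the gluing identities you can write only involve the products $\lambda^2\langle v,v\rangle$, $\lambda\mu\langle v,v\rangle$, $\mu^2\langle v,v\rangle$; "matching against the $E(2)$ formula" cannot separate these, and in particular cannot force the exponents $\pm\sqrt3\,f\cdot\Gamma$, $\pm\bi\sqrt3\,f\cdot\Lambda$. Nothing in the $K3$ answer $\e^{Q(\Gamma)/2-Q(\Lambda)}$ contains a $\sqrt3$; that information has to come from somewhere else.

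In the paper the missing input is Proposition~\ref{-3-identities}: the sections of the elliptic fibration of $E(3)$ are embedded $(-3)$-spheres, and applying the $(-3)$-sphere relations to them (with coefficients computed from the blowup formula of Theorem~\ref{blowup}, which is where $\cosh(\sqrt3 t_2)$ and $\cos(\sqrt3 t_3)$ first appear) produces the PDEs \eqref{mixed-PDE-1}--\eqref{PDE-2} whose solution space is spanned by $\cosh(\sqrt3 t_2)$, $\cos(\sqrt3 t_3)$, $\sin(\sqrt3 t_3)$; the constraints then give $\hbar_1+\hbar_2=g(0,0)=\pm1$, the sign coming from $p(0,0)=q(0,0)^2=1$, not from Theorem~\ref{nonvan} (which only controls coefficients of large degree, not the constant term). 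The same $(-3)$-sphere trick is what handles $w\cdot f\equiv0\bmod 3$: one twists $w$ by a section to reduce to the admissible case, whereas your proposed "inversion of the sum over shifts" is not available because $(Y_1,\gamma_{1,0})$ is not an admissible pair and no Fukaya--Floer gluing along it exists. To repair your proof you would need to add the negative-sphere analysis (or some equivalent computation of the $S^2\times T^2$ invariant in the fiber chamber) as an independent ingredient.
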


In \cite{DX:in-prep}, it is shown that the constants $\hbar_1=\frac{2}{3}$ and $\hbar_2=\frac{1}{3}$. However, we do not need the exact values of these constant in this paper. Later, we only use the fact that $\hbar_1$ and $\hbar_2$ are non-zero. To abbreviate our notation, define:
\begin{equation} \label{G}
	G(\Gamma,\Lambda,j):=\hbar_1 \cosh(\sqrt{3}f\cdot \Gamma)-2\hbar_2\cos(-\frac{2\pi}{3}j+\sqrt 3 f \cdot \Lambda).
\end{equation}	
\begin{prop} \label{E(3)-inv-w.fneq0}
	Suppose $w$ is a 2-cycle in $E(3)$ with $w\cdot f \nequiv 0$ mod 3. Then:
	\begin{equation} \label{E(3)-simple-type}
		\rD_{E(3),w}((\frac{a_2}{3})^jz)=\rD_{E(3),w-jf}(z) \hspace{1cm} \rD_{E(3),w}(a_3z)=0
	\end{equation}	
	for $z\in \A(E(3))^{\otimes 2}$. In particular, $E(3)$ has $w$-simple type.
	Furthermore, there is a power series $g \in\Q[\![t_2,t_3]\!]$ such that for $\Gamma,\,\Lambda \in H_2(E(3))$:
	\begin{equation*}
		\widehat \rD_{E(3),w}(\e^{\Gamma_{(2)}+\Lambda_{(3)}})={\rm e}^{\frac{Q(\Gamma)}{2}-Q(\Lambda)}g(\Gamma\cdot f,\Lambda\cdot f)
	\end{equation*}
	when $w\cdot f\equiv 1$ mod 3, and
	\begin{equation*}
		\widehat \rD_{E(3),w}(\e^{\Gamma_{(2)}+\Lambda_{(3)}})={\rm e}^{\frac{Q(\Gamma)}{2}-Q(\Lambda)}g(-\Gamma\cdot f,-\Lambda\cdot f)
	\end{equation*}
	when $w\cdot f\equiv 2$ mod 3. Furthermore, $g$ is even with respect to the variable $t_2$ and its constant term is equal to $\pm1$.
\end{prop}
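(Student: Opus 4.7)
The plan is to mirror the $K3$-style argument of Propositions \ref{a3=0,a2/3=shift} and \ref{K3-inv-w-i}, now using the decomposition of $E(3)$ along a regular fibre torus together with the $(-2)$-sphere relations available in the Milnor fibre piece $B(2,3,17)\subset E(3)$.

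First, I would establish the simple-type identities \eqref{E(3)-simple-type}. Removing a tubular neighbourhood $N(f)\cong D^2\times T^2$ of a regular fibre gives $E(3)=X\cup_{Y_1} N(f)$ with $Y_1=S^1\times f$; the $1$-cycle $\gamma:=w\cap Y_1$ consists of $|w\cdot f|$ parallel copies of $S^1\times\{\mathrm{pt}\}$, and the hypothesis $w\cdot f\not\equiv 0\pmod 3$ makes $(Y_1,\gamma)$ admissible. The polynomial invariant then factors through the ring $\mathbb V_{1,\,w\cdot f}$, with $a_2$ and $a_3$ acting as $\aleph_2$ and $\aleph_3$. As in the proof of Proposition \ref{a3=0,a2/3=shift}, Proposition \ref{relations-T^3} gives $\aleph_2=3\epsilon^{-1}$ and $\aleph_3=0$, yielding $\rD_{E(3),w}((a_2/3)^j z)=\rD_{E(3),w-jf}(z)$ and $\rD_{E(3),w}(a_3 z)=0$.

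Next, I would pin down the form of $\widehat{\rD}_{E(3),w}(\e^{\Gamma_{(2)}+\Lambda_{(3)}})$. By subsection \ref{E(n)-str}, the series is invariant under the algebraically dense subgroup of $\SO(H_2(E(3));f)$ realised by diffeomorphisms fixing $w$ mod $3$, so after dividing by $\e^{Q(\Gamma)/2-Q(\Lambda)}$ one obtains a formal power series in the five invariants $Q(\Gamma)$, $Q(\Gamma,\Lambda)$, $Q(\Lambda)$, $f\cdot\Gamma$, $f\cdot\Lambda$. Picking a $(-2)$-sphere $\tau\subset B(2,3,17)$ orthogonal to $w$ (using Lemma \ref{orbits-line-bundle-E(n)} after a diffeomorphism) and applying the Culler identities $(C_1)$, $(C_2)$, $(C_3)$ yields the same three PDEs in the bulk variables $r=Q(\Gamma)$, $s=Q(\Gamma,\Lambda)$, $t=Q(\Lambda)$ as in the proof of Proposition \ref{K3-inv-w-i}; these force all dependence on $r, s, t$ to vanish, leaving a power series $g$ in $f\cdot\Gamma$ and $f\cdot\Lambda$ alone. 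The case $w\cdot f\equiv 2$ reduces to $w\cdot f\equiv 1$ via \eqref{wto-w} applied to $-w$, which produces the sign swap in the arguments of $g$. Evenness of $g$ in $t_2$ follows from \eqref{dimension-fomrula}: since $\chi+\sigma=12$ for $E(3)$, the congruence $4m+2i'+4j'\equiv -4w^2\pmod{12}$ forces $i'$ to be even in any non-vanishing evaluation $\rD_{E(3),w}(a_2^m \Gamma_{(2)}^{i'}\Lambda_{(3)}^{j'})$, and since $\e^{Q(\Gamma)/2-Q(\Lambda)}$ only contributes even total $\Gamma$-degrees, this parity constraint transfers to the $t_2$-power of $g$.

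The main obstacle, I expect, will be pinning the constant term $g(0,0)$ to $\pm 1$ rather than merely a non-zero rational. By the simple-type identity, $g(0,0)$ collapses to $\rD_{E(3),w-k_0 f}(1)$ for the unique $k_0\in\{0,1,2\}$ meeting the dimension constraint modulo $12$. To evaluate it I would use the fibre-sum decomposition $E(3)=E(2)\#_f E(1)$ together with the gluing formula \eqref{FF-gluing-thm}, expressing the value as a pairing in $\mathbb I_{1,\,w\cdot f}$ between the relative invariant of $E(2)$ (known from Theorem \ref{K3-inv}) and that of $E(1)$ (handled through the chamber machinery for $b^+=1$). An alternative is the algebro-geometric route via Theorem \ref{nonvan}: for the minimal $\kappa$ consistent with the dimension constraint, the Gieseker moduli space of rank-$3$ stable bundles on $E(3)$ with the prescribed Chern classes ought to reduce to a single reduced point whose restriction to a generic elliptic fibre is a stable Atiyah-type rank-$3$ bundle, supplying the $\pm 1$.
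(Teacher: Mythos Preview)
Your treatment of the simple-type identities \eqref{E(3)-simple-type} matches the paper exactly: factor through the admissible pair $(Y_1,\gamma)$ and apply Proposition \ref{relations-T^3}. That step is fine.

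For the shape of $\widehat{\rD}_{E(3),w}$, however, your route diverges from the paper and has a genuine gap. The paper does \emph{not} use the $(-2)$-sphere relations here. Instead it applies Proposition \ref{fiber-sum-universal-formula} with $X_1=E(2)$, $X_2=E(1)$, $X_3=X_4=S^2\times f$; since $E(2)\#_f(S^2\times f)=E(2)$ and $(S^2\times f)\#_f E(1)=E(1)$, the relation $D_{1,2}D_{3,4}=D_{1,4}D_{3,2}$ expresses $\sum_j\rD_{E(3),w+jf}(\e^{\Gamma_{(2)}+\Lambda_{(3)}})$ as $(q/p)$ times the known $E(2)$-series $\e^{Q(\Gamma_0)/2-Q(\Lambda_0)}$, with $p$ invertible because $p(0,0)=1$. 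This already gives the factorisation $\e^{Q/2-Q}\,g$ for $\Gamma\cdot f=\Lambda\cdot f=1$, and Lemma \ref{SO-k-inv} plus homogeneity then extend it to all $\Gamma,\Lambda$. No $(-2)$-sphere relation is needed.

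Your approach via $(C_1)$, $(C_2)$, $(C_3)$ would require, in order to close the first PDE $\partial_r g=0$, the identity $\widehat{\rD}_{E(3),w+\tau}=\widehat{\rD}_{E(3),w}$. In the $K3$ case this followed from Proposition \ref{a3=0,a2/3=shift} (which relates all $w_i$ via the $a_2$-shift). Here the simple-type relation only connects $w$ to $w-jf$, not to $w+\tau$, so you cannot invoke it. Your appeal to Lemma \ref{orbits-line-bundle-E(n)} is also misplaced: that lemma has the explicit hypothesis $e\cup f\equiv 0\pmod 3$, which is exactly what fails for the $w$ under consideration. Without $(C_1)$, the two remaining relations $(C_2)$, $(C_3)$ give only two PDEs for a function of the five invariants $Q(\Gamma),Q(\Gamma,\Lambda),Q(\Lambda),f\cdot\Gamma,f\cdot\Lambda$, which is not enough to kill the three ``bulk'' variables.

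Your argument for evenness in $t_2$ via the degree constraint is correct, and the reduction of the case $w\cdot f\equiv 2$ to $w\cdot f\equiv 1$ is fine (the paper uses the diffeomorphism sending $f\mapsto -f$; your use of \eqref{wto-w} also works since $g$ is even in $t_2$).

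Finally, for $g(0,0)=\pm 1$: the paper's argument is much simpler than either of your proposals. A second application of Proposition \ref{fiber-sum-universal-formula}, this time with $X_1=X_2=E(1)$ and $X_3=X_4=S^2\times f$, yields $\e^{-t_2^2+2t_3^2}\,p(t_2,t_3)=q(t_2,t_3)^2$; evaluating at $t_2=t_3=0$ gives $q(0,0)^2=p(0,0)=1$, so the constant term of $g=q/p$ is $\pm1$. There is no need to invoke Fukaya--Floer pairings directly or any moduli-space computation on $E(3)$.
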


\begin{proof}
	The 4-manifold $E(3)$ is given as the fiber sum $E(2)\#_f E(1)$. In this proof, let $\sigma_n$
	denote a section of the elliptic fibration of $E(n)$, which is a sphere of self-intersection $-n$.
	We can assume $\sigma_3=\sigma_2\#\sigma_1$. Firstly, consider the case $w\cdot f \equiv 1$ mod 3.
	Arguing as in Lemma Lemma \ref{orbits-line-bundle-E(n)}, we can assume that $w=w_1\# \sigma_1$
	where $w_1$ is a 2-cycle in $E(2)$ with $w_1 \cdot f =1$.
	Suppose $\Gamma_0$ and $\Lambda_0$ are two elements of $H_2(E(2))$ such that $\Gamma_0\cdot f =\Lambda_0 \cdot f=1$.
	Then Proposition \ref{fiber-sum-universal-formula} for $X_1=E(2)$, $X_2=E(1)$ and
	$X_3=X_4=S^2 \times f$ implies that:
	\begin{equation*}
		p(t_2,t_3)\sum_{0\leq j \leq 2}\rD_{E(2)\#_{f} E(1), w+jf}(\e^{(\Gamma_0\#\sigma_1)_{(2)}+(\Lambda_0\#\sigma_1)_{(3)}})=\hspace{2cm}
	\end{equation*}
	\begin{equation*}	
		\hspace{2cm}=q(t_2,t_3)\sum_{0\leq j \leq 2}\rD_{E(2)\#_{f} S^2\times f, w_2+jf}(\e^{(\Gamma_0)_{(2)}+(\Lambda_0)_{(3)}})
	\end{equation*}
	where:
	\begin{equation*}
		p(t_2,t_3)=\sum_{0\leq j \leq 2}\rD_{S^2 \times f, S^2\times \{{\rm pt}\} +jf}(\e^{\Delta_{(2)}+\Delta_{(3)}})
	\end{equation*}
	and
	\begin{equation*}
		q(t_2,t_3)=\sum_{0\leq j \leq 2}\rD_{E(1), w_1+jf}(\e^{(\sigma_1)_{(2)}+(\sigma_1)_{(3)}}).
	\end{equation*}	
	Note that $b^+(S^2 \times f)=b^+(E(1))=1$ and we use the invariants with respect to the metrics that have long necks along $f$ in the above identities.
	The power series $p(t_2,t_3)$ is invertible, because $p(0,0)= 1$.
	Therefore, we can conclude there is a power series $g(t_2,t_3)$ such that:
	\begin{equation} \label{E(3)-e-inv}
		\sum_{0\leq j \leq 2}\rD_{E(3), w+jf}(\e^{\Gamma_{(2)}+\Lambda_{(3)}})={\rm e}^{\frac{Q(\Gamma)}{2}-Q(\Lambda)}g(t_2,t_3)
	\end{equation}	
	with $\Gamma=\Gamma_0\#\sigma_1$, $\Lambda=\Lambda_0\#\sigma_1$ for $\Gamma_0$ and $\Lambda_0$ given as above.
	Then Lemma \ref{SO-k-inv} implies that \eqref{E(3)-e-inv} holds for arbitrary $\Gamma$ and $\Lambda$ with
	$\Gamma \cdot f=\Lambda\cdot f=1$. By Proposition \ref{a3=0,a2/3=shift}, we can use a similar argument as above to show:
	\begin{equation*}
		\sum_{0\leq j \leq 2}\rD_{E(3), w+jf}(P(a_2,a_3)\e^{\Gamma_{(2)}+\Lambda_{(3)}})=P(3,0){\rm e}^{\frac{Q(\Gamma)}{2}-Q(\Lambda)}g(t_2,t_3)
	\end{equation*}	
	In particular, this shows that:
	\begin{equation*}
		\rD_{E(3), w}((\frac{a_2}{3})^j\e^{s\Gamma_{(2)}+t\Lambda_{(3)}})=
		\rD_{E(3), w-jf}(\e^{\Gamma_{(2)}+\Lambda_{(3)}}),\hspace{.2cm}\rD_{E(3), w}(a_3\e^{s\Gamma_{(2)}+t\Lambda_{(3)}})=0
	\end{equation*}		
	The power series $g$ is even with respect to $t_2$, because $\rD(\e^{\Gamma_{(2)}+\Lambda_{(3)}})$ is even with respect to $t_2$. A similar application of
	Proposition \ref{fiber-sum-universal-formula} for $X_1=X_2=E(1)$ and $X_3=X_4=S^2 \times f$ shows that:
	\begin{equation*}
		\e^{-t_2^2+2t_3^2}p(t_2,t_3)=q(t_2,t_3)^2.
	\end{equation*}
	The constant term of the above equality and the identity $p(0,0)=1$ shows that the constant term of $g$ is equal to $\pm 1$.
	This fact completes the proof for the case that $w\cdot f \equiv1$ mod 3.
	Using a diffeomorphism of $E(3)$ which maps $f$ to $-f$, we can also treat the
	case that $w\cdot f \equiv2$ mod 3.
\end{proof}

In order to determine the power series $g$, let $\sigma$ and $\sigma'$ be two disjoint sections of the elliptic fibration of $E(3)$. Let also $w$ be chosen such that $w\cdot f = 1$ and $w \cdot \sigma=2$. Then:
\begin{equation*}\label{inv-E(3)--3spheres}
	\widehat \rD_{E(3),w}(\e^{(s\sigma+s'\sigma')_{(2)}+(r\sigma+r'\sigma')_{(3)}})={\rm e}^{-3t_2^2\frac{(s^2+s'^)}{2}+3t_3^2(r^2+r'^2)}g((s+s')t_2,(r+r')t_3)
\end{equation*}
By taking derivative with respect to $s$ and $r$, we can conclude that:
\begin{equation*}\label{inv-E(3)--3spheres}
	\widehat \rD_{E(3),w}(\sigma_{(2)}^i \sigma_{(3)}^j z)=h\frac{d^i}{ds^i}\frac{d^j}{dr^j}|_{s=t=0}\, {\rm e}^{-3\frac{t_2^2s^2}{2}+3t_3^2r^2}g((s+s')t_2,(r+r')t_3)
\end{equation*}
where $z=\e^{s'\sigma_{(2)}'+r'\sigma_{(3)}'}$ and $h={\rm e}^{-3\frac{t_2^2s'^2}{2}+3t_3^2r'^2}$. By applying these identities to the second formula in Proposition \ref{-3-identities}, we can conclude:
\begin{equation}\label{mixed-PDE-1}
	-\frac{1}{2} g_3+\frac{1}{2}g_{22}-\frac{1}{2}g=g\circ \tau
\end{equation}
where $g_{22}$ means the second derivative of the power series $g(t_2,t_3)$ with respect to $t_2$, and so on. Moreover, $\tau$ maps $(t_2,t_3)$ to $(-t_2,-t_3)$. We can use \eqref{mixed-PDE-1} to derive the following identity:
\begin{equation}\label{mixed-PDE-1-p}
	\frac{1}{2} (g\circ \tau)_3+\frac{1}{2}(g\circ \tau)_{22}-\frac{1}{2}g\circ \tau=g
\end{equation}
Replacing $g\circ \tau$ in \eqref{mixed-PDE-1-p} with the left hand side of \eqref{mixed-PDE-1} gives rise to the following PDE for $g$:
\begin{equation} \label{PDE-4-1}
	g_{2222}-g_{33}-2g_{22}-3g=0
\end{equation}

Next, let $w'$ be a 2-cycle with $w'\cdot f=1$ and $w'\cdot \sigma=0$ and consider $$\widehat \rD_{E(3),w'}(\e^{(s\sigma+s'\sigma')_{(2)}+(r\sigma+r'\sigma')_{(3)}})$$
instead.
With the same argument, the last part of Proposition \ref{-3-identities} implies that:
\begin{equation}\label{PDE-4-2}
	g_{2222}-6g_{22}+3g_{33}+9g=0 \hspace{1cm}g_{2223}-6g_{23}=0
\end{equation}
We can combine \eqref{PDE-4-1} and the first equation in \eqref{PDE-4-2} to come up with the following simpler PDE:
\begin{equation}\label{PDE-2}
	g_{33}-g_{22}+3g=0
\end{equation}
The second PDE in \eqref{PDE-4-2} and the fact that $g$ is even with respect to the variable $t_2$ imply that $g_{st}=p(t)\cosh(\sqrt 6 s)$. The equations \eqref{PDE-4-1} and \eqref{PDE-2} can be used to write two linear ordinary differential equations for $p$. It is straightforward to check that the only solution of these ODEs is $p(t)=0$. Therefore, the power series $g$ has the form $q_1(t)+q_2(s)$. Equation \eqref{PDE-2} can be used to find differential equations for $q_1$ and $q_2$. By solving these ODEs and using the fact that $g$ is even with respect to $t_2$, we can conclude:
\begin{equation} \label{g-gen-form}
	g(t_2,t_3)=a\cosh(\sqrt 3 t_2)+b\cos(\sqrt 3 t_3)+c\sin(\sqrt 3 t_3)
\end{equation}
If $g(0,0)=1$, then the initial value and \eqref{mixed-PDE-1} imply the following constraints on $a$, $b$ and $c$ which can be used to prove Theorem \ref{E(3)-inv} in the case $w\cdot f \nequiv 0$ mod 3:
\begin{equation*}
	a+b=1\hspace{1cm}a-\frac{1}{2}b-\frac{\sqrt{3}}{2}c=1.
\end{equation*}
A similar argument can be used in the case that $g(0,0)=-1$.

Next, let $w\cdot f \equiv 0$ mod 3. Using Lemma \ref{orbits-line-bundle-E(n)} and Proposition \ref{wk,l-ind}, it suffices to consider the case that $w=w_{k,1}$ for $k=0$ or $1$. The following proposition computes the invariants of $E(3)$ for this choice of 2-cycle. In this proof, we use the basis for the homology of $H_2(E(3),\Z)$ which is introduced in Subsection \ref{E(n)-str}:
\begin{prop}
	For any $(\Gamma,\Lambda)\in H_2(E(3)) \oplus H_2(E(3))$:
	\begin{equation} \label{E(3)-w.f=0}
		\widehat \rD_{E(3),w_{k,l}}({\e}^{\Gamma_{(2)}+\Lambda_{(3)}})=
		{\e}^{\frac{Q(\Gamma)}{2}-Q(\Lambda)}G(\Gamma,\Lambda, 0)
	\end{equation}
\end{prop}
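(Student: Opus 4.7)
The plan is to imitate the proof of Proposition~\ref{E(3)-inv-w.fneq0}, but since the hypothesis that $w\cdot f$ is coprime to $3$ fails, Proposition~\ref{fiber-sum-universal-formula} is unavailable. I would replace it with Proposition~\ref{-3-identities} applied to a section of the elliptic fibration, reducing the $w\cdot f\equiv 0\pmod 3$ calculation to the $w\cdot f\not\equiv 0\pmod 3$ case already handled.

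First, by Proposition~\ref{wk,l-ind} together with Proposition~\ref{non-nuc-inv} (whose proof extends to $l=0$ by the method indicated there), it suffices to compute the series for $w=w_{k,1}$ with $k\in\{0,1\}$ and $\Gamma,\Lambda\in H_2(G(3))$. For each $k$, I would pick a $(-3)$-sphere $S_k$, represented by a section of the elliptic fibration, with $w_{k,1}\cdot S_k\equiv 1\pmod 3$: for $k=1$ take $S_1=\sigma$, the Gompf section; for $k=0$ take $S_0$ to be one of the eight non-Gompf sections, chosen so that it meets $\tau_1$ in a single point (such a section exists by the standard description of how sections of $E(3)$ meet fishtail fibers). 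Proposition~\ref{-3-identities}(i), combined with the simple-type reduction $a_2\to 3$ from Proposition~\ref{wk,l-ind}, then yields
\begin{equation*}
\widehat{\rD}_{E(3),w_{k,1}}\bigl((S_{k,(3)}+S_{k,(2)}^2+2)z\bigr)=2\,\widehat{\rD}_{E(3),w_{k,1}-S_k}(z),\qquad z\in\A(S_k^\perp)^{\otimes 2},
\end{equation*}
whose right-hand side is known from Proposition~\ref{E(3)-inv-w.fneq0} since $(w_{k,1}-S_k)\cdot f\equiv 2\pmod 3$.

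Substituting $z=\e^{A\tilde f_{(2)}+C\tilde f_{(3)}}$ with $\tilde f\in S_k^\perp$ normalised so that $\tilde f\cdot f=1$, and writing
\begin{equation*}
\widehat{\rD}_{E(3),w_{k,1}}\bigl(\e^{(A\tilde f+bS_k)_{(2)}+(C\tilde f+dS_k)_{(3)}}\bigr)=\e^{Q(\Gamma)/2-Q(\Lambda)}\,h(A+b,C+d),
\end{equation*}
the identity becomes a linear equation for $h$ after evaluating $b$- and $d$-derivatives at $b=d=0$. A trigonometric ansatz $h(u,v)=\alpha\cosh(\sqrt 3\,u\,t_2)+\beta\cos(\sqrt 3\,v\,t_3)+\gamma\sin(\sqrt 3\,v\,t_3)$ and matching against the explicit form of the known right-hand side forces $\alpha=\hbar_1$, $\beta=-2\hbar_2$, $\gamma=0$, identifying $h$ with $G(\Gamma,\Lambda,0)$. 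A Lemma~\ref{SO-k-inv}-style symmetry argument (exactly as in the proof of Proposition~\ref{E(3)-inv-w.fneq0}) then extends the formula to arbitrary $\Gamma,\Lambda\in H_2(E(3))$.

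The main obstacle is the $k=0$ case: first, the existence of $S_0$ with the required intersection with $\tau_1$ needs a geometric construction from the standard description of the sections of $E(3)$, and second, the ansatz $\e^{Q/2-Q}h(A+b,C+d)$ must be justified, since the decoupling of the $\tilde f$- and $S_0$-components relies on the form of the right-hand side carrying over. If one wishes to avoid constructing $S_0$ by hand, one can instead apply Proposition~\ref{-3-identities}(iii) to the Gompf section $\sigma$ (for which $w_{0,1}\cdot\sigma=0$); this gives two PDEs whose trigonometric solutions form a three-parameter family, and the parameters are pinned down by consistency with the $k=1$ calculation, since the universal constants $\hbar_1,\hbar_2$ are already determined by Proposition~\ref{E(3)-inv-w.fneq0}.
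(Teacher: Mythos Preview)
Your overall strategy---reduce the $w\cdot f\equiv 0$ case to the known $w\cdot f\not\equiv 0$ case via Proposition~\ref{-3-identities} applied to a $(-3)$-section---is exactly the paper's idea. But there are two genuine gaps that prevent your argument from closing.

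\textbf{The ansatz is circular.} Your displayed identity
\[
\widehat{\rD}_{E(3),w_{k,1}}\bigl(\e^{(A\tilde f+bS_k)_{(2)}+(C\tilde f+dS_k)_{(3)}}\bigr)=\e^{Q(\Gamma)/2-Q(\Lambda)}\,h(\Gamma\cdot f,\Lambda\cdot f)
\]
is precisely the form of the conclusion you are trying to prove. Proposition~\ref{non-nuc-inv} gives the factorisation only for the Milnor-fibre component of $(\Gamma,\Lambda)$; it says nothing about how the series depends on the two independent coordinates of $H_2(G(3))$ separately. With the ansatz unjustified, your single relation from Proposition~\ref{-3-identities}(i) (evaluated on the line $\langle\tilde f\rangle$) cannot pin down a function of four variables.

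\textbf{How the paper avoids this.} The paper applies Proposition~\ref{-3-identities}(i) in the \emph{opposite direction}: with $w=w_{k,1}+\alpha$ rather than $w=w_{k,1}$, one obtains $\widehat\rD_{E(3),w_{k,1}}(z)$ \emph{directly} (not merely a differential relation it satisfies) for every $z\in\A(\alpha^\perp)^{\otimes 2}$, since the right-hand side now involves $w_{k,1}+\alpha$ with $(w_{k,1}+\alpha)\cdot f=1$. This yields the formula on the codimension-one subspace $\alpha^\perp\oplus\alpha^\perp$ with no ansatz. The remaining step is a Zariski-density argument inside the slice $W_0\oplus W_1$ of Lemma~\ref{SO-k-inv}, and for this it is essential that $\alpha$ meet the Milnor-fibre classes $\tau_1,\tau_3$ nontrivially; that is why the paper does \emph{not} take $\alpha=\sigma$ but instead applies a diffeomorphism of $E(3)$ to a second section $\sigma'$, moving its Milnor-fibre component to $2g_1-\tau_1\pmod 3$. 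Your choice $S_1=\sigma$ would fail at exactly this point: $\sigma^\perp$ already contains all of $H_2(B(2,3,17))$, so the $\SO(H_2(E(3));f)$-orbits needed for the density argument cannot be produced. Your alternative for $k=0$ via part (iii) has the same underdetermination problem, and ``consistency with the $k=1$ calculation'' gives no link between the two distinct $2$-cycles $w_{0,1}$ and $w_{1,1}$.
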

\begin{proof}
	Let $\sigma'$ be a section of the elliptic fibration of $E(3)$ which is disjoint from $\sigma$. Then the homology class of $\sigma'$ is equal to $\sigma+3f+u$
	where $u \in H_2(B(2,3,17),\Z)$ and $u \cdot u=-6$. Arguing as in Lemma \ref{orbits-line-bundle-E(n)}, there is a diffeomorphism $\Phi$ of $E(3)$ that fixes $H_2(G(3))$
	and maps $u$ to a linear combination of $g_1$ and $\tau_1$. Furthermore, $\Phi_*(u)\equiv 2g_1-\tau_1$ mod 3.
	In particular, $\alpha:=\Phi(\sigma')$ is a (-3)-sphere with $\alpha \cdot w_{k,1} \equiv 1$ mod 3.
	Suppose $W$ is the subspace of the elements of $H_2(E(3))$ whose intersection numbers with
	 $\alpha$ are equal to $0$.
	Using Proposition \ref{-3-identities}, the series
	$\widehat \rD_{E(3),w_{k,l}}({\e}^{\Gamma_{(2)}+\Lambda_{(3)}})$, for any $(\Gamma,\Lambda)\in W \oplus W$, is equal to:
	\begin{equation} \label{RHS}
		-\frac{1}{3}\widehat \rD_{E(3),w_{k,1}+\alpha}((-\frac{3}{2} \alpha_{(3)}-\frac{3}{2} \alpha_{(2)}^2-a_2){\e}^{\Gamma_{(2)}+\Lambda_{(3)}})
	\end{equation}
	Since $(w_{k,1}+\alpha)\cdot f=1$, we can evaluate the expression \eqref{RHS} using our current knowledge of the invariants of $E(3)$.
	It is straightforward to check that the resulting series is equal to \eqref{E(3)-w.f=0}.
	
	The homology classes $f$, $k:=\sigma+\frac{3}{2}f$ and $\tau_2$ satisfy
	the assumption of Lemma \ref{SO-k-inv} for $V=H_2(E(3))$. Suppose $W_0$ and $W_1$
	are given as in Lemma \ref{SO-k-inv}, and $U$ is the subset of $W_0 \oplus W_1$
	consisting of the pairs that satisfy \eqref{E(3)-w.f=0}. Then $U$ is a Zariski
	closed subset of $W_0 \oplus W_1$. In order to complete the proof,
	we shall show that $U$ contains a Euclidean open set in $W_0\oplus W_1$ and hence
	$U=W_0\oplus W_1$. Let $(\Gamma,\Lambda)\in W_0\oplus W_1$ are given as below:
	\begin{equation*}
		\Gamma:=af+bk \hspace{1cm} \Lambda=a'f+b'k+c\tau_2.
	\end{equation*}
	Consider the homology classes $u_1:=\frac{\tau_1+\tau_3}{\sqrt 2}$ and $u_2:=\frac{\tau_1-\tau_3}{\sqrt 2}$ which have non-zero intersection with $\alpha$.
	There is an element $A_{t,\theta}\in \SO(H_2(E(n));f)$ such that:
	\begin{align} \label{imageunderA-tt}
		A_{t,\theta}(\Gamma):=&af+b(k+t^2f+tu_1)\nonumber\\
		A_{t,\theta}(\Lambda):=&a'f+b'(k+t^2f+tu_1)+c'(\cos(\theta) \tau_2+\sin(\theta) u_2)\nonumber
	\end{align}
	If $a$ and $a'$ are close enough to each other and $b$, $b'$ and $c'$ are close enough to $1$, then $t$ and $\theta$ can be chosen such that
	$A_{t,\theta}(\Gamma), A_{t,\theta}(\Lambda) \in W$.
	Therefore, $U$ contains an open subset of $W_0 \oplus W_1$.
\end{proof}

\subsection{Invariants of $E(n)$} \label{E(n)-invariants}

In this section, we compute the invariants of the elliptic surface $E(n)$. We start with the simpler case  that $w \cdot f \nequiv 0$ mod 3:
\begin{prop} \label{w.fneq0}
	Suppose $w$ is a 2-cycle in $E(n)$ with $w\cdot f \nequiv 0$ mod 3. Then $E(n)$ has $w$-simple type, and for $\Gamma, \Lambda \in H_2(E(n))$:
	\begin{equation*}
		\widehat \rD_{E(n),w}({\e}^{\Gamma_{(2)}+\Lambda_{(3)}})={\e}^{\frac{Q(\Gamma)}{2}-Q(\Lambda)}	G(\Gamma,\Lambda, w\cdot f)^{n-2}.
	\end{equation*}
\end{prop}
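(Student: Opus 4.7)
The strategy is induction on $n$, with base cases $n=2$ (Theorem~\ref{K3-inv}, where $G^{0}=1$ matches $e^{Q(\Gamma)/2-Q(\Lambda)}$) and $n=3$ (Theorem~\ref{E(3)-inv}, which is exactly the $G^{1}$ case). The inductive step is driven by the fiber--sum decomposition $E(n+1)=E(n)\#_{f}E(1)$ along a regular elliptic fiber, combined with the universal formula of Proposition~\ref{fiber-sum-universal-formula}.

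First, by Lemma~\ref{orbits-line-bundle-E(n)} and the diffeomorphism-invariance of the polynomial invariants, it suffices to treat the orbit representatives $w_{k,l}$ with $w_{k,l}\cdot f\equiv 1$ or $2\pmod 3$; the remaining freedom in $(\Gamma,\Lambda)$ can be reduced via Lemma~\ref{SO-k-inv} to pairs supported in the span of $f$, $\sigma$, and a single class in the Milnor--fiber summand, since the series is invariant under an algebraically dense subgroup of $\SO(H_2(E(n));f)$ as explained in Subsection~\ref{E(n)-str}. Writing $w=w'\#\sigma_1$ with $w'\subset E(n)$ and $\sigma_1$ a section of $E(1)$, apply Proposition~\ref{fiber-sum-universal-formula} with $X_1=E(n)$, $X_2=E(1)$, $X_3=X_4=S^{2}\times T^{2}$ and $T=\{\mathrm{pt}\}\times T^{2}$. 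This yields
\begin{equation*}
D_{1,2}\cdot D_{3,4}\;=\;D_{1,4}\cdot D_{3,2},
\end{equation*}
where each $D_{i,k}$ is the sum over the three twists $+jT$ of $\U(3)$-series, and all invariants on the right involving $E(1)$ or $S^{2}\times T^{2}$ are taken in the small-fiber chamber.

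Next, I would feed the induction hypothesis into $D_{1,4}=\sum_{j}\rD^{3}_{E(n),w'+jf}(\e^{\Gamma_{(2)}+\Lambda_{(3)}})$, which by the $n$-case equals $3\,e^{Q(\Gamma)/2-Q(\Lambda)}G(\Gamma,\Lambda,w\cdot f)^{n-2}$ (the factor of $3$ from the simple-type condition, since only one of the three twists contributes after the $a_{2}/3$-shift). The factor $D_{3,4}$ is a series for $S^{2}\times T^{2}$ whose constant term is a nonzero (in fact $\pm 1$ times) power of the basic $T^{4}$-type expression, hence invertible as an element of $\C[\![t_2,t_3]\!]$. Comparing with the $n=3$ identity then forces $D_{3,2}/D_{3,4}$ to equal (up to the correct normalization) the single-step factor $e^{Q(\Gamma)/2-Q(\Lambda)}G(\Gamma,\Lambda,w\cdot f)$, so that $D_{1,2}$ becomes $3\,e^{Q(\Gamma)/2-Q(\Lambda)}G^{n-1}$, which is the desired $E(n+1)$-formula after dividing by the normalizing $3$. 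The $w$-simple type of $E(n+1)$ is established simultaneously: applying the same universal identity with $a_{3}\cdot\e^{\Gamma_{(2)}+\Lambda_{(3)}}$ in place of $\e^{\Gamma_{(2)}+\Lambda_{(3)}}$ and invoking Proposition~\ref{a3=0,a2/3=shift} (which gives vanishing of the $a_{3}$ piece and the $a_{2}/3$-shift on both the $E(n)$ and $E(1)$ factors) together with the invertibility of the auxiliary series forces the simple-type identities to propagate from $n$ to $n+1$.

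The main obstacle is the $b^{+}=1$ issue for the auxiliary factors $E(1)$ and $S^{2}\times T^{2}$: their polynomial invariants are chamber-dependent, and one must verify that the chamber appearing in Proposition~\ref{fiber-sum-universal-formula} (metrics with a long neck along $f$) is the one for which $D_{3,4}$ has invertible constant term and for which $D_{3,2}$ is given by the single-step factor $G$. A secondary bookkeeping point is to confirm that the three-twist sum on the left-hand side is compatible with the shift $w\mapsto w+jf$ used implicitly in Proposition~\ref{E(3)-inv-w.fneq0}, so that the iteration $G^{n-2}\mapsto G^{n-1}$ really does multiply and does not incur additional $j$-dependent phases beyond those encoded in $G(\Gamma,\Lambda,w\cdot f)$.
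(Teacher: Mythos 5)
Your strategy is the paper's strategy: reduce $(w,\Gamma,\Lambda)$ using the diffeomorphism group and Lemma \ref{SO-k-inv}, then induct on $n$ via Proposition \ref{fiber-sum-universal-formula} applied to the fiber-sum decomposition, exactly as in Proposition \ref{E(3)-inv-w.fneq0}. The only substantive difference is your choice of the four manifolds: you take $X_1=E(n)$, $X_2=E(1)$, $X_3=X_4=S^2\times T^2$, which produces the relation $D_{E(n+1)}\cdot D_{S^2\times T^2}=D_{E(n)}\cdot D_{E(1)}$ and therefore forces you to re-engage with the $b^+=1$ chamber discussion (for $E(1)$ and $S^2\times T^2$, in the long-neck-along-$f$ chamber) at every inductive step — the issue you correctly identify as the main obstacle. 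The paper instead takes $X_1=E(n-2)$, $X_2=E(2)$, $X_3=X_4=E(1)$, giving $D_{E(n)}\cdot D_{E(2)}=D_{E(n-1)}\cdot D_{E(3)}$ for $n\geq 4$: all four fiber sums are closed manifolds with $b^+\geq 2$, the right-hand side is known from Theorems \ref{K3-inv} and \ref{E(3)-inv} and the inductive hypothesis, and $D_{E(2)}$ is invertible since its constant term is $\pm 1$; so the chamber issue is confronted only once, in the $n=3$ base case. Your route is legitimate (it is how the paper itself handles $n=2\to 3$), just slightly heavier. Two small corrections: the sum $\sum_{j}\rD_{E(n),w+jf}(\e^{\Gamma_{(2)}+\Lambda_{(3)}})$ equals $\widehat\rD_{E(n),w}(\e^{\Gamma_{(2)}+\Lambda_{(3)}})$ with no extra factor of $3$ (by the shift identity \eqref{simple-type-E(n)}, the three twists are the three terms $\rD_{E(n),w}((a_2/3)^j\e^{\cdots})$), though your spurious $3$ cancels in the end; and Lemma \ref{orbits-line-bundle-E(n)} concerns classes in $\langle f\rangle^{\perp}$, so for $w\cdot f\nequiv 0$ one must argue "as in" that lemma (as the paper does) rather than cite it directly.
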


\begin{proof}
	The proof of this proposition is similar to that of Proposition \ref{E(3)-inv-w.fneq0}. Applying Proposition \ref{fiber-sum-universal-formula} for $X_1=E(n-2)$, $X_2=E(2)$ and
	$X_3=X_4=E(1)$ gives us enough relations to verify the proposition by induction.
\end{proof}

In the blown up elliptic surface $E(n)\#\cp$, there is an embedded sphere with self-intersection $-(n+1)$, given by tubing a section of the elliptic fibration and the exceptional sphere in $\cp$. Therefore, there is a copy of the Gompf nucleus $G(n+1)$ in $G(n)\#\cp$. The homology class $E+f$ can be realized by an embedded surface $\overline E$ in $M_n:=G(n)\#\cp\backslash G(n+1)$. The surface $\overline E$ determines a generator of $H_2(M_n)$. Similarly, the nucleus $G(n+2)$ can be embedded in $G(n)\#2\cp$. Therefore, there are copies of $G(4)$ in the 4-manifolds $E(4)$, $E(3)\#\cp$ and $E(2)\#2\cp$. Let $Z_0\subset E(4)$, $Z_1\subset E(3)\#\cp$ and $Z_2\subset E(2)\#2\cp$ be  the complements of $G(4)$ in these manifolds. Then the boundary of $Z_i$ is diffeomorphic to $\Sigma(2,3,23)$. It is clear from the inductive construction of $E(n)$ that there is an embedding of $Z_i$ in $E(n-i)\#i\cp$ for $n \geq 4$. In fact, if $W(n)$ is the fiber sum $E(n-4)\#_f G(4)$, then $E(n-i)\#i\cp$ is diffeomorphic to $Z_i\#_{\Sigma(2,3,23)}W(n)$.

The 4-manifold $Z_i$ gives rise to elements of $\I_*(\Sigma(2,3,23))$ as it is explained in Proposition \ref{gluing-23}. Suppose $V_0\subseteq\I_*(\Sigma(2,3,23))$ is the vector space generated by the element $\rD_{Z_0,v_0}(1)$ where $v_0$ is a 2-cycle in $Z_0$ with $v_0^2\equiv 0 \mod 3$. Similarly, define $V_1$ to be the vector space generated by the three elements $\rD_{Z_1,w}(1)$ where $w$ is one of the following elements which satisfy $w^2\equiv 1$ mod 3:
\begin{equation*}
	v_1:=\overline E+\tau_1-g_1 \hspace{1cm}v_2:=-\overline E+\tau_1-g_1\hspace{1cm}v_3:=\tau_1
\end{equation*}
Finally, let $V_2$ be the subspace of $\I_*(\Sigma(2,3,23))$ which is generated by the elements of the form $\rD_{Z_2,w}(1)$ where $w^2\equiv 2 \mod 3$.

\begin{prop} \label{subspace-HF4}
	The space $V_i$ is a subspace of $\I_4(\Sigma(2,3,23))$. Furthermore, the dimension of $V_i$ is at least $\frac{(i+2)(i+1)}{2}$.
\end{prop}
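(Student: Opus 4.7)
The plan is to prove the two assertions -- degree placement and the dimension bound -- separately.

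First I would establish the degree placement by applying the relative-degree formula of Proposition~\ref{gluing-23}. The decomposition $E(n-i)\#i\cp = Z_i\cup_{\Sigma(2,3,23)} G(4)$ together with the standard values $\chi(E(m))=12m$, $\sigma(E(m))=-8m$, $\chi(G(4))=3$, $\sigma(G(4))=0$ gives $\chi(Z_i)+\sigma(Z_i) = 13,\,9,\,5$ for $i=0,1,2$. Combined with $w^2\equiv i\bmod 3$ and $\deg(1)=0$, the congruence $0\equiv -4w^2-4(\chi+\sigma)+4\pmod{12}$ holds in each case, placing every $\rD_{Z_i,w}(1)$ in $\I_4(\Sigma(2,3,23))$.

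For the dimension bound I would pair elements of $V_i$ with functionals $\rD^{W(n),w_2}(z_2)$ supplied by Proposition~\ref{gluing-23}: each pairing equals the closed invariant $\rD_{E(n-i)\#i\cp,\,w\cup w_2}(z_2)$, which can be evaluated through the blow-up formula (Theorem~\ref{blowup}) and Proposition~\ref{w.fneq0}. Separating elements of $V_i$ then reduces to exhibiting linearly independent power series in the blow-up variables.

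Concretely, for $V_0$, taking $v_0=\emptyset$ and pairing with $w_2=\sigma\subset G(4)$, $z_2=1$ yields $\widehat\rD_{E(4),\sigma}(1)=G(0,0,1)^2=(\hbar_1+\hbar_2)^2=1\neq 0$, so $\dim V_0\geq 1$. For $V_1$, the generators $v_1,v_2,v_3$ have exceptional-sphere coefficients $+1,-1,0$ on $E\subset Z_1$, hence by Theorem~\ref{blowup} their pairings with $\rD^{W(n),w_2}(\e^{E_{(2)}+E_{(3)}}z)$ factor through $s(t_2,t_3)$, $s(t_2,-t_3)$, $b(t_2,t_3)$ respectively, times a $j$-independent common factor coming from $E(n-1)$ (the $W_j$ differ only by multiples of $f$ and $g_1$, which pair trivially with the fiber, so Proposition~\ref{w.fneq0} returns the same non-zero value for all three). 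Linear independence of $\{b,s(\cdot,\cdot),s(\cdot,-\cdot)\}$ then gives $\dim V_1\geq 3$. For $V_2$, consider the nine 2-cycles $w_{(a_1,a_2)} = a_1\overline{E}_1 + a_2\overline{E}_2 + u_{a_1,a_2}$ with $(a_1,a_2)\in\{-1,0,1\}^2$, where $u_{a_1,a_2}\subset E(2)$ is chosen via Lemma~\ref{orbits-line-bundle-E(2)} so that $w_{(a_1,a_2)}^2\equiv 2\bmod 3$; a double application of Theorem~\ref{blowup} expresses the pairings as $f_{a_1}(t^{(1)})\cdot f_{a_2}(t^{(2)})$ times a common factor, and the six combinations $(0,0),(1,0),(0,1),(1,1),(1,-1),(-1,-1)$ produce six linearly independent products in $\C[\![t_2^{(1)},t_3^{(1)},t_2^{(2)},t_3^{(2)}]\!]$, giving $\dim V_2\geq 6$.

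The hard part will be verifying that the ``common factor'' really is independent of the distinguishing choice of $j$ or $(a_1,a_2)$, so that the blow-up variable dependence decouples cleanly. This should follow from the identities \eqref{w+Nw'} and \eqref{wto-w} together with the observation, built into Proposition~\ref{w.fneq0}, that $E(n)$-invariants depend on the 2-cycle only through its intersection number with the fiber modulo~$3$.
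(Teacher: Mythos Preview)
Your strategy is the same as the paper's: pair the generators of $V_i$ against functionals $\rD^{X_2,w_0}(z_2)$ supplied by Proposition~\ref{gluing-23}, evaluate the resulting closed invariants via the blowup formula together with the $E(3)$ (or $E(2)$) computations, and read off linear independence from the blow-up factors. Your degree computation is correct and more explicit than the paper's one-line appeal to Proposition~\ref{gluing-23}.

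There is, however, a concrete gap in the execution for $V_1$ and $V_2$. You propose $z_2=\e^{E_{(2)}+E_{(3)}}z$, but Proposition~\ref{gluing-23} requires $z_2\in\A(X_2)^{\otimes 2}$, where $X_2$ is the piece containing $G(4)$. The exceptional class $E$ does not lie in $H_2(G(4))$: that group is spanned by the $(-4)$-sphere $\sigma$ and the fiber $f$, and an intersection-form check ($E\cdot f=0$, $E\cdot E=-1$) rules $E$ out. So your proposed functionals are not of the allowed form, and you cannot conclude from a linear relation in $V_1$ that the corresponding closed invariants vanish. The paper's remedy is to take $z_2=\e^{\sigma_{(2)}+\sigma_{(3)}}$ with $\sigma\in G(4)$. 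Since $[\sigma]=[\sigma_3]+[E]$ in $E(3)\#\cp$, the closed invariant factors (via Theorem~\ref{blowup}) as an $E(3)$-piece evaluated at $\Gamma=\Lambda=\sigma_3$ and the cycle $(v_l+w_0)|_{E(3)}$, times a blow-up factor governed by $(v_l+w_0)\cdot E$; your common-factor argument then applies to the first piece (the $E(3)$-parts of the $v_l$ do differ only by $\pm f,\,g_1$), and the second piece gives the three functions $\cosh(\sqrt3 t_2)+\zeta^{\mp l}\e^{\pm\bi\sqrt3 t_3}$, which are linearly independent. For $V_2$ the same correction applies with $[\sigma]=[\sigma_2]+[E_1]+[E_2]$, but note both exceptional contributions are evaluated at the \emph{same} $(t_2,t_3)$ --- so the six distinguishing functions are products $f_{a_1}(t_2,t_3)f_{a_2}(t_2,t_3)$ in two variables, not the four-variable products you wrote; their linear independence must be checked directly rather than via the tensor-product argument.
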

\begin{proof}
	The first part of the proposition is an immediate consequence of Proposition \ref{gluing-23}.
	In order to show that $\dim(V_0)=1$, let $\rD_{Z_0,v_0}(1)=0$. By Proposition \ref{gluing-23}, $\rD_{E(4),v_0+w_0}(z)$ vanishes for a 2-cycle
	$w_0$ in $G(4)$ and $z\in \A(G(4))^{\otimes 2}$. If
	$w_0$ is chosen such that $w_0 \cdot f\nequiv 0$ mod 3, then Proposition \ref{w.fneq0} asserts that there is $z$ such that $\rD_{E(4),v_0+w_0}(z)\neq 0$ which is a contradiction.
	
	Next, we consider the case that $i=1$. By Proposition \ref{gluing-23}, a linear relation among the vectors $\rD_{Z_1,v_l}(1)$, for $1\leq l \leq 3$, implies
	that there are constant numbers $c_l$ such that:
	\begin{equation} \label{linear-relation}
		\sum_{1\leq l \leq 3} c_l \widehat{\rD}_{E(3)\#\cp,v_l+w_0}(\e^{\sigma_{(2)}+\sigma_{(3)}})=0.
	\end{equation}
	Here $w_0$ is a 2-cycle in $G(4)$ and $\sigma$ is the embedded ($-4$)-sphere in $G(4)$. We can use the blow up formula and the results of the previous subsection to evaluate the above power series and to
	conclude that:
	\begin{equation*}
		\sum_{1\leq l \leq 3} c_l (\cosh(\sqrt{3}t_2)+
		\zeta^{-(v_l+w_0)\cdot E}\e^{\bi \sqrt 3 t_3}+\zeta^{(v_l+w_0)\cdot E}\e^{-\bi \sqrt 3 t_3})=0
	\end{equation*}
	Since $v_l\cdot E\equiv -l$ mod 3, the constants $c_l$ are equal to zero.
	Therefore, $\dim (V_1)=3$.  Similar proofs can be applied to the case that $i=2$.
\end{proof}

\begin{theorem} \label{E(n)-inv}
	Suppose $w$ is a 2-cycle in $E(n)$ and $\Gamma,\Lambda \in H_2(E(n))$. Then:
	\begin{equation}\label{invariants-of-E(n)}
		\widehat \rD_{E(n),w}({\rm e}^{\Gamma_{(2)}+\Lambda_{(3)}})=
		{\rm e}^{\frac{Q(\Gamma)}{2}-Q(\Lambda)}G(\Gamma,\Lambda,w\cdot f)^{n-2}
	\end{equation}
\end{theorem}
\begin{proof}
	Since the dimension of $\I_4(\Sigma(2,3,23))$ is equal to 9,
	Proposition \ref{subspace-HF4} implies that there are vectors $\phi_i\in V_i$ such that:
	\begin{equation} \label{relation}
		\phi_0+\phi_1+\phi_2=0
	\end{equation}	
	We use this identity to verify \eqref{invariants-of-E(n)} inductively for the remaining invariants of $E(n)$.
	Firstly, consider the case that $\phi_0\neq 0$.
	Therefore, we can assume that $\phi_0=\rD_{Z_0,v_0}(1)$.
	There are also 2-cycles $w_1$, $\dots$, $w_k$ in $Z_2$ and constant numbers
	$c_1$, $c_2$, $c_3$, $c_1'$, $\dots$, $c_k'$ such that:
	\begin{equation}\label{phi1-phi2}
		\phi_1=\sum_{l=1}^3 c_l\rD_{Z_1,v_l}(1) \hspace{1cm}\phi_2=\sum_{j=1}^k c_j'\rD_{Z_2,w_j}(1)
	\end{equation}
	Suppose $\Gamma,\Lambda \in H_2(W(n))$ and $w$ is a 2-cycle in $W(n)$.
	Then Proposition \ref{gluing-23} implies that the $\U(3)$-series
	$\widehat{\rD}_{E(n),v_0+w}(\e^{\Gamma_{(2)}+\Lambda_{(3)}})$ is equal to:
	\begin{equation*}
		-\sum_l c_l \widehat{\rD}_{E(n-1)\#\cp,v_l+w}(\e^{\Gamma_{(2)}+\Lambda_{(3)}})-
		\sum_j c_j' \widehat{\rD}_{E(n-2)\#2\cp,w_j+w}(\e^{\Gamma_{(2)}+\Lambda_{(3)}})=\hspace{1cm}
	\end{equation*}
	\begin{equation} \label{ind-E(n)-inv}
		\hspace{5cm}= {\rm e}^{\frac{Q(\Gamma)}{2}-
		Q(\Lambda)}G(\Gamma,\Lambda,w\cdot f)^{n-4}	A(\Gamma,\Lambda)
	\end{equation}
	where the term $A(\Gamma,\Lambda)$ is a linear combination of the following six expressions:
	\begin{equation} \label{6-terms}
		\cosh(\sqrt{3}f\cdot \Gamma)^2\hspace{.5cm}
		\cosh(\sqrt{3}f\cdot \Gamma)\zeta^{\mp w\cdot f}\e^{\pm \bi \sqrt 3 f \cdot \Lambda}
		\hspace{.5cm}\zeta^{\mp 2 w\cdot f}\e^{\pm 2\bi \sqrt 3 f \cdot \Lambda}\hspace{.5cm} 1
	\end{equation}		
	and the coefficients of this this linear combination do not depend on $w$.
	To derive \eqref{ind-E(n)-inv} we use the fact that $\Gamma \cdot E=-\Gamma \cdot f$ and
	$\Lambda \cdot E=-\Lambda \cdot f$. In the case $w \cdot f\nequiv 0$ mod 3,
	$A(\Gamma,\Lambda)$ is equal to $G(\Gamma,\Lambda, w\cdot f)^2$
	using Proposition \ref{w.fneq0}. This identity holds also in the case that $w\cdot f \equiv 0$ mod 3,
	because $A(\Gamma,\Lambda)$ is a linear combination of the terms in \eqref{6-terms}
	with coefficients which are independent of $w$. Therefore, for a general 2-cycle $w$ in $W(n)$
	and $\Gamma,\Lambda \in H_2(W(n))$:
	\begin{equation*}
		\widehat{\rD}_{E(n),v_0+w}(\e^{\Gamma_2+\Lambda_3})=
		{\rm e}^{\frac{Q(\Gamma)}{2}-Q(\Lambda)}G(\Gamma,\Lambda,w\cdot f)^{n-2}
	\end{equation*}
	Then Proposition \ref{non-nuc-inv} implies that $\widehat{\rD}_{E(n),w}(\e^{\Gamma_2+\Lambda_3})$
	is equal to ${\rm e}^{\frac{Q(\Gamma)}{2}-Q(\Lambda)}G(\Gamma,\Lambda,w)^{n-2}$
	for any 2-cycle $w$ in $E(n)$ and $\Gamma,\Lambda \in H_2(E(n))$.
	
	Next, assume that $\phi_0=0$. We assume that the non-zero vectors
	$\phi_1$ and $\phi_2$ are given as in \eqref{phi1-phi2}.
	Fix an arbitrary 2-cycle $w$ in $W(n+1)$ and homology classes
	$\Gamma, \Lambda \in H_2(W(n+1))$. Another application of Proposition \ref{gluing-23} shows that:
	\begin{equation} \label{lin-comb-1}
		\sum _{1\leq l \leq 3} c_l \widehat \rD_{E(n)\#\cp,v_l+w}(\e^{\Gamma_{(2)}+\Lambda_{(3)}})
	\end{equation}
	is equal to:
	\[
	  \sum_j c_j' \widehat{\rD}_{E(n-1)\#2\cp,w_j+w}(\e^{\Gamma_{(2)}+\Lambda_{(3)}}).
	\]
	By our inductive calculation of the invariants of $E(n)$, the latter expression is equal to:
	\[
	  {\rm e}^{\frac{Q(\Gamma)}{2}-Q(\Lambda)}
	  G(\Gamma,\Lambda,w\cdot f)^{n-3}
	  B(\Gamma,\Lambda).
	\]
	Here $B(\Gamma,\Lambda)$ is a linear combination of the following six expressions:
	\[
	  \cosh(\sqrt{3}f\cdot \Gamma)^2\hspace{.7cm}\cosh(\sqrt{3}f\cdot \Gamma
	  )\zeta^{\mp w\cdot f}\e^{\pm \bi \sqrt 3 f \cdot \Lambda}\hspace{.7cm}
	  \zeta^{\mp 2 w\cdot f}\e^{\pm 2\bi \sqrt 3 f \cdot \Lambda}\hspace{.7cm} 1
	\]
	As in the previous case, the coefficients of the above linear combination is determined by $c_j'$
	and $w_j$. In particular, they do not depend on $w$. Therefore, we can determine this
	coefficient by considering the case that $w\cdot f \nequiv 0$ mod 3 for which we already
	computed the invariants. Therefore, $B(\Gamma,\Lambda)$ is equal to:
	\[
	  G(\Gamma,\Lambda,w\cdot f)
	  \sum_{1\leq l \leq 3} \frac{c_l}{3}(\cosh(\sqrt{3}E\cdot \Gamma)+
	  \zeta^{-w\cdot E}\e^{\bi \sqrt 3 E \cdot \Lambda}+\zeta^{w\cdot E}\e^{-\bi \sqrt 3 E \cdot \Lambda})
	\]
	For an arbitrary 2-cycle $w\subset E(n)\#\cp$ and $\Gamma,\Lambda\in E(n)\#\cp$,
	let $\widehat {\rm P}_{w}(\Gamma,\Lambda)$ be the power series given by subtracting
	$\widehat \rD_{w}(\e^{\Gamma_{(2)}+\Lambda_{(3)}})$ from the following power series:
	\[
	  {\rm e}^{\frac{Q(\Gamma)}{2}-Q(\Lambda)}
	  G(\Gamma,\Lambda,w\cdot f)^{n-2}
	  \frac{1}{3}(\cosh(\sqrt{3}E\cdot \Gamma)+\zeta^{w\cdot E}\e^{\bi \sqrt 3 E \cdot \Lambda}+
	  \zeta^{-w\cdot E}\e^{-\bi \sqrt 3 E \cdot \Lambda}).
	\]
	Then we can rephrase our conclusion in the form of the following identity:
	\begin{equation} \label{lin-comb}
		\sum _{1\leq l \leq 3} c_l \widehat {\rm P}_{v_l+w}(\Gamma,\Lambda)=0
	\end{equation}
	where $w$ is a 2-cycle in $W(n+1)$ and $\Gamma, \Lambda \in H_2(W(n+1))$.
	Suppose $p_w^{i,j}$ is the polynomial on $H_2(E(n)\#\cp)\oplus H_2(E(n)\#\cp)$ of bi-degree
	$(i,j)$, determined by the coefficient of $t_2^it_3^j$ in $\widehat {\rm P}_{w}$.
	Then $p_w^{i,j}$ can be evaluated at:
	\begin{equation*}
		(\Gamma_1,\dots,\Gamma_i;\Lambda_1,\dots,\Lambda_j)
	\end{equation*}
	for $\Gamma_k ,\,\Lambda_k \in H_2(E(n)\#\cp)$. By induction on $i+j$, we shall show that
	$p_w^{i,j}$ vanishes for all possible choices of $w$. By considering the constant terms of
	Equation \eqref{lin-comb} for empty $w$, we have:
	\begin{equation*}
		c_1p_{v_1}^{0,0}+c_2p_{v_2}^{0,0}+c_3p_{v_3}^{0,0}=0
	\end{equation*}
	The blowup formula asserts that $p_{v_1}^{0,0}=p_{v_2}^{0,0}=0$. Therefore, if $c_3\neq 0$,
	then $p_{v_3}^{0,0}=0$. Thus Proposition \ref{non-nuc-inv} and the blowup formula show that
	$p_{w}^{0,0}=0$ for all 2-cycles $w$ in $E(n)\#\cp$. If $c_3=0$, then by \eqref{lin-comb}:
	\begin{equation*}
		c_1p_{v_1}^{2,0}(\sigma+E,\sigma+E)+c_2p_{v_2}^{2,0}(\sigma+E,\sigma+E)=0
	\end{equation*}
	and
	\begin{equation*}
		c_1p_{v_1}^{0,1}(\sigma+E)+c_2p_{v_2}^{0,1}(\sigma+E)=0.
	\end{equation*}
	The blowup formula asserts that:
	\[
	  c_1p_{v_1-E}^{0,0}+c_2p_{v_2+E}^{0,0}=0 \hspace{1cm}c_1p_{v_1-E}^{0,0}-c_2p_{v_2+E}^{0,0}=0
	\]
	Consequently, at least one of the numbers $p_{v_1-E}^{0,0}$ and $p_{v_2+E}^{0,0}$ is zero
	and we can derive the same conclusion as in the previous case.
	
	Now assume that the polynomial $p_w^{i,j}$ vanishes for $i+j\leq k$ and any 2-cycle in $E(n)\#\cp$.
	Fix $(i,j)$, $\widetilde \Gamma_1$, $\dots$, $\widetilde \Gamma_i$, $\widetilde \Lambda_1$,
	$\dots$ and $\widetilde \Lambda_j$ such that $i+j=k+1$ and $\widetilde  \Gamma_l$,
	$\widetilde \Lambda_{l'}$ are either equal to $\sigma+E$ or $f$. Then apply \eqref{lin-comb}
	to conclude that:
	\begin{equation} \label{lin-comb-2}
		\sum _{1\leq l \leq 3} c_l p_{v_l}^{i,j}(\widetilde \Gamma_1,\dots,\widetilde \Gamma_i;\widetilde 		\Lambda_1,\dots,\widetilde \Lambda_j)=0
	\end{equation}
	Blowup formula and the induction hypothesis imply that every term of the form $\sigma+E$
	can be replaced with $\sigma$. Thus if $c_3\neq 0$, then:
	\begin{equation} \label{vanishing}
		p_{v_1}^{i,j}(\Gamma_1,\dots,\Gamma_i;\Lambda_1,\dots,\Lambda_j)=0
	\end{equation}
	for $\Gamma_k,\Lambda_k \in H_2(G(n))$. Therefore, Proposition \ref{non-nuc-inv} and
	the blowup formula allows us to complete the verification of the induction step. If $c_3=0$,
	we can use the analogue of \eqref{lin-comb-2} for:
	\[
	  (\sigma+E,\sigma+E,\widetilde \Gamma_1,\dots,\widetilde \Gamma_i;
	  \widetilde \Lambda_1,\dots,\widetilde \Lambda_j)\hspace{1cm}
	  (\widetilde \Gamma_1,\dots,\widetilde \Gamma_i;\sigma+E,
	  \widetilde \Lambda_1,\dots,\widetilde \Lambda_j)
	\]
	and argue as in the basis of the induction. This completes the proof of the theorem.
\end{proof}

\subsection{Gluing 4-manifolds along Surfaces of Self-intersection Zero}\label{gluing-self-intersection-0}

In this subsection, we use the calculation of $\U(3)$-polynomial invariants for elliptic surfaces to study invariants of another family of closed 4-manifolds:

\begin{definition} \label{permissible-type}
	Suppose $X$ is a smooth 4-manifold, $\Sigma$ is an oriented surface of genus $g\geq 1$ embedded in $X$, $w$
	is a 2-cycle in $X$, and $\mathcal H$ is a subspace of $H_2(X)$. Then $(X,\Sigma,w)$ is {\it permissible
	with respect to the subspace $\mathcal H$}, if the following properties hold:
	\begin{itemize}
		\item[(i)] $b^1(X)=0$, $b^+(X)>1$;
		\item [(ii)]$\Sigma\cdot \Sigma=0$;
		\item[(iii)] $w \cdot \Sigma \nequiv 0$ mod 3;
		\item[(iv)] let $z \in \A(\mathcal H)^{\otimes 2}$ and $u$ be the 2-cycle  $w+l\Sigma$ for $l =0$, $1$ or $2$. Then:
				\begin{equation}\label{simple-type-u}
					\rD_{X,u}((\frac{a_2}{3})^3z)=\rD_{X,u}(z) \hspace{1cm} \rD_{X,u}(a_3z)=0.
				\end{equation}	
				Moreover, there are cohomology classes $K_i \in H^2(X,\Z)$ such that $K_i$ is an integral lift of $w_2(TX)$,
				$|K_i \cdot \Sigma|\leq 2g-2$, and for $\Gamma$, $\Lambda \in \mathcal H$, the power series
				${\widehat{\rD}_{X,u}}(\e^{\Gamma_{(2)}+\Lambda_{(3)}})$ is equal to:
			\begin{equation}\label{simpletype}
				\e^{\frac{Q(\Gamma)}{2}-Q(\Lambda)} \sum_{i,j} c_{ij}\zeta^{-l\Sigma \cdot (\frac{K_i-K_j}{2})}
				\e^{\frac{{\sqrt 3}}{2}(K_i+K_j)\cdot \Gamma+\frac{{\sqrt 3}}{2}\bi(K_i-K_j)\cdot \Lambda}
			\end{equation}			
	\end{itemize}
	The cohomology class $K_i$ is called a {\it basic class} of the triple $(X,\Sigma,w)$ and $c_{i,j}$ is called the coefficient associated to the
	pair $(K_i,K_j)$. In the case that $\mathcal H=H_2(X)$, we say $(X,\Sigma,w)$ is permissible.
\end{definition}
 \begin{example} \label{E(2)-simple-triple}
	The results of Subsection \ref{E(2)} shows that the triple $(E(2), f,w)$ forms a permissible triple
	where $f$ is a fiber in an elliptic fibration of $E(2)$ and $w$ is a 2-cycle that $w\cdot f \nequiv 0$ mod 3.
	In this case, the only basic class in \eqref{simpletype} is the zero cohomology class.
	More generally, we can embed a surface of genus $g$ in $E(2)$ whose self-intersection is equal to $2g-2$.
	For example, we can construct such an embedded surface by considering the union of $g$ fibers and a section of the fibration, and then smoothing out
	the intersection points. Let $\Sigma$ be the proper transform of this surface after blowing up $E(2)$
	at $2g-2$ points on the surface. Let also $w$ be a 2-cycle in $E(2)\# (2g-2)\cp$ such that $w\cdot \Sigma \nequiv 0$ mod 3. Then the blow up formula implies that
	$(E(2)\# (2g-2)\cp,\, \Sigma,\, w)$ is a permissible triple. If $E_1$, $\dots$, $E_{2g-2}$ are the exceptional classes,
	then a basic class of this triple has the form $\pm E_1 \pm  \dots \pm E_{2g-2}$.
\end{example}

\begin{example}	
	One can further generalize the previous example by considering a surface $\Sigma$ with self-intersection 0, embedded in the 4-manifold $E(n)\#k \cp$.
	Let $w$ be a 2-cycle in $X$ such that $w \cdot \Sigma \nequiv 0$ mod 3. Then the
	blowup formula and Theorem \ref{E(n)-inv} can be utilized to verify most requirements of Definition \ref{permissible-type} for permissibility of $(X,\Sigma,w)$.
	The only missing part is to verify the inequality
	$|K \cdot \Sigma|\leq 2g-2$ for basic classes $K$ of $X$. To check this inequality, note that our basic classes for $(X,\Sigma,w)$
	are the same as $\U(2)$-basic classes for $X$ \cite{KM:Rec,FS:str-thm,Li:elliptic-surf,KM:str-thm,fintushel1996blowup}. Therefore, the desired inequality is
	a consequence of the Adjunction inequality in \cite{KM:str-thm}. In fact, we expect that any tripe $(X,w,\Sigma)$, satisfying properties (i), (ii) and (iii), automatically meets the requirements in (iv), as long
	as $X$ has simple type in the sense of \cite{KM:str-thm}. However, pursuing this direction is beyond the scope of this paper.
 \end{example}

\begin{definition}\label{Cg-Ng}
	For $g\geq 1$, the set of all integer pairs $(a,b)$ which satisfy
	the following two properties is denoted by $\mathcal C_g$:
	\begin{itemize}
		\item[(i)] $a$ and $b$ have the same parity;
		\item[(ii)] $|a|+|b|\leq 2g-2$.
	\end{itemize}
	We will write $N_g$ for the number of the elements of $\mathcal C_g$ which is equal to $2g(g-1)+1$.
\end{definition}

The following proposition can be verified using the permissible triples provided by Example \ref{E(2)-simple-triple}:
\begin{prop} \label{a-b-basic-class}
	For any $(a,b)\in \mathcal C_g$, there are a permissible triple $(X,w,\Sigma)$ and basic classes
	$K_i$ and $K_j$ of the triple such that $\Sigma$ has genus $g$ and:
	\begin{equation}
		a=\frac{(K_i+K_j)}{2}\cdot \Sigma\hspace{1cm} b=\frac{(K_i-K_j)}{2}\cdot \Sigma.
	\end{equation}	
\end{prop}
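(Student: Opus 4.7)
The plan is to realize the required triple explicitly by starting from the second permissibility example preceding the proposition. Set $X := E(2) \# (2g-2)\cp$, and take $\Sigma \subset X$ to be the proper transform of a smooth embedded surface $\Sigma_0 \subset E(2)$ of genus $g$ with self-intersection $2g-2$, constructed by smoothing the union of $g$ elliptic fibers and one section. If $E_1,\dots,E_{2g-2}$ denote the exceptional classes arising from the $2g-2$ blowups performed at smooth points of $\Sigma_0$, then $\Sigma = \pi^*\Sigma_0 - E_1 - \dots - E_{2g-2}$ has genus $g$, self-intersection zero, and $\Sigma \cdot E_l = 1$ for every $l$. Choose $w := f$, an elliptic fiber, which gives $w \cdot \Sigma = 1 \nequiv 0 \mod 3$. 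As explained in the example preceding the proposition, $(X,\Sigma,w)$ is then permissible.

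Next I would identify the basic class pairs $(K_i,K_j)$ of $(X,\Sigma,w)$. By Theorem~\ref{K3-inv}, the triple $(E(2),f,w)$ has the unique pair $(K_i,K_j)=(0,0)$ with coefficient $1$. Applying the blowup formula of Theorem~\ref{blowup} iteratively, each blowup multiplies the invariant by $\frac{1}{3}\e^{-t_2^2/2+t_3^2}[\cosh(\sqrt{3}\,t_2)+2\cos(\sqrt{3}\,t_3)]$; expanding $\cosh$ and $\cos$ into exponentials and matching the result against \eqref{simpletype} shows that the pairs appearing are exactly those of the form
\begin{equation*}
K_i = \sum_{l=1}^{2g-2} \epsilon_l\, E_l, \qquad K_j = \sum_{l=1}^{2g-2} \epsilon_l'\, E_l,
\end{equation*}
with the signs $\epsilon_l,\epsilon_l' \in \{\pm 1\}$ chosen independently for each $l$, and each such pair has nonzero coefficient. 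In particular $K_i\cdot \Sigma = \sum_l \epsilon_l$ and $K_j\cdot \Sigma = \sum_l \epsilon_l'$.

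The final step is a combinatorial observation. Given $(a,b)$ with $|a|+|b|\le 2g-2$ and $a\equiv b \pmod 2$, both $a+b$ and $a-b$ are even (hence of the same parity as $2g-2$), and satisfy $|a\pm b|\le |a|+|b|\le 2g-2$. So one can pick signs $\epsilon_l,\epsilon_l' \in \{\pm 1\}$ with $\sum_l \epsilon_l = a+b$ and $\sum_l \epsilon_l' = a-b$. Using $\Sigma\cdot E_l = 1$, the corresponding basic classes satisfy $\frac{1}{2}(K_i+K_j)\cdot \Sigma = a$ and $\frac{1}{2}(K_i-K_j)\cdot \Sigma = b$, which is what is required.

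The only nontrivial input is the characterization of basic class pairs after $2g-2$ blowups, which is bookkeeping with the already-proved blowup formula rather than new gauge theory; the rest is elementary. I expect the main care needed is in verifying that each sign pattern indeed shows up with a \emph{nonzero} coefficient $c_{ij}$, so that $K_i$ and $K_j$ actually qualify as basic classes of the triple and do not cancel out in the sum \eqref{simpletype}.
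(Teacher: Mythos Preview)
Your proposal is correct and follows exactly the approach the paper indicates, namely invoking the permissible triple $(E(2)\#(2g-2)\cp,\Sigma,w)$ of Example~\ref{E(2)-simple-triple}; you have simply fleshed out the combinatorics that the paper leaves implicit. Your residual worry about nonzero $c_{ij}$ is easily dispatched: expanding $b(t_2,t_3)=\tfrac{1}{6}\e^{-t_2^2/2+t_3^2}\bigl[\e^{\sqrt3 t_2}+\e^{-\sqrt3 t_2}+2\e^{\bi\sqrt3 t_3}+2\e^{-\bi\sqrt3 t_3}\bigr]$ shows each of the four choices $(\epsilon_l,\epsilon_l')$ contributes a strictly positive factor, and distinct sign patterns yield distinct $(K_i,K_j)$ since the $E_l$ are linearly independent, so no cancellation occurs.
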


For a permissible triple $(X,\Sigma,w)$, define:
\begin{equation*}
	{\rD_{X,w,\Sigma}}(\e^{\Gamma_{(2)}+\Lambda_{(3)}}):=\sum_{0\leq l \leq 2}\rD_{X,w+l\Sigma}(\e^{\Gamma_{(2)}+\Lambda_{(3)}})
\end{equation*}
More generally, ${\rD_{X,w,\Sigma}}(z)$ is defined to be the sum ${\rD_{X,w+l\Sigma}}(z)$ for different values of $0 \leq l\leq 2$. Dimension fomrula shows that if all terms in $z$ have a fixed degree, then only one of the 2-cycles $w+l \Sigma$ is involved in the definition of ${\rD_{X,w,\Sigma}}(z)$.
 \begin{lemma} \label{permissible-inv}
	Suppose $(X,w,\Sigma)$ is a permissible triple as above and $d_w\in \Z/3\Z$ is defined to be $b^++1-w\cdot w$. Then ${\rD_{X,w,\Sigma}}((\frac{a_2}{3})^m\e^{\Gamma_{(2)}+\Lambda_{(3)}})$ is equal to:
	\begin{equation*}
	\displaystyle {\sum c_{ij} \zeta^{l_{i,j}(d_w-m)} \e^{\zeta^{2l_{i,j}}(\frac{Q(\Gamma)}{2}+
		\frac{{\sqrt 3}}{2}\bi(K_i-K_j)\cdot \Lambda)+\zeta^{l_{i,j}}(-Q(\Lambda)+\frac{{\sqrt 3}}{2}(K_i+K_j)\cdot \Gamma)}}
	\end{equation*}
	where the inner sum is over all pairs of basic classes $(K_i,K_j)$.
	Moreover, $l_{i,j}\in \Z/3\Z$ is equal to $(w\cdot \Sigma) (\frac{K_i-K_{j}}{2})\cdot \Sigma$.
 \end{lemma}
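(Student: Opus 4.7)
The strategy is to match coefficients of $t_2^it_3^j$ on both sides of the claimed identity and reduce everything to a dimension-counting argument combined with the formula for $\widehat\rD_{X,u}$ supplied by condition (iv). The key input is that, for each fixed $(i,j,m)$, only one of the three 2-cycles $w+l\Sigma$ produces a nonzero value of $\rD_{X,w+l\Sigma}((\tfrac{a_2}{3})^m\Gamma^i\Lambda^j)$, and this nonzero value is then identified with $\widehat\rD_{X,w+l\Sigma}(\Gamma^i\Lambda^j)$ via simple type (this is analogous in spirit to Proposition~\ref{a3=0,a2/3=shift} for $E(2)$, but now carried out without any explicit cylindrical-end decomposition).

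The first step is the dimension count. Applying \eqref{dimension-fomrula} to $z=(\tfrac{a_2}{3})^m\Gamma_{(2)}^i\Lambda_{(3)}^j$ forces $4m+2i+4j\equiv-4((w+l\Sigma)^2+\chi+\sigma)\pmod{12}$. Using $\Sigma^2=0$, $b^1=0$ (so $\chi+\sigma\equiv 2+2b^+\pmod 3$), and the definition $d_w=b^++1-w\cdot w$, the mod-$3$ reduction becomes
\[
m\equiv d_w+l(w\cdot\Sigma)+i+2j\pmod 3.
\]
Since $w\cdot\Sigma\not\equiv 0\pmod 3$ by (iii), there is a unique solution $l^{*}=l^{*}(i,j,m)\in\{0,1,2\}$; for any other $l$ the invariant vanishes. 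Simple type further implies that at $u=w+l^{*}\Sigma$ only the $m$-th summand of $\widehat\rD_{X,u}=\rD_{X,u}(1+\tfrac{a_2}{3}+(\tfrac{a_2}{3})^2)$ survives the dimension constraint, so
\[
\rD_{X,w,\Sigma}((\tfrac{a_2}{3})^m\Gamma^i\Lambda^j)=\widehat\rD_{X,w+l^{*}(i,j,m)\Sigma}(\Gamma^i\Lambda^j).
\]

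Second, I insert formula (iv) and simplify the phase. Its only $l^{*}$-dependence enters through $\zeta^{-l^{*}m_{i'j'}}$ where $m_{i'j'}=\tfrac{K_{i'}-K_{j'}}{2}\cdot\Sigma$. Using $l^{*}\equiv(m-d_w-i-2j)(w\cdot\Sigma)^{-1}\pmod 3$, the definition $l_{i'j'}=(w\cdot\Sigma)\,m_{i'j'}$, and the identity $(w\cdot\Sigma)^2\equiv 1\pmod 3$, this phase simplifies to $\zeta^{l_{i'j'}(d_w-m+i+2j)}$. Reassembling the coefficients produces
\[
\rD_{X,w,\Sigma}((\tfrac{a_2}{3})^m\e^{\Gamma_{(2)}+\Lambda_{(3)}})=\sum_{i',j'}c_{i'j'}\,\zeta^{l_{i'j'}(d_w-m)}\sum_{i,j}\zeta^{l_{i'j'}(i+2j)}\,[t_2^it_3^j]\bigl(E_{i'j'}^{(0)}\bigr)\,t_2^it_3^j,
\]
where $E_{i'j'}^{(0)}$ denotes the exponential in (iv) at $l=0$.

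Finally, the inner sum is recognized as a twisted exponential: multiplying the $(t_2^it_3^j)$-coefficient of $E_{i'j'}^{(0)}$ by $\zeta^{l_{i'j'}(i+2j)}$ is exactly the effect of rescaling, inside the exponent, the linear $\Gamma$-piece by $\zeta^{l_{i'j'}}$ and $Q(\Gamma)$ by $\zeta^{2l_{i'j'}}$, together with rescaling the linear $\Lambda$-piece by $\zeta^{2l_{i'j'}}$ and $Q(\Lambda)$ by $\zeta^{l_{i'j'}}$. This yields precisely the twisted exponential displayed in the statement of the lemma. The main obstacle is the bookkeeping of this last identification; the asymmetric pattern $(\zeta^{l_{i'j'}},\zeta^{2l_{i'j'}})$ on the $\Gamma$-side versus $(\zeta^{2l_{i'j'}},\zeta^{l_{i'j'}})$ on the $\Lambda$-side is consistent thanks to the arithmetic identity $2r+s\equiv 2j\pmod 3$ whenever $r+2s=j$, which reconciles the $\zeta^{2l_{i'j'}}$-scaling of $\bi(K_{i'}-K_{j'})\cdot\Lambda$ with the $\zeta^{l_{i'j'}}$-scaling of $Q(\Lambda)$.
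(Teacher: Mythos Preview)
Your proof is correct and rests on the same ingredients as the paper's: the mod-$3$ dimension constraint, the simple-type relation $\rD_{X,u}((a_2/3)^3z)=\rD_{X,u}(z)$, and formula~\eqref{simpletype}. The only difference is organizational. The paper packages your first two steps into a single discrete Fourier inversion
\[
\rD_{X,w}\!\bigl((\tfrac{a_2}{3})^{m}\e^{\Gamma_{(2)}+\Lambda_{(3)}}\bigr)=\frac{1}{3}\sum_{k=0}^{2}\zeta^{k(d_w-m)}\,\widehat\rD_{X,w}\!\bigl(\e^{\zeta^{k}\Gamma_{(2)}+\zeta^{2k}\Lambda_{(3)}}\bigr),
\]
then sums over $l$ and substitutes~\eqref{simpletype}; the resulting double sum over $(k,l)$ collapses. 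Your route instead first selects the unique $l^{*}(i,j,m)$ and only at the end performs the rescaling $t_2\mapsto\zeta^{l_{i'j'}}t_2$, $t_3\mapsto\zeta^{2l_{i'j'}}t_3$ term by term; this is exactly the same rescaling hidden in the paper's $\widehat\rD_{X,w}(\e^{\zeta^k\Gamma_{(2)}+\zeta^{2k}\Lambda_{(3)}})$. So the two arguments are reorderings of one another, and your final ``asymmetric pattern'' check (that $\zeta^{4l_{i'j'}}=\zeta^{l_{i'j'}}$ reconciles the $Q(\Lambda)$ and $(K_{i'}-K_{j'})\cdot\Lambda$ scalings) is what the paper's ``straightforward simplification'' amounts to.
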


  \begin{proof}
	For a 4-manifold $W$ with simple type, the power series $\rD_{X,w}$ can be recovered from ${\widehat \rD}_{X,w}$ in the following way:
	\begin{equation*}
		\rD_{X,w}((\frac{a_2}{3})^{m}\e^{\Gamma_{(2)}+\Lambda_{(3)}})=
		\frac{1}{3}\sum_{0 \leq k \leq 2} \zeta^{k(d_w-m)}{\widehat \rD}_{X,w}(\e^{\zeta^{k}\Gamma_{(2)}+\zeta^{2k}\Lambda_{(3)}})
	\end{equation*}
	Therefore:
	\begin{equation*}
		\sum_{0 \leq l \leq 2} \rD_{X,w+l\Sigma}((\frac{a_2}{3})^{m}\e^{\Gamma_{(2)}+\Lambda_{(3)}})=
		\frac{1}{3}\sum_{0 \leq k,l \leq 2} \zeta^{k(d_{w+l\Sigma}-m)}{\widehat \rD}_{X,w+l\Sigma}(\e^{\zeta^{k}\Gamma_{(2)}+\zeta^{2k}\Lambda_{(3)}})
	\end{equation*}
	Then, we can use the permissibility of $(X,w,\Sigma)$ to rewrite the right hand side in terms of basic classes. A straightforward simplification gives the desired result.
\end{proof}

Suppose $(X, w,\Sigma)$ and $(X',w',\Sigma)$ are two permissible triples such that the embedded surfaces of genus $g$ are identified with each other, and this identification is lifted to the normal bundles. Suppose also $w$ and $w'$ intersect $\Sigma$ in the same number of points with the same signs. As it is explained in Subsection \ref{FFH}, we can form the triple $(X\#_\Sigma X', w\#w',\Sigma)$. There is also a subspace $\mathcal K \subseteq H_2(X)\oplus H_2(X')$ such that there is a map $\#:\mathcal K \to H_2(X\#_\Sigma X')$. The main goal of this section is to compute $\widehat \rD_{X\#_\Sigma X', w\#w'}(\e^{\Gamma_{(2)}+\Lambda_{(3)}})$, for $\Gamma,\Lambda \in \im(\#)$, in terms of the invariants of the pairs $(X, w)$ and $(X', w')$.

The basic idea to achieve this goal is to use the gluing property in \eqref{FF-gluing-thm}. Therefore, the $R_N$-module $\bIgdN$, introduced in Subsection \ref{FFH}, for $N=3$ and $d=w\cdot \Sigma$, plays a key role in  computing the invariants of $(X\#_\Sigma X', w\#w')$. In fact, we can replace ${\mathbb I}_{g,d}^3$ with a smaller module. Before giving the definition of this smaller module, we introduce some conventions. Form now on, we drop $3$ from our notation and denote this Fukaya-Floer homology module with $\bIgd$. Moreover, for a permissible triple $(X,w,\Sigma)$ the intersection $w\cdot \Sigma$ is denoted by $d$, unless otherwise is stated.

Let $\tbIgd\subset \bIgd$ be the $\C[\![t_2,t_3]\!]$-module generated by the following relative elements in $\bIgd$:
\begin{equation} \label{relative-element}
	\rD_{X^\circ,w^\circ ,\Sigma}(\e^{\Gamma^\circ_{(2)}+\Lambda^\circ_{(3)}}):=\sum_{l\in \Z/3\Z}\rD_{X^\circ,w^\circ+l\Sigma}(\e^{\Gamma^\circ_{(2)}+\Lambda^\circ_{(3)}})
\end{equation}
where $(X,w,\Sigma)$ is a permissible triple, $\Gamma$, $\Lambda \in H_2(X,\Z)$ with $\Gamma \cdot \Sigma=\Lambda \cdot \Sigma=1$. By Identity \ref{FF-gluing-thm}, the pairing of this element with
$\rD_{\Delta_g, \delta_g}(z\e^{ D_{(2)} +  D_{(3)}})$ is equal to the following element of $\C[\![t_2,t_3]\!]$:
\begin{equation}\label{pairing-rel}
	\rD_{X,w ,\Sigma}(z\e^{\Gamma_{(2)}+\Lambda_{(3)}})
\end{equation}
Suppose $\C[\![t_2,t_3]\!][x,y,z]$ is the ring of polynomials of three variables with coefficients in $\C[\![t_2,t_3]\!]$ and $z=P(a_2,\,\Sigma_{(2)},\,\Sigma_{(3)})$ for $P \in \C[\![t_2,t_3]\!][x,y,z]$. Then Lemma \ref{permissible-inv} shows that the pairing of \eqref{relative-element} and $\rD_{\Delta_g, \delta_g}(z\e^{ D_{(2)} +  D_{(3)}})$ is equal to:
\begin{equation*}
	\displaystyle {\sum_{(a,b)\in \mathcal C_g} \zeta^{d b d_w} P(u(a,b))\sum c_{i,j}\e^{\zeta^{2d b}
	(\frac{Q(\Gamma)}{2}+\frac{{\sqrt 3}}{2}\bi(K_i-K_j)\cdot \Lambda)+
    	\zeta^{d b}(-Q(\Lambda)+\frac{{\sqrt 3}}{2}(K_i+K_j)\cdot \Gamma)}}
\end{equation*}		
where the inner sum is over all pairs of basic classes $(K_i, K_j)$ such that:
\begin{equation} \label{a-b-basic-class-id}
	\frac{(K_i+K_{j})}{2}\cdot \Sigma=a \hspace{1cm}
	\frac{(K_i-K_{j})}{2}\cdot \Sigma=b.
\end{equation}
and:
\begin{equation} \label{point-a-b}
	u(a,b):=(3\zeta^{2d b},\,\zeta^{db}\sqrt 3 a+\zeta^{2db}t_2,\,
	\zeta^{2db}\sqrt 3 \bi b-2\zeta^{db}t_3).
\end{equation}

For $\lambda=(\alpha,\beta) \in \mathcal C_g$, fix a polynomial $P_{\lambda}\in \C[\![t_2,t_3]\!][x,y,z]$ such that:
\begin{equation} \label{P-lambda}
	P_{\lambda}(u(\alpha,\beta))=1\hspace{3cm} P_{\lambda}(u(a,b))=0 \hspace{.5cm}\text{ for $(a,b)\neq (\alpha,\beta)$}
\end{equation}
Define a map $\Phi: \tbIgd \to \C[\![t_1,t_2]\!]^{N_g}$ in the following way:
\begin{equation*}
	\Phi(\eta):=\left \{\langle \eta,\, \rD_{\Delta_g,\delta_g}(P_{\lambda}(a_2,\,\Sigma_{(2)},\Sigma_{(3)})\e^{D_{(2)} + D_{(3)}})\rangle \right\}_{\lambda \in \mathcal C_g}
\end{equation*}
By \eqref{FF-gluing-thm}, the homomorphism $\Phi$ maps the relative element in \eqref{relative-element} to an element of $\C^{N_g}[\![t_2,t_3]\!]$, whose component corresponding to $\lambda=(a,b)$, denoted by $c_{X,w,\Sigma}^\lambda(\Gamma,\Lambda)$, is equal to:
\begin{equation}\label{relative-component}
	\zeta^{d bd_w} \sum c_{i,j} \e^{\zeta^{2d b}
		(\frac{Q(\Gamma)}{2}+\frac{{\sqrt 3}}{2}\bi(K_i-K_j)\cdot \Lambda)+
    		\zeta^{d b}(-Q(\Lambda)+\frac{{\sqrt 3}}{2}(K_i+K_j)\cdot \Gamma)}
\end{equation}
where the sum is over the pairs of basic classes $(K_i,K_j)$ that satisfy \eqref{a-b-basic-class-id}.
Let $\C(\!(t_1,t_2)\!)$ denote the field of fractions of $\C[\![t_2,t_3]\!]$. Then $\Phi$ induces a map $\overline \Phi:\tbIgd\otimes_{\C[\![t_1,t_2]\!]} \C(\!(t_1,t_2)\!) \to \C(\!(t_1,t_2)\!)^{N_g}$.

\begin{prop}
	The map $\Phi$ is injective. Moreover, $\overline \Phi$ is an isomorphism of vector spaces.
\end{prop}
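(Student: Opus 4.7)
The plan is to derive both claims from the pairing formula \eqref{FF-gluing-thm} and the non-degeneracy provided by Proposition~\ref{bIgdN-prop}. By construction, the $\lambda$-component of $\Phi(\eta)$ is the pairing of $\eta$ with the test element $\rD_{\Delta_g,\delta_g}(P_\lambda(a_2,\Sigma_{(2)},\Sigma_{(3)})\e^{D_{(2)}+D_{(3)}})$. When $\eta$ is a relative element from a permissible triple, Lemma~\ref{permissible-inv} combined with the choice \eqref{P-lambda} of Lagrange interpolating polynomials recovers exactly the formula \eqref{relative-component}. The first preliminary task is to check that the $P_\lambda$ are well-defined, i.e.\ that the points $u(a,b)$ of \eqref{point-a-b} are pairwise distinct in $\C[\![t_2,t_3]\!]^3$ as $(a,b)$ ranges over $\mathcal C_g$; this will follow by specializing to $t_2=t_3=0$ and using $(d,3)=1$ to separate the first coordinate modulo third roots of unity, then using the linear terms $\sqrt 3\,a$ and $\sqrt 3\,\bi\,b$ to separate pairs within a fixed residue of $db\bmod 3$.

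For injectivity of $\Phi$, I would take $\eta \in \tbIgd$ with $\Phi(\eta)=0$ and show that $\langle\eta,\rD_{\Delta_g,\delta_g+i\Sigma}(z\e^{D_{(2)}+D_{(3)}})\rangle = 0$ for every $z\in\mathbb A(\Sigma)^{\otimes 2}$ and every $i\in\{0,1,2\}$, so that Proposition~\ref{bIgdN-prop} forces $\eta=0$. The shift by $i\Sigma$ on the $\Delta_g$-side is absorbed by the sum over $l$ in the definition \eqref{relative-element} and so adds no new data. Expanding $z$ in the generators of $\mathbb A(\Sigma)^{\otimes 2}$, one has three kinds of terms: monomials in $a_2,\,\Sigma_{(2)},\,\Sigma_{(3)}$, which vanish by Lagrange interpolation from the hypothesis $\Phi(\eta)=0$; monomials involving $a_3$, which vanish by the simple-type identity in Definition~\ref{permissible-type}(iv); and monomials involving generators of $H_1(\Sigma)$, which vanish because $b_1(X)=0$ for any permissible $X$ forces the map $H_1(\Sigma)\to H_1(X)$ to be zero, so the corresponding $\mu$-classes on the ASD moduli space over $X$ are zero. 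These three cases exhaust $\mathbb A(\Sigma)^{\otimes 2}$, proving injectivity.

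For the isomorphism $\overline\Phi$, injectivity transports from $\Phi$ after tensoring with the field of fractions, so it only remains to exhibit $N_g$ elements of $\tbIgd$ whose $\Phi$-images are linearly independent over $\C(\!(t_2,t_3)\!)$. Proposition~\ref{a-b-basic-class} supplies, for each $\lambda=(a,b)\in\mathcal C_g$, a permissible triple $(X_\lambda,w_\lambda,\Sigma)$ with a pair of basic classes $(K_i,K_j)$ realizing $\lambda$ via \eqref{a-b-basic-class-id}, so the corresponding relative element has a non-zero $\lambda$-component. Varying $\Gamma$ and $\Lambda$ in $H_2(X_\lambda)$, each component $c_{X_\lambda,w_\lambda,\Sigma}^{\lambda'}(\Gamma,\Lambda)$ is (by \eqref{relative-component}) an exponential in the linear functionals $(K_i+K_j)/2$ and $(K_i-K_j)/2$, with characters indexed by $\lambda'$. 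Artin-style linear independence of distinct characters over the field $\C(\!(t_2,t_3)\!)$ then permits extracting each standard basis vector $e_\lambda$, up to a non-zero scalar, from a $\C(\!(t_2,t_3)\!)$-linear combination of the $\Phi$-images of such relative elements, which proves surjectivity.

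The main obstacle I anticipate is the $H_1(\Sigma)$ step in the injectivity argument: Definition~\ref{permissible-type}(iv) only controls pairings against $\mathbb A(\mathcal H)^{\otimes 2}\subset\mathbb A(X)^{\otimes 2}$, so the vanishing for test elements built from $H_1(\Sigma)$-classes is not formally captured by \eqref{simpletype} and must be argued at the level of the universal ${\rm PU}(N)$-bundle and the $\mu$-map, using the topological input that $H_1(\Sigma)\to H_1(X)$ is trivial whenever $X$ is permissible. A secondary subtlety is verifying the distinctness of the points $u(a,b)$, which needs the coprimality hypothesis on $d$; without it, several $\lambda$'s would collapse and $\Phi$ would land in a proper subspace.
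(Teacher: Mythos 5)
Your proposal is correct and follows essentially the same route as the paper: injectivity by showing that pairings against the ideal generated by $a_3$, the $H_1(\Sigma)$-classes, and polynomials vanishing at the points $u(a,b)$ are zero (so that $\Phi(\eta)=0$ kills all pairings and Proposition~\ref{bIgdN-prop} applies), and surjectivity of $\overline\Phi$ by producing, via Proposition~\ref{a-b-basic-class}, relative elements with prescribed non-zero components and isolating each $\lambda_0$-component through linear independence of the exponential factors obtained by varying the homology classes (the paper does this concretely by shifting $\Gamma\mapsto\Gamma+s\Sigma$, $\Lambda\mapsto\Lambda+s\Sigma$). The only quibble is minor: the distinctness of the points $u(a,b)$ does not actually require $d$ coprime to $3$, since the second and third coordinates already separate $(a,b)$ even when $\zeta^{db}=1$.
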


\begin{proof}
	Suppose $\mathcal I$ is the ideal in $\A_g^3\otimes_\C\C[\![t_2,t_3]\!]$ that is generated by $a_3$, $\gamma_{(2)}$, $\gamma_{(3)}$ and
	the elements $P(a_2,\,\Sigma_{(2)},\,\Sigma_{(3)})$ where $P\in \C[\![t_2,t_3]\!][x,y,z]$ is a polynomial evaluating to zero at the points in
	\eqref{point-a-b}. Then the pairing of \eqref{relative-element} and $\rD_{\Delta_g, \delta_g}(z\e^{ D_{(2)} +  D_{(3)}})$ vanishes when
	$z\in \mathcal I$.
	Any element of $\tbIgd$ is also invariant with respect to the action of:
	\begin{equation} \label{tilde-epsilon-g}
	  \widetilde \epsilon:=\IIN([0,1] \times Y_g,[0,1] \times \gamma_{g,d}+\Sigma,
	  \e^{([0,1] \times \gamma)_{(2)}+\dots+([0,1] \times \gamma)_{(N)}})
	\end{equation}
	Recall that $Y_g=\Sigma\times S^1$ and $\gamma$ in \eqref{tilde-epsilon-g}
	denotes an $S^1$-fiber of $Y_g$.
	Any $z\in \A_g^3\otimes_\C\C[\![t_2,t_3]\!]$ can be written as a sum of an element of $\mathcal I$ and a $\C[\![t_2,t_3]\!]$-linear combination of
	the polynomials $\{P_{\lambda}\}_{\lambda\in \mathcal C_g}$. Therefore, injectivity of $\Phi$ is a consequence of Proposition \ref{bIgdN-prop}.
	
	For a given $\lambda_0 \in \mathcal C_g$, Proposition \ref{a-b-basic-class} gives
	a permissible triple $(X,w,\Sigma)$ such that the component of the relative element \eqref{relative-element}
	corresponding to $\lambda_0$ is non-zero. Furthermore, we can change the relative class as in \eqref{relative-element}
	by replacing $\Gamma$ with $\Gamma+s\Sigma$ and $\Lambda$ with $\Lambda+s\Sigma$.
	The component of this relative element corresponding to $\lambda=(a,b)$ picks the factor
	$\e^{s(\zeta^{2b}{\sqrt 3}at_2+\zeta^{b}\sqrt 3 b t_3+\zeta^{b}t_2^2-2\zeta^{2b}t_3^2)}$.
	Therefore, by taking $\C[\![t_1,t_2]\!]$--linear combinations of such expressions
	for different values of $s$, we can produce elements of $\tbIgd$ such that the component
	corresponding to $\lambda_0$ is the only non-zero element. This verifies the second part of the proposition.	
\end{proof}

Consider the restriction of the paring $\langle,\rangle$ on $\bIgd$. Above proposition implies that this pairing induces a pairing on $\C(\!(t_1,t_2)\!)^{N_g}$ using the map $\overline \Phi$:
\begin{equation}
	\langle\cdot ,\cdot \rangle: \C(\!(t_1,t_2)\!)^{N_g} \times \C(\!(t_1,t_2)\!)^{N_g} \to \C(\!(t_1,t_2)\!)
\end{equation}
Suppose this pairing is given by $\{\zeta^{2bd(g-1)}h_{\lambda,\lambda'}^{g,d}\}_{\lambda,\lambda' \in \mathcal C_g}$ with respect to the standard basis of $\C(\!(t_1,t_2)\!)^{N_g}$ where $b$ is the second coordinate of $\lambda$. The constant $\zeta^{2bd(g-1)}$ does not play an important role here. It will be used to obtain slightly simpler form for our gluing formulas in Proposition \ref{connected-sum-permissible}.
\begin{prop}\label{restriction-pairing-matrix}
	The element $h_{\lambda,\lambda'}^{g,d}(t_2,t_3)\in \C(\!(t_1,t_2)\!)$ is non-zero only if $\lambda=\lambda'$.
	Furthermore, if $\lambda=(a,b)$ and $|\lambda|_1:=|a|+|b|<2g-2$,
	then $h_{\lambda,\lambda}^{g,d}$ is zero.
\end{prop}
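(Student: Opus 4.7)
The plan is to handle the two claims separately: off-diagonal vanishing by a spectral/self-adjointness argument, and diagonal vanishing for non-extremal $\lambda$ by comparing the gluing formula with an explicit closed invariant computation.

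For the off-diagonal vanishing, I would use the action of the commuting operators given by multiplication by $a_2$, $\Sigma_{(2)}$, $\Sigma_{(3)}$ on $\tbIgd$ (coming from cobordism maps on the trivial cylinder over $Y_g$ with appropriate 2-cycles inserted). These operators commute, and under $\overline\Phi$ they act diagonally on $\C(\!(t_2,t_3)\!)^{N_g}$ with respective eigenvalues on $e_\lambda$ being the three coordinates of $u(a,b)$ given in \eqref{point-a-b}. A direct check shows that for $\lambda\neq \lambda'$ in $\mathcal C_g$, at least one of these three eigenvalues differs as a polynomial in $t_2,t_3$ (the $a_2$-eigenvalue $3\zeta^{2db}$ separates residue classes of $b$ mod $3$, while the remaining two eigenvalues separate within a fixed residue class). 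On the other hand, each of these operators is self-adjoint with respect to $\langle\cdot,\cdot\rangle$: the pairing cobordism $\Omega_g=\Sigma\times B$ is symmetric in its two incoming ends, so inserting one of the cycles near one end is isotopic (in $\Omega_g$) to inserting it near the other. Standard spectral theory then yields $h^{g,d}_{\lambda,\lambda'}=0$ for $\lambda\neq\lambda'$.

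For the diagonal vanishing when $|\lambda|_1<2g-2$, I would use the diagonality just proved together with Identity \eqref{FF-gluing-thm}. For permissible triples $(X,w,\Sigma)$ and $(X',w',\Sigma)$, the pairing of the corresponding relative elements produces a closed invariant which, by diagonality, equals
\begin{equation*}
\sum_{\mu\in\mathcal{C}_g} c^{\mu}_{X,w,\Sigma}(\Gamma,\Lambda)\, c^{\mu}_{X',w',\Sigma}(\Gamma',\Lambda')\, \zeta^{2b_\mu d(g-1)}\, h^{g,d}_{\mu,\mu}.
\end{equation*}
Take both triples from Example \ref{E(2)-simple-triple}, so that $X\#_\Sigma X'$ is a blowup of an elliptic surface whose $\U(3)$-series is determined by Theorem \ref{E(n)-inv} and the blowup formula of Theorem \ref{blowup}. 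A direct inspection of those formulas shows that the surviving basic-class pairs $(K_I,K_J)$ in the closed invariant are exactly the extremal ones with $|K_I\cdot\Sigma|=|K_J\cdot\Sigma|=2g-2$ (only these pair nontrivially against the elliptic fiber class which $\Sigma$ becomes in the fiber sum). Consequently only $\mu$ with $|\mu|_1=2g-2$ can contribute to the sum. Using the surjectivity of $\overline\Phi$ (which lets us realize any prescribed $c^\lambda$-component by suitable linear combinations of relative elements, after enlarging to the field of fractions) together with the independence of $w$, $\Gamma$, $\Lambda$ to separate individual $\mu$-contributions, we conclude $h^{g,d}_{\lambda,\lambda}=0$ for every $\lambda$ with $|\lambda|_1<2g-2$.

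The main obstacle will be the second step, specifically the identification of $X\#_\Sigma X'$ with a known elliptic blowup and the verification from Theorem \ref{E(n)-inv} that only extremal basic-class pairs survive. One must keep careful track of the cube roots of unity that appear in the formulas for $c^\mu$ and in Theorem \ref{E(n)-inv} to ensure that the linear system obtained by varying $(w,\Gamma,\Lambda,w',\Gamma',\Lambda')$ has enough independence to isolate each diagonal entry. A secondary subtlety is justifying the surjectivity-based isolation in the power-series ring rather than just the field of fractions, which may require an auxiliary argument comparing leading-order behavior in $t_2,t_3$.
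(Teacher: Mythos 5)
Your off-diagonal argument is correct and is, at bottom, the same mechanism the paper uses, just phrased in operator language: the paper replaces $(\Gamma,\Gamma',\Lambda,\Lambda')$ by $(\Gamma+r\Sigma,\Gamma'-r\Sigma,\Lambda+s\Sigma,\Lambda'-s\Sigma)$, which leaves the closed invariant unchanged while multiplying the $(\lambda,\lambda')$ term by $\e^{rf(\lambda,\lambda')+sg(\lambda,\lambda')}$ with $f,g$ vanishing iff $\lambda=\lambda'$; this is exactly the exponentiated form of your statement that the self-adjoint commuting operators $\Sigma_{(2)},\Sigma_{(3)},a_2$ act diagonally with eigenvalues $u(a,b)$ that separate the points of $\mathcal C_g$ (using $d\nequiv 0$ mod $3$). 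Either phrasing works; yours requires the small additional check that the transported operators are genuinely diagonal on all of $\C(\!(t_2,t_3)\!)^{N_g}$ (which follows from surjectivity of $\overline\Phi$ and the displayed formula for pairings against $\rD_{\Delta_g,\delta_g}(P\,\e^{D_{(2)}+D_{(3)}})$), and that no sign enters the adjunction identity (automatic here since $N=3$ is odd and the inserted classes have even degree).

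The diagonal step, however, has a genuine gap. You propose to take both triples from Example \ref{E(2)-simple-triple} and read off from Theorem \ref{E(n)-inv} plus the blowup formula that only extremal basic-class pairs survive in $\rD_{X\#_\Sigma X',w\#w'}$. But for $g\geq 2$ the fiber sum of two copies of $E(2)\#(2g-2)\cp$ along the proper transform of a genus-$g$ surface is \emph{not} a blowup of an elliptic surface, and its $\U(3)$-series is not computed anywhere independently of the gluing formula --- indeed, determining it (up to the constants $\hbar_3,\hbar_4$) is precisely the content of the rest of Section \ref{gluing-self-intersection-0}. So the ``direct inspection'' you invoke presupposes the conclusion: the only route to the closed invariant of that fiber sum is through the pairing matrix whose entries you are trying to show vanish. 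The paper's actual device avoids this circularity by choosing the second permissible surface degenerately: start with a genus-$(g-1)$ triple from Example \ref{E(2)-simple-triple} and stabilize $\Sigma$ by connect-summing with a null-homologous torus contained in a $4$-ball, producing a genus-$g$ permissible triple $(X,w,\Sigma')$ whose basic classes still realize every $\lambda$ with $|\lambda|_1\leq 2g-4$ (so $c^{\lambda}_{X,w,\Sigma'}\neq 0$ for all non-extremal $\lambda$), while $X\#_{\Sigma'}X$ splits off an $S^2\times S^2$ connected summand. Theorem \ref{gluing-S3} then forces the entire closed invariant to vanish, and combined with the diagonality already established and the separation of the exponentials $c^{\lambda}$ (Lemma \ref{sum-exponential}), this kills $h^{g,d}_{\lambda,\lambda}$ for every $\lambda$ with $|\lambda|_1<2g-2$. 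Without some such vanishing input --- a fiber sum whose closed invariant is known a priori to be zero or to have only extremal contributions --- your second step does not close.
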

\begin{proof}
	Suppose $(X,w,\Sigma)$ and $(X',w',\Sigma)$ are two permissible triples such that $w$ and $w'$ intersect $\Sigma$ in the
	same set of points with the same signs. Suppose the homology classes $\Gamma,\Lambda \in H_2(X)$ and
	$\Gamma',\Lambda' \in H_2(X')$ are chosen such that their intersection with $\Sigma$ is equal to $1$.
	Then Identity \eqref{FF-gluing-thm} asserts that:
	\begin{equation*}
		\rD_{X\#_\Sigma X', w\#w',\Sigma}(\e^{(\Gamma\#\Gamma')_{(2)}+(\Lambda\#\Lambda')_{(3)}})=\hspace{4cm}
	\end{equation*}
	\begin{equation}\label{gluing-formula}		
		\hspace{3cm}\sum_{\lambda,\lambda' \in \mathcal C_g} \zeta^{2bd(g-1)}h_{\lambda,\lambda'}^{g,d}
		c_{X,w,\Sigma}^\lambda(\Gamma,\Lambda)c_{X',w',\Sigma}^{\lambda'}(\Gamma',\Lambda')
	\end{equation}
	Replacing $\Gamma$, $\Gamma'$, $\Lambda$ and $\Lambda'$ with $\Gamma+r\Sigma$, $\Gamma'-r\Sigma$, $\Lambda+s\Sigma$
	and $\Lambda'-s\Sigma$ does not change the left hand side of the above identity. On the right hand side, the term corresponding to
	$\lambda$ and $\lambda'$ changes by a factor of the form $\e^{r\cdot f(\lambda,\lambda')+s\cdot g(\lambda,\lambda')}	$.
	Here $f(\lambda,\lambda')$ and $g(\lambda,\lambda')$, which can be computed explicitly, are zero if and only if $\lambda =\lambda'$.
	Therefore, $h_{\lambda,\lambda'}^{g,d}$ has to be non-zero when $\lambda\neq \lambda'$.
	
	Let $(X, w, \Sigma)$ be the permissible triple of Example \ref{E(2)-simple-triple} where $\Sigma$ has genus $g-1$.
	Taking the connected sum of $\Sigma$ and a nomologically trivial
	torus, embedded in a 4-ball, produces a permissible triple $(X, w, \Sigma')$ such that $\Sigma'$ has genus $g$. Then $X\#_\Sigma X$ can be
	decomposed as the connected sum of $S^2\times S^2$ and another 4-manifold with $b^+>0$.
	Theorem \ref{gluing-S3} asserts that, for
	$\Gamma,\Lambda,\Gamma'$ and $\Lambda'  \in H_2(X)$, the following invariant vanishes:
	\begin{equation*}
		\rD_{X\#_{\Sigma'} X, w\#w,\Sigma}(\e^{(\Gamma\#\Gamma')_{(2)}+(\Lambda\#\Lambda')_{(3)}})
	\end{equation*}	
	If $|\lambda|_1<2g-2$, then we can find $\Gamma$ and $\Lambda$ such that $c_{X,w,\Sigma'}^{\lambda}$ is non-zero.
	Consequently, $h_{\lambda,\lambda}^{g,d}$ is zero for this choice of $\lambda$.
\end{proof}

In the light of the above proposition, let $h_{a,b}^{g,d}$ be $h_{\lambda,\lambda}^{g,d}$ for $\lambda=(a,b)\in \mathcal C_g\backslash \mathcal C_{g-1}$. These are the only non-zero terms among the coefficients of the pairing.
\begin{prop}\label{connected-sum-permissible}
	Suppose $(X, w,\Sigma)$ and $(X',w',\Sigma)$ are two permissible triples such that $w$ and $w'$ intersect $\Sigma$ in the
	same set of points with the same signs. Then:
	\begin{equation} \label{simple-type-connected-sum-permissible}
		{\rD_{X\#_\Sigma X', w\#w'}}((\frac{a_2}{3})^3z)={\rD_{X\#_\Sigma X', w\#w'}}(z)\hspace{.8cm}{\rD_{X\#_\Sigma X', w\#w'}}(a_3z)=0
	\end{equation}
	for $z\in \Sym^*(H_0(X\#_\Sigma X')\oplus \im(\#))^{\otimes 2}$.	Suppose also the intersection number of homology classes $\Gamma,\Lambda \in H_2(X)$ and
	$\Gamma',\Lambda' \in H_2(X')$ with $\Sigma$ is $1$.
	Then ${\widehat{\rD}_{X\#_\Sigma X', w\#w'}}(\e^{(\Gamma\#\Gamma')_{(2)}+(\Lambda\#\Lambda')_{(3)}})$ is equal to:
    	\begin{equation*}
		\e^{\frac{Q(\Gamma\#\Gamma')}{2}-Q(\Gamma\#\Gamma')}
		\sum_{(a,b)\in \mathcal C_g\backslash \mathcal C_{g-1}} h_{a,b}^{g,d}(t_2,t_3)\sum
		c_{ij}d_{i'j'}\e^{(\frac{{\sqrt 3}}{2}(M_{i,i'}+M_{j,j'}) \cdot \Gamma\#\Gamma'+
		\frac{{\sqrt 3}}{2}\bi(M_{i,i'}-M_{j,j'})\cdot \Lambda\# \Lambda'))}
	\end{equation*}	
	For each $(a,b)$, the second sum is over the pairs of basic classes $(K_i, K_j)$ and $(L_{i'},L_{j'})$ such that:
	\begin{equation} \label{basic classes pairs}
		\frac{(K_i+K_j)}{2}\cdot \Sigma=\frac{(L_{i'}+L_{j'})}{2}\cdot \Sigma=a\hspace{1cm}
		\frac{(K_i-K_j)}{2}\cdot \Sigma=\frac{(L_{i'}-L_{j'})}{2}\cdot \Sigma=b
	\end{equation}
	and  $M_{i,i'}$, $M_{j,j'}$ are respectively equal to $K_i\#L_{i'}$, $K_j\#L_{j'}$.
\end{prop}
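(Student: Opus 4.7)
The plan is to feed the gluing identity \eqref{gluing-formula} into the diagonalization provided by Proposition \ref{restriction-pairing-matrix}, substitute the explicit formula \eqref{relative-component} for each factor $c^{\lambda}_{X,w,\Sigma}$, and then convert from the "$l$-summed" invariant $\rD_{\cdot,\cdot,\Sigma}$ back to $\widehat{\rD}$ using the simple type property.

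First I prove \eqref{simple-type-connected-sum-permissible}. Given $z\in\Sym^*(H_0\oplus\im(\#))^{\otimes 2}$, any factor $a_2$ or $a_3$ in $z$ is represented by a point class, and such a point may be chosen to lie entirely in $X^\circ$ (or $X'^\circ$). Under the gluing \eqref{FF-gluing-thm}, the insertion of $a_2$ or $a_3$ then acts by multiplication on the relative element $\rD_{X^\circ,w^\circ,\Sigma}(\e^{\Gamma^\circ_{(2)}+\Lambda^\circ_{(3)}})\in\widetilde{\mathbb I}_{g,d}$, and this action is governed by the simple type condition \eqref{simple-type-u} for the permissible triple $(X,u,\Sigma)$ summed over $u=w+l\Sigma$. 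Consequently $(a_2/3)^3$ acts as the identity and $a_3$ acts as zero on the image of the relative element in $\widetilde{\mathbb I}_{g,d}$, which by \eqref{FF-gluing-thm} yields \eqref{simple-type-connected-sum-permissible}.

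Second, I compute the series. Starting from the gluing identity \eqref{gluing-formula} and using Proposition \ref{restriction-pairing-matrix} to kill all off-diagonal pairings and all diagonal pairings with $|\lambda|_1<2g-2$, I obtain
\begin{equation*}
  \rD_{X\#_\Sigma X',w\#w',\Sigma}(\e^{(\Gamma\#\Gamma')_{(2)}+(\Lambda\#\Lambda')_{(3)}})
  =\sum_{(a,b)\in\mathcal C_g\setminus\mathcal C_{g-1}}\zeta^{2bd(g-1)}h^{g,d}_{a,b}(t_2,t_3)\,c^{(a,b)}_{X,w,\Sigma}(\Gamma,\Lambda)\,c^{(a,b)}_{X',w',\Sigma}(\Gamma',\Lambda').
\end{equation*}
Substituting \eqref{relative-component} for each factor produces a double sum over pairs of basic classes $(K_i,K_j)$ of $(X,w,\Sigma)$ and $(L_{i'},L_{j'})$ of $(X',w',\Sigma)$ satisfying \eqref{basic classes pairs}. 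Since the constraint $K_i\cdot\Sigma=L_{i'}\cdot\Sigma=a+b$ (and analogously for $K_j,L_{j'}$) is exactly the compatibility needed to glue the Poincar\'e duals along $\Sigma$, the classes $M_{i,i'}=K_i\# L_{i'}$ and $M_{j,j'}=K_j\# L_{j'}$ are well-defined elements of $H^2(X\#_\Sigma X')$. The exponential factors combine via \eqref{sharp-property}: $(K_i\# L_{i'})\cdot(\Gamma\#\Gamma')=K_i\cdot\Gamma+L_{i'}\cdot\Gamma'$ and $Q(\Gamma\#\Gamma')=Q(\Gamma)+Q(\Gamma')$ (the latter using $\Sigma^2=0$).

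Finally, to pass from $\rD_{\cdot,\cdot,\Sigma}$ to $\widehat{\rD}$, I invert the Fourier sum over $l\Sigma$ using the simple type established in the first step: the same identity \eqref{gluing-formula} applied to the cycles $w\#w'+l\Sigma$ for $l=0,1,2$ differs from the computation above only by the phase $\zeta^{-l\Sigma\cdot(M_{i,i'}-M_{j,j'})/2}$ appearing in \eqref{simpletype}, and averaging against $(1+a_2/3+a_2^2/9)$ exactly extracts the eigenspace of $a_2/3$ corresponding to eigenvalue~$1$, which by Lemma~\ref{permissible-inv} (applied to the fiber sum) reassembles to the $\widehat{\rD}$-series of the claim. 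The main bookkeeping obstacle will be to verify that the accumulated phase $\zeta^{dbd_w}\zeta^{dbd_{w'}}\zeta^{2bd(g-1)}$ matches $\zeta^{dbd_{w\#w'}}$, which reduces to the topological identities $b^+(X\#_\Sigma X')=b^+(X)+b^+(X')+2g-1$ and $(w\#w')^2=w^2+w'^2$; once this identification is in hand, the remaining $\zeta$-factors cancel and the desired clean exponentials $\e^{\frac{\sqrt 3}{2}(M_{i,i'}+M_{j,j'})\cdot(\Gamma\#\Gamma')+\frac{\sqrt 3}{2}\bi(M_{i,i'}-M_{j,j'})\cdot(\Lambda\#\Lambda')}$ emerge.
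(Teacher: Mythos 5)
Your proposal is correct and follows essentially the same route as the paper: substitute \eqref{relative-component} into \eqref{gluing-formula}, discard the terms killed by Proposition \ref{restriction-pairing-matrix}, and then invert the $l$-sum as in Lemma \ref{permissible-inv} to recover $\widehat{\rD}_{X\#_\Sigma X',w\#w'}$; your phase bookkeeping via $b^+(X\#_\Sigma X')=b^+(X)+b^+(X')+2g-1$ is exactly what the normalization $\zeta^{2bd(g-1)}$ in the pairing was set up to absorb. The only cosmetic difference is in \eqref{simple-type-connected-sum-permissible}: you argue that $a_2,a_3$ act on the relative element in $\widetilde{\mathbb I}_{g,d}$ as dictated by the simple type of the pieces (which needs Proposition \ref{bIgdN-prop} to upgrade equality of pairings to equality of elements), while the paper instead computes $\widehat{\rD}(a_2^ia_3^j\e^{\cdots})$ explicitly by the same gluing and reads off the identities — both rest on the same input.
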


 \begin{proof}
 	The series ${{\rD}_{X\#_\Sigma X', w\#w',\Sigma}}(\e^{(\Gamma\#\Gamma')_{(2)}+(\Lambda\#\Lambda')_{(3)}})$
	can be computed in terms of the cohomology classes
	$M_{i,i'}$ by plugging \eqref{relative-component} into \eqref{gluing-formula} and
	applying Proposition \ref{restriction-pairing-matrix}. Then we argue as in Lemma \ref{permissible-inv} to obtain the desired formula for
	the $\U(N)$-series ${\widehat{\rD}_{X\#_\Sigma X', w\#w'}}(\e^{(\Gamma\#\Gamma')_{(2)}+(\Lambda\#\Lambda')_{(3)}})$.
	We can follow a similar strategy to compute
	${\widehat{\rD}_{X\#_\Sigma X', w\#w'}}(a_2^ia_3^j\e^{(\Gamma\#\Gamma')_{(2)}+(\Lambda\#\Lambda')_{(3)}})$ in terms of the classes
	$M_{i,i'}$. The resulting formulas prove the identities in \eqref{simple-type-connected-sum-permissible}.
\end{proof}

The goal of the remaining part of this section is to determine the power series $h_{a,b}^{g,d}(t_2,t_3)$, up to two constants. Firstly, one can obtain a constraint on this power series by changing the orientation of $\Sigma$:
\begin{equation} \label{different-det-bdle}
	h_{a,b}^{g,2}(t_2,t_3)=h_{-a,-b}^{g,1}(-t_2,-t_3)
\end{equation}
We shall obtain more constraints by looking at some explicit 4-manifolds. In the case $g=1$, in fact we can determine $h_{0,0}^{1,d}(t_2,t_3)$ completely using our calculation of the invariants of $E(n)$:
 \begin{cor}
	For $g=1$, the only non-zero term among the pairing coefficients is given by:
	\begin{equation*}
		h^{1,d}_{0,0}(t_2,t_3)=(\hbar_1 \cosh(\sqrt{3}t_2)-2\hbar_2\cos(-\frac{2\pi}{3}d+\sqrt 3 t_3))^2.
	\end{equation*}
	In particular, if $(X,w,\Sigma)$ and $(X',w',\Sigma)$ are permissible triples such that the genus of $\Sigma$ is equal to $1$, and $w$, $w'$
	intersect $\Sigma$ in the same set of points with the same signs, then:
	\begin{equation} \label{gluing-torus}
		{\widehat{\rD}_{X\#_\Sigma X', w\#w'}}(\e^{(\Gamma\#\Gamma')_{(2)}+(\Lambda\#\Lambda')_{(3)}})=h_{0,0}^{1,d}
		{\widehat{\rD}_{X, w}}(\e^{\Gamma_{(2)}+\Lambda_{(3)}}) {\widehat{\rD}_{X', w'}}(\e^{\Gamma'_{(2)}+\Lambda'_{(3)}}).
	\end{equation}	
 \end{cor}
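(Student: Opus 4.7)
The plan is to isolate the only potentially non-zero pairing coefficient by a dimension count, then pin it down by testing the gluing formula against a fiber sum whose $\U(3)$-series is already known. For $g=1$ the inequality $|a|+|b|\leq 2g-2=0$ forces $a=b=0$, so $\mathcal C_1\setminus\mathcal C_0=\{(0,0)\}$. Proposition \ref{restriction-pairing-matrix} then immediately gives that $h^{1,d}_{\lambda,\lambda'}$ vanishes unless $\lambda=\lambda'=(0,0)$, proving the first assertion.

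To compute $h^{1,d}_{0,0}$, I would apply Proposition \ref{connected-sum-permissible} to two copies of $(E(2),w,f)$ with $w\cdot f=d$. By Example \ref{E(2)-simple-triple} this triple is permissible with a single basic class $K=0$ of coefficient $c_{0,0}=1$. The fiber sum is $E(4)$, and since every $M_{i,i'}=K_i\#L_{i'}$ vanishes, the inner double sum over basic classes collapses to $1$ with trivial exponential factor. Choosing $\Gamma,\Lambda\in H_2(E(2))$ with $\Gamma\cdot f=\Lambda\cdot f=1$, the proposition becomes
\[
\widehat{\rD}_{E(4),w\#w}(\e^{(\Gamma\#\Gamma)_{(2)}+(\Lambda\#\Lambda)_{(3)}})=\e^{\tfrac{Q(\Gamma\#\Gamma)}{2}-Q(\Lambda\#\Lambda)}\,h^{1,d}_{0,0}(t_2,t_3).
\]
Since $(w\#w)\cdot f=d$ and $f\cdot(\Gamma\#\Gamma)=f\cdot(\Lambda\#\Lambda)=1$, Theorem \ref{E(n)-inv} evaluates the same series as $\e^{\tfrac{Q(\Gamma\#\Gamma)}{2}-Q(\Lambda\#\Lambda)}(\hbar_1\cosh(\sqrt 3\,t_2)-2\hbar_2\cos(-\tfrac{2\pi}{3}d+\sqrt 3\,t_3))^{2}$. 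Comparing the two expressions yields the claimed closed form for $h^{1,d}_{0,0}$.

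For the gluing identity \eqref{gluing-torus}, I would return to Proposition \ref{connected-sum-permissible} with arbitrary permissible triples $(X,w,f)$ and $(X',w',f)$ of genus $g=1$. The bound $|K\cdot f|\leq 2g-2=0$ in Definition \ref{permissible-type} forces $K\cdot f=0$ for every basic class, so the constraints \eqref{basic classes pairs} at $(a,b)=(0,0)$ hold automatically, and the phase $\zeta^{-l\Sigma\cdot(K_i-K_j)/2}$ in \eqref{simpletype} (for $l=0$) is trivial. Using $(K\#L)[\Gamma\#\Gamma']=K[\Gamma]+L[\Gamma']$ together with the additivity $Q(\Gamma\#\Gamma')=Q(\Gamma)+Q(\Gamma')$ of the intersection form under fiber sum, the factor
\[
\e^{\tfrac{\sqrt 3}{2}(M_{i,i'}+M_{j,j'})\cdot(\Gamma\#\Gamma')+\tfrac{\sqrt 3}{2}\bi(M_{i,i'}-M_{j,j'})\cdot(\Lambda\#\Lambda')}
\]
splits as a product of an $X$-piece and an $X'$-piece, and the Gaussian exponential factorises likewise. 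Consequently the double sum over $(i,j,i',j')$ factorises, and each factor is exactly the formula in \eqref{simpletype} for $\widehat{\rD}_{X,w}(\e^{\Gamma_{(2)}+\Lambda_{(3)}})$ or $\widehat{\rD}_{X',w'}(\e^{\Gamma'_{(2)}+\Lambda'_{(3)}})$. This delivers \eqref{gluing-torus}. No serious obstacle arises beyond careful bookkeeping of intersection numbers and formal variables under the fiber sum.
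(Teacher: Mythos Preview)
Your proof is correct and follows essentially the approach the paper intends: since $\mathcal C_1=\{(0,0)\}$ there is only one pairing coefficient, and testing Proposition~\ref{connected-sum-permissible} on $E(2)\#_f E(2)=E(4)$ against Theorem~\ref{E(n)-inv} pins down $h^{1,d}_{0,0}$ exactly as you wrote. For the second part, your derivation of \eqref{gluing-torus} from Proposition~\ref{connected-sum-permissible} via the factorisation $M_{i,i'}\cdot(\Gamma\#\Gamma')=K_i\cdot\Gamma+L_{i'}\cdot\Gamma'$ and $Q(\Gamma\#\Gamma')=Q(\Gamma)+Q(\Gamma')$ is valid; note however that the paper's remark immediately after the corollary points to Proposition~\ref{fiber-sum-universal-formula} instead, which gives the gluing identity by a shorter argument (the rank-one property of $\ker(\epsilon-1)$ in $\mathbb I^N_{1,d}$) and, more importantly, shows it holds under the weaker hypotheses (i)--(iii) of Definition~\ref{permissible-type} without requiring the simple-type condition (iv).
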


\begin{remark}
	Identity \eqref{gluing-torus} is a consequence of Proposition \ref{fiber-sum-universal-formula}, and it holds even if we only require (i), (ii) and (iii) of
	Definition \ref{permissible-type} for the triples $(X,w,\Sigma)$ and $(X',w',\Sigma)$.
\end{remark}

The 4-manifold $B(1):=E(1) \# \cp$ has two elliptic fibrations with fibers $f$ and $f'$ such that $f\cdot f'=1$. There are also disjoint embedded spheres $E$ and $E'$ in $B(1)$ such that $E$ (respectively, $E'$) intersects $f$ (respectively, $f'$) positively in one point and is disjoint from $f'$ (respectively, $f$). The fiber sum of $n$ copies of $B(1)$ along $f'$ produces a 4-manifold $B(n)$ which is diffeomorphic to $E(n) \# n\cp$ (Figure \ref{B(n)}). The exceptional spheres of $B(n)$ are denoted by $E^1$, $\dots$, $E^n$ where $E^i$ is given by the exceptional sphere $E$ in the $i^{\rm th}$ summand of $B(n).$\footnote{For the special case of exceptional spheres in $B(n)$, we deviate from our previous notation and put $i$ as a superscript.} Furthermore, one can glue copies of $f$ to produce a surface $f_n$ of genus $n$ with self-intersection zero. The homology class of $f_n$ is equal to $[S]-[E^1]-\dots -[E^n]$ for an appropriate surface $S$ of genus $n$ in $E(n)$. Similarly, one can glue copies of $E'$ to produce $\sigma_n$, a sphere of self-intersection $-n$, that is disjoint form $f_n$ and the exceptional spheres. The intersection number of $\sigma_n$ and $f'$ is equal to $1$. The following proposition shows that $(B(n),f_n,w)$ is permissible if $w \cdot f \nequiv 0$ mod 3:

\begin{figure}
	\centering
		\includegraphics[width=.8\textwidth]{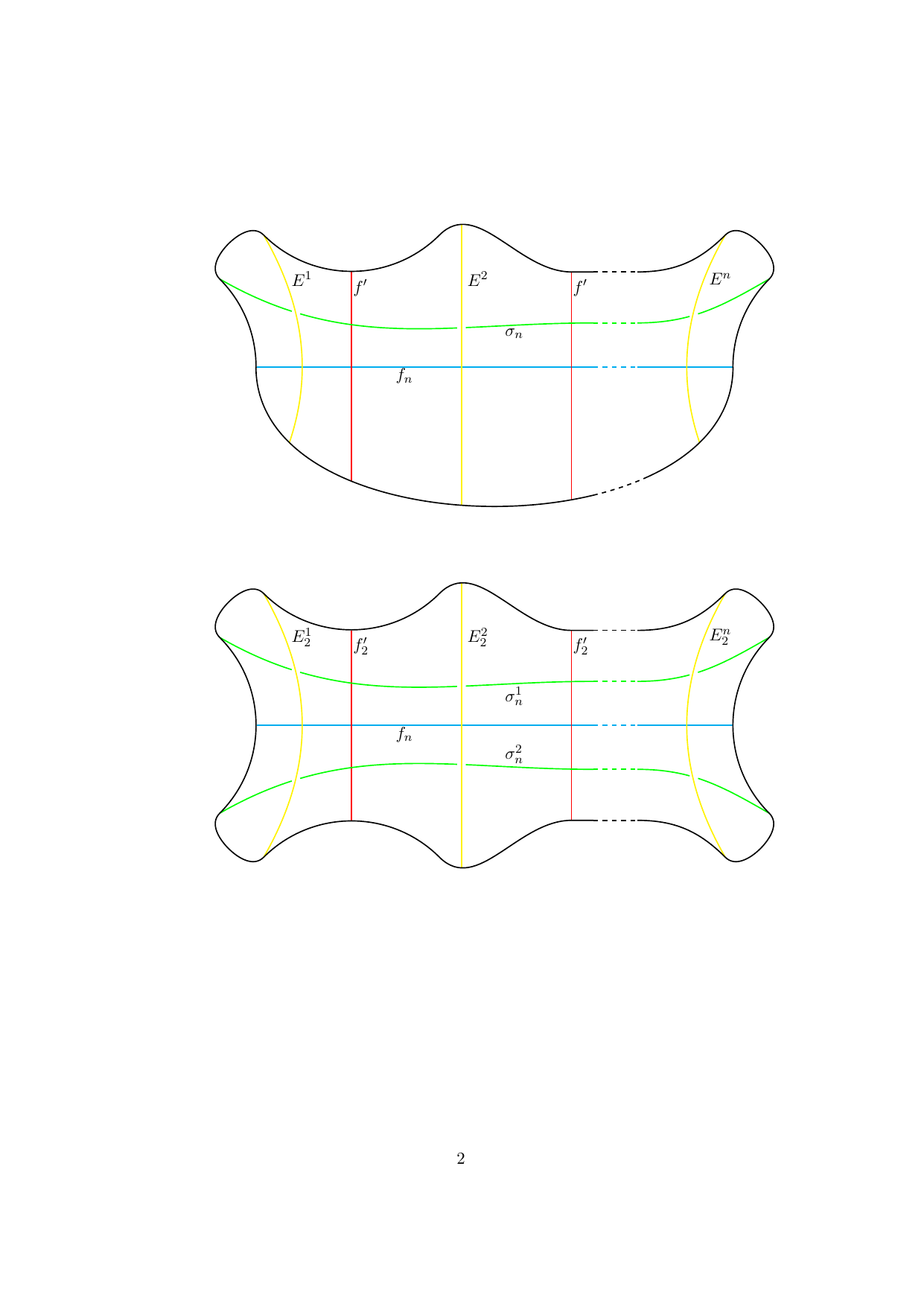}
		\caption{A schematic picture of $B(n)$: each embedded curve represents an embedded surface and each intersection point of these curves represents an intersection point. Note that $E^i$ and $\sigma_n$
		do not intersect. }
		\label{B(n)}
\end{figure}

 \begin{prop}\label{B(n)-invariants}	
	Suppose $w$ is a 2-cycle in $B(n)$ such that $w \cdot f_n \nequiv 0$ mod 3. For $n\geq 2$, the triple $(B(n),w, f_n)$ is permissible and
	${\widehat{\rD}_{B(n), w}}(\e^{\Gamma_{(2)}+\Lambda_{(3)}})$, for $\Gamma$, $\Lambda \in H_2(B(n))$, is given by:
	\begin{equation*}
		\e^{\frac{Q(\Gamma)}{2}-Q(\Lambda)}G'(\Gamma,\Lambda,w\cdot f')^{n-2}
		\prod_{i}\frac{1}{3}(\cosh(\sqrt{3}E^i\cdot \Gamma)+2\cos(-\frac{2\pi}{3}w\cdot E^i+ \sqrt 3 E^i \cdot \Lambda))
	\end{equation*}	
	Here $G'$ is given by \eqref{G} after replacing $f$ with $f'$. In particular, a basic class of $B(n)$ has the following form:
	\begin{equation*}
		(n-2k)f'\pm E^1\pm \dots \pm E^n
	\end{equation*}
	for $1\leq k \leq n-1$.
	 \end{prop}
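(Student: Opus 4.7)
The plan is to apply the blow-up formula (Theorem~\ref{blowup}) to the decomposition $B(n) = E(n) \# n\cp$, combined with the explicit $\U(3)$-series of $E(n)$ from Theorem~\ref{E(n)-inv}. Since $E(n)$ has simple type, the simple-type version of the blow-up formula applies, and iterating it over the $n$ exceptional spheres $E^i$ multiplies $E(n)$'s invariants by a product of blow-up factors depending on $w\cdot E^i \bmod 3$.

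I would decompose each class relative to $H_2(E(n))$ together with the exceptional classes: $w = w_0 + \sum a_i E^i$, $\Gamma = \Gamma_0 + \sum b_i E^i$, $\Lambda = \Lambda_0 + \sum c_i E^i$, where $w_0, \Gamma_0, \Lambda_0 \in H_2(E(n))$, $b_i = -\Gamma\cdot E^i$, $c_i = -\Lambda\cdot E^i$, and $a_i \equiv -w\cdot E^i \pmod 3$. Iterating Theorem~\ref{blowup} (and invoking Remark~\ref{S--E} when $a_i \equiv 2 \pmod 3$) yields
\[
\widehat\rD_{B(n),w}(e^{\Gamma_{(2)}+\Lambda_{(3)}}) = \widehat\rD_{E(n),w_0}(e^{(\Gamma_0)_{(2)}+(\Lambda_0)_{(3)}}) \cdot \prod_{i=1}^n F_{a_i}(-\Gamma\cdot E^i,\,-\Lambda\cdot E^i),
\]
where $F_0 = b$, $F_1 = s$, and $F_2(t_2,t_3) = s(t_2,-t_3)$ are the power series from \eqref{b-formula}--\eqref{s-formula}; this step also immediately yields $w$-simple type for $B(n)$. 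The Gaussian prefactors $e^{-(\Gamma\cdot E^i)^2/2 + (\Lambda\cdot E^i)^2}$ within each $F_{a_i}$ combine with $e^{Q(\Gamma_0)/2 - Q(\Lambda_0)}$ to produce the global factor $e^{Q(\Gamma)/2 - Q(\Lambda)}$, using $Q(\Gamma) = Q(\Gamma_0) - \sum(\Gamma\cdot E^i)^2$.

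A direct calculation then identifies each $F_{a_i}$ (with its Gaussian stripped off) with the claimed trigonometric expression $\tfrac{1}{3}[\cosh(\sqrt{3}\, E^i\cdot\Gamma) + 2\cos(-\tfrac{2\pi}{3} w\cdot E^i + \sqrt{3}\,E^i\cdot\Lambda)]$, via the identities $2\cos(-\tfrac{2\pi}{3}+x) = -\cos x + \sqrt{3}\sin x$ and $2\cos(-\tfrac{4\pi}{3}+x) = -\cos x - \sqrt{3}\sin x$. Applying Theorem~\ref{E(n)-inv} to the remaining $E(n)$-factor produces $G(\Gamma_0, \Lambda_0, w_0 \cdot f)^{n-2}$.

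The final step is to rewrite this as $G'(\Gamma, \Lambda, w\cdot f')^{n-2}$. Using both presentations of $B(n)$---as $E(n) \# n\cp$ and as $\#^n_{f'} B(1)$---one verifies that $f$ and $f'$ coincide in $H_2(B(n))$ up to a linear combination of the exceptional classes $E^i$, so the resulting correction is absorbed into the product over $i$ already present in the formula. The basic-class description $(n-2k) f' \pm E^1 \pm \cdots \pm E^n$ then follows by expanding $G'^{n-2}$ as a sum of exponentials and reading off the exponents. The main obstacle is precisely this homological reconciliation between the two elliptic fibrations on $B(n)$, the only step requiring simultaneous geometric analysis of both decompositions of the manifold.
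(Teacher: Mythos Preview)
Your approach is correct and is exactly what the paper does: the paper's entire proof reads ``This follows from the Blow-up formula and Theorem~\ref{E(3)-inv}'' (the reference is to the general $E(n)$ result restated as Theorem~\ref{E(n)-inv}).

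One clarification: the step you flag as the ``main obstacle'' is in fact trivial. The diffeomorphism $B(n)\cong E(n)\# n\cp$ arises because $B(n)=\#_{f'}^{\,n}(E(1)\#\cp)$ with the blow-up points chosen away from $f'$, so $B(n)=(\#_{f'}^{\,n}E(1))\# n\cp$; under this identification the regular elliptic fiber of $E(n)$ \emph{is} $f'$. Since moreover $E^i\cdot f'=0$ (in $B(1)$ the sphere $E$ is disjoint from $f'$), the class $f'$ lies in $H_2(E(n))\subset H_2(B(n))$, and one has $f'\cdot\Gamma=f'\cdot\Gamma_0$, $f'\cdot\Lambda=f'\cdot\Lambda_0$, $f'\cdot w=f'\cdot w_0$. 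Hence $G(\Gamma_0,\Lambda_0,w_0\cdot f)=G'(\Gamma,\Lambda,w\cdot f')$ on the nose; there are no exceptional-class corrections to absorb. With this in hand, everything else in your outline (the iterated blow-up, the sign bookkeeping via Remark~\ref{S--E}, and reading off the basic classes from the exponential expansion) goes through as you describe.
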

 \begin{proof}
	This follows from the Blow-up formula and Theorem \ref{E(n)-inv}.
 \end{proof}

From the construction, it is clear that there is a diffeomorphism (see Figures \ref{B(n)B(n)} and \ref{b(2)dotsB(2)}):
 \begin{equation} \label{Phin}
	\Phi_n: B(n) \#_{f_n} B(n) \to B(2) \#_{f_2} \dots \#_{f_2} B(2)
 \end{equation}

\begin{figure}[t!]
	\centering
		\includegraphics[height=.3\textheight, width=.8\textwidth]{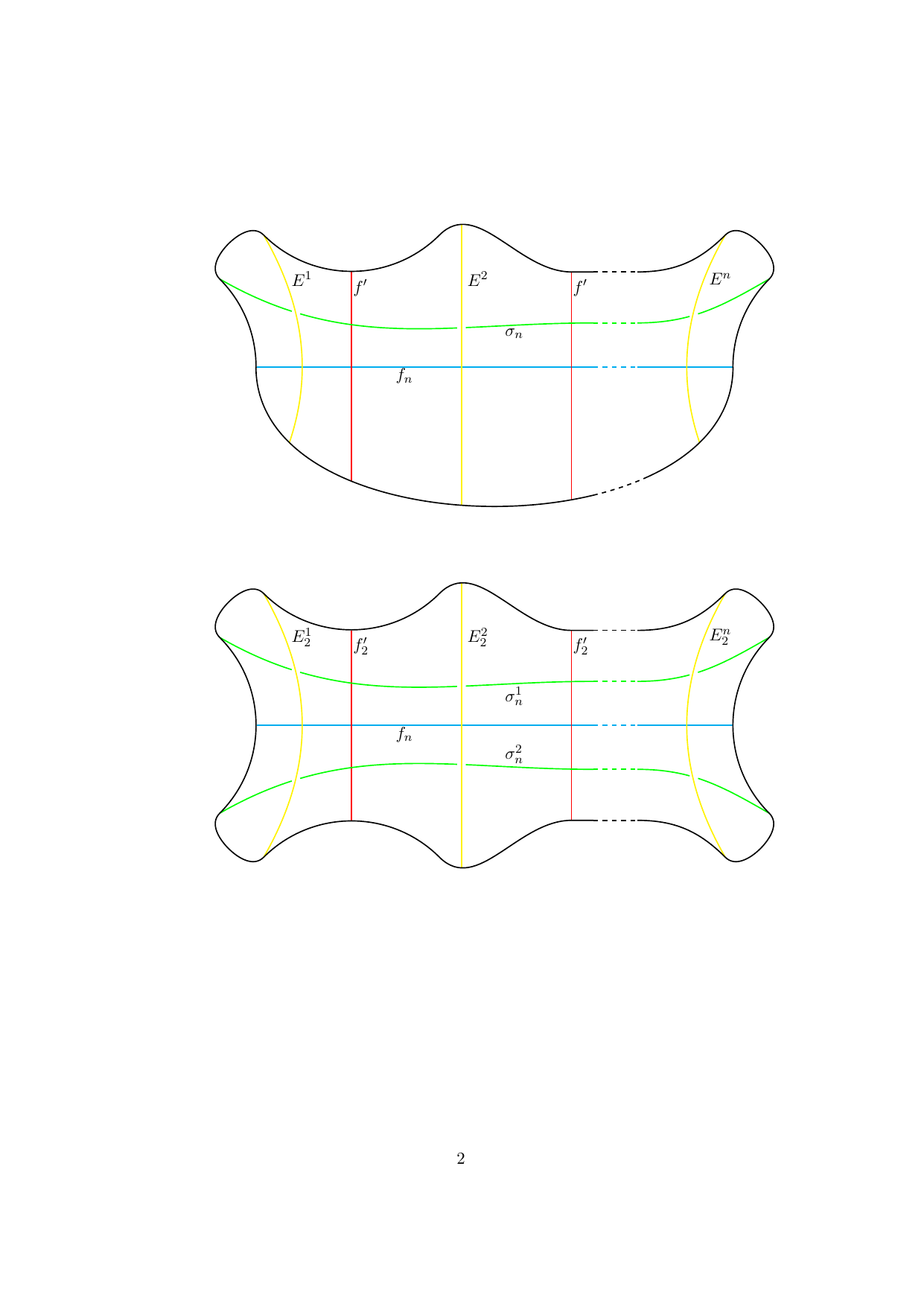}
		\caption{A schematic picture of $B(n)\#_{f_n}B(n)$}
		\label{B(n)B(n)}
\end{figure}
\begin{figure}[h!]
	\centering
		\includegraphics[height=.3\textheight, width=.8\textwidth]{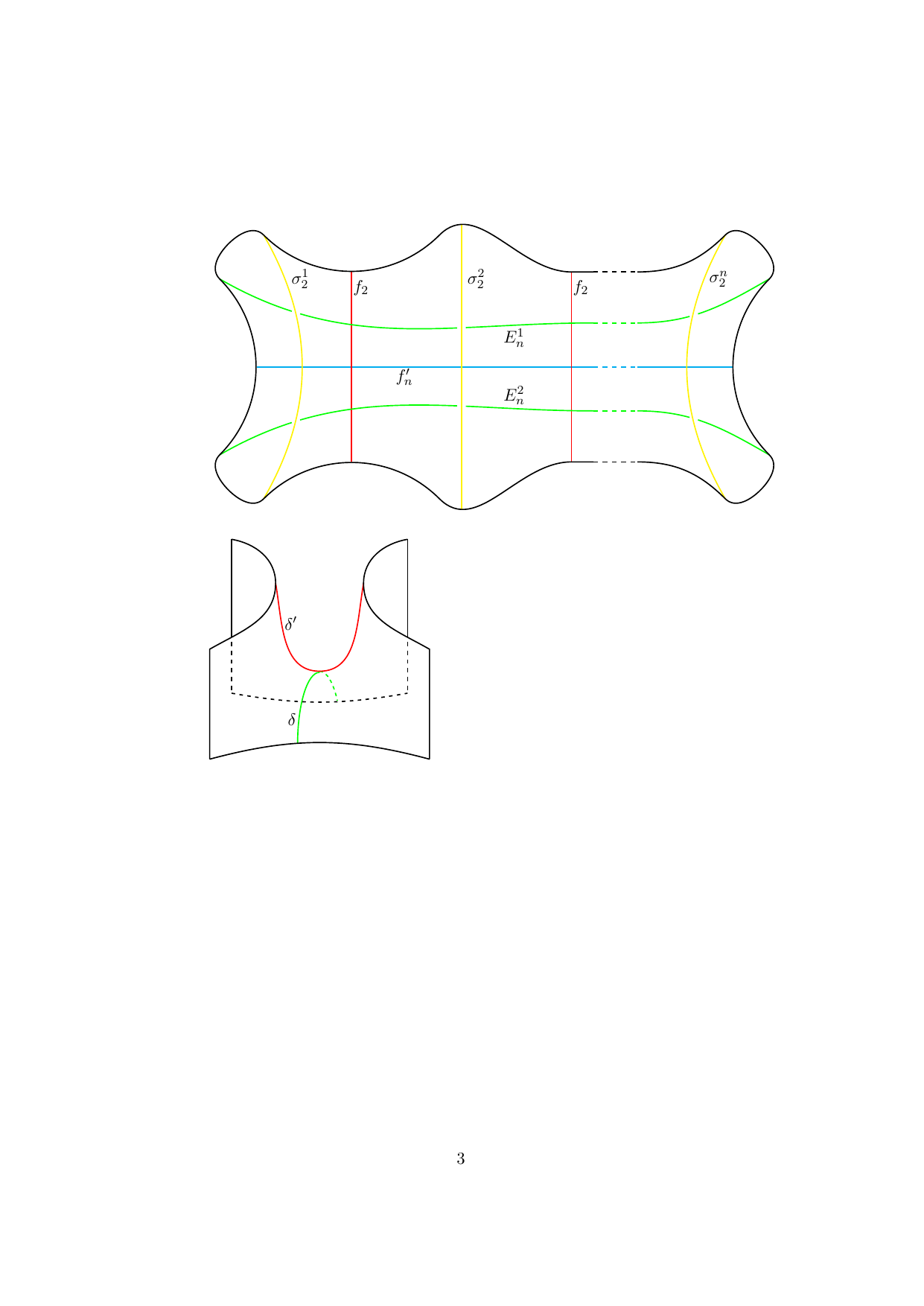}
		\caption{A schematic picture of $B(2) \#_{f_2} \dots \#_{f_2} B(2) $}
		\label{b(2)dotsB(2)}
\end{figure}

This diffeomorphism maps $f_n$ and $f_2':=f'\#f'$ in $B(n) \#_{f_n} B(n)$ to $f_n':=f' \# \dots \# f'$ and $f_2$ in $B(2) \#_{f_2} \dots \#_{f_2} B(2) $.  The sphere $\sigma_n\subset B(n)$ determines two spheres of self-intersection $-n$ in $B(n) \#_{f_n} B(n)$ which are denoted by $\sigma_n^1$ and $\sigma_n^2$. The diffeomorphism $\Phi_n$ maps $\sigma_n^i$ to $E^i_n:=E^i \# \dots \#E^i$. Therefore, the following elements of $\C[\![r_2,s_2,r_3,s_3,t_2,t_3]\!]$ are equal to each other:
 \begin{equation*}
	{\widehat{\rD}_{B(n) \#_{f_n} B(n), kf_2'+lf_n}}(\e^{(r_2f_n+s_2f_2')_{(2)}+(r_3f_n+s_3f_2')_{(3)}})=\hspace{2cm}
\end{equation*}	
\begin{equation}\label{symmetry}
	\hspace{2cm}={\widehat{\rD}_{B(2) \#_{f_2} \dots \#_{f_2} B(2) , kf_2+lf_n'}}(\e^{(r_2f_n'+s_2f_2)_{(2)}+(r_3f_n'+s_3f_2)_{(3)}})
 \end{equation}
\begin{prop} \label{genus-2-coeff}
	Suppose $(a, b) \in \mathcal C_2\backslash \{(0,0)\}$. Then:
	\begin{equation} \label{habge}
		h_{a,b}^{2,d}(t_2,t_3)=\sum_{(\gamma,\eta)\in \mathcal C_2\backslash \{(0,0)\}}h_{a,b,\gamma,\eta}^{2,d} \e^{\sqrt {3}\gamma t_2+
		\sqrt {3}i\eta t_3}
	\end{equation}	
	where $h_{a,b,\gamma,\eta}^{2,d}$ is a constant number.
\end{prop}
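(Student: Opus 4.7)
The plan is to realize $h_{a,b}^{2,d}(t_2,t_3)$ as a matrix coefficient of the $\U(3)$-polynomial invariants of the closed simple-type 4-manifold $Y:=B(2)\#_{f_2}B(2)$, and then use the finite exponential structure of such invariants to deduce the claimed form.

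I would begin by computing $\widehat{\rm D}_Y(e^{(\Gamma\#\Gamma')_{(2)}+(\Lambda\#\Lambda')_{(3)}})$ in two ways. Since $(B(2),w,f_2)$ is permissible with basic classes $\pm E^1\pm E^2$ (Proposition~\ref{B(n)-invariants}), Proposition~\ref{connected-sum-permissible} establishes the simple type of $Y$ and expresses this invariant as a finite sum of exponentials indexed by matched basic-class pairs $(K\#L,\,K'\#L')$ coming from $B(2)$. On the other hand the gluing formula \eqref{gluing-formula} expresses the same invariant as $\sum_{(a,b)\in \mathcal{C}_2\setminus \mathcal{C}_1}h_{a,b}^{2,d}(t_2,t_3)\,c_{B(2)}^{(a,b)}(\Gamma,\Lambda)\,c_{B(2)}^{(a,b)}(\Gamma',\Lambda')$, where each $c_{B(2)}^{(a,b)}$ is given explicitly by \eqref{relative-component} as a finite exponential sum.

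Next, I would match the two expressions by treating $(\Gamma,\Gamma',\Lambda,\Lambda')$ as parameters in a subspace of $\mathcal{K}$ transverse to the basic-class directions of $B(2)$, and performing a Fourier-style separation. Equating the coefficients of each character $e^{\alpha\cdot\Gamma+\alpha'\cdot\Gamma'+\beta\cdot\Lambda+\beta'\cdot\Lambda'}$ isolates an identity of the form $h_{a,b}^{2,d}(t_2,t_3)\cdot C_{a,b}=P(t_2,t_3)$, where $C_{a,b}$ is a nonzero numerical constant (the product of the $c_{i,j},\,d_{i',j'}$ of Definition~\ref{permissible-type}) and $P$ is a finite $\C$-linear combination of exponentials $e^{\sqrt{3}\gamma t_2+\sqrt{3}i\eta t_3}$ read off from the simple-type expansion of $\widehat{\rm D}_Y$. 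The frequencies $(\gamma,\eta)$ arising are precisely $((L+L')/2\cdot f_2,\,(L-L')/2\cdot f_2)$ for basic-class pairs $(L,L')$ of $B(2)$ occurring in the matched sum, and the adjunction inequality for $f_2$ (a genus $2$ surface) forces $(\gamma,\eta)\in\mathcal{C}_2$.

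The absence of the $(\gamma,\eta)=(0,0)$ term follows from Proposition~\ref{restriction-pairing-matrix}: any $(0,0)$-contribution would factor through the vanishing $\mathcal{C}_1$-part of the pairing, and is thus absent in the nontrivial sectors $(a,b)\in\mathcal{C}_2\setminus\{(0,0)\}$. The main technical obstacle is verifying that the Fourier-style decomposition is sufficiently clean over $\C[\![t_2,t_3]\!]$ to yield a genuine matching of exponential forms, rather than just an identity of power series. This reduces to checking that the four basic classes $\pm E^1\pm E^2$ of $B(2)$ induce distinct linear functionals on the chosen transverse subspace of $\mathcal{K}$, and that the pairing of relative classes $c_{B(2)}^{(a,b)}$ for different $(a,b)$ remains linearly independent --- both of which are straightforward from the explicit form of the basic classes and their intersections with $f_2 \in \{-2,0,2\}$.
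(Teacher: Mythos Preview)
Your two computations are not actually independent: Proposition~\ref{connected-sum-permissible} is \emph{derived from} the gluing formula \eqref{gluing-formula}, so both express $\widehat{\rm D}_{Y,w\#w'}$ in the same way, namely as
\[
e^{\frac{Q}{2}t_2^2-Qt_3^2}\sum_{(a,b)}h_{a,b}^{2,d}(t_2,t_3)\sum c_{ij}d_{i'j'}\,e^{\frac{\sqrt3}{2}(M+M')\cdot(\Gamma\#\Gamma')t_2+\frac{\sqrt3}{2}i(M-M')\cdot(\Lambda\#\Lambda')t_3}.
\]
The coefficients $h_{a,b}^{2,d}(t_2,t_3)$ are still unknown power series here, so no ``Fourier-style separation'' in $\Gamma,\Gamma',\Lambda,\Lambda'$ can isolate them as a known finite exponential sum $P(t_2,t_3)$. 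In particular, Proposition~\ref{connected-sum-permissible} does \emph{not} assert that $(Y,w\#w',f_2)$ is permissible in the sense of Definition~\ref{permissible-type} (with constant coefficients $c_{ij}$); that is the content of Theorem~\ref{D-inv-con-sum}, which comes after and depends on the proposition you are trying to prove.

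The missing ingredient is a genuinely second computation. The paper obtains it from the self-diffeomorphism $\Phi_2$ of $Y=B(2)\#_{f_2}B(2)$ (the reflection in Figure~\ref{B(2)B(2)}) which swaps $f_2$ and $f_2'$. Applying the gluing formula with $\Gamma\#\Gamma'=s_2f_2'+r_2f_2$ and $\Lambda\#\Lambda'=s_3f_2'+r_3f_2$ (and setting $t_2=t_3=1$) yields, after cancelling $e^{r_2s_2-2r_3s_3}$,
\[
\sum_{(a,b)}C_{a,b}\,h_{a,b}^{2,d}(s_2,s_3)\,e^{\sqrt3 a r_2+\sqrt3 i b r_3},
\]
and $\Phi_2$ forces this to be symmetric under $(r_2,r_3)\leftrightarrow(s_2,s_3)$. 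Since the exponentials $e^{\sqrt3 a r_2+\sqrt3 i b r_3}$ for $(a,b)\in\mathcal C_2\setminus\{(0,0)\}$ are linearly independent and each $C_{a,b}\neq 0$, this symmetry immediately gives $h_{a,b}^{2,d}$ the claimed form. Your argument never invokes $\Phi_2$ or any other source of independent information, so it cannot conclude.
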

\begin{proof}
	The triple $(B(2),df'+2df_2,f_2)$ is permissible, and
	Proposition \ref{connected-sum-permissible} can be utilized to compute the following element of $\C[\![r_2,r_3,s_2,s_3,t_2,t_3]\!]$:
	\begin{equation*}
		{\widehat{\rD}_{B(2) \#_{f_2} B(2), df_2'+df_2}}(\e^{(s_2 f_2'+r_2f_2)_{(2)}+(s_3f_2'+r_3f_2)_{(3)}}).
	\end{equation*}	
	We can evaluate this series at $t_2=t_3=1$ to obtain a well-defined element of $\C[\![r_2,r_3,s_2,s_3]\!]$. This power series is equal to:
	\begin{equation}\label{C-2-inv}
	\e^{r_2s_2-2r_3s_3} \sum_{(a,b)\in \mathcal C_2\backslash \{(0,0)\} } h_{a,b}^{2,d}(s_2,\,s_3)\e^{\sqrt {3}ar_2+\sqrt {3}\bi br_3}
		(\sum_{(K_i,K_j)} c_{ij})^2
	 \end{equation}
	where the inner sum is over all pairs of basis classes $(K_i,K_j)$ of $(B(2),df'+2df_2,f_2)$ such that:
	\begin{equation*}
		\frac{(K_i+K_j)}{2}\cdot f_2=a\hspace{1cm} \frac{(K_i-K_j)}{2}\cdot f_2=b.
	\end{equation*}
	For each choice of $(a,b)$, the inner sum is non-zero.
	The identity \eqref{symmetry} shows that the expression \eqref{C-2-inv} is invariant with respect to the symmetry of $\C[\![r_2,r_3,s_2,s_3]\!]$
	that switches $r_2$ with $s_2$ and $r_3$ with $s_3$. This can be used to show that $h_{(a,b)}^{2,d}(t_2,\,t_3)$ has the form in \eqref{habge}.
\end{proof}

\begin{figure}[h]
	\centering
		\includegraphics[height=.3\textheight, width=.4\textwidth]{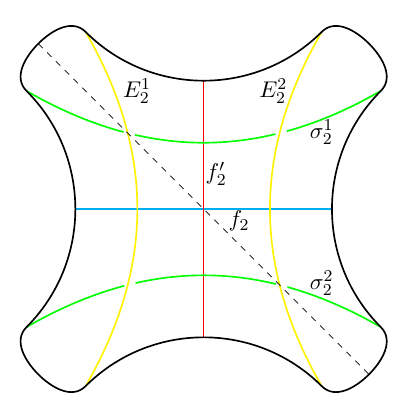}
		\caption{A schematic picture of $B(2)\#_{f_2}B(2)$: reflection with respect to the dashed line represents
		the diffeomorphism $\Phi_2$ of this 4-manifold }
		\label{B(2)B(2)}
\end{figure}

\begin{prop} \label{vanishing-constant-coeff-2}
	The constant numbers $h_{a,b,\gamma,\eta}^{2,d}$ are zero except possibly the following ones:
	\begin{equation*}
		h_{2,0,2,0}^{2,d}\hspace{1cm}h_{0,2,0,2}^{2,d}\hspace{1cm}h_{0,-2,0,-2}^{2,d}\hspace{1cm}h_{-2,0,-2,0}^{2,d}.
	\end{equation*}
	Furthermore, there are real numbers $\hbar_3$ and $\hbar_4$ such that:
	\begin{equation}
		h_{2,0,2,0}^{2,d}=h_{-2,0,-2,0}^{2,d}=\hbar_3\hspace{1cm}h_{0,2,0,2}^{2,d}=\zeta^{d}\hbar_4
		\hspace{1cm}h_{0,-2,0,-2}^{2,d}= \zeta^{-d}\hbar_4.
	\end{equation}
\end{prop}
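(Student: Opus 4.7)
The plan is to extract the remaining constraints from the diffeomorphism identity \eqref{symmetry} for $n = 3$ (or more generally for $n\geq 3$), by expanding both sides of that identity with the help of Proposition \ref{connected-sum-permissible} and comparing exponential factors. Proposition \ref{genus-2-coeff} has already reduced each $h^{2,d}_{a,b}(t_2,t_3)$ to the ansatz \eqref{habge}, so everything that remains is to determine which coefficients $h^{2,d}_{a,b,\gamma,\eta}$ can be non-zero.

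First I would compute the left hand side of \eqref{symmetry} for $n=3$ as a single application of Proposition \ref{connected-sum-permissible} to the permissible triple $(B(3),w,f_3)$. By Proposition \ref{restriction-pairing-matrix}, only coefficients $h^{3,d}_{a,b}$ with $|a|+|b|=4$ can contribute, and by Proposition \ref{B(n)-invariants} the basic classes of $B(3)$ are $\pm f' \pm E^1 \pm E^2 \pm E^3$. Using $f'\cdot f' = 0$ and $E^i\cdot f' = 0$, the intersections with $f_3$ and with $f_2'=f'\#f'$ take very restricted values, so the dependence of the left hand side on $(r_2,s_2,r_3,s_3)$ is through a small, explicit family of exponentials. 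I would then compute the right hand side of \eqref{symmetry} by applying Proposition \ref{connected-sum-permissible} twice, first gluing two of the $B(2)$'s along $f_2$ and then gluing in the third. This produces a double sum over pairs $(a_1,b_1),(a_2,b_2)\in \mathcal C_2\setminus\{(0,0)\}$, each summand carrying a product $h^{2,d}_{a_1,b_1}\cdot h^{2,d}_{a_2,b_2}$, which by \eqref{habge} expands into a large sum of exponentials parametrized by auxiliary indices $(\gamma_i,\eta_i)$.

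The decisive step is to match the two sides term by term as formal power series in $r_2,s_2,r_3,s_3$. On the left the exponents are highly constrained by the explicit form of the basic classes of $B(3)$, while on the right the indices $(\gamma_i,\eta_i)$ contribute independent linear combinations of $t_2,t_3$. Requiring that the right hand side reproduce only the exponents allowed on the left forces $h^{2,d}_{a,b,\gamma,\eta}=0$ unless $(\gamma,\eta)$ is synchronized with $(a,b)$ in the precise way asserted. In particular, the ``mixed'' entries with $(a,b)=(\pm 1,\pm 1)$ and all cross terms like $(\pm 2,0,0,\pm 2)$ are eliminated, leaving only the four surviving possibilities $(\pm 2,0,\pm 2,0)$ and $(0,\pm 2,0,\pm 2)$. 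Once the vanishing has been established, the symmetry \eqref{different-det-bdle} (changing the orientation of $\Sigma$) pairs up $(2,0,2,0)$ with $(-2,0,-2,0)$ and $(0,2,0,2)$ with $(0,-2,0,-2)$. The $\zeta^{\pm d}$ twist in the two latter coefficients comes from the prefactor $\zeta^{2bd(g-1)}$ in the definition of the pairing matrix (which, at $g=2$ and $b=\pm 2$, gives $\zeta^{\pm 4d}=\zeta^{\pm d}$), combined with the $\zeta^{dbd_w}$ factor in \eqref{relative-component}; this identifies the surviving entries as $\hbar_3$ and $\zeta^{\pm d}\hbar_4$ for some constants $\hbar_3,\hbar_4$ independent of everything in sight.

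The main obstacle will be the combinatorial bookkeeping in the exponential matching step: the right hand side initially carries a very rich family of exponentials (eight possibilities for each $(a_i,b_i)$ and eight for each $(\gamma_i,\eta_i)$, compounded by the two iterations), and one must show that only the asserted four coefficients survive. In particular, ruling out same-support off-diagonal terms such as $h^{2,d}_{2,0,-2,0}$ and all sixteen terms with $|a|=|b|=1$ requires using several different values of $(k,l)$ in \eqref{symmetry} (and possibly also the analogous identity for $n=4$) to get enough independent constraints; it is here that the restricted form of the basic classes of $B(n)$ for $n\geq 3$ is essential.
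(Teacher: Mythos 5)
Your overall strategy (exploit the diffeomorphism \eqref{Phin} to force an identity between two exponential expansions, then separate terms by linear independence of exponentials) is the right one, but you have chosen the wrong instance of it, and this creates a genuine gap. You propose to use \eqref{symmetry} for $n=3$, computing the left-hand side ``as a single application of Proposition \ref{connected-sum-permissible} to $(B(3),w,f_3)$.'' That left-hand side is \emph{not} explicit at this stage: it is a sum over $(a,b)\in\mathcal C_3\setminus\mathcal C_2$ weighted by the genus-$3$ pairing coefficients $h^{3,d}_{a,b}(t_2,t_3)$, which are undetermined power series here. In the paper's logical order these are computed only \emph{after} the present proposition, and their computation \emph{uses} the genus-$2$ result as input (compare \eqref{E-B(2g)B(2g)-1} with \eqref{E-B(2g)B(2g)-2} in the final proposition of the section). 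So your ``decisive step'' of matching exponentials term by term is a simultaneous determination of two families of unknowns from one functional identity; you give no argument that the system is not underdetermined, and your own closing paragraph (``requires using several different values of $(k,l)$ \dots and possibly also the analogous identity for $n=4$'') concedes that the elimination of the off-diagonal and $|a|=|b|=1$ terms has not actually been carried out.

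The paper stays entirely at $n=2$: the diffeomorphism $\Phi_2$ of $B(2)\#_{f_2}B(2)$ swaps $f_2\leftrightarrow f_2'$ and $\sigma_2^i\leftrightarrow E_2^i$, so one gets an identity between two expansions of the \emph{same} normalized series ${\rm N}_{B(2)\#_{f_2}B(2),\cdot}$ on the subspace $\bigl(\im(\#)\cap(\Phi_2)_*(\im(\#))\bigr)^{\oplus 2}$, with no higher-genus unknowns entering. The crux is then the distinctness check: Table \ref{pairing-table} evaluates the candidate exponents $L_m^\delta$, $J_m^\delta$ on enough classes in that subspace to show that the only coincidences are $(L_1^2,J_1^2)$ and $(L_2^{-2},J_2^{-2})$, whence Lemma \ref{sum-exponential} kills every coefficient except the four listed; the relations $h^{2,d}_{2,0,2,0}=h^{2,d}_{-2,0,-2,0}$ and $h^{2,d}_{0,\pm2,0,\pm2}=\zeta^{\pm d}\hbar_4$ then come from the surviving terms of \eqref{exponential-identity} together with \eqref{different-det-bdle} and rationality of the invariants (your heuristic accounting of the $\zeta^{\pm d}$ via the prefactor $\zeta^{2bd(g-1)}$ is not a substitute for this). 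To repair your write-up, either switch to the $n=2$ self-diffeomorphism and supply the analogue of Table \ref{pairing-table}, or, if you insist on $n\geq 3$, first isolate and eliminate the unknown $h^{3,d}_{a,b}$ from the identity before attempting the term-by-term comparison.
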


\begin{proof}
	Firstly, for the purpose of brevity, let:
	\begin{equation*}
		{\rm N}_{X,w}(\Gamma,\Lambda):={\widehat{\rD}_{X,w}}(\e^{\Gamma_{(2)}+\Lambda_{(3)}})\e^{\frac{-Q(\Gamma)}{2}+Q(\Lambda)}.
	\end{equation*}
	Proposition \ref{genus-2-coeff} implies that ${\rm N}_{B(2) \#_{f_2} B(2), df_2'+d'f_2}(\Gamma,\Lambda)$, for $\Gamma, \Lambda \in \im(\#)$, has the following form:
	 \begin{equation*}
		\sum M_{a,b,\gamma,\eta}^{d,d'}(i,j,i',j')\e^{(\frac{{\sqrt 3}}{2}((K_i\#K_{i'}+K_j\#K_{j'}+2\gamma f_2) \cdot \Gamma)+
		\frac{{\sqrt 3}}{2}\bi((K_i\#K_{i'}-K_j\#K_{j'}+2\eta f_2)\cdot \Lambda))}
	\end{equation*}	
	where the sum is over the pairs $(a,b), (\gamma,\eta)\in\mathcal C_2 \backslash \{(0,0)\}$
	and the basic classes $K_i$, $K_j$, $K_{i'}$ and $K_{j'}$ of the permissible triple $(B(2),df'+2d'f_2,f_2)$ such that:
	\begin{equation*}
		\frac{(K_i+K_j)}{2}\cdot f_2=\frac{(K_{i'}+K_{j'})}{2}\cdot f_2=a\hspace{1cm} \frac{(K_i-K_j)}{2}\cdot f_2=\frac{(K_{i'}-K_{j'})}{2}\cdot f_2=b.
	\end{equation*}
	In the above expression, the constant number $M_{a,b,\gamma,\eta}^{d,d'}(i,j,i',j')$ is equal to $h_{a,b,\gamma,\eta}^{2,d}c_{ij}c_{i'j'}$
	where $c_{ij}$ and $c_{i'j'}$ are the coefficients associated to the pairs of basic classes $(K_i,K_j)$ and $(K_{i'},K_{j'})$ for the permissible triple
	$(B(2),df'+2d'f_2,f_2)$.
	We need the following elementary lemma:
	\begin{lemma}\label{sum-exponential}
		Suppose $V$ is a vector space and $\{f_i\}_{1\leq i\leq N}$ is a finite set of distinct complex valued
		linear functionals on $V$. Then the functions $\{\e^{f_i}\}_{1\leq i\leq N}$ are linearly independent over $\C$.
	\end{lemma}
	This Lemma is an immediate consequence of the existence of a line $l \subseteq V$ such that the restrictions $f_i|_l$ are distinct.
	We apply this lemma in the case that $V$ is the following subspace of $H_2(B(2)\#_{f_2}B(2))^{\oplus 2}$:
	\begin{equation*}
		\(\im(\#)\cap (\Phi_2)_*(\im(\#))\)\oplus\(\im(\#)\cap (\Phi_2)_*(\im(\#))\)
	\end{equation*}	
	Since $\Phi$ maps $df_2'+d'f_2$ to $df_2+d'f_2'$, we have:
	\begin{equation*}
		{\rm N}_{B(2)\#_{f_2}B(2),df_2'+d'f_2}(\Gamma,\Lambda)={\rm N}_{B(2)\#_{f_2}B(2),df_2+d'f_2'}((\Phi_2)_*(\Gamma),(\Phi_2)_*(\Lambda))
	\end{equation*}
	for $(\Gamma,\Lambda) \in V$. This identity implies that:
	 \begin{equation} \label{exponential-identity}
		\sum M_{a,b,\gamma,\eta}^{d,d'}(i,j,i',j')\e^{f_{a,b,\gamma,\eta}^{i,j,i',j'}(\Gamma,\Lambda)}
		-M_{a,b,\gamma,\eta}^{d',d}(i,j,i',j')\e^{g_{a,b,\gamma,\eta}^{i,j,i',j'}(\Gamma,\Lambda)}=0.
	\end{equation}	
	Here $f_{a,b,\gamma,\eta}^{i,j,i',j'}$ is defined by the following pair of cohomology classes:
	\begin{equation*}
		(\frac{{\sqrt 3}}{2}(K_i\#K_{i'}+K_j\#K_{j'}+2\gamma f_2),\,
		\frac{{\sqrt 3}}{2}\bi(K_i\#K_{i'}-K_j\#K_{j'}+2\eta f_2))	
	\end{equation*}
	and $g_{a,b,\gamma,\eta}^{i,j,i',j'}=\Phi_2^*(f_{a,b,\gamma,\eta}^{i,j,i',j'})$.
	In the case of $f_{a,b,\gamma,\eta}^{i,j,i',j'}$, this pair is equal to:
	\begin{equation*}
		(\frac{{\sqrt 3}}{2}(L_m^{\delta}+L_{m'}^{\delta'}), \frac{{\sqrt 3}}{2}\bi(L_{m}^{\delta}-L_{m'}^{\delta'}))
	\end{equation*}
	for an appropriate choice of $m$ and $\delta$, where $L_m^{\delta}$ is defined in the following table:
	
\begin{table}[H]
	\begin{centering}
		\begin{tabular}{|c|c|c|c|c|c|c|c|}
		\hline
		&$f_2$&$f_2'$&$E_2^1$&$E_2^2$&$\sigma_2^1$&$\sigma_2^2$\\
		\hline
		$L_1^\delta:=E_2^1+E_2^2+\delta f_2$&$2$&$\delta$&$\delta-2$&$\delta-2$&$0$&$0$\\
		\hline
		$L_2^\delta:=-E_2^1-E_2^2+\delta f_2$&$-2$&$\delta$&$\delta+2$&$\delta+2$&$0$&$0$\\
		\hline
		$L_3^\delta:=-E_2^1+E_2^2+\delta f_2$&$0$&$\delta$&$\delta+2$&$\delta-2$&$0$&$0$\\
		\hline
		$L_4^\delta:=E_2^1-E_2^2+\delta f_2$&$0$&$\delta$&$\delta-2$&$\delta+2$&$0$&$0$\\
		\hline
		$L_5^\delta:=\zeta_1-\zeta_2+\delta f_2$&$0$&$\delta$&$\delta$&$\delta$&$0$&$0$\\
		\hline
		$L_6^\delta:=\zeta_2-\zeta_1+\delta f_2$&$0$&$\delta$&$\delta$&$\delta$&$0$&$0$\\
		\hline
		$J_1^\delta:=\sigma_2^1+\sigma_2^2+\delta f_2'$&$\delta$&$2$&$0$&$0$&$\delta-2$&$\delta-2$\\
		\hline
		$J_2^\delta:=-\sigma_2^1-\sigma_2^2+\delta f_2'$&$\delta$&$-2$&$0$&$0$&$\delta+2$&$\delta+2$\\
		\hline
		$J_3^\delta:=-\sigma_2^1+\sigma_2^2+\delta f_2'$&$\delta$&$0$&$0$&$0$&$\delta+2$&$\delta-2$\\
		\hline
		$J_4^\delta:=\sigma_2^1-\sigma_2^2+\delta f_2'$&$\delta$&$0$&$0$&$0$&$\delta-2$&$\delta+2$\\
		\hline
		$J_5^\delta:=\xi_1-\xi_2+\delta f_2'$&$\delta$&$0$&$0$&$0$&$\delta$&$\delta$\\
		\hline
		$J_6^\delta:=\xi_2-\xi_1+\delta f_2'$&$\delta$&$0$&$0$&$0$&$\delta$&$\delta$\\
		\hline
		\end{tabular}
		\caption{Pairing of the cohomology classes $L_m^\delta$ and $J_m^\delta$ with some elements of $\(\im(\#)\cap (\Phi_2)_*(\im(\#))\)$}\label{pairing-table}
	\end{centering}
\end{table}

	There are similar formulas for $g_{a,b,\gamma,\eta}^{i,j,i',j'}$, where $L_m^{\delta}$ and $L_{m'}^{\delta'}$ are replaced with
	$J_m^{\delta}$ and $J_{m'}^{\delta'}$.
	In the above table, the cohomology classes $\zeta_1$ and $\zeta_2$ are equal to $E^1\#E^2$ and $E^2\#E^1$. Moreover, $\xi_i:=\Phi_2^*(\zeta_i)$. The table contains evaluations of
	$L_{m}^{\delta}$ and $J_{m}^\delta$ at some elements of $\im(\#)\cap (\Phi_2)_*(\im(\#))$.
	The evaluations of this table shows that the only possible identities among the following functionals on $V$:
	\begin{equation*}
		\hspace{2cm}L_{i}^{\delta}\hspace{1cm} J_{i}^{\delta} \hspace{2cm} 1\leq i \leq 4.
	\end{equation*}
	are the identities of the elements in the pairs $(L_{1}^{2},J_{1}^{2})$ and $(L_{2}^{-2},J_{2}^{-2})$.
	Therefore, Lemma \ref{sum-exponential} can be used to prove the first part of the proposition.
	The second part, is
	a consequence of the remaining information in \eqref{exponential-identity}, Identity \eqref{different-det-bdle} and rationality of polynomial invariants.
\end{proof}

\begin{prop} \label{non-van-h-3-4}
	The constant number $\hbar_3$ is non-zero.
\end{prop}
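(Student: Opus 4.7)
The idea is to derive a contradiction from $\hbar_3=0$ by applying the gluing formula of Proposition \ref{connected-sum-permissible} to an algebraic surface that decomposes as a fiber sum along a genus-$2$ surface of self-intersection zero, and then invoking the non-vanishing theorem for $\U(3)$-polynomial invariants of algebraic surfaces.

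First I would verify that $(E(3),w,f_2)$ is a permissible triple, where $f_2$ denotes a generic fiber of the genus-$2$ fibration of $E(3)=X(3,2)$ (not the elliptic fibration) and $w$ is a $2$-cycle with $w\cdot f_2\not\equiv 0\pmod 3$. The basic classes of $E(3)$ are $\pm f$, with $f$ the elliptic fiber, read off by expanding the formula in Theorem \ref{E(n)-inv} into exponentials; and since $f\cdot f_2=2=2g-2$, the adjunction bound $|K\cdot f_2|\le 2g-2$ in Definition \ref{permissible-type} is saturated. The coefficients $c_{ij}$ attached to the pairs $(f,f)$ and $(-f,-f)$ are each $\hbar_1/2$ (from the $\cosh$ term of $G$), while those attached to $(f,-f)$ and $(-f,f)$ are proportional to $\hbar_2$ times a cube root of unity (from the $\cos$ term).

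Next I would apply Proposition \ref{connected-sum-permissible} to the fiber sum $X(3,4)=E(3)\#_{f_2}E(3)$, which is a smooth projective surface with positive geometric genus. The gluing formula decomposes $\widehat{\rD}_{X(3,4),w_1\#w_2}(\e^{(\Gamma_1\#\Gamma_2)_{(2)}+(\Lambda_1\#\Lambda_2)_{(3)}})$ into an $\hbar_3$-piece (coming from pairs of $E(3)$-basic classes with matching signs on $f_2$) whose prefactor is proportional to $\hbar_3\hbar_1^2$ and which contains non-trivial $\Gamma$-exponentials $\e^{\pm\sqrt{3}(f\#\pm f)\cdot\Gamma}$, plus an $\hbar_4$-piece (from mismatched signs) with prefactor proportional to $\hbar_4\hbar_2^2$ whose exponentials depend only on $\Lambda$. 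Assuming $\hbar_3=0$, the $\Gamma$-dependence of the series reduces to the Gaussian factor $\e^{Q(\Gamma)/2}$ alone, and the $\Gamma$-channel has no non-trivial basic classes.

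Finally, Theorem \ref{nonvan} guarantees that for a hyperplane class $h$ on $X(3,4)$ and appropriate $w$ coprime to $3$, the invariant $\rD_{X(3,4),w}(h^d)$ is strictly positive for infinitely many large $d$ satisfying the dimension congruence. Under the assumption $\hbar_3=0$, however, the formula above would force $\rD_{X(3,4),w}(h^d)$ to be a constant (the $\Lambda=0$ specialization of the $\hbar_4$-piece) times a moment of the Gaussian $\e^{Q(\Gamma)/2}$, a rigid functional form that vanishes for infinitely many admissible values of $d$ (for instance, when $d$ is odd relative to the parity selected by $Q(h)^{d/2}$) and cannot match the positivity predicted by Theorem \ref{nonvan}. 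This contradicts $\hbar_3=0$. The main obstacle is the careful bookkeeping in this final comparison, specifically showing that the Gaussian-only shape predicted under $\hbar_3=0$ cannot accommodate the positivity and degree structure of the algebro-geometric invariants; a subsidiary verification is that $\hbar_2\neq 0$ (so that the $\hbar_4$-piece is itself non-trivial and the alternative scenario is not vacuous), which follows from applying Theorem \ref{nonvan} directly to $E(3)$ together with the explicit formula in Theorem \ref{E(n)-inv}.
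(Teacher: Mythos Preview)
Your overall strategy---realize $X(3,4)$ as the fiber sum $E(3)\#_{f_2}E(3)$, verify that $(E(3),w,f_2)$ is permissible, apply Proposition~\ref{connected-sum-permissible}, and contrast with Theorem~\ref{nonvan}---is exactly what the paper does. The gap is in your final contradiction.

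Your parity argument does not work for $N=3$. The dimension formula \eqref{dimension-fomrula} forces the degree of any class on which $\rD^3_{X,w}$ is nonzero to be divisible by~$4$; since $\deg(h_{(2)}^d)=2d$, only even $d$ arise, and for even $d$ the Gaussian moments $Q(h)^{d/2}$ do not vanish. So under $\hbar_3=0$ the remaining $\hbar_4$-piece gives
\[
\widehat{\rD}_{X(3,4),w}(\e^{h_{(2)}})
   = 2\hbar_2^2\hbar_4\cos\!\Big({-}\tfrac{2\pi}{3}\,w\cdot K\Big)\,\e^{Q(h)/2},
\]
whose $t_2^{2m}$-coefficients are all nonzero with a fixed sign once $Q(h)>0$ and the constant in front is nonzero. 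There is no parity mismatch to exploit.

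The paper sidesteps this by taking a linear combination over two holomorphic $2$-cycles $w_1=f_3$ and $w_2=f_3+f_2$ so that the cosine terms cancel identically:
\[
2\,\widehat{\rD}_{X(3,4),w_1}(\e^{S_{(2)}})+\widehat{\rD}_{X(3,4),w_2}(\e^{S_{(2)}})
   = \tfrac{3}{2}\,\hbar_1^2\hbar_3\,\e^{Q(S)/2}\cosh\!\big(\sqrt{3}\,(f_3+2f_2)\cdot S\big),
\]
with $S$ an explicit ample class. Each summand on the left has positive large-$d$ coefficients by Theorem~\ref{nonvan}, so the right-hand side does too, forcing $\hbar_1^2\hbar_3>0$ and hence $\hbar_3\neq 0$ (and incidentally $\hbar_1\neq 0$). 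Your argument can be repaired along similar lines: instead of parity, note that the sign of $\cos(-\tfrac{2\pi}{3}w\cdot K)$ flips as $w\cdot K$ varies mod~$3$, so for at least one admissible holomorphic $w$ the constant in front of $\e^{Q(h)/2}$ is nonpositive, contradicting Theorem~\ref{nonvan}; but you then have to check that such a $w$ can be chosen coprime to $3$, decomposable as $w_1\#w_2$, and representing $c_1$ of a holomorphic line bundle. The paper's linear-combination trick avoids this case analysis entirely.
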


\begin{proof}
	Recall that the 4-manifold $X(m,n)$ is a branched double cover of $W(m,n)$ which is the blow up of ${\bf CP}^1\times {\bf CP}^1$ at $4mn$ points. Suppose $\pi:X(m,n) \to W(m,n)$ is the covering map.
	Since $\pi$ does not contract any curve, the pullback of any
	ample divisor is still ample by Nakai-Moishezon Criterion and projection formula \cite{Har:Alg-Geo}. In particular, the following divisor is ample:
	\begin{equation*}
		\pi^\ast(\{p\}\times {\bf CP}^1+{\bf CP}^1 \times \{q\} -\sum_i E_i)= f_{n-1}+f_{m-1}-\sum_{i=1}^{4mn} \pi^\ast(E_i)
	\end{equation*}
	where $\{E_i\}_{1\leq i \leq 4mn}$ is the set of exceptional classes.	

	Next we focus on the 4-manifold $X(3,4)$. Suppose $w_1=f_3$ and $w_2=f_3+f_2$. Thus there is a holomorphic line bundle $L_i$ on $X(3,4)$ that $c_1(L_i)$ is represented by $w_i$ and $w_i$
	can be decomposed as $w_i^1\#w_i^2$ with respect to the decomposition of $X(3,4)$ as $E(3)\#_{f_2}E(3)$.
	Let $S$ denote the ample class  $f_2+f_3- \sum_{i=1}^{48}\pi^*(E_i)$.
	According to Theorem \ref{nonvan}, the coefficient of $t_2^k$ in the series $\widehat {\rD}_{X(3,4),w}(\e^{S_{(2)}})\in \C[\![t_2]\!]$ is positive
	for large values of $k$. Since $X(3,4)=E(3)\#_{f_2}E(3)$ and the homology class $S$ lies in the image of $\#$, we can use Proposition \ref{vanishing-constant-coeff-2} to show that:
	\begin{equation*}
		\widehat {\rD}_{X(3,4),w_i}(\e^{S_{(2)}})={\rm e}^{\frac{Q(S)}{2}}[\frac{1}{2}\hbar_1^2 \hbar_3\cosh(\sqrt{3}(f_3+2f_2)\cdot S)+
		2\hbar_2^2\hbar_4\cos(-\frac{2\pi}{3}w_i\cdot (f_3+2f_2))]
	\end{equation*}	
	Therefore:
	\begin{equation*}
		2\widehat {\rD}_{X(3,4),w_1}(\e^{S_{(2)}})+\widehat {\rD}_{X(3,4),w_2}(\e^{S_{(2)}})=\frac{3}{2}\hbar_1^2 \hbar_3{\rm e}^{\frac{Q(S)}{2}}\cosh(\sqrt{3}(f_3+2f_2)\cdot S).
	\end{equation*}			
	This implies that $\hbar_3$ is a positive number (and $\hbar_1$ is non-zero).
\end{proof}

\begin{prop}
	The power series  $h^{g,d}_{a,b}(t_2,t_3)$ is zero, unless:
	\begin{equation*}
		(a,\,b) \in \{(\pm(2g-2),\,0),(0,\,\pm(2g-2))\}.	
	\end{equation*}
	Furthermore, we have:
	\begin{equation*}
		h^{g,d}_{\pm (2g-2),0}(t_2,t_3)=\hbar_3^{g-1}(\frac{2}{\hbar_1})^{2g-4}\e^{\pm2\sqrt 3 t_2} \hspace{1cm}
		h^{g,d}_{0,\pm(2g-2)}(t_2,t_3)=\hbar_4^{g-1}\hbar_2^{4-2g}\zeta^{\pm \frac{2\pi}{3}d}\e^{\pm2\sqrt 3 \bi t_3}.
	\end{equation*}
\end{prop}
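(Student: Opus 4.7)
The plan is to pin down $h^{g,d}_{a,b}$ by computing the $\U(3)$-series of the closed 4-manifold $W_g:=B(g)\#_{f_g}B(g)$ in two distinct ways and comparing. The key input is the diffeomorphism $\Phi_g$ of \eqref{Phin}, which identifies $W_g$ with the iterated fiber sum $V_g:=B(2)\#_{f_2}\cdots\#_{f_2}B(2)$ of $g$ copies of $B(2)$ along genus-2 surfaces. Under $\Phi_g$, the genus-$g$ surface $f_g\subset W_g$ is sent to $f_g'=f'\#\cdots\#f'$, while $f_2'=f'\#f'\subset W_g$ is sent to $f_2$ inside one of the intermediate summands, so the two descriptions carry genuinely different structural information about the same invariant.

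On the $W_g$ side, apply Proposition~\ref{connected-sum-permissible} directly, with 2-cycles $w,w'\subset B(g)$ satisfying $w\cdot f_g\equiv w'\cdot f_g\equiv d\pmod 3$ and with homology classes $\Gamma,\Gamma',\Lambda,\Lambda'\in H_2(B(g))$ each intersecting $f_g$ in one point. The resulting $\widehat{\rD}_{W_g,w\#w'}(\e^{(\Gamma\#\Gamma')_{(2)}+(\Lambda\#\Lambda')_{(3)}})$ is an exponential pre-factor times a sum over $(a,b)\in\mathcal C_g\setminus\mathcal C_{g-1}$ of $h^{g,d}_{a,b}(t_2,t_3)$ paired against explicit exponentials in the basic classes of $B(g)$ computed in Proposition~\ref{B(n)-invariants}, namely $(g-2k)f'\pm E^1\pm\cdots\pm E^g$, summed over pairs satisfying \eqref{basic classes pairs}. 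On the $V_g$ side, apply Proposition~\ref{connected-sum-permissible} iteratively $g-1$ times, writing $V_g=B(2)\#_{f_2}V_{g-1}$. By Proposition~\ref{vanishing-constant-coeff-2}, every step contributes nontrivially only through $(\alpha,\beta)\in\{(\pm2,0),(0,\pm2)\}$ with coefficient $\hbar_3$ or $\zeta^{\pm d}\hbar_4$; enforcing the pairing constraint \eqref{basic classes pairs} at each stage gives a sum over sequences $(\epsilon_1,\ldots,\epsilon_{g-1})\in\{\pm1\}^{g-1}$ of sign choices multiplied by exponentials of the form $\exp(\pm2\sqrt3(g-1)\,t_2)$ or $\exp(\pm2\sqrt3\bi(g-1)\,t_3)$, together with endpoint factors coming from the sections $\sigma_2^i$ of the outermost $B(2)$-summands.

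Pulling the two expressions back to the shared subspace $(\im(\#)\cap\Phi_g^*(\im(\#)))^{\oplus 2}$ and invoking Lemma~\ref{sum-exponential} on the distinct exponentials that appear produces termwise matching. Since the $V_g$-side exponents are unmixed in $t_2$ and $t_3$, only sequences consisting entirely of $(\pm2,0)$-steps or entirely of $(0,\pm2)$-steps survive; the former contribute to $(a,b)=(\pm(2g-2),0)$ and the latter to $(a,b)=(0,\pm(2g-2))$, proving the claimed vanishing for all other $(a,b)$. Reading off the surviving coefficients, and normalising by the coefficients $c_{ij}$ of $B(g)$'s basic-class pairs on the $W_g$ side (which contribute the inverse factors $(2/\hbar_1)^{2g-4}$ and $\hbar_2^{4-2g}$), produces precisely $\hbar_3^{g-1}(2/\hbar_1)^{2g-4}\,\e^{\pm2\sqrt3\,t_2}$ and $\hbar_4^{g-1}\hbar_2^{4-2g}\zeta^{\pm d}\,\e^{\pm2\sqrt3\bi\,t_3}$.

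The main obstacle will be the combinatorial bookkeeping of how the genus-$2$ basic classes of the individual $B(2)$-summands combine, under $\Phi_g^*$, into genus-$g$ basic-class pairs of $B(g)$ with the correct multiplicity, and the verification that all ``mixed'' iteration sequences (some steps of type $(\pm2,0)$ combined with some steps of type $(0,\pm2)$) cancel out in aggregate: such mixed contributions would correspond to $W_g$-side exponents with $(a,b)\notin\mathcal C_g\setminus\mathcal C_{g-1}$ that are already forced to vanish by Proposition~\ref{restriction-pairing-matrix}, and so enforce non-trivial cancellations on the $V_g$ side that must be checked explicitly.
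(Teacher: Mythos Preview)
Your approach is essentially the paper's: compute the $\U(3)$-series of $B(g)\#_{f_g}B(g)$ two ways via $\Phi_g$, then match exponentials on $\im(\#)\cap(\Phi_g)_*(\im(\#))$ using Lemma~\ref{sum-exponential}. The paper carries this out with the same ingredients (Propositions~\ref{connected-sum-permissible}, \ref{B(n)-invariants}, \ref{genus-2-coeff}, \ref{vanishing-constant-coeff-2}).

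One point deserves correction. Your description of the $V_g$-side iteration as a sum over sequences $(\epsilon_1,\ldots,\epsilon_{g-1})\in\{\pm1\}^{g-1}$, with ``mixed'' sequences requiring cancellation, overstates the complexity. After the first genus-$2$ gluing, the only effective basic classes of $V_2$ (with respect to $f_2$) are $\pm M^{(2)}$, each meeting $f_2$ in $\pm2$. At the next gluing, the constraint $K_i\cdot f_2=L_{i'}\cdot f_2$ then forces the sign on the incoming $B(2)$-side to agree with that of the chosen $V_2$-class, and likewise for the $j$-index. The signs therefore propagate rigidly through all $g-1$ steps, so the iteration produces exactly four terms --- corresponding to $(\epsilon_i,\epsilon_j)\in\{\pm1\}^2$ --- not $4^{g-1}$. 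There are no mixed sequences and no cancellations to verify; this is precisely why the paper can write down the closed four-term expression \eqref{E-B(2g)B(2g)-2} directly. Once you track this, the ``main obstacle'' you identify evaporates, and the termwise comparison with \eqref{E-B(2g)B(2g)-1} is immediate. The paper then isolates each $h^{g,d}_{a,b}$ by restricting to lines $\{\Gamma+sf_g\}$, $\{\Lambda+sf_g\}$ (respectively to $\Gamma,\Lambda$ in the span of the classes $E_2^1\pm\cdots\pm E_2^g$), which you should make equally explicit.
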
	
\begin{proof}
	This theorem can be proved by exploiting the diffeomorphism in \eqref{Phin} for $n=g$.
	Let the 2-cycle $w$ in $B(g) \#_{f_g} B(g)$ be equal to $df_2'+df_g$. Using Propositions \ref{connected-sum-permissible} and \ref{B(n)-invariants},
	we can show that ${\rm N}_{B(g) \#_{f_g} B(g),w}(\Gamma,\Lambda)$, for $\Gamma,\Lambda\in \mathcal H:=\im(\#)$, is equal to:
	 \begin{equation}\label{E-B(2g)B(2g)-1}
		\sum_{(a,b)\in \mathcal C_g\backslash \mathcal C_{g-1}} h_{a,b}^{g,d}(t_2,t_3)\sum
		c_{ij}c_{i'j'}\e^{(\frac{{\sqrt 3}}{2}(M_{i,i'}+M_{j,j'}) \cdot \Gamma+
		\frac{{\sqrt 3}}{2}\bi(M_{i,i'}-M_{j,j'})\cdot \Lambda)}
	\end{equation}	
	where the second sum is over the pairs of basic classes $(K_i, K_j)$ and $(K_{i'},K_{j'})$ of the permissible triple $(B(g),df'+2df_g,f_g)$ such that:
	\begin{equation} \label{basic-classes-f-g}
		\frac{(K_i+K_j)}{2}\cdot f_g=\frac{(K_{i'}+K_{j'})}{2}\cdot f_g=a\hspace{1cm}
		\frac{(K_i-K_j)}{2}\cdot f_g=\frac{(K_{i'}-K_{j'})}{2}\cdot f_g=b
	\end{equation} 	
	and $M_{i,i'}=K_i\#K_{i'}$ and $M_{j,j'}=K_j\#K_{j'}$.
	
	Recall that the 4-manifolds $B(g) \#_{f_g} B(g)$ and $B(2) \#_{f_2} \dots \#_{f_2} B(2)$ are diffeomorphic to each other using the diffeomorphism
	$\Phi_g$. Therefore, ${\rm N}_{B(g) \#_{f_g} B(g),w}(\Gamma,\Lambda)$ can be also computed by regarding $B(g) \#_{f_g} B(g)$ as the fiber sum
	of $g$ copies of $B(2)$ along surfaces of genus $2$. In particular, Propositions \ref{genus-2-coeff}
	and \ref{vanishing-constant-coeff-2} allow us to obtain the following explicit form for ${\rm N}_{B(g) \#_{f_g} B(g),w}(\Gamma,\Lambda)$:
	\begin{equation}\label{E-B(2g)B(2g)-2}
		\hbar_3^{g-1}(\frac{1}{36})^g\e^{\sqrt 3 M_g \cdot \Gamma}+\hbar_3^{g-1}(\frac{1}{36})^g\e^{-\sqrt 3 M_g \cdot \Gamma}
		+\hbar_4^{g-1}(\frac{\zeta^{d}}{9})^g\e^{\sqrt 3\bi M_g \cdot \Lambda}+
		\hbar_4^{g-1}(\frac{\zeta^{-d}}{9})^g\e^{-\sqrt 3\bi M_g \cdot \Lambda}
	\end{equation}
	where $M_g=\sigma_g^1+\sigma_g^2+2(g-1)f_2'$. However, this approach works for the homology classes $\Gamma, \Lambda$ $ \in \mathcal H'$ where $\mathcal H'$ is the
	image of the iterated applications of $\#$ using the decomposition of $B(g) \#_{f_g} B(g)$ as the fiber sum of $g$ copies of
	$B(2)$. Therefore, \eqref{E-B(2g)B(2g)-1} and \eqref{E-B(2g)B(2g)-2} are equal to each other for
	$\Gamma,\Lambda \in \mathcal H \cap \mathcal H'$. Fix $\Gamma,\Lambda \in \mathcal H \cap \mathcal H'$ and let:
	\begin{equation*}
		l:=\{\Gamma+sf_g \mid s \in \C\}\hspace{1cm}l':=\{\Lambda+sf_g \mid s \in \C\}
	\end{equation*}
	Applying Lemma \ref{sum-exponential} to the subspace $l\oplus l'$ of $(\mathcal H\cap \mathcal H')^{\oplus 2}$ shows that:
	\begin{equation*}
    		h^{g,d}_{\pm (2g-2),0}(t_2,t_3)(\frac{1}{36})^g(\frac{\hbar_1}{2})^{2g-4}\e^{\pm \sqrt 3 M_g' \cdot \Gamma}=
		\hbar_3^{g-1}(\frac{1}{36})^g\e^{\pm \sqrt 3 M_g \cdot \Gamma}
	\end{equation*}
	\begin{equation*}
		h^{g,d}_{0,\pm (2g-2)}(t_2,t_3)(\frac{1}{9})^g \hbar_2^{2g-4} \zeta^{\mp(2g-2)d} \e^{\pm \sqrt 3\bi M_g' \cdot \Lambda}=
		\hbar_4^{g-1}(\frac{\zeta^{\pm d}}{9})^g\e^{\pm \sqrt 3\bi M_g \cdot \Lambda}
	\end{equation*}
	where $M_g':=(g-2)f_2'+{E_2^1}+\dots+{E_2^g}$. Since $\hbar_1$ and $\hbar_2$ are non-zero \cite{DX:in-prep}, the above identity proves the second part of the proposition.
	If $(a,b)\notin\{(\pm(2g-2),0),(0,\pm(2g-2))\}$,
	then the same argument shows that:
	\begin{equation*}
		h_{a,b}^{g,d}(t_2,t_3)\sum
		c_{ij}d_{i'j'}\e^{(\frac{{\sqrt 3}}{2}(M_{i,i'}+M_{j,j'}) \cdot \Gamma+
		\frac{{\sqrt 3}}{2}\bi(M_{i,i'}-M_{j,j'})\cdot\Lambda)}=0
	\end{equation*}
	for $\Gamma,\Lambda \in \mathcal H\cap \mathcal H'$. This sum is over the pairs that satisfy \eqref{basic-classes-f-g}.
	Another application of Lemma \ref{sum-exponential}
	for the following homology classes in $\mathcal H\cap \mathcal H'$:
	\begin{equation*}
		\Gamma=s({E_2^1}\pm \dots\pm {E_2^g})\hspace{1cm}\Lambda=s({E_2^1}\pm \dots\pm{E_2^g})
	\end{equation*}	
	 shows that $h_{a,b}^{g,d}(t_2,t_3)$ has to be zero.
\end{proof}

The following theorem summarizes our results in this subsection:

\begin{theorem} \label{D-inv-con-sum}
	Suppose $(X, w,\Sigma)$ and $(X',w',\Sigma)$ are two permissible triples and the genus of $\Sigma$ is at least 2.
	Then the triple $(X\#_\Sigma X', w\#w',\Sigma)$
	is permissible with respect to the image of the map $\#:\mathcal K \to H_2(X\#_\Sigma X')$. The basic classes for $(X\#_\Sigma X', w\#w',\Sigma)$ are:
	\begin{equation} \label{Miip}
		M_{i,i'}^{\gamma}=K_i\#L_{i'}+2\gamma\Sigma
	\end{equation}
	where $K_i$, $L_{i'}$ are basic classes of $(X, w,\Sigma)$, $(X',w',\Sigma)$, $K_i\cdot \Sigma=L_{i'}\cdot \Sigma$
	and:
	\begin{equation*}
		(K_i\cdot \Sigma,\gamma) \in \{(2g-2,1),(-(2g-2),-1)\}.
	\end{equation*}
	For a pair of basic classes $M_{i,i'}^{\gamma}=K_i\#L_{i'}+2\gamma\Sigma$ and $M_{j,j'}^{\eta}=K_j\#L_{j'}+2\eta\Sigma$, let:
	\begin{equation*}
		\frac{(K_i+K_j)}{2}\cdot \Sigma=\frac{(L_{i'}+L_{j'})}{2}\cdot \Sigma=a\hspace{1cm}
		\frac{(K_i-K_j)}{2}\cdot \Sigma=\frac{(L_{i'}-L_{j'})}{2}\cdot \Sigma=b	
	\end{equation*}
	Then the coefficient associated to this pair is equal to $c_{i,j}d_{i',j'}h_{a,b,\gamma,\eta}^{g,d}$, where $c_{i,j}$ is the coefficient associated to
	$(K_i,K_j)$ for the triple $(X, w,\Sigma)$, $d_{i',j'}$ is the coefficient associated to $(L_{i'},L_{j'})$ for the triple $(X', w',\Sigma)$, and
	$h_{a,b,\gamma,\eta}^{g,d}$ is non-zero in the following cases:
	\begin{equation*}
		h^{g,d}_{(2g-2),0,1,0}=h^{g,d}_{-(2g-2),0,-1,0}=\hbar_3^{g-1}(\frac{2}{\hbar_1})^{2g-4}
	\end{equation*}
	\begin{equation*}
		h^{g,d}_{0,(2g-2),0,1}=\frac{\hbar_4^{g-1}}{\hbar_2^{2g-4}}\zeta^{d}\hspace{1cm}
		h^{g,d}_{0,-(2g-2),0,-1}=\frac{\hbar_4^{g-1}}{\hbar_2^{2g-4}}\zeta^{-d}.
	\end{equation*}
\end{theorem}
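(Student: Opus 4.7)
The plan is to prove Theorem \ref{D-inv-con-sum} by unfolding the general fiber--sum formula of Proposition \ref{connected-sum-permissible} after substituting the explicit values of the pairing coefficients $h^{g,d}_{a,b}(t_2,t_3)$ established in the immediately preceding propositions. The simple--type identities \eqref{simple-type-connected-sum-permissible} are already part of that proposition, so what remains is to rewrite the resulting power series in the basic--class form required by Definition \ref{permissible-type} and to check the auxiliary permissibility conditions.

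First I would plug in the vanishing/explicit result of the last proposition of the subsection: $h^{g,d}_{a,b}$ is zero except when $(a,b)\in\{(\pm(2g-2),0),(0,\pm(2g-2))\}$, in which cases it equals a constant multiple of $\e^{\pm 2\sqrt{3}t_2}$ or $\zeta^{\pm d}\e^{\pm 2\sqrt{3}\bi t_3}$. Under the convention \eqref{conv-exp-cos-sin} together with the hypothesis $\Sigma\cdot(\Gamma\#\Gamma')=\Sigma\cdot(\Lambda\#\Lambda')=1$, the factor $\e^{\pm 2\sqrt{3}t_2}$ is precisely $\e^{\pm 2\sqrt{3}(\Sigma\cdot\Gamma\#\Gamma')\,t_2}$, so it shifts both $M_{i,i'}$ and $M_{j,j'}$ appearing in the exponent of Proposition \ref{connected-sum-permissible} by $\pm 2\Sigma$; similarly $\e^{\pm 2\sqrt{3}\bi t_3}$ shifts only $M_{j,j'}$ by $\mp 2\Sigma$ (and produces the extra $\zeta^{\pm d}$ phase). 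Collecting the surviving terms yields precisely the basic--class expansion \eqref{simpletype} with basic classes $M^\gamma_{i,i'}=K_i\#L_{i'}+2\gamma\Sigma$ subject to the stated constraint $(K_i\cdot\Sigma,\gamma)\in\{(2g-2,1),(-(2g-2),-1)\}$, with coefficients $c_{i,j}d_{i',j'}h^{g,d}_{a,b,\gamma,\eta}$ whose explicit values can be read off.

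It then remains to check the other permissibility conditions of Definition \ref{permissible-type} with respect to $\mathcal H=\im(\#)$. The conditions $b^1(X\#_\Sigma X')=0$ and $b^+(X\#_\Sigma X')>1$ follow from Mayer--Vietoris together with $g\geq 2$; that each $M^\gamma_{i,i'}$ is an integral lift of $w_2(T(X\#_\Sigma X'))$ follows from the analogous property for $K_i$ and $L_{i'}$ and the fact that $\Sigma$ has trivial normal bundle on both sides; and the adjunction--type bound $|M^\gamma_{i,i'}\cdot\Sigma|\leq 2g-2$ holds with equality, since $\Sigma\cdot\Sigma=0$ forces $(K_i\#L_{i'}+2\gamma\Sigma)\cdot\Sigma=K_i\cdot\Sigma=\pm(2g-2)$. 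The main technical step I expect to have to address is the extension of the formula from pairs $(\Gamma,\Lambda)$ with $\Sigma$-intersection one, to which Proposition \ref{connected-sum-permissible} directly applies, to arbitrary $\Gamma,\Lambda\in\im(\#)$: since the class of a pushed--off copy of $\Sigma$ lies in $\im(\#)$, one can shift $\Gamma\mapsto\Gamma+n\Sigma$ and $\Lambda\mapsto\Lambda+m\Sigma$ and observe that both sides transform by the same multiplicative factor; a Zariski density argument in the remaining orthogonal directions then completes the reduction and finishes the proof.
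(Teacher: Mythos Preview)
Your approach is exactly what the paper does: Theorem \ref{D-inv-con-sum} is stated there without a separate proof, explicitly as a summary of Proposition \ref{connected-sum-permissible} together with the subsequent determination of the pairing coefficients $h^{g,d}_{a,b}$.

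You are right that passing from the intersection-one hypothesis of Proposition \ref{connected-sum-permissible} to arbitrary $\Gamma,\Lambda\in\im(\#)$ requires an argument (the paper itself is silent on this point), but the one you sketch does not work as written. Since $\Sigma\cdot\Sigma=0$, the shift $\Gamma\mapsto\Gamma+n\Sigma$ leaves $\Gamma\cdot\Sigma$ unchanged, and the same is true of shifts in any direction orthogonal to $\Sigma$; so these moves never take you off the affine hyperplane $\{\Gamma\cdot\Sigma=1\}$, and no Zariski-density argument confined to those directions can reach classes with other intersection numbers. The correct fix is homogeneity: once the exponential factors in $h^{g,d}_{a,b}$ have been absorbed into the basic-class exponents, the coefficient of $t_2^{\,i}t_3^{\,j}$ on both sides is a bi-homogeneous polynomial of bi-degree $(i,j)$ in $(\Gamma\#\Gamma',\Lambda\#\Lambda')$, and two such polynomials agreeing on the product of affine hyperplanes $\{\,\cdot\,\Sigma=1\}$ (neither passing through the origin) must agree everywhere, by scaling. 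A minor computational point: the factor $\e^{\pm 2\sqrt 3\bi t_3}$ shifts $M_{i,i'}$ and $M_{j,j'}$ by $2\Sigma$ in \emph{opposite} directions, not only one of them.
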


\section{Sutured Floer Homology}
\subsection{Eigenvectors} \label{ev}
For arbitrary $N$, we introduce a set of generators of the algebra $\VgdN$ in Corollary \ref{VgdN-generators}. In particular, in the special case that $N=3$ and $d\equiv 1$ or $2$ mod 3, we have the following generators of $\mathbb V_{g,d}^3$ (which will be denoted by $\Vgd$ from now on):
\begin{equation} \label{3-generators}
	\epsilon={\rm D}_{\Delta_g,\delta_{g,d}+\Sigma}(1) \hspace{.5 cm} \aleph_r={\rm D}_{\Delta_g,\delta_{g,d}}(a_r)\hspace{.5cm}
	o_r^j={\rm D}_{\Delta_g,\delta_{g,d}}(l^j_{(r)})\hspace{.5cm}\rho_{r}={\rm D}_{\Delta_g,\delta_{g,d}}(\Sigma_{(r)})
\end{equation}
where $r=2,3$ and $\{l^j\}_{1\leq j\leq 2g}$ is a set of generators for $H_1(\Sigma,\Z)$. If $y$ is one of the above elements, then the product $m(\cdot, y)$ defines an operator on $\Vgd$ which is still denoted by $y$. Recall that there is also a pairing on $\Vgd$ which is denoted by $\langle \,,\,\rangle$.

The operator $\epsilon$ is equal to $\rI_*(Y_g \times [0,1],\gamma_{g,d} \times [0,1]+ \Sigma,1)$ and the remaining operators can be described as:
\begin{equation} \label{op}
	\rI_*(Y_g \times [0,1],\gamma_{g,d} \times [0,1],q)
\end{equation}
where $q=a_r$, $l^j_{(r)}$ or $\Sigma_{(r)}$. This alternative description allows us to extend the definition of these operators to arbitrary admissible pairs. Suppose $(Y,\gamma)$ is a $3$-admissible pair, and $\Sigma$ is an embedded surface of genus $g$ in $Y$. We also assume that an integral basis $\{l^j\}_{1\leq j\leq 2g}$ for $H_1(\Sigma)$ is fixed. By replacing $(Y_g,\gamma_{g,d})$ with $(Y,\gamma)$, we can define analogues of the operators $\epsilon$, $\aleph_r$, $o_r^j$ and $\rho_{r}$ on $\rI_*(Y,\gamma)$. These operators on $\rI_*(Y,\gamma)$ are denoted by $\epsilon(\Sigma)$, $\aleph_r$, $o_r^j(\Sigma)$ and $\rho_{r}(\Sigma)$. In the case that the choice of $\Sigma$ is clear from our discussion, we drop $\Sigma$ from our notation for $\epsilon(\Sigma)$, $o_r^j(\Sigma)$ and $\rho_{r}(\Sigma)$. 

\begin{definition}
	An element $v\in \mathbb{V}_{g,d}$ is called an {\it exhaustive eigenvector} if it is a simultaneous eigenvector of the action of the
	operators in \eqref{3-generators}. An {\it exhaustive eigenspace} is the set of all
	exhaustive eigenvectors which have the same eigenvalues with respect to these operators. An exhaustive eigenvector $v$ is called non-degenerate if
	the pairing $\langle v,v\rangle \neq 0$.
\end{definition}

\begin{remark}
	Since $(o_r^j)^2=0$, the only eigenvalue of this operator is zero.
\end{remark}

Suppose $(X,w,\Sigma)$ is a permissible triple and $\Sigma$ is a surface of genus $g$ and $w \cdot \Sigma=d$. For a pair of basic classes $(K_i,K_j)$ of this triple, let $c_{i,j}$ denote the associated coefficient. Suppose also for a fixed $\lambda=(\alpha,\beta)\in \mathcal C_g$:
\begin{equation} \label{condition-basic-classes}
	\sum_{(K_i,K_j)} c_{i,j} \neq 0
\end{equation}
where the sum is over all pairs of basic classes $(K_i,K_j)$ that satisfy the following equality for $(a,b)=(\alpha,\beta)$:
\begin{equation} \label{basic-classes-con-a-b}
	\frac{(K_i+K_{j})}{2}\cdot \Sigma=a \hspace{1cm}
	\frac{(K_i-K_{j})}{2}\cdot \Sigma=b.
\end{equation}
Recall that $P_\lambda$ is the polynomial that satisfies \eqref{P-lambda}. Suppose $Q_{\lambda}\in \C[x,y,z]$ is defined as the evaluation of $P_{(a,b)}$ at $t_2=t_3=0$ and consider the following element of $\Vgd$:
\begin{equation*}
	\vab:=\rD_{X^\circ,w^\circ ,\Sigma}(Q_{\lambda}(a_2,\Sigma_{(2)},\Sigma_{(3)})).
\end{equation*}
\begin{prop}\label{eigen}
	The element $\vab\in \Vgd$ is a non-zero exhaustive eigenvector.
	The eigenvalues of $\vab$ with respect to the actions of $\epsilon$, $\aleph_2$, $\aleph_3$, $\rho_2$ and $\rho_3$ are respectively equal to
	$1$, $3\zeta^{2d \beta}$, $0$, $ \zeta^{d\beta }\sqrt 3 \alpha$ and $\zeta^{2d \beta}\sqrt 3 \bi \beta$. Furthermore, if $(\alpha,\beta)=(\pm(2g-2),0)$, then the eigenvector $\vab$ is non-degenerate.
\end{prop}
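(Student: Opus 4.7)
The plan is to reduce each eigenvalue statement to a polynomial-invariant calculation by pairing against a spanning family of $\Vgd$. By Proposition~\ref{non-deg-pairing} the pairing $\langle\,,\,\rangle$ on $\Vgd$ is non-degenerate, and by Corollary~\ref{VgdN-generators} the elements $\rho_{i,z}:=\rD_{\Delta_g,\delta_{g,d}+i\Sigma}(z)$, with $i\in\Z/3\Z$ and $z\in\mathbb{A}_g^3$, span $\Vgd$. Consequently, to prove that an operator $A$ acts on $\vab$ by a scalar $\mu$ it is enough to check $\langle A\cdot\vab,\rho_{i,z}\rangle=\mu\langle\vab,\rho_{i,z}\rangle$ for every such $\rho_{i,z}$; to prove $\vab\neq 0$ it is enough to exhibit one $\rho_{i,z}$ with non-vanishing pairing; and the non-degeneracy of $\vab$ reduces likewise to a single pairing computation.

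Gluing $X^\circ$ to $\Delta_g$ along $Y_g$ recovers $X$, and the three translates of $w^\circ$ by multiples of $\Sigma$ appear as the three summands in the definition of $\vab$. Functoriality of $\rI_*$ therefore gives
\[
\langle\vab,\rho_{i,z}\rangle=\rD_{X,w,\Sigma}\bigl(Q_\lambda(a_2,\Sigma_{(2)},\Sigma_{(3)})\cdot z\bigr),
\]
with the analogous identity for $A\cdot\vab$ obtained by replacing $Q_\lambda$ by $a_rQ_\lambda$, $\Sigma_{(r)}Q_\lambda$, or by shifting $w^\circ$ by $\Sigma$ in the cases $A=\aleph_r$, $\rho_r$, $\epsilon$ respectively. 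Lemma~\ref{permissible-inv} evaluates the right-hand side as a sum over $(a,b)\in\mathcal C_g$ in which $Q_\lambda$ is evaluated at the specialization $u(a,b)|_{t_2=t_3=0}=(3\zeta^{2db},\,\zeta^{db}\sqrt{3}\,a,\,\zeta^{2db}\sqrt{3}\,\bi b)$, weighted by a $z$-dependent coefficient assembled from the basic-class data at $(a,b)$. These specialized points are pairwise distinct as $(a,b)$ varies over $\mathcal C_g$, so the interpolation property \eqref{P-lambda} of $P_\lambda$ restricts to $Q_\lambda(u(a,b)|_{t_2=t_3=0})=\delta_{(a,b),\lambda}$. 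The sum therefore collapses to the single term $(a,b)=(\alpha,\beta)$, which by hypothesis \eqref{condition-basic-classes} is nonzero for some $z$, showing $\vab\neq 0$.

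Reading off the coordinates of $u(\alpha,\beta)|_{t_2=t_3=0}$ now yields the eigenvalues: $\aleph_2$ acts by $3\zeta^{2d\beta}$, $\rho_2$ by $\zeta^{d\beta}\sqrt{3}\,\alpha$ and $\rho_3$ by $\zeta^{2d\beta}\sqrt{3}\,\bi\beta$. The operator $\aleph_3$ is killed by the simple-type identity \eqref{simple-type-u}; the operator $\epsilon$ cyclically permutes the $l$-summands in $\vab$ and so fixes it; and the odd operators $o_r^j$ annihilate $\vab$ because the class $l^j\in H_1(\Sigma)$ is null-homologous in $X$ (since $b_1(X)=0$ by the permissibility condition (i)), so the polynomial invariants of $X$ into which $l^j_{(r)}$ is inserted vanish identically.

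For the non-degeneracy assertion when $(\alpha,\beta)=(\pm(2g-2),0)$ I compute $\langle\vab,\vab\rangle$ by gluing $X^\circ$ to a second copy of itself along $Y_g$, producing the closed permissible triple $(X\#_\Sigma X,\,w\#w,\,\Sigma)$. Propositions~\ref{connected-sum-permissible} and~\ref{restriction-pairing-matrix}, combined with the explicit formulas of Theorem~\ref{D-inv-con-sum}, then express the resulting invariant as a nonzero multiple of $h^{g,d}_{\pm(2g-2),0}(0,0)=\hbar_3^{g-1}(2/\hbar_1)^{2g-4}$, which is nonzero since $\hbar_1,\hbar_3\neq 0$ by Proposition~\ref{non-van-h-3-4} and its proof. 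The main technical nuisance, rather than a real obstacle, is tracking the $\zeta^{dbd_w}$ phases from Lemma~\ref{permissible-inv} and the $\zeta^{2bd(g-1)}$ normalization in the pairing matrix; once this bookkeeping is unpacked, the eigenvalue assertions reduce to Lagrange interpolation and the non-degeneracy statement reduces to the non-vanishing results already established for $\hbar_1$ and $\hbar_3$.
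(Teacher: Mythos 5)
Your proposal is correct and follows essentially the same route as the paper's proof: non-degeneracy of the pairing reduces everything to evaluations of $\rD_{X,w,\Sigma}$, Lemma \ref{permissible-inv} together with the interpolation property \eqref{P-lambda} collapses the sum to the single term $(\alpha,\beta)$, simple type and $b_1(X)=0$ kill $\aleph_3$ and $o_r^j$, and the non-degeneracy of $v_{(\pm(2g-2),0)}$ follows from Theorem \ref{D-inv-con-sum} and Proposition \ref{non-van-h-3-4}. The only difference is that you spell out some bookkeeping the paper leaves implicit.
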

\begin{proof}
	Lemma \ref{permissible-inv} can be used to show that for an arbitrary polynomial $P\in \C[x,y,z]$:
	\begin{equation} \label{equality-a-S}
		\rD_{X,w,\Sigma}(P(a_2,\Sigma_{(2)},\Sigma_{(3)}))=
		\sum_{(a,b)\in \mathcal C_g} \zeta^{d b d_w} P(3\zeta^{2d b}, \zeta^{bd}\sqrt 3 a,\zeta^{2bd}\sqrt 3 \bi b)\sum_{(K_i,K_j)} c_{i,j}
	\end{equation}	
	where the inner sum is over all pairs of basic classes $(K_i, K_j)$ that satisfy \eqref{basic-classes-con-a-b}
	and $c_{i,j}$ is the coefficient associated to the pair $(K_i,K_j)$.
	Functorial properties of Floer homology imply that the pairing of $\vab$ and $\rD_{\Delta_g, \delta_{g,d}}(1)$ is equal to
	$\rD_{X,w,\Sigma}(Q_\lambda(a_2,\Sigma_{(2)},\Sigma_{(3)}))$ which is non-zero. Therefore, $\vab$ is a non-zero vector. Using the
	non-degeneracy of the pairing on $\Vgd$, the claim that $\vab$ is an exhaustive eigenvector can be translated to claims about the
	$\U(3)$-invariants of $(X,w)$. In particular, \eqref{equality-a-S} shows that $\vab$ is an eigenvector of $\epsilon$, $\aleph_2$, $\rho_2$ and
	$\rho_3$. The vector $\vab$ is in the kernel of the operators $\aleph_{3}$ and $o_r^j$ because $X$ has $w$-simple type and $b_1(X)=0$.
	The pairing $\langle \vab,\vab \rangle$ can be also computed by Theorem \ref{D-inv-con-sum}. Using Proposition \ref{non-van-h-3-4},
	this number is non-zero for $(\alpha,\beta)=(\pm(2g-2),0)$.
\end{proof}
Example \ref{E(2)-simple-triple} gives a permissible triple such that the condition in \eqref{condition-basic-classes} is satisfied for any $\lambda \in \mathcal C_g$. Therefore, for each $\lambda$ there is an exhaustive eigenvector in $\Vgd$. The condition in \eqref{condition-basic-classes} is not very essential in constructing such an eigenvector. This condition is used to show that $\vab$ is a non-zero element of $\Vgd$. It is possible to replace $\vab$ with the following element:
\begin{equation*}
	\vab':=\rD_{X^\circ,w^\circ ,\Sigma}(Q_{\lambda}(a_2,\Sigma_{(2)},\Sigma_{(3)})z)
\end{equation*}
where $z \in \A(X^{\circ})^{\otimes 2}$. If the triple $(X,w,\Sigma)$ has at least one pair of basic classes $(K_i,K_j)$ which satisfy \eqref{basic-classes-con-a-b} for $(a,b)=(\alpha,\beta)$, then $z$ can be chosen such that the above element of $\Vgd$ is non-zero.

\begin{prop}\label{exh}
	An exhaustive eigenspace is 1-dimensional.
\end{prop}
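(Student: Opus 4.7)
The aim is to combine the explicit eigenvector construction of Proposition \ref{eigen} with a dimension count to force each joint eigenspace to be one-dimensional.

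First, the construction of Proposition \ref{eigen} extends to all three allowed values of the $\epsilon$-eigenvalue. Given a permissible triple $(X,w,\Sigma)$ and a cube root of unity $\mu$, define
\begin{equation*}
	v_\lambda^\mu := \sum_{k=0}^{2}\mu^{-k}\,\rD_{X^\circ,\,w^\circ+k\Sigma^\circ}\bigl(Q_\lambda(a_2,\Sigma_{(2)},\Sigma_{(3)})\bigr).
\end{equation*}
An immediate adaptation of the proof of Proposition \ref{eigen}, using Lemma \ref{permissible-inv}, shows this is a non-zero exhaustive eigenvector whose eigenvalues agree with those listed there except that the $\epsilon$-eigenvalue is now $\mu$. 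The tuple of eigenvalues of $(\epsilon,\aleph_2,\rho_2,\rho_3)$ determines $(\lambda,\mu)$ uniquely: $\mu$ is read from $\epsilon$, then $\beta$ from the imaginary part of the $\rho_3$-eigenvalue, and finally $\alpha$ from the $\rho_2$-eigenvalue. Hence the $3N_g=3(2g^2-2g+1)$ vectors $v_\lambda^\mu$ are linearly independent.

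It then remains to show that these vectors span $\Vgd$, for then each joint eigenspace, being contained in the span, must be generated by the single $v_\lambda^\mu$ with matching eigenvalues. One approach is to compare $3N_g$ to $\dim\Vgd = 3\dim H^*(\mathcal{N}_{3,d}(\Sigma))$ via Proposition \ref{VgdN-vector-space}; the expected equality $\dim H^*(\mathcal{N}_{3,d}(\Sigma))=N_g$ would follow from the Harder--Narasimhan/Atiyah--Bott computation of the Poincar\'e polynomial for moduli of rank-$3$ stable bundles of coprime degree, evaluated at $t=1$, after accounting for the part of the cohomology annihilated by the classes $q_r^j$ that generate $o_r^j$.

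The main obstacle, and the part expected to require the most care, is precisely this dimension match. A more robust alternative is to argue directly via the non-degeneracy of the pairing (Proposition \ref{non-deg-pairing}): the operators $\epsilon,\aleph_r,\rho_r$ are self-adjoint for $\langle\,,\,\rangle$ (by the reflection symmetry of the cobordism $\Omega_g$ in \eqref{Omega} against the insertion cobordisms), so vectors in distinct joint eigenspaces are automatically orthogonal. Proposition \ref{eigen} already supplies non-degenerate eigenvectors for the extremal pairs $\lambda=(\pm(2g-2),0)$; propagating this non-degeneracy to all $(\lambda,\mu)$ by means of the fiber-sum gluing formulas of Section \ref{gluing-self-intersection-0} would then force any putative second eigenvector in a given eigenspace to be orthogonal to every element of $\Vgd$ and hence zero, completing the proof without an explicit Betti-number computation.
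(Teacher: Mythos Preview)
Your approach has genuine gaps, and the paper's proof is both simpler and fundamentally different.

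The paper's argument is purely algebraic and requires no eigenvector construction at all. Given an exhaustive eigenspace $V$ with eigenvalues $(s_1,\dots,s_5)$, let $\mathfrak I\subset\Vgd$ be the ideal generated by
\[
G=\{\epsilon-s_1,\ \aleph_2-s_2,\ \aleph_3-s_3,\ \rho_2-s_4,\ \rho_3-s_5\}\cup\{o_r^j\}.
\]
The Frobenius property $\langle v,m(x,y)\rangle=\langle m(v,x),y\rangle$ of the pairing (which follows from the cobordism description of $m$ and $\langle\,,\,\rangle$) shows that any $v\in V$ is orthogonal to $\mathfrak I$. By Corollary \ref{VgdN-generators} the elements of $G$ together with $1$ generate $\Vgd$ as a ring, so $\Vgd/\mathfrak I$ is one-dimensional. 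Non-degeneracy of the pairing (Proposition \ref{non-deg-pairing}) then gives $\dim V\le \dim(\Vgd/\mathfrak I)=1$.

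Your first route fails at the dimension count: $\dim H^*(\mathcal N_{3,d}(\Sigma_g))$ is in general much larger than $N_g=2g^2-2g+1$, and restricting to the intersection of the kernels of the $o_r^j$ does not obviously reduce it to $N_g$ either; this is exactly the kind of structural statement about $H^*(\mathcal N_{3,d})$ that is unavailable for rank $3$ and that the paper is designed to avoid. Your alternative route also cannot close: the plan to ``propagate non-degeneracy to all $(\lambda,\mu)$'' contradicts Proposition \ref{restriction-pairing-matrix}, which shows $h^{g,d}_{\lambda,\lambda}=0$ whenever $|\lambda|_1<2g-2$, so most of the $v_\lambda^\mu$ are in fact isotropic. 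And even with self-adjointness, orthogonality of distinct eigenspaces only lets you conclude that a second eigenvector in a given eigenspace is orthogonal to the \emph{other} $v_{\lambda'}^{\mu'}$; without knowing these span $\Vgd$ (which is the dimension count again) you cannot deduce it vanishes. The ring-theoretic argument sidesteps all of this by replacing ``span'' with ``generate as an algebra''.
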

\begin{proof}
	Suppose $V\subset \Vgd$ is an exhaustive eigenspace, and $s_1$, $s_2$, $s_3$, $s_4$ and $s_5$ are respectively the corresponding eigenvalues of
	$\epsilon$, $\aleph_2$, $\aleph_3$, $\rho_2$ and $\rho_3$. Suppose also
	$\mathfrak I\subset \Vgd$ is the ideal generated by the elements of the following set:
	\begin{equation}\label{definition-G}
		G:=\{\epsilon-s_1,\aleph_2-s_2, \aleph_3-s_3,\rho_2-s_4,\rho_3-s_5\} \cup \{o_r^j\mid 1\leq j\leq 2g,\, 1\leq r \leq2\}
	\end{equation}
	Then an element of $\mathfrak I$ is the sum of the elements of the form $m(x,y)$ with $x\in G$ and $y \in \Vgd$. For any $v \in V$:
	\begin{equation*}
		\langle v,m(x,y)\rangle=\langle m(v,x),y\rangle=0
	\end{equation*}
	Therefore, $V$ is orthogonal to $\mathfrak I$. Since $\Vgd/\mathfrak I$ is a 1-dimensional vector space and the pairing is non-degenerate,
	the dimension of the vector space $V$ is at most 1.
\end{proof}

\begin{lemma} \label{HFgS}
	Suppose $v\in \Vgd$ is a non-degenerate exhaustive eigenvector, and
	$s_1$, $s_2$, $s_3$, $s_4$ and $s_5$ are respectively the corresponding eigenvalues of $\epsilon$, $\aleph_2$, $\aleph_3$, $\rho_2$ and $\rho_3$.
	Then the following space:
	\begin{equation*}
		H:=\gker(\epsilon-s_1)\bigcap\gker(\aleph_2-s_2)\bigcap\gker(\aleph_3-s_3)\bigcap\gker(\rho_2-s_4)\bigcap \gker(\rho_3-s_5)
	\end{equation*}
	is 1-dimensional. Here $\gker(T)$, for an operator $T$, is the union of the kernel of the operators $T^k$ for all values of $k$.
\end{lemma}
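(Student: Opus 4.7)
The strategy is to show that $H$ coincides with the exhaustive eigenspace $V$ containing $\alpha$, which by Proposition~\ref{exh} is one-dimensional. Since $\alpha \in V \subseteq H$, only the inclusion $H \subseteq V$ is non-trivial. Two structural facts drive the proof. First, each of $\epsilon, \aleph_r, \rho_r, o_r^j$ is multiplication by an element of the Frobenius algebra $\Vgd$, so the Frobenius identity $\langle m(a,b), c\rangle = \langle a, m(b,c)\rangle$ (already used in the proof of Proposition~\ref{exh}) makes each such operator $A$ self-adjoint: $\langle Ax, y\rangle = \langle x, Ay\rangle$. Second, the five operators $T_1, \dots, T_5 := \epsilon, \aleph_2, \aleph_3, \rho_2, \rho_3$ commute pairwise and each commutes with every $o_r^j$, so $H$ is preserved by all the $o_r^j$ as well.

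Given a non-zero $v \in H$, the operators $T_i - s_i$ act on $H$ as pairwise-commuting nilpotents, so I choose a monomial $Q = \prod_i (T_i - s_i)^{e_i}$ of maximal total degree with $w := Qv \neq 0$. Maximality forces $(T_j - s_j)w = 0$ for every $j$, so $w$ is a joint eigenvector of the $T_i$ with eigenvalues $s_i$. Likewise, since the $o_r^j$ are nilpotent and pairwise anti-commute (and commute with the $T_i$'s), I pick a (possibly empty) product $\theta$ of $o_r^j$'s with $\theta w \neq 0$ but $o_r^j \theta w = 0$ for every $r, j$. Then $\theta w$ is a non-zero exhaustive eigenvector with the eigenvalues of $\alpha$, and by Proposition~\ref{exh} it equals $c\alpha$ for some $c \neq 0$.

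The contradiction comes from pairing with $\alpha$. On the one hand $\langle \theta w, \alpha\rangle = c\langle \alpha, \alpha\rangle \neq 0$ by the non-degeneracy of $\alpha$. On the other, $Q$ and $\theta$ commute, and combining the self-adjointness of each factor (with an irrelevant sign arising from reversing the anti-commuting $o_r^j$'s in $\theta$) yields $\langle \theta w, \alpha\rangle = \pm \langle v, Q\theta\alpha\rangle$. If $Q \neq 1$, some $(T_i - s_i)$ appears and kills $\alpha$, so $Q\alpha = 0$; if $\theta \neq 1$, some $o_r^j$ appears and kills $\alpha$ because $\alpha$ is an exhaustive eigenvector, so $\theta\alpha = 0$. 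In either non-trivial case $Q\theta\alpha = 0$, contradicting $\langle \theta w, \alpha\rangle \neq 0$. Hence $Q = \theta = 1$, giving $v = \theta w = c\alpha \in V$. This proves $\dim H = 1$.

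The only delicate point is the Frobenius self-adjointness of these multiplication operators, which is built into the TQFT structure of $\Vgd$ and is what makes the orthogonality argument of Proposition~\ref{exh} work; the sign ambiguities arising from anti-commutativity of the $o_r^j$'s are harmless here because only the non-vanishing of a scalar pairing is used.
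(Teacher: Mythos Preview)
Your proof is correct and follows essentially the same approach as the paper: both use the nilpotency of the generators on $H$ to reduce an arbitrary vector to a multiple of $\alpha$ (via Proposition~\ref{exh}), and then exploit the Frobenius self-adjointness of the multiplication operators together with $\langle\alpha,\alpha\rangle\neq0$ to force the reduction to be trivial. Your presentation is a bit more explicit---you build the maximal monomial $\theta Q$ and pair $\theta Qv$ with $\alpha$, whereas the paper replaces $\beta$ (``without loss of generality'') so that a single element $x_0\in G$ does not annihilate it, writes $m(\beta,x_0)=c\alpha$, and pairs this with itself; this also lets you bypass the $\Z/2\Z$-grading step the paper uses---but the underlying mechanism is identical.
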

\begin{proof}
	Suppose the claim does not hold and $v'\in H$  is a vector which is linearly independent of $v$. Let $G$ be defined as in \eqref{definition-G}.
	All the operators involved in the definition of $H$ have even degree with respect to the $\Z/2\Z$-grading of $\Vgd$, induced by the $\Z/12\Z$-grading.
	Therefore, $H$ can de decomposed as $H_0\oplus H_1$ with respect to the $\Z/2\Z$-grading of $\Vgd$, and we may assume that $v' \in H_i$ for
	$i=0$ or $1$.
	By Proposition \ref{exh}, there is $x_0 \in G$ such that $m(v',x_0) \neq 0$. 
	Since the restriction of the elements of $G$ on $H$ are nilpotent,
	without loss of generality, we can also assume that the product of 
	$m(v',x_0)$ and any element $x \in G$ is zero.
	Therefore, $m(v',x_0)=cv$ for a
	non-zero complex number $c$.
	This implies that:
	\begin{equation*}
		\langle m(v',x_0),m(v',x_0)\rangle=\langle cv,cv\rangle\neq0
	\end{equation*}
	On the other hand, we have:
	\begin{equation*}
		\langle m(v',x_0),m(v',x_0)\rangle=\pm \langle m(m(v',x_0),x_0),v'\rangle=0
	\end{equation*}	
	which is a contradiction.
\end{proof}

\begin{prop}\label{ev-gen-Y}(\cite[Proposition 7.2]{KM:suture})
	Suppose $(Y,\gamma)$ is a $3$-admissible pair and $\Sigma$ is an embedded surface in $Y$ of genus $g$ 
	such that $\gamma\cdot \Sigma \equiv d$ mod $3$. If $(s_1,s_2,s_3,s_4,s_5)$ is a simultaneous eigenvalue of 
	the operators $(\epsilon(\Sigma), \aleph_2,\aleph_3,\rho_2(\Sigma),\rho_3(\Sigma))$, then 
	is also a simultaneous eigenvalue of the operators $(\epsilon, \aleph_2,\aleph_3,\rho_2,\rho_3)$
	acting on $\Vgd$.
\end{prop}

By Proposition \ref{eigen}, $v_{2g-2,0}$ is a non-degenerate exhaustive eigenvector of $\Vgd$. Suppose $s^g_1$, $s^g_2$, $s^g_3$, $s^g_4$ and $s^g_5$ denote the corresponding eigenvalues of $\epsilon$, $\aleph_2$, $\aleph_3$, $\rho_2$ and $\rho_3$. Then $s^g_1=1$, $s^g_2=3$, $s^g_3=0$, $s^g_4=\sqrt 3 (2g-2)$ and $s^g_5=0$. Following \cite{KM:suture}, we can define a variation of instanton Floer homology:
\begin{definition}
	Suppose $(Y,\gamma)$ is a $3$-admissible pair and $\Sigma$ is an embedded surface in $Y$ of genus $g$ 
	such that $\gamma\cdot \Sigma \equiv d$ mod $3$. Then $\I_*(Y,\gamma | \Sigma)$ is defined as:
	\[
	  \gker(\epsilon(\Sigma)-s^g_1)\bigcap\gker(\aleph_2-s^g_2)\bigcap\gker(\aleph_3-s^g_3)\bigcap
	  \gker(\rho_2(\Sigma)-s^g_4)\bigcap \gker(\rho_3(\Sigma)-s^g_5)
	\]
	In this definition, we allow $\Sigma$ to have more than one connected component. 
	In that case, each connected component $\Sigma'$ of $\Sigma$ is required to have genus $g$ and 
	$\gamma\cdot \Sigma' \equiv d$ mod 3. 
	In the definition of $\I_*(Y,\gamma | \Sigma)$, we include the operators $\epsilon(\Sigma')$, $\rho_2(\Sigma')-s^g_4$ 
	and $\rho_3(\Sigma')-s^g_5$ for each connected component $\Sigma'$ of $\Sigma$.
\end{definition}

\begin{remark} \label{genus-1-SHI}
	In the case that $g=1$, the action of the operators $\aleph_3$, $\rho_2$, $\rho_3$ on $\mathbb V_{1,d}$ are trivial,
	 the operator $\aleph_2$ is equal to $3\epsilon^{-1}$ and $\epsilon^3=1$. Therefore,
	similar relationships hold among the operators $\epsilon(\Sigma)$, 
	$\aleph_2$, $\aleph_3$, $\rho_2(\Sigma)$, $\rho_3(\Sigma)$ acting on $\I_*(Y,\gamma)$, where $\Sigma$
	is a genus one surface in $Y$. This can be verified in a similar way as in \ref{ev-gen-Y}. 
	(See \cite[Proposition 7.2]{KM:suture}.) Therefore, $\I_*(Y,\gamma | \Sigma)$, for a genus one surface $\Sigma$, is 
	simply equal to $\ker(\aleph_2-3)$.
\end{remark}

This variant of instanton Floer homology is also functorial with respect to cobordisms. Suppose $(W,w):(Y_0,\gamma_0) \to (Y_1,\gamma_1)$ is a cobordism of admissible pairs, $z \in \A(W)^{\otimes 2}$, and $\Sigma_i$ is an embedded oriented and connected surface in $Y_i$ such that $\Sigma_i\cdot \gamma_i\equiv d$ mod 3, and $\Sigma_0$, $\Sigma_1$ induce the same homology classes of $W$. More generally, if $\Sigma_1$ is disconnected, then each connected component of $\Sigma_1$ is required to be homologous to a connected component of $\Sigma_0$ inside $W$. Properties of instanton Floer homology discussed in Subsection \ref{IN} implies that $\rI_*(W,w,z)$ maps $\I_*(Y_0,\gamma_0 | \Sigma_0)\subseteq \I_*(Y_0,\gamma_0)$ to $\I_*(Y_1,\gamma_1 | \Sigma_1)\subseteq \I_*(Y_1,\gamma_1)$. Moreover, suppose $(X,w)$ is a cobordism from an admissible pair $(Y,\gamma)$ to the empty pair and $z \in \A(X)^{\otimes 2}$. Then the restriction of the map $\rD^{X,w}(z)$ gives rise to a functional on $\rI_*(Y,\gamma|\Sigma)$ which is denoted with the same notation.

Lemma \ref{HFgS} asserts that $\rI_*(Y_g,\gamma_{g,d}|\Sigma)$ is 1-dimensional. The following pairs from Subsection \ref{HF-S1-S} define cobordisms from two copies of $(Y_g,\gamma_{g,d})$ to the empty pair:
\begin{equation*}
	(\Omega_g,\omega_{g,d}) \hspace{1cm}(\Delta_g\coprod \Delta_g,\delta_{g,d}\coprod \delta_{g,d}).
\end{equation*}
Therefore, they determine two functionals $\rD^{\Omega_g,\omega_{g,d}}(1)$ and $\rD^{\Delta_g,\delta_{g,d}}(1) \otimes \rD^{\Delta_g,\delta_{g,d}}(1)$ on the 1-dimensional vector space $\rI_*(Y_g,\gamma_{g,d}|\Sigma) \otimes \rI_*(Y_g,\gamma_{g,d}|\Sigma)$. The non-degeneracy of the exhaustive eigenvector involved in the definition of $\rI_*(Y_g,\gamma_{g,d}|\Sigma)$ implies that the former functional is non-zero. Therefore, we have the following lemma which provides the distinguishing property of working with a non-degenerate exhaustive eigenspace for us:
\begin{lemma} \label{Omega=2Delta}
	The map $\rD^{\Delta_g,\delta_{g,d}}(1) \otimes \rD^{\Delta_g,\delta_{g,d}}(1):\rI_*(Y_g,\gamma_{g,d}|\Sigma) \otimes \rI_*(Y_g,\gamma_{g,d}|\Sigma) \to \C$ is a multiple of
	$\rD^{\Omega_g,\omega_{g,d}}(1):\rI_*(Y_g,\gamma_{g,d}|\Sigma) \otimes \rI_*(Y_g,\gamma_{g,d}|\Sigma) \to \C$.
\end{lemma}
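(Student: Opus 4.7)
The plan is to reduce the lemma to a one-line dimension count together with a single non-vanishing assertion. By Lemma~\ref{HFgS}, the space $\rI_*(Y_g,\gamma_{g,d}|\Sigma)$ has dimension one, so its tensor square is also one-dimensional, and hence the dual space of linear functionals on $\rI_*(Y_g,\gamma_{g,d}|\Sigma)^{\otimes 2}$ has dimension one as well. Consequently any two functionals on this tensor square are automatically scalar multiples of one another, as long as at least one of them is nonzero. The whole proof therefore collapses to verifying that $\rD^{\Omega_g,\omega_{g,d}}(1)$ does not vanish identically on $\rI_*(Y_g,\gamma_{g,d}|\Sigma)^{\otimes 2}$; the required conclusion for $\rD^{\Delta_g,\delta_{g,d}}(1)\otimes \rD^{\Delta_g,\delta_{g,d}}(1)$ is then immediate.

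To exhibit a witness of non-vanishing, I would use the exhaustive eigenvector $v_{2g-2,0}\in\Vgd$ produced in Proposition~\ref{eigen} from a permissible triple such as the one of Example~\ref{E(2)-simple-triple}. The eigenvalues of $v_{2g-2,0}$ under $\epsilon,\aleph_2,\aleph_3,\rho_2,\rho_3$ computed there are precisely $s_1^g=1$, $s_2^g=3$, $s_3^g=0$, $s_4^g=\sqrt 3\,(2g-2)$, $s_5^g=0$, which are by definition the eigenvalues whose generalized common kernel cuts out $\rI_*(Y_g,\gamma_{g,d}|\Sigma)\subset \Vgd$. Hence $v_{2g-2,0}$ lies in that subspace and, being nonzero, spans it; so $v_{2g-2,0}\otimes v_{2g-2,0}$ generates $\rI_*(Y_g,\gamma_{g,d}|\Sigma)^{\otimes 2}$. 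Recalling from subsection~\ref{HF-S1-S} that the pairing $\langle\,,\,\rangle$ on $\Vgd$ is by definition the functional $\rD^{\Omega_g,\omega_{g,d}}(1)$ associated to the cobordism $(\Omega_g,\omega_{g,d})$ from two copies of $(Y_g,\gamma_{g,d})$ to the empty pair, the non-degeneracy clause of Proposition~\ref{eigen} (which is exactly the case $(\alpha,\beta)=(2g-2,0)$ covered there) gives $\langle v_{2g-2,0},v_{2g-2,0}\rangle\neq 0$, finishing the verification.

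There is no real analytic or topological obstacle to the proof: all of the substantive work — the existence of a non-degenerate exhaustive eigenvector corresponding to $(2g-2,0)$ and the one-dimensionality of the generalized eigenspace — has already been carried out in Proposition~\ref{eigen} and Lemma~\ref{HFgS} via the permissible-triple construction together with the non-degeneracy of the pairing on $\Vgd$. The only point of modest care is the bookkeeping step that matches the eigenvalues of $v_{2g-2,0}$ listed in Proposition~\ref{eigen} with the values $s_i^g$ used in the definition of $\rI_*(Y_g,\gamma_{g,d}|\Sigma)$, so that one may legitimately assert $v_{2g-2,0}$ spans this subspace; once that is noted, the lemma follows.
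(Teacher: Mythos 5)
Your proof is correct and follows essentially the same route as the paper's: both reduce the claim to the one-dimensionality of $\rI_*(Y_g,\gamma_{g,d}|\Sigma)$ supplied by Lemma~\ref{HFgS}, together with the non-vanishing of $\rD^{\Omega_g,\omega_{g,d}}(1)$, which the paper likewise deduces from the non-degeneracy of the exhaustive eigenvector $v_{2g-2,0}$ of Proposition~\ref{eigen}. The only difference is that you make explicit the bookkeeping identifying $v_{2g-2,0}$ as a spanning vector of the generalized eigenspace and the pairing $\langle\,,\,\rangle$ with the functional $\rD^{\Omega_g,\omega_{g,d}}(1)$, which the paper leaves implicit.
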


\subsection{Excision and Sutured Manifolds Invariants}\label{suture}
Suppose $R_1$ and $R_2$ are two embedded surfaces of genus $g\geq 1$ in a 3-manifold $Y$. Suppose also there is a 1-cycle $\gamma$ in $Y$ such that $\gamma\cdot R_1=\gamma \cdot R_2$. We also assume $\gamma$ intersects $R_1$ and $R_2$ transversally, and all the intersection points have the same signs. Fix a diffeomorphism $\phi:R_1 \to R_2$ such that $\phi$ maps $\gamma \cap R_1$ to $\gamma \cap R_2$. We cut $Y$ along the surfaces $R_1$, $R_2$, and then identify the four boundary components of the resulting 3-manifold using $\phi$ such that the final 3-manifold, ${\widetilde Y}$, is an
oriented closed 3-manifold with embedded surfaces $\widetilde R_1$ and $\widetilde R_2$. Our assumption on $\phi$ implies that $\gamma$ determines a 1-cycle $\widetilde{\gamma}$ in $\widetilde{Y}$. We will also write $R$ (respectively, $\widetilde R$) for the union $R_1 \cup R_2$ (respectively, $\widetilde R_1 \cup \widetilde R_2$). Now we are ready to state our excision theorem:

\begin{theorem}\label{excision}
	The following Floer homology groups are isomorphic:
	\begin{equation*}
		\I_*(Y,\gamma|R)=\I_*({\widetilde Y}, {\widetilde\gamma}|{\widetilde R})
	\end{equation*}
\end{theorem}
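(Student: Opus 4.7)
The plan is to adapt the Floer--Braam--Donaldson excision argument to the $\U(3)$ setting, using the structural results from subsections \ref{HF-S1-S} and \ref{ev} as the essential inputs. First I would construct the excision cobordism $W: Y \to \widetilde{Y}$ as follows. Let $U$ be a tubular neighborhood of $R_1 \sqcup R_2$ in $Y$, so $Y \setminus U$ is a 3-manifold with four boundary components, each identified with $R = R_1$ via $\phi$. The 3-manifold $\widetilde{Y}$ arises by gluing the same $Y \setminus U$ with a different pairing of these four boundary components. To interpolate, take the product cobordism $[0,1] \times (Y \setminus U)$, whose boundary contains four copies of $[0,1] \times R$, and fill them in using $P \times R$, where $P$ is the four-holed sphere regarded as a 2-dimensional cobordism between the two pairings of circles. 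This yields a smooth cobordism $W$ from $Y$ to $\widetilde{Y}$. The 1-cycle $\gamma$ extends naturally to a 2-cycle $w: \gamma \to \widetilde\gamma$ in $W$, using $\gamma \cap (Y\setminus U)$ crossed with $[0,1]$, and $(\gamma\cap R) \times P$ across the pair-of-pants region. Admissibility of $(\widetilde{Y},\widetilde\gamma)$ follows from that of $(Y,\gamma)$.

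Second, I would show that $\rI_*(W,w,1) \colon \I_*(Y,\gamma) \to \I_*(\widetilde Y,\widetilde\gamma)$ restricts to a homomorphism $\Phi\colon \I_*(Y,\gamma|R) \to \I_*(\widetilde Y,\widetilde\gamma|\widetilde R)$. The key point is that the four copies of $R$ in $W$ are mutually homologous (they are all parallel to sections of $P\times R$), and homologous respectively to $R_i \subset Y$ at the incoming end and to $\widetilde R_j \subset \widetilde Y$ at the outgoing end. Therefore the operators $\epsilon$, $\aleph_r$, $\rho_r$ associated to $R_i$ and to $\widetilde R_j$ are intertwined by $\rI_*(W,w,1)$, which shows that the generalized-kernel conditions carved out by these operators are preserved. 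Reversing the roles of $Y$ and $\widetilde Y$ gives an analogous cobordism $\widetilde W\colon \widetilde Y \to Y$ with a 2-cycle $\widetilde w$, inducing $\widetilde \Phi\colon \I_*(\widetilde Y,\widetilde\gamma|\widetilde R) \to \I_*(Y,\gamma|R)$.

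Third, and this is the heart of the argument, I would compute the composition $\widetilde W \circ W$ by a neck-stretching argument. The composite cobordism $\widetilde W \circ W$ contains an embedded copy of $\Omega_g \sqcup \Delta_g$-type pieces attached along the surfaces $\Sigma = R_1$ (and $R_2$) with a long $[-T,T]\times Y_g$ neck. Stretching this neck factors the cobordism map through $\I_*(Y_g,\gamma_{g,d})\otimes \I_*(Y_g,\gamma_{g,d})$. The constraint of lying in $\I_*(\,\cdot\,|R)$ forces the connections on the neck to lie in the exhaustive eigenspace $\I_*(Y_g,\gamma_{g,d}|\Sigma)$, which by Lemma \ref{HFgS} is one-dimensional. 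Lemma \ref{Omega=2Delta} then identifies the contribution of the $\Omega_g$-piece of the composite with (a nonzero multiple of) two copies of the $\Delta_g$-cap-off, which collapses $\widetilde\Phi\circ\Phi$ to a nonzero scalar multiple of the identity on $\I_*(Y,\gamma|R)$. Exchanging the roles gives the same conclusion for $\Phi\circ\widetilde\Phi$, and therefore $\Phi$ is an isomorphism.

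The main obstacle will be step three, namely verifying that after neck-stretching the cobordism map truly reduces to the pairing $\rD^{\Omega_g,\omega_{g,d}}(1)$ against two cap-off classes $\rD^{\Delta_g,\delta_{g,d}}(1)$, with a genuinely nonzero coefficient. This requires a careful analysis of the moduli spaces on $P\times R$ under neck-stretching, compactness of ASD connections on the $P\times R$ piece with prescribed limits, and the fact that the limiting reducible or higher-energy contributions all lie outside the one-dimensional exhaustive eigenspace. The gluing package developed later in Section 6 supplies exactly these ingredients; once the composite is identified as a nonzero endomorphism of the finite-dimensional space $\I_*(Y,\gamma|R)$, the excision isomorphism follows formally.
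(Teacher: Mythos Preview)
Your proposal is correct and follows essentially the same approach as the paper: construct the excision cobordism $W$ from $[0,1]\times(Y\setminus U)$ glued to a saddle piece $\mathcal P\times R$, take the reverse cobordism, and show both composites are nonzero multiples of the identity by locating a separating copy of $Y_g=S^1\times R$ in $\overline W\circ W$ and invoking Lemma~\ref{Omega=2Delta} (with Lemma~\ref{HFgS}) to replace the $\Omega_g$-pairing by two $\Delta_g$-caps, collapsing the composite to the product cobordism. Your final paragraph overcomplicates step three: no direct moduli-space analysis on $P\times R$ is needed, since the functoriality of $\rI_*$ already gives the factorization through $\I_*(Y_g,\gamma_{g,d})$, and Lemma~\ref{Omega=2Delta} is a purely algebraic statement about the one-dimensional eigenspace.
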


\begin{proof}
	This theorem is the analogue of excision theorem for $\U(2)$-instanton Floer homology.
	The $\U(2)$ version of the excision theorem is proved in \cite{F:sur-rel} for $g=1$ (see also \cite{DB:sur-rel}) and in \cite{KM:suture} for higher values of $g$. We follow the same
	strategy as in the $\U(2)$ case to prove the theorem. In particular, the isomorphism between $\I_*(Y,\gamma|R)$ and $\I_*({\widetilde Y}, {\widetilde\gamma}|{\widetilde R})$ is induced by a cobordism of pairs
	$(W,w):(Y,\gamma) \to ({\widetilde Y}, {\widetilde\gamma})$.
	Let $Y^\circ$ be the complement of a regular neighborhood of $R$ in $Y$. Then the cobordism $W$ is the result of gluing $[0,1] \times Y^\circ$ to
	$\mathcal P \times R_1$ where $\mathcal P$ is the saddle cobordism in Figure \ref{saddle}.
	\begin{figure}
	\centering
		\includegraphics[]{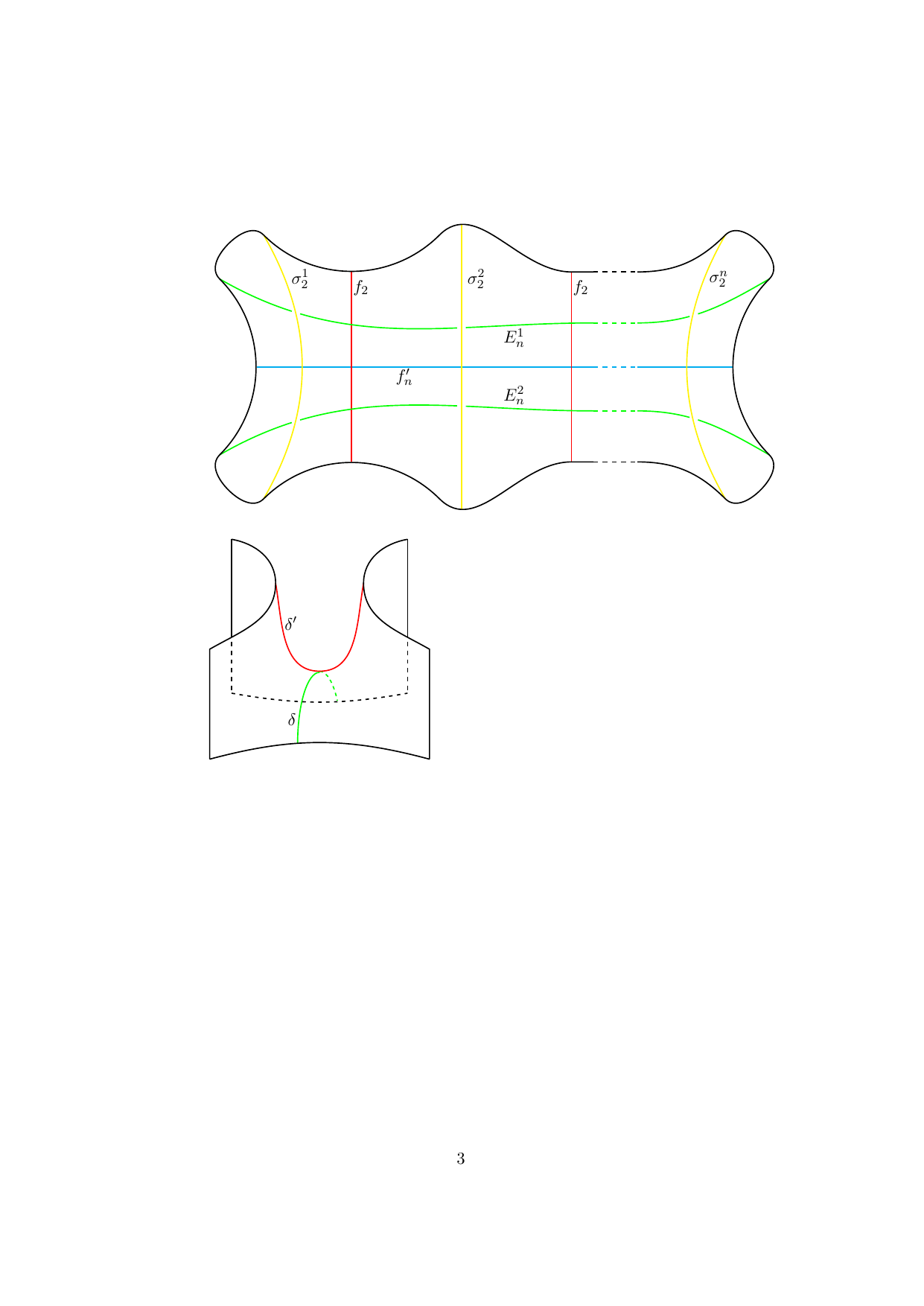}
		\caption{The saddle cobordism $\mathcal P$; the union of the two vertical boundary components on the left (respectively, right) is denoted by $\partial^v_l \mathcal P$ (respectively, $\partial^v_r \mathcal P$).}
		\label{saddle}
	\end{figure}
	The boundary of the 3-manifold $Y^\circ$ is equal to $R_1\cup \overline R_1\cup R_2\cup \overline R_2$. Then $[0,1]\times (R_1\cup \overline R_1) \subset [0,1] \times Y^\circ$ is glued to
	$\partial^v_l \mathcal P \times R_1$ by the identity map and $[0,1]\times (R_2\cup \overline R_2) \subset [0,1] \times Y^\circ$ is glued to
	$\partial^v_r \mathcal P \times R_1$ by the map $\phi$. (For the definition of $\partial^v_l \mathcal P$ and $\partial^v_r \mathcal P$ see Figure \ref{saddle}.) The surface cobordism $w:\gamma \to \widetilde \gamma$
	is also constructed in a similar way. This surface is given by gluing one copy of $\mathcal P$ for each intersection point in $\gamma \cap R_1$ to $[0,1]\times (\gamma\cap Y^{\circ})$. Reversing the cobordism
	$(W,w)$ determines another cobordism $(\overline W,\overline w): ({\widetilde Y}, {\widetilde\gamma}) \to (Y,\gamma)$. In order to prove the excision theorem, we claim that:
	\begin{equation} \label{composite-maps}
		\rI_*(\overline W, \overline w) \circ \rI_*(W,w):\rI_* (Y,\gamma|R)\to  (Y,\gamma|R), \hspace{.2cm}
		\rI_*(\overline W, \overline w) \circ \rI_*(W,w): \rI_* (\widetilde Y,\widetilde \gamma|\widetilde R)\to(\widetilde Y,\widetilde  \gamma|\widetilde  R)
	\end{equation}
	are non-zero multiples of the identity map. In the composite cobordism $\overline W \circ W:Y \to Y$, a copy of $\mathcal P\times R_1$ is glued to another copy of $\mathcal P\times R_1$ with the reverse orientation.
	In this part of $\overline W \circ W$, the union of the two copies of $\delta\times R_1$ gives rise to a copy of $Y_g:=S^1 \times R_1$. The intersection of $\overline w \circ w$ with $S^1 \times R_1$ produces a copy of
	$\gamma_{g,d}$. According to Lemma \ref{Omega=2Delta} and functoriality of $\rI_*$,
	replacing a neighborhood of $S^1 \times R_1$ with $(\Delta_g\coprod \Delta_g,\delta_{g,d}\coprod \delta_{g,d})$ does not change the map
	$\rI_*(\overline W\circ W, \overline w\circ w)$, up to multiplication by a non-zero constant number.
	But the resulting cobordism of the pair is the product cobordism $ ([0,1] \times Y,[0,1] \times \gamma)$. Therefore, the
	first map in \eqref{composite-maps} is a non-zero multiple of the identity map. Replacing $\delta$ with $\delta'$ and using the same argument proves a similar result for the second map in \eqref{composite-maps}.	
\end{proof}

The following proposition is the analogue of of Corollary 4.8 in \cite{KM:suture} and can be proved in a similar way using the excision theorem:

\begin{prop}\label{fibermfd}
	Let $Y$ be a 3-manifold, $\gamma$ be a 1-cycle in $Y$, and $R\subset Y$ be a connected surface of genus $g\geq 1$ such that
	$\gamma \cdot R \nequiv 0$ mod 3 and the intersection points of $\gamma\cap R$ are transversal and have the same signs.
	Let $\widetilde Y$ be the 3-manifold obtained by cutting $Y$ along $R$ and regluing by an orientation preserving diffeomorphism $\phi:R \to R$.
	Suppose $\phi$ maps $R\cap \gamma$ to  $R\cap \gamma$, and $\widetilde \gamma$, $\widetilde R$ are the induced 1-cycle and the embedded
	surface in $\widetilde Y$. Then
	\begin{equation*}
		\I_*(Y,\gamma|R)\cong \I_*(\widetilde{Y},\widetilde{\gamma}|\widetilde R).
	\end{equation*}
\end{prop}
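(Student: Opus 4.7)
The plan is to deduce this proposition from the excision theorem (Theorem~\ref{excision}) by a standard doubling trick, following the blueprint of Corollary~4.8 in \cite{KM:suture}. Let $d=\gamma\cdot R$, which is coprime to $3$ by assumption, and introduce the auxiliary admissible pair $(Y_g,\gamma_{g,d})=(R\times S^1,\,\{x_1,\ldots,x_d\}\times S^1)$. Form the disjoint union $(Z,\eta):=(Y,\gamma)\sqcup (Y_g,\gamma_{g,d})$ and take the two embedded surfaces $R_1:=R\subset Y$ and $R_2:=R\times\{*\}\subset Y_g$. Both meet $\eta$ transversally in $d$ points of the same sign, and the given $\phi$ lifts to an orientation-preserving diffeomorphism $R_1\to R_2$ matching $\eta\cap R_1$ to $\eta\cap R_2$. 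This is exactly the input required by Theorem~\ref{excision}.

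Next I would identify the cut-and-reglue $\widetilde Z$ with $\widetilde Y$: cutting $Y$ along $R_1$ gives $Y^\circ$ with two $R$-boundary components, cutting $Y_g$ along $R_2$ gives the product cylinder $R\times[0,1]$, and the excision regluing inserts this cylinder, twisted by $\phi$, between the two ends of $Y^\circ$. Straightening the inserted cylinder recovers precisely the 3-manifold obtained by cutting $Y$ along $R$ and regluing by $\phi$, namely $\widetilde Y$. Under this identification $\widetilde R_1,\widetilde R_2\subset\widetilde Y$ become two parallel copies of $R$ bounding a product region, and $\widetilde\eta$ is isotopic to $\widetilde\gamma$. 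Excision therefore yields
\begin{equation*}
\I_*(Z,\eta\mid R_1\cup R_2)\;\cong\;\I_*(\widetilde Y,\widetilde\gamma\mid \widetilde R_1\cup \widetilde R_2).
\end{equation*}

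Now I would apply the disjoint-union identity of subsection~\ref{IN}, which gives $\I_*(Z,\eta)=\I_*(Y,\gamma)\otimes \I_*(Y_g,\gamma_{g,d})$. Since the operators $\epsilon,\aleph_r,\rho_r$ attached to $R_i$ act through the $i$-th tensor factor only,
\begin{equation*}
\I_*(Z,\eta\mid R_1\cup R_2)\;=\;\I_*(Y,\gamma\mid R)\;\otimes\;\I_*(Y_g,\gamma_{g,d}\mid R_2).
\end{equation*}
The second factor is precisely the generalized eigenspace in $\Vgd$ singled out by the non-degenerate exhaustive eigenvector $v_{2g-2,0}$: Proposition~\ref{eigen} produces such an eigenvector (non-degeneracy is guaranteed at $(\alpha,\beta)=(2g-2,0)$), and Lemma~\ref{HFgS} then forces this subspace to be one-dimensional. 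Hence the left-hand side above is canonically isomorphic to $\I_*(Y,\gamma\mid R)$.

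On the right-hand side, $\widetilde R_1$ and $\widetilde R_2$ are homologous in $\widetilde Y$ (they cobound a product $R\times[0,1]$), so the cylinder-cobordism operators $\epsilon,\aleph_r,\rho_r$ attached to the two surfaces coincide, and the two sets of generalized eigenspace conditions cut out the same subspace. Consequently $\I_*(\widetilde Y,\widetilde\gamma\mid \widetilde R_1\cup \widetilde R_2)=\I_*(\widetilde Y,\widetilde\gamma\mid R)$, and the proposition follows. The main substantive input is the one-dimensionality of $\I_*(Y_g,\gamma_{g,d}\mid R_2)$, which ultimately rests on the non-vanishing of $\hbar_3$ (Proposition~\ref{non-van-h-3-4}) and thereby on the $\U(3)$-polynomial invariants of elliptic surfaces computed earlier in the paper; once that is granted, the rest is a formal rerun of the classical $\SU(2)$ excision trick.
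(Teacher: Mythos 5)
Your argument is correct and is precisely the one the paper intends: the authors prove this proposition by citing the excision theorem together with the doubling trick of Corollary~4.8 of Kronheimer--Mrowka, which is exactly what you carry out (disjoint union with $(Y_g,\gamma_{g,d})$, excision along $R$ and $R\times\{*\}$, the tensor-product splitting for disjoint unions, and the one-dimensionality of $\I_*(Y_g,\gamma_{g,d}\,|\,R)$ from Lemma~\ref{HFgS}). The only point worth stating explicitly is that the excision gluing map must be chosen so that, after collapsing the inserted product cylinder, the composite regluing of $Y^\circ$ is the prescribed $\phi$; this is a routine adjustment and does not affect the argument.
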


Now we can define invariants for balanced sutured manifolds almost verbatim from \cite{KM:suture}. Firstly we recall the definition of balanced sutured manifolds (cf. \cite{gabai,juhasz,KM:suture}) :

\begin{definition} \label{balance-suture-manifolds}
	A {\it sutured manifold} $(M,\alpha)$ consists of an oriented 3-manifold $M$, an oriented closed 1-manifold $\alpha$ in $\partial M$
	and a decomposition of $\partial M$ as:
	\begin{equation} \label{bdry-decom}
		\partial M= R_+ \cup A(\alpha)\cup R_-.
	\end{equation}
	Each connected component of $\alpha$ is called a {\it suture} and $A(\alpha)$ is the closure of a tubular neighborhood of $\alpha$.
	The spaces $R_+$ and $R_-$ are disjoint and each of them is a union of some of
	the connected components of $\overline{\partial M\backslash A(\alpha)}$.
	In particular, each component of $\partial R_+$ and $\partial R_-$ is a parallel copy of a suture.
	Suppose $R_+$ and $R_-$ are oriented with the outward-normal-first convention.
	Similarly, each component of $\partial R_+$ (respectively, $\partial R_-$) inherits an orientation from $R_+$ (respectively, $R_-$). This orientation
	is required to agree (respectively, disagree) with the orientation of the corresponding suture.
	The sutured manifold $(M,\alpha)$ is {\it balanced} if neither $M$ nor $R_\pm$ has closed components,
	and $\chi (R_+) =\chi (R_-)$. 
\end{definition}
Note that the required conditions on $R_+$, $R_-$ and $A(\alpha)$ imply that the decomposition \eqref{bdry-decom} is unique.

\begin{example}
	Suppose $F_{g,k}$ is a surface of genus $g$ with $k\geq 1$ boundary components which is not the 2-dimensional disc. Then
	$( [-1,1] \times F_{g,k},\partial F_{g,k} \times \{0\})$ determines a balanced sutured manifold.
	The decomposition of the boundary of this product sutured manifold is given as below:
	\begin{equation}
		R_+=\{1\}\times F_{g,k} \hspace{1cm} A(\alpha)=[-1,1]\times \partial F_{g,k} \hspace{1cm}R_-=\{-1\}\times F_{g,k}.
	\end{equation}
\end{example}

\begin{example} \label{knot-suture}
	Suppose $Y$ is a 3-manifold and $K\subset Y$ is a knot.
	 Let $M(K)$ be the complement of a regular neighborhood of  $K$ in $Y$, and
	$\alpha(K)$ be the union of two oppositely oriented meridional curves on the boundary of $M(K)$.
	Then $(M(K),\alpha(K))$
	determines a balanced sutured manifold.
	The manifolds $R_+$ and $R_-$ are homeomorphic to $[0,1] \times S^1$.
\end{example}
\begin{example} \label{Seifert-suture}	
	Suppose $K$ is a null-homologous knot in $Y$ and $S$ is a Seifert surface for $K$.
	We can also associate a sutured manifold $(N(S),\alpha(S))$ to $S$.
	The three manifold $N(S)$ is defined to be the $Y\backslash \((-1,1)\times {\rm int}(S)\)$, where
	$(-1,1)\times S$ is a regular neighborhood of $S$ in $Y$. The only suture $\alpha(S)$ of this
	sutured manifold is defined to be $\{0\}\times \partial S$.
\end{example}

Let $(M,\alpha)$ be a balanced sutured manifold such that the number of sutures is greater than one and $R_+$, $R_-$ are not a union of 2-dimensional discs. We attach the product sutured manifold $([-1,1]\times F_{0,k}, \{0\}\times \partial F_{0,k})$ to $(M,\alpha)$ where $k$ is the number of sutures of $(M,\alpha)$. More precisely, we glue $M$ to $[-1,1]\times F_{0,k}$ by identifying $A(\alpha)$ with $[-1,1]\times \partial F_{0,k}$ using an orientation reversing map. The resulting manifold is oriented and has two boundary components which are given as below:
 \begin{equation*}
	\bar{R}_+=R_+\cup \{1\}\times F_{0,k} \hspace{1cm}\bar{R}_-=R_-\cup \{-1\}\times F_{0,k}
 \end{equation*}
 Since $(M,\alpha)$ is balanced, the oriented surfaces $\bar{R}_+$ and $\bar{R}_-$ have the same positive genus. We choose an orientation reversing diffeomorphism $\phi:\bar{R}_+ \to \bar{R}_-$. Identifying $\bar{R}_+$ and $\bar{R}_-$ using the map $\phi$ determines a closed 3-manifold $Y_\phi$ which is called a {\it closure} of the sutured manifold $(M,\alpha)$. The 3-manifold $Y_\phi$ only depends on $(M,\alpha)$ and the choice of the diffeomorphism $\phi$. The surface $\bar{R}_+$ also induces an oriented surface in $Y_\phi$ which is denoted by $\bar{R}$. We also fix a point $y$ on $F_{0,k}$ and require that $\phi$ maps $(1,y) \in \bar{R}_+$ to $(-1,y) \in \bar{R}_-$. Therefore, the path $[-1,1]\times \{y\}\subset [-1,1]\times F_{0,k}$ induces a 1-cycle $\gamma_\phi$ in $Y_\phi$.

\begin{definition}\label{SHI-def}
	The {\it sutured instanton homology} of the sutured manifold $(M,\alpha)$ is defined as
	\begin{equation}\label{SHI-eq}
		\SHI_* (M,\alpha):= \rI_* (Y_\phi,\gamma_\phi|\bar{R}).
	\end{equation}
	If $(M,\alpha)$ has one suture or one of $R_+$, $R_-$ is a union of discs, we replace $F_{0,k}$ with 
	$F_{1,k}$ in the definition of $(Y,\gamma,\bar{R})$ and then extend sutured instanton homology
	to these sutured manifolds using \eqref{SHI-eq}.
\end{definition}
Proposition \ref{fibermfd} implies that sutured instanton homology $\SHI_*$ is well-defined. 
\begin{remark}
	In the definition of the closure of a sutured manifold $(M,\alpha)$, we can replace $F_{0,k}$ with $F_{g,k}$ 
	for an arbitrary $g$. Then for each choice of $g$, we can define a sutured Floer homology group as above.
	Using our excision theorem and the method of \cite{KM:suture}, we can show that the rank of
	these sutured Floer homology groups is non-decreasing in $g$.
	We expect that these sutured Floer homology groups for various choices of $g$ are isomorphic to each other. 
	However, proving this seems to need
	a further study of the algebra $\mathbb V_{g,1}$. We hope to come back to this issue elsewhere.
\end{remark}

\subsection{Instanton Knot Homology}\label{KHI}

\begin{definition}
	Given a knot $K$ in a 3-manifold $Y$, let $(M(K),\alpha(K))$ be the sutured manifold of Example \ref{knot-suture}.
	The {\it $\U(3)$-knot homology} of $K$, denoted by $\KHI_*(Y,K)$, is defined to be $\SHI_* (M(K),\alpha(K))$.
\end{definition}

As it is explained in \cite[Lemma 5.2]{KM:suture}, a closure of $(M(K),\alpha(K))$ can be described as follows. Suppose $F$ is a genus $1$ surface with one boundary component, and $c, c'\subset F$ are two oriented non-separating simple closed curves which intersect in exactly one point and $c\cdot c'=1$. Let $Z(K)$ be the result of gluing the knot complement $M(K)$ to the product 3-manifold $F \times S^1$ such that the meridian of $K$ is identified with $\{{\rm point}\}\times S^1$ and the longitude of $K$ is identified with $\partial F \times \{{\rm point}\}$. Suppose also $\gamma(K)\subset Z(K)$ is the 1-cycle given by $c \times \{{\rm point}\}\subset F \times S^1$. Then $Z(K)$ is a closure of the sutured manifold $(M(K),\alpha(K))$ and $\gamma(K)$ is the corresponding 1-cycle. The embedded surface $\bar R$ is also given by the torus $T=c' \times S^1 \subset F \times S^1$. Consequently:
\begin{equation}\label{KHI-Y-K}
	\KHI_*(Y,K)= \rI_*(Z(K),\gamma(K)|T)
\end{equation}

Next, we characterize the set of the critical points of the Chern-Simons functional associated to the pair $(Z(K),\gamma(K))$:

\begin{prop} \label{rep-var}
	For a knot $K$ in a 3-manifold $Y$, the set of the critical points of the Chern-Simons functional associated to the admissible pair $(Z(K),\gamma(K))$ is a 3-sheeted covering space of
	\begin{equation} \label{mathcalR}
		\mathcal{R}=\{\rho: \pi_1(Y\backslash K)\to \SU(3)\mid \rho(\mu)=\left[
                                                             \begin{array}{ccc}
                                                               1 &  0& 0  \\
                                                                0& \zeta & 0 \\
                                                                 0 & 0 & \zeta^2 \\
                                                             \end{array}
                                                           \right] \}.
	\end{equation}
	Recall that $\mu$ is a meridian of $K$ and $\zeta=e^{2\pi i/ 3}$.
\end{prop}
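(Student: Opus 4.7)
My approach is to reinterpret the critical points of $\CS$ as conjugacy classes of representations, decompose along the torus separating $M(K)$ from $F\times S^1$ inside $Z(K)$, and then solve explicit $\SU(3)$ equations on the $F\times S^1$ side.

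First I will show that a critical point $[A]$ on the admissible pair $(Z(K),\gamma(K))$ --- a connection on the $\U(3)$-bundle $Q$ with fixed determinant, modulo determinant-one gauge, satisfying $F_0(A)=0$ --- corresponds to a conjugacy class of representations $\rho:\pi_1(Z(K)\setminus\gamma(K))\to \SU(3)$ with $\rho(\mu_\gamma)=\zeta\cdot I$, where $\zeta=\e^{2\pi\bi/3}$ and $\mu_\gamma$ is a small meridian of $\gamma(K)$; this value of the meridian holonomy is fixed by the standard $\U(3)$-model for $Q$ near $\gamma(K)=c\times\{\mathrm{pt}\}$ together with the determinant connection $B_0$. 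Next, using $Z(K)=M(K)\cup_{T^2}(F\times S^1)$ and a van Kampen computation (structurally analogous to removing a longitude from $T^3$),
\[
\pi_1((F\times S^1)\setminus(c\times\{\mathrm{pt}\})) = \langle c,c',h\mid [c,h]=1\rangle,\qquad \mu_\gamma=[c',h],
\]
with the gluing identifying $h$ with the meridian $m$ of $K$ and $[c,c']$ with the longitude $\ell$. Hence $\rho$ is equivalent data to a representation $\rho_M:\pi_1(M(K))\to \SU(3)$ together with matrices $\rho(c),\rho(c')\in \SU(3)$ satisfying $[\rho(c),\rho_M(m)]=1$, $[\rho(c'),\rho_M(m)]=\zeta I$, and $[\rho(c),\rho(c')]=\rho_M(\ell)$.

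The second relation forces $\rho_M(m)$ to be conjugate to $\zeta\rho_M(m)$, so its three eigenvalues are cyclically permuted by multiplication by $\zeta$; combined with $\det\rho_M(m)=1$ this pins them down to $\{1,\zeta,\zeta^2\}$, and after ambient conjugation $\rho_M$ lies in $\mathcal R$ with $\rho_M(m)=\mathrm{diag}(1,\zeta,\zeta^2)$. In this normalization $\rho(c)=\mathrm{diag}(x_1,x_2,x_3)$ is forced to be diagonal, while $\rho(c')$ must cyclically permute the eigenspaces of $\rho_M(m)$, so $\rho(c')=PD$ for $P$ the standard cyclic permutation matrix and some $D$ in the maximal torus $T$; the residual conjugation by the centralizer $T$ of $\rho_M(m)$ absorbs $D$ and I may normalize $\rho(c')=P$. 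The remaining relation then gives $\mathrm{diag}(x_1/x_2,x_2/x_3,x_3/x_1)=\rho_M(\ell)=\mathrm{diag}(a_1,a_2,a_3)$, which together with $x_1 x_2 x_3=1$ reduces to $x_3^3=1/(a_1a_2^2)$; its three solutions differ by multiplication of $\rho(c)$ by the central element $\zeta I$ and so give three genuinely inequivalent extensions of each $\rho_M\in\mathcal R$.

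This produces a well-defined $3$-to-$1$ forgetful map from the critical set to $\mathcal R$, and continuity of the three cube roots as $\rho_M$ varies exhibits it as a topological $3$-sheeted covering (potentially with nontrivial monodromy). The main obstacle I anticipate is justifying the precise meridian normalization $\rho(\mu_\gamma)=\zeta I$ in the first step, which requires careful bookkeeping of the orientation conventions in Kronheimer's construction of $Q$ from the 1-cycle $\gamma(K)$ and of the fixed determinant connection $B_0$; once this is settled, the remaining steps are a direct linear-algebra calculation closely parallel to the $\SU(2)$-case in Kronheimer--Mrowka's work.
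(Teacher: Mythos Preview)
Your proposal is correct and follows essentially the same approach as the paper. Both proofs normalize a representative so that $\rho(h)=\rho_M(m)=\mathrm{diag}(1,\zeta,\zeta^2)$ (the paper's $J_2$) and $\rho(c')=P$ (the paper's $J_1$), deduce that $\rho(c)$ (the paper's $J_3$) is diagonal, and then observe that the $3$-to-$1$ homomorphism $J_3\mapsto[J_3,J_1]$ from the maximal torus to itself accounts for the three sheets over each $\rho_M\in\mathcal R$. Your concern about the sign of $\rho(\mu_\gamma)$ is not essential here: either $\zeta I$ or $\zeta^{-1}I$ forces the same eigenvalue set $\{1,\zeta,\zeta^2\}$ for $\rho_M(m)$, so the target is $\mathcal R$ in either case.
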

\begin{proof}
	The set of the critical points of the Chern-Simons functional is given by the conjugacy classes of representations
	$\rho: \pi_1(Z(K)\backslash \gamma(K)) \to \SU(3)$ such that a meridian of the closed curve $\gamma(K)$ is mapped to $\zeta$.
	We fix a base point for $Z(K)$ on the torus  $c' \times S^1\subset F \times S^1$. Suppose $J_1=\rho(c' \times \{{\rm point}\})$ and
	$J_2=\{{\rm point}\} \times S^1$. Since $c' \times S^1$ intersects $c$ in one point, we can assume:
	\begin{equation*}
		[J_2,J_1]= \zeta\cdot \id.
	\end{equation*}
	Thus there is a unique representative for the conjugacy class of $\rho$ such that:
	\begin{equation*}
		J_1= \left[
			\begin{array}{ccc}
				0&0&1\\
				1&0&0\\
				0&1&0\\
			\end{array}  \right]
		\hspace{1cm}	
		J_2=\left[
			\begin{array}{ccc}
				1&0&0\\
				0&\zeta&0\\
				0&0&\zeta^2\\
			\end{array}
			\right]
	\end{equation*}
	Therefore, the conjugacy class of the representation $\rho|_{\pi_1(F\times S^1 \backslash \gamma(K))}$ is uniquely determined by $J_3 \in \SU(3)$
	which is equal to the image of $\rho$ for a parallel copy of $c$. Since $J_3$ has to commute with $J_2$ it is a diagonal matrix.
	Therefore, the restriction of the above representative of $\rho$ to the knot group $\pi_1(M(K))$ determines an element of $\mathcal R$. Furthermore,
	$\rho$ maps the longitude of $K$ to $[J_3, J_1]$. Now the claim can be easily verified, because the map that sends a diagonal matrix $J_3$ to
	$[J_3, J_1]$ is 3 to 1.
\end{proof}

\begin{cor}\label{nonabrep}
	Suppose $K$ is a knot in a homology sphere $Y$. If $\dim(\KHI(Y,K)) > 1$, then there exists a non-abelian representation $\rho :\pi_1(Y\backslash K) \to \SU(3)$
	such that the image of the meridian is conjugate to
	\begin{equation*}
		\left[
			\begin{array}{ccc}
				1&0&0\\
				0&\zeta&0\\
				0&0&\zeta^2\\
			\end{array}
		\right]
	\end{equation*}
\end{cor}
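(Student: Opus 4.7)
The plan is to prove the contrapositive: assuming every $\rho \in \mathcal{R}$ is abelian, I will show $\dim \KHI(K) \leq 1 = \dim \KHI(U)$. Since $H_1(S^3 \setminus K) \cong \Z$ is generated by the meridian $\mu$, an abelian $\rho \in \mathcal{R}$ is determined by $\rho(\mu)$, which is the prescribed diagonal matrix with distinct eigenvalues $1, \zeta, \zeta^2$. Under this hypothesis, $\mathcal{R} = \{\rho_0\}$ is a single point, and by Proposition \ref{rep-var} the critical set of the Chern--Simons functional on the admissible pair $(Z(K), \gamma(K))$ consists of exactly three points $\alpha_0, \alpha_1, \alpha_2$ forming a $\Z/3\Z$-orbit covering $\{\rho_0\}$.

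Next, I would compare this picture with the unknot. Since $\pi_1(S^3 \setminus U) \cong \Z$, the set $\mathcal{R}$ for $U$ is automatically a single abelian representation and $\CS$ on $(Z(U), \gamma(U))$ has three critical points of the same type. Moreover, the twisted cohomology groups $H^*(Z(K); \ad \rho_0)$ and the Hessian of $\CS$ at each $\alpha_j$ depend only on the abelian datum $\rho_0$, which factors through $H_1(S^3\setminus K) \cong \Z$; a Mayer--Vietoris computation using the decomposition $Z(K) = M(K) \cup (F \times S^1)$ shows these quantities coincide with the corresponding ones for the unknot. Hence the three critical points are non-degenerate with the same Floer gradings as in the unknot case, so the Floer chain complex of $(Z(K), \gamma(K))$ has dimension at most $3$.

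To conclude, recall that $\KHI(K)$ is the common generalized eigenspace of $\epsilon, \aleph_2, \aleph_3, \rho_2, \rho_3$ for the $g=1$ eigenvalues of Lemma \ref{HFgS}, in particular the eigenvalue $1$ of $\epsilon$. The three critical points $\alpha_0, \alpha_1, \alpha_2$ differ by the action of the center $\Z/3\Z \subset \SU(3)$ on lifts of the underlying $\PU(3)$-connection, and the cobordism map defining $\epsilon$ (insertion of a parallel copy of $\Sigma$) implements this cyclic permutation on the three generators, giving eigenvalues $1, \zeta, \zeta^2$. The generalized $1$-eigenspace is therefore at most one-dimensional, which yields $\dim \KHI(K) \leq 1$.

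The main technical obstacle is to rigorously identify the action of $\epsilon$ on the three critical points in the abelian case: one must rule out off-diagonal contributions from Floer differentials or from corrections to the cobordism map, which amounts to showing the relevant moduli spaces of ASD connections on $[0,1] \times Z(K)$ are regular and their contributions localize to the cyclic permutation term. This is analogous to the Kronheimer--Mrowka analysis in the $\SU(2)$-case, and should follow from the non-degeneracy established above together with the admissibility condition ruling out reducible trajectories.
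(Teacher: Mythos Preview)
Your overall strategy matches the paper's: argue by contrapositive, note that the unique abelian element of $\mathcal R$ gives exactly three critical points of the Chern--Simons functional, bound the chain complex by three generators, and then show the $1$-eigenspace of $\epsilon$ has dimension at most $1$.

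The substantive difference is in how you handle $\epsilon$. You attempt to identify $\epsilon$ with the cyclic permutation of the three generators, and you correctly flag that this requires controlling moduli spaces to rule out extra contributions to the cobordism map. The paper sidesteps this entirely with a pure grading argument: since $\epsilon^3=\id$ (adding $3\Sigma$ to the $2$-cycle leaves the $\PU(3)$-bundle unchanged) and $\epsilon$ has degree $4$ in the relative $\Z/12\Z$-grading, the restriction $\epsilon:V_i\to V_{i+4}$ is an isomorphism for every graded piece $V_i$ of $\I_*(Z(K),\gamma(K))$, and on each orbit $V_i\oplus V_{i+4}\oplus V_{i+8}$ the three eigenspaces for $1,\zeta,\zeta^2$ have equal dimension $\dim V_i$. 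Hence the three eigenspaces of $\epsilon$ on all of $\I_*$ have the same dimension, namely $\tfrac{1}{3}\dim\I_*\le 1$. Since $\epsilon$ is diagonalizable (it has finite order), the generalized $1$-eigenspace equals $\ker(\epsilon-1)$, and $\KHI(K)\subset\ker(\epsilon-1)$ gives the bound. No moduli-space analysis of the cobordism map is needed at all, so your ``main technical obstacle'' simply does not arise.

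On non-degeneracy of the three abelian critical points: the paper asserts this without proof in the corollary, implicitly relying on the $T^3$ case treated earlier (the $\mathbb V_{1,d}^N$ discussion and the reference to \cite{K:higher}). Your Mayer--Vietoris sketch is a reasonable heuristic for why the local data agrees with the unknot case, but neither you nor the paper gives a full argument here; this is the one point where your write-up actually adds something, though it would need to be made precise.
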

\begin{proof}
	Suppose there is not a representation with this property. Then the only critical points
	of the Chern-Simons functional of $(Z(K),\gamma(K))$ are the three flat connections induced by the abelian representation
	in \eqref{mathcalR}. Since these critical points are non-degenerate\footnote{This is a consequence 
	of the fact that the Alexander polynomial of a knot $K$ in an integral homology sphere does not have a root, which 
	is a third root of unity.}, $\rI_*(Z(K),\gamma(K))$
	is the homology of a chain complex which has three generators.
	The order three map $\epsilon$ has degree $4$ with respect to the $\Z/12\Z$ grading. Thus
	the three eigenspaces of this operator have the same dimensions.
	Therefore, $\ker(\epsilon-1)$ has to be at most 1-dimensional, which is a contradiction.	
\end{proof}

\begin{prop} \label{knot-suture-sefiert-suture}
Let $K$ be a null-homologous knot in $Y$ and $S$ be a Seifert surface of  genus $g\geq 1$ for $K$.
	Then $\dim(\KHI(Y,K))\geq 2\dim(\SHI_*(N(S),\alpha(S)))$
	where $(N(S),\alpha(S))$ is the sutured manifold of
	Example \ref{Seifert-suture}.
\end{prop}
\begin{proof}
	According to \eqref{KHI-Y-K}, the $\U(3)$-knot homology is equal to $\rI_*(Z(K),\gamma(K)|T)$. 
	In order to form a closure of $(N(S),\alpha(S))$ and to define $\SHI_*(N(S),\alpha(S))$, 
	we firstly glue $[-1,1]\times F_{1,1}$ along the suture $\alpha(S)$.  
	In this case, $\bar{R}^{\pm}$ are two copies of $S\cup F_{1,1}$. If we identify $\bar{R}^{\pm}$ in the obvious way, then
	the resulting space is again the 3-manifold $Z(K)$.
	Let $\gamma(S)$ and $\bar R(S)$ be the resulting 1-cycle and the surface in the closure $Z(K)$. The 1-cycle $\gamma(S)$
	is a copy of $S^1\times \{{\rm point}\} \subset S^1\times F_{1,1}$. The surface $\bar R(S)$ has genus $g+1$ is given by
	gluing the Seifert surface $S$ to $F_{1,1}$. Arguing as in \cite[Proposition 7.9]{KM:suture} and using our excision theorem, 
	we can show that:
	\[\rI_*(Z(K),\gamma(K)|T)=\rI_*(Z(K),\gamma(K)+\gamma(S)|T)\]
	and   
	\[\rI_*(Z(S),\gamma(S)|\bar R(S))=\rI_*(Z(S),\gamma(K)+\gamma(S)|\bar R(S)).\]
	Remark \ref{genus-1-SHI} implies that the homology group $\rI_*(Z(K),\gamma(K)+\gamma(S)|T)$ is equal to:
	\begin{equation}\label{KHI-reform}
	  \rI_*(Z(K),\gamma(K)+\gamma(S))\cap \ker(\aleph_2-3).
	\end{equation}
	We can further decompose the vector space in \eqref{KHI-reform} using the eigenvalues of the operators 
	$\rho_2(\bar R(S))$. In particular, vector spaces:
	\begin{equation}\label{subspace-1}
	  \rI_*(Z(K),\gamma(K)+\gamma(S))\cap \ker(\aleph_2-3)\cap\gker(\rho_2(\bar R(S))-2\sqrt{3}g)
	\end{equation}
	and 
	\begin{equation}\label{subspace-2}
	  \rI_*(Z(K),\gamma(K)+\gamma(S))\cap \ker(\aleph_2-3)\cap\gker(\rho_2(\bar R(S))+2\sqrt{3}g)
	\end{equation}	
	are two distinct summands of $\rI_*(Z(K),\gamma(K)+\gamma(S)|T)$. Since the operator $\bar R(S))$ has degree $2$ with 
	respect to the $\Z/12\Z$-grading of $\rI_*(Z(K),\gamma(K)+\gamma(S)|T)$, the vector spaces in \eqref{subspace-1} and 
	\eqref{subspace-2} have the same dimension. It is also clear from the definition that \eqref{subspace-1} contains the 
	$\rI_*(Z(S),\gamma(K)+\gamma(S)|\bar R(S))$, which verifies the claim of this proposition.
\end{proof}

\begin{conjecture} \label{non-triv-knot-non-triv-KHI}
If $Y\backslash K$ is irreducible, then $\dim(\SHI_*(N(S),\alpha(S)))\geq 1$.
\end{conjecture}

If $Y\backslash K$ is irreducible, then the sutured manifold $(N(S),\alpha(S))$ is {\it taut}. Kronheimer and Mrowka proved a non-vanishing theorem for the $\U(2)$-sutured Floer homology $\SHI_*^2$ of taut sutured manifolds \cite{KM:suture}. As it is explained in Section \ref{que-conj}, we expect that a similar non-vanishing theorem holds for our version of sutured Floer homology. 

If Conjecture \ref{non-triv-knot-non-triv-KHI} holds, then the answer to Question \ref{rank-N-rep} is positive for $N=3$ and for any non-trivial knot in an integral homology sphere $Y$. Using Prime decomposition theorem for 3-manifolds, one can decompose $Y$ as a connected sum $Y_1\#Y_2$ such that $K\subset Y_1$ and $Y_1\backslash K$ is irreducible. With the aid of Corollary \ref{nonabrep} and Proposition \ref{knot-suture-sefiert-suture}, we can conclude from Conjecture \ref{non-triv-knot-non-triv-KHI} that the answer to Question \ref{rank-N-rep} for $N=3$ and the pair $(Y_1,K)$ is positive. This clearly would imply the claim for $(Y,K)$.

\section{Gluing Theory}
\subsection{Moduli Spaces on Manifolds with Long Neck} \label{long-neck}

 Suppose $Y$ is a connected 3-manifold and $\gamma \subset Y$ is a cycle. We do not assume that $(Y,\gamma)$ is $N$-admissible. However, we assume that the critical points of the (possibly perturbed)  Chern-Simons functional of $(Y,\gamma)$ are non-degenerate. Suppose also $(X,w)$ is a pair with boundary $(Y,\gamma)$. As it is explained in Subsection \ref{cyl-mod}, we can form moduli spaces $\mathcal M_p(X,w)$ and their framed counterparts $\widetilde{\mathcal M}_p(X,w)$ by working with perturbations of the ASD equation. Uhlenbeck compactness theorem of Subsection \ref{pol-invts} has an analogue for 4-manifolds with cylindrical ends. A proof of this result for $N=2$ is given in \cite[Chapter 5]{Don:YM-Floer}, and it can be extended to the higher rank by combining the arguments of  \cite{Don:YM-Floer} and \cite{K:higher}:
\begin{theorem} \label{cyl-mod-compactness}
	Suppose $\{[A_i]\}_{i\in \N}$ is a sequence of connections in the moduli space $\widetilde {\mathcal M}_{p} (X,w)$. Then there is an element $([B],[C_1],\dots,[C_k])$ of the following space
	\begin{equation}
		\widetilde {\mathcal M}_{p_0}(X,w;\alpha_0)  \times_{\Gamma_{\alpha_0}}
		\widetilde {\mathcal M}_{p_1}(\alpha_0,\alpha_1) \times_{\Gamma_{\alpha_1}} \dots  \times_{\Gamma_{\alpha_{k-1}}}
		\widetilde {\mathcal M}_{p_k}(\alpha_{k-1},\alpha_k)
	\end{equation}
	and an element $({\bf x},{\bf y}_1,\dots,{\bf y}_k)$ of the space:
	\begin{equation}
		 (X^+)^{m_0}/S_{m_0}\times (Y \times \R)^{m_1}/S_{m_1}\times \dots \times  (Y \times \R)^{m_k}/S_{m_k}
	\end{equation}	
	for appropriate non-negative integers $m_i$ such that $\{[A_i]\}_{i\in \N}$, after passing to a sequence is {\it weakly chain convergent} to $(([B],{\bf x}),([C_1],{\bf y}_1),\dots,([C_k],{\bf y}_k))$. Furthermore, we have:
	\begin{equation} \label{energy-relation}
		\kappa(p)=\kappa(p_0)+\kappa(p_1)+\dots+\kappa(p_k)+m_0+m_1+\dots+m_k
	\end{equation}
\end{theorem}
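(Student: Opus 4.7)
The plan is to combine the standard Uhlenbeck compactness on 4-manifolds with a neck-stretching analysis on the cylindrical end, following the strategy of \cite[Chapter 5]{Don:YM-Floer} for $N=2$ and adapting the gauge-theoretic modifications from \cite{K:higher} to treat the higher rank $\U(N)$ setting. Throughout, the topological energy $\kappa(p)$ is fixed, so Chern-Weil gives a uniform $L^2$-bound on $F_0(A_i)$, which is the starting point.

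First, I would compactify on the body. Fix an exhaustion $K_1 \subset K_2 \subset \cdots$ of $X^+$ by compact submanifolds, with $K_n$ containing $X \cup ([0,n]\times Y)$. On each $K_n$, Uhlenbeck's theorem together with the $\epsilon$-regularity for ASD connections (in the form needed for $\su(P)$-valued curvature, see \cite{K:higher}) provides, after passing to a subsequence and applying gauge transformations in $\mathcal{G}_{p}$, $L^2_{l,\mathrm{loc}}$-convergence away from a finite bubble set ${\bf x}_n \subset K_n$ of cardinality at most $\kappa(p)$. A diagonal argument produces a limiting ASD connection $B$ on $X^+ \setminus {\bf x}$ for a locally finite bubble set ${\bf x}$, and the standard removable singularities theorem extends $B$ across ${\bf x}$ (possibly changing the underlying bundle). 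The topological energy of $B$ equals $\kappa(p) - m_0 - (\text{energy lost to the neck})$, where $m_0 = |{\bf x} \cap X^+|$ accounts for the body bubbles.

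Second, and this is the heart of the argument, I would analyze the end. If all the energy is captured in $B$ already, $B$ must decay to some critical point $\alpha_0$ of $\CS_\pi$ on the end, and we are done with $k=0$. Otherwise, there exist sequences $\tau_i^1 \to +\infty$ such that translating $A_i$ by $-\tau_i^1$ along the cylinder produces connections on $\R \times Y$ with a definite fraction of the energy remaining in a bounded region (again modulo bubbles ${\bf y}_1 \subset \R \times Y$ of cardinality $m_1$). Applying the same Uhlenbeck-type extraction on $\R\times Y$, which is a gradient trajectory moduli by the ASD/CS correspondence from subsection \ref{cyl-mod}, yields a limit $C_1 \in \widetilde{\mathcal{M}}_{p_1}(\alpha_0,\alpha_1)$ connecting two critical points. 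Iterating this neck-stretching extraction, each step extracts a trajectory $C_j$ of some positive energy $\kappa(p_j)$. Because the critical points of $\CS_\pi$ are non-degenerate and discrete, there is a positive minimum energy for non-constant trajectories, and the total energy $\kappa(p)$ bounds the number of iterations, so the process terminates after finitely many steps, producing the chain $([B], [C_1], \ldots, [C_k])$ with matching endpoints. The fiber-product structure over $\Gamma_{\alpha_i}$ arises because the gauge transformations used on adjacent pieces are only required to match asymptotically up to the stabilizer, matching the framed moduli conventions. The energy identity \eqref{energy-relation} follows by summing up the Chern-Weil contributions of each piece plus one for each bubble.

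The main obstacle is the bookkeeping on the cylindrical end when $\gamma$ is \emph{not} admissible, so that the critical sets of $\CS_\pi$ may include reducibles with nontrivial stabilizer $\Gamma_{\alpha_i} \subsetneq Z(\SU(N))$. In this case, the gauge fixing in the neck-stretching step cannot be done globally, and one must patch gauges along overlap regions using the stabilizer, which is exactly what the fiber products $\times_{\Gamma_{\alpha_i}}$ record. The careful treatment of this patching, and the verification that the resulting convergence is independent of choices modulo $\Gamma_{\alpha_i}$, is the most delicate point, and here we rely on the higher-rank extension of the slice-theorem/center-of-mass argument from \cite{K:higher}. All other steps are straightforward adaptations of the $N=2$ cylindrical-end compactness of \cite[Chapter 5]{Don:YM-Floer}.
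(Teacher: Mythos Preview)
Your proposal is correct and follows exactly the approach the paper indicates: the paper does not give a detailed proof of this theorem at all, but simply states that the $N=2$ case is in \cite[Chapter~5]{Don:YM-Floer} and that the higher-rank extension follows from the arguments of \cite{K:higher}. Your sketch is precisely an expansion of that strategy---Uhlenbeck extraction on a compact exhaustion, followed by iterated neck-stretching on the end with termination guaranteed by the energy bound and non-degeneracy of critical points---so there is nothing to compare.
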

Note that $\widetilde {\mathcal M}_{q}(\alpha,\beta)$ is the moduli space of framed connections associated to a path $q:\alpha \to \beta$ over $(Y\times \R,\gamma \times \R)$. This moduli space is equipped with an action of $\Gamma_\alpha\times \Gamma_\beta$. The weakly chain convergence of $\{[A_i]\}_{i\in \N}$ to $(([B],{\bf x}),([C_1],{\bf y}_1),\dots,([C_k],{\bf y}_k))$ means that the following holds \cite{Don:YM-Floer}: the sequence $\{[A_i]\}_{i\in \N}$, after choosing appropriate gauge representatives, is $L^p_1$-convergent to $B$ on compact sets of $X\backslash {\bf x}$. Moreover, there is a sequence of real numbers:
\begin{equation*}
	t_i^1:=0<t_i^2<\dots<t_i^k
\end{equation*}
with $\lim_i t_i^{j+1}-t_i^j=\infty$ such that the translation of $A_i|_{Y \times [0,\infty)}$ by the constant $t_i^j$ is $L^p_1$ convergent to $C_j$ on compact sets of $Y \times \R \backslash {\bf y}_j$.  Identity \eqref{energy-relation} implies that:
\begin{equation} \label{index-ineq}
	\ind(\mathcal D_{A_i}) \geq \ind(\mathcal D_{B_1})+\ind(\mathcal D_{C_1})+\dots + \ind(\mathcal D_{C_k})
\end{equation}
and the two sides of the inequality are also equal to each other mod $4N$. In \eqref{index-ineq}, equality holds if and only if the integers $m_j$ are all zero, and in this case, $L^p_1$ convergence on compact subspaces can be improved to $C^\infty$ convergence on compact subspaces.
\begin{remark} \label{compact-cylinder-cob}
	Theorem \ref{cyl-mod-compactness} can be extended to the case that $W$ has more than one boundary components in an obvious way.
	In this case, we need to fix a chain of the elements of moduli spaces for each boundary component.
\end{remark}

There is another compactness theorem we need to review, in which we stretch a 4-manifold along a neck and consider the associated moduli spaces. Suppose $(X_1,w_1)$ and $(X_2,w_2)$ are pairs whose boundaries are $(Y,\gamma)$ and $(\overline Y,\overline \gamma)$, respectively. Then we can glue these pairs to form $(X,w)$. We also fix Riemannian metrics on $X_i$ which are product metrics in neighborhoods of their boundaries associated to a fixed metric on $Y$. Let $X^T$ be the Riemannian manifold, diffeomorphic to $X$, which has an isometric copy of $Y \times (-T,T)$ and $X^T\backslash Y \times (-T,T)$ is isometric to the disjoint union of $X_1$ and $X_2$. The proof of the following theorem is similar to that of Theorem \ref{cyl-mod-compactness}.

\begin{theorem} \label{stretching-neck-compactness}
	Suppose $A_i$ is a connection on the moduli space $\mathcal M_{\kappa} (X^{T_i},w)$ such that
	$\lim _{i \to \infty}T_i=\infty$. Then there is an element $([B_1],[C_1],\dots,[C_k],[B_2])$ of
	\begin{equation}\label{limit-mod-space}
		\widetilde {\mathcal M}_{p}(X_1,w_1;\alpha_0)  \times_{\Gamma_{\alpha_0}}
		\widetilde {\mathcal M}_{p_1}(\alpha_0,\alpha_1) \times_{\Gamma_{\alpha_1}} \dots  \times_{\Gamma_{\alpha_{k-1}}}
		\widetilde {\mathcal M}_{p_k}(\alpha_{k-1},\alpha_k) \times_{\Gamma_{\alpha_k}}
		\widetilde {\mathcal M}_{p'}(X_2,w_2;\alpha_k)
	\end{equation}
	and an element $({\bf x_1},{\bf y}_1,\dots,{\bf y}_k,{\bf x_{2}})$ of the space:
	\begin{equation}
		 (X_1^+)^{m_0}/S_{m_0}\times (Y \times \R)^{m_1}/S_{m_1}\times \dots \times (Y \times \R)^{m_k}/S_{m_k}\times (X_2^+)^{m_{k+1}}/S_{m_{k+1}}
	\end{equation}	
	such that $\{[A_i]\}_{i \in \N}$ is weakly chain convergent to $(([B_1]{\bf x_1}),([C_1],{\bf y}_1),\dots,([C_k],{\bf y}_k),([B_2],{\bf x_{2}}))$. Moreover, we have:
	\begin{equation} \label{energy-relation-2}
		\kappa=\kappa(p)+\kappa(p_1)+\dots+\kappa(p_k)+\kappa(p')+m_0+m_1+\dots+m_k+m_{k+1}
	\end{equation}
\end{theorem}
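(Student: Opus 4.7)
The plan is to follow the same strategy as the proof of Theorem \ref{cyl-mod-compactness}, adapting it to the neck-stretching setup. First, using the Chern--Weil identity and the uniform bound $\kappa$, one gets a uniform $L^2$-bound on $F(A_i)$ on $X^{T_i}$. Standard Uhlenbeck compactness then produces a bubble set ${\bf x}_1\subset X_1^+$ of some multiplicity $m_0$ and a bubble set ${\bf x}_2\subset X_2^+$ of multiplicity $m_{k+1}$ (after cutting the neck finitely many times and distributing bubbles accordingly), together with connections $B_1$ on $X_1^+$ and $B_2$ on $X_2^+$ to which (subsequences of) the $A_i$ converge in $L^p_1$ on compact subsets away from these bubble points. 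The finiteness of $\kappa$ and the Chern--Simons/energy formula in subsection \ref{cyl-mod} force $B_1,B_2$ to have finite energy on their cylindrical ends, so that their restrictions to $Y\times[T,\infty)$ and $(-\infty,-T]\times Y$ are asymptotic to critical points $\alpha_0$ and $\alpha_k$ of (the perturbed) Chern--Simons functional; this gives paths $p$ and $p'$ and hence the outer factors in \eqref{limit-mod-space}.

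Next I would handle the middle cylinder. Because $T_i\to\infty$, the portion of $A_i$ lying on $Y\times[-T_i,T_i]$ can carry nontrivial energy in the limit that is \emph{not} captured by $B_1$ or $B_2$. To recover it, I choose translation parameters $s_i^j\in(-T_i,T_i)$ adapted to the $j$-th ``bump'' of energy: concretely, one chooses $s_i^1$ to be the smallest $s$ such that a fixed fraction of the total remaining energy has accumulated on $[-T_i,s]\times Y$, then iterates. After translating $A_i$ by $-s_i^j$ one obtains a sequence of connections on longer and longer cylinders, and another application of Uhlenbeck compactness (again with point bubbling, producing ${\bf y}_j$ and multiplicities $m_j$) yields a limit $C_j$ on $Y\times\mathbb{R}$ solving the perturbed ASD equation; its asymptotics at $\pm\infty$ are critical points $\alpha_{j-1}$ and $\alpha_j$. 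By extracting a finite number $k$ of such bumps (finiteness is guaranteed by the uniform energy bound together with the positive lower bound on the energy of any nontrivial gradient-flow trajectory between distinct critical points) we exhaust the energy on the neck. The fibre products over $\Gamma_{\alpha_j}$ in \eqref{limit-mod-space} appear because the framing on each piece is only well-defined up to the stabilizer of the limiting flat connection, and consecutive pieces must match on the nose along $\{s_i^j\}\times Y$.

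The identity \eqref{energy-relation-2} is then immediate from additivity of the Chern--Weil integral: each bubble contributes $+1$ to $\kappa$, and the total energy partitions into the contributions of $B_1$, the $C_j$'s, and $B_2$ by construction of the translation parameters. Matching the asymptotics $\alpha_0,\dots,\alpha_k$ along the neck gives exactly the concatenation of paths $p\cdot p_1\cdots p_k\cdot p'$ with total topological energy $\kappa$.

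The main obstacle is the usual one in stretching arguments: ensuring that the translation parameters $s_i^j$ can be chosen so that \emph{after} translation the connections converge (modulo gauge and bubbling) on each fixed compact sub-cylinder, and that the asymptotic flat connections on adjacent pieces really agree. This requires the standard ``no energy escapes to infinity on a thin middle annulus'' argument, which uses the exponential decay estimates for ASD connections near a non-degenerate critical point of $\CS_\pi$ together with a Coulomb-gauge patching on cylinders $Y\times[s-1,s+1]$ of small curvature. Once the critical points along the neck are non-degenerate (which, by the perturbation scheme of \cite{KM:YAFT} reviewed at the end of subsection \ref{cyl-mod}, can be arranged), this standard machinery applies and yields the weak chain convergence claimed. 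The remainder of the argument---verifying that $([B_1],[C_1],\dots,[C_k],[B_2])$ indeed lies in the fibre product \eqref{limit-mod-space} and that the bubble multiplicities account for the defect in $\kappa$---is then identical, mutatis mutandis, to the proof of Theorem \ref{cyl-mod-compactness}.
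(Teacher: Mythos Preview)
Your proposal is essentially correct and in fact more detailed than what the paper provides: the paper does not give a proof of this theorem at all, stating only that ``the proof of the following theorem is similar to that of Theorem \ref{cyl-mod-compactness}'', which in turn is not proved but referred to \cite[Chapter~5]{Don:YM-Floer} and \cite{K:higher}. Your sketch of the standard neck-stretching argument---uniform energy bound, Uhlenbeck compactness with bubbling on each piece, extraction of finitely many translation parameters on the neck, exponential decay near non-degenerate critical points to match asymptotics---is exactly the content of those references and is what the paper is invoking.
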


The following gluing theorem can be regarded as an inverse to Theorem \ref{stretching-neck-compactness}. There are various places in the literature that similar gluing theorems are discussed \cite{Taubes:L2-mod-space,Don:YM-Floer,KM:monopoles-3-man}. Theorem \ref{geometric-gluing} can be proved with similar strategies (see e.g. \cite[Theorem 4.17 and Section 4.7.1]{Don:YM-Floer}):
\begin{theorem} \label{geometric-gluing}
	Let $(X_i,w_i)$ be given as above. For $1\leq i \leq k$, let $p_i:\alpha_{i-1} \to \alpha_i$ be a path along $(Y \times \R, w \times \R)$, and let  $\widetilde N_i$ be a compact
	$(\Gamma_{\alpha_{i-1}}\times \Gamma_{\alpha_i})$-invariant
	subspace of $\widetilde {\mathcal M}_{p_i}(\alpha_{i-1},\alpha_i)$, which consists of regular points. Suppose also we are given two other compact spaces
	as below, which contain only regular points and are respectively invariant with respect to the action of $\Gamma_{\alpha_0}$ and $\Gamma_{\alpha_k}$:
	\begin{equation*}
		\widetilde N_0\subset \widetilde {\mathcal M}_{p}(X_1,w_1;\alpha_0)\hspace{1cm}
		\widetilde N_{k+1}\subset \widetilde {\mathcal M}_{p'}(X_2,w_2;\alpha_k)
	\end{equation*}
	Here we assume that the perturbation of the ASD equation on the ends of $X_1$ and $X_2$ are induced by a fixed
	perturbation of the Chern-Simons functional of $(Y,\gamma)$.
	Then there is a space $\widetilde U_i$ containing $\widetilde N_i$, which is an open subset of the relevant moduli space and is invariant with respect to the action of the relevant group.
	Moreover, for large enough values of $T$,
	there is a {\it gluing map}:
	\begin{equation*}
		\Phi_T:\widetilde U_0 \times_{\Gamma_{\alpha_0}} \widetilde U_1\times_{\Gamma_{\alpha_1}}\dots \times_{\Gamma_{\alpha_k}} \widetilde U_{k+1}
		\to \mathcal M_{\kappa} (X^{T},w)
	\end{equation*}
	where:
	\begin{equation} \label{energy-relation-3}
		\kappa=\kappa(p)+\kappa(p_1)+\dots+\kappa(p_k)+\kappa(p').
	\end{equation}
	The gluing map is a diffeomorphism into its image and satisfies the following properties: for any fixed element $a$ in the domain of $\Phi_T$, the sequence $\Phi_T(a)$ is
	chain convergent to $a$, as $T$ goes to infinity. Moreover, if $A_i$ is a connection on the moduli space $\mathcal M_{\kappa} (X^{T_i},w)$ such that
	$\lim _{i \to \infty}T_i=\infty$ and the sequence $\{A_i\}$ is chain convergent to an element of
	\begin{equation*}
		\widetilde N_0 \times_{\Gamma_{\alpha_0}} \widetilde N_1\times_{\Gamma_{\alpha_1}}\dots \times_{\Gamma_{\alpha_k}} \widetilde N_{k+1}
	\end{equation*}
	then $A_i$ lies in in the image of the gluing map for large enough values of $i$.
\end{theorem}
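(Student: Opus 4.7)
The plan is to follow the standard approximate-solution-plus-Newton-iteration scheme for gluing ASD moduli spaces as developed in \cite{Taubes:L2-mod-space, Don:YM-Floer, MMR:L2-mod-space, KM:monopoles-3-man}, adapted to our higher-rank setting with non-admissible cycles. The key inputs are the regularity of every element of $\widetilde N_i$ (forcing $H^2=0$ in the relevant deformation complex) and compactness of $\widetilde N_i$ (yielding uniform estimates over all parameters). The construction breaks naturally into four stages: pre-gluing, Newton correction, local diffeomorphism property, and the converse surjectivity statement.

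\textbf{Pre-gluing.} Given $(a_0,\dots,a_{k+1})$ in the fibered product, I would pick representative connections $A_0,\dots,A_{k+1}$ on the respective (cylindrically ended) pieces, each in a gauge so that it converges exponentially to its asymptotic limit $\alpha_i$. After using elements of $\Gamma_{\alpha_i}$ to reconcile the asymptotic representatives across each neck, splice the connections together using standard cutoff functions supported on bands $Y\times[-L,L]$ inside each of the $k+2$ neck regions (where $L$ is large but much less than $T$). Non-degeneracy of the critical points $\alpha_i$ (after the perturbation of $\CS$ used to define $\mathcal B_p(W,w;\alpha)$) gives decay $|A_j-\alpha_i|=O(\e^{-\delta t})$ on the relevant cylindrical end for a uniform $\delta>0$, so the pre-glued connection $A_T^{\mathrm{pre}}$ satisfies $\|F^+_0(A_T^{\mathrm{pre}})\|=O(\e^{-\delta T})$ in the weighted Sobolev norm used to define the cylindrical moduli spaces in \S\ref{cyl-mod}.

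\textbf{Newton correction.} The linearized ASD operator $\mathcal D_{A_T^{\mathrm{pre}}}$ is, away from the necks, a direct sum of the pieces' operators. Regularity of $\widetilde N_i$ produces right inverses for each $\mathcal D_{A_j}$, and splicing them with the same cutoffs yields an approximate right inverse $Q_T$ whose norm is bounded uniformly in $T$ and in the parameter; the uniform bound uses compactness of the $\widetilde N_i$ together with a standard weighted estimate that avoids the small eigenvalues appearing along the long neck. The contraction mapping principle on a small ball in a Coulomb slice then produces a unique correction $a(T)$ with $A_T^{\mathrm{pre}}+a(T)$ ASD, and we set $\Phi_T(a_0,\dots,a_{k+1}):=[A_T^{\mathrm{pre}}+a(T)]$. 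Smooth dependence on the parameters follows from differentiating through the contraction, and $\Gamma_{\alpha_i}$-equivariance of the construction descends $\Phi_T$ to the indicated fibered product.

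\textbf{Diffeomorphism and converse.} The differential $d\Phi_T$ at any parameter matches, to $O(\e^{-\delta T})$, the natural linear splicing map on deformation spaces $H^1_{A_j}$; regularity makes the latter injective onto a subspace of $H^1_{A_T^{\mathrm{pre}}+a(T)}$ that accounts for all of the tangent space, so $\Phi_T$ is a local diffeomorphism. Injectivity globally comes from the fact that distinct parameters in the fibered product are separated in chain-convergence, a separation that dominates the $O(\e^{-\delta T})$ size of the Newton correction. For the converse, let $A_i\in\mathcal M_\kappa(X^{T_i},w)$ with $T_i\to\infty$ be chain-convergent to $(([B_1],\mathbf x_1),([C_1],\mathbf y_1),\dots,([B_2],\mathbf x_2))$ in the closure of the target fibered product of the $\widetilde N_\bullet$'s. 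After gauging $A_i$ piece by piece on compact subsets and using exponential decay to control the necks, we find that $A_i$ is uniformly close (in the norm of Step 2) to some $A_{T_i}^{\mathrm{pre}}$ associated with a parameter close to the limit, hence lies in the small ball where the Newton fixed point is unique; consequently $A_i\in\mathrm{im}(\Phi_{T_i})$ for all large $i$.

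The main obstacle will be the uniform invertibility estimate in the Newton step: one must show that the approximate right inverse $Q_T$ has operator norm bounded independently of $T$ and of the parameter, simultaneously for all $k+2$ necks. This requires a weighted Sobolev setup with weight adapted to the decay rate $\delta$ coming from the non-degeneracy of each $\alpha_i$, careful control of the small eigenvalues produced by the long neck, and a bookkeeping of the fibered-product structure and the stabilizer groups $\Gamma_{\alpha_i}$. Once the uniform estimate is in hand, the remaining steps are standard adaptations of the rank-two arguments cited above.
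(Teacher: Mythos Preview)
Your proposal is correct and follows precisely the standard pre-gluing plus Newton-iteration scheme that the paper itself invokes: the paper does not give an independent proof but simply points to \cite{Taubes:L2-mod-space, MMR:L2-mod-space, Don:YM-Floer, KM:monopoles-3-man} (specifically \cite[Theorem 4.17 and Section 4.7.1]{Don:YM-Floer}) and asserts the result carries over. Your identification of the uniform right-inverse estimate as the main technical point is exactly right; one minor slip is that in the converse step the chain limit lies in the fibered product of the $\widetilde N_\bullet$ without bubbling multi-sets, so the notation $([B_1],\mathbf x_1),\dots$ should be replaced by the bare tuple.
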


\begin{remark} \label{cut-down}
	Theorems \ref{stretching-neck-compactness} and \ref{geometric-gluing}
	are strong enough to study the cut-down
	moduli spaces on a 4-manifold with a long neck.
	To demonstrate this in the context of an example, suppose
	$(X_i,w_i)$ is as above and $b^+(X_i)\geq 1$. Let $\Sigma$ be an embedded surface in $X_2$.
	Let $\nu(\Sigma)\subset X_2$ be an open neighborhood of $\Sigma$ such that
	the inclusion of $\nu(\Sigma)$ in $X_2$ induces a surjection of fundamental groups.
	Let $\kappa$ be chosen such that $\mathcal M_\kappa(X^T,w)$ has expected dimension two.
	We make the simplifying assumption that all critical points of the Chern-Simons functional on $(Y,\gamma)$
	are irreducible and non-degenerate, and all the moduli spaces on $Y\times \R$ are regular.
	By choosing a generic metric and a small compactly supported perturbation, we can assume that the moduli
	spaces of the form $\mathcal M_p(X_1,w_1,\alpha)$ with expected dimension at most $0$ contain only irreducible and
	regular elements \cite[Lemma 24]{K:higher}.
	Similarly, we can arrange for a metric, a small compactly supported perturbation on $X_2$, and a geometric representative
	$V_2(\Sigma)\subset \mathcal B^*(\nu(\Sigma))\times \nu(\Sigma)$ such that:
	\begin{itemize}
		\item[(i)] The moduli spaces of the form $\mathcal M_p(X_2,w_2,\alpha)$ with dimension at most two
		consist of irreducible and regular solutions.
		\item[(ii)] The map $r:\mathcal M_p(X_2,w_2,\alpha)\times \Sigma\to\mathcal B^*(\nu(\Sigma))\times \nu(\Sigma)$,
		for any moduli space $\mathcal M_p(X_2,w_2,\alpha)$ of dimension at most two, is transversal to $V_2(\Sigma)$.	
		Suppose $\mathcal N_p(X_1,w_1;\alpha,\Sigma)$ denotes the cut-down moduli space.
	\end{itemize}
	The chosen holonomy perturbations on $X_1$ and $X_2$ induce a holonomy perturbation on $X^T$ for large
	values of $T$. Theorem \ref{cyl-mod-compactness} implies that for large enough values of $T$, the space
	$\mathcal M_{\kappa} (X^{T},w)\times \Sigma$
	is also cut-down transversely by $V_2(\Sigma)$ and the resulting space is
	compact. Furthermore, the elements in the cut-down space $\mathcal N_{\kappa}(X^{T},w;\Sigma)$
	are in correspondence with the elements of the following space:
	\begin{equation*}
		\bigcup_{\kappa(p_1)+\kappa(p_2)=\kappa} \mathcal M_{p_1}(X_1,w_1;\alpha,\Sigma)\times
		\mathcal N_{p_2}(X_2,w_2,\alpha).
	\end{equation*}
	In the following, we use a similar strategy to study the cut-down moduli spaces on 4-manifolds with long necks,
	without going into details.
\end{remark}

The following vanishing theorem is a standard application of the above theorems \cite{Don:inv,DK,MM:Don-inv}:

\begin{theorem} \label{gluing-S3}
	Suppose $X_1$ and $X_2$ are two 4-manifolds with $b^+(X_i)\geq 1$. Then for any 2-cycle $w$ in the connected sum $X_1\#X_2$ and
	any $z \in \A(X_1\#X_2)^{\otimes 2}$, the number $\rD_{X_1\#X_2,w}(z)$ is equal to zero.
\end{theorem}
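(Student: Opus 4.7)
The plan is to stretch the neck of $X_1\#X_2$ along the separating 3-sphere $S^3$ and analyze the limit moduli spaces via the compactness theorem (Theorem \ref{stretching-neck-compactness}) and the gluing theorem (Theorem \ref{geometric-gluing}). Since $b^+(X_1\#X_2)=b^+(X_1)+b^+(X_2)\geq 2$, the invariant $\rD_{X_1\#X_2,w}(z)$ is metric-independent, so I may compute it using a Riemannian metric $g_T$ containing an isometric neck $S^3\times[-T,T]$ of length $2T$. Since $H_2(S^3)=0$, I can isotope $w$ off the neck so that $w=w_1\cup w_2$ with $w_i\subset X_i$; likewise, using $H_\ast(X_1\#X_2)\cong H_\ast(X_1)\oplus H_\ast(X_2)$ in positive degrees, I decompose $z=\sum_j z_1^j\cdot z_2^j$ with $z_i^j\in\A(X_i)^{\otimes 2}$.

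The key geometric input is that $\pi_1(S^3)=1$ and the 1-cycle on the neck is empty, so the only flat ${\rm SU}(3)$-connection on $S^3$ is the trivial one $\Theta$, and its stabilizer in $\mathcal B$ is $\Gamma_\Theta={\rm PU}(3)$, of dimension $N^2-1=8$. By Theorem \ref{stretching-neck-compactness}, any sequence of ASD connections on $(X^T,w)$ with $T\to\infty$ chain-converges to a configuration $([B_1],[C_1],\ldots,[C_k],[B_2])$ with $B_i\in \widetilde{\mathcal M}(X_i^+,w_i;\Theta)$ and $C_j$ on $S^3\times\R$ running between copies of $\Theta$, together with bubble data. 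Via removable singularities, the framed moduli space $\widetilde{\mathcal M}(X_i^+,w_i;\Theta)$ is a $\Gamma_\Theta$-principal bundle over the closed-manifold moduli space $\mathcal M_{\kappa_i}(X_i,w_i)$.

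The central computation is a dimension count. Using $\chi(X_1\#X_2)=\chi(X_1)+\chi(X_2)-2$, $\sigma(X_1\#X_2)=\sigma(X_1)+\sigma(X_2)$, and the index formula \eqref{ind-form-closed}, for any splitting $\kappa=\kappa_1+\kappa_2$ we have
\[
\dim\mathcal M_\kappa(X_1\#X_2,w)=\dim\mathcal M_{\kappa_1}(X_1,w_1)+\dim\mathcal M_{\kappa_2}(X_2,w_2)+8,
\]
where the $+8$ is precisely the dimension of the gluing parameter $\Gamma_\Theta$. The invariant is nonzero only when $\deg z=\dim\mathcal M_\kappa(X_1\#X_2,w)$; combined with $\deg z=\deg z_1^j+\deg z_2^j$, this forces for every term in the decomposition and every splitting $\kappa=\kappa_1+\kappa_2$
\[
\dim\mathcal N_{\kappa_1}(X_1,w_1;z_1^j)+\dim\mathcal N_{\kappa_2}(X_2,w_2;z_2^j)=-8.
\]
Hence at least one of the two expected dimensions is strictly negative. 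Using $b^+(X_i)\geq 1$ to perturb the metric on $X_i$ and choose generic geometric representatives for the $\mu$-classes, the cut-down moduli space of negative expected dimension is empty. The gluing theorem \ref{geometric-gluing} then identifies a neighborhood of the chain limit inside $\mathcal N_\kappa(X^T,w;z)$ with the image of a gluing map from this empty product, so $\mathcal N_\kappa(X^T,w;z)=\emptyset$ for all sufficiently large $T$, as in Remark \ref{cut-down}. This yields $\rD_{X_1\#X_2,w}(z)=0$.

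The main technical obstacle is that $\Theta$ is the maximally reducible connection, whereas Theorem \ref{geometric-gluing} is stated for regular, non-degenerate limit data; one must adapt the gluing analysis to the degenerate asymptotic value $\Theta$, in the style of the classical $N=2$ treatment (e.g.\ Donaldson--Kronheimer, Chapter 7), by constructing a Kuranishi model for the $\Gamma_\Theta$-equivariant obstruction and then quotienting. One must also verify that all lower strata in the Uhlenbeck compactification (intermediate flat connections $C_j$, necessarily also equal to $\Theta$, and bubble parameters $m_j\geq 0$) satisfy the same dimension inequality, which follows from \eqref{energy-relation-2} since each bubble costs $4N=12$ in energy while reducing the virtual dimension of the $X_i$ strata by a matching amount, preserving the negativity conclusion.
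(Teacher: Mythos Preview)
Your approach is essentially the same as the paper's: stretch the neck along the separating $S^3$, split $w$ and $z$, and use compactness/gluing to show the relevant 0-dimensional cut-down moduli space is empty. Your explicit dimension count (the $-8$ deficit coming from $\dim\Gamma_\Theta$) makes transparent what the paper leaves implicit.

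There are two points where your write-up diverges from the paper and should be corrected.

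First, you omit the blowup trick. Even after splitting $w=w_1\cup w_2$, the individual $w_i\subset X_i$ need not be coprime to $N$, and without that you cannot invoke $b^+(X_i)\geq 1$ alone to rule out reducibles or to achieve regularity on $X_i^+$ (cf.\ the hypothesis stated in subsection \ref{pol-invts}). The paper fixes this by replacing each $X_i$ with $X_i\#\cp$, $w_i$ with $w_i\cup E_i$, and $z_i$ with $z_i\cdot(E_i)_{(2)}^2$; this leaves the invariant unchanged (Proposition \ref{blowup-inital}) and makes each $w_i$ coprime to $N$, after which the holonomy perturbations from \cite{K:higher} give regularity of the cut-down moduli spaces $\mathcal N_p(X_i,w_i;\Theta,z_i)$.

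Second, your ``main technical obstacle'' is misdiagnosed. The trivial connection $\Theta$ on $S^3$ is reducible but it is \emph{non-degenerate}: $H^1(S^3;\ad_\Theta)=0$, so the Hessian of $\CS$ is invertible there. Theorem \ref{geometric-gluing} is already formulated with framed moduli spaces and fiber products $\times_{\Gamma_\alpha}$, precisely to accommodate reducible limiting flat connections; no additional Kuranishi or equivariant-obstruction analysis is required. The actual technical inputs the paper supplies are (a) the round metric on $S^3$, which makes all framed moduli spaces on $\R\times S^3$ regular, and (b) coprimality of $w_i$ (via the blowup trick) together with holonomy perturbations, which makes the relevant $\mathcal N_p(X_i,w_i;\Theta,z_i)$ regular. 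With those in hand, Theorem \ref{geometric-gluing} applies directly and the argument closes.
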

\begin{proof}
	We can assume that $w=w_1\cup w_2$ and $z=z_1\cdot z_2$ where $w_i$ is a 2-cycle in $X_i$ and $z_i \in \A(X_i)^{\otimes 2}$.
	By replacing $X_i$, $w_i$ and $z_i$ with $X_i \# \cp$, $w_i \cup E_i$ and $z_i \cdot (E_i)_{(2)}^2$, we can also assume that $w_i$ is coprime to $N$. Here
	$E_i$ is the exceptional class in $X_i \# \cp$. We fix a Riemannian metric on $X_1\#X_2$ whose restriction to the connected sum region is
	isometric to the product metric $[-T,T]\times S^3$ where $T$ is a large constant and $S^3$ has the standard metric.
	Using the standard metric on $S^3$ allows us to ensure that all the framed moduli spaces on the cylinder $\R \times S^3$ are regular. We fix a holonomy perturbation of the ASD equation on $X_i$ such that the perturbation is supported outside of a neighborhood
	of the connected sum region, and the cut-down moduli spaces $\mathcal N_p(X_i,w_i;\Theta,z_i)$ of expected dimension at most zero are regular. Here $\Theta$ is the trivial
	connection, which is the only flat connection on $S^3$. Now we can use Theorem \ref{geometric-gluing} to conclude that
	the 0-dimensional moduli space $\mathcal N_\kappa(X_1\#X_2,w_1\cup w_2;z_1 \cdot z_2)$ is empty when $T$ is large enough.
\end{proof}

Next, we utilize the gluing and the compactness theorem to prove Proposition \ref{gluing-23} on connected sums along $\Sigma(2,3,23)$. Recall that in addition to the trivial connections, there are $44$ irreducible and $8$ $\SU(2)$-reducible connections on the trivial $\SU(3)$-bundle over $\Sigma(2,3,23)$. All these connections are non-degenerate. We choose a perturbation of the Chern-Simons functional of $\Sigma(2,3,23)$ (and the empty cycle) such that the critical points of the perturbed functional is the same as those of the Chern-Simons functional and the following assumption about the regularity of the elements of $[A]\in \mathcal M_p(\Sigma(2,3,23);\alpha,\beta)$ holds: if $A$ is irreducible, then we require that $A$ is regular and if $A$ is reducible and induced by an $\SU(2)$-connection, we require that $A$ is regular as a solution of the (perturbed) ASD equation for $\SU(2)$-connections \cite{Don:YM-Floer,KM:YAFT}.

Suppose $X_1$ is a 4-manifold with $b^+(X_1)\geq 1$ and $\partial X_1=\Sigma(2,3,23)$. Suppose also $w_1$ is a closed 2-cycle in $X_1$ which is coprime to 3. Fix a metric with cylindrical ends on $X_1$ and a small holonomy perturbation of the ASD equation which is compatible with the perturbation of the Chern-Simons functional. Let ${\mathcal M}_{p}(X_1,w_1;\alpha)$ be the moduli space of solutions to a perturbation of the ASD equation associated to the pair $(X_1,w_1)$ and the path $p$:

\begin{prop} \label{mod-X-Sigma}
	Suppose a positive integer $n_0$ is given. There is a metric and a holonomy perturbation of the
	ASD equation on $X_1$ such that the following holds: let $p$ be a path along $(X_1,w_1)$
	based at a flat connection $\alpha$ on $\Sigma(2,3,23)$ such that the
	index of the elements of ${\mathcal M}_{p}(X_1,w_1;\alpha)$ is at most $n_0$. Then the moduli
	space ${\mathcal M}_{p}(X_1,w_1;\alpha)$ consists of regular solutions
	and does not have any reducible connection.	
	Moreover, suppose $z_1\in \A(X)^{\otimes 2}$ such that:
	\[
	  d_{X_1,w_1}:=-4w_1^2-4(\chi(X_1)+\sigma(X_1))-4\equiv \deg(z_1)+4 \hspace{1cm}({\rm mod} 12).
	\]
	Suppose also the expected dimension of ${\mathcal N}_{p}(X_1,w_1;\alpha,z_1)$ is zero.
	Then there is a geometric representative for $z_1$ such that
	the cut-down moduli space ${\mathcal N}_{p}(X_1,w_1;\alpha,z_1)$ is compact.
\end{prop}
Note that ${\mathcal N}_{p}(X_1,w_1;\alpha,z_1)$ might be a linear combination of different spaces. Then compactness of this space is defined to be the compactness of all the involved spaces in the linear combination. In what follows, we gloss over this point about the nature of the spaces ${\mathcal N}_{p}(X_1,w_1;\alpha,z_1)$.
\begin{proof}
	The arguments of \cite{K:higher} can be used to show that the metric and the perturbation
	can be chosen such that if $p$ is a path as in the statement of the proposition,
	then ${\mathcal M}_{p}(X_1,w_1;\alpha)$ is regular and does not
	contain any reducible solution. We can also assume that
	$V(z_1)$ is chosen such that all moduli moduli spaces ${\mathcal N}_{p}(X_1,w_1;\alpha)$
	with expected dimension at most $n_0-\deg(z_1)$ are cut down transversely.
	Next, let the dimension of ${\mathcal M}_{p}(X_1,w_1;\alpha)$
	be equal to $\deg(z_1)$, and $\{A_i\}_i$ be a sequence of connections
	in the cut-down moduli space ${\mathcal N}_{p}(X_1,w_1;\alpha,z_1)$.
	By Theorem \ref{cyl-mod-compactness}, this sequence
	converges to an element $(([B],{\bf x}),([C_1],{\bf x}),\dots,([C_k],{\bf y}_k))$ where
	$[B] \in \widetilde {\mathcal N}_{p_0}(X_1,w_1;\alpha_0,z_1)$,
	$[C_i] \in  \widetilde {\mathcal M}_{p_i}(\alpha_{i-1},\alpha_i)$, $\alpha_k=\alpha$, and:
	\begin{equation} \label{index-cylindrical}
		\deg(z_1)\geq \ind(\mathcal D_{B})+\ind(\mathcal D_{C_1})+\dots + \ind(\mathcal D_{C_k}).
	\end{equation}
	and the equality holds if and only if the multi-sets ${\bf x}$, ${\bf y}_1$, $\dots$, ${\bf y}_k$ are empty.
	
	We firstly show that $\ind(\mathcal D_{B})$ in
	\eqref{index-cylindrical} is bounded above.
	If $C_i$ is irreducible, then $\ind(\mathcal D_{C_i})$ is positive.
	In the case that $C_i$ is reducible, we cannot guarantee that
	$\ind(\mathcal D_{C_i})$ is positive. However,
	the index of $\mathcal D_{C_i}$ as an $\SU(2)$-connection is positive.
	Using Table \ref{inv-red-23}, it is straightforward to check that for a reducible connection
	$C_i \in \widetilde {\mathcal M}_{p_i}(\R \times \Sigma(2,3,23);\alpha_{i-1},\alpha_i)$
	with non-positive $\ind(\mathcal D_{C_i})$, the flat connection $\alpha_{i}$ has to be
	$\beta_8$, and $\alpha_{i-1}$ is equal to either
	the trivial connection $\Theta$ or the $\SU(2)$-connection $\beta_2$. In the first case,
	$\ind(\mathcal D_{C_i})$ is equal to $-3$ and in the latter case
	$\ind(\mathcal D_{C_i})$ is equal to $-2$. Suppose:
	\[
	  \{i_1,\dots,i_l\} \subseteq \{0,\dots,k-1\}
	\]
	is the set of indices such that $\alpha_{i_j}=\beta_8$. We have:
	\begin{equation}\label{index-eq-2}
		\ind(\mathcal D_{C_{i_{j}+1}})+\ind(\mathcal D_{C_{i_{j}+2}})+
		\dots + \ind(\mathcal D_{C_{i_{j+1}}}) \equiv 0 \mod 12
	\end{equation}
	for $1\leq j \leq k-1$. In \eqref{index-eq-2}, the last term is at least $-3$ and the other terms are
	positive. This shows that the sum in \eqref{index-eq-1} is non-negative. We can use this to conclude that
	the sum $\ind(\mathcal D_{C_1})+\dots + \ind(\mathcal D_{C_k})$ in
	\eqref{index-cylindrical} is at least $-3$. That is to say, $\ind(\mathcal D_{B})$ is not greater than
	$\deg(z_1)+3$. By increasing the value of $n_0$ if necessary, we can assume that
	${\mathcal N}_{p_0}(X_1,w_1;\alpha_0,z_1)$, which contains
	$B$, is cut down transversely. In particular, $\ind(\mathcal D_{B})\geq \deg(z_1)$.
	
	If $l\geq 1$, then the index formula \eqref{ind-DA} shows:
	\begin{equation}\label{index-eq-1}
		\ind(\mathcal D_{B})+\ind(\mathcal D_{C_1})+\dots + \ind(\mathcal D_{C_{i_1}}) \equiv
		d_{X_1,w_1}-3\equiv \deg(z_1)+1 \mod 12
	\end{equation}
	In the above expression, the first term on the left hand side is not less than $\deg(z_1)$,
	the last term is not less than $-3$, and the remaining terms are positive. Therefore,
	the sum on the left hand side is not less than $\deg(z_1)+1$. This also shows that
	the right hand side of \eqref{index-cylindrical} is at least $\deg(z_1)+1$, which is a contradiction
	and as a result $l=0$.
	Therefore, in \eqref{index-cylindrical}, $\ind(\mathcal D_{C_i})$ is always positive.
	This also implies that $k=0$ and ${\bf x}$ is empty.
	Consequently, ${\mathcal N}_{p}(X_1,w_1;\alpha,z_1)$ is compact.
\end{proof}

Form the moduli spaces ${\mathcal N}_{p}(X_1,w_1;\alpha,z_1)$ with the perturbations from Proposition \ref{mod-X-Sigma}, and define the following element of $\rI_4(\Sigma(2,3,23))$:
\begin{equation} \label{relative-Sigma}
	\rD_{X_1,w_1}(z_1):=\sum \#{\mathcal N}_{p}(X_1,w_1;\alpha,z_1) \cdot \alpha
\end{equation}
where the sum is over all irreducible connections $\alpha$ in $\Sigma(2,3,23)$ and the paths $p$ such that:
\begin{equation} \label{cut-down-space}
	{\mathcal N}_{p}(X_1,w_1;\alpha,z_1)
\end{equation}
is 0-dimensional. Here we follow the standard conventions to orient \eqref{cut-down-space} \cite{KM:YAFT,K:higher}. Since $N=3$, we do not need a homology orientation of $(X_1,w_1)$ to fix a sign for $\rD_{X_1,w_1}(z_1)$.

Next, let $X_2$ be a 4-manifold with $b^+(X_2)\geq 1$ and $\partial X_2=\overline {\Sigma(2,3,23)}$. Let also $w_2$ be a closed 2-cycle in $X_2$ which is coprime to $3$. Fix a metric with cylindrical ends on $X_2$ and a small holonomy perturbation of the ASD equation which is compatible with the perturbation of the Chern-Simons functional:

\begin{prop} \label{mod-X2-Sigma}
	Suppose a positive integer $n_0$ is given. There is a metric and a holonomy perturbation of the
	ASD equation on $X_2$ such that the following holds: let $p$ be a path along $(X_2,w_2)$
	based at a flat connection $\alpha$ on $\Sigma(2,3,23)$ such that the
	index of the elements of ${\mathcal M}_{p}(X_2,w_2;\alpha)$ is at most $n_0$. Then the moduli
	space ${\mathcal M}_{p}(X_2,w_2;\alpha)$ consists of regular solutions
	and does not have any reducible connection.	
	Moreover, Suppose $z_2\in \A(X)^{\otimes 2}$ is chosen such that $\deg(z_2)$ is divisible by $4$.
	Suppose also the expected dimension of ${\mathcal N}_{p}(X_2,w_2;\alpha,z_2)$ is zero.
	Then there is a geometric representative for $z_2$ such that
	the cut-down moduli space ${\mathcal N}_{p}(X_2,w_2;\alpha,z_2)$ is compact.
\end{prop}

The proof of this proposition is analogous to that of Proposition \ref{mod-X-Sigma}, and we leave it to the reader. The cut-down moduli spaces in Proposition \ref{mod-X2-Sigma} can be used to define the following functional on $\rI_*(\Sigma(2,3,23))$:
\begin{equation} \label{relative-Sigma}
	\rD^{X_2,w_2}(z_2)(\alpha):= \#{\mathcal N}_{p}(X_2,w_2;\alpha,z_2)
\end{equation}
where $\alpha$ is an irreducible connection on $\Sigma(2,3,23)$ and the path $p$ is chosen such that ${\mathcal N}_{p}(X_2,w_2;\alpha,z_2)$ is 0-dimensional. The index formula shows that this space is 0-dimensional only if:
\begin{equation} \label{deg-alpha}
	\deg(\alpha)\equiv 4w_2^2+4(\chi(X_2)+\sigma(X_2))-4+\deg(z_2) \hspace{1cm} ({\rm mod} 12).
\end{equation}
Therefore, $\rD^{X_2,w_2}(z_2)$ is non-zero only on the elements of $\rI_*(\Sigma(2,3,23))$ that satisfy \eqref{deg-alpha}.

\begin{prop}
	For $i=1,2$, suppose $(X_i,w_i)$ and $z_i$ are as in Propositions \ref{mod-X-Sigma} and \ref{mod-X2-Sigma}. Then:
	\begin{equation} \label{gluing-identity}
		\rD_{X_1\circ X_2,w_1\cup w_2}(z_1\cdot z_2)=\rD^{X_2,w_2}(z_2) \circ \rD_{X_1,w_1}(z_1)
	\end{equation}
\end{prop}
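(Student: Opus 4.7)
The plan is to prove the identity by the standard neck-stretching argument, using the compactness result (Theorem~\ref{stretching-neck-compactness}) and the gluing theorem (Theorem~\ref{geometric-gluing}), together with the index bookkeeping already carried out in Propositions~\ref{mod-X-Sigma} and \ref{mod-X2-Sigma}. Let $X = X_1 \circ X_2$, $w = w_1 \cup w_2$, $z = z_1 \cdot z_2$. Choose the metric on $X^T$ so that it contains an isometric copy of $[-T,T] \times \Sigma(2,3,23)$, with metrics and perturbations on $X_1$, $X_2$ that are compatible with a fixed perturbation of the Chern--Simons functional on $\Sigma(2,3,23)$ (meeting the regularity hypotheses of the two propositions above). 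Pick a geometric representative for $z$ supported away from the neck, with its $X_i$-part equal to the representative used to define $\rD_{X_1,w_1}(z_1)$ and $\rD^{X_2,w_2}(z_2)$. The degree hypothesis on $z_1$ combined with $\deg z = \deg z_1 + \deg z_2$ ensures that the relevant cut-down moduli space $\mathcal{N}_{\kappa}(X^T, w; z)$ has expected dimension zero.

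Next I would carry out the compactness analysis. By Theorem~\ref{stretching-neck-compactness} (and Remark~\ref{cut-down} applied to the cut-down situation), any sequence $\{A_i\}$ in $\mathcal{N}_{\kappa}(X^T_i, w; z)$ with $T_i \to \infty$ chain-converges to an element of
\begin{equation*}
  \mathcal{N}_{p}(X_1, w_1; \alpha_0, z_1)
  \times \mathcal{M}_{p_1}(\alpha_0,\alpha_1)
  \times \cdots \times \mathcal{M}_{p_k}(\alpha_{k-1},\alpha_k)
  \times \mathcal{N}_{p'}(X_2, w_2; \alpha_k, z_2),
\end{equation*}
together with multi-sets of bubble points, and the indices satisfy the analogue of \eqref{index-cylindrical}. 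The hard part is to rule out all non-trivial broken trajectories and all bubbling; once this is done, the limiting configuration must be an honest pair
\begin{equation*}
  ([B_1], [B_2]) \in \mathcal{N}_{p}(X_1, w_1; \alpha, z_1) \times \mathcal{N}_{p'}(X_2, w_2; \alpha, z_2),
\end{equation*}
for a single irreducible flat connection $\alpha$ with $\deg(\alpha) = 4$ (forced by the degree hypothesis on $z_1$ and the dimension formula), and both factors must themselves be zero-dimensional.

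The exclusion of reducibles is the main obstacle and proceeds exactly as in Proposition~\ref{mod-X-Sigma}: each irreducible flow line on $\R \times \Sigma(2,3,23)$ has strictly positive index, while any reducible flow line factors through the trivial connection $\Theta$ or the $\SU(2)$-reducible $\beta_2$ flowing into $\beta_8$, with index $-3$ or $-2$ respectively. Using $\deg z_1 \equiv d_{X_1,w_1} - 4 \pmod{12}$ and $\deg z_2 \equiv 0 \pmod 4$ (forced by the fact that $\rD^{X_2,w_2}(z_2)$ is non-zero only in degrees satisfying \eqref{deg-alpha}), the congruence and positivity constraints mod $12$ rule out any such reducible trajectory, and all bubble-point multi-sets must be empty. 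This gives compactness of $\mathcal{N}_{\kappa}(X^T, w; z)$ and a natural surjection
\begin{equation*}
  \partial \mathcal{N}_{\kappa}^{\mathrm{limit}}(X, w; z) \; \longrightarrow \;
  \bigsqcup_{\alpha} \bigsqcup_{\kappa(p)+\kappa(p')=\kappa}
  \mathcal{N}_p(X_1,w_1;\alpha,z_1) \times \mathcal{N}_{p'}(X_2,w_2;\alpha,z_2),
\end{equation*}
with the union over irreducible $\alpha$ of degree $4$.

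Finally, the gluing theorem (Theorem~\ref{geometric-gluing}) provides, for large $T$, an orientation-preserving diffeomorphism from a neighborhood of this disjoint union into $\mathcal{N}_{\kappa}(X^T,w;z)$, and the compactness argument shows that every point of the cut-down moduli space is hit. Counting with signs yields
\begin{equation*}
  \#\mathcal{N}_{\kappa}(X^T, w; z)
  \;=\; \sum_{\alpha} \#\mathcal{N}_p(X_1,w_1;\alpha,z_1) \cdot \#\mathcal{N}_{p'}(X_2,w_2;\alpha,z_2),
\end{equation*}
which is precisely $\langle \rD^{X_2,w_2}(z_2), \rD_{X_1,w_1}(z_1) \rangle$ in the basis of irreducible flat connections on $\Sigma(2,3,23)$. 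Since the left-hand side is $\rD_{X_1 \circ X_2, w_1 \cup w_2}(z_1 \cdot z_2)$ (the polynomial invariant of the closed pair is independent of $T$), this establishes \eqref{gluing-identity}. The two main technical points I expect to require the most care are the sign/orientation comparison in the gluing map (which is standard and follows the conventions of \cite{KM:YAFT,K:higher}) and the verification that the index-congruence mod $12$ really does eliminate \emph{all} broken trajectories involving the reducibles $\Theta$ and $\beta_8$; the rest is routine.
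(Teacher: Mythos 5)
Your proposal is correct and follows essentially the same route as the paper: the paper's proof also fixes compatible metrics and perturbations via Propositions \ref{mod-X-Sigma} and \ref{mod-X2-Sigma}, stretches the neck along $\Sigma(2,3,23)$, invokes Theorem \ref{geometric-gluing} together with "the similar arguments as in the proof of Proposition \ref{mod-X-Sigma}" to identify the zero-dimensional cut-down moduli space on $X_1\circ X_2$ with the union of products $\mathcal N_{p_1}(X_1,w_1;\alpha,z_1)\times \mathcal N_{p_2}(X_2,w_2;\alpha,z_2)$ over irreducible $\alpha$, and then compares orientations. The details you spell out (ruling out factorizations through $\Theta$ and $\beta_8$ via the mod-$12$ index congruence, and emptiness of the bubble multi-sets) are exactly the content the paper delegates to the earlier proposition.
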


\begin{proof}
	Use Propositions \ref{mod-X-Sigma} and \ref{mod-X2-Sigma} to fix  metrics on $X_1$ and $X_2$.
	Let also  $X_1\circ X_2$ be equipped with a Riemannian metric,
	which is compatible with the metrics on $X_1$, $X_2$
	and has a long neck along $\Sigma(2,3,23)$.
	By slightly modifying the holonomy perturbations provided by Propositions \ref{mod-X-Sigma}
	and \ref{mod-X2-Sigma}, we can assume that the perturbation of $X_i$ on the complement of a
	compact subset of $X_i^+$ is induced by the chosen perturbation of the Chern-Simons functional
	of $\Sigma(2,3,23)$ and the claims of these propositions about the moduli spaces
	of the form ${\mathcal N}_{p}(X_1,w_1;\alpha,z_1)$ and ${\mathcal N}_{p}(X_2,w_2;\alpha,z_2)$
	still hold.
	The perturbations of the ASD equations on $X_1$ and $X_2$ induce a perturbation of the
	ASD equation on $X_1\circ X_2$. By applying
	Theorem \ref{geometric-gluing} and the similar arguments as in the proof of
	Proposition \ref{mod-X-Sigma}, we can conclude that,
	for a long enough neck along $\Sigma(2,3,23)$, we have the following diffeomorphism of
	0-dimensional moduli spaces:
	\[
	  \mathcal N_\kappa (X_1\circ X_2,w_1\cup w_2,z_1\cdot z_2)\cong
	  \bigcup_\alpha \mathcal N_{p_1}(X_1,w_1;\alpha,z_1) \times \mathcal N_{p_2}(X_2,w_2;\alpha,z_2).
	\]
	Here $\alpha$ is an irreducible connection on $\Sigma(2,3,23)$, and $\kappa$, $p_1$ and $p_2$ are chosen such that all the above cut down moduli
	spaces are 0-dimensional. Standard arguments show that
	the above diffeomorphism is compatible with respect to the orientation of the involved moduli spaces. This diffeomorphism imply the claim in \eqref{gluing-identity}.
\end{proof}

This proposition essentially proves Theorem \ref{gluing-23} from Subsection \ref{sigma-23}. We only need to extend the above proposition to the case that $w_1$ and $w_2$ are not necessarily coprime to 3 and $\deg(z_2)$ is not divisible by $4$. The assumption on $w_i$ can be removed by the blowing up trick. The dimension of the moduli space of rank 3 instantons on the closed 4-manifold $X_1 \circ X_2$ is always divisible by 4. Therefore, if we define $\rD^{X_2,w_2}(z_2)=0$ in the case that $\deg (z_2) \nequiv 0$ mod 4, then the above theorem still holds.

\subsection{Gluing Theory for Negative Embedded Spheres} \label{neg-spheres-gluing}
In this subsection, we give a proof of Proposition \ref{-3-identities} based on the techniques which are discussed in Subsection \ref{long-neck}. Suppose $X$ is a smooth 4-manifold with $b^+(X)\geq 2$. Suppose also $\sigma$ is an embedded sphere in $X$ with $\sigma\cdot \sigma =-3$. A tubular neighborhood of $\sigma$, denoted by $Z$, is a disc bundle over $\sigma$ with Euler class $-3$ and the boundary $Y=L(3,-1)$. Thus $X$ can be split as $Z \circ X_1$ where $X_1$ is the closure of the complement of $Z$. We will also write $w_0$ for a fiber of the disc bundle $Z$. Fix the orientation on $w_0$ such that $w_0$ intersects $\sigma$ negatively in one point.

Fix a Riemannian metric with a cylindrical end on $Z$. Let also $L$ be the complex line bundle over $Z$ associated to the 2-cycle $w_0$. Since $b^+(Z)=b^1(Z)=0$, there is a unique ASD connection on $L$ with finite energy. This connection, denoted by $B$, is asymptotic to a flat connection $\chi$ which maps the generator of $\pi_1(Y)$ to $\zeta=\e^{2\pi\bi/3 }$. Consider the ASD $\U(N)$-connection $A:=B^{k_1} \oplus \dots \oplus B^{k_N}$, asymptotic to the flat connection $\alpha:=\chi^{k_1}\oplus\dots \oplus \chi^{k_N}$. The first Chern class of the underlying $\U(N)$-connection is equal to $(k_1+\dots+k_N){\rm P.D.}[w_0]$. The topological energy of $A$ is given by the following formula:
\begin{equation*}\label{energy}
	\kappa(A)=\frac{1}{12N}\sum_{1\leq i,j\leq N}(k_i-k_j)^2.
\end{equation*}
Therefore, we can use \eqref{ind-DA} to compute the index of $\mathcal D_A$. In particular, if $|k_i-k_j|\le 3$ for all $i,j$, then \eqref{ind-DA} shows that \cite{Luc:Thesis}:
\begin{equation*}\label{dimension}
	\ind(\mathcal D_A)= 1-N^2+\sum_{1\leq i,j\leq N}|k_i-k_j|	  	
\end{equation*}

The following table consists of various choices of $\U(3)$-connections. For each connection $A$, the first Chern class of the underlying bundle
of $A$ is equal to $k{\rm P.D.}[w_0]$ where $k$ is also given in the table:

\begin{center}
\renewcommand{\arraystretch}{1.2}
    \begin{tabular}{ |c|c|c|c|c|c| }
    	\hline
    	$A$ & $\alpha$ & $k$ &  $\ind(\mathcal D_A) $ & $\dim (\widetilde {\mathcal M}(A))$ & $\Gamma _{\alpha}$ \\
    	\hline
    	  $B^{2} \oplus 1 \oplus 1$ & $\chi^2 \oplus 1 \oplus 1 $& $2$ &$ 0$ & $4 $& ${\rm S}(\U(1)\times \U(2))$   \\
    	\hline
    	$B \oplus B\oplus 1$ & $\chi\oplus \chi \oplus 1$ & $ 2$ & $-4 $& $0$ & ${\rm S}(\U(2)\times \U(1))$   \\
    	\hline
    	$B^{-1} \oplus B^2\oplus B$ & $\chi^2\oplus \chi^2 \oplus \chi$ & $ 2$ & $4 $& $8$ & ${\rm S}(\U(2)\times \U(1))$   \\
    	 \hline
    	$B\oplus B^{-1} \oplus B^{-1}$ & $\chi\oplus \chi^2 \oplus \chi^2$ & $-1$ & $0 $& $4$ & ${\rm S}(\U(1)\times \U(2))$   \\
	\hline
    	$B^{-1}\oplus 1 \oplus 1$ & $\chi^2\oplus 1 \oplus 1$ & $ -1$ & $-4 $& $0$ & ${\rm S}(\U(1)\times \U(2))$   \\
    	\hline
    	$B^{-2} \oplus B\oplus 1$ & $\chi\oplus \chi \oplus 1$ & $ -1$ & $4 $& $8$ & ${\rm S}(\U(2)\times \U(1))$   \\
    	\hline
    \end{tabular}
\end{center}
We will write  ${\mathcal M}(A)$ (respectively, $\widetilde{\mathcal M}(A)$) for the moduli space (respectively, the framed moduli space) of connections on $Z$ corresponding to the path which is represented by $A$. From now on, we assume that an anti-self-dual metric with positive scalar curvature is fixed on $Z$ \cite{LeBr:exp-metric}.\footnote{We use such metrics to ensure regularity of moduli spaces of ASD connections with respect to these metrics. The construction of \cite{LeBr:exp-metric} provides asymptotically cylindrical ASD metrics rather than cylindrical metrics. However, that is enough for our purposes because we can pick a cylindrical approximation to the these metrics and then argue as in \cite[Corollary 5.13]{CDX:polygons}.}

\begin{prop} \label{minimal}
	For each $A$ in this table, $A$ has the minimal topological energy among all ASD connections with the same limiting flat connection as $A$ and the same $c_1$.
\end{prop}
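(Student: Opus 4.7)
The plan is to rule out ASD connections on $Z$ of topological energy strictly less than $\kappa(A)$ by combining positive scalar curvature Weitzenböck vanishing of $H^2$ with the index formula \eqref{ind-DA}. Let $A'$ be any ASD connection on $Z$ asymptotic to $\alpha$ with $c_1(A') = c_1(A)$. Because $A$ and $A'$ determine the same framed determinant bundle and the same limiting flat connection, the quantity $\kappa(A') - \kappa(A)$ lies in $\Z$, and I will show it is non-negative.

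The key analytic input is the Weitzenböck vanishing $H^2_{A'} = 0$. The chosen metric on $Z$ is anti-self-dual, so its self-dual Weyl tensor $W^+$ vanishes identically; combined with $s > 0$ and the ASD condition $F^+_{A'} = 0$, the Bochner formula for $\mathfrak{su}(P)$-valued self-dual $2$-forms $\phi$ reduces to
\begin{equation*}
  2\, d_{A'}^+ d_{A'}^{+,*}\phi \;=\; \nabla_{A'}^{*}\nabla_{A'}\phi + \tfrac{s}{3}\phi.
\end{equation*}
Pairing with $\phi \in H^2_{A'}$ and integrating by parts on $Z^+$ yields $\int|\nabla_{A'}\phi|^2 + \tfrac{s}{3}\int|\phi|^2 = 0$, and so $\phi \equiv 0$. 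The integration by parts is legitimate because the non-degeneracy of $\alpha$ forces the relevant weighted $L^2_{l-1,\delta}$ representatives to decay exponentially at infinity, annihilating any boundary contribution.

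With $H^2_{A'} = 0$, and likewise $H^0_{A'} = 0$ from non-degeneracy of $\alpha$ in the weighted setting, the framed moduli space $\widetilde{\mathcal M}(A')$ is smooth of dimension $\ind(\mathcal D_{A'}) + h^0(\alpha)$, using the expected-dimension formula from subsection~\ref{cyl-mod}. Non-emptiness of $\widetilde{\mathcal M}(A')$ therefore forces
\begin{equation*}
  \ind(\mathcal D_{A'}) \;\geq\; -h^0(\alpha) \;=\; -4,
\end{equation*}
where $h^0(\alpha) = \dim\Gamma_\alpha = 4$ is read off the table. On the other hand, since the only term in \eqref{ind-DA} that differs between $A$ and $A'$ is $4N\kappa$, we have $\ind(\mathcal D_{A'}) - \ind(\mathcal D_A) = 12\bigl(\kappa(A') - \kappa(A)\bigr)$. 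A strict decrease $\kappa(A') \leq \kappa(A) - 1$ would therefore give $\ind(\mathcal D_{A'}) \leq \ind(\mathcal D_A) - 12 \leq -12$ (using $\ind(\mathcal D_A) \in \{0,-4\}$ from the table), contradicting the bound $\ind(\mathcal D_{A'}) \geq -4$. Hence $\kappa(A') \geq \kappa(A)$, as claimed.

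The only non-routine step is the Weitzenböck argument on the non-compact $Z^+$, which requires controlling the boundary term in the integration by parts. This reduces to verifying exponential decay of harmonic $\mathfrak{su}(P)$-valued self-dual $2$-forms in $L^2_{l-1,\delta}$, which follows from the non-degeneracy of $\alpha$ via the standard analysis of the asymptotic deformation operator (see, e.g., \cite{MMR:L2-mod-space}); no new input is required beyond the framework already set up in subsection~\ref{cyl-mod}.
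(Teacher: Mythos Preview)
Your proof is correct and follows essentially the same route as the paper. The paper's argument is terser: it invokes the anti-self-dual, positive-scalar-curvature metric on $Z$ (fixed just before the proposition) to declare every ASD connection regular, then notes that a drop in $\kappa$ by at least $1$ forces $\dim\widetilde{\mathcal M}(A')\leq\dim\widetilde{\mathcal M}(A)-12\leq -8$, while regularity and the $\Gamma_\alpha$-orbit of $[A']$ give $\dim\widetilde{\mathcal M}(A')\geq\dim\Gamma_\alpha-\dim\Gamma_{A'}\geq 0$. You spell out the Weitzenböck vanishing that underlies the regularity claim and use the slightly weaker (but sufficient) bound $\dim\widetilde{\mathcal M}(A')\geq 0$ from mere non-emptiness; the paper's orbit bound is sharper but not needed here.
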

\begin{proof}
	Let $A'$ be an ASD connection with a smaller topological energy and the same limiting flat connection and $c_1$ as $A$. The difference
	$\dim(\widetilde{\mathcal M}(A))-\dim(\widetilde{\mathcal M}(A'))$ is at least $12$.
	On the other hand, any such connection is regular because of the choice of the metric. Therefore, the dimension of $\widetilde{\mathcal M}(A')$ is at least $\dim(\Gamma_{\alpha})-\dim(\Gamma_{A'})$.
	This can be used to rule out the existence of $A'$.
\end{proof}

We study the moduli spaces ${\mathcal M}(A)$ for various choices of $A$.  The same argument as in Proposition \ref{minimal} shows that if $A=B \oplus B\oplus 1$, then the moduli space ${\mathcal M}(A)$ contains only the completely reducible connection $A$. Next, we turn to the case that $A=B^{2} \oplus 1 \oplus 1$. Analogous argument as in Proposition \ref{minimal} shows that there are three types of connections in this space:
\begin{itemize}
	\item finitely many irreducible connections;
	\item the complete reducible connection $A=B^{2} \oplus 1 \oplus 1$;
	\item reducible connections of the form $R\oplus 1$ where $R$ is an irreducible $\U(2)$-connection in the 1-dimensional
	space $\mathcal M(B^{2}\oplus 1)$.
\end{itemize}

\begin{prop}\label{nearreducible2}
	The unique component of $ \mathcal M(B^{2}\oplus 1)$ containing $B^{2}\oplus 1$ is a half-line $[0,\infty)$.
	All the other components are either circles or copies of $\R$ consisting of only irreducible connections.
	The corresponding components of $\widetilde {\mathcal M}(B^{2}\oplus 1\oplus 1)$ are $\C^2$, $S^3\times S^1$ and $S^3\times\R$.
\end{prop}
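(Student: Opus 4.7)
The plan is to exploit the holomorphic structure of $Z$. With the positive scalar curvature ASD metric fixed on $Z$, the 4-manifold $Z$ is biholomorphic to the total space of the line bundle $\mathcal O_{{\bf CP}^1}(-3)$, i.e., the minimal resolution of the cyclic quotient singularity $\C^2/(\Z/3\Z)$ under the diagonal action. The Kobayashi--Hitchin correspondence in its asymptotically conical formulation (compare \cite{Taubes:L2-mod-space,MMR:L2-mod-space}) identifies $\mathcal M(B^{-2}\oplus 1)$ with the moduli space of stable rank-2 holomorphic bundles on $Z$ of determinant $L^{-2}$ (where $L=\mathcal O_Z(-\sigma)$ generates $\mathrm{Pic}(Z)=\Z$), of topological energy $1/3$, and equipped with a holomorphic framing at infinity prescribed by the flat connection $\chi^{-2}\oplus 1$.

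The next step is to enumerate these bundles. Because $\mathrm{Pic}(Z)=\Z$, any such rank-2 bundle $\mathcal E$ is either a split sum $L^{k_1}\oplus L^{k_2}$ with $k_1+k_2=-2$ or arises as a non-split extension $0\to L^{k_1}\to\mathcal E\to L^{k_2}\to 0$. The framing at infinity rules out every split bundle except the reducible $B^{-2}\oplus 1$ itself, and the bound $\kappa=1/3$ confines $(k_1,k_2)$ to a short finite list. The non-split extensions with fixed $(k_1,k_2)$ are classified by $H^1(Z,L^{k_1-k_2})$, which I intend to compute by a direct \v Cech calculation on the standard two-chart cover of $\mathcal O_{{\bf CP}^1}(-3)$; quotienting by the natural $\C^*$-rescaling of extension classes parametrises the irreducible part of the moduli.

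From this enumeration, the claimed component structure and its framed $\U(3)$-lift follow. The $\C^*$-family of non-trivial extensions $0\to B^{-2}\to\mathcal E\to\mathcal O\to 0$ continuously degenerates, as its extension class is rescaled to zero, to the split bundle $B^{-2}\oplus 1$, producing the half-line $[0,\infty)$ with the reducible at the endpoint. The remaining admissible pairs $(k_1,k_2)$ yield non-degenerating $\C^*$-orbits of irreducibles whose quotients are either circles or copies of $\R$, depending on whether the residual asymptotic framing admits a compact stabilizer. Appending the trivial summand and recording asymptotic framings, each irreducible unframed component is covered by a principal bundle with fiber $\Gamma_\alpha/\Gamma_{A'\oplus 1}$, which a direct computation identifies with $S^3$; triviality of the resulting $S^3$-bundles over $S^1$ and $\R$ then produces the claimed $S^3\times S^1$ and $S^3\times\R$, while the component containing the reducible closes up through its Kuranishi slice to $\C^2$.

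The principal obstacle is this last identification at the fully reducible $A=B^{-2}\oplus 1\oplus 1$: one must verify that the obstruction space $H^2_A$ of the $\Gamma_\alpha$-equivariant deformation complex vanishes, so that the local Kuranishi model is linear, and then compute the $S(\U(1)\times\U(2))$-representation on $H^1_A$ to confirm that the framed slice is indeed $\C^2$ (with the $\U(1)$-stabilizer of $A$ acting by a weighted scaling fixing only the origin). Both points reduce to a Weitzenb\"ock estimate exploiting the positive scalar curvature of the metric together with the Atiyah--Patodi--Singer boundary correction associated with the asymptotic flat connection $\chi^{-2}\oplus 1$, in the spirit of the arguments of \cite{Taubes:L2-mod-space,MMR:L2-mod-space,Don:YM-Floer}.
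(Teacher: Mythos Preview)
The paper's argument for the first part is far simpler than yours: $\mathcal M(B^{-2}\oplus 1)$ is real $1$-dimensional (by the index formula) and contains exactly one reducible, namely $B^{-2}\oplus 1$ itself (this is established in the paragraph immediately preceding the proposition). Since the fixed metric on $Z$ is anti-self-dual with positive scalar curvature, every ASD connection is regular, so the irreducible locus is a smooth $1$-manifold and the local Kuranishi model at the reducible is $H^1_A/\U(1)\cong\C/\U(1)=[0,\infty)$. The classification of connected $1$-manifolds then finishes the argument: the component through the reducible is a half-line, and every other component is $S^1$ or $\R$. Note that the proposition does not decide \emph{which} of the two occurs for a given irreducible component; your remark that this ``depends on whether the residual asymptotic framing admits a compact stabilizer'' is misplaced, since the asymptotic flat connection (and hence its stabilizer) is the same for every point of the moduli space.

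Your holomorphic route has a genuine gap. Quotienting non-trivial extension classes in $H^1(Z,L^{k_1-k_2})$ by the $\C^*$ that rescales extensions parametrises isomorphism classes of the resulting rank-$2$ bundles, and that quotient is a complex (hence even-real-dimensional) space; it cannot coincide with the unframed moduli $\mathcal M(B^{-2}\oplus 1)$, which has odd real dimension $1$. What is naturally complex here is the \emph{framed} $\U(2)$ moduli (real dimension $2$), while the unframed moduli is its quotient by the \emph{real} group $\Gamma_{\chi^{-2}\oplus 1}\cong\U(1)$, not by the holomorphic $\C^*$. On a non-compact base with prescribed asymptotics the relationship between these two quotients is not the one you wrote down, and the references you cite do not contain the asymptotically-conical Kobayashi--Hitchin statement you invoke.

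For the second part your core step---computing the orbit of $R\oplus 1$ in the framed moduli as $\Gamma_\alpha/\Gamma_{R\oplus 1}={\rm S}(\U(1)\times\U(2))/{\rm S}(\U(1)_2\times\U(1))\cong S^3$---is exactly the paper's argument. The identification of the reducible component with $\C^2$ then follows more directly than via a full obstruction analysis: regularity already makes the framed $\U(3)$ moduli a smooth $4$-manifold carrying an ${\rm S}(\U(1)\times\U(2))$-action with a single fixed point and all other orbits $S^3$, and the equivariant slice theorem at that fixed point forces the standard $\U(2)$-model $\C^2$.
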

 \begin{proof}
 	The proof of the first part is straightforward. For the second part, note that the orbit of the connection $R\oplus1$
	in the framed moduli space, for an irreducible connection $R$, is
	$\Gamma_\alpha / \Gamma_{R\oplus 1}={\rm S}(\U(1)\times \U(2))/ {\rm S}(\U(1)_2\times \U(1))=S^3$, where
	$\U(1)_2\times \U(1)$ denotes $3 \times 3$ diagonal matrices where the first
	two diagonal entires are equal to each other.
 \end{proof}

Now we are ready to prove the first part of Proposition \ref{-3-identities}:
\begin{prop}\label{universal2}
	Suppose $X$ and $\sigma$ are as above and $z \in \mathbb{A}(\langle\sigma\rangle^{\perp})^{\otimes 2}$. Suppose also $w$ is a 2-cycle in $X$ such that $w \cdot \sigma \equiv 1$ mod 3.
	Then there is a constant $c$ such that:
	\begin{equation*}
		\rD^3_{X,w}((-\frac{3}{2}\sigma_{(3)}-\frac{3}{2}\sigma_{(2)}^2 - a_2)z)=c \rD^3_{X,w-\sigma}(z).
	\end{equation*}
\end{prop}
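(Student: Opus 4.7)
The plan is to apply the neck-stretching technique of Section~\ref{long-neck} to the decomposition $X = Z \cup_{L(3,-1)} X_1$, where $Z$ is the tubular neighborhood of $\sigma$ and $X_1 = \overline{X \setminus Z}$. Fix a metric on $X$ having a long cylinder $L(3,-1) \times [-T,T]$, and let $T \to \infty$. Because $z \in \mathbb{A}(\langle\sigma\rangle^{\perp})^{\otimes 2}$, its geometric representatives can be chosen disjoint from $Z$; meanwhile the classes $\sigma_{(r)}$ and $a_r$ are realized by cycles lying inside $Z$. By the cut-down analysis sketched in Remark~\ref{cut-down}, applied to the splitting above, the cut-down moduli space on $X$ decomposes asymptotically (for $T$ large) as a fiber product of cut-down moduli spaces on $Z$ (by the degree-$4$ class $-\tfrac{3}{2}\sigma_{(3)} - \tfrac{3}{2}\sigma_{(2)}^2 - a_2$) and on $X_1$ (by $z$), glued along their common flat limits on $L(3,-1)$ via Theorem~\ref{geometric-gluing}.

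The crucial observation is that $\sigma \subset Z$, so $w|_{X_1} = (w - \sigma)|_{X_1}$: the bundles on $X_1$ and their flat limits on $L(3,-1)$ are identical in the LHS and RHS calculations. The only difference is the bundle type on $Z$, which shifts its first Chern class by $-3\,\mathrm{P.D.}[w_0]$ when $w$ is replaced by $w-\sigma$. Under the hypothesis $w \cdot \sigma \equiv 1 \pmod 3$, the minimum-energy ASD connections on $Z$ compatible with a fixed flat limit $\chi \oplus 1 \oplus 1$ on $L(3,-1)$ are drawn from the table preceding Proposition~\ref{minimal}. For the RHS bundle, the relevant $Z$-side moduli space contains the rigid connection $B \oplus 1 \oplus 1$ of dimension $0$, so no cut-down is needed and the $X_1$-side directly yields $\rD^3_{X,w-\sigma}(z)$. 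For the LHS bundle (with $c_1|_Z$ shifted by $3$), the relevant $Z$-side moduli space is $\widetilde{\mathcal M}(B^{-2} \oplus 1 \oplus 1)$ of dimension $4$, which must absorb the degree-$4$ cut-down class.

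The heart of the argument is therefore to compute, modulo $\Gamma_\alpha = S(U(1) \times U(2))$, the pairing of the class $-\tfrac{3}{2}\sigma_{(3)} - \tfrac{3}{2}\sigma_{(2)}^2 - a_2$ against $\widetilde{\mathcal M}(B^{-2} \oplus 1 \oplus 1)$, and to show that this pairing produces a universal constant $c$ independent of the ambient $(X,w,z)$. By Proposition~\ref{nearreducible2}, this framed moduli space decomposes as $\C^2 \sqcup (S^3 \times S^1) \sqcup (S^3 \times \mathbb{R})$-components, where the $\C^2$-component contains the completely reducible connection with maximal stabilizer and the $S^3 \times \mathbb{R}$-components contain one-parameter families of nearby reducibles of type $R \oplus B^0$. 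Any contribution of higher-energy or non-minimal moduli spaces to the decomposition lives in strictly positive codimension and can be excluded by the usual index/dimension count, using the assumption $b^+(X) \geq 2$ to rule out reducible strata on $X_1$. The specific linear combination with coefficients $-\tfrac{3}{2}, -\tfrac{3}{2}, -1$ is dictated by the requirement that the potentially divergent contributions from the non-compact strata near the reducibles cancel, leaving a finite universal answer.

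The main obstacle is exactly this cohomological computation on the stratified moduli space $\widetilde{\mathcal M}(B^{-2} \oplus 1 \oplus 1)$: one must identify the Chern-class representatives $\mu(\sigma_{(2)})$, $\mu(\sigma_{(3)})$, $\mu(a_2)$ explicitly near the reducible strata, then use Theorem~\ref{geometric-gluing} to excise small neighborhoods of those strata and match the resulting boundary integrals. The coefficients $-\tfrac{3}{2}, -\tfrac{3}{2}, -1$ must be verified to be precisely those that produce a well-defined pairing, in direct analogy with Culler's treatment of the $(-2)$-sphere identities $(C_1),(C_2),(C_3)$ in \cite{Luc:Thesis}. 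The universal constant $c$ thus produced is the content of the proposition; its precise value is left undetermined here and is pinned down to $c = -3$ only later, by comparing with the explicit $K3$ calculation in subsection~\ref{E(2)}.
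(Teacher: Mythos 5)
Your overall strategy is the correct one and matches the paper's: stretch the neck along $Y=L(3,-1)$, note that the $X_1$-side data is identical for $w$ and $w-\sigma$, and localize the degree-$4$ class on the $Z$-side moduli spaces, with the $Z$-side for the right-hand side being the rigid point $B\oplus 1\oplus 1$. However, there are two genuine gaps.

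First, you account for only one of the two charts needed to cover $\mathcal M_{\kappa_0}(X^T,w)$. The limiting flat connection on $L(3,-1)$ compatible with the bundle on $Z$ appearing on the left-hand side need not be $\chi^{-2}\oplus 1\oplus 1$: it can also be $\chi^{-1}\oplus\chi^{-1}\oplus 1$, whose minimal $Z$-side moduli space is the rigid reducible $B^{-1}\oplus B^{-1}\oplus 1$, paired with an $8$-dimensional moduli space on $X_1$. The paper covers the moduli space by two open sets $\mathcal U$ and $\mathcal V$ accordingly, and must show that $-\tfrac{3}{2}\sigma_{(3)}-\tfrac{3}{2}\sigma_{(2)}^2-a_2$ vanishes identically on $\mathcal V$, using the description $\widetilde{\mathbb E}_2=F\boxtimes L^{-1}\oplus P$ of the equivariant universal bundle there. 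Without this, the localization to the single point $B^{-2}\oplus 1\oplus 1$ is not established, and the answer would a priori contain an extra term proportional to the count of an $8$-dimensional cut-down moduli space on $X_1$, which is not a multiple of $\rD^3_{X,w-\sigma}(z)$.

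Second, the mechanism by which the coefficients $-\tfrac{3}{2},-\tfrac{3}{2},-1$ enter is not a cancellation of divergent contributions from non-compact strata. The space $\widetilde{\mathcal M}(B^{-2}\oplus 1\oplus 1)$ is already under control (its components are $\C^2$, $S^3\times S^1$ and $S^3\times\R$ by Proposition \ref{nearreducible2}); what one actually proves is that the specific equivariant class
\begin{equation*}
-\tfrac{3}{2}\,\bar c_3(\widetilde{\mathbb E}_1)/\sigma-\tfrac{3}{2}\bigl(\bar c_2(\widetilde{\mathbb E}_1)/\sigma\bigr)^2-\bar c_2(\widetilde{\mathbb E}_1)/x
\end{equation*}
restricts to zero on the complement of the completely reducible point, using the explicit identifications $\widetilde{\mathbb E}_1=Q\boxtimes L^{-2}\oplus G$ on $\C^2$ and $Q\boxtimes L^{-2}\oplus Q\oplus Q^{-2}$ on the orbits $\Gamma_{\alpha_1}/\Gamma_{R\oplus 1}\cong S^3$ of the partial reducibles $R\oplus 1$ (the identities from \cite{Luc:Thesis}). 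This Chern-class computation is the actual content of the proposition, and deferring it as ``the main obstacle'' leaves the proof incomplete. Your closing remarks, that $c$ is universal and is pinned down to $-3$ only by comparison with the $E(2)$ computation, are correct.
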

\begin{proof}
	For the simplicity of the exposition, we assume that $z=1$. A similar proof works in the more general case.
	Equip $X$ with a Riemannian metric that has a neck of length $T$ along the lens space $Y$,
	and denote the resulting Riemannian manifold by $X^T$.
	We firstly study the 4-dimensional moduli spaces of the form $\mathcal M_{\kappa_0}(X^T,w)$ for large values of $T$.
	Let $\alpha_1$ and $\alpha_2$ denote the flat connections $\chi^{2}\oplus 1 \oplus 1$ and $\chi\oplus \chi\oplus 1$.
	We write $G_i$ for the stabilizer group $\Gamma_{\alpha_i}$.
	We also write $\mathcal R_1$ for $\widetilde {\mathcal M}(B^{2}\oplus 1\oplus 1)$ and
	$\mathcal R_2$ for $\widetilde {\mathcal M}(B\oplus B\oplus 1)$. Clearly $\mathcal R_i$ is a $G_i$-manifold.
	By Theorem \ref{geometric-gluing} and the above description of the low-dimensional moduli spaces on $Z$,
	the moduli space $\mathcal M_{\kappa_0}(X^T,w)$
	can be covered with two open sets $\mathcal{U}$ and $\mathcal{V}$ of the following form:
	\begin{equation*}
		\mathcal{U}=\widetilde{\mathcal M}_{p_1}(X_1,w\cap X_1; \alpha_1)
		\times_{G_1}\mathcal R_1
		 \hspace{.5cm}\mathcal{V}=\widetilde{\mathcal M}_{p_2}(X_1,w\cap X_1;
		 \alpha_2)\times_{G_2} \mathcal R_2
	\end{equation*}
	where framed moduli spaces $\widetilde{\mathcal M}_{p_1}(X_1,w\cap X_1; \alpha_1)$ and
	$\widetilde{\mathcal M}_{p_2}(X_1,w\cap X_1; \alpha_2)$ are respectively of dimensions $4$ and $8$.
	By choosing a generic metric on $X_1$ and a small holonomy perturbation of the
	ASD equation supported in $X_1$,
	we can assume that the action of $\Gamma_{\alpha_i}$ on
	$\widetilde{\mathcal M}_{p_i}(X_1,w\cap X_1; \alpha_i)$ is free.
	In particular, there are $G_i$ equivariant maps:
	\begin{equation}\label{f_i}
		\widetilde f_i:\widetilde{\mathcal M}_{p_i}(X_1,w\cap X_1; \alpha_i) \to EG_i
	\end{equation}
	where
	$EG_i$ is the universal principal $G_i$-bundle over the classifying space $BG_i$. Every connected component of
	$\widetilde{\mathcal M}_{p_1}(X_1,w\cap X_1; \alpha_1)$ can be identified with $G_1$ and we can assume that
	the map $\widetilde f_1$ on different connected components of $\widetilde{\mathcal M}_{p_1}(X_1,w\cap X_1; \alpha_1)$ are equal
	to each other.
		
	Gluing theory also gives a description of the intersection $\mathcal U \cap \mathcal V$ for large values of $T$. Let
	$\mathcal N$ be the $G_1\times G_2$-manifold $\widetilde{\mathcal M}_{q}(\alpha_1,\alpha_2)$
	where the path $q$ is chosen such that the index of any element in
	$\widetilde{\mathcal M}_{q}(\alpha_1,\alpha_2)$ is equal to $0$.
	This space consists of
	ASD connections of the form $R\oplus 1$ on $\R\times L(3,1)$ which are asymptotic to
	$\chi^2\oplus1$ and $\chi\oplus\chi$ on the two ends. After dividing by the action of translations, there are
	finitely many choices for the connection $R$. Therefore, this manifold, as a $(G_1\times G_2)$-space,
	is the union of finitely many spaces of the following form:
	\begin{equation}\label{overlap-comp}
	  \R \times \frac{G_1\times G_2}{{\rm S}(\U(1)_2\times \U(1))}
	\end{equation}
	In particular, the action of $G_i$ on this space is free. The $G_1$-space $ \mathcal N\times_{G_2}
	\mathcal R_2$ can be identified with the end of $\mathcal R_1$.
	The $G_2$-space $\widetilde{\mathcal M}_{p_1}(X_1,w\cap X_1; \alpha_1) \times_{G_1} \mathcal N$
	can be identified with the end of $\widetilde{\mathcal M}_{p_2}(X_1,w\cap X_1; \alpha_2)$.
	Using these identifications, the overlap $\mathcal U \cap \mathcal V$ is identified with the following space:
	\begin{equation} \label{overlap}
	  \widetilde{\mathcal M}_{p_1}(X_1,w\cap X_1; \alpha_1) \times_{G_1} \mathcal N\times_{G_2}
	  \mathcal R_2
	\end{equation}
	In summary, $\mathcal M_{\kappa_0}(X^T,w)$  is the pushout of the following below:
	\begin{equation}\label{pushout}
	\begin{tikzcd}
	 \widetilde{\mathcal M}_{p_1}(X_1,w\cap X_1; \alpha_1)
	  \times_{G_1} \mathcal N\times_{G_2} \mathcal R_2\arrow[r, hook]  \arrow[d, hook]
	  \arrow[dr, phantom, "\mathlarger{\mathlarger{\mathlarger{\mathlarger{\mathlarger{\ulcorner}}}}}"]
	&   \widetilde{\mathcal M}_{p_1}(X_1,w\cap X_1; \alpha_1) \times_{G_1}\mathcal R_1 \arrow[d]\\
	  \widetilde{\mathcal M}_{p_2}(X_1,w\cap X_1; \alpha_2) \times_{G_2}\mathcal R_2 \arrow[r]& \mathcal M_{\kappa_0}(X^T,w)
	\end{tikzcd}
	\end{equation}	
	We can form a universal version of the above diagram as follows:
	\begin{equation}\label{universal-pushout}
	\begin{tikzcd}
	 EG_1\times_{G_1} \mathcal N\times_{G_2}\mathcal R_2 \arrow[r, hook, "h_2"]  \arrow[d,"h_1"]
	  \arrow[dr, phantom, "\mathlarger{\mathlarger{\mathlarger{\mathlarger{\mathlarger{\ulcorner}}}}}"]
	&EG_1\times_{G_1}\mathcal R_1 \arrow[d]\\
	  EG_2 \times_{G_2}\mathcal R_2 \arrow[r]& M
	\end{tikzcd}
	\end{equation}	
	The free action of $G_2$ on $EG_1\times_{G_1} \mathcal N$ gives a $G_2$-equivariant map
	$f:EG_1\times_{G_1} \mathcal N\to EG_2$. The vertical map $h_1$ is induced by $f$.
	The horizontal map $h_2$ is induced by the inclusion of $\mathcal N\times_{G_2}\mathcal R_2$ into
	$\mathcal R_1$.
	The space $M$ is then the pushout of $h_1$ and $h_2$.
	 The maps $\widetilde f_1$ and $\widetilde f_2$ determine the maps:
	\[
	  f_i:\widetilde{\mathcal M}_{p_i}(X_1,w\cap X_1; \alpha_i) \times_{G_i}\mathcal R_i \to
	  EG_i \times_{G_i}\mathcal R_i
	\]
	and
	\[
	  g: \widetilde{\mathcal M}_{p_1}(X_1,w\cap X_1; \alpha_1)  \times_{G_1} \mathcal N\times_{G_2}\mathcal R_2\to
	  EG_1\times_{G_1} \mathcal N\times_{G_2}\mathcal R_2.
	\]
	 The maps $\widetilde f_1$ and $\widetilde f_2$ can be chosen such that $f_1$, $f_2$ and $g$ give rise to a map
	from Diagram \eqref{pushout} to Diagram \eqref{universal-pushout}.
	We will write $F$ for the induced map from $\mathcal M_{\kappa_0}(X^T,w)$ to $M$.
	
	There is a similar description of the universal bundle over the space $\mathcal M_{\kappa_0}(X^T,w) \times \sigma$.
	Suppose $q_1$ is the path over $Z$ determined by the connection $B^2\oplus 1\oplus 1$.
	We can form an equivariant universal bundle $\widetilde {\mathbb P}_1$ over
	$\widetilde{\mathcal B}_{q_1}(Z,w\cap Z;\alpha)\times \sigma$ similar to the the universal bundles in Subsection \ref{pol-invts}.
	There is an action of $ G_1$ on $\widetilde {\mathbb P}_1$ which lifts the obvious action of this group on
	$\widetilde{\mathcal B}_{q_1}(Z,w\cap Z;\alpha)\times \sigma$. This bundle induces a
	${\rm PU}(3)$-bundle over $\mathcal R_1\times \sigma$ with an action of $G_1$ which we still denote by
	$\widetilde {\mathbb P}_1$. Similarly, we can construct a
	${\rm PU}(3)$-bundle $\widetilde {\mathbb P}_2$ over $\mathcal R_2\times \sigma$ with an action of $G_2$.
	Since $\mathcal R_2$ consists of only the class of the connection $B\oplus B \oplus 1$,
	the bundle $\widetilde {\mathbb P}_2$ has the following form \cite[Proposition 46]{Luc:Thesis}:
	\begin{equation} \label{vanishing-2}
		\widetilde {\mathbb P}_2=F\boxtimes L\oplus P.
	\end{equation}
	where $F$ and $P$ are the standard $\U(2)$- and $\U(1)$-representations of $G_2$. To be more precise, \eqref{vanishing-2}
	gives a lift of $\widetilde {\mathbb P}_2$ to a $\U(3)$-bundle with an action of $G_2$.
	The restriction of $\widetilde {\mathbb P}_1$, as a ${\rm PU}(3)$-bundle with an action of $G_1$, to the subset
	$\mathcal N\times_{G_2}\mathcal R_2\times \sigma$ of $\mathcal R_1\times \sigma$
	can be identified with $\mathcal N\times_{G_2}\widetilde {\mathbb P}_2$.
	Therefore, $\widetilde {\mathbb P}_1$ and $\widetilde {\mathbb P}_2$ gives rise to the following diagram of
	${\rm PU}(3)$-bundles:
	\begin{equation}\label{universal-pushout-bundles}
	\begin{tikzcd}
	 {\mathbb P}_3:=EG_1\times_{G_1} \mathcal N\times_{G_2}\widetilde {\mathbb P}_2 \arrow[r, "\overline h_2"]  \arrow[d,"\overline h_1"]
	  \arrow[dr, phantom, "\mathlarger{\mathlarger{\mathlarger{\mathlarger{\mathlarger{\ulcorner}}}}}"]
	&EG_1\times_{G_1}\widetilde {\mathbb P}_1 \arrow[d]\\
	  EG_2\times_{G_2}\widetilde {\mathbb P}_2 \arrow[r]& \overline {\mathbb P}
	\end{tikzcd}
	\end{equation}	
	The ${\rm PU}(3)$-bundle $\overline {\mathbb P}$ over $M\times \sigma$ is the pushout of the above diagram.
	Pullback of $\overline {\mathbb P}$ with respect
	to the map $(F,id):\mathcal M_{\kappa_0}(X^T,w)\times \sigma \to M\times \sigma$ is equal to the universal bundle.
	
	Suppose $\varphi\in H^{4}(M,\Q)$ is defined to be:
	\[\frac{3}{2}c_3 (\overline {\mathbb P})/\sigma -\frac{3}{2}(c_2(\overline {\mathbb P})/\sigma)^2 -
	c_2(\overline {\mathbb P})/x.\]
	The description of $\widetilde {\mathbb P}_2$ in \eqref{vanishing-2} can be employed to show that the restriction of $\varphi$
	to the subspace $EG_2\times_{G_2}\mathcal R_2$ vanishes. Thus we can use the vanishing of cohomology classes of
	$EG_1\times_{G_1} \mathcal N\times_{G_2}\mathcal R_2$ in odd degrees and the Mayer-Vietoris exact sequence for
	the pushout diagram in \eqref{universal-pushout} to conclude that there is a unique choice of a relative cohomology class
	\[
	  \psi \in H^{4}(EG_1\times_{G_1}\mathcal R_1,EG_1\times_{G_1}\mathcal N\times_{G_2} \mathcal R_2)
	\]	
	such that $\varphi$ is equal to the image of $\psi$ in $H^{4}(M,\Q)$. This discussion shows that the pairing of
	$ F^*(\varphi)$ and the fundamental class of the 4-dimensional moduli space $\mathcal M_{\kappa_0}(X^T,w)$ is
	equal to the pairing of $f_1^*(\psi)$ and the relative fundamental class of
	$\widetilde{\mathcal M}_{p_1}(X_1,w\cap X_1; \alpha_1) \times_{G_1}\mathcal R_1$.
	The description of the moduli spaces and the map $\widetilde f_1$ shows that there is a constant $c$ such that the latter pairing is equal to:
	\[
	  c\cdot \# {\mathcal M}_{p_1}(X_1,w\cap X_1; \alpha_1)
	\]
	Since $\widetilde {\mathcal M}(B^{-1}\oplus1\oplus 1)$ consists of a single point, we have:
	\begin{equation} \label{0-dim-mod-space-long-neck}
		{\mathcal M}_{p_1}(X_1,w\cap X_1; \alpha_1)=
		\widetilde{\mathcal M}_{p_1}(X_1,w\cap X_1; \alpha_1)\times_{\Gamma_{\alpha_1}}
		\widetilde {\mathcal M}(B^{-1}\oplus1\oplus 1)=\mathcal M_{\kappa_1}(X^T,w-\sigma)
	\end{equation}
	Here $\kappa_1$ is chosen such that the space $\mathcal M_{\kappa_1}(X^T,w-\sigma)$ is 0-dimensional.
	We use Theorem \ref{geometric-gluing} to conclude the second identity for large enough values of $T$.
	Identities \eqref{0-dim-mod-space-long-neck} allow us to verify the desired claim.
\end{proof}

The second part of Proposition \ref{-3-identities} can be proved by applying the first part to the $(-3)$-sphere $\sigma$ with the reverse orientation. Next, we turn to the proof of the last part of Proposition \ref{-3-identities}. As in the previous case, we need to study some low dimensional moduli spaces over $Z$. The following table consists of various choices of $\U(3)$-connections on $Z$ with vanishing $c_1$. The proof of Proposition \ref{minimal} shows that each connection $A$ in this table has the minimal energy among all ASD connections with the same limiting flat connection and vanishing $c_1$.

\begin{table}[H]
	\begin{center}
		\resizebox{\hsize}{!}{
		\renewcommand{\arraystretch}{1.3}
		\begin{tabular}{ |c|c|c|c|c| }
			\hline
			$ A$ & $\alpha$ &$\ind(\mathcal D_A)$ & $\dim (\widetilde{\mathcal M}(A))$ & $\Gamma _{\alpha}$ \\
			\hline
	 		 $1\oplus 1 \oplus 1$  & $1\oplus 1 \oplus 1 $&$ -8$ & $0 $& $\SU(3)$   \\
	 		\hline
			$B \oplus B^{-1}\oplus 1$ & $\chi\oplus \chi^{2} \oplus 1$ & $0$& $2$ & ${\rm S}(\U(1)\times \U(1)\times \U(1))$   \\
			\hline
			$B\oplus B \oplus B^{-2}$ & $\chi\oplus \chi \oplus \chi$ &  $4 $& $12$ & $\SU(3)$   \\
			\hline
			$B^{-1}\oplus B^{-1} \oplus B^{2}$  & $\chi^{2}\oplus \chi^{2} \oplus \chi^{2}$ & $4 $& $12$ & $\SU(3)$   \\
			\hline
		\end{tabular}}
	\end{center}
\end{table}	

For $A=B \oplus B^{-1}\oplus 1$, the moduli space $\mathcal M(A)$ contains three types of connections:
\begin{itemize}
	\item finitely many irreducible connections;
	\item the completely reducible connection $A=B\oplus B^{-1}\oplus 1$;
	\item reducible connections of the form $R\oplus 1$ where $R$ is an irreducible $\SU(2)$-connection in the 1-dimensional space
	$\mathcal M(B\oplus B^{-1})$.
\end{itemize}
\begin{prop}\label{nearreducible}
	The connected components of $\mathcal M(B\oplus B^{-1})$ which contains $B\oplus B^{-1}$ is a half-line $[0,\infty)$.
	All the other components are either circles or copies of $\R$ consisting of only irreducible connections. The corresponding components of $\widetilde{\mathcal M}(A)$ are
	$\C$, $S^1\times S^1$ and $S^1\times \R$, and the action of $\Gamma_\alpha=S^1$ is standard.
\end{prop}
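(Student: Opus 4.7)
Plan: The argument is formally parallel to that of Proposition \ref{nearreducible2}. First I would show that $B \oplus B^{-1}$ is the unique reducible element of $\mathcal M(B \oplus B^{-1})$. Any reducible $\U(2)$-solution asymptotic to $\chi \oplus \chi^{-1}$ with vanishing $c_1$ is of the form $B^a \oplus B^{-a}$ with $a \equiv 1 \pmod 3$, and its topological energy is proportional to $a^2$. Since the moduli space has the fixed topological energy realized by $a = \pm 1$ (the two choices giving the same connection after swapping summands), $B \oplus B^{-1}$ is the only reducible.

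Next I would analyze the local Kuranishi model at this reducible. The anti-self-dual metric of positive scalar curvature fixed on $Z$ gives, via the Weitzenb\"ock formula for $d_A^+$ on self-dual $\mathfrak{su}(2)$-valued two-forms, the vanishing $H^2_A = 0$, so the reducible is regular. Since $\ind(\mathcal D_{B \oplus B^{-1}}) = 1$ and the stabilizer $\Gamma_A = S^1$ is the diagonal maximal torus of $\SU(2)$, the identity $\dim_\R H^1_A = \ind + \dim \Gamma_A$ yields $\dim_\R H^1_A = 2$. The isotropy representation of $\Gamma_A = S^1$ on $H^1_A$ acts with standard weight one on the off-diagonal $\mathfrak{su}(2)$-deformations, so the Kuranishi model identifies a neighborhood of $B \oplus B^{-1}$ in the framed moduli space with $\C$ carrying the standard $S^1$-action, and the corresponding neighborhood in $\mathcal M(B \oplus B^{-1})$ with $\C/S^1 = [0,\infty)$, the origin being the reducible.

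Away from $B \oplus B^{-1}$ the moduli space consists entirely of irreducibles, again regular by the same Weitzenb\"ock argument, so $\mathcal M(B \oplus B^{-1})$ is a smooth $1$-manifold outside this point and its remaining components are circles or copies of $\R$. On the locus of irreducibles the framed moduli space is a principal $S^1$-bundle over the unframed one; orientability rules out the Klein bottle over a circle base, leaving $S^1 \times S^1$, while over an $\R$-base one obtains $S^1 \times \R$. The component of the framed moduli containing the reducible is obtained by gluing the Kuranishi chart $\C$ to the $S^1$-bundle over $(0,\infty)$, giving $\C$ with its standard $S^1$-action, as claimed. The main obstacle is the Kuranishi step, specifically checking that $\Gamma_A$ acts on $H^1_A$ with weight exactly one (higher weight would produce a conical singularity rather than the smooth half-line $[0,\infty)$); this reduces to a direct calculation with the standard slice, showing that the off-diagonal $\mathfrak{su}(2)$-valued deformations transform in the weight-one representation of the diagonal $S^1$.
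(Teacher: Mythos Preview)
The paper gives no proof for this proposition; it is stated immediately after the parallel Proposition~\ref{nearreducible2}, whose proof the paper already called ``straightforward,'' and is evidently meant to be read the same way. Your argument supplies exactly the expected details: uniqueness of the reducible by the energy formula, regularity from the Weitzenb\"ock argument on the fixed anti-self-dual positive-scalar-curvature metric, the Kuranishi model at the reducible, and the classification of $1$-manifold components away from it. This is the right approach and matches what the paper implicitly has in mind.

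Two small corrections that do not affect your conclusions. First, the isotropy action of $\Gamma_A=\{\mathrm{diag}(\lambda,\lambda^{-1})\}\subset\SU(2)$ on the off-diagonal part of $\mathfrak{su}(2)$ has weight $\pm 2$, not weight one: conjugation by $\mathrm{diag}(\lambda,\lambda^{-1})$ scales the $(1,2)$-entry by $\lambda^{2}$. Your stated concern that a higher weight ``would produce a conical singularity rather than the smooth half-line'' is therefore misplaced---the quotient $\C/S^1$ is $[0,\infty)$ for any nonzero weight, so the unframed local model is unchanged. Second, over a circle base the framed bundle is trivial simply because principal $S^1$-bundles over $S^1$ are classified by $H^2(S^1;\Z)=0$; orientability is not the operative mechanism. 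Finally, note that in the paper's usage $A=B\oplus B^{-1}\oplus 1$ is the $\U(3)$-connection with $\Gamma_\alpha={\rm S}(\U(1)^3)\cong T^2$; the ``$\Gamma_\alpha=S^1$'' in the statement refers to the effective action on the reducible locus (one $S^1\subset T^2$ acts trivially on every $R\oplus 1$), which coincides with your $\U(2)$-framed picture.
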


\begin{prop}\label{universal1}
	Suppose $X$ and $\sigma$ are as above and $z \in \mathbb{A}(\langle\sigma\rangle^{\perp})^{\otimes 2}$. Suppose also $w$ is a 2-cycle in $X_1$.
	Then the following formulas hold:
	\begin{itemize}
		\item $\rD^3_{X,w}((\sigma_{(2)}^4+4a_2\sigma_{(2)}^2+3\sigma_{(3)}^2)z)=0$
		\item $\rD^3_{X,w}((\sigma_{(2)}^3 \sigma_{(3)}+3a_3\sigma_{(2)}^2+a_2 \sigma_{(2)} \sigma_{(3)}) z)=0$
	\end{itemize}
\end{prop}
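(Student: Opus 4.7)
The argument follows the pattern established in Proposition \ref{universal2}, and in fact both identities of (iii) have exactly the form of Culler's identities $(C_2)$ and $(C_3)$ for a $(-2)$-sphere; the point is that when $w\cdot \sigma\equiv 0 \bmod 3$, the universal bundle behaves over $\sigma$ in a way that is formally identical to the $(-2)$-sphere case, even though the ambient self-intersection has changed. Decompose $X=Z\circ X_1$ where $Z$ is a tubular neighborhood of $\sigma$ with boundary $L(3,-1)$, equip $X$ with a Riemannian metric whose neck along $L(3,-1)$ has length $T$, and consider the moduli spaces $\mathcal M_\kappa(X^T,w)$ of the expected dimensions $8$ and $10$ relevant to the two identities. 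Because $w\cdot \sigma\equiv 0\bmod 3$ the bundle on $Z$ has $c_1$ divisible by $3$, so by the index formula \eqref{ind-DA} together with Proposition \ref{minimal}, any minimal-energy ASD connection on $Z$ contributing to the limit as $T\to\infty$ is one of the four connections tabulated in subsection \ref{neg-spheres-gluing}: $1\oplus 1\oplus 1$, $B\oplus B^{-1}\oplus 1$, $B\oplus B\oplus B^{-2}$ and $B^{-1}\oplus B^{-1}\oplus B^2$, each asymptotic to a flat $\SU(3)$-connection $\alpha_A$ on $L(3,-1)$. Theorem \ref{geometric-gluing} then covers $\mathcal M_\kappa(X^T,w)$ for large $T$ by open sets of the form
\begin{equation*}
\widetilde{\mathcal M}_{p_A}(X_1,w\cap X_1;\alpha_A)\times_{\Gamma_{\alpha_A}}\widetilde{\mathcal M}(A),
\end{equation*}
together with finitely many lower-dimensional bubbling strata that are controlled by the standard counting argument sketched in Remark \ref{cut-down}.

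For each $A$ I would next invoke the method of \cite[Proposition 46]{Luc:Thesis} to describe the $\Gamma_{\alpha_A}$-equivariant universal complex vector bundle $\widetilde{\mathbb E}_A$ on $\widetilde{\mathcal M}(A)\times \sigma$ as a direct sum of line and rank-two bundles of the form $R\boxtimes L^k$, where $L$ is the tautological line bundle on $\sigma\cong S^2$ of degree $-3$ and $R$ runs over characters or irreducibles of $\Gamma_{\alpha_A}$. Explicitly: on the singleton $\widetilde{\mathcal M}(1\oplus 1\oplus 1)$ the universal bundle is pulled back from the standard representation of $\SU(3)$; on the stratum of $\widetilde{\mathcal M}(B\oplus B^{-1}\oplus 1)$ near its reducible locus (whose components I would classify in an analogue of Proposition \ref{nearreducible}) it splits as $Q_+\boxtimes L\oplus Q_-\boxtimes L^{-1}\oplus Q_0$ under the three weights of $\Gamma_{\alpha_A}=T^2$; and on the $\SU(3)$-orbits inside $\widetilde{\mathcal M}(B\oplus B\oplus B^{-2})$ and $\widetilde{\mathcal M}(B^{-1}\oplus B^{-1}\oplus B^2)$ the equivariant universal bundle is the standard representation twisted by the central character $\chi^{\pm 1}$.

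From these descriptions one computes the slant products $\bar{c}_i(\widetilde{\mathbb E}_A)/\sigma$ and $\bar{c}_i(\widetilde{\mathbb E}_A)/x$ explicitly, and a direct symmetric-function calculation in the Chern roots shows that on each $\widetilde{\mathcal M}(A)$ the universal combinations
\begin{equation*}
(\bar{c}_2/\sigma)^4+4(\bar{c}_2/x)(\bar{c}_2/\sigma)^2+3(\bar{c}_3/\sigma)^2
\end{equation*}
and
\begin{equation*}
(\bar{c}_2/\sigma)^3(\bar{c}_3/\sigma)+3(\bar{c}_3/x)(\bar{c}_2/\sigma)^2+(\bar{c}_2/x)(\bar{c}_2/\sigma)(\bar{c}_3/\sigma)
\end{equation*}
vanish identically as $\Gamma_{\alpha_A}$-equivariant cohomology classes. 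These are precisely the universal identities underlying Culler's $(C_2)$ and $(C_3)$, and they are insensitive to the replacement of $L^{-2}$ by $L^{-3}$ because both identities only record the vanishing of $c_1$ of the bundle restricted to $\sigma$. Granting this, the vanishing on every stratum of the cover of $\mathcal M_\kappa(X^T,w)$ forces the signed counts defining $\rD^3_{X,w}$ on the two classes in (iii) to be zero.

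The main difficulty, as in Proposition \ref{universal2}, will be the careful handling of the reducible strata inside each $\widetilde{\mathcal M}(A)$: the completely reducible singleton in $\widetilde{\mathcal M}(1\oplus 1\oplus 1)$, the partially reducible half-line in $\widetilde{\mathcal M}(B\oplus B^{-1}\oplus 1)$ analogous to Proposition \ref{nearreducible}, and the $\SU(3)$-fixed points in the central strata, where the gluing map is not a free quotient and one must promote the universal expressions into the equivariant setting. As in Culler's thesis, this is handled by combining the equivariant vanishing of the universal classes with the index bound from Proposition \ref{minimal} and a standard excess-intersection argument, after which compactness of the relevant cut-down moduli spaces is automatic from Theorems \ref{cyl-mod-compactness} and \ref{stretching-neck-compactness}. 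This reduces (iii) to the algebraic identities above and completes the proof.
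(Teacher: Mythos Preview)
Your outline is close to the paper's proof, but one piece of the reasoning is off and one key mechanism is missing.

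The paper does not in fact need to treat all four limiting connections on $Z$. The only stratum requiring real work is the one coming from $A=B\oplus B^{-1}\oplus 1$, asymptotic to $\alpha=\chi\oplus\chi^2\oplus 1$; on the other pieces the relevant equivariant classes vanish for elementary dimension reasons, and the paper simply records this as ``for obvious reasons.'' So your extended case analysis of the central strata $B\oplus B\oplus B^{-2}$ and $B^{-1}\oplus B^{-1}\oplus B^2$ is unnecessary, and your concern about ``$\SU(3)$-fixed points in the central strata'' and an ``excess-intersection argument'' is misplaced.

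More importantly, your heuristic that the identities are ``insensitive to the replacement of $L^{-2}$ by $L^{-3}$ because both identities only record the vanishing of $c_1$'' is not correct. The paper's argument on the reducible component $\C\subset\widetilde{\mathcal M}(B\oplus B^{-1}\oplus 1)$ is an explicit computation in $H^*_{\Gamma_\alpha}(\C)=\C[p,q]$: writing $\widetilde{\mathbb E}=P\boxtimes L\oplus Q\boxtimes L^{-1}\oplus R$, one finds $\widetilde\sigma_{(2)}=p-q$, $\widetilde\sigma_{(3)}=q^2-p^2$, $\widetilde a_2=pq-(p+q)^2$, $\widetilde a_3=-pq(p+q)$, and then checks \emph{algebraically} that both combinations are identically zero. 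This depends on $\int_\sigma c_1(L)=\pm 1$ and on the specific splitting type $L\oplus L^{-1}\oplus\underline{\C}$, not merely on $c_1(\widetilde{\mathbb E})=0$; a generic value of $\int_\sigma c_1(L)$ would not give zero. Equally essential---and absent from your sketch---is the observation that on the irreducible part of the half-line (the $R\oplus 1$ connections) the bundles $P$ and $Q$ become isomorphic, so that $p-q$, and hence each $\widetilde\sigma_{(i)}$, is the image of a \emph{compactly supported} equivariant class on $\C$. It is this compact support that lets the equivariant vanishing on $\C$ propagate to the restriction of the genuine classes on $\mathcal M_{\kappa_0}(X^T,w)$, via Theorem~\ref{geometric-gluing} and \cite[Proposition~47]{Luc:Thesis}.
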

\begin{proof}
	Similar to the proof of Proposition \ref{universal2}, we may assume that $z=1$.
	We need to study the $8$-dimensional moduli space for the first identity and the $10$-dimensional moduli space for the second identity.
	Suppose $\kappa_0$ is a constant number such that the expected dimension of the moduli space $\mathcal M_{\kappa_0}(X^T,w)$ is
	$8$ or $10$.
	This moduli space is compact. Moreover, Theorem \ref{geometric-gluing} and the above description of the low-dimensional
	moduli spaces on $Z$ show that $\mathcal M_{\kappa_0}(X^T,w)$ can be given as the pushout of the following diagram:
	\begin{equation}\label{pushout-2}
	\begin{tikzcd}
	 \widetilde{\mathcal M}_{p_1}(X_1,w; \alpha_1)
	  \times_{G_1} \mathcal N\times_{G_2} \mathcal R_2\arrow[r, hook]  \arrow[d, hook]
	  \arrow[dr, phantom, "\mathlarger{\mathlarger{\mathlarger{\mathlarger{\mathlarger{\ulcorner}}}}}"]
	&   \widetilde{\mathcal M}_{p_1}(X_1,w; \alpha_1) \times_{G_1}\mathcal R_1 \arrow[d]\\
	  \widetilde{\mathcal M}_{p_2}(X_1,w; \alpha_2) \times_{G_2}\mathcal R_2 \arrow[r]& \mathcal M_{\kappa_0}(X^T,w)
	\end{tikzcd}
	\end{equation}	
	where $\alpha_1=\chi\oplus \chi^{2} \oplus 1$, $\alpha_2=1\oplus 1 \oplus 1$, $G_i=\Gamma_{\alpha_i}$,
	$\mathcal R_1=\widetilde{\mathcal M}(B\oplus B^{-1} \oplus 1)$, $\mathcal R_2=\widetilde{\mathcal M}(1\oplus 1 \oplus 1)$,
	$\mathcal N=\widetilde {\mathcal M}_q(\alpha_1,\alpha_2)$ and the path $q$ is chosen such that the index of any
	element in $\mathcal N$ is equal to $0$. As in the previous case, we can form the universal version of the above diagram as below:
	\begin{equation}\label{universal-pushout-2}
	\begin{tikzcd}
	 EG_1\times_{G_1} \mathcal N\times_{G_2}\mathcal R_2 \arrow[r, hook, "h_2"]  \arrow[d,"h_1"]
	  \arrow[dr, phantom, "\mathlarger{\mathlarger{\mathlarger{\mathlarger{\mathlarger{\ulcorner}}}}}"]
	&EG_1\times_{G_1}\mathcal R_1 \arrow[d]\\
	  EG_2 \times_{G_2}\mathcal R_2 \arrow[r]& M
	\end{tikzcd}
	\end{equation}	
	with a map $F: \mathcal M_{\kappa_0}(X^T,w) \to M$.
	
	There are also ${\rm PU}(3)$-bundles $\widetilde {\mathbb P}_i$ on $\mathcal R_i$ with an action of $G_i$. These bundles give
	rise to a pushout diagram of bundles:
	\begin{equation}\label{universal-pushout-bundles-2}
	\begin{tikzcd}
	 {\mathbb P}_3:=EG_1\times_{G_1} \mathcal N\times_{G_2}\widetilde {\mathbb P}_2 \arrow[r, "\overline h_2"]  \arrow[d,"\overline h_1"]
	  \arrow[dr, phantom, "\mathlarger{\mathlarger{\mathlarger{\mathlarger{\mathlarger{\ulcorner}}}}}"]
	&EG_1\times_{G_1}\widetilde {\mathbb P}_1 \arrow[d]\\
	  EG_2\times_{G_2}\widetilde {\mathbb P}_2 \arrow[r]& \overline {\mathbb P}
	\end{tikzcd}
	\end{equation}	
	such that $(F,id)^{*}(\overline {\mathbb P})$ is equal to the universal bundle on $\mathcal M_{\kappa_0}(X^T,w)\times \sigma$.
	Let $\varphi_1  \in H^{8}(M,\Q)$ and $\varphi_2\in H^{10}(M,\Q)$ be defined as follows:
	\[\varphi_1:=(c_2(\overline {\mathbb P})/\sigma)^4+4(c_2 (\overline {\mathbb P})/x)\cdot(c_2(\overline {\mathbb P})/\sigma)^2+
	3(c_3(\overline {\mathbb P})/\sigma)^2,\]
	\[
	\varphi_2:=(c_2(\overline {\mathbb P})/\sigma)^3\cdot(c_3(\overline {\mathbb P})/\sigma)+
	3(c_3 (\overline {\mathbb P})/x)\cdot(c_2(\overline {\mathbb P})/\sigma)^2+
	(c_2(\overline {\mathbb P})/x)\cdot (c_2(\overline {\mathbb P})/\sigma)\cdot(c_3(\overline {\mathbb P})/\sigma).\]
	
	The restriction of cohomology classes $\varphi_1$ and $\varphi_2$ to the open sets $EG_i\times_{G_i}\mathcal R_i$ vanish.
	We show this claim for the connected component  $EG_1\times_{G_1}\C$ of $EG_1\times_{G_1}\mathcal R_1$. The restriction of
	the bundle $\widetilde{\mathbb P}_1$ to the $G_1$-manifold $\C$ is given by \cite[Proposition 46]{Luc:Thesis}:
	\[
	  P\boxtimes L \oplus Q\boxtimes L^{-1}\oplus R\boxtimes \underline{\C}
	\]
	where $P,Q,R$ are the $G_1$-equivariant line bundles over $\C$ associated to the three standard 1-dimensional representations
	of $G_1$. Note that $P\otimes Q\otimes R$ is the trivial bundle with the trivial action of $G_1$.
	Suppose $p, q\in H_{G_1}^*(\C)$ denote the equivariant first Chern classes of the bundles $P$ and $Q$. Then:
	\[
	  c_2(EG_1\times_{G_1}\widetilde {\mathbb P}_1)/\sigma=p-q\hspace{1cm}
	  c_3(EG_1\times_{G_1}\widetilde {\mathbb P}_1)/\sigma=q^2-p^2
	\]
	\[
	  c_2(EG_1\times_{G_1}\widetilde {\mathbb P}_1)/x=-p^2-q^2-pq\hspace{1cm}
	  c_3(EG_1\times_{G_1}\widetilde {\mathbb P}_1)/x=-pq(p+q)
	\]
	These identities can be used to show that the restriction of $\varphi_1$ and $\varphi_2$ to
	$EG_1\times_{G_1} \C$ are equal to zero. Using similar arguments,
	it is even easier to show that the restriction of these two cohomology classes to
	$EG_2\times_{G_2}\mathcal R_2$ and the remaining connected components of
	$EG_1\times_{G_1}\mathcal R_1$ are equal to zero. Since the cohomology groups of
	$EG_1\times_{G_1} \mathcal N\times_{G_2}\mathcal R_2$ in odd degrees vanish,
	the Mayer-Vietoris exact sequence for Diagram \eqref{universal-pushout-2} imply that
	$\varphi_1$ and $\varphi_2$ are both equal to zero.
	In particular, this verifies the claim of this proposition.
\end{proof}

\subsection{Gluing Theory for Fukaya-Floer Homology} \label{IIN}
One of the primary goals of this subsection is to define the Fukaya-Floer homology for an $N$-admissible pair $(Y,\gamma)$ and an $(N-1)$-tuple $L=(l_2,\dots,l_N)$ of the elements of $H_1(Y)$. As it is mentioned in Subsection \ref{FFH}, we shall construct a chain complex $(\mathfrak{C}_*^{N,j}(Y,\gamma,L),d_{N,j})$ over $R_{N,j}$, for each non-negative integer number $j$. We also define a chain map:
\[
  \hspace{3cm} F^k_j:\mathfrak{C}_*^{N,j}(Y,\gamma,L) \to \mathfrak{C}_*^{N,k}(Y,\gamma,L)   \hspace{1cm} j\geq k\geq 0
\]
such that $F^k_j$ is a homomorphism of $R_{N,j}$-modules and for a triple of integers $j\geq k\geq l\geq 0$, the map $F^l_k\circ F^k_j$ is chain homotopy equivalent to $F^l_j$. Let ${\mathbb I}^{N,j}_*(Y,\gamma,L)$ be the homology of the chain complex $(\mathfrak{C}_*^{N,j}(Y,\gamma,L),d_{N,j})$ and $f_j^k:{\mathbb I}^{N,j}_*(Y,\gamma,L) \to {\mathbb I}^{N,k}_*(Y,\gamma,L)$ be the map induced by $F_j^k$. Then the Fukaya-Floer homology group ${\mathbb I}^N_*(Y,\gamma,L)$ is the inverse limit of the inverse system $(\{{\mathbb I}^{N,j}_*(Y,\gamma,L)\}_j, \{f_j^k\}_{j\geq k})$.

For the simplicity of exposition, we assume that $l_3= \dots=l_N=0$ and $l_2$ is an integral homology class. Later we shall explain how the definition should be adapted to the arbitrary case. For each $i\in \N$, let $\eta_i$ be an oriented closed curve that represents $l_2$. Let also $\nu(\eta_i)$ be a regular neighborhood of $\eta_i$ such that the inclusion of $\nu(\eta_i)$ into $Y$ induces a surjective map at the level of fundamental groups. Furthermore, we assume that the open sets $\nu(\eta_i)$ are disjoint.

For a fixed integer $j \geq 0$, let $\CS_{\pi_j}$ be a perturbation of the Chern-Simons functional associated to $(Y,\gamma)$ such that all critical points of $\CS_{\pi_j}$ are irreducible and non-degenerate, and all moduli spaces $\mathcal M_p(\alpha,\beta)$, with dimension at most $2j+1$, consist of regular points. If $i$ is a non-negative integer number not greater than $j$ and $\dim(\mathcal M_p(\alpha,\beta))\leq 2j+1$, then we may also assume that the restriction of any element in $\mathcal M_p(\alpha,\beta)$ to the open subspace $\nu(\eta_i)\times (0,1)$ is irreducible.\footnote{This is a consequence of unique continuation property of (non-perturbed) ASD connections and Uhlenbeck compactness theorem. For more details see \cite{K:higher}.} We define:
\begin{equation} \label{FF-chain-gp}
	\mathfrak{C}_*^{N,j}(Y,\gamma,L):=\mathfrak{C}_*^{\pi_j}(Y,\gamma)\otimes R_{N,j}.
\end{equation}
Next, we need to define the differential $d_{N,j}$.

For any path $p$ between two critical points $\alpha$ and $\beta$ of $\CS_{\pi_j}$ and $S=\{i_1,\dots, i_k\}\subset \{1,\dots, j\}$, we have the diagonal $\R$-action $\{\sigma_t\}_{t\in \R}$on the following space:
\begin{equation} \label{MabS}
	{\mathcal M}_p(\alpha,\beta)\times (\eta_{i_1} \times \R) \times \dots \times (\eta_{i_k} \times \R)
\end{equation}
Here the action of $t \in \R$ maps $(x,r) \in \eta_i \times \R$ to $(x,r-t)$. If $p$ is not the constant path or $S$ is not the empty set, then this action is free and we will write $\breve {\mathcal M}_p(\alpha,\beta;S)$ for the quotient space. Otherwise, $\breve {\mathcal M}_p(\alpha,\beta;S)$ is defined to be empty. In the case that $S$ is empty, the quotient space is the space $\breve {\mathcal M}_p(\alpha,\beta)$. If $S'\subset S$, then there is an obvious projection map from $\breve {\mathcal M}_p(\alpha,\beta;S)$ to $\breve {\mathcal M}_p(\alpha,\beta;S')$, which is denoted by $\pi_{S\to S'}$.

We can form a partial compactification of $\breve {\mathcal M}_p(\alpha,\beta;S)$ by defining
\begin{equation*}
	\breve {\mathcal M}^+_p(\alpha,\beta;S):=\bigcup_{k} \bigcup_{\{(p_i,S_i)\}_{1\leq i \leq k}}\prod_{i=1}^k \breve {\mathcal M}_{p_i}(\alpha_{i-1},\alpha_i;S_i)
\end{equation*}
where $S_i \subset S$ and $p_i:\alpha_{i-1} \to \alpha_i$ is a path between two critical points of $\CS_{\pi_j}$ such that the sets $S_i$ are disjoint, their union is equal to $S$, and the composition $p_1 \circ \dots \circ p_k$ is equal to $p$. Note that the set $S_i$ may be empty. A sequence $u_l\in  \breve {\mathcal M}_p(\alpha,\beta;S)$ is chain convergent to $u_\infty=(u_\infty ^1,\cdots u_\infty^k)\in  \prod_{i=1}^k \breve {\mathcal M}_{p_i}(\alpha_{i-1},\alpha_i;S_i)$, if there is a sequence of $k$-tuple of real numbers $(t_l^1,\dots, t_l^k)$ such that:
\begin{equation*}
	t_l^1<\dots<t_l^k\hspace{2cm} \lim_{l\to \infty} t_l^{m+1}-t_l^{m}=\infty \hspace{.5cm} \forall 1\leq m\leq k-1
\end{equation*}
and
\begin{equation*}
	\sigma_{t_l^m}\circ \pi_{S \to S_m} (u_l) \xrightarrow{C^\infty_{loc}} u_{\infty}^{m}.
\end{equation*}
We use this notion of convergence to define a topology on $\breve {\mathcal M}^+_p(\alpha,\beta;S)$.
\begin{remark}
	Due to the possibility of bubbling off instantons, the space $\breve {\mathcal M}^+_p(\alpha,\beta;S)$ is not necessarily compact.
\end{remark}

\begin{example}
	Suppose $A \in \mathcal M_p(\alpha,\beta)$ where $p:\alpha \to \beta$ is a non-trivial path between two critical points of the perturbed
	Chern-Simons functional. Suppose also $x\in \eta_1$ is fixed. Then $u_i=[A,(x,i)]$ defines a sequence of elements
	of $ \breve {\mathcal M}_p(\alpha,\beta;S)$ where $S=\{1\}$. This sequence is convergent to $(u_\infty^1,u_\infty^2)$:
	\begin{equation*}
		u_\infty^1:=[A] \in \breve {\mathcal M}_p(\alpha,\beta) \hspace{1cm}u_\infty^2:=[\pi^*(\beta),(x,0)] \in \breve{\mathcal M}_q(\beta,\beta;S)
	\end{equation*}
	Here $q$ is the constant path from $\beta$ to $\beta$, and $\pi^*(\beta)$ is the pull back of the connection $\beta$ to $\R \times Y$.
\end{example}

For $S=\{i_1,\dots, i_k\}$, define the map $\Phi_S$ as follows:
\begin{equation}\label{Phis}
	\Phi_S:\breve {\mathcal M}_p(\alpha,\beta;S) \to \prod_{i\in S}\mathcal{B}^\ast( \nu(\eta_i)\times (0,1))\times \eta_i
\end{equation}
\begin{equation*}
	\Phi_s([A,(x_{i_1},t_{i_1})\dots,(x_{i_k},t_{i_k})])=((T^*_{t_{i_1}}A|_{\nu(\eta_{i_1})\times (0,1)},x_{i_1}),\dots,(T^*_{t_{i_k}}A|_{\nu(\eta_{i_k})\times (0,1)},x_{i_k})).
\end{equation*}
where $T_{t_i}:\R \times Y \to \R \times Y$ denotes the translation that maps $(x,t)$ to $(x,t+t_i)$. This map extends to $\breve {\mathcal M}^+_p(\alpha,\beta;S)$ in the obvious way and the extension is continuous. As in Subsection \ref{pol-invts}, we can form a universal ${\rm PU}(N)$-bundle $\mathbb P_i$ and an associated $\SU(N^N)$ vector bundle $\mathbb E_i$ on $\mathcal{B}^\ast( \nu(\eta_i)\times (0,1))\times \eta_i$. We need to find a subspace of $\mathcal{B}^\ast( \nu(\eta_i)\times (0,1))\times \eta_i$ which represents $c_2(\mathbb P_i)$ or equivalently $\frac{1}{N^N}c_2(\mathbb E_i)$, and use the inverse image of this  representative to cut down the moduli space $\breve {\mathcal M}_p(\alpha,\beta;S)$. We can proceed as in Subsection \ref{pol-invts}. The rank stratification of the vector bundle ${\rm Hom}(\C^{N^N-1},\mathbb E_i)$ determines a codimension four representative $V_2(\eta_i)$ for the second Chern class of the universal bundle. For all choices of $\alpha$, $\beta$, $S$ and $p$, we may assume that $\Phi_S$ in \eqref{Phis} is transversal to:
\[
  V_2(\eta_{i_1})\times \dots \times V_2(\eta_{i_k})
\]
Let $\breve {\mathcal N}_p(\alpha,\beta;S)$ be the inverse image of the above space by the map $\Phi_S$. Then the closure of $\breve {\mathcal N}_p(\alpha,\beta;S)$ in $\breve {\mathcal M}^+_p(\alpha,\beta;S)$, denoted by $ \breve {\mathcal N}^+_p(\alpha,\beta;S)$, is also a stratified space with smooth strata of the following form:
\begin{equation*}
	\prod_{i=1}^k \breve {\mathcal N}_{p_i}(\alpha_{i-1},\alpha_i;S_i).
\end{equation*}

\begin{lemma} \label{comp}
	If the dimension of $\breve {\mathcal N}^+_p(\alpha,\beta;S)$ is at most $1$, then this space is compact.
\end{lemma}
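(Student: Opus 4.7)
The plan is to take an arbitrary sequence $u_i \in \breve{\mathcal N}_p(\alpha,\beta;S)$ and extract a subsequence whose limit lies in $\breve{\mathcal N}^+_p(\alpha,\beta;S)$. The main tool is a mild extension of Theorem~\ref{cyl-mod-compactness} that keeps track of the marked points $(x_{i_r},t_{i_r})\in \eta_{i_r}\times \R$ modulo the diagonal $\R$-action. Applying it and passing to a subsequence, one obtains a broken trajectory $(A_1,\ldots,A_k)$ with $A_j \in \mathcal M_{p_j}(\alpha_{j-1},\alpha_j)$ and $p = p_1\circ \cdots \circ p_k$ (with intermediate critical points $\alpha_{j-1},\alpha_j$ of $\CS_\pi$), a distribution $S=S_1\sqcup\cdots\sqcup S_k$ of the marked-curve indices among the levels, Uhlenbeck bubble multisets ${\bf y}_j\subset \eta_{i_r}\times\R$ of total multiplicity $m=\sum_j m_j$, and limiting marked points on the relevant $\eta_{i_r}$'s. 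By energy additivity $\kappa(p)=\sum_j \kappa(p_j)+m$, so the index formula \eqref{ind-DA} gives $\sum_j \dim \mathcal M_{p_j}(\alpha_{j-1},\alpha_j)=\dim \mathcal M_p(\alpha,\beta)-4Nm$.

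Because the neighborhoods $\nu(\eta_r)$ are pairwise disjoint, and because there are infinitely many disjoint curves $\eta_i$ representing $l_2$ (so that after relabeling we may assume every bubble point lies away from each $\eta_{i_r}\times \R$), the $C^\infty_{\mathrm{loc}}$ convergence of $u_i$ on the complement of the bubble set implies that the constraints $V_2(\eta_{i_r})$ pass to the limit. Hence the limit lies in a stratum of the form
\[
\prod_{j=1}^k \breve{\mathcal N}_{p_j}(\alpha_{j-1},\alpha_j;S_j)\times(\text{bubble configurations of multiplicity $m$}),
\]
whose expected dimension equals
\[
\dim \breve{\mathcal N}_p(\alpha,\beta;S)-(k-1)-(4N-4)m.
\]
Here $(k-1)$ accounts for the merging of $\R$-translations at each break, and $(4N-4)m=4Nm-4m$ accounts for the drop of $4Nm$ in the moduli dimension offset by the $4m$ positional degrees of freedom of the bubble points inside $\R\times Y$.

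Now the hypothesis $\dim \breve{\mathcal N}^+_p(\alpha,\beta;S)\leq 1$ and $N\geq 2$ force $(k-1)+(4N-4)m\leq 1$, so $m=0$ and $k\leq 2$. Thus no bubbling can occur in the limit, and the limiting configuration is exactly a (possibly once-broken) trajectory belonging to $\breve{\mathcal N}^+_p(\alpha,\beta;S)$. This yields the claimed compactness.

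The main obstacle in making this rigorous is the step asserting that strata of strictly negative expected dimension are empty. This requires a transversality argument for the evaluation maps at bubble locations against the stratified representatives $V_2(\eta_r)$, together with the usual interior regularity of the moduli spaces $\mathcal M_{p_j}(\alpha_{j-1},\alpha_j)$ coming from the choice of $\CS_\pi$. Both ingredients are extensions of the techniques already used in \cite{K:higher,Luc:Thesis}: the freedom to perturb $\CS_\pi$ generically, together with the freedom to choose the codimension-four representatives $V_2(\eta_r)$ in general position and to shift to a different $\eta_i$ representing $l_2$ when necessary, suffices to rule out the lower-dimensional bubble strata and complete the proof.
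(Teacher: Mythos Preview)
Your proposal is correct and is precisely what the paper means by ``the standard dimension counting argument used for the definition of the polynomial invariants for closed 4-manifolds'': apply the Uhlenbeck--chain compactness of Theorem~\ref{cyl-mod-compactness} (adapted to keep track of the marked points) and rule out bubbles and extra breaks by index additivity. Your formula
\[
\dim\Big(\text{limit stratum}\Big)=\dim\breve{\mathcal N}_p(\alpha,\beta;S)-(k-1)-(4N-4)m
\]
is the right one and forces $m=0$, $k\le 2$ when the left-hand side is $\le 1$ and $N\ge 2$.

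One small point of phrasing: the ``relabeling'' justification for asserting that all constraints pass to the limit is not literally valid, since the curves $\eta_{i_r}$ indexed by $S$ are fixed once and for all and the bubble locations depend on the sequence. The honest argument---which you essentially give in your final paragraph---is the one from \cite{DK,K:higher,Luc:Thesis}: because the neighborhoods $\nu(\eta_r)$ are pairwise disjoint, a single bubble can interfere with at most one constraint, and even in that case the index drop $4N$ exceeds the combined contribution of the bubble position and the lost codimension-four divisor, so the corresponding limit stratum still has negative expected dimension for $N\ge 2$ and is empty by the genericity of the $V_2(\eta_r)$. With that correction your write-up is a faithful expansion of the paper's one-line proof.
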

\begin{proof}
	This is a consequence of Theorem \ref{compact-cylinder-cob} and
	the standard dimension counting argument used for the definition of the polynomial invariants for closed 4-manifolds.
\end{proof}

Now we are in a position to define the differential $d_{N,j}$. Firstly consider the operator:
\begin{equation*}
	d_S:\mathfrak{C}_*^{\pi_j}(Y,\gamma)\to \mathfrak{C}_*^{\pi_j}(Y,\gamma)
\end{equation*}
\begin{equation*}
	d_S(\alpha)= \sum_{p:\alpha\to \beta}\# \breve {\mathcal N}^+_p(\alpha,\beta;S)~\cdot \beta
\end{equation*}
where the sum is over all paths $p$ from $\alpha$ to other critical points of $\CS_{\pi_j}$ such that the dimension of the moduli space $\mathcal M_p(\alpha,\beta)$ is equal to $2|S|+1$. The cut-down moduli space $\breve {\mathcal N}^+_p(\alpha,\beta;S)$ is $0$-dimensional and hence it is compact by Lemma \ref{comp}. Finally, the Fukaya-Floer differential $d_{N,j}$ is defined as:
\begin{equation*}
	d_{N,j}:= \sum_{S\subset \mathbf{N}^+}   \frac{\prod_{i\in S} t_{2,i} }{N^{N|S|}}d_S
\end{equation*}
The term $N^{N|S|}$ appears in the above expression due to the the relationship between $c_2(\mathbb P_i)$ and $c_2(\mathbb E_i)$.

\begin{prop}\label{DD}
	The map $d_{N,j}$ defines a differential, i.e., $d_{N,j}^2=0$.
\end{prop}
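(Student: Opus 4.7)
The plan is to reduce the identity $\tilde d^2 = 0$ to the standard statement that a compact oriented $1$-manifold has signed boundary count equal to zero, applied to the compactified $1$-dimensional cut-down moduli spaces. First, I would expand
\begin{equation*}
\tilde d^{\,2} = \sum_{S_1, S_2 \subset \mathbf N^+}
\frac{\prod_{j \in S_1} t_{2,j} \cdot \prod_{j \in S_2} t_{2,j}}{N^{N(|S_1| + |S_2|)}}\, d_{S_2} \circ d_{S_1},
\end{equation*}
and invoke the relations $t_{2,j}^2 = 0$ to reduce the sum to pairs $(S_1, S_2)$ with $S_1 \cap S_2 = \varnothing$. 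So for each finite $S \subset \mathbf N^+$, the coefficient of $\prod_{j \in S} t_{2,j}$ in $\tilde d^{\,2}(\alpha)$ is
\begin{equation*}
\frac{1}{N^{N|S|}} \sum_{S = S_1 \sqcup S_2} \sum_{p_1 \circ p_2 = p,\ \beta}
\#\breve{\mathcal N}^+_{p_1}(\alpha, \beta; S_1)\cdot \#\breve{\mathcal N}^+_{p_2}(\beta, \gamma; S_2)\,\gamma,
\end{equation*}
and it suffices to show that, for each $\alpha$, $\gamma$ and each $S$, the inner double sum vanishes.

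Second, I would look at the $1$-dimensional cut-down moduli space $\breve{\mathcal N}^+_p(\alpha, \gamma; S)$ for each path $p : \alpha \to \gamma$ with $\dim \mathcal M_p(\alpha, \gamma) = 2|S| + 2$. By Lemma \ref{comp}, these spaces are compact after compactification. Each interior stratum of the compactification is a smooth, oriented $1$-manifold, and the boundary strata are exactly the products
\begin{equation*}
\breve{\mathcal N}_{p_1}(\alpha, \beta; S_1) \times \breve{\mathcal N}_{p_2}(\beta, \gamma; S_2),
\end{equation*}
where $(p_1, S_1)$ and $(p_2, S_2)$ run over decompositions with $p_2 \circ p_1 = p$, $S_1 \sqcup S_2 = S$, and both factors of expected dimension zero. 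The chain convergence topology described before Lemma \ref{comp} identifies these products precisely with the ends of $\breve{\mathcal N}^+_p(\alpha, \gamma; S)$. Applying $\partial^2 = 0$ (the signed count of the boundary of a compact $1$-manifold) to each such 1-dimensional space, then summing over $p$ from $\alpha$ to $\gamma$ with the required dimension, yields the desired vanishing.

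The main analytic obstacle is verifying that the compactification of $\breve{\mathcal N}^+_p(\alpha, \gamma; S)$ is a genuine compact $1$-manifold with boundary given exactly by the broken-trajectory pieces above — with no spurious contributions. Three potential sources of extra boundary must be excluded: (i) bubbling of instantons (which is codimension $\geq 4$ and hence absent in dimension $\leq 1$ by the standard dimension-count argument of \cite{Don:YM-Floer,K:higher}, as used in Lemma \ref{comp}); (ii) non-compactness arising from a marked point $(x_{i_r}, t_{i_r})$ running off to infinity along $\eta_{i_r} \times \mathbf{R}$, which is precisely what the chain convergence accommodates and is already incorporated in the stratified space $\breve{\mathcal M}^+_p$; and (iii) failure of transversality of the cut-down representatives $V_2(\eta_r)$, which is handled by the genericity assumption used in their construction. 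Once these three points are verified, together with the usual check that the induced orientations on the two ends of each $1$-manifold produce opposite signs when assembled into $d_{S_2} \circ d_{S_1}$ (with the ordering of factors giving the correct relative sign as in \cite[Subsection~3.6]{KM:YAFT}), the cobordism argument closes and yields $\tilde d^{\,2} = 0$.

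Finally, I would address the general case where $l_3, \dots, l_N$ are not required to vanish and the $l_i$ are arbitrary elements of $H_1(Y)$. The only change is that the universal representative $V_2(\eta_r)$ is replaced, for indices $(i,j)$ with $2 \leq i \leq N$, by a representative of $c_i(\mathbb P_{i,j})$ of codimension $2(i-1)$ obtained from the rank stratification of ${\rm Hom}(\mathbf C^{N^N - i + 1}, \mathbb E_r)$, and the dimension count for $d_S$ is adjusted to $\dim \mathcal M_p = \sum_{(i,j) \in S} 2(i-1) + 1$. The entire boundary analysis proceeds verbatim; no new analytical input is required beyond the transversality and compactness statements already established.
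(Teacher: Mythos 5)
Your proposal is correct and follows essentially the same route as the paper: expand $\tilde d^{\,2}$, use $t_{2,j}^2=0$ to restrict to disjoint decompositions $S=S_1\sqcup S_2$, and identify the resulting coefficient as the signed count of boundary points of the compact $1$-dimensional cut-down moduli space $\breve{\mathcal N}^+_p(\alpha,\gamma;S)$, which vanishes. Your additional discussion of the three potential sources of spurious boundary and of the general $(l_2,\dots,l_N)$ case is consistent with what the paper establishes in the surrounding text (Lemma \ref{comp} and the paragraphs following the proof).
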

\begin{proof}
	For critical points $\alpha$ and $\beta$ of $\CS_{\pi_j}$ and $S\subset \N$, let $h_S(\alpha,\beta)$
	be a number that satisfies the following identity:
	\begin{equation*}
		d_{N,j}^2 \alpha= \sum_{\beta, S} \frac{m_S(\alpha,\beta)}{N^{N|S|}}(\prod_{i\in S} t_{2,i}) \beta.
	\end{equation*}	
	Fix $\alpha$, $\beta$ and $S\subseteq \{1,\dots,k\}$, and suppose $p:\alpha \to \beta$ is a path that
	$\dim(\mathcal M_p(\alpha,\beta))$ is equal to $2|S|+2$.
	Then the term $m_S(\alpha,\beta)$ is equal to:
	\begin{equation*}
		m_S(\alpha,\beta)=\sum_{\substack{p_1:\alpha \to \gamma\\ p_2:\gamma \to \beta}}\sum_{S_1,S_2}
		\# \breve {\mathcal N}^+_{p_1}(\alpha,\gamma;S_1)\cdot \# \breve {\mathcal N}^+_{p_2}(\gamma,\beta;S_2)
	\end{equation*}	
	such that $p_1 \circ p_2=p$, $S_1\cup S_2=S$, $S_1$ and $S_2$ are disjoint, and:
	\begin{equation*}
		\dim(\mathcal M_{p_1}(\alpha,\gamma))=2|S_1|+1\hspace{1cm} \dim(\mathcal M_{p_2}(\gamma,\beta))=2|S_2|+1.
	\end{equation*}
	Therefore, $m_{S}(\alpha,\beta)$ is equal to the signed count of the boundary points of the compact 1-manifold $\breve {\mathcal N}^+_p(\alpha,\beta;S)$.
	This implies that $m_{S}(\alpha,\beta)$ is equal to zero.	
\end{proof}

The above discussion can be generalized to the case of arbitrary $(N-1)$-tuple $(l_2,\dots, l_N)$ in a straightforward way. For each $l_k$, we choose a sequence of disjoint representatives $\{\eta_{k,i}\}_{i\in \N}$ and the open neighborhoods $\nu(\eta_{k,i})$. We keep assuming that $\eta_{k,i}$ is an oriented simple closed curve in $Y$, the open sets $\nu(\eta_{k,i})$ are disjoint and the inclusion of $\nu(\eta_{k,i})$ in $Y$ induces a surjective map at the level of fundamental groups. In a more general case that $l_i$ is a homology class with complex coefficients, we need to consider the straightforward generalization that $\eta_{k,i}$ is a linear combination of disjoint closed curves. Then for each $(N-1)$-tuple $\overline S=(S_2,\dots,S_N)$ of subsets of $\{1,2,\dots,j\}$, we can form the moduli space $\breve {\mathcal M}_p(\alpha,\beta; \overline S)$ and its partial compactification $\breve {\mathcal M}^+_p(\alpha,\beta; \overline S)$. As in the previous case, each stratum of the partial compactification is a product of moduli spaces of the form $\breve {\mathcal M}_p(\alpha,\beta; \overline S)$. The next step is to cut down $\breve {\mathcal M}^+_p(\alpha,\beta; \overline S)$ with divisors $V_k(\eta_{k,i})$ for $i \in S_k$. As in the case of closed 4-manifolds, we can use the auxiliary complex vector bundles of rank $N^N$ associated to the universal ${\rm PU}(N)$-bundle and construct geometric representatives for $V_k(\eta_{k,i})$. The desired representatives are linear combinations of codimension $2k$ stratified spaces with even dimensional strata. We will write $\breve {\mathcal N}^+_p(\alpha,\beta; \overline S)$ for the cut-down moduli space. A generic choice of these divisors allow us to obtain a cut-down moduli space. This space is compact when its dimension is less than or equal to 1.

The 0-dimensional cut-down moduli spaces $\breve {\mathcal N}^+_p(\alpha,\beta; \overline S)$ can be used to define an operator $d_{\overline S}$ acting on $\mathfrak{C}_*^{\pi_j}(Y,\gamma)$:
\begin{equation*}
	d_{\overline S} \alpha=\sum_{p:\alpha \to \beta} \#\breve {\mathcal N}^+_p(\alpha,\beta; \overline S)\cdot \beta.
\end{equation*}
We combine these operators to form:
\begin{equation*}
	d_{N,j}:= \sum_{\overline S=(S_2,\cdots, S_N)} (\prod_{ i\in S_k} t_{k,i} )  d_{\overline S}
\end{equation*}
As in the previous case, the 1-dimensional moduli spaces can be used to show that $d_{N,j}^2=0$.

Before giving the definition of the maps $F_j^k$, we study the functorial properties of the chain complex $(\mathfrak{C}_*^{N,j}(Y,\gamma,L),d_{N,j})$. Let $(X,w)$ be a pair whose boundary is $(Y,\gamma)$. Suppose $\Gamma^2,\cdots, \Gamma^N$ are properly embedded surfaces in $X$ such that $[\partial \Gamma^i]=l_i\in H_1(Y)$. For $z\in \mathbb{A}(X)^{N-1}$, we want to define the relative invariant:
\begin{equation*}
	{\rm D}^{N,j}_{X,w}(z\e^{\sum_i \Gamma^i_{(i)}})\in \mathbb{I}_\ast^{N,j} (Y,\gamma,L)
\end{equation*}
where $L=(l_2,\cdots,l_N)$. This is similar to the definition of the differential of the Fukaya-Floer chain complex.
For the simplicity of exposition, we assume that $\Gamma^3=\dots=\Gamma^N=0$, and $z=1$. As in the case of the definition of the differential in Fukaya-Floer homology, the more general case is just slightly different. In the manifold $X^+$ with a cylindrical end, let $\{\Sigma_i\}_{i\in \N}$ be a sequence of surfaces which are given by perturbing the surface $\Gamma^2$. We assume that these surfaces intersect generically and their intersections with $\R^{\geq 0} \times Y$ are equal to the disjoint product surfaces $\{\eta_i\times \R^{\geq 0}\}_{i \in \N}$. We choose a holonomy perturbation of the ASD equation on $X^+$, compatible with the chosen perturbation $\CS_{\pi_j}$ of the Chern-Simons functional of $Y$, such that all moduli spaces $\mathcal M_p(X,w;\beta)$ consists of regular points. Given a subset $S=\{i_1,\dots, i_k\}\subset \{1,\dots,j\}$ and a path $p$ along $(X,w)$ based at the critical point $\beta$ of $\CS_{\pi_j}$, consider the space:
\begin{equation}
	{\mathcal M}_p(X,w;\beta,S):={\mathcal M}_p(X,w;\beta)\times \Sigma_{i_1}  \times \dots \times \Sigma_{i_k}.
\end{equation}
For $S' \subset S$, the obvious projection map from ${\mathcal M}_p(X,w;\beta,S)$ to ${\mathcal M}_p(X,w;\beta,S')$ is denoted by $\pi_{S \to S'}$. There is also a partially defined translation map for ${\mathcal M}_p(X,w;\beta,S)$. Suppose ${\mathcal M}_p^{cyl}(X,w;\beta,S)$ denotes the following subset of ${\mathcal M}_p(X,w;\beta,S)$:
\begin{equation*}
	{\mathcal M}_p(X,w;\beta)\times (\eta_{j_1}\times \R^{\geq 0})  \times \dots \times (\eta_{j_k}\times \R^{\geq 0})
\end{equation*}
For $u=([A],(x_1,t_1),\dots,(x_k,t_k))\in{\mathcal M}_p^{cyl}(X,w;\beta,S)$ define $\sigma_t(u)$ to be the following element:
\begin{equation*}
	([T^*_{t}(A|_{Y \times \R^{\geq 0}})],(x_1,t_1-t),\dots,(x_k,t_k-t)).
\end{equation*}
Note that the connection $T^*_{t}(A|_{Y \times \R^{\geq 0}})$ is partially defined on the cylinder $Y \times \R$.

As in the cylinder case, we form a partial compactification of ${\mathcal M}_p(X,w;\beta,S)$ given by:
\begin{equation*}
	{\mathcal M}_p^+(X,w;\beta,S):=\bigcup_{k} \bigcup_{\{(p_i,S_i)\}_{1\leq i \leq k}}{\mathcal M}_{p_1}(X,w,\alpha_1,S_1) \times \prod_{i=2}^k \breve {\mathcal M}_{p_i}(\alpha_{i-1},\alpha_i;S_i)
\end{equation*}
where $\alpha_1$, $\dots$, $\alpha_k$ are critical points of $\CS_{\pi_j}$, $\alpha_k=\beta$, $p_1$ is a path along $(X,w)$, $p_i:\alpha_{i-1}\to \alpha_i$ is a path along the cylinder, $p=p_1\circ \dots \circ p_k$, the sets $S_i$ are disjoint and their union is equal to $S$. As before, $S_i$ may be empty. In a little more detail, a sequence $u_l\in {\mathcal M}_p(X,w;\beta,S)$ is chain convergent to $u_\infty=(u_\infty ^1,\cdots u_\infty ^k)\in {\mathcal M}_{p_1}(X,w,\alpha_1,S_1) \times \prod_{i=2}^k \breve {\mathcal M}_{p_i}(\alpha_{i-1},\alpha_i;S_i)$, if there is a sequence of $(k-1)$-tuple of real numbers $(t_l^2,\dots, t_l^k)$ such that:
\begin{equation*}
	t_l^1:=0<t_l^2<\dots<t_l^k\hspace{2cm} \lim_{l\to \infty} t_l^{m+1}-t_l^{m}=\infty \hspace{.5cm} \forall\,\, 1\leq m\leq k-1
\end{equation*}
and
\begin{equation*}
	\pi_{S \to S_1} (u_l) \xrightarrow{C^\infty_{loc}} u_{\infty}^{1} \hspace{1cm}\sigma_{t_l^i}\circ \pi_{S \to S_i} (u_l)
	\xrightarrow{C^\infty_{loc}} u_{\infty}^{i}\hspace{3mm} i\geq 2.
\end{equation*}
Here part of the assumption is that $\sigma_{t_l^i}\circ \pi_{S \to S_i} (u_l)$ is well-defined for $i\geq 2$. That is to say, $\pi_{S \to S_i} (u_l) \in {\mathcal M}_p^{cyl}(X,w;\beta,S_i)$.

We momentarily assume that $S=\{1\}$. Suppose $\nu(\Sigma_1)$ is an open neighborhood of $\Sigma_1$ such that the inclusion map of $\nu(\Sigma_1)$ induces a surjective map. Suppose also $\mathcal B^{**}(\nu(\Sigma_1))$ is the set of connections on $\nu(\Sigma_1)$ whose restrictions to the sets of the from $\nu(\eta_1) \times (t-1,t)$ are irreducible. We can assume that the perturbation $\pi_j$ is small enough such that the restriction map $r_1:{\mathcal M}_p(X,w;\beta,S)\to \mathcal{B}^{**}(\nu(\Sigma_1))\times \Sigma_1$ is well-defined.\footnote{This is again a consequence of unique continuation and Uhlenbeck compactness theorem \cite{K:higher}.} We can form a universal ${\rm PU}(N)$-bundle $\mathbb P_1$ and the associated $\SU(N^N)$-bundle $\mathbb E_1$ on $\mathcal{B}^{**}(\nu(\Sigma_1))\times \Sigma_1$. Our goal is to define a geometric representative for $c_2(\mathbb P_1)$ or equivalently $\frac{1}{N^N}c_2(\mathbb E_1)$, which is compatible with our choice of the geometric representative in the case of cylinders. Note that ${\mathcal M}_p(X,w;\beta,S)$ is the union of the following two sets:
\begin{equation*}
	B_1={\mathcal M}_p(X,w;\beta)\times (\eta_{1}\times [1,\infty))\hspace{1cm}B_2={\mathcal M}_p(X,w;\beta)\times (\Sigma_1\backslash (\eta_{1}\times (2,\infty)))
\end{equation*}
The map $r_1|_{B_1}$ can be composed with the following map:
\begin{equation*}
	F:\mathcal{B}^{**}(\nu(\Sigma_1))\times (\eta_{1}\times [1,\infty)) \to  \mathcal{B}^*( \nu(\eta_1)\times (0,1))\times \eta_1
\end{equation*}
\begin{equation*}
	F([A],(x,t))=([A|_{\nu(\Sigma)\times (t-1,t)}],x)
\end{equation*}
Therefore, we can choose the geometric representative $V_2(\Sigma_1)\subset  \mathcal{B}^{**}(\nu(\Sigma_1))\times \Sigma_1 $ for $c_2(\mathbb E_1)$ such that:
\begin{equation*}
	V_2(\Sigma_1)\cap \(\mathcal{B}^{**}(\nu(\Sigma_1))\times (\eta_{1}\times [1,\infty))\)=F^{-1}(V_2(\eta_1)).
\end{equation*}
Arguing as in \cite{K:higher}, we can also assume that $V_2(\Sigma_1)$ is transversal to the map $r_1$ for all choices of the path $p$. In this process, firstly we slightly modify $V_2(\eta)$ to make the map $r_1|_{B_1}$ transversal. Then we extend $F^{-1}(V_2(\eta_1))$ to $\mathcal{B}^{**}(\nu(\Sigma_1))\times \Sigma_1 $ such that $r_1|_{B_2}$ is also transversal. We will write ${\mathcal N}_p(X,w;\beta,S)$ for the cut-down moduli space $r_1^{-1}(V_2(\Sigma_1))$. A similar construction can be used to define ${\mathcal N}_p(X,w;\beta,S)$ in the case that $S$ has more than one element. The closure of ${\mathcal N}_p(X,w;\beta,S)$ in ${\mathcal M}_p^+(X,w;\beta,S)$, denoted by ${\mathcal N}^+_p(X,w;\beta,S)$, is also a stratified space with smooth strata of the following form:
\begin{equation*}
	{\mathcal N}_{p_1}(X,w,\alpha_1,S_1) \times \prod_{i=2}^k \breve {\mathcal N}_{p_i}(\alpha_{i-1},\alpha_i;S_i).
\end{equation*}
As in the cylinder case, the cut-down moduli space ${\mathcal N}^+_p(X,w;\beta,S)$ is compact when its dimension is at most one. The relative invariant ${\rm D}^N_{X,w}(\e^{\Gamma_{(2)}})$ is defined using 0-dimensional moduli spaces as below:
\begin{equation} \label{rel-elt}
	{\rm D}^{N,j}_{X,w}(\e^{\Gamma^2_{(2)}}):=
	\sum_{S\subset \{1,\dots,j\}} \frac{\prod_{i\in S} t_{2,i}}{N^{N|S|}} \# {\mathcal N}^+_p(X,w;\beta,S)\cdot \beta
	\in \mathfrak{C}_*^{N,j}(Y,\gamma,L)
\end{equation}
Following the proof of Proposition \ref{DD}, we can show that \eqref{rel-elt} determines a cycle in $\mathfrak{C}_*^{N,j}(Y,\gamma,L)$. We will denote the corresponding element in $\mathbb{I}^{N,j}_*(Y,\gamma, L)$ with the same notation.

Definition of the relative element in \eqref{rel-elt} can be extended to similar situations. For example, suppose $(Y_0,\gamma_0)$ and $(Y_1,\gamma_1)$ are two $N$-admissible pairs and $L_i=(l^2_i,\dots,l^N_i)$ is an $(N-1)$-tuple of the elements in $H_1(Y_i)$. Suppose also $(W,w,z)$ is a morphism from $(Y_0,\gamma_0)$ to $(Y_1,\gamma_1)$, and $\Gamma^j$ is a properly embedded surface in $W$ such that $[\partial_i \Gamma^j]=l_i^j$. Then there is a chain map:
\begin{equation*}
	\mathfrak C^{N,j}(W,w,z\e^{\Gamma^2_{(2)}+\dots+\Gamma^N_{(N)}}):
	{\mathfrak{C}}_*^{N,j}(Y_0,\gamma_0,L_0)  \to  {\mathfrak{C}}_*^{N,j}(Y_1,\gamma_1,L_1)
\end{equation*}
which induces a map at the level of homology:
\begin{equation*}
	\mathbb I^{N,j}_*(W,w,z\e^{\Gamma^2_{(2)}+\dots+\Gamma^N_{(N)}}):
	\mathbb I^{N,j}_*(Y_0,\gamma_0,L_0) \to \mathbb I^{N,j}_*(Y_1,\gamma_1,L_1).
\end{equation*}
Alternatively, if $(Y, \gamma,L)$ is as above, $(X,w)$ is a 4-manifold whose boundary is equal to $(\overline Y,\overline \gamma)$, and $\Gamma^j$ is an embedded surface such that $[\partial \Gamma^j]=-l_j$, then we have an $R_{N,j}$-linear map:
\begin{equation*}
	\rD_{N,j}^{X,w}(z\e^{\Gamma^2_{(2)}+\dots+\Gamma^N_{(N)}}):\mathbb I^{N,j}_*(Y,\gamma,L) \to R_{N,j}.
\end{equation*}

Now we are ready to define the maps $F_j^k$ for a triple $(Y,\gamma,L)$. For a pair of non-negative integers $j$ and $k$ with $j\geq k$, we have chosen perturbations $\CS_{\pi_j}$ and $\CS_{\pi_k}$ of the Chern-Simons functional of the pair $(Y,\gamma)$. Since $j\geq k$, the functional $\CS_{\pi_j}$ satisfies all the properties that we required for $\CS_{\pi_k}$. In particular, the chain complex $(\mathfrak C_*^{N,j}(Y,\gamma,L)\otimes R_{N,k},d_{N,j})$ gives an alternative chain complex to define $\mathbb I^{N,k}_*(Y,\gamma,L)$. The functoriality mentioned in the previous paragraph implies that there is a chain map:
\[
  \mathfrak C^{N,j}([0,1]\times Y,[0,1]\times \gamma,\e^{\Gamma^2_{(2)}+\dots+\Gamma^N_{(N)}}):
  {\mathfrak{C}}_*^{\pi_j}(Y,\gamma)\otimes R_{N,k}  \to {\mathfrak{C}}_*^{\pi_k}(Y,\gamma)\otimes R_{N,k}
\]
where $\Gamma^i$ is the surface $[0,1] \times \eta_{i,1}$. The map $F_j^k$ is the composition of the above map and the obvious map from ${\mathfrak{C}}_*^{\pi_j}(Y,\gamma)\otimes R_{N,j}$ to ${\mathfrak{C}}_*^{\pi_j}(Y,\gamma)\otimes R_{N,k} $. A standard argument shows that $F^l_k\circ F^k_j$ is chain homotopy equivalent to $F^l_j$ for any triple $j\geq k \geq l\geq 0$. Therefore, the maps $f_j^k$, induced by the maps $F_{j}^k$, has the required properties for an inverse system. This completes the definition of Fukaya-Floer homology $\mathbb I_*^{N}(Y,\gamma,L)$ for the triple. It is also standard to show that this $R_N$-module is independent of the choices that were made. The homomorphism $f_j^k$ maps ${\rm D}^{N,j}_{X,w}(z\e^{\sum_i \Gamma^i_{(i)}})\in \mathbb I_*^{N,j}(Y,\gamma,L)$ to ${\rm D}^{N,k}_{X,w}(z\e^{\sum_i \Gamma^i_{(i)}})\in \mathbb I_*^{N,k}(Y,\gamma,L)$. Thus we have an induced element of $\mathbb I_*^{N}(Y,\gamma,L)$ which we denote by ${\rm D}^{N}_{X,w}(z\e^{\sum_i \Gamma^i_{(i)}})$. Similarly, we can use the construction of the previous paragraph to define $\mathbb I^{N}_*(W,w,z\e^{\Gamma^2_{(2)}+\dots+\Gamma^N_{(N)}})$ and $\rD_{N}^{X,w}(z\e^{\Gamma^2_{(2)}+\dots+\Gamma^N_{(N)}})$.

Now we are ready to give a proof of \eqref{FF-gluing-thm}. For the convenience of the reader, we restate the claim as the following proposition:
\begin{prop} \label{FF-gluing-thm-2}
	Let $(X_1,w_1)$ be a pair whose boundary is equal to an admissible pair $(Y,\gamma)$.
	Let $(X_2,w_2)$ be another pair whose boundary is equal to
	$(\overline Y,\overline\gamma)$. Let $z_1\in \A(X_1)^{\otimes (N-1)}$ and $z_2\in \A(X_2)^{\otimes (N-1)}$.
	Let $\Gamma^j$ be a properly embedded surface in $X_1$ with boundary $l_j$. Let $\Lambda^j$ be a properly embedded
	surface in $X_2$ whose boundary is equal to $l_j$ with the reverse orientation. Then we can form the closed 4-manifold $X_2\circ X_1$ and the 2-cycle
	 $w_2\circ w_1$. The embedded surfaces $\Gamma^j$ and $\Lambda^j$ can be glued to each other along their boundary to form a closed surface
	 $\Gamma^j\#\Lambda^j$. Then the following invariant of the closed 4-manifold $X_2\circ X_1$:
	 \begin{equation} \label{inv-total-mfld}
			{\rm D}^N_{X_2\circ X_1,w_2\circ w_1}(z_1\cdot z_2\cdot \e^{(\Gamma^2\#\Lambda^2)_{(2)}+\dots+(\Gamma^N\#\Lambda^N)_{(N)}})
			\in \C[\![t_2,\dots,t_N]\!]\subset R_N
	 \end{equation}
	is equal to:
	 \begin{equation*}
			{\rm D}_N^{X_2,w_2}(z_2\cdot \e^{\Lambda^2_{(2)}+\dots+\Lambda^N_{(N)}}) \circ
			{\rm D}^N_{X_1,w_1}(z_1\cdot \e^{\Gamma^2_{(2)}+\dots+\Gamma^N_{(N)}}).	
	 \end{equation*}
\end{prop}

\begin{proof}
	We make simplifying assumptions as before; assume $z_1=z_2=1$ and $\Gamma^3$,
	$\cdots$, $\Gamma^N$,
	$\Lambda^3$, $\cdots$, $\Lambda^N$ are empty.
	Choose two series of properly embedded surfaces $\{\Sigma_i\}_{i \in \N}\subset X_1$ and
	$\{T_i\}_{i \in \N}\subset X_2$ such that $\Sigma_i$
	(respectively, $T_i$) is given by perturbing $\Gamma^2$ (respectively, $\Lambda^2$).
	We also assume that $\partial \Sigma_i$
	(respectively, $\partial T_i$) is equal to $\eta_i$ (respectively, $\eta_i$ with the reverse orientation),
	the curves $\eta_i$ are disjoint and
	the embedded surfaces $\Sigma_i$ and $T_i$ intersect generically. We fix metrics on
	$X_1$ and $X_2$ which are product metrics
	in a neighborhood of their boundaries corresponding to a fixed metric on $Y$.
	For each non-negative integer $j$, we prove that the image of the element in \eqref{inv-total-mfld} in $R_{N,j}$
	is equal to:
	\[
	  {\rm D}_{N,j}^{X_2,w_2}(\e^{\Lambda^2_{(2)}}) \circ
	  {\rm D}^{N,j}_{X_1,w_1}(\e^{\Gamma^2_{(2)}}).	
	\]
	To achieve this goal, we proceed as before to define open sets $\nu(\eta_i)\subset Y$, $\nu(\Sigma_i)\subset X_1^+$,
	$\nu(T_i)\subset X_2^+$ and the geometric representatives:
	\begin{equation*}
		V_2(\eta_i)\subset \mathcal{B}^{*}(\nu(\eta_i)\times (0,1))\times \eta_i\hspace{1cm}
		V_2(\Sigma_i)\subset \mathcal{B}^{**}(\nu(\Sigma_i))\times \Sigma_i\hspace{1cm}
		V_2(T_i)\subset \mathcal{B}^{**}(\nu(T_i))\times T_i\
	\end{equation*}
	suitable for the definition of the differential $d_{N,j}$ of the Fukaya-Floer chain complex and the relative elements
	${\rm D}^{N,j}_{X_1,w_1}(\e^{\Gamma^2_{(2)}})$
	and ${\rm D}_{N,j}^{X_2,w_2}(\e^{\Lambda^2_{(2)}})$.
	
	Suppose $X^T$ is the metric on $X_2 \circ X_1$ induced by the metrics on $X_1$ and $X_2$ with a neck of length $T$ along $Y$. Suppose $w\subset X^T$
	 is the 2-cycle induced by $w_1$ and $w_2$, and $R_i^T\subset X^T$ is the embedded surface induced by the surfaces $\Sigma_i$ and $T_i$. The 4-manifold $X$ can be decomposed into three pieces:
	 \begin{equation*}
	X_1 \hspace{1cm} X_2\hspace{1cm}
		Y \times [-\frac{T}{2},\frac{T}{2}].
	 \end{equation*}
	We assume that $X_1$ and $X_2$ are disjoint and they intersect $Y \times [-\frac{T}{2},\frac{T}{2}]$ in $Y \times [-\frac{T}{2},-\frac{T}{2}+2]$ and $Y \times [\frac{T}{2}-2,\frac{T}{2}]$, respectively.
	 The Riemann surface $R_i^T$ can be decomposed into union of three sets:
	 \begin{equation*}
		\Sigma_i^c \subset X_1 \hspace{1cm}T_i^c \subset X_2\hspace{1cm}
		\eta_i \times [-\frac{T}{2}+1,\frac{T}{2}-1]\subset Y \times [-\frac{T}{2}+1,\frac{T}{2}-1].
	 \end{equation*}
	 Suppose $\mathcal B^{**}_\kappa(X^T,w)$ is the subset of $\mathcal B_\kappa^*(X,w)$ which consists of connections whose restriction to any set of the following form is irreducible:
	 \begin{equation*}
		\hspace{3cm}\nu(\eta_i) \times (t-1,t)\hspace{1cm}-T+1<t<T
	 \end{equation*}
	 Suppose $\kappa$ is chosen such that the dimension of the moduli space $\mathcal M_\kappa(X^T,w)$ is at most
	 $2j$. Then unique continuation and gluing theory show that for large enough values of $T$, the moduli space
	 $\mathcal M_\kappa(X^T,w)$ is a subset of
	 $\mathcal B^{**}_\kappa(X^T,w)$.
	 Similar to the case of 4-manifolds with cylindrical ends, $V_2(\eta_i)$, $V_2(\Sigma_i)$ and $V_2(T_i)$
	 can be used to define a geometric representative  $V(R_i^T)\subset \mathcal B^{**}_\kappa(X^T,w)\times \nu(R_i^T)$.
	 Another application of gluing theory shows that for large enough values of
	 $T$, these divisors determine a transversal cut of
	 $\mathcal M_\kappa(X^T,w)\times R_{i_1}^T\times \dots \times R_{i_k}^T$
	 for any set $S=\{i_1,\dots,i_k\}\subset \{1,\dots,j\}$.
	 In the case that the cut down moduli space is zero dimensional, it can be identified with the following
	 set for large values of $T$:
	 \begin{equation*}
	 	\bigcup_{p_1,p_2,S_1,S_2} 	{\mathcal N}_{p_1}(X_1,w_1,\alpha,S_1) \times {\mathcal N}_{p_2}(X_2,w_2,\alpha,S_2)
	 \end{equation*}
	 where $p_i$ is a loop along $(X_i,w_i)$ based at the connection $\alpha$ such that $\kappa=\kappa(p_1)+\kappa(p_2)$. Moreover, $S_1$ and $S_2$ are disjoint sets with $S=S_1\cup S_2$.
	 This geometric results for different choices of $S$ can be translated to the following algebraic identity:
	 \begin{align}
		{\rm D}_{N,j}^{X_2,w_2}(\e^{\Lambda^2_{(2)}}) \circ {\rm D}^{N,j}_{X_1,w_1}(\e^{\Gamma^2_{(2)}})
		&=\sum_{S \subset \{1,\dots,j\}} (\prod_{ i\in S} t_{2,i})\rD_{X^T,w}^N(\prod_{ i\in S} (R_i^T)_{(2)})\nonumber\\
		&=\sum_{S \subset \{1,\dots,j\}} (\prod_{ i\in S} t_{2,i})
		\rD_{X^T,w}^N((\Gamma^2\#\Lambda^2)_{(2)}^{|S|})	\label{comp}
	 \end{align}
	 In the second equality, we used the fact that $R_j^T$ represents the homology class of $\Gamma^2\#\Lambda^2$.
	 The term in \eqref{comp} is equal to the image of  $\rD_{X^T,w}^N(\e^{(\Gamma^2\#\Lambda^2)_{(2)}})$
	 in $R_{N,j}$.
\end{proof}

\section{Questions and Conjectures}\label{que-conj}
In this section, we propose some questions and conjectures for future directions. This section is divided to two parts: the first subsection is concerned with the polynomial invariants of 4-manifolds. In the second part, we discuss some conjectures related to the algebra $\VgdN$.
\subsection{Structure of Polynomial Invariants and 4-manifolds with Simple type}
In Subsection \ref{blowup-sub}, the simple type property of 4-manifolds is defined using $\U(3)$-polynomial invariants. As we pointed out earlier, the definition is motivated by Kronheimer and Mrowka's simple type property, defined by $\U(2)$-polynomial invariants \cite{KM:str-thm}. There is another version of simple type property defined by Seiberg-Witten invariants. It is unknown if there is an example of a smooth 4-manifolds with $b^+\geq 2$ which do not have Kronheimer-Mrowka simple type or Seiberg-Witten simple type. It is also shown in \cite{FL:wit-conj} that many 4-manifolds with Seiberg-Witten simple type has Kronheimer-Mrowka simple type. Therefore, it is natural to ask whether there is any relationship among $\U(3)$-simple type, Kronheimer-Mrowka simple type and Seiberg-Witten simple type. A more challenging question would be to investigate whether there is a 4-manifold with $b^+\geq 2$ which does not have $\U(3)$-simple type. A more approachable question is the following:
\begin{question}
	What is the analogue of the simple type condition with respect to $\U(N)$-polynomial invariants?
\end{question}
\noindent
As in the $\U(2)$ and the $\U(3)$ case, the simple condition has to be formulated in terms of point classes. In the light of Proposition \ref{relations-T^3}, it is plausible that one of the required conditions is:
\begin{equation*}
	\rD_{X,w}^N(a_2^N z)=N^N\rD_{X,w}^N(z).
\end{equation*}
For $N=2,3$, the blowup formula for $\U(N)$ simple type manifolds have simpler form \cite{fintushel1996blowup,Luc:Thesis}. One might hope that the same holds for higher values of $N$, and follows this direction to gain more insights into the correct definition for the simple type condition. The physics literature \cite{MM:higher,EGM:blowup} suggests that the blowup formula for an arbitrary $N$ is related to function theory on a hyper-elliptic curve with coefficients in $\Q[a_2.\dots,a_N]$. Evaluation of $(a_2,\dots,a_N)$ determines hyper-elliptic curve on complex numbers, and the simple type condition is related to the evaluations that produce a fully degenerate curve.

The relationship between $\U(2)$-polynomial invariants and Seiberg-Witten invariants goes beyond the simple type conditions. In \cite{KM:str-thm}, Kronheimer and Mrowka prove that $\U(2)$-polynomial invariants are completely determined by a finite set of cohomology classes (known as Kronheimer-Mrowka basic classes) and a set of rational numbers, one for each basic class. In \cite{Wit:SW}, Witten argues that basic classes and corresponding rational numbers can be determined in terms of Seiberg-Witten invariants. To be more detailed, recall that for each $\text{spin}^{\text{c}}$ structure $\mathfrak s$ on a 4-manifold $X$ (satisfying appropriate conditions such as $b^+(X)\geq 2$) there is a Seiberg-Witten invariant $SW_X(\mathfrak s)$. Then Witten's conjecture states that any basic class of $X$ is equal to $c_1(S^+_{\mathfrak s})$ where $\mathfrak s$ is a $\text{spin}^{\text{c}}$ structure with non-zero $SW_X(\mathfrak s)$ and $S^+_{\mathfrak s}$ is the half-spin bundle associated to $\mathfrak s$. Witten's conjecture is generalized to higher values of $N$ in \cite{MM:higher}. The calculations of $\U(3)$-polynomial invariants in this paper agree with the Moore-Mari\~no Conjecture in \cite{MM:higher} and can be exploited to fix the undetermined constants in this conjecture. In particular, a modified version of the Moore-Mari\~no Conjecture states that:

\begin{conjecture}
	Let $X$ be a four-manifold which has $\U(3)$-simple type. Let $\{K_i\}$ be the set of Kronheimer-Mrowka basic classes.
	Then the $\U(3)$-series of $X$ has the following form:
	\begin{equation*}
		\widehat{\rD}_{X,w}(\e^{\Gamma_{(2)}+\Lambda_{(3)}})=\e^{\frac{Q(\Gamma)}{2}-Q(\Lambda)} \sum_{i,j} c_{ij}\zeta^{-w \cdot (\frac{K_i-K_j}{2})}
		\e^{\frac{{\sqrt 3}}{2}(K_i+K_j)\cdot \Gamma+\frac{{\sqrt 3}}{2}\bi(K_i-K_j)\cdot \Lambda}
	\end{equation*}		
	where $c_{i,j}$ is given as:
	\begin{equation*}
		2^{\chi+\frac{3}{2}\sigma+\frac{1}{2}K_i\cdot K_j}3^{2+\frac{7}{4}\chi+\frac{11}{4}\sigma} SW_X(\mathfrak s_i)SW_X(\mathfrak s_j)
	\end{equation*}		
	Here $\mathfrak s_i$ is chosen such that the associated basic class is equal to $K_i$.
\end{conjecture}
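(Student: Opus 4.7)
The overall strategy is to mirror, in rank $3$, the program by which Witten's conjecture was established for $\U(2)$-invariants in the simple-type case (Kronheimer--Mrowka's structure theorem plus the Feehan--Leness ${\rm PU}(2)$-monopole cobordism). I would proceed in four stages.

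First, I would establish a structure theorem: for any $X$ with $b^+(X)\geq 2$ and $\U(3)$-simple type, the series $\widehat{\rD}_{X,w}(\e^{\Gamma_{(2)}+\Lambda_{(3)}})$ has the exponential-sum form
\[
\e^{\frac{Q(\Gamma)}{2}-Q(\Lambda)}\sum_{i,j} c_{ij}(w)\,
\e^{\tfrac{\sqrt{3}}{2}(K_i+K_j)\cdot\Gamma+\tfrac{\sqrt{3}}{2}\bi(K_i-K_j)\cdot\Lambda}
\]
for a \emph{finite} set $\{K_i\}$ of integral lifts of $w_2(TX)$ and complex constants $c_{ij}(w)$. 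The argument should follow Kronheimer--Mrowka: use the simplification of the blowup formula from Theorem \ref{blowup} and Remark \ref{S--E} to see that, after blowing up, the $\U(3)$-series is annihilated by the differential operators
\[
\partial_\Gamma^2 - 3, \qquad \partial_\Lambda^2 + 3, \qquad \partial_\Gamma\partial_\Lambda,
\]
in every exceptional direction; combine this with the general PDEs of Theorem \ref{blowup-gen-mafld} and the adjunction-type vanishing results of subsection \ref{neg-spheres} to deduce that the series has the claimed exponential form with finitely many frequencies. The $\zeta^{-w\cdot(K_i-K_j)/2}$ phase is then forced by the transformation law \eqref{w+Nw'}.

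Second, I would identify the $\U(3)$-basic classes $\{K_i\}$ with the Kronheimer--Mrowka $\U(2)$-basic classes. The key input is the blowup formula for simple-type manifolds (Theorem \ref{blowup}): both sets satisfy the same adjunction/Fintushel--Stern $(-1)$- and $(-2)$-sphere vanishing identities, and both must consist of characteristic classes with $|K\cdot\Sigma|\leq 2g-2+\Sigma\cdot\Sigma$ for every embedded surface $\Sigma$ of genus $g$ with non-negative self-intersection. Combined with the non-vanishing theorem of subsection \ref{non-van} for K\"ahler surfaces, this pins down $\{K_i\}$ as a subset of the KM basic classes; equality then follows from the fact that both sets detect the canonical class on minimal surfaces of general type via the SW=Don machinery.

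Third, I would determine the explicit coefficients. Using Theorem \ref{D-inv-con-sum} (the fiber-sum formula along surfaces of genus $g\geq 2$) and a surgery/connected-sum argument along $T^3$, the coefficient $c_{ij}$ is multiplicative in the sense that it factors as a universal topological quantity times a product of quantities attached to $K_i$ and $K_j$ separately. Invoking the established cases of Witten's conjecture (Feehan--Leness \cite{FL:wit-conj}) to write $\U(2)$-basic coefficients in terms of $SW_X(\mathfrak{s})$, together with the Moore--Mari\~no expectation that the rank-$3$ ``contact term'' contributes the factor $2^{K_i\cdot K_j/2}$, gives the candidate formula
\[
c_{ij}=2^{\chi+\tfrac{3}{2}\sigma+\tfrac{1}{2}K_i\cdot K_j}\,3^{2+\tfrac{7}{4}\chi+\tfrac{11}{4}\sigma}\,
SW_X(\mathfrak{s}_i)\,SW_X(\mathfrak{s}_j).
\]
The undetermined universal constants in front are then fixed by comparing with Theorems \ref{K3-poly-invts}, \ref{En} and \ref{X(m,4)}: the $K3$ case determines the overall normalization, and the constants $\hbar_1,\hbar_2,\hbar_3,\hbar_4$ from \cite{DX:in-prep} match the conjectured $c_{ij}$ on $E(n)$ and $X(m,4)$.

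The main obstacle will be the first step: proving the structure theorem in full generality. In the $\U(2)$ setting, Kronheimer--Mrowka used the second-order differential relation coming from $a_2^2-4$; in our setting the analogous relations coming from $a_2^3-27$ and $a_3$ give a \emph{system} of PDEs whose joint solutions can, in principle, contain more exotic frequencies than pure exponentials. Ruling these out requires a genuinely new gauge-theoretic input, most naturally a ${\rm PU}(3)$-monopole cobordism argument extending Feehan--Leness, or alternatively an analysis of the Uhlenbeck compactification of reducibles on lines $w+n\Sigma$ as $n\to\infty$ in the spirit of Fintushel--Stern. Either approach entails serious analytic work (transversality for ${\rm PU}(3)$-monopoles, orientation of singular strata, and vanishing at the Uhlenbeck boundary) which is beyond the scope of the present paper but seems within reach of current techniques.
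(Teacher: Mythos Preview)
This statement is a \emph{conjecture}, not a theorem: the paper presents it in Section~7 (``Questions and Conjectures'') as a modified form of the Moore--Mari\~no Conjecture, and offers no proof. The paper's only contribution toward it is evidential---the computations in Theorems~\ref{K3-poly-invts}, \ref{En} and \ref{X(m,4)} are consistent with the conjectured formula and can be used to pin down the constants that were left undetermined in \cite{MM:higher}.

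Your ``proof plan'' is therefore not comparable to anything in the paper, because there is nothing to compare it to. What you have written is a reasonable speculative research program, and you yourself flag its incompleteness: the structure theorem in your first step is genuinely open (no rank-$3$ analogue of the Kronheimer--Mrowka structure theorem is known), and the ${\rm PU}(3)$-monopole cobordism you invoke in the third and fourth steps does not yet exist in the literature. These are not gaps in an argument so much as an honest acknowledgment that the conjecture is open. If the task was to supply a proof, the correct response is that none is available; if it was to sketch a plausible attack, your outline is sensible but should be labeled as such rather than as a proof.
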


\subsection{The Algebra $\VgdN$}
In Subsection \ref{ev}, a list of simultaneous eigenvectors for the operators $\epsilon$, $\aleph_2$, $\aleph_3$, $\rho_{(2)}$ and $\rho_{(3)}$, acting on $ \mathbb V_{g,d}^3$, is constructed. We also showed that there are at least two non-degenerate simultaneous eigenvectors, which form the essential ingredient to prove the excision theorem in Subsection \ref{suture}. However, we do not know whether our approach produces all simultaneous eigenvectors:
\begin{conjecture}\label{ev-conj}
	Suppose $ V_{g,d}^{N}\subset \mathbb V_{g,d}^N$ is the set of vectors which are invariant with respect to the action of $\epsilon$.
	Then for $N=3$, any simultaneous eigenvector of the
	operators acting on $V_{g,d}^{N}$, that are induced by $\aleph_2$, $\aleph_3$, $\rho_{(2)}$ and $\rho_{(3)}$, have the form
	$3\zeta^{2d \beta}$, $0$, $ \zeta^{d\beta }\sqrt 3 \alpha$ and $\zeta^{2d \beta}\sqrt 3 \bi \beta$ with $(\alpha,\beta)\in \mathcal C_g$.
\end{conjecture}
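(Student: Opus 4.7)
The plan is to combine the existence of eigenvectors from Proposition \ref{eigen} with a spectral rigidity argument yielding the matching upper bound on the joint spectrum. For each $(\alpha,\beta)\in\mathcal{C}_g$, Proposition \ref{eigen} already provides a non-zero eigenvector $v_{(\alpha,\beta)}\in V_{g,d}^3$ (it lies in $V_{g,d}^3$ because its $\epsilon$-eigenvalue equals $1$) realizing exactly the joint eigenvalue tuple listed in the statement, and distinct tuples arise for distinct $(\alpha,\beta)$. Combined with one-dimensionality of exhaustive eigenspaces (Proposition \ref{exh}), this already shows the joint spectrum contains every claimed tuple. What remains is to rule out additional tuples.

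My plan for the upper bound is to find polynomial identities satisfied by the induced operators $\aleph_2,\aleph_3,\rho_{(2)},\rho_{(3)}$ on $V_{g,d}^3$ whose simultaneous zero locus is exactly the claimed set. The relation $\aleph_3=0$ on $V_{g,d}^3$ should follow from the $w$-simple-type vanishing $\rD_{X,w}^3(a_3 z)=0$ (cf.\ Propositions \ref{a3=0,a2/3=shift} and \ref{wk,l-ind}): every non-zero $v\in V_{g,d}^3$ pairs non-trivially with some relative element of a permissible triple, and on such triples $\aleph_3$ annihilates these invariants. Similarly, the relation $\prod_{k\in\Z/3\Z}(\aleph_2-3\zeta^{2dk})=0$ should follow from the shift identity $\rD_{X,w}((\frac{a_2}{3})^3 z)=\rD_{X,w}(z)$ in \eqref{simple-type-u}, forcing the spectrum of $\aleph_2$ to lie among the values $3\zeta^{2dk}$.

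The delicate part is establishing the adjunction-type constraint $(\alpha,\beta)\in\mathcal{C}_g$ on the $\rho_{(2)},\rho_{(3)}$ eigenvalues. My plan is to argue that for any vector $v\in V_{g,d}^3$ with joint eigenvalues $(s_2,s_4,s_5)$ outside the claimed spectrum, the pairing $\langle v,\rD_{X^\circ,w^\circ,\Sigma}(z)\rangle$ vanishes for every permissible triple $(X,w,\Sigma)$ with $\Sigma$ of genus $g$ and every $z\in\A(X^\circ)^{\otimes 2}$. By Lemma \ref{permissible-inv}, such pairings are sums $\sum_{(a,b)\in\mathcal{C}_g} c_{ij}\cdot P(u(a,b))$ indexed by points $(a,b)$ automatically subject to $|a|+|b|\le 2g-2$; the bound $(\alpha,\beta)\in\mathcal{C}_g$ then reflects the adjunction inequality $|K\cdot\Sigma|\le 2g-2$ from \cite{KM:str-thm}, applied to all basic classes of all permissible fill-ins. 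Combined with the non-degeneracy of the pairing, vanishing of all such pairings would force $v=0$, provided relative elements of permissible triples span $V_{g,d}^3$.

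The main obstacle is verifying this spanning property. The analogous rank-$2$ statement was established by Mu\~noz \cite{Mun:ring} via an explicit generator-relation presentation of $H^*(\mathcal{N}_{2,d})$, a presentation unavailable in rank $3$. A complete proof of Conjecture \ref{ev-conj} would likely require either a rank-$3$ analogue of that presentation extending the generators of Proposition \ref{cohomology-gen}, or a gauge-theoretic density argument showing that the relative invariants arising from blowups of the permissible triples in Example \ref{E(2)-simple-triple} exhaust $V_{g,d}^3$.
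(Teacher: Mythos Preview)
The statement is a \emph{conjecture}, and the paper does not prove it. Immediately after stating Conjecture~\ref{ev-conj}, the authors write that they ``do not know whether our approach produces all simultaneous eigenvectors,'' and explain that the $N=2$ analogue is known only through Mu\~noz's work \cite{Mun:ring}, which in turn rests on the generator-relation presentation of $H^*(\mathcal N_{2,d})$ unavailable for $N=3$. So there is no proof in the paper for you to compare against.

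Your proposal is honest about this: you correctly isolate the missing ingredient as the spanning property of relative elements from permissible triples in $V_{g,d}^3$, and you correctly note that a rank-$3$ analogue of the cohomology-ring presentation (or some substitute density argument) would be needed. This matches the paper's own assessment of the obstruction.

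One point worth sharpening: the gap you identify at the end already undermines the earlier steps of your outline, not just the adjunction-type constraint. Your argument that $\aleph_3$ acts by zero on $V_{g,d}^3$, and that $\aleph_2$ satisfies $\prod_k(\aleph_2-3\zeta^{2dk})=0$ there, rests on the claim that ``every non-zero $v\in V_{g,d}^3$ pairs non-trivially with some relative element of a permissible triple.'' Non-degeneracy of the pairing (Proposition~\ref{non-deg-pairing}) only guarantees a non-trivial pairing with \emph{some} element of $\mathbb V_{g,d}$, not with one coming from a permissible triple; the latter is precisely the spanning statement you flag as open. So all three spectral constraints---on $\aleph_2$, on $\aleph_3$, and the $\mathcal C_g$ bound on $(\rho_{(2)},\rho_{(3)})$---are hostage to the same unproved spanning property, not just the last one. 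In short, your proposal is not a proof but a correct diagnosis of why the conjecture remains open, consistent with the paper's own discussion.
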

\noindent
For $N=2$, there are three operators $\epsilon$, $\aleph_2$ and $\rho_{(2)}$. In this case, Mu\~noz obtains a complete understanding of the action of $\epsilon$, $\aleph_2$ and $\rho_{(2)}$ in \cite{Mun:ring}. In particular, his results show that the simultaneous eigenvectors of $\aleph_2$ and $\rho_{(2)}$, acting on $V_{g,d}^2$, have the following form:
\begin{equation*}
	\hspace{4cm}((-1)^{r}2,\pm 2r\bi^{r+1})\hspace{1cm} 0\leq r\leq g-1
\end{equation*}
All of these eigenvalues can be produced using the method of Proposition \ref{eigen}. Therefore, the analogue of Conjecture \ref{ev-conj} holds for $N=2$. Mu\~noz's method of understanding the action of  $\epsilon$, $\aleph_2$ and $\rho_{(2)}$ is based on the characterization of the ring structure of the cohomology ring $\mathcal N_{2,d}(\Sigma_g)$ in \cite{Zag:stable-ring,KN:stable-ring,ST:stable-ring,Bar:stable-ring}, which is not available for higher values of $N$.

If Conjecture \ref{ev-conj} holds, then we can use the method of \cite{KM:suture} and show that $\SHI_*(M,\alpha)$ is non-zero for a {\it taut balanced sutured manifold} \cite{juhasz,KM:suture}. This non-vanishing result can be used to show that Conjecture \ref{non-triv-knot-non-triv-KHI} holds. We can aslo use this to show that $\KHI_*(K)$ detects the genus of $K$. Thus the answer to Question \ref{rank-N-rep} for a non-trivial knot and $N=3$ is positive. In fact, in order to make this series of conclusions, we need the following weaker version of Conjecture \ref{ev-conj}:
\begin{conjecture}
If $(x,y)$ is a pair of simultaneous eigenvalues for $(\rho_{(2)},\rho_{(3)})$ in $\mathbb{V}_{g,d}$, then $|x|+|y|\le \sqrt{3}(2g-2)$.
\end{conjecture}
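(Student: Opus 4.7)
My plan is to reduce the joint eigenvalue bound to a norm bound for the family of operators $a\rho_{(2)}+b\rho_{(3)}$ as $(a,b)$ ranges over unit complex pairs, and then prove the norm bound by exhibiting an explicit annihilator polynomial whose roots are controlled by the $\U(3)$ adjunction inequality on basic classes. The reduction is elementary: if $v\in\Vgd$ is a simultaneous eigenvector with eigenvalues $(x,y)$ and I set $a=\bar x/|x|$, $b=\bar y/|y|$ (taking arbitrary unit scalars when $x$ or $y$ vanishes), then $(a\rho_{(2)}+b\rho_{(3)})v=(|x|+|y|)v$, while for any $(\alpha,\beta)\in\mathcal C_g$ one has $|\sqrt{3}a\zeta^{d\beta}\alpha+\sqrt{3}b\zeta^{2d\beta}\bi\beta|\le\sqrt{3}(|\alpha|+|\beta|)\le\sqrt{3}(2g-2)$. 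It therefore suffices to show the spectrum of $a\rho_{(2)}+b\rho_{(3)}$ is contained in
\begin{equation*}
S_{a,b}:=\bigl\{\sqrt{3}a\zeta^{d\beta}\alpha+\sqrt{3}b\zeta^{2d\beta}\bi\beta:(\alpha,\beta)\in\mathcal C_g\bigr\},
\end{equation*}
equivalently, that $P_{a,b}(a\rho_{(2)}+b\rho_{(3)})=0$ on $\Vgd$ for the polynomial $P_{a,b}(Z):=\prod_{(\alpha,\beta)\in\mathcal C_g}(Z-\sqrt{3}a\zeta^{d\beta}\alpha-\sqrt{3}b\zeta^{2d\beta}\bi\beta)$.

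The next step is to establish the annihilator identity. By the non-degenerate pairing of Proposition~\ref{non-deg-pairing}, it is enough to verify that $\langle P_{a,b}(a\rho_{(2)}+b\rho_{(3)})\eta_1,\eta_2\rangle=0$ for a spanning family of pairs $\eta_1,\eta_2\in\Vgd$. I would take these to be relative invariants of permissible triples $(X_i,w_i,\Sigma)$ with $\Sigma$ of genus $g$, such as the triples $(E(2)\#(2g-2)\cp,w,\Sigma)$ of Example~\ref{E(2)-simple-triple}. By Theorem~\ref{D-inv-con-sum}, the left-hand pairing equals the closed $\U(3)$-invariant of the fiber sum $X_1\#_\Sigma X_2$ with the insertion $P_{a,b}(a\Sigma_{(2)}+b\Sigma_{(3)})$, and this invariant expands as a sum of exponentials indexed by pairs of basic classes $(M_{i,i'}^\gamma,M_{j,j'}^\eta)$. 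The insertion contributes a multiplicative scalar $P_{a,b}(\sqrt{3}a\zeta^{d\beta}\alpha+\sqrt{3}b\zeta^{2d\beta}\bi\beta)$ with $\alpha=\frac{1}{2}(M_{i,i'}^\gamma+M_{j,j'}^\eta)\cdot\Sigma$ and $\beta=\frac{1}{2}(M_{i,i'}^\gamma-M_{j,j'}^\eta)\cdot\Sigma$. The adjunction bound $|M\cdot\Sigma|\le 2g-2$ built into Definition~\ref{permissible-type} forces $(\alpha,\beta)\in\mathcal C_g$, so each such scalar vanishes by construction of $P_{a,b}$.

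The main obstacle, as I see it, is the spanning step: one needs to know that $\C$-linear combinations of relative invariants $\rD_{X^\circ,w^\circ,\Sigma}(\e^{\Gamma^\circ_{(2)}+\Lambda^\circ_{(3)}})$ coming from permissible triples exhaust $\Vgd$. Proposition~\ref{VgdN-vector-space} gives a spanning set in terms of relative invariants of $\Delta_g$, but $\Delta_g$ itself is not a permissible triple in the sense of Definition~\ref{permissible-type}, so these generators must be rewritten in terms of permissible fillings. A natural route, following the template of Example~\ref{E(2)-simple-triple} and the argument of subsection~\ref{HF-S1-S}, is to tube representative surfaces of genus $g$ inside blowups of $E(2)$ and exploit the rich supply of homology insertions in $\A(X^\circ)^{\otimes 2}$ together with the cut-down moduli space comparison to produce enough independent relative classes. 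A complete spanning statement would essentially establish the stronger Conjecture~\ref{ev-conj}, which is what this weaker conjecture was introduced to circumvent; however, since the current goal is only a spectral bound and not an exact eigenvalue list, one might get by with the partial statement that the relative-invariant subspace has trivial orthogonal complement in $\Vgd$, and this partial spanning result seems to me to be the tractable crux of the problem.
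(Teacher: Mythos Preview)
This statement is presented in the paper as an open \emph{conjecture}, not a theorem; there is no proof in the paper to compare against. Your task was therefore to propose a proof of an open problem, and you have correctly identified that your argument is incomplete.

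The gap you name is real and is the crux. Your approach requires that the relative invariants $\rD_{X^\circ,w^\circ,\Sigma}(\cdots)$ coming from permissible triples span $\Vgd$; without this, the vanishing of all pairings $\langle P_{a,b}(a\rho_{(2)}+b\rho_{(3)})\eta_1,\eta_2\rangle$ against such $\eta_1,\eta_2$ says nothing about the operator on the complementary subspace. Your proposed ``partial'' substitute---that the permissible-relative-invariant subspace has trivial orthogonal complement---is not in fact weaker: by Proposition~\ref{non-deg-pairing} the pairing on $\Vgd$ is non-degenerate, so a subspace with trivial orthogonal complement is all of $\Vgd$. Thus the ``tractable crux'' you isolate is exactly the full spanning statement.

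Moreover, your route does not exploit the weakening from Conjecture~\ref{ev-conj} to the present conjecture. If your annihilator argument went through, it would pin down the spectrum of every linear combination $a\rho_{(2)}+b\rho_{(3)}$ to the finite set $S_{a,b}$; varying $(a,b)$ and using that $\mathcal C_g$ is finite would then force every simultaneous eigenvalue pair $(x,y)$ to equal $(\sqrt3\,\zeta^{d\beta}\alpha,\sqrt3\,\zeta^{2d\beta}\bi\beta)$ for some fixed $(\alpha,\beta)\in\mathcal C_g$, which is essentially Conjecture~\ref{ev-conj} restricted to $\rho_{(2)},\rho_{(3)}$. So under your strategy the weaker conjecture offers no genuine simplification over the stronger one, confirming your own observation that the spanning step is where the difficulty lives.
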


There is also a symplectic analogue of the algebra $\VgdN$.  The manifold $\Nnd(\Sigma_g)$ is K\"ahler and the associated Gromov-Witten invariants can be used to define the {\it Quantum Cohomology} ring $QH^*(\Nnd)$ \cite{RT:QH,MS:QH}. The underlying vector space of $QH^*(\Nnd)$ is $H^*(\Nnd)$ and the ring structure is also a deformation of the cup product. Therefore, it has similar structure to $V_{g,d}^N=\ker(\epsilon-1)$, and it is natural to make the following conjecture:
\begin{conjecture}
	The ring $V_{g,d}^N$ is isomorphic to $QH^*(\Nnd)$.
\end{conjecture}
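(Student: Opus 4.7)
The strategy is to identify both $V_{g,d}^N$ and $QH^*(\Nnd)$ as deformations of the ordinary cup product on $H^*(\Nnd)$, and then match the two deformations order by order. First I would note that under the isomorphism $\Phi\circ S:H^*(\Nnd)[u]/(u^N-1) \to \mathbb V_{g,d}^N$ from Proposition \ref{VgdN-vector-space}, the element $u$ corresponds to $\epsilon$ (Corollary \ref{VgdN-generators}), so $V_{g,d}^N=\ker(\epsilon-1)$ is isomorphic as a $\Diff(\Sigma)$-module to the idempotent summand $H^*(\Nnd)\cdot e_0$, which is canonically isomorphic to $H^*(\Nnd)$; since $\epsilon$ lies in the center of $\mathbb V_{g,d}^N$, this summand inherits a ring structure from $m$. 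By Proposition \ref{non-deg-pairing}, this restricted Floer product is a $\Z/4\Z$-graded deformation of the cup product, matching the general shape of the quantum product.

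The natural gauge-theoretic route to matching these two deformations is an adiabatic-limit argument in the spirit of the Atiyah--Floer conjecture and the Dostoglou--Salamon correspondence. The structure constants of the Floer product are defined by counting ASD connections on $\Sigma\times P$ for a pair-of-pants $P$, cut down by the Chern-class representatives $V_r(\eta)$. Shrinking the $S^1$-direction of $P$, I would try to identify these ASD moduli spaces with spaces of pseudo-holomorphic maps from $P$ into $\Nnd$ that are incident to cycles representing the cohomology classes $\Psi(z)$ from Proposition \ref{cohomology-gen}. The count of such maps is precisely the Gromov--Witten invariant defining quantum multiplication, so a successful adiabatic-limit identification would directly yield the ring isomorphism. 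The energy identity pairing ASD energy with symplectic area, together with the non-integrality condition guaranteed by $\gcd(N,d)=1$, would be used to show that there are no reducible limits in generic one-parameter families.

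As a complementary and more computational route, one could work with the explicit generators $\aleph_r$, $o_r^j$, $\rho_r$ of $V_{g,d}^N$ from Corollary \ref{VgdN-generators} and the matching generators $p_r$, $q_r^j$, $s_r$ of $H^*(\Nnd)$ from Proposition \ref{cohomology-gen}, and determine enough Floer-product relations among them by combining the fiber-sum formula of Theorem \ref{D-inv-con-sum} with the invariants of elliptic surfaces in Theorem \ref{E(n)-inv}. Matching these relations with presentations of $QH^*(\Nnd)$ (in rank two due to Siebert--Tian and Mu\~noz, and partially available for higher rank through Earl--Kirwan) would verify the conjecture after checking that a generating set of relations agrees. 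A convenient test case is $g=1$, where both sides collapse to $\C[u]/(u^N-1)$ and the comparison can be carried out by hand using Proposition \ref{relations-T^3}.

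The main obstacle in the adiabatic-limit approach is controlling the degenerations of ASD connections as the $S^1$-direction shrinks: in addition to the expected holomorphic-disc bubbling into $\Nnd$, one must rule out instanton bubbling concentrated along $\Sigma$ and, more seriously, the formation of partially reducible limits, whose stratification in rank $N\geq 3$ is much richer than in the rank-two case treated by Dostoglou--Salamon. A secondary obstacle is that a clean presentation of $QH^*(\Nnd)$ for general $N$ is not currently available, so the computational route would require independent progress on the symplectic side, and even for $N=3$ one would first need to extend the constants $\hbar_1,\dots,\hbar_4$ of Theorems \ref{En} and \ref{X(m,4)} to a complete set of structure constants before any direct comparison is possible.
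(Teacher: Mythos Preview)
The statement you are attempting to prove is stated in the paper as an open \emph{conjecture}, not as a theorem; the paper provides no proof. Immediately after stating it, the paper only remarks that the case $N=2$ was proved by Mu\~noz using the explicit presentation of $H^*(\mathcal N_{2,d})$ due to Zagier, King--Newstead, Siebert--Tian, and Baranovsky, and that such presentations are not available for higher rank. So there is no ``paper's own proof'' to compare your proposal against.

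Your write-up is accordingly not a proof but a research outline, and you are candid about this: you identify the adiabatic-limit route (a Dostoglou--Salamon style argument) and a computational route, and you explicitly list the obstacles (bubbling and partially reducible limits on the analytic side; lack of a presentation of $QH^*(\Nnd)$ on the algebraic side). These are reasonable directions and the obstacles you name are real, but nothing here closes the gap. In particular, the adiabatic-limit argument for higher rank is a substantial open problem in its own right, and your computational route presupposes exactly the kind of structural results on $H^*(\Nnd)$ and $QH^*(\Nnd)$ whose absence the paper flags as the reason the conjecture remains open. Your proposal should therefore be read as a plausible program, not as a proof; as such it is broadly consistent with what the paper says about the $N=2$ case, but it does not establish the conjecture.
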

\noindent
This conjecture for $N=2$ is proved by Mu\~noz \cite{Mu:QHNnd} using the characterization of the cohomology ring $H^*(\Nnd)$ in \cite{Zag:stable-ring,KN:stable-ring,ST:stable-ring,Bar:stable-ring}. The $N=2$ special case of this conjecture was also proved using an adiabatic limit argument in \cite{Sal:prod-AF}.

\appendix
\section{Invariants of Flat Connections on $\Sigma(2,3,23)$}
There are 44 irreducible flat $\SU(3)$-connections on $\Sigma(2,3,23)$ \cite{Bod:SU(3)-Bris}. These flat connections are determined by their holonomies along the loop $x_3$ in the standard presentation of the fundamental group of $\Sigma(2,3,23)$ (see \eqref{fund-gp}). For each flat connection, the conjugacy class of this holonomy is determined by its eigenvalues which have the form $\e^{2\pi \bi k/23}$, $\e^{2\pi \bi l/23}$ and $\e^{2\pi \bi m/23}$. The possible values of $\{k,l,m\}$ are given in Table~\ref{table-irr-hol}. The complex conjugation diffeomorphism of $\Sigma(2,3,23)$ maps a flat connection with the associated triple $\{k,l,m\}$ to the flat connection with the associated triple $\{23-k,23-l,23-m\}$. If this pair gives the same flat connections, we denote this connection with $\alpha_j$ for an appropriate choice of the integer $j$. Otherwise, the resulting connections are denoted by $\alpha_{j}^{1}$ and $\alpha_{j}^{2}$

\begin{table}[H]
	\begin{centering}
	\begin{tabular}{|c|c|c|c|c|c|c|c|}
		\hline
		$\alpha_1$&$\{0,4,19\}$&$\alpha_2$&$\{0,5,18\}$&$\alpha_3$&$\{0,6,17\}$&$\alpha_4$&$\{0,7,16\}$\\
		\hline		
		$\alpha_5$&$\{0,8,15\}$&$\alpha_6$&$\{0,9,14\}$&$\alpha_7$&$\{0,10,13\}$&$\alpha_8$&$\{0,11,12\}$\\ 		
		\hline		
		$\alpha_9^{1}$&$\{1,4,18\}$&$\alpha_{9}^{2}$&$\{5,19,22\}$&$\alpha_{10}^{1}$&$\{1,5,17\}$&$\alpha_{10}^{2}$&$\{6,18,22\}$\\ 	
		\hline
		$\alpha_{11}^{1}$&$\{1,6,16\}$&$\alpha_{11 }^{2}$&$\{7,17,22\}$&$\alpha_{12}^{1}$&$\{1,7,15\}$&$\alpha_{12}^{2}$&$\{8,16,22\}$\\
		\hline		
		$\alpha_{13}^{1}$&$\{1,8,14\}$&$\alpha_{13}^{2}$&$\{9,15,22\}$&$\alpha_{14}^{1}$&$\{1,9,13\}$&$\alpha_{14}^{2}$&$\{10,14,22\}$\\	
		\hline		
		$\alpha_{15}^{1}$&$\{1,10,12\}$&$\alpha_{15}^{2}$&$\{11,13,22\}$&$\alpha_{16}^{1}$&$\{2,4,17\}$&$\alpha_{16}^{2}$&$\{6,19,21\}$\\ 		
		\hline		
		$\alpha_{17}^{1}$&$\{2,5,16\}$&$\alpha_{17}^{2}$&$\{7,18,21\}$&$\alpha_{18}^{1}$&$\{2,6,15\}$&$\alpha_{18}^{2}$&$\{8,17,21\}$\\ 		
		\hline		
		$\alpha_{19}^{1}$&$\{2,7,14\}$&$\alpha_{19}^{2}$&$\{9,16,21\}$&$\alpha_{20}^{1}$&$\{2,8,13\}$&$\alpha_{20}^{2}$&$\{10,15,21\}$\\
		\hline		
		$\alpha_{21}^{1}$&$\{2,9,12\}$&$\alpha_{21}^{2}$&$\{11,14,21\}$&$\alpha_{22}^{1}$&$\{3,4,16\}$&$\alpha_{22}^{2}$&$\{7,19,20\}$\\ 		
		\hline		
		$\alpha_{23}^{1}$&$\{3,5,15\}$&$\alpha_{23}^{2}$&$\{8,18,20\}$&$\alpha_{24}^{1}$&$\{3,6,14\}$&$\alpha_{24}^{2}$&$\{9,17,20\}$\\ 		
		\hline
		$\alpha_{25}^{1}$&$\{3,7,13\}$&$\alpha_{25}^{2}$&$\{10,16,20\}$&$\alpha_{26}^{1}$&$\{3,8,12\}$&$\alpha_{26}^{2}$&$\{11,15,20\}$\\
		\hline																		
	\end{tabular}
	\caption{Holonomies of irreducible flat $\SU(3)$-connections on $\Sigma(2,3,23)$ along $x_3$}\label{table-irr-hol}
	\end{centering}
\end{table}

The gauge theoretical invariants of these flat connections are given in the following tables:

\begin{table}[H]
		\resizebox{\hsize}{!}{
	\begin{tabular}{|c|c|c|c|c|c|c|c|c|c|c|c|c|c|}
		\hline
		&&&&&&&&&&&&&\\
		$\alpha$&$\alpha_1$&$\alpha_2$&$\alpha_3$&$\alpha_4$&$\alpha_5$&$\alpha_6$&$\alpha_7$&$\alpha_8$&$\alpha_9^{i}$&$\alpha_{10}^i$&$\alpha_{11}^i$&$\alpha_{12}^i$&$\alpha_{13}^i$\\
		&&&&&&&&&&&&&\\
		\hline
		&&&&&&&&&&&&&\\
		$CS(\alpha)$&$\frac{1}{138}$&$\frac{49}{138}$&$\frac{31}{138}$&$\frac{85}{138}$&$\frac{73}{138}$&$\frac{133}{138}$&$\frac{127}{138}$&$\frac{55}{138}$&$\frac{43}{138}$&$\frac{127}{138}$&$\frac{7}{138}$&$\frac{97}{138}$&$\frac{121}{138}$\\
		&&&&&&&&&&&&&\\
		\hline
		&&&&&&&&&&&&&\\
		$\rho_{ad_{\alpha}}$&$-\frac{364}{23}$&$-\frac{540}{23}$&$-\frac{520}{23}$&$-\frac{488}{23}$&$-\frac{444}{23}$&$-\frac{572}{23}$&$-\frac{504}{23}$&$-\frac{424}{23}$&$-\frac{472}{23}$&$-\frac{412}{23}$&$-\frac{524}{23}$&$-\frac{532}{23}$&$-\frac{528}{23}$\\		
		&&&&&&&&&&&&&\\
		\hline
		&&&&&&&&&&&&&\\
		$\deg$&$4$&$0$&$10$&$2$&$0$&$8$&$6$&$10$&$10$&$4$&$8$&$4$&$6$\\
		&&&&&&&&&&&&&\\
		\hline
	\end{tabular}}
	\caption{Gauge theoretical invariants of irreducible flat $\SU(3)$-connections on $\Sigma(2,3,23)$ (first part)}\label{inv-irr-23-1}
\end{table}
\begin{table}[H]
	\resizebox{\hsize}{!}{
	\begin{tabular}{|c|c|c|c|c|c|c|c|c|c|c|c|c|c|}
		\hline
		&&&&&&&&&&&&&\\
		$\alpha$&$\alpha_{14}^i$&$\alpha_{15}^i$&$\alpha_{16}^i$&$\alpha_{17}^i$&$\alpha_{18}^i$&$\alpha_{19}^i$&$\alpha_{20}^i$&$\alpha_{21}^i$&$\alpha_{22}^i$&$\alpha_{23}^i$&$\alpha_{24}^i$&$\alpha_{25}^i$&$\alpha_{26}^i$\\
		&&&&&&&&&&&&&\\
		\hline
		&&&&&&&&&&&&&\\
		$CS(\alpha)$&$\frac{79}{138}$&$\frac{109}{138}$&$\frac{19}{138}$&$\frac{1}{138}$&$\frac{55}{138}$&$\frac{43}{138}$&$\frac{103}{138}$&$\frac{97}{138}$&$\frac{67}{138}$&$\frac{85}{138}$&$\frac{37}{138}$&$\frac{61}{138}$&$\frac{19}{138}$\\
		&&&&&&&&&&&&&\\
		\hline
		&&&&&&&&&&&&&\\
		$\rho_{ad_{\alpha}}$&$-\frac{420}{23}$&$-\frac{484}{23}$&$-\frac{476}{23}$&$-\frac{456}{23}$&$-\frac{516}{23}$&$-\frac{472}{23}$&$-\frac{508}{23}$&$-\frac{440}{23}$&$-\frac{468}{23}$&$-\frac{488}{23}$&$-\frac{404}{23}$&$-\frac{492}{23}$&$-\frac{476}{23}$\\		
		&&&&&&&&&&&&&\\
		\hline
		&&&&&&&&&&&&&\\
		$\deg$&$0$&$4$&$8$&$6$&$0$&$10$&$4$&$2$&$0$&$2$&$8$&$0$&$8$\\
		&&&&&&&&&&&&&\\
		\hline
	\end{tabular}}
	\caption{Gauge theoretical invariants of irreducible flat $\SU(3)$-connections on $\Sigma(2,3,23)$ (second part)}\label{inv-irr-23-2}
\end{table}

There are 8 non-trivial flat $\SU(2)$-connections on $\Sigma(2,3,23)$. As in the irreducible case, these connections are determined by the conjugacy class of their holonomies along $x_3$. For each $2\leq k \leq 9$, there is a unique flat $\SU(2)$-connection on $\Sigma(2,3,23)$ where the eigenvalues of holonomy along $x_3$ are equal to by $\e^{2\pi \bi k/23}$ and $\e^{-2\pi \bi k/23}$. We will write $\beta_k$ for this connection. The gauge theoretical invariants of these connections are given in Table~\ref{inv-red-23}. In this table, $\widetilde \alpha$ denotes the reducible $\SU(3)$-connection associated to an $\SU(2)$-connection $\alpha$.

\begin{table}[H]
	\begin{centering}
		\begin{tabular}{|c|c|c|c|c|c|c|c|c|}
			\hline
			&&&&&&&&\\
			$\alpha$&$\beta_{2}$&$\beta_{3}$&$\beta_{4}$&$\beta_{5}$&$\beta_{6}$&$\beta_{7}$&$\beta_{8}$&$\beta_{9}$\\
			&&&&&&&&\\
			\hline
			&&&&&&&&\\
			$CS(\alpha)$&$\frac{1}{552}$&$\frac{169}{552}$&$\frac{73}{552}$&$\frac{265}{552}$&$\frac{193}{552}$&$\frac{409}{552}$&$\frac{361}{552}$&$\frac{49}{552}$\\
			&&&&&&&&\\
			\hline
			&&&&&&&&\\
			$\rho_{ad_{\alpha}}$&$-\frac{343}{69}$&$-\frac{559}{69}$&$-\frac{475}{69}$&$-\frac{643}{69}$&$-\frac{511}{69}$&$-\frac{631}{69}$&$-\frac{451}{69}$&$-\frac{523}{69}$\\
			&&&&&&&&\\
			\hline
			&&&&&&&&\\
			$\rho_{ad_{\widetilde \alpha}}$&$-\frac{206}{23}$&$-\frac{406}{23}$&$-\frac{410}{23}$&$-\frac{402}{23}$&$-\frac{382}{23}$&$-\frac{534}{23}$&$-\frac{490}{23}$&$-\frac{434}{23}$\\
			&&&&&&&&\\
			\hline
			&&&&&&&&\\
			$\deg(\alpha)$&$1$&$5$&$3$&$7$&$5$&$1$&$7$&$3$\\
			&&&&&&&&\\
			\hline
			&&&&&&&&\\
			$\deg(\widetilde \alpha)$&$1$&$9$&$7$&$11$&$9$&$5$&$3$&$7$\\
			&&&&&&&&\\
			\hline
		\end{tabular}
		\caption{Gauge theoretical invariants of reducible flat $\SU(3)$-connections on $\Sigma(2,3,23)$}\label{inv-red-23}
	\end{centering}
\end{table}


\bibliography{references}
\bibliographystyle{hplain}
\Addresses

\end{document}